\documentclass[11pt]{amsart}
\usepackage{amsfonts}
\usepackage{geometry}                
\geometry{letterpaper}                   
\usepackage{graphicx}
\usepackage{amssymb}
\usepackage{epstopdf}
\usepackage{setspace}
\usepackage{leftidx}
 \usepackage{setspace}

\DeclareGraphicsRule{.tif}{png}{.png}{`convert #1 `dirname
#1`/`basename #1 .tif`.png}
\parskip 1.2ex
\usepackage{xcolor}

\bibliographystyle{plain}

\theoremstyle{plain}
\newtheorem{theo}{Theorem}[section]
\newtheorem{lem}[theo]{Lemma}
\newtheorem{cor}[theo]{Corollary}
\newtheorem{prop}[theo]{Proposition}

\newtheorem{cla}[theo]{Claim}

\numberwithin{equation}{section}

\theoremstyle{definition}

\newtheorem{remark}[theo]{Remark}
\newcommand{\bequ}{\begin{equation}}
\newcommand{\eequ}{\end{equation}}
\newcommand{\bali}{\begin{align}}
\newcommand{\eali}{\end{align}}

\def\vf{\varphi}

\def\P{\mathfrak P}

\def\d{\delta}

\def\D{\Delta}

\def\g{\gamma}

\def\LL{\mathcal L}
\def\pp{\partial}
\def\l{\lambda}

\def\s{\sigma}

\def\E{\mathbb E}

\def \P{\mathbb P}
\def\ov{\overline}
\def\un{\underline}

\def\om{\omega}
\def\Om{\Omega}

\def\W{\mathcal W}

\def\wt{\widetilde}
\def\({\biggl(}
\def\){\biggr)}

\def\<{\bold\langle}
\def\>{\bold\rangle}

\def\LL{{{\mathcal L}}}
\def\XX{\mathcal{X}}

\def\M{\widetilde {M}}

\def\bfv{{\bf v}}

\def\oa{\overleftarrow}
\def\lf{\lfloor}
\def\rc{\rceil}

\definecolor{Lin}{RGB}{220, 30, 70}
\definecolor{Francois}{RGB}{70,30, 220}

\usepackage{mathtools,mathabx}
\newcommand\AR[1]{\makebox[0pt][l]{$#1$}\kern0.5em\raisebox{1.5ex}{$\curvearrowleft$}} 

\usepackage{graphicx}
\usepackage{float}
\usepackage{stmaryrd}
\usepackage{caption}

\title[The regularity of the  linear drift   in negatively curved spaces]{The regularity of the  linear drift   in negatively curved spaces}
\author{Fran\c cois Ledrappier  and  Lin Shu}
\address{Fran\c cois Ledrappier,  Department of Mathematics, University of Notre Dame, IN 46556-4618, USA and  LPSM, Sorbonne Universit\'e, Bo\^{i}te Courrier 158, 4, Place Jussieu, 75252 PARIS cedex
05, France}\email{fledrapp@nd.edu}
\address{Lin Shu,  LMAM, School
of Mathematical Sciences, Peking University, Beijing 100871,
People's Republic of China} \email{lshu@math.pku.edu.cn}
\subjclass[2010]{37D40, 58J65} \keywords{entropy, heat kernel, linear drift, locally
symmetric space}

\thanks{The second author was  partially  supported by  NSFC (No.11331007 and No.11422104) and Beijing Higher Education Young Elite Teacher Project (YETP0003).} 

\begin{document}
\maketitle
\begin{abstract} We show the linear drift of the Brownian motion on the universal cover of a closed connected Riemannian manifold is $C^{k-2}$ differentiable  along   any $C^{k}$ curve  in  the manifold of $C^k$ metrics  with negative sectional curvature.  We also show that the stochastic entropy of the Brownian motion is  $C^1$ differentiable  along   any $C^{3}$ curve of $C^3$ metrics with negative sectional curvature. We formulate the first derivatives of the linear drift and entropy, respectively,  and show they are critical at locally symmetric metrics. 
\tiny\begin{spacing}{1.08}
{\tableofcontents}
\end{spacing}
\end{abstract}
\section{Introduction and statement of results}

If we think of curvature as a measure of  the geometric complexity of a closed connected Riemannian manifold, the `simplest' geometric objects are those with constant sectional curvatures since their universal covers must be spheres, planes or Poincar\'{e} disks.  A little more `complicated' objects are locally symmetric spaces, whose universal covers are symmetric.  An attractive  problem in  geometry is to characterize locally symmetric spaces using other complexities, for instance,   Lichnerowicz's conjecture in 1944 (\cite{Li}) says that symmetry is equivalent to the harmonic property of the space, which means the geodesic spheres have constant mean curvature depending only on their radii.

From the point of view of dynamical systems, geometry influences dynamics and hence the geometric complexities can be read using dynamical complexities.  One example is  volume entropy,  which  is the exponential growth rate of the volume of a ball in the universal cover as a function of the radius.  It is named entropy since it is no bigger than the topological entropy of the geodesic flow in the unit bundle,  with equality if the underlying space is of nonpositive curvature (\cite{Man}) or the underlying space has no conjugate points and its Riemannian metric is H\"{o}lder $C^3$ (\cite{FM}).   In 1983, Gromov (\cite{Gro}) conjectured that among all metrics of volume equal to the volume of a locally symmetric metric $g_0$, the volume entropy is minimized at metrics isometric to $g_0$.  For negatively curved spaces,  this was shown by Katok (\cite{K}) for the 2-dimensional  case and was shown for higher dimensional cases by Besson,  Courtois and  Gallot (\cite{BCG}).  The remarkable rigidity result in \cite{BCG}   implies the Mostow rigidity (\cite{Mos}) (and its generalizations by Corlette (\cite{Cor}), Siu (\cite{Si}) and Thurston  (\cite{Th})) and also has many interesting rigidity applications in dynamics  combined with the results of  \cite{BFL}, \cite{FL}, \cite{L2}, etc. This helps us to understand the interaction between differential geometry and dynamical systems,  and  leads to many more rigidity studies on both sides. 

Since the  geodesic flow in the unit tangent bundle always preserves the Liouville measure, its  entropy  is  another natural quantity  (besides the topological entropy) for the description of the  dynamical complexity of the system. Clearly, the entropy of the Liouville measure is always less or equal to the topological entropy for the geodesic flow. It was conjectured by Katok in 1982 (\cite{K}, see also \cite{BK}) that in the negatively curved manifold case,  these two entropies coincide (if and) only if the manifold is locally symmetric. This is true  in the 2-dimensional case (\cite{K}). For the higher dimensional cases, it is a very difficult problem and it depends on our understanding of the dedicate difference between the Liouville measure and the Bowen-Margulis measure (for topological  entropy).  To approach this conjecture,  many experts tried to study  the variations of the two entropies with respect to  perturbations of the original system and to derive formulas for their infinitesimal changes (see e.g., \cite{Con, Fl, KKPW, KKW, KW, Kn}). We mention some of them briefly.  The smoothness of the topological entropy for perturbations of the Anosov flows were  considered  by Katok, Knieper, Pollicott and Weiss in \cite{KKPW} (see \cite{KKW} for  the first order derivative formula of the topological entropy of the geodesic flow under one-parameter family of $C^2$ perturbations of the original $C^2$ negative curved metric).   (As a corollary of the results of  \cite{BCG}  and \cite{KKPW}, a  locally symmetric negatively curved metric $g_0$ is a critical point of the topological entropy. Whether the reverse is true or not is an open question that was addressed in \cite{KKW}.)  Contreras (\cite{Con}) continued to analyze the  regularity of the Liouville entropy with respect to perturbations of the system. Furthermore, Flaminio (\cite{Fl}) gave a partial positive answer to Katok's conjecture by showing that along any non-trivial deformation the topological entropy and the difference between the  topological entropy and the Liouville entropy are locally strictly convex functions of the deformation parameter.   Besides its connection with the above  rigidity problems, the studies of the regularities of the entropies  have their own interest in the dynamical dimension theory (see e.g., \cite{Ano2, Katok2, Mis1, Mis2, Ne, P, Rug, Y1, Y2}). They are also  in the same flavor of the studies of the linear response problems in statistical mechanics for the understanding of the heat conduction (see  Ruelle (\cite{Ru1, Ru2, Ru3, Ru4})). The key step in the linear response theory is to justify, derive and understand  the first order derivative of the measure theoretical entropy of the SRB measure under smooth perturbations of the original system  (see  \cite{Ru5} and \cite{B} for nice introductions to  this field and hot references).  

Now, if we consider Brownian motion instead of the geodesic flow,  can we find similar connections between the stochastic dynamics  and  the geometric complexities?

 Let  $M$ be an $m$-dimensional orientable closed connected smooth manifold with fundamental group $G$.  Its universal cover space $\M$ is such that  $M=\wt{M}/G$.  For a $C^2$ Riemannian metric $g$ on $M$, let    $\wt{g}$  be its $G$-invariant extension  to $\M$. Consider the Brownian motion on $(\wt{M}, \wt{g})$ with starting point $x\in \M$. Its density function of the distribution at time $t\in \Bbb R_{+}$,  denoted by  $p(t, x,
y), y\in \wt{M}$, is the fundamental solution
to  the heat equation ${\partial u }/{\partial t}=\Delta u$,  where
$\Delta:=\mbox{Div}\nabla$ is the Laplacian of metric $\wt{g}$ on $C^2$ functions on
$\wt{M}$. Denote by $\mbox{Vol}_{\wt{g}}$ the Riemannian volume on
$(\wt{M}, \wt{g})$.  In this paper, we are mainly interested in the behaviors of the following two dynamical quantities. One is the  \emph{linear drift} 
\[\ell:=\lim_{t\rightarrow +\infty} \frac{1}{t}\int d_{\wt{g}}(x, y)p(t, x, y)\
  d\mbox{Vol}_{\wt{g}}(y), \]
  which was  introduced by
Guivarc'h (\cite{Gu}). It 
 tells the average in time of the shift of the Brownian motion from its starting point.  The other is the \emph{(stochastic) entropy} 
 \[
h:=\lim\limits_{t\to +\infty}-\frac{1}{t}\int \ln p(t, x, y) p(t, x, y)\ d{\rm Vol}_{\wt{g}}(y),
\]
which was introduced by Kaimanovich (\cite{K1}). It tells the average  decay rate of the transition probabilities of the Brownian motion.   Both $\ell$ and $h$ are independent of the choice of $x$ and are well-defined since we have a compact quotient. 

The linear drift, the stochastic entropy and the volume entropy (denoted by $\upsilon$)  are  interrelated  as follows:
\begin{equation}\label{ineq}
\ell^2\; \stackrel {(a)}{\leq} \; h \;\stackrel {(b)}{ \leq }\; \ell
\upsilon \;\stackrel{(c)}{ \leq }\upsilon^2.
\end{equation}
(For $(a)$, see \cite{K1} for  the negatively curved case and see \cite{L4} for the general case.  For $(b)$, see \cite{Gu}.  Inequality $(c)$ was derived in \cite{L4}  as a corollary of $(a)$ and $(b)$.)  All the equalities in (\ref{ineq}) turn out to be  related to the rigidity problem of locally symmetric spaces.  The equality $\ell^2=h$ (and hence $\upsilon^2=h$ and $\ell=\upsilon$) implies the space is locally symmetric in the negative curvature case by results in \cite{K1, BCG, BFL, FL, FM} and this characterization continues to hold in the non focal point case (\cite{LS1}).

 For $h=\ell \upsilon$, whether it   holds only for locally symmetric spaces is equivalent to a conjecture of Sullivan (see \cite{L2} for a discussion), which is not even known for  negatively  curved manifolds with dimensions greater than 2. Note that for Brownian motion, it is associated with a natural important probability measure in the unit tangent bundle of $M$, the so-called harmonic measure (see Section \ref{Sec-2.3}) and,  in the negatively curved case,  the quotient    $h/\ell$ is proportional to the Hausdorff dimension of the harmonic measure at the infinity boundary (\cite{L1}). Hence Sullivan's conjecture depends on the understanding of the dedicate difference between the harmonic measure and the Bowen-Margulis measure. 
This, together with the works that we mentioned above on  Katok's conjecture and the linear response theory, motivates our  study  in \cite{LS2} and the present paper to  analyze  the regularities of the linear drift and the stochastic entropy with respect to metric changes, to derive  formulas for the corresponding differentials and to understand the critical points.

We need some  notations to state our regularity results  in a precise form. 

For $k\in \Bbb N$, let  $C^k(S^2T^*)$ be the collection of $C^k$ sections of $S^2T^*$,  the bundle of symmetric $2$-forms on  the tangent space $TM$. It is a Banach space with the topology of the uniform convergence in $k$
derivatives. The set  of  all
smooth sections of  $S^2T^*$, denoted by 
$C^{\infty}(S^2T^*):=\bigcap_{k=0}^{\infty}C^k(S^2T^*)$, is a
Fr\'{e}chet space whose topology is given by all the $C^k$-norms.
Let $\mathcal{M}^k(M)$ denote the set of $C^k$ Riemannian
metrics on $M$. It  is the collection of
elements in $C^k(S^2T^*)$  which induces a positive definite inner
product on each tangent space $T_xM$, $x\in M$. 
The space of all smooth Riemannian metrics
$\mathcal{M}^{\infty}(M)=\bigcap_{k=1}^{\infty}\mathcal{M}^k(M)$ consists of
an open convex positive cone in $C^{\infty}(S^2T^*)$ and is a
Fr\'{e}chet manifold.

  Let $\Re^k(M)$ $(k\geq 3\ \mbox{or}\ k=\infty)$ be  the submanifold of $\mathcal{M}^k(M)$ made of  negatively curved $C^k$ metrics  on
$M$.   It  is open in
$\mathcal{M}^k(M)$.   For any curve $\lambda\in (-1, 1)\mapsto g^{\lambda}\in
\Re^k(M)$,  the linear drift  for each $(M, g^{\lambda})$, denoted by
$\ell_{\lambda}$, is positive  (\cite[Theorem 10]{K1}). 

 Our main result in this paper is the following.
\begin{theo}\label{main}
Let  $M$ be a closed connected smooth manifold. For any $C^{k}$
$(k\geq 3)$ curve $\lambda\in (-1, 1)\mapsto
g^{\lambda}\in \Re^{k}(M)$,  the function $\lambda\mapsto
\ell_{\lambda}$ is $C^{k-2}$ differentiable; for  any $C^{\infty}$
 curve $\lambda\in (-1, 1)\mapsto
g^{\lambda}\in \Re^{\infty}(M)$,  the function $\lambda\mapsto
\ell_{\lambda}$ is $C^{\infty}$ differentiable.
\end{theo}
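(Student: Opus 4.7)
My starting point is the classical Kaimanovich-type integral representation of the linear drift in negative curvature,
\[
\ell_{\lambda} \;=\; -\int \Delta_{\wt g^{\lambda}}\, b^{\lambda}_{\xi}(x)\, d\mathbf{m}_{\lambda}(x,\xi),
\]
where $b^{\lambda}_{\xi}$ is the Busemann function of $\wt g^{\lambda}$ based at $\xi\in \MM$ and $\mathbf{m}_{\lambda}$ is the unique harmonic probability measure on the unit tangent bundle $SM$ associated to the diffusion generated by $\Delta_{\wt g^{\lambda}}$. Heuristically this comes from applying It\^o to the martingale $t\mapsto b^{\lambda}_{\omega_{\infty}}(\omega_{t})$ along a Brownian path $\omega$, together with the asymptotic identity $d_{\wt g^{\lambda}}(\omega_{0},\omega_{t}) \sim -b^{\lambda}_{\omega_{\infty}}(\omega_{t})+\mathrm{const}$. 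Theorem \ref{main} then reduces to showing that both the integrand and the measure depend $C^{k-2}$-ly on $\lambda$, and to differentiating under the integral sign.

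Regularity of the integrand is essentially differential-geometric. The quantity $\Delta_{\wt g^{\lambda}}b^{\lambda}_{\xi}$ is the mean curvature of the horosphere of $\wt g^{\lambda}$ based at $\xi$ through $x$, and is obtained by integrating the Riccati equation for the stable second fundamental form backward along a geodesic. Structural stability of the Anosov geodesic flow along the $C^{k}$ curve $\lambda\mapsto g^{\lambda}$, combined with smooth dependence on parameters of solutions of the Riccati ODE, yields that $\lambda\mapsto \Delta_{\wt g^{\lambda}}b^{\lambda}_{\xi}(x)$ is $C^{k-2}$ with uniform H\"older control in $(x,\xi)$. The two derivatives that are lost are the natural cost of extracting the second fundamental form of horospheres from the metric.

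The main obstacle is the $C^{k-2}$ dependence of the harmonic measure $\mathbf{m}_{\lambda}$. My plan is to realize $\mathbf{m}_{\lambda}$ as the unique stationary probability of the leafwise diffusion on the strong stable foliation of $SM$, equivalently as the fixed point of a transfer-type operator $L_{\lambda}$ acting on H\"older densities over a fixed smooth reference measure. Provided the Banach space is arranged so that $L_{\lambda}$ depends $C^{k-2}$-ly on $\lambda$ (once again a loss of two derivatives, from the leafwise Laplacian) and retains a simple isolated eigenvalue $1$ with uniform spectral gap along the curve, the implicit function theorem produces the required regularity of $\mathbf{m}_{\lambda}$. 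The spectral gap itself is supplied by leafwise ellipticity combined with the Anosov contraction of strong stable manifolds, but engineering this functional-analytic framework and controlling the gap uniformly along $\lambda\mapsto g^{\lambda}$ is the technically delicate core of the proof. Once both factors are $C^{k-2}$, term-by-term differentiation inside the integral representation finishes the proof, and the $C^{\infty}$ statement is obtained by letting $k\to\infty$.
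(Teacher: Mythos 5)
Your overall architecture coincides with the paper's: the same integral formula $\ell_\lambda=-\int \mathrm{Div}^{\lambda}\overline X^{\lambda}\,d\mathbf m^{\lambda}$, regularity of the horospherical mean curvature via structural stability of the Anosov geodesic flow (this part is fine, though one only gets $C^{k-2}$ into continuous functions and $C^{k-3}$ into H\"older functions, not $C^{k-2}$ with uniform H\"older control), and regularity of the harmonic measure via a fixed point of a Markov-type operator with spectral gap plus analytic perturbation of an isolated simple eigenvalue. The genuine gap is in the harmonic-measure step, which you acknowledge as ``delicate'' but then treat as a matter of functional-analytic bookkeeping. First, the framework of ``H\"older densities over a fixed smooth reference measure'' is not available: harmonic measures of the stable foliation are in general singular in the transverse (boundary) direction, so $\mathbf m^{\lambda}$ has no density to perturb. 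The correct setting (and the one the paper uses) is the dual of a Banach space $\mathcal H^0_{\mathtt b}$ of continuous functions on $SM$ that are H\"older with respect to direction changes, with $\mathbf m^{\lambda}$ the fixed point of the dual of the leafwise heat operator ${\rm Q}_T^{\lambda}$; even there, one must produce a \emph{common} space and a contraction rate uniform along the curve, which requires a uniform positive lower bound on the drift (the paper's Lemma 3.9 and Proposition 3.6), not merely ``leafwise ellipticity plus Anosov contraction.''

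Second, and more seriously, the hypothesis ``provided $L_\lambda$ depends $C^{k-2}$-ly on $\lambda$ (a loss of two derivatives, from the leafwise Laplacian)'' is precisely the hard theorem, and it does not follow from smooth dependence of the coefficients of $\Delta^{\lambda}$. The operator being perturbed is the time-$T$ heat operator on the leaves of the stable foliation of the noncompact cover $\wt M$, so its $C^{k-2}$ dependence on $\lambda$ amounts to differentiability in $\lambda$ of the heat kernels $p^{\lambda}(T,x,\cdot)$ together with quantitative $L^q$ bounds on $(p^{\lambda})^{(i)}_{\lambda}/p^{\lambda}$ and on gradients of $(\ln p^{\lambda})^{(i)}_{\lambda}$, uniformly enough to preserve the H\"older-in-$\xi$ norm (this is the paper's Theorem 1.3). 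The naive route of differentiating the heat equation in $\lambda$ fails because the source term $(\Delta^{\lambda})^{(1)}_{\lambda}p^{\lambda}(t,x,\cdot)$ is singular as $t\to 0$ at $y=x$ and the cover is noncompact, which is why the paper develops the whole stochastic-flow/Bismut-type integration-by-parts machinery of Sections 4--6. Without an argument supplying these heat-kernel estimates (or some substitute), your implicit-function-theorem step has no input, so the proposal as written does not yet prove the theorem.
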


A special case of Theorem \ref{main} was treated in \cite{LS2}, where we considered the case that $g^{\lambda}=e^{2\vf^{\lambda}} g$ is a $C^3$ curve of $C^3$ conformal changes of $g$ in $\Re^3(M)$ and showed the differentiability of $\ell_{\l}$ in $\l$. In that setting,  the relation between the $\wt{g}^{\l}$-Laplacian $\Delta^{\lambda}$  and the $\wt{g}$-Laplacian $\Delta$  can be  formulated as:  for $f$ a $C^2$ function on $\M$,
\[
\Delta^{\lambda} f=e^{-2\vf^{\lambda}}\left(\Delta f+(m-2)\langle \nabla\vf^{\l},\nabla f\rangle_g\right), 
\]
where we still denote $\vf^{\lambda}$ its  $G$-invariant  extension to $\M$. So we can split the difference $\ell_{\l}-\ell_0$ into two parts corresponding to the time change and drift change of the diffusion, respectively. The first part differentiability can be  handled using the results in \cite{FF, LMM}, while the second part  differentiability was shown in the process of the diffusion using the Cameron-Martin-Girsanov  formula and the Central Limit Theorem for the linear drift (\cite{L}).  There is no such simple picture  for the  $C^1$ regularity of the linear drift for general deformation of metrics or for the higher order regularities consideration.

Our strategy to prove Theorem \ref{main} is to use the expression
of the linear drift at the infinity boundary
of $\M$ and prove the $C^{k-2}$ regularity  of the ingredients in that formula.

Let $\wt{g}^{\lambda}$ be the $G$-invariant extensions of $g^{\l}$ in $\M$.  The \emph{geometric boundary}
of $(\wt{M}, \wt{g}^{\l})$, denoted $\partial\M^\l$, is the collection of  the equivalence classes of unit speed $\wt{g}^{\lambda}$-geodesics that remain a bounded distance apart.  Each $\partial\M^\l$ can be identified with  $\partial \M^0$ (or simply $\partial \M$)  since
the identity isomorphism from $G$ to itself induces a natural homeomorphism between the two boundaries.  For $x\in \M$ and $\xi\in \partial \M$,
let $X^{\l}(x, \xi)$ be the initial speed vector of the unit speed $\wt{g}^{\lambda}$-geodesic  starting from $x$  belonging to the equivalent class of $\xi$.  Let ${\mbox{Div}}^{\l}$ be the divergence operator of $(\M, \wt{g}^{\lambda})$.  It is true (see Section 2 for a more precise statement) that
\begin{equation}\label{ell-lambda}
\ell_{\lambda}=-\int_{M_0\times \partial \widetilde{M}}{\mbox{Div}^{\l}}X^{\lambda}\ d\widetilde{{\bf m}}^{\lambda},
\end{equation}
where $M_0$ is a connected  fundamental domain and $d\wt{{\bf
m}}^{\lambda}=dx^{\l}\times d\wt{{\bf m}}^{\lambda}_x$, where $dx^{\l}$ is proportional  $d{\mbox{Vol}}_{\wt{g}^{\lambda}}$ and $\wt{{\bf m}}^{\lambda}_x$ is the hitting probability at
$\pp\M$ of the  $\wt{g}^{\lambda}$-Brownian motion starting at $x$.

The term $-{\mbox{Div}^{\l}}X^{\lambda}$ in (\ref{ell-lambda}) has its
geometric feature as  being  the mean curvature  of  the strong 
stable horosphere of the geodesic flow in the metric $\wt{g}^{\l}$ (see (\ref{Div-trace})); its regularity in $\l$ can be deduced using the results from \cite{Con, KKPW, LMM} on the Morse
correspondence map between the geodesic flows of two negatively curved spaces  (Proposition \ref{regularity-Div}).

To conclude Theorem \ref{main}, we show the following  on the regularity  in $\l$ of the harmonic measure ${\bf m}^{\l}:=\widetilde{{\bf m}}^{\lambda}|_{SM}$, where $SM:=M_0\times \partial \M$ (see Section \ref{sec-regularities-linear drift} for precise definitions).  

\begin{theo}\label{regularity-harmonic measure} Let  $M$ be a closed connected smooth manifold. For any $g\in \Re^{k}(M)$, $k\geq 3$, there  exist a neighborhood $\mathcal{V}_g$ of $g$ in $\Re^{k}(M)$ and a Banach subspace  $\mathcal{H}_{{\mathtt{b}}}^0$  of continuous functions on $SM$ such that for any $C^k$ curve $\lambda\in
(-1, 1)\mapsto g^{\lambda}\in \mathcal{V}_g$  with $g^0=g$, the mapping $\l\mapsto {\bf m}^{\l}$ is $C^{k-2}$ in the weak topology of the dual space $(\mathcal{H}_{{\mathtt{b}}}^0)^*$. 
\end{theo}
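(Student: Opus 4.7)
The plan is to characterize ${\bf m}^{\lambda}$ as the unique probability measure on $SM$ stationary for the leafwise Laplacian $\mathcal{L}^{\lambda}$ along the strong stable foliation of the $\widetilde{g}^{\lambda}$-geodesic flow, and then to derive its $\lambda$-regularity by a perturbative spectral argument on a Banach space tailored to this operator. First I would transport the problem to a fixed model by using the Morse/structural-stability conjugacy $\Phi_{\lambda}\colon (SM,g^{0})\to(SM,g^{\lambda})$, which is H\"older, conjugates the strong stable foliations, and depends $C^{k-2}$ on $\lambda$ in the mixed ``smooth-along-leaves, H\"older-transversely'' topology by the same results of \cite{LMM, KKPW, Con} already used for Proposition \ref{regularity-Div}. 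Pulling back $\mathcal{L}^{\lambda}$ through $\Phi_{\lambda}$ yields a family $L^{\lambda}$ of second-order operators along the \emph{fixed} foliation, with coefficients in this mixed regularity class and depending $C^{k-2}$ on $\lambda$; correspondingly ${\bf m}^{\lambda}$ transports to a probability $\mu^{\lambda}$ stationary for $L^{\lambda}$.

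Next I would take $\mathcal{H}_{{\mathtt{b}}}^{0}$ to be the Banach space of continuous functions on $SM$ that are H\"older along the stable leaves with a uniform modulus and integrate to zero against $\mu^{0}$, and work with a companion space $\mathcal{D}$ of leafwise-smooth functions on which $L^{0}$ is invertible; such an inverse is available because the leafwise Laplacian has a spectral gap by the theory of harmonic foliated measures (Garnett, Hamenst\"adt, Ledrappier). For $f\in\mathcal{H}_{{\mathtt{b}}}^{0}$ set $u:=(L^{0})^{-1}f\in\mathcal{D}$; the stationarity of $\mu^{\lambda}$ then gives the key identity
\[
\int f\,d\mu^{\lambda}\;=\;\int L^{0}u\,d\mu^{\lambda}\;=\;\int (L^{0}-L^{\lambda})\,u\,d\mu^{\lambda},
\]
which already yields $\mu^{\lambda}\to\mu^{0}$ in $(\mathcal{H}_{{\mathtt{b}}}^{0})^{*}$ at rate $O(\lambda)$. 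The higher derivatives are obtained by iterating the same identity: expanding $(I-(L^{\lambda}-L^{0})(L^{0})^{-1})^{-1}$ in a Neumann-type series paired against $\mu^{0}$ produces explicit continuous functionals on $\mathcal{H}_{{\mathtt{b}}}^{0}$ realizing each successive $\lambda$-derivative, and the series terminates at order $k-2$ under the regularity hypotheses on $L^{\lambda}$.

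The main obstacle is to choose the pair $(\mathcal{H}_{{\mathtt{b}}}^{0},\mathcal{D})$ so that simultaneously (i) $L^{0}\colon\mathcal{D}\to\mathcal{H}_{{\mathtt{b}}}^{0}$ is an isomorphism, (ii) the family $L^{\lambda}$ is $C^{k-2}$ as a map $(-1,1)\to\mathrm{Hom}(\mathcal{D},\mathcal{H}_{{\mathtt{b}}}^{0})$, and (iii) elements of $\mathcal{D}$ retain enough transverse H\"older regularity to be paired uniformly in $\lambda$ with $\mu^{\lambda}$. Because $L^{\lambda}$ is only hypoelliptic (elliptic along leaves, merely H\"older transversely), none of (i)--(iii) can be handled by classical elliptic theory on $SM$; they require combining leafwise elliptic estimates with the transverse H\"older regularity of the foliation inherited from the Anosov dynamics, and then carefully tracking the dependence of the resulting constants on $\lambda$ through the Morse conjugacy.
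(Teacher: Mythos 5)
Your overall strategy --- characterize ${\bf m}^{\lambda}$ as the stationary measure of a perturbed operator and run spectral perturbation theory --- is in the same spirit as the paper, but the specific route you take has two genuine gaps that the paper avoids by working with the time-$T$ heat semigroup rather than the generator.

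First, the Morse-conjugacy step is both unnecessary and harmful. After identifying $\partial\M^{\lambda}$ with $\partial\M$, all the stable foliations $\{\M\times\{\xi\}\}_{\xi}$ coincide as set-theoretic foliations of $SM\cong M_{0}\times\partial\M$; what changes with $\lambda$ is only the Riemannian metric on each leaf (hence the leafwise Laplacian and heat kernel). There is therefore nothing to conjugate. Worse, the Morse map is merely H\"older transversely and does not intertwine the leafwise metric structures, so pulling $\Delta^{\lambda}$ back through it would produce an operator with H\"older (not $C^{k-2}$) coefficient dependence on $\lambda$, defeating the purpose. The paper works directly with ${\rm Q}_{t}^{\lambda}$ defined in (\ref{def-Q-t}), which already acts on the common space of functions on $SM=M_{0}\times\partial\M$.

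Second, the inversion of $L^{0}$ (your point (i)) is not a standard fact and is precisely the obstruction the paper's route is designed to sidestep. The leafwise Laplacian is hypoelliptic with no transverse smoothing, and the leaves are noncompact, so there is no Fredholm theory or spectral gap for the generator on a natural Banach space. The paper instead uses the time-$T$ operator ${\rm Q}_{T}^{\lambda}$, which \emph{is} a bounded operator on $\mathcal{H}_{\mathtt{b}}^{0}$ with a uniform (in $\lambda$) contraction estimate (Proposition \ref{Uni-contraction} and Lemma \ref{UC-step 0}), hence an isolated simple eigenvalue $1$; Kato's theorem then gives analyticity of the spectral projection in ${\rm Q}_{T}^{\lambda}$, and the whole weight falls on showing that $\lambda\mapsto{\rm Q}_{T}^{\lambda}$ is $C^{k-2}$ into $\mathrm{Hom}(\mathcal{H}_{\mathtt{b}}^{0},\mathcal{H}_{\mathtt{b}}^{0})$. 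That regularity is the substance of Theorem \ref{diff-HK-estimations-gen}: quantitative $L^{q}$ bounds for the $\lambda$-derivatives of $\ln p^{\lambda}(T,x,\cdot)$ and their gradients, proved by a stochastic-flow construction in Sections 4--6. Your proposal does not identify this as the crux, and without an analogue of it there is no way to make $\lambda\mapsto L^{\lambda}$ (or its resolvent, or the Neumann series you write) $C^{k-2}$ in any operator norm. Finally, your iterated identity $\int f\,d\mu^{\lambda}=\int(L^{0}-L^{\lambda})(L^{0})^{-1}f\,d\mu^{\lambda}$ pairs the remainder against the unknown $\mu^{\lambda}$, not $\mu^{0}$, so each iteration reintroduces the measure you are trying to control; a uniform-in-$\lambda$ contraction estimate (the paper's Proposition \ref{Uni-contraction}) is needed to close this loop, and is not supplied.
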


The regularity problem in  Theorem \ref{regularity-harmonic measure}  was not discussed in \cite{LS1} for the conformal change case. It  is subtle  since harmonic  measures are not the dual of  linear functionals acting on the space of continuous functions on $SM$.
For each ${g}^{\l}$, it is defined  naturally  a one parameter family  of actions  ${{\rm Q}}_t^{\l}$ $(t\geq 0)$ on continuous functions $f$ on $SM$:   \begin{equation}\label{def-Q-t} {{\rm Q}}_t^{\l}(f)(x,
\xi):=\int_{\scriptscriptstyle{M_0\times \pp\M}} \wt{f}(y, \eta){\bf q}^{\l}(t,
(x, \xi), d(y, \eta)),
\end{equation}
where ${\bf q}^{\l}$ denotes the transition probability of the $g^{\l}$-Brownian motion on the stable leaves of $SM$ and $\wt{f}$ denotes  the $G$-invariant extension of $f$ to $\M\times \partial \M$. Since $(M, g^{\l})$ is negatively curved, it is known (\cite{L}) that each ${{\rm Q}}_T^{\l}$ (for $T$ large) is a contraction  on  some Banach space $\mathcal{H}^\l_{{\mathtt{b}}}$ of
 continuous  functions on $SM$ which are H\"{o}lder continuous with respect to direction changes and this makes ${\bf m}^{\l}$  a fixed point of the dual of ${{\rm Q}}_T^{\l}\big|_{\mathcal{H}^\l_{{\mathtt{b}}}}$.  
The idea to prove Theorem \ref{regularity-harmonic measure} is to use the classical  perturbation result on a linear contraction  in a Banach  space (\cite{Ka}). Hence, it suffices to  find a common Banach space $\mathcal{H}_{{\mathtt{b}}}^0$ and a $T>0$ such that 
\begin{itemize}
\item all ${{\rm Q}}_T^{\l}$, ${\l\in (-1, 1)}$,  are  contractions on $\mathcal{H}_{{\mathtt{b}}}^0$, uniformly in $\l$, and
\item $\l\mapsto {{\rm Q}}_T^{\l}$ is $C^{k-2}$ as maps from $\mathcal{H}_{{\mathtt{b}}}^0$ into itself. 
\end{itemize}
To achieve this, we not only need the regularity of the heat kernels ${\bf q}^{\l}$ in $g^{\l}$, but also need  the estimations on its differentials, which we present  with full generality  as follows.

For each  $C^k$ Riemannian metric $g=(g_{ij}(x))\in \mathcal{M}^k(M)$, set $\|g\|_{C^a}$ ($a\leq k$) for the $C^a$-norm of $g$ which involves  the bounds of $\{g_{ij}(x)\}$ and of their differentials  up to the $a$-th  order.  Each $C^k$ curve  $\lambda\in
(-1, 1)\mapsto g^{\lambda}\in \mathcal{M}^k(M)$ defines a one parameter family  of  tangent vectors $\XX^{\l}=(\XX^{\l}_{ij}(x)) \in C^k(S^2T^*)$.  Let  
\[
(\XX^{\l})^{(0)}=\XX^{\l}, \  (\XX^{\l})^{(l)}=\big((\XX^{\l})^{(l-1)}\big)^{(1)}_{\l}, \ l=1, \cdots, k-1.
\]
All $(\XX^{\l})^{(l)}$ are elements in  $C^k(S^2T^*)$. By $\|(\XX^{\l})^{(l)}\|_{C^a}$ ($a\leq k$), we mean the $C^a$-norm of $(\XX^{\l})^{(l)}$, which involves the bounds of the $(\XX^{\l})^{(l)}_{ij}(x)$ and of their differentials in $x$ up to the $a$-th  order.    

Let $C^{k, \iota}(\M)$   denote  the collection  of $C^k$ functions on $\M$ with H\"{o}lder exponent $\iota$. The set of continuous functions on $\M$ is denoted by $C(\M)$. For any one parameter family  of   real functions on $\M$ or real numbers $\l\mapsto a^{\l}$, let $(a^{\l})^{(i)}_{\l}$ denote the $i$-th differential in $\l$ whenever it exists.

\begin{theo}\label{diff-HK-estimations-gen} For  any $g\in \mathcal{M}^k(M)$, $k\geq 3$,  there exist $\iota\in (0,
1)$ and a neighborhood $\mathcal{V}_g$ of $g$ in $\mathcal{M}^{k}(M)$ such that  for any $C^{k}$ curve  $\lambda\in
(-1, 1)\mapsto g^{\lambda}\in \mathcal{V}_{g}$ with $g^0=g$:
\begin{itemize}
\item[i)] The mappings  $\l\mapsto p^{\l}(T, x, \cdot)$, $x\in \M, T\in \Bbb R_+$,  are $C^{k-2}$  in $C^{k, \iota}(\M)$.
\item[ii)] Let $T_0>0$  and $q\geq 1$. For each $i$, $1\leq i\leq k-2$, $l$, $0\leq l\leq k-2-i$, and $T>T_0$,  there exists ${c}_{\l, (l, i)}(q)$ depending on $(l, i)$, $m, q, T, T_0$, $\|g^{\l}\|_{C^{l+i+2}}$ and $\{\|(\XX^{\l})^{(j)}\|_{C^{l+i-j+1}}\}_{j\leq i-1}$  such that  
\begin{align}\label{esti-p-lam-der-k} \hspace{7mm} 
\left\|\nabla^{(l)}(\ln p^{\l})^{(i)}_{\l}(T, x, \cdot)\right\|_{L^q}\leq {c}_{\l, (l, i)}(q), \end{align}
where the $L^q$-norm is taken with respect to the distribution at $T$ of the $\wt{g}^{\l}$-Brownian motion probability.
\item[iii)]Let $T_0>0$ and $q\geq 1$.   For each $i$, $1\leq i\leq k-2$, and $T>T_0$, there exists ${c}_{\l, (i)}(q)$ depending on $i, m, q, T, T_0$, $\|g^{\l}\|_{C^{i+2}}$ and  $\{\|(\XX^{\l})^{(j)}\|_{C^{i-j+1}}\}_{j\leq i-1}$ such that  
\begin{equation}\label{p-lam-i-p-equ}
\left\|\frac{(p^{\l})^{(i)}_{\l}(T, x, \cdot)}{p^{\l}(T, x, \cdot)}\right\|_{L^q}\leq {c}_{\l, (i)}(q).
\end{equation}
\item[iv)] Let $\wt{f}\in C(\M)$  be  uniformly continuous and bounded. Then for any $T>0$ and $i$, $1\leq i\leq k-2$, the function  $\int_{\M} (p^{\l})^{(i)}_{\l}(T, x, y) \wt{f}(y)\ d{\rm{Vol}}_{\wt{g}^{\l}}(y)$ belongs to $C(\M)$. 
\end{itemize}
\end{theo}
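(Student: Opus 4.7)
All four claims share a common engine: differentiate the heat equation $\partial_t p^{\l} = \Delta^{\l} p^{\l}$ in $\l$, and iterate Duhamel's principle together with classical parabolic Schauder estimates on $(\M, \wt g^{\l})$. Writing $L^{\l, (j)} := (\Delta^{\l})^{(j)}_{\l}$, in local coordinates each $L^{\l, (j)}$ is a linear second order operator whose coefficients are universal polynomials in $g^{\l}$, $(g^{\l})^{-1}$ and their first two spatial derivatives, together with $\{(\XX^{\l})^{(r)}\}_{r\leq j-1}$ and their spatial derivatives. This algebraic observation already encodes the dependence on the norms appearing in (ii)--(iii).

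I would prove (i) by induction on $i$. Formal differentiation of the heat equation yields the non-homogeneous parabolic equation
\begin{equation*}
(\partial_t - \Delta^{\l}) (p^{\l})^{(i)}_{\l} \;=\; \sum_{j=1}^{i} \binom{i}{j}\, L^{\l, (j)} (p^{\l})^{(i-j)}_{\l},
\end{equation*}
to which I would apply Duhamel's formula together with the parabolic Schauder estimate on a fundamental domain, lifted to $\M$ by the $G$-action. Since $g^{\l}\in C^{k,\iota}$ (for an appropriate $\iota$ arising from Sobolev embedding into the H\"older scale), the base case places $p^{\l}(T,x,\cdot)\in C^{k,\iota}(\M)$, and the inductive step, using the already-established regularity of $(p^{\l})^{(i-j)}_{\l}$ for $j\geq 1$ and the smoothness of $\l\mapsto L^{\l, (j)}$, produces a $C^{k-2}$ map $\l\mapsto p^{\l}(T,x,\cdot)$ into $C^{k,\iota}(\M)$.

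For (ii) and (iii) I would exploit the Markov bridge identity: $p^{\l}(s,x,z)\,p^{\l}(T-s,z,y)/p^{\l}(T,x,y)$ is, at intermediate time $s$, the density of the $\wt g^{\l}$-Brownian bridge from $x$ to $y$. Dividing the Duhamel formula above by $p^{\l}(T,x,y)$ therefore rewrites $(p^{\l})^{(i)}_{\l}(T,x,y)/p^{\l}(T,x,y)$ as a bridge expectation of an additive functional $\int_0^{T} F^{\l}_i(s,X_s)\,ds$, where $F^{\l}_i$ is built from $L^{\l, (j)}$ acting on lower-order logarithmic derivatives $(\ln p^{\l})^{(i-j)}_{\l}$. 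Induction on $i$, combined with uniform-in-$\l$ Li--Yau type Gaussian two-sided bounds on $p^{\l}$ (valid for $T>T_0$) to absorb the denominators, and Jensen's inequality to pass from bridge-expectation bounds back to expectations under the diffusion law at time $T$, delivers (iii). Estimate (ii) follows by expanding $(\ln p^{\l})^{(i)}_{\l}$ via Fa\`a di Bruno as a polynomial in $\{(p^{\l})^{(j)}_{\l}/p^{\l}\}_{j\leq i}$ and differentiating $l$ times in space; parabolic gradient estimates, which consume the extra $C^{l+i+2}$-regularity of $g^{\l}$, combined with (iii) yield (\ref{esti-p-lam-der-k}).

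Part (iv) is the softest: the Duhamel identity together with Gaussian upper bounds on $p^{\l}$ shows that $x\mapsto (p^{\l})^{(i)}_{\l}(T,x,y)$ is continuous and dominated in $y$ by a Gaussian, uniformly for $x$ in compacts; the uniform continuity of $\wt f$ and dominated convergence then transfer continuity to $x\mapsto \int_{\M}(p^{\l})^{(i)}_{\l}(T,x,y)\,\wt f(y)\,d\mathrm{Vol}_{\wt g^{\l}}(y)$. The main obstacle I foresee is part (ii): controlling the combinatorial explosion in the Fa\`a di Bruno expansion of $(\ln p^{\l})^{(i)}_{\l}$ and showing that the resulting $L^q$-bound depends only on $\|g^{\l}\|_{C^{l+i+2}}$ and the stated norms of the $\l$-variations $(\XX^{\l})^{(j)}$. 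Absorbing the $1/p^{\l}$ factors without losing extra derivatives of the metric requires a delicate inductive argument, sustained by the uniform (in $\l\in \mathcal{V}_g$) Gaussian lower bound on $p^{\l}$ away from $t=0$.
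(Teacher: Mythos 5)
Your proposal runs into a difficulty that the authors explicitly flag as the reason the classical PDE route fails: when you write
\begin{equation*}
(\partial_t - \Delta^{\l})\,(p^{\l})^{(1)}_{\l} \;=\; L^{\l,(1)}\,p^{\l}
\end{equation*}
and try to invert by Duhamel, the source term $L^{\l,(1)}_y\,p^{\l}(t,x,y)$ is a second-order operator applied to the heat kernel and blows up as $t\to 0$, $y\to x$ on the scale of $t^{-1}$ times the kernel. The Duhamel integral $\int_0^T\!\int_{\M} p^{\l}(T-s,z,y)\,L^{\l,(1)}_z p^{\l}(s,x,z)\,d\mathrm{Vol}^{\l}(z)\,ds$ therefore has a borderline (logarithmic) singularity at $s=0$ that your sketch does not resolve, and on the non-compact cover $\M$ the parabolic Schauder theory does not give uniform control. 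The paper states this in so many words after \eqref{b-argu-1}: they ``tried many classical analysis methods such as parametrix, parabolic Schauder theory, Sobolev spaces'' and could not make them close, precisely because of this short-time singularity and non-compactness. The problem is already present in part (i), not merely in the Fa\`a di Bruno bookkeeping of (ii) you single out as the ``main obstacle.''

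The paper's actual route is a purely probabilistic construction, not a Duhamel iteration. Using the Eells--Elworthy--Malliavin SDE on the orthogonal frame bundle, they write the $\l$-derivative of the endpoint law as an integration by parts against a $C^1$ vector field ${\rm z}_T^{\l,1}(y)$ defined by a conditional expectation $\overline{\E}(\,D\pi(\lf{\rm u}_T\rc^{\l})^{(1)}_{\l}\mid \lf{\rm x}_T\rc^{\l}=y)$ (formula \eqref{z-T-lam-1}). The $C^1$ dependence on $y$ is established not by PDE estimates but by building a one-parameter family of quasi-invariant transformations ${\bf F}^s$ on Brownian path space (Sections 5.2--5.4) that extends any vector-field flow $F^s$ on $\M$ to a flow on paths while keeping the endpoint fixed, so that $\overline{\P}_x^{\l}\circ{\bf F}^s \ll \overline{\P}_x^{\l}$ with controlled Radon--Nikodym density. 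Differentiating through this change of variables yields the smoothness of $y\mapsto{\rm z}_T^{\l,1}(y)$ and, via the integration-by-parts identity \eqref{CIBP}, the existence and a stochastic formula for $(\ln p^{\l})^{(1)}_{\l}$. The Brownian-bridge expectations you correctly anticipate do appear, but as $\E_{\P^{\ast}_{x,y,T}}$ of explicit stochastic integrals (Propositions \ref{cond-nabla-ln-p}, \ref{est-norm-D-j-F-t-cond}, \ref{est-norm-u-t-j}), not of an additive functional $\int_0^T F^{\l}_i(s,X_s)\,ds$ extracted from a Duhamel formula. The parabolic theory (Lemmas \ref{Friedman-lem}--\ref{weak-reg-p-1}) enters only as a bootstrap at the very end: once the probabilistic construction shows $(p^{\l})^{(1)}_{\l}$ exists as a continuous function and satisfies the differentiated heat equation weakly, elliptic regularity upgrades it to $C^{k,\iota}$. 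Parts (ii)--(iv) then follow inductively from the explicit conditional-expectation formulas, not from Fa\`a di Bruno applied to a PDE solution. So while your plan identifies the right objects (bridges, Gaussian bounds, induction on $i$), the engine driving the paper's proof is different, and for a reason the authors make explicit.
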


A priori, the derivative in $\l$ of $p^{\l}(t, x, y)$, if it exists, satisfies the equation
\begin{align}\label{b-argu-1}
\left\{ \begin{array}{lll}
\frac{\partial}{\partial t}q(t,x, y)\!&=&\!\Delta^{\l}_{y} q(t, x, y)+(\Delta^{\l}_y)^{(1)}_{\l}p^{\l}(t, x, y),\\
q(0, x, y)\!&\equiv & \! 0. 
\end{array}
 \right.
\end{align}
Equation (\ref{b-argu-1}) always has a solution in the distribution sense. Our Theorem \ref{diff-HK-estimations-gen} is that this distribution is given by a function $(p^{\l})^{(1)}_{\l}(t, x, \cdot)\in C^{k, \iota}(\M)$ and that its gradients satisfy (\ref{esti-p-lam-der-k}). This does not follow directly from (\ref{b-argu-1}) since $(\Delta^{\l}_y)^{(1)}_{\l}p^{\l}(t, x, y)$ has singularities as $t$ goes to zero and $y=x$. This type of singularities was not handled in the literature and this difficulty accumulates when we  consider $\{(p^{\l})^{(i)}_{\l}(T, x, \cdot)\}_{i\geq 2}$.  Moreover the universal cover is non-compact.    We are not successful to give  a more direct proof after trying many classical analysis  methods such as parametrix, parabolic Schauder theory, Sobolev spaces,  etc. (cf. \cite{Fr, Ma, Ro}).

To get an explicit expression of the solution, we use the stochastic calculus representations of the heat kernel and the Brownian motion.  Namely, we find a $C^1$ vector field ${\rm z}_T^{\l, 1}(y)$  on $\M$ (see (\ref{z-T-lam-1})) such that, for any smooth $f$ on $\M$ with compact support, 
\begin{align*}
\left(\int_{\M}f(y)p^{\l}(T, x, y)\ d{\rm{Vol}}^{\l}(y)\right)^{(1)}_{\l} =\int_{\M}\big\langle\nabla_y^{\l} f(y),  p^{\l}(T, x, y){\rm z}_T^{\l, 1}(y)\big\rangle_{\l}\ d{\rm Vol}^{\l}(y).
\end{align*}
So, using the classical integration by parts formula, we obtain 
\begin{align}
&\left(\int_{\M}f(y)p^{\l}(T, x, y)\ d{\rm{Vol}}^{\l}(y)\right)^{(1)}_{\l}\notag\\
\label{CIBP}&\ \ \  =-\int_{\M} f(y)\left({\rm Div}^{\l}{\rm z}_T^{\l, 1}(y)+\big\langle {\rm z}_T^{\l, 1}(y), \nabla^{\l}\ln p^{\l}(T, x, y) \big\rangle_{\l}\right)p^{\l}(T, x, y) \ d{\rm Vol}^{\l}(y). 
\end{align}
In the same way, we will find $C^{1}$ vector fields $\{{\rm z}_T^{\l, j}(y)\}_{j\leq i\leq k-2}$  (see (\ref{z-T-lam-2}) and (\ref{z-T-lam-j})),  which  will enter the formulas of $(\ln p^{\l})^{(i)}_{\l}$ and the gradients $\nabla^{(l)}(\ln p^{\l})^{(i)}_{\l}$.  

 It is not hard to obtain  a stochastic expression   for ${\rm z}_T^{\l, 1}$ using the  Eells-Elworthy-Malliavin construction of the Brownian motion on a manifold. But the  associated stochastic differential equation of the Brownian motion  in the orthogonal frame bundle is degenerate.  So the main technical difficulty is that the $C^1$ regularity of  ${\rm z}_T^{\l, 1}$ does not follow directly from the stochastic pathwise integration by parts theory or the stochastic functional methods for the calculus of variations (cf. \cite{Bi1, Bi2, D1, D94, M76, M78, M78-2, W}). Similar difficulties will also arise in obtaining the stochastic expressions of $\{{\rm z}_T^{\l, j}(y)\}_{2\leq j< i\leq k-2}$ and in using these expressions to identify ${\rm z}_T^{\l, i}$.  However, since we are mainly interested in the behaviors of the projections of the various stochastic objects on the manifold,  we  can overcome these difficulties by  a constructive method using some ideas from \cite{CE, D2, Hs1, M}.  Most computations to guarantee the constructions will appear in Chapter \ref{Sec-BM-SF} for the neatness of the paper.  It is also for the introduction of  the beautiful ideas from \cite{CE, M} to treat the Brownian motion as a stochastic analogue of the geodesic flow (see Section \ref{BMM-flow} for details).   This dynamical point of view will be very helpful in understanding our  constructive proof concerning the  smoothness of all the vector fields $\{{\rm z}_T^{\l, j}(y)\}_{1\leq j\leq k-2}$. 
 
Note that the stochastic flow (for  the Brownian motion) always preserves the Liouville measure (\cite{CE}).  In analogy with  Katok's conjecture, one interesting question is when will the entropy of the Liouville measure be equal to  the topological entropy for this flow?

In showing Theorem \ref{main}, we  also obtain the formula (\ref{l-derivative})  (see the formula (\ref{Great}) for a more precise form)  for  the first order differential of the  linear drift  under one-parameter  family of  deformations of negative curved metrics, which  implies  the following two theorems. 

\begin{theo}\label{Critical}(see Corollary \ref{prop-thm1.4})  Let  $M$ be a closed connected smooth manifold. Let $g\in \Re^3(M)$ be a negatively curved locally symmetric metric.  Then for any $C^3$ curve $\lambda\in (-1, 1)\mapsto g^{\lambda}\in \Re^3(M)$ with $g^0=g$ and constant volume,
\[
(\ell_{\l})'_0:=(d\ell_{\lambda}/d\lambda)|_{\lambda=0}=0.
\]
\end{theo}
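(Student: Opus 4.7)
The plan is to apply the explicit first-variation formula (\ref{l-derivative})--(\ref{Great}) for $\ell_\lambda$ to a curve $\lambda\mapsto g^\lambda$ with $g^0=g$ locally symmetric and $\mathrm{Vol}(M,g^\lambda)$ constant, and to check that both terms in the natural decomposition vanish. Differentiating (\ref{ell-lambda}) at $\lambda=0$ gives
\[
(\ell_\lambda)'_0 \;=\; -\int_{M_0\times\partial\wt M}(\mathrm{Div}^\lambda X^\lambda)'_0\, d\wt{\bf m}^0 \;-\; \int_{M_0\times\partial\wt M}\mathrm{Div}^0 X^0\, d(\wt{\bf m}^\lambda)'_0,
\]
which I analyse separately.

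For the second (measure--variation) term, I would use the defining feature of a rank-one symmetric space of negative curvature: all stable horospheres have the same constant mean curvature, equal to the volume entropy $\upsilon$; equivalently, $-\mathrm{Div}^0 X^0 \equiv \upsilon$ as a function on $\wt M\times\partial\wt M$. The scalar $\upsilon$ therefore factors out, reducing the term to $\upsilon\cdot(\wt{\bf m}^\lambda(\mathbf 1))'_0$. Since $d\wt{\bf m}^\lambda = dx^\lambda\otimes d\wt{\bf m}^\lambda_x$ with $\wt{\bf m}^\lambda_x$ a probability on $\partial\wt M$ and $dx^\lambda$ proportional to $d\mathrm{Vol}_{\wt g^\lambda}$ on the fixed fundamental domain $M_0$, the total mass $\wt{\bf m}^\lambda(\mathbf 1)$ is a fixed multiple of $\mathrm{Vol}(M,g^\lambda)$, whose derivative vanishes by hypothesis.

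For the first (integrand--variation) term, the crucial input is that at a locally symmetric negatively curved $g^0$ the hitting measure $\wt{\bf m}^0$ projects to $SM$ as a constant multiple of the Liouville measure and is in particular invariant under the geodesic flow $\phi_t^0$. Unpacking (\ref{Great}) at $\lambda=0$ writes $-(\mathrm{Div}^\lambda X^\lambda)'_0$ as the sum of a $\phi_t^0$-coboundary (whose integral against the flow-invariant measure $\wt{\bf m}^0$ is zero) plus a tensorial pairing of the shape $\int_{SM}\kappa_0(v)\,\XX^0(v,v)\,d\wt{\bf m}^0$, where $\XX^0=(g^\lambda)'_0$ and $\kappa_0$ is a scalar built from curvature and horospherical data of $g^0$. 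On a rank-one symmetric model, the isotropy group acts transitively on unit tangent vectors, so $\kappa_0$ is a universal constant of the symmetric structure, and Schur-averaging $\XX^0(v,v)$ against the rotation-invariant Liouville measure on each fibre $S_xM$ converts it into a constant multiple of $\mathrm{tr}_{g^0}\XX^0(x)$. The first term then collapses to a constant times $\int_M \mathrm{tr}_{g^0}\XX^0\, d\mathrm{Vol}_{g^0}=2(\mathrm{Vol}(M,g^\lambda))'_0=0$.

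The main obstacle is the initial step of the analysis of the first term: isolating the $\phi_t^0$-coboundary inside $(\mathrm{Div}^\lambda X^\lambda)'_0$ and verifying that the remaining piece really has the simple tensorial form $\kappa_0\,\XX^0(v,v)$. This is essentially the content of the formula (\ref{Great}) derived earlier in the paper; once that formula is in hand, specialisation to a locally symmetric $g^0$ together with the flow-invariance of Liouville measure and a short rank-one averaging argument yield the vanishing, with the constant-volume hypothesis absorbing the trace of $\XX^0$.
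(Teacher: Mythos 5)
Your proposal takes a genuinely different route from the paper's proof. The paper proves this (Corollary \ref{prop-thm1.4}) by a pinching argument that bypasses the explicit variation formula entirely: one always has $\ell_\lambda \leq \upsilon_\lambda$, and for the locally symmetric $g^0$ one has equality $\ell_0 = \upsilon_0$; by Besson--Courtois--Gallot, $\lambda = 0$ is a minimum of $\lambda \mapsto \upsilon_\lambda$ among constant-volume deformations, so $(\upsilon_\lambda)'_0 = 0$; and since $\lambda \mapsto \ell_\lambda - \upsilon_\lambda$ has an interior maximum at $\lambda = 0$, with both functions differentiable (by Theorem \ref{main} and \cite{KKPW}), one gets $(\ell_\lambda)'_0 = (\upsilon_\lambda)'_0 = 0$. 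What this buys is complete independence from the fine structure of (\ref{Great}); what it costs is that it gives no local information. Your direct verification via (\ref{Great}) --- the approach of Remark \ref{cri-remark} --- can be made to work, but as written it has a gap.

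The gap is the $-\mathrm{Div}\,Y$ term in (\ref{Great}). You claim that at $\lambda = 0$ the integrand-variation decomposes as ``a $\phi_t^0$-coboundary plus a tensorial pairing $\kappa_0(v)\,\XX(v,v)$,'' and invoke (\ref{Great}) as the step that delivers this shape. It doesn't. In (\ref{Great}), the first and third summands ($-\tfrac12\langle\nabla\mathrm{trace}\,\XX,\overline{X}\rangle$ and $\tfrac12\langle\nabla(\XX(\overline{X},\overline{X})),\overline{X}\rangle$) are coboundaries, and the second ($\tfrac12\XX(\overline{X},\overline{X})\,\mathrm{Div}\,\overline{X}$) has the tensorial form you describe once $\mathrm{Div}\,\overline{X}$ is constant. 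But the fourth summand $-\mathrm{Div}\,Y$ is neither: $Y(v)$ is the vertical part of $\int_0^\infty (D{\bf\Phi}_s)^{-1}\Upsilon({\bf\Phi}_s(v))^{\rm u}\,ds$, a nonlocal object. To see that $\int\mathrm{Div}\,Y\,d{\bf m} = 0$ at the symmetric point, the paper uses (\ref{ln-Martin}) to pass to $-\int\langle Y,\nabla_y\ln k(x,y,\xi)|_{y=x}\rangle\,d{\bf m}$, observes that for a symmetric $g^0$ the Martin kernel is given by the Busemann function so that $\nabla_y\ln k|_{y=x} = \ell\,\overline{X}(v)$, and uses the pointwise orthogonality $Y(v)\perp\overline{X}(v)$. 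This is an argument about the Martin boundary, not a coboundary or an isotropy/Schur-averaging argument, and it cannot be absorbed into your stated shape. Your proof needs this ingredient.

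A smaller point: you attribute the vanishing of the measure-variation term to the constant-volume hypothesis via ``the total mass of $\wt{\bf m}^\lambda$ is a fixed multiple of $\mathrm{Vol}(M,g^\lambda)$.'' In the paper, ${\bf m}^\lambda$ is normalized to be a probability for each $\lambda$, and the constant functions lie in $\mathcal{H}_{\mathtt{b}}^0$, so $\int\mathbf 1\,d({\bf m}^\lambda)'_0 = 0$ holds unconditionally. The constant-volume hypothesis is genuinely used only once, in your ``Schur-averaging'' step for the $\XX(\overline{X},\overline{X})\,\mathrm{Div}\,\overline{X}$ term; your conclusion for the measure term is right but the stated reason is not.
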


\begin{theo}\label{linear} (see Theorem \ref{Equiv-Thm1.5})
There is a linear functional $\LL$ on $C^k(S^2T^*)$ such that for all $C^3$ curve $\lambda\in (-1, 1)\mapsto g^{\lambda}\in \Re^3(M)$ with $g^0=g$ and constant volume, \[ (\ell_{\l})_0' \; = \; \LL (\XX) .\]
\end{theo}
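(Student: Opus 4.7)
The plan is to derive Theorem \ref{linear} directly from the explicit formula (\ref{l-derivative}) for $(\ell_\lambda)'_0$ already obtained in the course of proving Theorem \ref{main}, by verifying that every ingredient entering that formula depends linearly and continuously on the tangent vector $\XX := (dg^\lambda/d\lambda)|_{\lambda = 0} \in C^k(S^2T^*)$. Since the formula expresses $(\ell_\lambda)'_0$ in terms only of $g$ and $\XX$ (not of higher jets of the curve), this yields a well-defined functional $\LL(\XX) := (\ell_\lambda)'_0$, and the content of Theorem \ref{linear} is to show that this functional is linear and continuous.

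Starting from
\[
\ell_\lambda = -\int_{M_0 \times \partial \M} \mathrm{Div}^\lambda X^\lambda \, d\widetilde{\bf m}^\lambda
\]
and applying the product rule at $\lambda = 0$, I would split $(\ell_\lambda)'_0$ into three pieces: (A) the integral of the pointwise variation $(\mathrm{Div}^\lambda X^\lambda)'_0$ against the unperturbed measure $d\widetilde{\bf m}^0$; (B) the contribution from the variation of the base volume factor $dx^\lambda$, which under the constant-volume normalization is a multiple of $\mathrm{tr}_g \XX$; and (C) the contribution from $(\widetilde{\bf m}^\lambda_x)'_0$ integrated against $-\mathrm{Div}^0 X^0$. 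For (A), the unit vector field $X^\lambda$ varies through the Morse correspondence map, whose first variation is linear in $\XX$ by Proposition \ref{regularity-Div}; the divergence operator varies linearly in the metric; and the product rule applied to the horospherical mean-curvature expression yields a second-order linear differential expression in $\XX$ with coefficients determined by $g$. For (B), $\sqrt{\det g^\lambda}$ contributes $\tfrac12 \mathrm{tr}_g \XX$ at first order. For (C), the perturbation of the fixed point of the contraction $\mathrm{Q}_T^\lambda$ on the common Banach space $\mathcal{H}^0_{\mathtt b}$ is obtained from Kato-type perturbation theory as a reduced resolvent of $(\mathrm{Q}_T^0)^*$ applied to $((\mathrm{Q}_T^\lambda)^*)'_0 \widetilde{\bf m}^0$, and this last object depends linearly on $\XX$ because it is built from the first heat-kernel derivative $(p^\lambda)'_0$, which is linear in $\XX$ by Theorem \ref{diff-HK-estimations-gen}.

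Assembling (A), (B), (C) exhibits $(\ell_\lambda)'_0$ as a sum of linear functionals of $\XX$, which together define the desired $\LL$; continuity on $C^k(S^2T^*)$ follows from the uniform estimates of Theorem \ref{diff-HK-estimations-gen} combined with the $C^{k-2}$ regularity supplied by Theorem \ref{regularity-harmonic measure} and Proposition \ref{regularity-Div}. The main obstacle is piece (C): the invariant measure of a perturbed Markov operator a priori depends nonlinearly on the perturbation through its implicit fixed-point equation, so linearity in $\XX$ is not automatic. It is recovered here precisely because we only require the first-order variation, and the reduced-resolvent identity combined with the linear dependence of the operator variation $(\mathrm{Q}_T^\lambda)'_0$ on $\XX$ reduces the question to the linear-in-$\XX$ nature of a single heat-kernel variation, which is exactly what Theorem \ref{diff-HK-estimations-gen} supplies.
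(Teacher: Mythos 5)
Your plan---to read off linearity of $(\ell_\lambda)'_0$ in $\XX$ term-by-term from formula (\ref{l-derivative})---is the right one, and your decomposition into integrand, volume, and boundary-measure contributions is a reasonable alternative to the paper's split. However, there are two concrete gaps.

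First, in piece (A) you assert that after the product rule the variation of the horospherical mean curvature $\mathrm{Div}^\lambda\overline{X}^\lambda$ ``yields a second-order linear differential expression in $\XX$ with coefficients determined by $g$.'' That is false: $\mathrm{Div}\,\overline{X}(x,\xi)$ is the trace of the stable tensor $S'_{\bf v}(0)$, which is a nonlocal functional of the metric (it is defined by a limit along the geodesic ray, equivalently via the Anosov splitting), and so its first variation cannot be a local differential operator applied to $\XX$. The paper's proof of Theorem \ref{Equiv-Thm1.5} does real work here: the non-tangential part of $(\overline{X}^\lambda)'_0$ is the vector field $Y({\bf v})$, defined as the vertical part of $\int_0^\infty (D{\bf\Phi}_s)^{-1}\Upsilon({\bf\Phi}_s({\bf v}))^{\rm u}\,ds$ where $\Upsilon$ depends linearly (and locally) on $\XX$. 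Linearity of $Y$ in $\XX$ follows from linearity of $\Upsilon$, and boundedness of $\int\mathrm{Div}\,Y\,d{\bf m}$ follows from the integration-by-parts identity $\int\mathrm{Div}\,Y\,d{\bf m}=-\int\langle Y,\nabla_y\ln k(x,y,\xi)|_{y=x}\rangle\,d{\bf m}$ together with the Anosov contraction $\|(D{\bf\Phi}_s)^{-1}|_{\rm u}\|\le Ce^{-\tau s}$. None of this is supplied by Proposition \ref{regularity-Div}, which only gives $C^{k-2}$ (or $C^{k-3}$) regularity of the Morse correspondence along the curve---differentiability, not linearity of the derivative in $\XX$. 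Citing that proposition for ``the first variation is linear in $\XX$'' is a non sequitur.

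Second, in piece (C) you correctly identify the obstacle (the fixed-point equation for the harmonic measure is a priori nonlinear in the perturbation) and propose the reduced-resolvent formula, which reduces linearity of $({\bf m}^\lambda)'_0$ to linearity of $((\mathrm{Q}_T^\lambda))'_0$, hence of $(p^\lambda)'_0$, in $\XX$. But Theorem \ref{diff-HK-estimations-gen} gives only differentiability along curves and $L^q$ estimates; it does not assert linearity of $(p^\lambda)'_0$ in $\XX$. One would have to extract that from the explicit stochastic representation in Section \ref{sec5}, which is possible but is additional work you have not done. The paper sidesteps this issue entirely with a cleaner device: differentiating the harmonic-measure characterization $\int\Delta^\lambda f\,d{\bf m}^\lambda=0$ gives $\int(\Delta^\lambda)'_0 f\,d{\bf m}+\int\Delta f\,d({\bf m}^\lambda)'_0=0$, which applied to the auxiliary function $u_0$ (satisfying $\Delta u_0=-\mathrm{Div}\,\overline{X}-\ell$) yields $\int\mathrm{Div}\,\overline{X}\,d({\bf m}^\lambda)'_0=\int(\Delta^\lambda)'_0 u_0\,d{\bf m}$, and $(\Delta^\lambda)'_0$ is an explicit second-order operator whose coefficients are manifestly linear first-order expressions in $\XX$. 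This transfers linearity from the operator variation (where it is obvious) to the measure variation (where it was not), with no heat-kernel input at all. If you keep your reduced-resolvent route, you must close the gap about $(p^\lambda)'_0$; otherwise the paper's $u_0$-trick is the more economical path.
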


A similar approach yields the first order differentiability in $\l$ of the stochastic entropy $h^{\l}$ of the Brownian motion on $(\M, \wt{g}^{\l})$. 

\begin{theo}\label{main-h}Let  $M$ be a closed connected smooth manifold. For any $C^{3}$ curve $\lambda\in (-1, 1)\mapsto
g^{\lambda}\in \Re^{3}(M)$,  the function $\lambda\mapsto
 h^{\lambda}$ is $C^{1}$ differentiable and is critical at $\l=0$ when $g^0$ is locally symmetric. Moreover, there is a linear functional $\mathcal{K}$ on $C^k(S^2T^*)$ such that
 \[
 (h^{\l})'_0:=(dh^{\lambda}/d\lambda)|_{\lambda=0}=\mathcal{K}(\XX). 
 \]
\end{theo}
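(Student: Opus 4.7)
The plan is to follow the strategy of Theorem \ref{main}, adapted to the stochastic entropy. First I would establish a boundary representation for $h^\lambda$ analogous to (\ref{ell-lambda}). In negative curvature, Kaimanovich's formula yields
\begin{equation*}
h^\lambda \;=\; \int_{M_0\times \partial\M}\|\nabla^\lambda \ln k^\lambda(\cdot,\xi)\|_{\wt g^\lambda}^2(x)\, d\wt{{\bf m}}^\lambda(x,\xi),
\end{equation*}
where $k^\lambda(x,\xi)$ denotes the Poisson kernel of $(\M,\wt g^\lambda)$. The integrand $\|\nabla^\lambda\ln k^\lambda\|_{\wt g^\lambda}^2$ is a geometric quantity attached to the stable horospheres, playing the role for $h^\lambda$ analogous to the role played by the horospherical mean curvature $-{\rm Div}^\lambda X^\lambda$ for $\ell^\lambda$.

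Given this representation, $C^1$ differentiability of $\lambda\mapsto h^\lambda$ reduces to two ingredients. The first is the $C^1$ regularity in $\lambda$ of the integrand as an element of the predual Banach space $\mathcal{H}_{\mathtt{b}}^0$ of Theorem \ref{regularity-harmonic measure}; the second is the $C^1$ regularity of $\lambda\mapsto \wt{{\bf m}}^\lambda$ in the dual topology, which is exactly Theorem \ref{regularity-harmonic measure} specialized to $k=3$. For the integrand, I would write $\nabla^\lambda\ln k^\lambda(x,\xi)$ in terms of the $\wt g^\lambda$-geodesic spray $X^\lambda$ and the horospherical derivatives of the Martin kernel; the former has the $\lambda$-regularity supplied by Proposition \ref{regularity-Div} via the Morse correspondence of \cite{Con,KKPW,LMM}, while the latter is controlled by realizing $\ln k^\lambda$ as a long-time limit of normalized $\ln p^\lambda$ and applying the pointwise derivative estimates of Theorem \ref{diff-HK-estimations-gen}. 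Pairing these two regularity statements yields the $C^1$ differentiability of $h^\lambda$ together with a formula
\begin{equation*}
(h^\lambda)'_0 \;=\; \int_{M_0\times\partial\M}\!\big(\|\nabla^\lambda\ln k^\lambda\|_{\wt g^\lambda}^2\big)^{(1)}_0\, d\wt{{\bf m}}^0 \;+\; \Big(\int_{M_0\times\partial\M}\!\|\nabla\ln k\|^2\, d\wt{{\bf m}}^\lambda\Big)^{(1)}_0,
\end{equation*}
in which both summands depend linearly on the tangent vector $\mathcal{X}$ — the first through $(\Delta^\lambda)^{(1)}_\lambda$ acting on Poisson kernels, the second through the infinitesimal perturbation of the operators ${\rm Q}_T^\lambda$. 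This linearity furnishes the functional $\mathcal{K}$.

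For criticality at a locally symmetric $g^0$, I would use a sandwich argument based on (\ref{ineq}). In the locally symmetric case $\ln k^0(x,\xi)=-\upsilon\, b_\xi^0(x)$ up to normalization, giving $\|\nabla^0\ln k^0\|_0^2\equiv \upsilon^2$ and $h^0=\upsilon^2=(\ell^0)^2$. For any $C^3$ volume-preserving deformation, the non-negative $C^1$ function $h^\lambda-(\ell^\lambda)^2$ vanishes at $\lambda=0$, so its derivative there is zero, and combined with Theorem \ref{Critical} this forces $(h^\lambda)'_0=2\ell^0\cdot (\ell^\lambda)'_0=0$. The main obstacle I anticipate is the first ingredient in the previous paragraph: controlling $\nabla^\lambda\ln k^\lambda(\cdot,\xi)$ and its $\lambda$-derivative uniformly in $\xi\in\partial\M$ in a norm fine enough to pair with the $(\mathcal{H}_{\mathtt{b}}^0)^*$-regularity of $\wt{{\bf m}}^\lambda$. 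Because the Poisson kernel is defined through an infinite-time boundary limit, one must interweave the short-time singular estimates of Theorem \ref{diff-HK-estimations-gen} with the long-time exponential contraction encoded in ${\rm Q}_T^\lambda$ in order to transfer pathwise derivative bounds into well-defined boundary quantities.
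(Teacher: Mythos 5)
Your criticality argument (the sandwich $h^\l\geq(\ell_\l)^2$ with equality at a locally symmetric $g^0$, combined with Theorem \ref{Critical}) is sound and is essentially the alternative the paper itself records in a remark after the proof (there via $h^\l\leq(\upsilon^\l)^2$ and \cite{BCG}, \cite{KKPW}); but it is contingent on the $C^1$ differentiability of $\l\mapsto h^\l$, and that is where your proposal has a genuine gap.

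Your reduction rests on the claim that $\l\mapsto \|\nabla^\l\ln k^\l(\cdot,\xi)\|^2_{\wt g^\l}$ is $C^1$ into $\mathcal{H}_{\mathtt b}^0$, i.e.\ that the Martin kernel $k^\l$ can be differentiated in $\l$ (uniformly in $\xi$, in a H\"older norm on $SM$). This is precisely what is \emph{not} available: the paper states explicitly that the $\l$-regularity of the Martin kernel is a delicate problem deferred to a subsequent work, and the estimates of Theorem \ref{diff-HK-estimations-gen} cannot be "interwoven with long-time contraction" in any routine way, since the constants ${c}_{\l,(l,i)}(q)$ grow with $T$ (exponentially in the unconditioned bounds) and there is no uniformity as $T\to\infty$ that would let you differentiate the Green function, let alone the boundary limit $k^\l(x,y,\xi)=\lim_{z\to\xi}{\bf G}^\l(y,z)/{\bf G}^\l(x,z)$, term by term. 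Even at fixed $\l$, placing second-order derivatives of $\ln k$ into $\mathcal{H}_{\mathtt b}^0$ requires an Anderson--Schoen type argument (the paper's Lemma \ref{Holder-2nd-k}); your approach would need the harder, $\l$-dependent analogue for $\nabla^\l\ln k^\l$ together with its $\l$-derivative. The paper's proof is structured to avoid this entirely: it writes $h^\l-h=(h^\l-h^{\l,0})+(h^{\l,0}-h)$ with $h^{\l,0}=-\int\Delta^\l_y\ln k^0(x,y,\xi)|_{y=x}\,d{\bf m}^\l$, so that in the main term only the Laplacian and the harmonic measure vary (handled by Lemma \ref{Holder-2nd-k} and Theorem \ref{regularity-harmonic measure}, yielding $\mathcal{K}(\XX)$), while the remainder $(h^\l-h^{\l,0})/\l$ is shown to tend to $0$ by a separate probabilistic argument: the variational characterization $h^{\l,0}=\inf_{s>0}\overline h^{\l,0}(s)$ gives an upper bound via $-\ln a\leq a^{-1}-1$, and the lower bound comes from a Markov-chain/ergodic-theorem computation whose $\l$-derivative vanishes because $\int p^\l(1,y,z)\,d{\rm Vol}^\l(z)\equiv 1$, using only the finite-time ($T=1$) estimates of Theorem \ref{diff-HK-estimations-gen}. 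Unless you supply a proof of the $\l$-differentiability of $\nabla^\l\ln k^\l$ in a suitable H\"older class (or reorganize the argument as the paper does), the central step of your proposal does not go through.
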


An explicit formula of $\mathcal{K}(\XX)$ is given in Theorem  \ref{deriv-entr-for}, where the  infinitesimals of the metric changes appear in a neat way. Hence  an interesting question is to characterize the critical points of the entropies of harmonic measures.  In our approach, the higher order regularity of $\l\mapsto h^{\l}$ and the analysis on  the differentials  would depend on understanding the regularity of the Martin kernel, which is a delicate problem in the manifold setting. This will be treated in a subsequent paper (\cite{LS-n}).   

Note that  the Hausdorff dimension of the distribution of $\wt{{\bf m}}^{\l}_x$, denoted by ${\rm dim}_{{\rm H}}\wt{{\bf m}}^{\l}_x$,  is given by $h^{\l}/(\varkappa\ell^{\l})$ for a fixed  number $\varkappa$ associated with the distance function on the boundary (see (\ref{distance-bd})) (\cite{L1}). The following is a corollary of Theorem \ref{main} and Theorem \ref{main-h}. 

\begin{cor}Let  $M$ be a closed connected smooth manifold. For any $C^{3}$ curve $\lambda\in (-1, 1)\mapsto
g^{\lambda}\in \Re^{3}(M)$ and all $x\in \M$,  the function   $\lambda\mapsto {\rm dim}_{{\rm H}} \wt{{\bf m}}^{\l}_x$ is $C^1$ differentiable.  
\end{cor}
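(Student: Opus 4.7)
The plan is to derive the corollary directly from Theorem \ref{main} and Theorem \ref{main-h} via the identity
\[
{\rm dim}_{{\rm H}} \wt{{\bf m}}^{\l}_x = \frac{h^{\l}}{\varkappa\,\ell^{\l}}
\]
from \cite{L1}, stated in the paragraph immediately preceding the corollary. Since the curve $\l \mapsto g^{\l}$ is of class $C^3$ and takes values in $\Re^3(M)$, Theorem \ref{main} applied with $k=3$ yields that $\l \mapsto \ell^{\l}$ is of class $C^{k-2} = C^1$ on $(-1,1)$, and Theorem \ref{main-h} yields that $\l \mapsto h^{\l}$ is also $C^1$ on the same interval. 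So the task reduces to verifying that the ratio on the right-hand side is well-defined and $C^1$ in $\l$.

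By \cite[Theorem 10]{K1}, cited in the introduction, the drift $\ell^{\l}$ is strictly positive for each $\l\in(-1,1)$. Combined with the $C^1$ (hence continuous) dependence of $\ell^{\l}$ on $\l$, this implies that on any compact subinterval $[-r,r]\subset(-1,1)$ the function $\l\mapsto \ell^{\l}$ is bounded below by a positive constant. Consequently, $\l\mapsto 1/\ell^{\l}$ is $C^1$, and therefore so is the quotient $\l \mapsto h^{\l}/\ell^{\l}$.

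It remains to handle the factor $\varkappa$, which, per (\ref{distance-bd}), is the normalization constant associated to the boundary distance induced by $g^{\l}$. If $\varkappa$ can be chosen independent of $\l$ (by a uniform choice of boundary metric along the curve), then the corollary is immediate. Otherwise, one observes that $\varkappa=\varkappa^{\l}$ is produced from curvature-controlled data of $g^{\l}$ and therefore inherits $C^1$-regularity in $\l$ from the $C^3$ deformation of metrics, while remaining bounded away from $0$; taking the product of this with the $C^1$ function $h^{\l}/\ell^{\l}$ gives the desired $C^1$ regularity of $\l\mapsto {\rm dim}_{{\rm H}} \wt{{\bf m}}^{\l}_x$. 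The only substantive point is this last verification of the $\l$-dependence of $\varkappa$; once it is noted, the corollary follows as a bookkeeping consequence of the two main theorems.
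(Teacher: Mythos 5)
Your proof is correct and coincides with the paper's intended argument: the corollary is presented as an immediate consequence of Theorems \ref{main} and \ref{main-h} together with Ledrappier's formula ${\rm dim}_{{\rm H}}\wt{{\bf m}}^{\l}_x=h^{\l}/(\varkappa\ell^{\l})$, and you supply exactly the needed quotient argument, including the strict positivity of $\ell^{\l}$ from \cite[Theorem 10]{K1}. Your third paragraph hedging on a possible $\l$-dependence of $\varkappa$ is unnecessary: the paper explicitly introduces $\varkappa$ as a fixed constant (one simply picks $\varkappa>0$ small enough that (\ref{distance-bd}) defines a distance for every metric near $g^0$, which is possible by the uniform hyperbolicity of the family), so the simpler branch of your dichotomy is the correct one.
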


 If we switch from a negatively curved manifold  to a finitely generated hyperbolic group $G$, we do not have to control any more the subtle influence of the changes of soft geometric structures. But diffusions live on in the form of random walks, so the regularity problem of random dynamics with respect to probabilities still has its  interest.  More precisely,  let $\mathcal{N}$ be the set of probability measures with support a fixed finite subset $G_0\subset G$  which generates $G$ as a semigroup.  Then $\mathcal{N}$ is  an open finite dimensional simplex, in particular, it has a natural real analytic structure.  Each element $\mu\in \mathcal{N}$ defines a random walk  on $G$ by convolutions $\{\mu^{(n)}\}_{n\in \Bbb N}$. The  linear drift  and  the entropy of $\mu$  are defined by 
$$ \ell _\mu  := \lim\limits _{n \to +\infty } \frac{1}{n} \sum _{\gamma \in G} |\gamma| \mu^{(n)} (\gamma) , \quad h_\mu  := \lim\limits _{n \to +\infty }  - \frac{1}{n}  \sum _{\gamma \in G}  \mu^{(n)} (\gamma) \log  \mu^{(n)} (\gamma)   ,$$ where, for $\gamma \in G , |\gamma| $ denotes the word length of $\gamma$.  In this setting, much progress has  been achieved in understanding  the  regularity  of  $\ell_{\mu}, h_{\mu}$ with respect to $\mu$:  the continuity property was considered by Erschler and Kaimanovich (\cite{EK}), the Lipschitz property was shown by  one of the authors (\cite{L13}), the differentiability under one parameter family of differentiable curve of $\mu$  is due to Mathieu (\cite{Math}),  and,  more recently, the real analytic property is shown by Gou\"ezel (\cite{Go}).  (See \cite{Go} for  the whole history and other previous results in various settings.) In the same flavor of the rigidity problems in the manifold case, a basic question is what can we say about the group structure using our knowledge of the dynamical quantities $\ell_{\mu}$ and $h_{\mu}$?   We don't have an  answer to this general question, but we can mention one result which is related to $(b)$ of (\ref{ineq}) in the above group setting: in \cite{GMM}, Gou\"{e}zel, Math\'{e}us and Maucourant show that if $G$ is not virtually free, then there is $c<1$ such that for any symmetric measure $\mu\in \mathcal{N}$, $h_{\mu}\leq c\ell_{\mu}\upsilon$, where $\upsilon$ denotes the volume entropy of the group in the word metric.

We arrange the paper as follows. In  Section 2, we give some preliminaries. In Section 3, we  assume Theorem \ref{diff-HK-estimations-gen} and prove consecutively  Theorem \ref{regularity-harmonic measure}, Theorem \ref{main}, Theorem \ref{Critical} and Theorem \ref{linear}. Section 4 is for the Eells-Elworthy-Malliavin construction of the stochastic flow corresponding to the Brownian motion  and its related dynamical properties. The estimations of the growth of various stochastic tangent structures are done with some special  care since we are in the non-compact case.  The strategy for proving the first order differentiability in Theorem \ref{diff-HK-estimations-gen} and the $i=1$ case of  (\ref{esti-p-lam-der-k}) and (\ref{p-lam-i-p-equ}) is explained in Section \ref{Obs-Stra}. Section \ref{sec5} is devoted to the details of that proof:  Section \ref{flow-F-S-y} is for the  construction for  the $C^1$ regularity of ${\rm z}_T^{\l, 1}$, followed by the existence  proof and estimations in Section \ref{sec-exist-F-s}-\ref{the flow F-S}, and  the proof of Theorem \ref{diff-HK-estimations-gen} with $i=1$ is given in Section \ref{TDOp-lam}  using the regularities and estimations  of ${\rm z}_T^{\l, 1}$.  The rest of the proof of Theorem \ref{diff-HK-estimations-gen} is by induction on the order of differentiability. See Section \ref{skectch-6.1} for the description of the necessary steps and Section \ref{sec6.2} for their proofs.  Finally, in Section 7, we consider the first order regularity of the entropy.

\section{Preliminaries}
 In this section, we introduce the basic notions related to  formula (\ref{ell-lambda}).   In the rest of the paper, if it is not specified, we only consider the elements of $\mathcal{M}^k(M),$  $\Re^k(M)$ with $k\geq 3$.

\subsection{Jacobi fields and the geodesic flow} For $g\in \mathcal{M}^k(M)$, let $\nabla, R$ be the Levi-Civita connection and the curvature tensor on $(M, g)$ and $(\M, \wt{g})$. Recall that a unit speed  $\wt{g}$-geodesic $t\mapsto\gamma(t)\in \M$ is such that  $\nabla_{\dot{\g}}{\dot{\g}}=0$, where $\dot{\g}(t)=\nabla_{\frac{\partial}{\partial t}}\g (t)$.  The Jacobi fields along $\gamma$ are vector fields $t\mapsto J(t)\in T_{\gamma(t)}\M$ which describe the infinitesimal variations of the geodesics around $\gamma$.  It is well known that $J(t)$ satisfies the Jacobi equation
\[
\nabla_{\dot{\gamma}(t)}\nabla_{\dot{\gamma}(t)} J(t)+R(J(t), \dot{\gamma}(t))\dot{\gamma}(t)=0
\]
and is uniquely determined by the values of $J(0)$ and $J'(0)$.   Let $N(\gamma)$ be the normal bundle of $\gamma$, i.e., 
\[
N(\gamma):=\bigcup_{t\in \Bbb R}N_t(\gamma), \ \mbox{where}\ N_t(\gamma)=\big\{Y\in T_{\gamma(t)}\M:\ \langle Y, \dot{\gamma}(t)\rangle=0\big\}.
\]
A \emph{$(1, 1)$-tensor} along $\gamma$ is a family $V=\{V(t), \ t\in \Bbb
R\}$, where each $V(t)$ is an endomorphism of $N_t(\gamma)$ such that for
any family $Y_t$ of parallel vectors along $\gamma$, the covariant
derivative $\nabla_{\dot{\gamma}(t)} (V(t)Y_t)$ exists.  The
curvature tensor $R$ induces a symmetric $(1, 1)$-tensor along
$\gamma$ by $R(t)Y=R(Y, \dot{\gamma}(t))\dot{\gamma}(t)$. A $(1, 1)$-tensor $V(t)$ along $\gamma$ is called a \emph{Jacobi tensor} if it
satisfies
\[
\nabla_{\dot{\gamma}(t)}\nabla_{\dot{\gamma}(t)} V(t)+R(t)V(t)=0.
\]
If $V(t)$ is a Jacobi tensor along $\gamma$, then $V(t)Y_t$ is a Jacobi field for any parallel field $Y_t$ along $\g$.

 The Jacobi fields can  also be visualized using the geodesic flow map on the unit tangent bundle.   For $x\in \M$
and ${\rm v}\in T_x\M$, an element  ${\rm w} \in T_{\rm v} T\M$   is
{\it {vertical }} if its projection on $T_x\M$ vanishes. The vertical subspace $V_{\rm v} $   is  identified with $T_x\M$.
The connection defines a {\it {horizontal }} complement $H_{\rm v}$,  which also can be  identified with $T_x\M.$
This gives a  horizontal/vertical Whitney sum decomposition
\[
TT\M=T\M\oplus T\M.
\]
Define the inner product on $TT\M$ by
\[
\big\langle (Y_1, Z_1), (Y_2, Z_2)\big\rangle_{\wt{g}}:=\big\langle Y_1,
Y_2\big\rangle_{\wt{g}}+\big\langle Z_1, Z_2\big\rangle_{\wt{g}}.
\]
It induces a Riemannian metric on $T\M$, the so-called Sasaki
metric. The  unit tangent bundle $S\M$ of
the universal cover $(\M, \wt{g})$ is a subspace of $T\M$  with
tangent space
\[
T_{(x, {\rm v})}S\M=\big\{(Y, Z):\ Y, Z\in T_x\M, Z\perp  {\rm v}\big\},\  \mbox{for}\
x\in \M,  {\rm v}\in S_x\M.
\]

Assume ${\bf v}=(x, {\rm v})  \in S\M$ and let $\gamma_{\bf v}$  be the $\wt{g}$-geodesic starting at $x$ with initial velocity $\rm v$.   Horizontal vectors in $T_{{\bf v}}S\M$ correspond to pairs $(J(0),0)$.  In particular, the \emph{geodesic spray}  $ \overline X_{{\bf v}}$  at $\bf v$ is the horizontal vector associated with $({\rm v},0)$. 
 A vertical vector in $T_{{\bf v}}S\M$   is a vector tangent to $S_x\M$, the set of unit tangent vectors at $x$.  It corresponds to a pair $(0, J'(0))$, with $J'(0) $ orthogonal to  ${\rm v}$. The orthogonal space to   $\overline{X}_{{\bf v}}$ in $T_{\bfv} S\M$  corresponds to pairs $({\rm v}_1, {\rm v}_2), {\rm v}_i \in N_0(\g_\bfv)$ for $i =1,2$.  
 
  The vector field $\{\overline{X}_{{\bf v}}\}_{{\bfv}\in S\M}$ generates the geodesic flow  $\{{\bf \Phi}_t\}_{t\in \Bbb R}$ on the unit tangent bundle,  where  ${\bf \Phi}_t: S\M\to S\M, \ {\bfv}\mapsto \dot{\gamma}_{\bf v}(t)$. Any Jacobi field along a geodesic $\gamma_{\bf v}$ is of the form $D{\bf \Phi}_t ({\bf w})$, where ${\bf w}\in T_{\bf v}S\M$ is an infinitesimal change of  the initial point $\bf v$.   More explicitly,  if $(J(0), J'(0) )$ is the horizontal/vertical decomposition of  ${\bf w}\in T_{\bfv}S\M$,  then $(J(t), J'(t) )$ is the horizontal/vertical decomposition of  $D {\bf \Phi}_t({\bf w})\in T_{{\bf \Phi}_t(\bfv)}S\M$.

\subsection{Anosov flow and invariant manifolds} \label{Sec-Anosov}Assume  $g\in \Re^k(M)$.  The $\wt{g}$-geodesic flow ${\bf \Phi}_t$ on $S\M$ has some special properties  due the negative curvature nature of the space. 

  Firstly,  $(\M, \wt{g})$ has no conjugate points. Hence we can  identify $S\M$ with $\M\times
\partial\M$ since each  pair $(x, \xi)\in \M\times \partial\M$ corresponds to a unique unit speed geodesic $\gamma_{x, \xi}$,  which begins at $x$ and is asymptotic to $\xi$,  and the mapping
$\partial \M\mapsto S_x\M$ sending $\xi$ to $\dot{\gamma}_{x,
\xi}(0)$ is a bijection.  In the $(\M, \partial\M)$-coordinate, the geodesic flow map ${\bf \Phi}_t$ has the expression
\begin{equation*}\label{geodesic flow}
{\bf\Phi}_t(x, \xi)=(\gamma_{x, \xi}(t), \xi), \ \forall  (x, \xi)\in S\M.
\end{equation*}

Furthermore,  the geodesic flow on $S\M$ is \emph{Anosov}: the tangent bundle $TS\M$ decomposes into the Witney sum of three $D{\bf \Phi}_t$-invariant   subbundles ${\bf E}^{\rm c}\oplus {\bf E}^{\rm ss}\oplus {\bf E}^{\rm su}$, where ${\bf E}^{\rm c}$ is the 1-dimensional subbundle tangent to the flow and $ {\bf E}^{\rm ss}$  and $ {\bf E}^{\rm su}$ are the strongly contracting and expanding subbundles, respectively, so that there are constants $C, c>0$ such that
\begin{itemize}
\item[i)] $\|D{\bf\Phi}_t {\bf w}\|\leq Ce^{-ct}\|{\bf w}\|$ for ${\bf w}\in {\bf E}^{\rm ss}$, $t>0$.
\item[ii)] $\|D{\bf \Phi}_t ^{-1}{\bf w}\|\leq Ce^{-ct}\|{\bf w}\|$   for ${\bf w}\in {\bf E}^{\rm su}$, $t>0$.
\end{itemize}
The ${\bf E}^{\rm ss}, {\bf E}^{\rm su}$ and  ${\bf E}^{\rm c}$ are the so-called \emph{stable, unstable} and \emph{central bundles}, respectively.

The subbundles ${\bf E}^{\rm ss}$, ${\bf E}^{\rm su}$  have their characterizations  using Jacobi tensors.  Assume ${\bf v}=(x, {\rm v})\in S\M$.  For each $s>0$, let $S_{{\bfv}, s}$ be the Jacobi tensor along  $\gamma_{\bfv}$ with the boundary conditions $S_{\bfv, s}(0)={\mbox{Id}}$ and $S_{\bfv, s}(s)=0$.  Since $(\M, \wt{g})$ has no conjugate points, the limit $\lim_{s\to +\infty}S_{\bfv, s}=:S_{\bfv}$ exists  (\cite{Esc}) and is called the \emph{stable tensor} along the geodesic
$\gamma_{\bfv}$. Similarly, by reversing the time $s$, we obtain the
\emph{unstable tensor} $U_{\bfv}$ along the geodesic $\gamma_{\bfv}$.  The stable subbundle ${\bf E}^{\rm ss}$ at $\bfv$
is the graph of the mapping $S'_{\bfv}(0)$, considered as a map
from $ \overline{N_0(\gamma _\bfv )}$ to $V_{\rm v} $
sending $Y$ to
$S'_{\bfv}(0)Y$, where  $\overline{N_0(\gamma _\bfv )} := \{ {\bf {w}}, {\bf w} \in H_{\rm v}, {\bf {w}} \perp \overline X_\bfv \}$. 
Similarly, the unstable subbundle ${\bf E}^{\rm su}$
at $\bfv$ is the graph of the mapping $U'_{\bfv}(0)$
considered as  a map from $ \overline{N_0(\gamma _\bfv )}$ to $V_{\rm v}$.

 Due to the Anosov property of the geodesic flow, the distributions of ${\bf E}^{\rm ss}, {\bf E}^{\rm su}$ (and hence ${\bf E}^{\rm c}\oplus {\bf E}^{\rm ss}, {\bf E}^{\rm c}\oplus {\bf E}^{\rm su}$) are H\"{o}lder continuous (\cite{Ano},   see also \cite[Proposition 4.4]{Ba}).  Hence, the $(1,1)$-tensors  $S_{
\bfv}, S'_{\bfv}, U_{\bfv}$ and  $U'_{\bfv}$ are also H\"{o}lder
continuous with respect to $\bfv$.

Associated with the  bundle ${\bf E}^{\rm cs}:={\bf E}^{\rm c}\oplus {\bf E}^{\rm ss}$ are  the (weak) \emph{stable manifolds} of ${\bf \Phi}_t$: 
\begin{equation}\label{weak stable manifold}
W^{\rm{s}}(x, \xi):=\left\{(y, \eta)\in \M\times \partial \M:\ \limsup\limits_{t\to +\infty}\frac{1}{t}\ln {\mbox{dist}}\left({\bf \Phi}_t(y, \eta),  {\bf \Phi}_t(x, \xi)\right)\leq 0\right\}.
\end{equation}
Each  $W^{\rm{s}}(x, \xi)$ coincides with 
the collection of the initial speed vectors of the geodesics asymptotic to $\xi$  and can be identified with $\M$. Associated with ${\bf E}^{\rm ss}$ are the  \emph{strong stable manifolds}
\begin{equation}\label{stable manifold}
W^{\rm{ss}}(x, \xi):=\left\{(y, \eta)\in\M\times \partial\M:\ \limsup\limits_{t\to +\infty}\frac{1}{t}\ln {\mbox{dist}}\left({\bf \Phi}_t(y, \eta),  {\bf \Phi}_t(x, \xi)\right)<0\right\}.
\end{equation}
Each $W^{\rm{ss}}(x, \xi)$, locally, is a $C^{k-1}$ graph from ${\bf E}_{(x, \xi)}^{\rm
ss}$ to ${\bf E}_{(x, \xi)}^{\rm c}\oplus{\bf E}_{(x, \xi)}^{\rm
su}$ and is tangent to ${\bf E}^{\rm{ss}}_{(x, \xi)}$  (\cite{SFL}).   It is true that 
\[{\bf \Phi}_t \left(W^{\rm{ss}}(x, \xi)\right)= W^{\rm{ss}}\left({\bf \Phi}_t(x, \xi)\right)\] 
and the union of these images is just  the stable manifold, i.e.,
 \[W^{\rm{s}}(x, \xi)=\bigcup_{t\in \Bbb R}{\bf \Phi}_t \left(W^{\rm{ss}}(x, \xi)\right).\]
 The weak and strong unstable manifolds, denoted by  $W^{\rm{u}}(x, \xi)$ and $W^{\rm{su}}(x, \xi)$, respectively,  can be defined similarly as in (\ref{weak stable manifold}) and (\ref{stable manifold})  by reversing the time.  They have tangents  ${\bf E}^{\rm cu}:={\bf E}^{\rm c}\oplus {\bf E}^{\rm su}$ and ${\bf E}^{\rm su}$, respectively.

  The geodesic flow ${\bf{\Phi}}_t$ on $S\M$ naturally descends to the geodesic flow $\Phi_t$  on  $g$-unit tangent bundle $SM$, carrying the tangent splitting and the  corresponding submanifolds downstairs.  Indeed, 
  the action of  $G$ on the tangent bundle $\bf E$ (where $\bf E$ denotes any one of ${\bf E}^{\rm ss}, {\bf E}^{\rm su}$ and ${\bf E}^{\rm c}$) satisfies $\psi ({\bf E}(x, \xi))={\bf E}(D\psi(x, \xi))$ for all $\psi\in G$ so that it defines the $D\Phi_t$-invariant subbundles $E^{\rm ss}, E^{\rm su}$  and $E^{\rm c}$ of $TSM$,  the so-called stable, unstable and central bundles. We see that $E^{\rm c}$ is tangent to the flow direction and  $E^{\rm ss}, E^{\rm su}$ are such that 
  \begin{itemize}
\item[i)] $\|D{\Phi}_t {w}\|\leq Ce^{-ct}\|{w}\|$ for ${w}\in {E}^{\rm ss}$, $t>0$.
\item[ii)] $\|D{\Phi}_t ^{-1}{w}\|\leq Ce^{-ct}\|{w}\|$   for ${w}\in {E}^{\rm su}$, $t>0$.
\end{itemize}
Similarly, the action of $G$ on the submanifolds $W$ (where $W$ denotes any one of $W^{\rm{s}}, W^{\rm{ss}}, W^{\rm{u}}$ and $W^{\rm{su}}$)
satisfies $\psi(W(x, \xi))=W(D\psi(x, \xi))$ for all $\psi\in G$ so that it defines the stable, strong stable, unstable and 
strong unstable manifolds of the geodesic flow on $SM$, which have tangents $E^{\rm ss}\oplus E^{\rm c}$, $E^{\rm ss}$,  $E^{\rm su}\oplus E^{\rm c}$ and $E^{\rm su}$, respectively. In particular,  the collection of $W^{\rm{s}}(x, \xi)$ defines a foliation $\mathcal{W}=\{W^{\rm{s}}(v)\}_{v\in SM}$ on $SM$, the so-called \emph{stable foliation} of $SM$.   Each $W^{\rm{s}}(x, \xi)$ can be identified with $\M\times \{\xi\}$.  Hence the quotients $W^{\rm{s}}(v)$  are naturally endowed with the Riemannian metric induced from $\wt{g}$.  They are $C^{k-1}$ immersed submanifolds of $SM$ depending continuously on $v$ in the $C^{k-1}$ topology (\cite{SFL}).

\subsection{Harmonic measure for the stable foliation}\label{Sec-2.3}  We continue to assume $g\in \Re^k(M)$.  Associated with the stable foliation $\W$ is the harmonic measure which is closely related to the leafwise Brownian motion. 
Write $\Delta^{\mathcal{W}}$ for the leafwise Laplace operator of $\W$, which acts on functions that are of class $C^2$ along the  leaves of $\mathcal{W}$. A probability measure ${\bf m}$ on $SM$ is called \emph{harmonic} if it satisfies, for any $C^2$ function $f$ on $SM$,
\[
\int_{SM}\Delta^{\mathcal{W}}f\ d{\bf m}=0.
\]
Since $(M, g)$ is negatively curved,  there is a unique harmonic measure  $\bf m$ associated to the stable foliation  (\cite{Ga}). Let $\wt{\bf m}$ be the  $G$-invariant extension of $\bf m$ to $S\M$. It is closely related to the Brownian motion on the stable leaves.   For $(x, \xi)\in S\M$, let 
\[
{\bf p}(t, (x, \xi), d(y, \eta)):=p(t, x, y)\ d{\rm Vol}_{\wt{g}}(y)\delta_{\xi}(\eta),
\]
where $\delta_{\xi}(\eta)$ is the Dirac function at $\xi$.  Then $\bf p$ is  just the transition probability function of the Brownian motion on $W^{s}(x, \xi)=\M\times \{\xi\}$  starting  from $(x, \xi)$.    Let $\wt{\Omega}_{+}$ be the space of continuous paths ${\omega}:  [0, +\infty)\to S\M$ equipped with the smallest $\sigma$-algebra
 for which the projections $R_t: {\omega}\mapsto {\omega}(t)$ are measurable.  Let $\{\P_{(x, \xi)}\}$ be the corresponding
Markovian family of ${\bf p}$ on  ${\Omega}_+$. Then for every $t>0$ and every Borel set $A\subset \M\times
\partial\M$, 
\[
\wt{\P}_{(x, \xi)}\left(\{{\omega}\in {\Omega}_+:  {\omega}(t)\in
A\}\right)=\int_{A}{\bf p}(t, (x, \xi), d(y, \eta)).
\]

\begin{prop}(\cite{Ga}) 
 The following hold true. 
\begin{itemize}
\item[i)]The measure $\wt{\bf m}$ satisfies, for any $f\in C^2(\M\times \partial\M)$ with compact support,
\[
\ \ \ \ \ \int_{\M\times \partial\M}\left(\int_{\M\times \partial\M}f(y,
\eta){\bf p}(t, (x, \xi), d(y, \eta))\right)\ d\wt{\bf m}(x,
\xi)=\int_{\M\times \partial\M}f(x, \xi)\ d\wt{\bf m}(x, \xi).
\]
\item[ii)] The measure $\wt{\P}=\int\wt{\P}_{(x, \xi)}\ d\wt{\bf m}(x, \xi)$ on ${\wt{\Omega}}_+$ is invariant under every  $t$-time shift mapping $\sigma_t:\ {\wt{\Omega}}_+\to {\wt{\Omega}}_+,\ \sigma_t(\wt{\omega}(s))=\wt{\omega}(s+t)$,  for $s>0$ and $\wt{\omega}\in {\wt{\Omega}}_+$.
\item[iii)] The measure $\wt{\bf m}$ can be expressed locally at $(x, \xi)\in \M\times \partial\M$ as $d\wt{\bf m}=dx\times d\wt{\bf m}_x$, where $dx$ is proportional to the volume element and $\wt{\bf m}_x$  is the hitting probability at $\partial\M$ of the Brownian motion starting at $x$.
\end{itemize}
\end{prop}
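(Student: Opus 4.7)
The plan is to establish (iii) first by identifying $\wt{\bf m}$ explicitly, then deduce (i) and (ii) from harmonicity and the Markov property. Fix a basepoint $x_0 \in \M$. The Brownian motion on $(\M, \wt g)$ converges almost surely to a point at infinity, producing the hitting measure $\wt{\bf m}_x$ on $\partial \M$. By the results on positive harmonic functions in negative curvature (Prat, Anderson--Schoen), $\wt{\bf m}_x$ is absolutely continuous with respect to $\wt{\bf m}_{x_0}$ with a positive continuous density $k(x, \xi)$ which is $\Delta$-harmonic in $x$ for each fixed $\xi$. Define a measure $\mu$ on $\M \times \partial \M$ by
\[
d\mu(x, \xi) = k(x, \xi)\, d\mbox{Vol}_{\wt g}(x)\, d\wt{\bf m}_{x_0}(\xi) = d\mbox{Vol}_{\wt g}(x)\, d\wt{\bf m}_x(\xi).
\]
The equivariance $\wt{\bf m}_{\psi x} = \psi_* \wt{\bf m}_x$ for $\psi \in G$ gives $G$-invariance of $\mu$, which therefore descends to a finite measure on $SM$. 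To see $\mu$ is harmonic, take $f \in C^2(\M \times \partial \M)$ compactly supported; then
\[
\int \Delta^{\W} f\, d\mu = \int_{\partial \M} \left(\int_\M \Delta_x f(x, \xi)\, k(x, \xi)\, d\mbox{Vol}_{\wt g}(x)\right) d\wt{\bf m}_{x_0}(\xi),
\]
and the inner integral vanishes by Green's identity since $k(\cdot, \xi)$ is harmonic and $f(\cdot, \xi)$ is compactly supported. Uniqueness of the harmonic measure on the compact $SM$ (Garnett) then identifies the normalization of $\mu$ with $\wt{\bf m}$, proving (iii).

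For (i): the leafwise heat semigroup $P_t$ with kernel ${\bf p}(t, \cdot, \cdot)$ is generated by $\Delta^{\W}$, so a direct computation gives
\[
\frac{d}{dt} \int P_t f\, d\wt{\bf m} = \int \Delta^{\W} P_t f\, d\wt{\bf m}.
\]
The same Green's identity argument, applied with $P_t f(\cdot, \xi)$ in place of $f(\cdot, \xi)$, then shows that the right-hand side vanishes, and integrating in $t$ yields $\int P_t f\, d\wt{\bf m} = \int f\, d\wt{\bf m}$, which is precisely (i). The point requiring care is that $P_t f$ is no longer compactly supported in $x$; the integration by parts must be justified using Gaussian bounds on $p(t, x, y)$ and its gradient together with polynomial volume growth of $\M$, both available since $M$ is compact with bounded geometry.

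For (ii): the shift invariance of $\wt \P$ under $\sigma_t$ is equivalent, via the Markov property applied to cylinder functionals on $\wt \Omega_+$ and a standard monotone class argument, to the $P_t$-invariance of $\wt{\bf m}$ proved in (i); indeed, the finite-dimensional marginals of $\wt \P \circ \sigma_t^{-1}$ and $\wt \P$ agree as soon as the one-time marginals do, and those coincide by (i). The main obstacle in this outline is the justification of integration by parts for a non-compactly-supported $P_t f$ against the Poisson kernel $k(\cdot, \xi)$; all remaining steps are formal given the structural theory of the Poisson boundary in negative curvature.
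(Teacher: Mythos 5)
The paper gives no proof of this proposition: it is quoted from Garnett's work (the citation \cite{Ga} covers both the harmonicity statements and the identification of $\wt{\bf m}$ with $d{\rm Vol}_{\wt g}\times d\wt{\bf m}_x$), so there is no argument of the paper to compare yours against. Your reconstruction follows the standard route — build $\mu(x,\xi)=k(x,\xi)\,d{\rm Vol}_{\wt g}(x)\,d\wt{\bf m}_{x_0}(\xi)$ from the Poisson kernel, identify it with $\wt{\bf m}$ via uniqueness of the harmonic measure, then deduce (i) and (ii) — and your treatment of (ii) is correct: $\wt\P\circ\sigma_t^{-1}$ is a Markov measure with the same transition kernel as $\wt\P$ and with initial distribution $\wt{\bf m}P_t=\wt{\bf m}$ by (i), hence equals $\wt\P$.

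Two steps are weaker than you acknowledge. First, your justification of the integration by parts in (i) is wrong as stated: $\M$ is the universal cover of a compact negatively curved manifold, so its volume growth is exponential, not polynomial, and the density $k(\cdot,\xi)$ against which you integrate by parts itself grows exponentially in $d(x_0,x)$. The Gaussian factor in the heat kernel and gradient bounds does dominate both exponentials, so the estimate can be repaired, but the one ``point requiring care'' you single out is not actually handled by the reason you give. In fact the detour through $\frac{d}{dt}\int P_tf\,d\wt{\bf m}$ is unnecessary: the Markov property together with almost sure convergence to $\partial\M$ gives $\wt{\bf m}_y(A)=\int_{\M}p(t,y,x)\,\wt{\bf m}_x(A)\,d{\rm Vol}_{\wt g}(x)$, equivalently $\int_{\M}p(t,y,x)\,k(x,\xi)\,d{\rm Vol}_{\wt g}(x)=k(y,\xi)$, and then (i) follows from (iii) by Fubini alone (everything is nonnegative and $f$ has compact support), with no boundary terms at infinity. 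Second, in (iii) you verify $\int\Delta^{\mathcal W}f\,d\mu=0$ only for compactly supported $f$ on $\M\times\partial\M$, whereas the uniqueness theorem you invoke concerns harmonic probability measures on the compact quotient $SM$: the relevant test functions are $C^2$ functions on $SM$, whose lifts are $G$-invariant and never compactly supported. You need either a folding/partition-of-unity argument to pass from one class of test functions to the other, or — more simply — Garnett's equivalent characterization of harmonic measures as measures invariant under the leafwise heat semigroup, which again reduces to the displayed Markov identity. With these two repairs the outline is sound.
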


The group $G$ acts naturally and discretely on the space $ \wt \Om
_+$ with quotient  the space $\Om _+$ of
continuous paths in $SM$, and this action commutes with the shift
$\s_t, t\geq 0$. Therefore, the measure $\wt{\P}$ is the extension of
a finite, shift invariant  measure $\P$ on $\Om _+.$ We identify $SM$ with  $M_0\times \partial\M$, where $M_0$ is a connected fundamental domain of $(\M, \wt{g})$.   Hence we can also identify $\Om_+$ with the lift of its elements  in $\wt{\Om}_+$ starting from $M_0$.  We will continue to denote elements in $\Om_+$ by $\omega$ and will clarify the notation  whenever there is an ambiguity.  In this paper, we normalize the harmonic measure $\bf m$ to be a probability measure, so that $\P$ is also a probability measure. We denote by $\E_\P$ the corresponding expectation symbol.

A nice property for the laminated Brownian motion is that the semi-group $\s_t, t\geq 0,$ of transformations of $\Om_+$ has strong ergodic properties with
respect to the probability $\P$. 
\begin{prop}\label{mixing}(\cite{Ga}, cf. \cite[Proposition 2.3]{LS2}) The shift semi-flow $\s_t, t \geq 0, $ is mixing on
$(\Om _+ , \P)$ in the sense for any bounded measurable functions
$f_1,f_2$ on $\Om_+$, 
\[ \lim\limits_{t\to +\infty} \E_{\P} (f_1\,(f_2\circ \s_t)) \; = \; \E_{\P}( f_1) \E_{\P}(f_2) .\]
\end{prop}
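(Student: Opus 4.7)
The plan is to establish mixing by reducing to a convenient dense subclass of observables and then exploiting the contraction of the leafwise heat semigroup on the Hölder space $\mathcal{H}_{\mathtt{b}}$ that the introduction attributes to \cite{L}. First, by a standard monotone-class / density argument it suffices to verify the limit when $f_1,f_2$ are bounded continuous \emph{cylinder} functions on $\Omega_+$, that is, functions depending on $\omega(s_1),\dots,\omega(s_n)$ for finitely many times. Approximating such cylinder functions uniformly by ones whose coordinate-slice factors are Hölder continuous on $SM$ reduces the problem further to the case where $f_1$ depends on $\omega(s)$ for $s\in[0,S]$ and $f_2$ is of the form $\varphi\circ R_0$ with $\varphi\in\mathcal{H}_{\mathtt{b}}$.

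Second, I would use the Markov property of the laminated Brownian motion together with shift-invariance of $\mathbb{P}$: for $t>S$,
\begin{align*}
\mathbb{E}_{\mathbb{P}}\bigl(f_1\cdot(f_2\circ\sigma_t)\bigr)
&=\int_{SM}\mathbb{E}_{(x,\xi)}\!\left[f_1(\omega|_{[0,S]})\,\varphi(\omega(t))\right]d\mathbf{m}(x,\xi)\\
&=\int_{SM}\mathbb{E}_{(x,\xi)}\!\left[f_1(\omega|_{[0,S]})\,({\rm Q}_{t-S}\varphi)(\omega(S))\right]d\mathbf{m}(x,\xi),
\end{align*}
so that the correlation is controlled by $\|{\rm Q}_{t-S}\varphi-\mathbf{m}(\varphi)\|_{\infty}$ (times $\|f_1\|_\infty$). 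Thus it is enough to show that ${\rm Q}_t\varphi\to\mathbf{m}(\varphi)$ uniformly on $SM$ for every $\varphi\in\mathcal{H}_{\mathtt{b}}$.

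Third, this uniform convergence follows from the contraction property of ${\rm Q}_T$ on $\mathcal{H}_{\mathtt{b}}$ for $T$ large enough, which is the property used in the proof of Theorem \ref{regularity-harmonic measure}. Indeed, since $\mathbf{m}$ is the unique fixed point of the dual of ${\rm Q}_T|_{\mathcal{H}_{\mathtt{b}}}$, the difference $\varphi-\mathbf{m}(\varphi)$ lies in the kernel of the mean-value functional, and iterating the contraction gives $\|{\rm Q}_{nT}\varphi-\mathbf{m}(\varphi)\|_{\mathcal{H}_{\mathtt{b}}}\le C\rho^n\|\varphi-\mathbf{m}(\varphi)\|_{\mathcal{H}_{\mathtt{b}}}$ for some $\rho<1$; combined with the contraction of ${\rm Q}_s$ on the sup norm for $s\in[0,T]$, this yields exponential decay of $\|{\rm Q}_t\varphi-\mathbf{m}(\varphi)\|_{\infty}$.

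The main obstacle is the reduction step from arbitrary bounded measurable $f_2$ to Hölder $\varphi$: the natural $L^2(\mathbf{m})$-approximation is not automatic because $\mathcal{H}_{\mathtt{b}}$ is dense in $C(SM)$ but the continuity hypotheses along the unstable and central directions are delicate. I would bypass this by truncating and smoothing along unstable and flow directions using a standard local convolution adapted to the stable foliation, as in Garnett's original argument \cite{Ga}, producing Hölder approximants whose error vanishes in $L^1(\mathbf{m})$; the mixing on the dense subclass together with the uniform bound $\|f_1(f_2\circ\sigma_t)\|_\infty\le\|f_1\|_\infty\|f_2\|_\infty$ then yields the conclusion on all bounded measurable pairs.
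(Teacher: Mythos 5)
The paper states this proposition with references to \cite{Ga} and \cite[Proposition 2.3]{LS2} and does not give a proof, so I assess your proposal on its merits rather than against an internal argument.

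Your overall route---reduce to nice observables, exploit the Markov property of the leafwise diffusion, then invoke the spectral gap of ${\rm Q}_T$ on $\mathcal{H}_{\mathtt{b}}^0$---is the natural one here and fits the tools the paper develops (cf. Proposition \ref{Uni-contraction}); the decay estimate you invoke does hold. However, the reduction at the beginning contains a genuine gap. After passing to bounded cylinder functions you assert that H\"older-approximating the coordinate slices brings you to the case $f_2=\varphi\circ R_0$ with $\varphi\in\mathcal{H}_{\mathtt{b}}^0$. This is not correct as written: a cylinder function depending on several coordinates is not, and cannot be uniformly approximated by, a function of the single coordinate $\om(0)$, and the sub-$\sigma$-algebra generated by $R_0$ alone is strictly smaller than the full $\sigma$-algebra of $\Om_+$, so no density or monotone-class argument reduces an arbitrary bounded measurable $f_2$ to the form $\varphi\circ R_0$. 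What closes the gap is a second application of the Markov property, at the far end. For $f_1$ bounded and depending only on $\om(s)$, $s\leq S$, and $f_2$ bounded measurable, set $h(v):=\E_{\P_v}(f_2)$ for $v\in SM$; then for $t>S$,
\[
\E_{\P}\bigl(f_1\,(f_2\circ\s_t)\bigr)\;=\;\E_{\P}\!\left[\,f_1\cdot ({\rm Q}_{t-S}h)(\om(S))\,\right],\qquad \E_{\P}(f_2)=\int_{SM}h\,d{\bf m}.
\]
Only at this stage should one approximate $h$ in $L^1({\bf m})$ by $\varphi_n\in\mathcal{H}_{\mathtt{b}}^0$ with $\|\varphi_n\|_\infty\leq\|h\|_\infty$; the ${\rm Q}$-invariance of ${\bf m}$ gives $\E_{\P}\bigl|({\rm Q}_{t-S}(h-\varphi_n))(\om(S))\bigr|\leq\int_{SM}|h-\varphi_n|\,d{\bf m}$, uniformly in $t$, and your spectral-gap argument applied to $\varphi_n$ then closes the diagonal. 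Your last paragraph contains the right $L^1$-approximation idea, but it should be deployed on the function $h$ produced by this Markov step rather than on $f_2$ itself; as written, the step ``$f_2$ is of the form $\varphi\circ R_0$'' is where the argument breaks.
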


\subsection{Busemann function and the linear drift}\label{Sec-Busemann}
In this subsection,   we derive (\ref{ell-lambda}). 

Let $g\in \Re^k(M)$.  For ${\bf v}=(x, \xi)\in M_0\times \partial \M$, the projection on $\M$ of the law of $\P_{\bf v}$ on $W^s(x, \xi)=\M\times \{\xi\}$ is the same as that of $\P_x$ of the Brownian motion on $\M$ starting from $x$. For $\om\in \Om_+$,  we still denote by $\om$ its projection to $\M$.  By ergodicity of $\P$ with respect to the shift map $\sigma_t$ (Proposition \ref{mixing}), for $\P$-almost all path $\om\in \Om_+$, its leafwise linear drift coincides with $\ell$.

Since $\wt{g}$ is negatively curved, for $\P$-almost all path $\om$,  $\om(t)$ tends to a point in the geometric boundary $\partial \M$ (\cite{K1}). Write  $\om(\infty):=\lim_{t\to +\infty}\om(t)$. Roughly speaking, $\om$ follows $\gamma_{\om(0), \om(\infty)}$. Hence the drift of $\om(t)$ from $\om(0)$ can be measured via its shadow on  $\gamma_{\om(0), \om(\infty)}$.  A candidate function for this measurement is the Busemann function. 
Let $x_0\in \M$ be a reference point.  For $y, z\in \M$, define
\[
b_{x_0,  y}(z):=d(z, y)-d(x_0, y).
\]
The assignment of $y\mapsto b_{x_0, y}$ is continuous, one-to-one and
takes value in a relatively compact set of functions for the
topology of uniform convergence on compact subsets of $\M$. The
Busemann compactification of $\M$ is the closure of $\M$ for that
topology (\cite{BGS})  and it coincides with the geometric compactification in the negative curvature case (see \cite{Ba}). 
So
for each ${\bf v}=(x, \xi)\in \M\times \pp\M$, the function 
\[
b_{{\bf v}}(z):=\lim\limits_{y\to \xi}b_{x, y}(z), \ \mbox{for}\
z\in \M, 
\]
is well-defined and is called the \emph{Busemann function
at ${\bf v}$}.  
It is known (\cite{EO}) that,   if we consider $b_{\bfv}$ as a function defined on $W^s(x,\xi),$ then 
\begin{equation}\label{Buse-geo}
\nabla b_{{\bf v}}(z)=-\overline{X}(z, \xi). 
\end{equation}
The difference between $b_{\bfv}(y)$ and $b_{\bfv}(y')$ is preserved  when $(y, \xi)$ and $(y', \xi)$ are driven by the geodesic flow $\Phi_t$.  Hence
\[
W^{\rm{ss}}(\bfv)=\left\{(y, \xi):\  b_{\bfv}(y)=b_{\bfv}(x)\right\}.
\]
 Note that $W^{\rm{ss}} (\bf v) $
 locally is a $C^{k-1}$ graph from ${\bf E}_{\bfv}^{\rm
ss}$ to ${\bf E}_{\bfv}^{\rm c}\oplus{\bf E}_{\bfv}^{\rm
su}$ and is tangent to ${\bf E}^{\rm ss}_{\bfv}$. So, by the Jacobi tensor characterization of ${\bf E}^{\rm ss}_{\bfv}$ and (\ref{Buse-geo}), it is
true (\cite{Esc, HIH}) that
\[
\nabla_{{\bf w}}(\nabla b_{\bfv})(x)=-S'_{\bfv}(0)({ \bf w}),\ 
 \forall { \bf w}\in T_{x}\M.
\]
Thus, 
\begin{equation}\label{Div-trace}\Delta_{x}b_{\bfv}=-{\rm Div}\overline{X}=-{\mbox{Trace of}}\ S'_{\bfv }(0), \end{equation} which is the mean curvature of  the set of footpoints of $W^{\rm{ss}}(x, \xi)$. 
Note that for  each $\psi\in G$,
\[
b_{(x_0, \psi\xi)}(\psi x)=b_{(x_0 , \xi)}(x)+b_{(\psi ^{-1}x_0, \xi)}( x_0).
\]
Hence  $\Delta_x b_{(x_0, \xi)}$ satisfies $\Delta_{\psi
x}b_{(x_0, \psi\xi)}=\Delta_x b_{(x_0, \xi)}$   and defines a function $\mathtt{B}$ on the
unit tangent bundle $SM$, which is called the \emph{Laplacian of the
Busemann function}. The function $\mathtt{B}$ is a H\"{o}lder continuous function on $SM$ by the H\"{o}lder continuity of the strong stable tangent bundles (\cite{Ano}, see Section \ref{Sec-Anosov}).

Now, we can derive the integral formula of the linear drift using the geodesic spray and the harmonic measure (\cite{K1}).  For $\P$-almost all  path
${\om}\in {\Om}_{+}$, let  ${\bf v}:=\om(0)$ and  $\eta:={\om}(\infty)\in \pp\M$.  When $t$
goes to infinity, the process $b_{{\bf v}}({\om}(t))-d(x,
{\om}(t))$ converges $\P$-a.e. to the a.e. finite
number $-2(\xi|\eta)_x$, where the Gromov product $(\cdot|\cdot)_x$ is such that
\begin{equation}\label{Gromov-product-def-sec3}
(\xi|\eta)_x:=\lim\limits_{y\to \xi, z\to \eta}(y|z)_x\ \mbox{and}\  (y|z)_x:=\frac{1}{2}\left(d(x,
y)+d(x, z)-d(y, z)\right).
\end{equation}
So for $\P$-almost all $\om\in \Om_+$, we have
\[
\lim\limits_{t\to +\infty}\frac{1}{t}b_{{\bf
v}}({\om}(t))=\ell.
\]
Using the fact that the leafwise  Brownian motion has generator  $\Delta$ and is ergodic with invariant measure ${\bf m}$ on $SM$, we obtain 
\begin{eqnarray}
  \ell&=&\lim_{t\to +\infty}\frac{1}{t}\int_{0}^{t}\frac{\pp}{\pp s} b_{{\bf v}}({\om}(s))\ ds\notag\\
  &=& \lim_{t\to +\infty}\frac{1}{t}\int_{0}^{t}\Delta b_{{\bf v}}({\om}(s))\ ds \, \left(= \, \int _{\scriptscriptstyle{M_0\times \pp\M}}\Delta b_{{\bf v}}\, \  d{\bf m}\right)\notag\\
  &=& -\int_{\scriptscriptstyle{M_0\times \pp\M}} {\rm{Div}^{\W}}(\overline{X})\
  d{\bf{m}},\label{equ-1.1}
\end{eqnarray}
where ${\rm{Div}^{\W}}$ is the laminated divergence operator for the stable foliation $\W$. Since on each leaf we have ${\rm{Div}^{\W}}(\overline{X})={\rm{Div}}(X)$, (\ref{equ-1.1}) reduces to  (\ref{ell-lambda}). But that will not simplify the discussion of the regularity of the linear drift under metric changes since ${\rm{Div}}(X)$ is essentially a leafwise object. In contrast, (\ref{equ-1.1}) is more suitable for this purpose because of the natural connection between the geodesic spray $\overline{X}$ and the geodesic flow.

\section{Regularity of the  linear drift}\label{sec-regularities-linear drift}

In this section, we assume Theorem \ref{diff-HK-estimations-gen} holds true. 
We first prove  Theorem \ref{main} by showing  the regularities of  ${\rm{Div}^{\W}}\overline{X}$ and ${{\bf m}}$ under a one parameter family  of $C^k$ deformation of metrics in $\Re^k(M)$ and then prove Theorems \ref{Critical} and \ref{linear}.

\subsection{Regularity of the leafwise divergence term  ${\rm{Div}}^{\W}\overline{X}$}
Clearly, the geodesic sprays of a  metric $g\in \Re^k(M)$ form a $C^{k-1}$ vector field which varies  $C^{k-1}$ with respect to $C^k$ metric change.  But this does not imply the regularity of ${\mbox{Div}^{\W}}\overline{X}$ with respect to the metric changes  since we are considering the leafwise divergence.

Laminate $S\M=\M\times \partial \M$ into  stable leaves $\{W^{\rm{s}}(x, \xi)=\M\times\{\xi\}\}$, where each leaf can be identified with $(\M, \wt{g})$, but is only H\"{o}lder continuous in  the $\xi$-coordinate (see Section \ref{Sec-Anosov}).  Consequently,  $\overline{X}(y, \xi)\in TW^{\rm{s}}(x, \xi)$ is  $C^{k-1}$ in the $y$-coordinate, but is only H\"{o}lder continuous in the $\xi$-coordinate.  Let $g'\in \Re^k(M)$ be another metric.  Its geometric boundary $\partial \M_{g'}$ can be identified with $\partial \M$. But the $\wt{g}'$-geodesic spray  $\overline{X}_{\wt{g}'}(x, \xi)$ differs from $\overline{X}(x, \xi)$  and the  divergence operator on the   $\wt{g}'$-stable leaf $W^{\rm{s}}_{\wt{g}'}(x, \xi)$ differs from that on $W^{\rm{s}}(x, \xi)$.  Both difference contribute to the change  of $({\mbox{Div}^{\W}}\overline{X})(x, \xi)$ in  metrics. This, by (\ref{Div-trace}),  can be understood by a study of the regularity of  $\ov{X}$ and ${\bf E}^{\rm ss}$ in $\Re^k(M)$.

Assume $g\in \Re^k(M)$. The set of $\wt{g}$-oriented geodesics in $\M$  can be identified with $\partial^2\M:=(\partial\M\times \partial\M)\backslash\{(\xi, \xi):\
\xi\in \partial\M\}$. Indeed, for $(x, \xi)\in
S\M$, let $\gamma: \Bbb R\mapsto \M$ be the unique geodesic
with $\dot{\gamma}(0)=(x, \xi)$ and write $\partial^+\gamma:=\lim_{t\to
+\infty}\gamma(t)$ and $\partial^-\gamma:=\lim_{t\to
-\infty}\gamma(t)$.  The mapping $\gamma\mapsto (\partial^+\gamma,
\partial^-\gamma)$  establishes a homeomorphism between
the set of all oriented geodesics in $(\M, \wt{g})$ and
$\partial^2\M$.  Consequently, for any $g'\in \Re^k(M)$, the mapping $D_{g'}:\partial^2(\wt{M})\to \partial^2(\wt{M}_{\wt{g}'})$ induced from the identity isomorphism from $G$ to itself can be viewed as a homeomorphism between the set of oriented geodesics in $(\wt{M}, \wt{g})$ and $(\wt{M}, \wt{g}')$.
 Further realize points from $S\wt{M}_{\wt{g}'}$ by pairs $(\gamma, y)$,
where $\gamma$ is an oriented geodesic and $y\in \gamma$.  For $g'$ close to $g$, we obtain a map $\wt{F}_{g'}: S\M\to
S\M_{\wt{g}'}$ which sends $(\gamma, y)\in S\wt{M}$ to
\[
\wt{F}_{g'}(\gamma, y)=(D_{g'}(\gamma), y'),
\]
where $y'$ is the unique intersection point of $D_{g'}(\gamma)$ and
the hypersurface $\{\exp_{\wt{g}}Y:\ Y\perp {\bf v}\}$ with ${\bf
v}$ being the vector in $S_y\wt{M}$ pointing at $\partial^+\gamma$.
  The map $\wt{F}_{g'}$ is a homeomorphism between $S\M$ and $S\M_{\wt{g}'}$ which preserves the geodesics, i.e.,
  sending $\wt{g}$-geodesics  to $\wt{g}'$-geodesics,   and is referred to as a $(\wt{g}, \wt{g}')$-\emph{Morse
  correspondence map.}  The restriction of $\wt{F}_{g'}$ to geodesics asymptotic to $\xi\in \partial\M$ is a homeomorphism from  $W^{\rm{s}}_{\wt{g}}(x, \xi)$ to $W^{\rm{s}}_{\wt{g}'}(x, \xi)$.   Let  $\wt{\pi}_{g'}: S\M_{\wt{g}'}\to S\M$ be the
map sending ${\bfv}$ to $\bfv/\|\bfv\|_{\wt{g}}$ which records the direction
information points of $S\M_{\wt{g}'}$ in $S\M$.  Then $\wt{\pi}_{g'}\circ
\wt{F}_{g'}$ is a homeomorphism between $S\M$ and itself.

The map $\wt{F}_{g'}$ induces a homeomorphism $F_{g'}$ between $SM$ and $SM_{g'}$ which sends $g$-geodesics to $g'$-geodesics and is called a  \emph{$(g, g')$-Morse
  correspondence map}. For
any sufficiently small $\epsilon$, if $g'$ is sufficiently close to
$g$, then $F_{g'}$ is such that the footpoint of $F_{g'}(v)$
belongs to the hypersurface of points $\{\exp_gY:\ Y\perp v, \
\|Y\|_g<\epsilon\}$, where $v$ is the projection of ${\bf v}$ in
$SM$.  Let  $\pi_{g'}: SM_{g'}\to SM$ be the natural projection
map sending $v$ to $v/\|v\|_{g}$.  Then $\pi_{g'}\circ
F_{g'}$ is a homeomorphism between $SM$ and itself.

For $g'$ in a small neighborhood of $g$ in $\Re^k(M)$,  let ${\bf E}_{g'}$ (resp. $E_{g'}$) denote any one of ${\bf E}_{g'}^{\rm ss}, {\bf E}_{g'}^{\rm su}$ and ${\bf E}_{g'}^{\rm c}$ (resp. any one of ${E}_{g'}^{\rm ss}, {E}_{g'}^{\rm su}$ and ${E}_{g'}^{\rm c}$).   We also regard ${\bf E}_{g'}$ (resp. $E_{g'}$) as a mapping from $S\M_{\wt{g}'}$ (resp. $SM_{g'}$) to its tangent bundle.  Of our special interest, is the regularity of the mappings 
$g'\mapsto \wt{\pi}_{g'}\circ \wt{F}_{g'}$, $g'\mapsto D\wt{\pi}_{g'}\circ {\bf E}_{g'}$. Equivalently, we can consider the regularity of the downstairs mappings $g'\mapsto \pi_{g'}\circ F_{g'}$ and $g'\mapsto D{\pi}_{g'}\circ {E}_{g'}$, for which, we can take advantage of the compactness of $M$ to construct certain manifolds of maps so that the implicit function theory applies (\cite{LMM, KKPW}).

Let
$\mathcal{H}^{k-1}(SM)$ be the Banach space of $C^{k-1}$
vector fields on $SM$ endowed with the topology of uniform $C^{k-1}$
convergence on compact subsets. Let $\underline{X}_{g}$ be the vector field generating the $g$-geodesic flow. Then $\underline{X}_{g'}$, the projection  (via $D\pi_{g'}$) of the generating vector field of the $g'$-geodesic flow on $SM_{g'}$,  belongs to $\mathcal{H}^{k-1}(SM)$
and is $C^{k-1}$ close to $\underline{X}_{g}$ whenever $g'$ is $C^k$ close to
$g$.  For $\alpha\in [0, 1)$, let $C^{\alpha}(SM, N)$ denote the
Banach space of $\alpha$-H\"{o}lder (or continuous for
$\alpha=0$) maps from $SM$ to a Banach space $N$ endowed with the
topology given by the $\alpha$-H\"{o}lder norm on $SM$.  Consider\[
C_{\Phi}^{\alpha}(SM, SM):=\!\left\{F\in C^{\alpha}(SM, SM):\
D_{\Phi}F(v):=\!\left.\frac{d}{dt} F(\Phi_t(v))\right|_{t=0} \ \! \mbox{exists and
is}\ \alpha\mbox{-H\"{o}lder}\right\} 
\]
with the topology of the norm $\|F\|+\|D_{\Phi}F\|_{\alpha}$, where $\|\cdot\|_{\alpha}$ denotes the $\alpha$-H\"{o}lder norm, together with   the
mapping
\begin{align*}
&\Psi:\ \mathcal{H}^{k-1}(SM)\times C_{\Phi}^{\alpha}(SM, SM)\times
C^{\alpha}(SM, \Bbb R)\to C^{\alpha}(SM, TSM)\\
&\ \ \ \ \ \ \ \ \ \ \ \ \ \ \ \ \ \ \ \ \ \  \Psi(Y, F, f)=Y\circ
F-f\cdot D_{\Phi}F.
\end{align*}
By hyperbolicity of  the $g$-geodesic flow $\Phi_t$, the
implicit function theory applies to $\Psi$ if we further require
$F\in C^{\alpha}_{\Phi}(SM, SM)$ to be such that the footpoint of
$F(v)$ lies in $\{\exp_g(w):\ w\perp v\}$ for any $v\in SM$. The following structural stability theorem is due to de la
Llave-Marco-Moriy\'{o}n (\cite{LMM}) for continuous case and 
Katok-Knieper-Pollicott-Weiss (\cite{KKPW}) for H\"{o}lder continuous case.

\begin{prop}(\cite[Proposition 2.2]{KKPW})\label{Con-regularity-1} For $g\in \Re^{k}(M)$, there exist $\alpha\in (0, 1)$ and a neighborhood $\mathcal{U}\subset
  \mathcal{H}^{k-1}(SM)$ of $\underline{X}_{g}$ and $C^{k-2}$ maps $\mathcal{U}\to C_{\Phi}^{\alpha}(SM,
  SM):\ Y\mapsto F_{Y}$ and $\mathcal{U}\to C^{\alpha}\left(SM, [\frac{1}{2}, +\infty)\right):\ Y\mapsto
  f_{Y}$ such that $Y\circ F_{Y}=f_{Y}D_{\Phi}F$. Moreover, the maps $\mathcal{U}\to C_{\Phi}^0(SM, SM):\ Y\mapsto
  F_{Y}$ and $\mathcal{U}\to C^0\left(SM, [\frac{1}{2}, +\infty)\right):\ Y\mapsto
  f_{Y}$ are $C^{k-1}$. 
\end{prop}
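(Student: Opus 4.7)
The plan is to deduce this from the (quantitative) implicit function theorem applied to the map $\Psi$ introduced in the excerpt, following the template of de la Llave--Marco--Moriy\'on and its H\"older refinement by Katok--Knieper--Pollicott--Weiss. First I record the base point: $\Psi(\underline X_g,\mathrm{id},1)=\underline X_g - D_{\Phi}\mathrm{id} = 0$, since the generating vector field of the flow is exactly the $\Phi$-derivative of the identity. The solutions $(F,f)$ of $\Psi(Y,F,f)=0$ lying near $(\mathrm{id},1)$ are exactly the orbit equivalences $F$ between $\Phi_t$ and the flow of $Y$, with time reparametrization cocycle integrating $f$; so existence of $(F_Y,f_Y)$ is equivalent to solving this implicit equation.

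Next I would verify the regularity hypothesis: $\Psi$ is defined on an open set of $\mathcal H^{k-1}(SM)\times C^{\alpha}_{\Phi}(SM,SM)\times C^{\alpha}(SM,\mathbb R)$ and takes values in $C^{\alpha}(SM,TSM)$. The composition map $(Y,F)\mapsto Y\circ F$ loses one derivative in the usual Banach calculus on H\"older spaces; together with the multiplication $f\cdot D_{\Phi}F$, one shows $\Psi$ is $C^{k-2}$ as a map between these Banach spaces (this is where the exponent loss $k\mapsto k-2$ enters, and the restriction $k\ge 3$ becomes essential). The constraint that the footpoint of $F(v)$ lies in $\{\exp_g w:w\perp v\}$ cuts out a closed codimension-one Banach submanifold of $C^{\alpha}_{\Phi}(SM,SM)$ containing $\mathrm{id}$, which removes the one-parameter ambiguity coming from sliding $F$ along the flow.

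The heart of the argument is to show that the partial derivative $D_{(F,f)}\Psi$ at $(\underline X_g,\mathrm{id},1)$, restricted to this submanifold, is a Banach space isomorphism onto $C^{\alpha}(SM,TSM)$. A direct computation gives
\[
D_{(F,f)}\Psi(\underline X_g,\mathrm{id},1)\cdot(V,h) \;=\; D\underline X_g\cdot V \,-\, h\,\underline X_g \,-\, D_{\Phi}V,
\]
so one must solve, for a prescribed $W\in C^{\alpha}(SM,TSM)$, the linearized cohomological equation $\mathcal L V - h\,\underline X_g = W$, where $\mathcal L V := D_{\Phi}V - D\underline X_g\cdot V$ is the Lie-derivative of $V$ along the geodesic flow. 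Projecting along the Anosov splitting $TSM=E^{ss}\oplus E^c\oplus E^{su}$: the center component determines $h$ uniquely (after the orthogonality normalization that kills the center component of $V$), while on $E^{ss}\oplus E^{su}$ the operator $\mathcal L$ is boundedly invertible in H\"older norms because $D\Phi_t$ contracts on $E^{ss}$ and $D\Phi_{-t}$ contracts on $E^{su}$ exponentially, so that $V^{ss}=-\int_0^{\infty}(D\Phi_{-t})W^{ss}\circ\Phi_t\,dt$ and similarly for $V^{su}$ converge in $C^{\alpha}$ for some small $\alpha>0$ depending on the contraction rate and the H\"older exponent of the bundles (which, by the Anosov result cited in Section \ref{Sec-Anosov}, are themselves H\"older). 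This choice of $\alpha$ is the one appearing in the statement; the main obstacle is precisely this coupling between the H\"older exponent of the Anosov splitting, the contraction rates, and the regularity of the composition operator, all of which must be made compatible.

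Granted invertibility, the implicit function theorem produces a neighborhood $\mathcal U$ of $\underline X_g$ and unique $C^{k-2}$ maps $\mathcal U\to C^{\alpha}_{\Phi}(SM,SM)$ and $\mathcal U\to C^{\alpha}(SM,\mathbb R)$, $Y\mapsto (F_Y,f_Y)$, solving $\Psi(Y,F_Y,f_Y)=0$. Since $f_{\underline X_g}\equiv 1$, shrinking $\mathcal U$ ensures $f_Y\ge 1/2$, so $f_Y$ takes values in $[\tfrac12,+\infty)$. Finally, for the gain from $C^{k-2}$ to $C^{k-1}$ when the target space is weakened to $C^0$, I would use the standard regularity-versus-smoothness trade-off: the map $Y\mapsto (F_Y,f_Y)$ into the stronger H\"older space $C^{\alpha}$ is $C^{k-2}$, and interpolating (or differentiating once more the defining equation while allowing the target to lose H\"older regularity down to $C^0$) yields one extra order of differentiability with values in $C^0$, giving the claimed $C^{k-1}$ regularity in the weaker norm.
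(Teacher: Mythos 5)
Your proof reconstructs the argument from KKPW (and LMM) via the implicit function theorem applied to the map $\Psi$, which is exactly the route the paper cites and sketches in the paragraph preceding the proposition. Apart from minor bookkeeping slips --- the Duhamel formula for $V^{ss}$ should integrate along the backward flow, $V^{ss}(v)=\int_0^\infty (D\Phi_s)\,W^{ss}(\Phi_{-s}v)\,ds$, since $D\Phi_{-t}$ \emph{expands} on $E^{ss}$ and your forward integral would diverge there; a couple of signs in the linearization; and the orthogonality constraint is better described as sections of a corank-one subbundle than as a literal codimension-one Banach submanifold --- the setup, the linearized cohomological equation, the invertibility via the Anosov splitting with H\"older exponent $\alpha$ tied to the contraction rates, and the $C^{k-2}$-versus-$C^{k-1}$ tradeoff from composition losing a derivative in $C^\alpha$ but not in $C^0$ all match the cited proof.
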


Define $C^{\alpha}(S\M, N), C^{\alpha}_{\Phi}(S\M, N)$ analogously as  $C^{\alpha}(SM, N)$,
$C^{\alpha}_{\Phi}(SM, N)$.  A consequence of Proposition \ref{Con-regularity-1} is 

\begin{cor}\label{g-F_g}Assume  $g\in \Re^{k}(M)$.  There exist $\alpha\in (0, 1)$ and a
neighborhood $\mathcal{V}$ of $g$ in $\Re^k(M)$ such that the map
$g'\in \mathcal{V}\mapsto \pi_{g'}\circ F_{g'}$ is $C^{k-2}$ into
$C_{\Phi}^{\alpha}(SM, SM)$ and is $C^{k-1}$ into $C_{\Phi}^0(SM, SM)$; the map $g'\in \mathcal{V}\mapsto \wt{\pi}_{g'}\circ \wt{F}_{g'}$  is $C^{k-2}$ into $C_{\Phi}^{\alpha}(S\M, S\M)$ and is  $C^{k-1}$ into $C_{\Phi}^0(S\M, S\M)$. 
\end{cor}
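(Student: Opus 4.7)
The statement is a corollary of Proposition~\ref{Con-regularity-1}, so my plan is to recognize $\pi_{g'}\circ F_{g'}$ as a value of the map $Y\mapsto F_Y$ provided there, after a smooth reparametrization $g'\mapsto \underline X_{g'}$ of the input.

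The first step is to verify that the assignment $g'\mapsto \underline X_{g'}$ is $C^{\infty}$ (in fact real-analytic) as a map $\mathcal{V}\to \mathcal{H}^{k-1}(SM)$, for $\mathcal{V}$ a sufficiently small neighborhood of $g$ in $\Re^k(M)$. In local coordinates the geodesic spray of $g'$ on $TM$ is the polynomial expression $\sum_i v^i\partial_{x^i}-\sum_{i,j,k}\Gamma^{i}_{jk}(g')v^jv^k\partial_{v^i}$, whose coefficients are rational in the entries of $g'$ and their first derivatives, hence lie in $C^{k-1}$ and depend real-analytically on $g'\in\mathcal{M}^k(M)$. Restricting to $SM_{g'}$ and projecting by $D\pi_{g'}$ to $TSM$ only introduces further rational dependence on the entries of $g'$, so $g'\mapsto \underline X_{g'}$ into $\mathcal{H}^{k-1}(SM)$ is smooth; in particular, shrinking $\mathcal{V}$ if necessary, its image sits in the neighborhood $\mathcal{U}$ of $\underline X_g$ furnished by Proposition~\ref{Con-regularity-1}.

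Next I identify $\pi_{g'}\circ F_{g'}$ with the object $F_{\underline X_{g'}}$ produced by the implicit function theorem argument of \cite{KKPW, LMM}. By construction, $F_{g'}:SM\to SM_{g'}$ sends $g$-geodesics to $g'$-geodesics and places the footpoint of $F_{g'}(v)$ on the hypersurface $\{\exp_g Y:Y\perp v,\ \|Y\|_g<\epsilon\}$. Post-composing with $\pi_{g'}:SM_{g'}\to SM$ preserves the footpoint and sends $g'$-geodesic orbits to orbits of $\underline X_{g'}$, so $\pi_{g'}\circ F_{g'}\in C_\Phi^{\alpha}(SM,SM)$ satisfies the relation $\underline X_{g'}\circ(\pi_{g'}\circ F_{g'})=f\,D_\Phi(\pi_{g'}\circ F_{g'})$ for the appropriate positive reparametrisation factor $f$, as well as the transversality normalization built into the implicit function setup. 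By the uniqueness part of Proposition~\ref{Con-regularity-1}, $\pi_{g'}\circ F_{g'}=F_{\underline X_{g'}}$. Composing the smooth map $g'\mapsto \underline X_{g'}$ with the $C^{k-2}$ map $Y\mapsto F_Y$ into $C_\Phi^{\alpha}(SM,SM)$, and with the $C^{k-1}$ map $Y\mapsto F_Y$ into $C_\Phi^0(SM,SM)$, yields the two claimed regularities downstairs.

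Finally, I transfer the conclusion to $S\M$. The maps $\wt F_{g'}$ and $\wt\pi_{g'}$ are, by definition, the unique $G$-equivariant lifts of $F_{g'}$ and $\pi_{g'}$ respectively. Thus $\wt\pi_{g'}\circ\wt F_{g'}$ is the $G$-equivariant lift of $\pi_{g'}\circ F_{g'}$, and since $G$ acts cocompactly and by isometries of the Sasaki metric on $S\M$, the $C_\Phi^\alpha(S\M,S\M)$ and $C_\Phi^0(S\M,S\M)$ distances between two such $G$-equivariant lifts coincide (up to a fixed constant) with the corresponding distances of their quotients on $SM$. Consequently each difference quotient and derivative of the downstairs family in $g'$ lifts $G$-equivariantly and has the same norm upstairs, so the regularity established in the previous paragraph transfers verbatim to $g'\mapsto\wt\pi_{g'}\circ\wt F_{g'}$. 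The step I expect to require the most care is the uniqueness in the identification of Step~2, since one must check that the normalization placing the footpoint on $\{\exp_g Y:Y\perp v\}$ exactly matches the transversality constraint under which the implicit function theorem is solved in \cite{KKPW}; this is a local computation but is the single point where the particular form of $F_{g'}$ enters.
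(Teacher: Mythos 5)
Your proposal is correct and follows essentially the same route the paper intends: the corollary is obtained by composing the smooth (metric $\mapsto$ projected geodesic spray) map $g'\mapsto \underline{X}_{g'}\in\mathcal{H}^{k-1}(SM)$ with the $C^{k-2}$ (resp. $C^{k-1}$) maps $Y\mapsto F_Y$ of Proposition \ref{Con-regularity-1}, identifying $\pi_{g'}\circ F_{g'}$ with $F_{\underline{X}_{g'}}$ via the common footpoint normalization, and lifting to $S\M$ by $G$-equivariance. The paper states the corollary without written proof, and your filled-in details, including the care about matching the transversality normalization, are the intended argument.
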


The regularity of $g'\mapsto D{\pi}_{g'}\circ {E}_{g'}$ and  $g'\mapsto D\wt{\pi}_{g'}\circ {\bf E}_{g'}$ can be analyzed analogously (\cite{Con}).  Let $\mathcal{G}$ be
the Grassmann bundle of $u$-planes on $TSM$, where $u=\mbox{dim}
E^{\rm  su}_{g}$. Let $C_{\Phi}^{\alpha}(SM, \mathcal{G})$ be the
space of $\alpha$-H\"{o}lder maps $\widehat{F}: SM\to \mathcal{G},
\widehat{F}(v)=(F(v), E(v))$, where $F\in C_{\Phi}^{\alpha}(SM,
SM)$, with the topology of the $\alpha$-H\"{o}lder norm on $F,
D_{\Phi}F$ and $E$.  Then instead of $\Psi$, one can consider the
maps
\begin{eqnarray*}
&&\widehat{\Psi}_{\pm}:\ \mathcal{H}^{k-1}(SM)\times
C_{\Phi}^{\alpha}(SM, \mathcal{G})\times
C^{\alpha}(SM, \Bbb R)\to C^{\alpha}(SM, TSM\oplus \mathcal{G})\\
&&\ \  \widehat{\Psi}_{\pm}(Y, \widehat{F}, f)=\left(Y\circ F-f\cdot
D_{\Phi}F,\ D\psi_{\tau_Y(v)}\circ F(\Phi_{\pm 1}(v))E(\Phi_{\pm
1}(v))\right),
\end{eqnarray*}
where $\psi_t$ is the time $t$ map of the flow generated by $Y$ and
$\tau_{Y}$ is the time change such that
$$\psi_{\tau_Y(v)}\circ F_{Y}(\Phi_{\pm 1}(v))=F_{Y}(v),\  \forall v\in SM.$$
Again, by hyperbolicity of the flow generated by $Y$ which is close to $\underline{X}_{g}$ and the invariance of the corresponding strong stable and
unstable bundles, denoted by $E_{Y}^{\rm ss}, E_{Y}^{\rm su}$, the implicit
function theory applies for $\widehat{\Psi}_+, \widehat{\Psi}_{-}$
and gives the following.

\begin{prop}(\cite[Proposition 2.1]{Con})\label{Con-regularity-2} For $g\in \Re^{k}(M)$, there exist a neighborhood
$\mathcal{U}$ of $\underline{{X}}_{g}$ in $\mathcal{H}^{k-1}(SM)$ and $\alpha\in
(0, 1)$ such that the map $\mathcal{U}\to C^{\alpha}_{\Phi}(SM,
\mathcal{G}):\ Y\mapsto (v\mapsto E_{Y}\circ F_{Y}(v))$ is $C^{k-3}$ and
the map $\mathcal{U}\to C^{0}_{\Phi}(SM, \mathcal{G}):\ Y\mapsto
E_{Y}\circ F_{Y}$ is $C^{k-2}$, where $E_{Y}=E^{\rm ss}_{Y} \ \mbox{or}\
E^{\rm su}_{Y}$. 
\end{prop}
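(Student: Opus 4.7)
The plan is to apply the implicit function theorem to the maps $\widehat{\Psi}_{\pm}$ exactly as set up in the paragraph preceding the statement, with base point $(\underline{X}_g, \widehat{F}_0, 1)$, where $\widehat{F}_0(v) = (v, E_g(v))$ with $E_g = E_g^{\rm ss}$ paired with $\widehat{\Psi}_+$ and $E_g = E_g^{\rm su}$ paired with $\widehat{\Psi}_-$. First I would verify that $\widehat{\Psi}_{\pm}$ vanishes at the base point: the first component vanishes since $\underline{X}_g = D_{\Phi}{\rm Id}$ and $f = 1$; the second vanishes because for $Y = \underline{X}_g$ one has $\tau_Y(v) \equiv \pm 1$ and the flow $\psi_{\pm 1}$ coincides with $\Phi_{\pm 1}$, under which $E_g^{\rm ss}$ and $E_g^{\rm su}$ are invariant.

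Next I would record the regularity of $\widehat{\Psi}_{\pm}$ as a map between the Banach spaces indicated. The only new ingredient relative to the operator $\Psi$ from Proposition \ref{Con-regularity-1} is the factor $D\psi_{\tau_Y(v)}$, which costs one extra derivative of $Y$: the flow $\psi^Y$ depends $C^{k-1}$ on $Y \in \mathcal{H}^{k-1}(SM)$, so its differential only depends $C^{k-2}$ on $Y$. Thus $\widehat{\Psi}_{\pm}$ is $C^{k-3}$ in the $\alpha$-H\"{o}lder setting and $C^{k-2}$ in the $C^0$ setting, matching the advertised losses of one degree compared with Proposition \ref{Con-regularity-1}.

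The heart of the argument is the invertibility of the partial derivative $\partial_{(\widehat{F}, f)} \widehat{\Psi}_{\pm}$ at the base point. This derivative has a block lower-triangular structure in the variables $((F, f), E)$: the $(F, f)$-block is the linearized operator already treated in Proposition \ref{Con-regularity-1}, whose invertibility comes from the hyperbolic splitting for $\Phi_t$. The new diagonal block acting on the Grassmannian variable is the linearization at $E_g$ of the graph transform induced on $u$-planes in $TSM$ by $D\Phi_{\pm 1}$. By the classical cone-field estimates for Anosov flows, this graph transform is a strict contraction in a neighborhood of the invariant section $E_g^{\rm ss}$ for $\widehat{\Psi}_+$ and of $E_g^{\rm su}$ for $\widehat{\Psi}_-$; the sign $\pm$ is chosen precisely for this reason. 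Hence ${\rm Id}$ minus the contraction is invertible via a Neumann series, and the full partial derivative is an isomorphism onto its codomain.

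With these ingredients in place, the implicit function theorem produces a local map $Y \mapsto (F_Y, E_Y \circ F_Y, f_Y)$ of the asserted regularity on a neighborhood $\mathcal{U}$ of $\underline{X}_g$, and extracting the middle component gives the proposition. The main technical obstacle, in my view, is making the Grassmannian contraction quantitative in the $\alpha$-H\"{o}lder norm: one has to choose $\alpha$ small enough that the loss of H\"{o}lder regularity produced by one application of the graph transform is strictly dominated by the Lyapunov contraction rate of the flow in the relevant direction, and the linearization must be carried out in a chart on the nonlinear Grassmann bundle $\mathcal{G}$ along the invariant section, so some bookkeeping is needed to keep the nonlinear remainders under control.
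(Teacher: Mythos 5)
Your proposal follows essentially the same approach the paper indicates — apply the implicit function theorem to $\widehat{\Psi}_{\pm}$, using the already-established invertibility of the $(F,f)$-block from Proposition \ref{Con-regularity-1} together with the contracting graph transform on the Grassmannian block — and the analysis of where the extra degree of regularity is lost (the $D\psi_{\tau_Y}$ factor) matches the proposition's statement. There is one small sign slip worth noting: the defining relation $\psi_{\tau_Y(v)}\circ F_Y(\Phi_{\pm 1}(v))=F_Y(v)$ gives $\tau_Y(v)=\mp 1$ at $Y=\underline{X}_g$ (not $\pm 1$), so the operator entering the Grassmannian block is $D\Phi_{\mp 1}$ rather than $D\Phi_{\pm 1}$ as you wrote; your final assignment of $E_g^{\rm ss}$ to $\widehat{\Psi}_+$ and $E_g^{\rm su}$ to $\widehat{\Psi}_-$ is nevertheless the correct one and is consistent with the corrected sign, so the argument is unaffected.
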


Let $\wt{\mathcal{G}}$ be 
the Grassmann bundle of $u$-planes on $TS\M$  (where $u=\mbox{dim}
{\bf E}_{g}^{\rm su}$) and define $C^{\alpha}_{\Phi}(S\M, \wt{\mathcal{G}})$ in analogy with   $C_{\Phi}^{\alpha}(SM, \mathcal{G})$. The following is an  application of Proposition \ref{Con-regularity-2} to the geodesic flows. 

\begin{cor}\label{g_E_g}
There exist $\alpha\in (0, 1)$ and a neighborhood $\mathcal{V}$ of
$g$ in $\Re^k(M)$ such that the map $g'\in \mathcal{V}\mapsto
D\pi_{g'}\circ E_{g'}\circ F_{g'}$ is $C^{k-3}$ into
$C^{\alpha}_{\Phi}(SM, \mathcal{G})$ and is $C^{k-2}$ into
$C^0_{\Phi}(SM, \mathcal{G})$, where $E_{g'}$ is any one of ${E}_{g'}^{\rm ss}, {E}_{g'}^{\rm su}$ and ${E}_{g'}^{\rm c}$. Similarly,  the map $g'\in \mathcal{V}\mapsto
D\wt{\pi}_{g'}\circ {\bf E}_{g'}\circ \wt{F}_{g'}$ is $C^{k-3}$ into
$C^{\alpha}_{\Phi}(S\M, \wt{\mathcal{G}})$ and is $C^{k-2}$ into
$C^0_{\Phi}(S\M, \wt{\mathcal{G}})$, where ${\bf E}_{g'}$ is any one of ${\bf E}_{g'}^{\rm ss}, {\bf E}_{g'}^{\rm su}$ and ${\bf E}_{g'}^{\rm c}$.
\end{cor}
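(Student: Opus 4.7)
My plan is to deduce Corollary \ref{g_E_g} from Proposition \ref{Con-regularity-2} in essentially the same way Corollary \ref{g-F_g} was obtained from Proposition \ref{Con-regularity-1}, by factoring the assignment $g' \mapsto D\pi_{g'}\circ E_{g'}\circ F_{g'}$ through the smooth dependence of the geodesic spray on the metric.

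First, I would show that the assignment $g' \mapsto \underline{X}_{g'}\in \mathcal{H}^{k-1}(SM)$ (the pushforward via $D\pi_{g'}$ of the generator of the $g'$-geodesic flow on $SM_{g'}$) is $C^{\infty}$ on a $C^k$-neighborhood $\mathcal{V}$ of $g$ in $\Re^k(M)$. In local coordinates, $\underline{X}_{g'}$ is built algebraically from the components of $g'$ and their first derivatives via the Christoffel symbols (and the normalization $v/\|v\|_g$), so the map is rational in the metric entries and linear in their first derivatives. This gives not only $C^{\infty}$ (indeed real-analytic) dependence from $C^k$-metrics into $C^{k-1}$-vector fields, but also $\underline{X}_{g'}\in\mathcal{U}$ for the neighborhood $\mathcal{U}\subset \mathcal{H}^{k-1}(SM)$ of Proposition \ref{Con-regularity-2}, provided $\mathcal{V}$ is chosen small enough.

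Next, I would identify the objects produced by Proposition \ref{Con-regularity-2} applied to $Y=\underline{X}_{g'}$ with the ones in the statement. The flow generated by $\underline{X}_{g'}$ on $SM$ is conjugate, via the fiberwise radial rescaling $\pi_{g'}:SM_{g'}\to SM$, to the $g'$-geodesic flow on $SM_{g'}$ (up to the time change captured by $f_{\underline{X}_{g'}}$). Uniqueness of the Morse correspondence in the implicit function theorem gives $F_{\underline{X}_{g'}}=\pi_{g'}\circ F_{g'}$, and the $D\pi_{g'}$-pushforwards of $E_{g'}^{\rm ss}$, $E_{g'}^{\rm su}$ are the $D\Phi$-invariant strong stable/unstable bundles $E_{\underline{X}_{g'}}^{\rm ss}$, $E_{\underline{X}_{g'}}^{\rm su}$ of the flow of $\underline{X}_{g'}$ on $SM$; hence $E_{\underline{X}_{g'}}\circ F_{\underline{X}_{g'}}=D\pi_{g'}\circ E_{g'}\circ F_{g'}$. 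Composing the smooth map $g'\mapsto \underline{X}_{g'}$ with the $C^{k-3}$ (resp.\ $C^{k-2}$) map of Proposition \ref{Con-regularity-2} into $C_{\Phi}^{\alpha}(SM,\mathcal{G})$ (resp.\ $C_{\Phi}^{0}(SM,\mathcal{G})$) yields the stated regularity for $E_{g'}^{\rm ss}$ and $E_{g'}^{\rm su}$. The central case $E_{g'}^{\rm c}$ is immediate: $D\pi_{g'}\circ E_{g'}^{\rm c}$ is spanned at each point by $\underline{X}_{g'}$, so $C^{\infty}$-dependence on $g'$ (precomposed with the already-controlled $F_{g'}$) gives at least the claimed regularity.

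Finally, for the upstairs statement on $S\widetilde{M}$, I would lift equivariantly. All the ingredients ($\widetilde{X}_{g'}$, $\widetilde{F}_{g'}$, $\widetilde{\pi}_{g'}$, ${\bf E}_{g'}$) are $G$-invariant lifts of their downstairs analogues, and the $C^{\alpha}_{\Phi}$- and $C^{0}_{\Phi}$-norms on $S\widetilde{M}$ agree with their downstairs versions on any compact fundamental domain. Hence the regularity in $g'$ of $D\widetilde{\pi}_{g'}\circ{\bf E}_{g'}\circ\widetilde{F}_{g'}$ follows directly from that of $D\pi_{g'}\circ E_{g'}\circ F_{g'}$. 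The only genuinely delicate point in the whole argument is the identification $F_{\underline{X}_{g'}}=\pi_{g'}\circ F_{g'}$ and the corresponding identification of the invariant bundles; once these are in place, the rest is chain rule plus the smoothness of $g'\mapsto \underline{X}_{g'}$.
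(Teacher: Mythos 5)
Your argument is correct and is exactly the route the paper intends: the paper states Corollary \ref{g_E_g} as an immediate application of Proposition \ref{Con-regularity-2} without spelling out details, and your composition $g'\mapsto \underline{X}_{g'}\mapsto E_{\underline{X}_{g'}}\circ F_{\underline{X}_{g'}}$ together with the identifications $F_{\underline{X}_{g'}}=\pi_{g'}\circ F_{g'}$ and $E_{\underline{X}_{g'}}\circ F_{\underline{X}_{g'}}=D\pi_{g'}\circ E_{g'}\circ F_{g'}$ (via conjugacy by $\pi_{g'}$) is precisely what is being invoked, with the central bundle handled separately as you do and the lift to $S\widetilde{M}$ by $G$-equivariance as in Corollary \ref{g-F_g}.
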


For  $\lambda\in (-1,
1)\mapsto g^{\lambda}\in \Re^k(M)$, we write ${\ov{X}}^{\l}$ for the $\wt{g}^{\l}$-geodesic spray, 
$({\bf E}^{\rm ss})^{\lambda}$ for the $\wt{g}^{\l}$-stable bundle  and   ${\rm
Div}^{\l}$ for the divergence operator associated with the $\wt{g}^{\l}$-stable foliation. 

\begin{prop}\label{regularity-Div} Let $g\in \Re^k(M)$. There exist $\alpha\in (0,
1)$ and a neighborhood $\mathcal{V}_g$ of $g$ in $\Re^{k}(M)$ such
that for any $C^{k}$ curve  $\lambda\in
(-1, 1)\mapsto g^{\lambda}\in \mathcal{V}_{g}$ with $g^0=g$, 
\begin{itemize}
\item[i)] $\lambda\mapsto \ov{X}^{\lambda}$ is  $C^{k-3}$ into  $C^{\alpha}(\M\times \partial\M,  TT\M)$  and is $C^{k-2}$ into $C^{0}(\M\times \partial\M, TT\M)$,
\item[ii)] $\l\mapsto ({\bf E}^{\rm ss})^{\lambda}$ is $C^{k-3}$ into  $C^{\alpha}(\M\times \partial\M,  \wt{\mathcal{G}})$ and is $C^{k-2}$ into $C^{0}(\M\times \partial\M, \wt{\mathcal{G}})$, and 
\item[iii)] $\lambda\mapsto {\rm
  Div}^{\lambda}\ov{X}^{\lambda}$ is $C^{k-3}$ into $C^{\alpha}(\M\times \partial\M, \Bbb
  R^+)$ and is $C^{k-2}$ into $C^{0}(\M\times \partial\M, \Bbb
  R^+)$.
  \end{itemize}
\end{prop}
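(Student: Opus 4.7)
The strategy is to transfer the $\lambda$-dependent objects, which naturally live on $S\M^{\l}$, to the fixed space $S\M\cong\M\times\partial\M$ through the Morse correspondence $\wt F_{g^{\l}}$ and the normalization map $\wt\pi_{g^{\l}}$, and then to invoke Corollaries \ref{g-F_g} and \ref{g_E_g}.

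For part (i), observe that $\ov X^{\l}(x,\xi)\in T_{(x,{\rm v}^{\l})}T\M$ is the horizontal lift (with respect to the Levi-Civita connection of $\wt g^{\l}$) of the unit $\wt g^{\l}$-vector ${\rm v}^{\l}(x,\xi)$ pointing toward $\xi$. Both ${\rm v}^{\l}$ (which is $\wt\pi_{g^{\l}}\circ\wt F_{g^{\l}}$ applied to the canonical section of $S\M$ corresponding to $(x,\xi)$) and the horizontal subspace (a linear expression in the Christoffel symbols of $\wt g^{\l}$) can be recovered from $\wt\pi_{g^{\l}}\circ\wt F_{g^{\l}}$ and from the metric $\wt g^{\l}$ itself. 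Corollary \ref{g-F_g}, combined with the $C^k$ dependence of $\wt g^{\l}$ on $\l$, then yields the stated regularity of $\ov X^{\l}$.

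Part (ii) is a direct translation of Corollary \ref{g_E_g}: pulling $({\bf E}^{\rm ss})^{\l}$ back through $\wt F_{g^{\l}}$ and composing with $D\wt\pi_{g^{\l}}$ produces a section of $\wt{\mathcal{G}}$ over $\M\times\partial\M$ which, by the corollary, is $C^{k-3}$ into $C^\alpha$ and $C^{k-2}$ into $C^0$. Untwisting this identification is a $C^k$-in-$\l$ algebraic operation governed by $g^{\l}$ and preserves these regularities.

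For part (iii), we avoid differentiating $\ov X^{\l}$ directly by invoking (\ref{Div-trace}):
\[
{\rm Div}^{\l}\ov X^{\l}(x,\xi)\;=\;-\mathrm{tr}\bigl((S^{\l})'_{{\bf v}^{\l}}(0)\bigr),
\]
where $(S^{\l})'_{{\bf v}^{\l}}(0):\ov{N_0(\gamma_{{\bf v}^{\l}})}\to V_{{\rm v}^{\l}}$ is exactly the slope of $({\bf E}^{\rm ss})^{\l}_{{\bf v}^{\l}}$ relative to the horizontal/vertical splitting determined by $\wt g^{\l}$. Hence ${\rm Div}^{\l}\ov X^{\l}$ is an algebraic function of $({\bf E}^{\rm ss})^{\l}$ and of $\wt g^{\l}$, and the regularity claim follows from (i), (ii) and the $C^k$ dependence of $g^{\l}$ on $\l$.

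The main technical subtlety is the H\"older drop inherent in the invariant distributions: $({\bf E}^{\rm ss})^{\l}$ is only H\"older continuous transverse to the flow, which is why the $\l$-regularity in the H\"older topology drops by one order relative to the $C^0$ topology. This drop is already built into Corollaries \ref{g-F_g} and \ref{g_E_g}, and the algebraic reformulation (\ref{Div-trace}) guarantees that no additional derivatives are lost when passing from the stable bundle to the leafwise divergence.
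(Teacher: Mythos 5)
Your overall route is the paper's: reduce (i) and (ii) to the structural-stability regularity of Corollaries \ref{g-F_g} and \ref{g_E_g}, and deduce (iii) from (ii) via (\ref{Div-trace}) and the description of ${\bf E}^{\rm ss}$ as the graph of $S'_{\bfv}(0)$ (that last step is exactly the paper's). The gap is in how you pass from the Morse-corresponded objects back to the objects at $(x,\xi)$. The correspondence does not fix footpoints: $\wt F_{g^{\l}}(x,\xi)=(f^{\l}_{\xi}(x),\xi)$ with $f^{\l}_{\xi}(x)\neq x$ in general, and $\wt\pi_{g^{\l}}$ normalizes in $\wt g$, not in $\wt g^{\l}$. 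So $\wt\pi_{g^{\l}}\circ\wt F_{g^{\l}}$ applied to the canonical section does not give your ${\rm v}^{\l}(x,\xi)$, and the object Corollary \ref{g_E_g} actually controls, $D\wt\pi_{g^{\l}}\circ({\bf E}^{\rm ss})^{\l}\circ\wt F_{g^{\l}}$, is the stable plane over the displaced footpoint $f^{\l}_{\xi}(x)$, not over $x$. To recover $\ov X^{\l}(x,\xi)$ and $({\bf E}^{\rm ss})^{\l}$ at $(x,\xi)$ one must evaluate at $((f^{\l}_{\xi})^{-1}(x),\xi)$ and then compare with the value at $(x,\xi)$; this comparison is not a ``$C^k$-in-$\l$ algebraic operation governed by $g^{\l}$'' but requires regularity of the unperturbed objects as the footpoint moves inside the weak stable leaf.

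For $\ov X$ this extra input is the comparatively easy fact that $y\mapsto\ov X(y,\xi)$ is $C^{k-1}$, which, together with the length renormalization, gives the paper's three-term decomposition $(a)_{\l}+(b)_{\l}+(c)_{\l}$ and then Taylor's formula. For ${\bf E}^{\rm ss}$ the needed input is genuinely nontrivial, since a priori the stable distribution is only H\"older on $S\M$: the paper proves that $y\mapsto{\bf E}^{\rm ss}(y,\xi)$ is $C^{k-1}$ along each leaf $W^{\rm s}(x,\xi)$ by combining the local $C^{k-1}$ graph structure of the strong stable manifolds $W^{\rm ss}$, smoothness along the flow direction from $D{\bf \Phi}_t$-invariance, and the fact that these two directions chart $W^{\rm s}(x,\xi)$. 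Your write-up never invokes any leafwise regularity, so the ``untwisting'' step in (ii), and the implicit footpoint adjustment in (i), are unjustified as written; once you insert the footpoint decomposition and the leafwise $C^{k-1}$ smoothness of $\ov X(\cdot,\xi)$ and ${\bf E}^{\rm ss}(\cdot,\xi)$, your argument closes and coincides with the paper's proof.
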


\begin{proof}Express the $(\wt{g},
\wt{g}^{\lambda})$-Morse correspondence map $\wt{F}_{g^{\lambda}}$ from $\M\times
\partial \M$ to itself as
\[
\wt{F}^{\lambda}(x, \xi)=\big(f_{\xi}^{\lambda}(x), \xi\big), \ \forall (x,
\xi)\in \M\times \partial\M,
\]
where  $f^{\l}_{\xi}$ records the change of the footpoint for the unit vector pointing at $\xi$ in the boundary.  
For $(x, \xi)\in \M\times \partial \M$, we transform  $\overline{X}_{\wt{g}}(x,
\xi)$ to  $\overline{X}_{\wt{g}^{\l}}(x,
\xi)$  in three steps: the first is to follow the footpoint of the  inverse of the $(g^{\l},g)$-Morse correspondence  from $\overline{X}_{\wt{g}}(x,
\xi)$ to $\overline{X}_{\wt{g}}((f_{\xi}^{\l})^{-1}(x),
\xi)$ with the constraint that the vector remains  within $TW^{\rm{s}} (x,\xi )$;   the second is to use the  $(g^{\l}, g)$-Morse correspondence  from $\overline{X}_{\wt{g}}((f_{\xi}^{\l})^{-1}(x),
\xi)$ to $\overline{X}_{\wt{g}^{\l}}(x,
\xi)/\|\overline{X}_{\wt{g}^{\l}}(x,
\xi)\|_{\wt{g}};$  the third is to adjust the length of  $\overline{X}_{\wt{g}^{\l}}(x,
\xi)/\|\overline{X}_{\wt{g}^{\l}}(x,
\xi)\|_{\wt{g}}$ to be 1 in the metric $\wt{g}^{\l}$. Hence, 
\begin{align*}
 & \overline{X}_{\wt{g}^{\l}}(x,
\xi)-\overline{X}_{\wt{g}}(x,
\xi)\\
&=\left(\overline{X}_{\wt{g}^{\l}}(x,
\xi)-\frac{\overline{X}_{\wt{g}^{\l}}(x,
\xi)}{\|\overline{X}_{\wt{g}^{\l}}(x,
\xi)\|_{\wt{g}}}\right)+\left(\frac{\overline{X}_{\wt{g}^{\l}}(x,
\xi)}{\|\overline{X}_{\wt{g}^{\l}}(x,
\xi)\|_{\wt{g}}}-\overline{X}_{\wt{g}}((f_{\xi}^{\l})^{-1}(x),
\xi)\right)\\
&\ \ \ \ \ \ \ \ \ \ \ \ \ \ \ \ \ \ \ \ \ \ \ \ \ \ \ \ \ \ \ \ \ \ \ \ \ \ \ \ \ \   +\left(\overline{X}_{\wt{g}}((f_{\xi}^{\l})^{-1}(x),
\xi)-\overline{X}_{\wt{g}}(x,
\xi)\right)\\
&=:(a)_{\l}+(b)_{\l}+(c)_{\l}. 
\end{align*}
Note that $(a)_0, (b)_0$ and  $(c)_0$ are all zero. So the regularity  of  $\lambda\mapsto \ov{X}^{\lambda}$ will follow from that of $(a)_{\l}, (b)_{\l}$ and  $(c)_{\l}$ by Taylor's formula.  This is true since $(a)_{\l}$ corresponds to length change and is $C^{k}$ in $\l$, $(b)_{\l}$ is $C^{k-2}$  (or $C^{k-3}$) in $\l$ depending on $\alpha=0$ (or not) by  Corollary \ref{g-F_g},  while $(c)_{\l}$ has the same regularity as $(b)_{\l}$ since $\ov{X}(x, \xi)$ is $C^{k-1}$ in the $x$-coordinate.

Similarly, we write ${\bf
v}^{\lambda}=\ov{X}^{\lambda}(x, \xi)$ and 
\begin{align*}
 ({\bf E}^{\rm ss})^{\lambda}({{\bf v}^{\lambda}})-({\bf E}^{\rm ss})^{0}({{\bf
  v}^{0}})=& \ \left(({\bf E}^{\rm ss})^{\lambda}({{\bf v}^{\lambda}})-({\bf E}^{\rm ss})^{0}((f_{\xi}^{\lambda})^{-1}(x),
  \xi)\right)\\
& \ \ +\left(({\bf E}^{\rm
ss})^{0}((f_{\xi}^{\lambda})^{-1}(x),
  \xi)-({\bf E}^{\rm ss})^{0}({{\bf
  v}^{0}})\right)\\
  =:& \  (d)_{\lambda}+(e)_{\lambda}.
\end{align*}
This means we can transport $({\bf E}^{\rm ss})^{0}({{\bf
  v}^{0}})$ to $({\bf E}^{\rm ss})^{\lambda}({{\bf v}^{\lambda}})$  in two steps: first is to transport $({\bf E}^{\rm ss})^{0}({{\bf
  v}^{0}})$ to $({\bf E}^{\rm ss})^{0}((f_{\xi}^{\lambda})^{-1}(x),
  \xi)$ along the tangent bundle of $W^{\rm{s}}(x, \xi)$ and follow the   footpoint of the  inverse of the $(\wt{g}^{\l},\wt{g})$-Morse correspondence; the second is to use the Morse correspondence for the stable bundle from $({\bf E}^{\rm ss})^{0}((f_{\xi}^{\lambda})^{-1}(x),
  \xi)$  to $ ({\bf E}^{\rm ss})^{\lambda}({{\bf v}^{\lambda}})$.  Note that $(d)_{0}, (e)_0$ are zero. The regularity  of  $\lambda\mapsto ({\bf E}^{\rm ss})^{\lambda}$ will follow from that of $(d)_{\l}, (e)_{\l}$ by Taylor's formula, which will follow by Corollary \ref{g_E_g} if we can show the  $C^{k-1}$ dependence of ${\bf E}^{\rm ss}(x,  \xi)$ on the $x$-coordinate.  This is
true because each ${\bf E}^{\rm ss}(y, \xi)$ is the tangent plane of
the strong stable manifold $W^{\rm{ss}}(y, \xi)$.  Locally,
$W^{\rm{ss}}(x, \xi)$ is a $C^{k-1}$ graph from ${\bf E}_{(x,
\xi)}^{\rm ss}$ to ${\bf E}_{(x, \xi)}^{\rm c}\oplus{\bf E}_{(x,
\xi)}^{\rm su}$. This means,  locally, $y\mapsto {\bf E}^{\rm ss}(y,
\xi)$ is $C^{k-1}$ along the leaf $W^{\rm{ss}}(x, \xi)$. On the other
hand, by invariance of the strong stable bundle with respect to the
geodesic flow, $y\mapsto {\bf E}^{\rm ss}(y, \xi)$ is smooth as $y$
varies on the geodesic passing through $x$ asymptotic to $\xi$. By
invariance of the strong stable leaf under  the  geodesic flow, $W^{\rm{ss}}(x, \xi)$ and the time direction (i.e. the direction of the geodesic spray) consist of a coordinate chart for $W^{\rm{s}}(x, \xi)$. This
shows, locally at $x$, $y\mapsto {\bf E}^{\rm ss}(y, \xi)$ is
$C^{k-1}$ along $W^{\rm{s}}(x, \xi)=\M\times \{\xi\}$.

Finally iii) is just an application  of ii) noting that for any $g\in \Re^k(M)$,  we have
\[
({\rm Div}\ov{X})(x, \xi)={\mbox{Trace of}}\ S'_{\bf v}(0),\ \forall  {\bfv}=(x, \xi)\in \M\times \partial\M, 
\] and the stable bundle ${\bf E}^{\rm ss}$ at $\bfv$
is the graph of the mapping $S'_{\bfv}(0)$, considered as a map
from $ \overline{N_0(\gamma _\bfv )}$ to $V_{\rm v}$ 
sending $Y$ to
$S'_{\bfv}(0)Y$, where  $\overline{N_0(\gamma _\bfv )} := \{ {\bf {w}}, {\bf w} \in H_{\rm v}, {\bf {w}} \perp \overline X_\bfv \}$. 
\end{proof}

\subsection{Regularity of the harmonic measure}In this subsection, we prove Theorem \ref{regularity-harmonic measure} following the sketch that  we gave in the Section 1.  

For $g^{\l}\in \Re^k(M)$, we introduce a metric on $\partial \M$ as follows.  Let $\varkappa>0$. For $x\in \M$, define
\begin{equation}\label{distance-bd}
d_x^{\varkappa, \l}(\zeta, \eta):=e^{-\varkappa(\zeta|\eta)_x^{\l}}, \
\forall \zeta, \eta\in \pp\M,
\end{equation}
where $(\cdot|\cdot)_x^{\l}$ is the Gromov product defined in (\ref{Gromov-product-def-sec3}) for $d_{\wt{g}^{\l}}$.  
 If $\varkappa$ is small,  each  $d_x^{\varkappa,\l}(\cdot, \cdot)$ defines a distance on $\pp\M$, the so-called
 $\varkappa$-Busemann distance  (\cite{K2}), which is related
to the $\wt{g}^{\l}$-Busemann functions $b^{\l}$ since
\begin{equation}\label{Bus-xi-zeta}
b_{{\bf v}}^{\l}(y)=\lim\limits_{\zeta, \eta\to
\xi}\left((\zeta|\eta)_y^{\l}-(\zeta|\eta)_x^{\l}\right),\ \mbox{for any} \
{\bf v}=(x, \xi)\in S\M,\ y\in \M.
\end{equation}
Let  ${\mathtt{b}}>0$. For continuous functions $f$ on $SM=M_0\times \partial \M$, define 
\[
\|f\|_{{\mathtt{b}}}^{\l}:=\sup\limits_{x, \xi}|\wt{f}(x, \xi)|+\sup\limits_{x, \xi_1,
\xi_2} |\wt{f}(x, \xi_1)-\wt{f}(x, \xi_2)|e^{{\mathtt{b}}(\xi_1|\xi_2)_x^{\l}}. 
\]
Let   $\mathcal{H}_{{\mathtt{b}}}^{\l}$ be the Banach space of continuous
functions $f$ on $SM$ with $\|f\|_{{\mathtt{b}}}^{\l}<+\infty$.  Elements of $\mathcal{H}_{{\mathtt{b}}}^{\l}$ are  continuous  on $SM$  and  H\"{o}lder continuous with respect to  the direction changes. 

 Recall that the transition probability of the $\wt{g}^{\l}$-Brownian motion on  the stable leaf $W^{{\rm{s}}}_{\wt{g}^{\l}}(x, \xi)=\M\times \{\xi\}$  starting  from $(x, \xi)$ is given by 
\[
{\bf p}^{\l}(t, (x, \xi), d(y, \eta)):=p^{
\l}(t, x, y)\ d{\rm Vol}^{\l}(y)\delta_{\xi}(\eta),
\]
where $\{p^{\l}(t, x, \cdot)\}_{x\in \M, t\in \Bbb R_+}$ is the transition probabilities of the  $\wt{g}^{\l}$-Brownian motion on $\M$,  $\delta_{\xi}(\eta)$ is the Dirac function at $\xi$  and ${\rm Vol}^{\l}$ is the $\wt{g}^{
\l}$ volume element.  Then ${\bf p}^{\l}$ descends to be the transition probability of $g^{\l}$-Brownian motion the stable leaves of $SM$: for   $(x, \xi), (y, \eta)\in SM=M_0\times \partial\M$, the transition  probability is 
\begin{align*}
{\bf q}^{\l}\left(t, (x, \xi), d(y, \eta)\right)=&\sum_{\beta\in G}{\bf p}^{\l}(t, (x, \xi), d(\beta y, \beta\eta))\\
=& \sum_{\beta\in G}p^{\l}(t, x, \beta y)d{\rm Vol}^{\l}(y)\delta_{\xi}(\beta \eta). 
\end{align*}
Let ${{\rm Q}}_t^{\l}$ $(t\geq 0)$ be given in (\ref{def-Q-t}). It defines  the action
of $[0, +\infty)$ on continuous functions $f$ on $SM$ which
describes the $\Delta^{\W}_{g^{\l}}$-diffusion.  It was shown in \cite{L} that for sufficiently small ${\mathtt{b}}>0$, 
there exists $T>0$ such that ${{\rm Q}}_{T}^{\l}$ is a contraction on $\mathcal{H}_{{\mathtt{b}}}^{\l}$ and hence,   as $t \to \infty $,
${{\rm Q}}_t^{\l}$ converges to the mapping $f\mapsto \int  f\ d{\bf m}^{\l}$
    exponentially in $t$ for $f\in \mathcal{H}_{{\mathtt{b}}}^{\l}$.  Thus,   each harmonic measure  ${\bf m}^{\l}$ is a fixed point of the dual operation $({{\rm Q}}_{T}^{\l})^*$ in the dual space $(\mathcal{H}^{\l}_{{\mathtt{b}}})^*$ with the weak topology, where 
   \[({{\rm Q}}_{T}^{\l})^*(\mu)(f):=\mu({{\rm Q}}_{T}^{\l}(f)),\ \mbox{for all}\ \mu\in (\mathcal{H}^{\l}_{{\mathtt{b}}})^*,\  f\in  \mathcal{H}_{{\mathtt{b}}}^{\l}. \]  
The following proposition shows  that $\mathcal{H}_{{\mathtt{b}}}^{\l}$ can be chosen to be independent of  $g^{\l}$.

\begin{prop}\label{Uni-contraction} Let $\mathcal{V}_g$ be as in Theorem \ref{diff-HK-estimations-gen}. For every ${\mathtt{b}}>0$ small enough, there exist $C>0$ and $\Bbbk<1$ such that,  for all $\l\in (-1, 1)$, $t>0$ and $f\in \mathcal{H}_{{\mathtt{b}}}^{0}$, 
\begin{equation*}
\left\|{\rm Q}_t^{\l}f-\int f\ d{\bf m}^{\l}\right\|_{{\mathtt{b}}}\leq C\Bbbk^t\|f\|_{{\mathtt{b}}}. 
\end{equation*}
\end{prop}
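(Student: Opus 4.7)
My plan is to reduce the uniform contraction to the single-metric contraction estimate of \cite{L} via two steps: first identify the Banach spaces $\mathcal{H}_{\mathtt{b}}^\l$ with $\mathcal{H}_{\mathtt{b}}^0$ up to uniformly equivalent norms, and then verify that the constants in the proof of \cite{L} can be chosen independent of $\l \in (-1,1)$ provided $\mathcal{V}_g$ is small enough.

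For the first step I would exploit the Morse correspondences $\wt{F}^\l$ of Corollary \ref{g-F_g}: these depend continuously on $\l$ and reduce to the identity at $\l = 0$, so for $\mathcal{V}_g$ small each $\wt{F}^\l$ is a $(1+\varepsilon)$-quasi-isometry between $(\M, \wt{g})$ and $(\M, \wt{g}^\l)$ with additive error $C(\mathcal{V}_g)$, uniform in $\l$. Applied to the definition (\ref{Gromov-product-def-sec3}) of the Gromov product, this should yield
\[ \bigl| (\xi_1|\xi_2)_x^\l - (\xi_1|\xi_2)_x^0 \bigr| \leq C_1, \qquad \forall\, \l, x, \xi_1, \xi_2, \]
from which the norms $\|\cdot\|_{\mathtt{b}}^\l$ are pairwise uniformly equivalent on a common underlying vector space (at the cost of multiplying ${\mathtt{b}}$ by an arbitrarily small factor). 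I would then fix $\mathcal{H}_{\mathtt{b}}^0$ as the working Banach space and use $\|\cdot\|_{\mathtt{b}} := \|\cdot\|_{\mathtt{b}}^0$ throughout.

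For the second step I would follow the two axes of the argument in \cite{L} and check uniformity line by line. On the one hand, the H\"older-seminorm contraction is obtained by synchronously coupling the leafwise $\wt{g}^\l$-Brownian motions on $W^{\rm s}_{\wt{g}^\l}(x, \xi_1)$ and $W^{\rm s}_{\wt{g}^\l}(x, \xi_2)$: the exponential contraction of the stable Jacobi tensor $S^\l_\bfv$, together with positivity of $\ell^\l$, forces the Gromov product $(\xi_1|\xi_2)^\l_{\omega(t)}$ at the coupled footpoint to grow linearly at rate at least $\ell^\l - \varepsilon$ with overwhelming probability, giving
\[ \bigl| {\rm Q}_t^\l f(x, \xi_1) - {\rm Q}_t^\l f(x, \xi_2) \bigr| \leq C \Bbbk_1^t\, e^{-{\mathtt{b}}(\xi_1|\xi_2)_x^0}\, \|f\|_{\mathtt{b}}, \]
with $\Bbbk_1 < 1$. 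The $\l$-uniformity of $\Bbbk_1$ and $C$ is the delicate point; I expect it to follow from the $C^0$-continuity of $S^\l$ and $\overline{X}^\l$ provided by Proposition \ref{regularity-Div} together with the continuity of $\l \mapsto \ell^\l$, after shrinking $\mathcal{V}_g$ if needed. On the other hand, a Doeblin-type minorization on compact pieces, derived from the uniform Harnack inequality for $\Delta^\l$ (which itself follows from the uniform $C^k$-bounds on $g^\l$ over $\mathcal{V}_g$), combined with the direction contraction above, should deliver the sup-norm decay of ${\rm Q}_t^\l f - \int f\, d{\bf m}^\l$ needed to control the first term in $\|\cdot\|_{\mathtt{b}}$.

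The principal obstacle I anticipate is verifying that all hyperbolicity and coupling constants are genuinely uniform in $\l$. A clean way to organize this is to conjugate every $\wt{g}^\l$-object back to the fixed background $(\M, \wt{g}^0)$ through $\wt{F}^\l$, so that $\Delta^\l_\W$ becomes a continuously varying family of uniformly elliptic operators on the single stable foliation $\mathcal{W}^0$, and the synchronous coupling of the $\wt{g}^\l$-Brownian motions can be built from one reference Brownian motion on $(\M, \wt{g}^0)$. The estimates of \cite{L} then apply as a perturbative statement, yielding a single pair of constants $C, \Bbbk$ valid for all $\l \in (-1,1)$.
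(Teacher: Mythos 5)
Your plan reaches the right endpoint but leaves the crux unproved, and one of your organizing devices is a detour. On the first point: the paper does not conjugate $\Delta^{\l}_{\mathcal{W}}$ back to a fixed foliation through the Morse correspondence $\wt{F}^{\l}$. It does not need to, because the transition kernel ${\bf q}^{\l}$ lives on the fixed common space $M_0\times\partial\M$ already, with each stable leaf identified with $\M\times\{\xi\}$ independently of $\l$; only the Gromov product $(\cdot|\cdot)_x^{\l}$ varies with the metric, and this is handled by simply fixing the $\wt{g}^0$ product in the $\|\cdot\|_{\mathtt{b}}$-norm and using the elementary two-sided comparison $|(\xi|\eta)_y-(\xi|\eta)_z|\leq 2d(y,z)\leq C\, d^{\l}(y,z)$ with $C$ independent of $\l$. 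So your first step is correct in spirit but the Morse-correspondence machinery is not what carries it; the paper's route is both more direct and avoids having to analyze a $\l$-dependent elliptic operator on a conjugated foliation.

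On the second and more serious point: you correctly identify that the $\l$-uniformity of the contraction constants is \emph{the} delicate step, but then you assert that it ``follows from the $C^0$-continuity of $S^{\l}$ and $\overline{X}^{\l}$ together with the continuity of $\l\mapsto\ell^{\l}$.'' This is where the gap is. What the contraction of the H\"older seminorm actually needs is a \emph{uniform} lower bound on the expected linear growth rate
\[
\frac{1}{T}\,\E_{x,\xi}^{\l}\Bigl((\xi|\eta)_{\lfloor{\rm x}_T\rceil^{\l}}-(\xi|\eta)_x\Bigr)
\]
over \emph{all} $(x,\xi,\eta)$ and \emph{all} $\l$, not just positivity of $\ell^{\l}$ at each fixed $\l$. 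Pointwise continuity of $S^{\l}$, $\overline{X}^{\l}$, $\ell^{\l}$ does not on its own preclude a sequence $(\l_n, x_n,\xi_n,\eta_n, T_n)$ along which the above ratio degenerates. The paper closes this gap with a compactness-and-contradiction argument (Lemma \ref{L-Prop2}): one extracts a weak limit $\mu$ of Ces\`aro-averaged occupation measures on $M_0\times\partial\M\times\partial\M$, uses the operator-norm continuity of $\l\mapsto{\rm Q}_{t_0}^{2,\l}$ -- which in turn relies on Theorem \ref{diff-HK-estimations-gen} ii), the heat-kernel differential estimate -- to show that $\mu$ is $({\rm Q}^{2,\l_0})^*$-invariant, then invokes the uniqueness of $\mathbf{m}^{2,\l_0}$ from Lemma \ref{L-Prop 1} to derive a contradiction with the definition of $\underline{\ell}$. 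Your synchronous-coupling plus Doeblin-minorization outline omits any mechanism that could produce such a uniform first-moment bound, so as written the argument does not close. If you insist on your coupling route, you would still need to reproduce something equivalent to Lemma \ref{L-Prop2}, and the natural way to do that is exactly the paper's compactness argument.
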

The proof of Proposition \ref{Uni-contraction} follows  \cite[Theorem 3]{L} for an individual metric.  The only  modification is to find a common H\"{o}lder continuous function space independent of the metrics where the contractions (of H\"{o}lder norm) happen.  Denote  $d$ and $(\xi \big| \eta)_x$ for the $\wt{g}^0$ distance and its Gromov product.  The key lemma is the following. 
 
 \begin{lem}\label{UC-step 0} Let $\mathcal{V}_g$ be as in Theorem \ref{diff-HK-estimations-gen}. There is a number ${\mathtt{b}}'>0$ such that for any $\mathtt{b}$, $0<{\mathtt{b}}<{\mathtt{b}}'$, there exists $\Bbbk_1<1$ such that for $t$ large enough, $x\in M_0$ and all $\xi, \eta$, $\xi\not=\eta$, we have for all $\l\in (-1, 1)$, 
\begin{equation*}
\E_{x, \xi}^{\l}\left(e^{-{\mathtt{b}}\left((\xi |\eta)_{\lfloor {\rm x}_t \rceil^{\l}}-(\xi |\eta)_x\right)}\right)<\Bbbk_1^t, 
\end{equation*}
where $\lfloor {\rm x}_t \rceil^{\l}$ denotes the  $\wt{g}^{\l}$-Brownian motion on $W^s(x, \xi)$ starting from  $(x, \xi)$ and $\E_{x, \xi}^{\l}$ denotes its corresponding expectation.
 \end{lem}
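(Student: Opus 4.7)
The plan is to express the Gromov product increment in terms of the $\wt{g}^0$-Busemann functions and then apply Itô's formula along the $\wt{g}^\l$-Brownian motion. From the defining formula of the Gromov product one checks directly that
\[
(\xi|\eta)_y - (\xi|\eta)_x \;=\; \tfrac12\bigl(b^0_{(x,\xi)}(y)+b^0_{(x,\eta)}(y)\bigr)
\]
for every $y\in\M$, where $b^0$ denotes the $\wt{g}^0$-Busemann function. Substituting $y=\lfloor{\rm x}_t\rceil^\l$ and applying the Cauchy--Schwarz inequality, it suffices to exhibit $\mathtt{b}'>0$ and $\Bbbk_1<1$ such that
\[
\E_{x,\xi}^\l\bigl(e^{-\mathtt{b}\, b^0_{(x,\zeta)}(\lfloor{\rm x}_t\rceil^\l)}\bigr) \;\leq\; \Bbbk_1^t
\]
for all $\zeta\in\partial\M$, $\l\in(-1,1)$, $x\in M_0$, and $\mathtt{b}<\mathtt{b}'$.

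Next, I apply Itô's formula to the smooth-in-$y$ function $b^0_{(x,\zeta)}$ along the $\wt{g}^\l$-Brownian motion, whose infinitesimal generator is $\Delta^\l$. This yields
\[
b^0_{(x,\zeta)}(\lfloor{\rm x}_t\rceil^\l) \;=\; N_t \;+\; \int_0^t\Delta^\l b^0_{(\cdot,\zeta)}(\lfloor{\rm x}_s\rceil^\l)\,ds,
\]
with $N$ a continuous $\P_{x,\xi}^\l$-martingale of quadratic variation proportional to $\int_0^t|\nabla^\l b^0_{(\cdot,\zeta)}|^2_{g^\l}\,ds$. Setting $A_\l:=\inf_{S\M}\Delta^\l b^0_{(\cdot,\zeta)}$ and $C_\l:=\sup_{S\M}|\nabla^\l b^0_{(\cdot,\zeta)}|^2_{g^\l}$, the standard exponential supermartingale inequality gives
\[
\E_{x,\xi}^\l\bigl(e^{-\mathtt{b}\, b^0_{(x,\zeta)}(\lfloor{\rm x}_t\rceil^\l)}\bigr) \;\leq\; \exp\!\bigl(-\mathtt{b}\, t\,(A_\l-\mathtt{b}\, C_\l)\bigr).
\]
The lemma therefore reduces to uniform bounds $A_\l\geq A>0$ and $C_\l\leq C<\infty$ for $g^\l\in\mathcal{V}_g$: then any $\mathtt{b}<\mathtt{b}':=A/C$ yields $\Bbbk_1=e^{-\mathtt{b}(A-\mathtt{b} C)}<1$.

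To establish these uniform bounds, observe first that at $\l=0$ one has $|\nabla^0 b^0|^2_{g^0}\equiv 1$ while $\Delta^0 b^0_{(\cdot,\zeta)}(y)=-{\rm Div}^0\overline{X}^0(y,\zeta)$ is, by (\ref{Div-trace}) and strict negative curvature, a $G$-invariant strictly positive continuous function on $S\M$; passing to the compact quotient $SM$ produces $A_0>0$ and $C_0=1$. For $g^\l$ in a sufficiently small neighborhood $\mathcal{V}_g$ of $g^0$, both $\Delta^\l b^0_{(\cdot,\zeta)}$ and $|\nabla^\l b^0_{(\cdot,\zeta)}|^2_{g^\l}$ depend continuously on the $C^1$-jet of $g^\l$ and on the $C^2$-jet of the $g^0$-Busemann function; this latter jet is bounded uniformly in $(y,\zeta)$ by the H\"older continuity of the stable Jacobi tensor $S'_{\bfv}(0)$ recalled in Section~2 (and controlled via Proposition~\ref{regularity-Div}). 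Consequently $A_\l\to A_0$ and $C_\l\to 1$ uniformly in $\zeta$ as $\l\to 0$, and shrinking $\mathcal{V}_g$ if necessary delivers the required bounds. The main obstacle is precisely this uniform positivity of $A_\l$ jointly in $\zeta\in\partial\M$ and in $\l$: without the compactness of $SM$ the infimum over $\zeta$ could degenerate, and without the $C^0$-continuity in $\l$ of the Morse-correspondence data it could degenerate in the parameter.
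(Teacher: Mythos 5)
Your proposal is correct in its essentials, but it follows a genuinely different route from the paper. The paper first reduces to a fixed large time $T$ by submultiplicativity of $\sup_{x,\xi,\eta}\psi^{\l}_{\mathtt{b}}$ (Markov property), then expands the exponential to first order, controls the quadratic remainder by uniform Gaussian-type moments of $d^{\l}(\lfloor{\rm x}_t\rceil^{\l},x)$, and feeds in the key drift estimate of Lemma \ref{L-Prop2}, namely $\frac1T\E^{\l}_{x,\xi}\big((\xi|\eta)_{{\rm x}^{\l}_T}-(\xi|\eta)_x\big)\geq\frac14\underline{\ell}$, which is itself proved by a compactness/contradiction argument with the invariant measures ${\bf m}^{2,\l}$ and the heat-kernel continuity of Theorem \ref{diff-HK-estimations-gen}; the drift there comes from the positivity of the linear drift $\underline{\ell}$. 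You instead use the cocycle identity $(\xi|\eta)_y-(\xi|\eta)_x=\frac12\big(b^0_{(x,\xi)}(y)+b^0_{(x,\eta)}(y)\big)$, Cauchy--Schwarz, and It\^o plus the exponential supermartingale bound, drawing the drift from the pointwise strict positivity of $\Delta^{\l} b^0$ (mean curvature of the $\wt g^0$-horospheres, uniform by pinched negative curvature and compactness of $SM$) together with a $C^1$-perturbation estimate. This is more elementary and quantitative: it avoids Lemma \ref{L-Prop2}, the measures ${\bf m}^{2,\l}$ and Theorem \ref{diff-HK-estimations-gen} altogether, gives explicit $\mathtt{b}'$ and $\Bbbk_1$, and holds for all $t>0$. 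The price is that your uniform positivity of $\inf\Delta^{\l}b^0_{(\cdot,\zeta)}$ requires $\mathcal{V}_g$ to be small in the $C^1$ topology, so you must shrink the neighborhood of Theorem \ref{diff-HK-estimations-gen} (permissible in the paper's framework, where $\mathcal{V}_g$ is also shrunk so that $\underline{\ell}>0$, but it should be stated as a uniform bound over all $g'\in\mathcal{V}_g$ rather than a limit ``as $\l\to0$'', since the conclusion is needed for every $\l\in(-1,1)$); by contrast, the paper's argument only uses the escape rate $\ell$ and works directly with the given family. Your passing appeal to Proposition \ref{regularity-Div} is unnecessary: the uniform $C^2$-bound on $b^0_{(\cdot,\zeta)}$ follows from the boundedness of $S'_{\bfv}(0)$ (comparison theory on the cocompact cover), which is all you need.
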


As a preparation for the proof of Lemma \ref{UC-step 0}, define on $M_0\times \partial \M\times\partial\M$ the  transition probabilities
\[
{\bf q}^{2, \l}(t, (x, \xi_1, \xi_2), d(y, \eta_1, \eta_2)):=\sum_{\beta\in G}p^{
\l}(t, x, \beta y)\ d{\rm Vol}^{\l}(y)\delta_{\xi_1}(\beta \eta_1)\delta_{\xi_2}(\beta\eta_2)
\]
and the corresponding operator ${\rm Q}_{t}^{2, \l}$ on continuous functions on $M_0\times \partial \M\times \partial \M$:
\[
{\rm Q}_t^{2, \l}f(x, \xi_1, \xi_2)=\int f(y, \eta_1, \eta_2) {\bf q}^{2, \l}\left((x, \xi_1, \xi_2), d(y, \eta_1, \eta_2)\right). 
\]
By analogy with the case of  ${\rm Q}_t^{\l}$, there is a unique ${\rm Q}_t^{2, \l}$-invariant probability measure on $M_0\times\partial \M\times \partial\M$ which is related to the harmonic measure ${\bf m}^{\l}$ as follows. 

\begin{lem}(\cite[Proposition 1]{L})\label{L-Prop 1} For each $g^{\l}\in \Re^k(M)$, with the above notations, there is a unique probability measure ${\bf m}^{2, \l}$ on $M_0\times\partial \M\times \partial\M$ satisfying
\[
\int {\rm Q}_t^{2, \l}f\ d{\bf m}^{2, \l}=\int f\ d{\bf m}^{2, \l}
\]
for all $f\in C(M_0\times\partial \M\times \partial\M, \Bbb R)$ and all positive $t$. The measure ${\bf m}^{2, \l}$ is characterized  by 
\[
\int f\ d{\bf m}^{2, \l}=\int_{M_0\times \partial \M} f(x, \xi, \xi)\ d{\bf m}^{\l}(x, \xi). 
\]
\end{lem}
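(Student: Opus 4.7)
The plan is to produce the invariant measure by an explicit diagonal construction and to prove uniqueness by a test-function argument exploiting the fact that, in negative curvature, Brownian motion on $\M$ almost surely escapes every bi-infinite geodesic.

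For existence, observe that the kernel ${\bf q}^{2,\l}$ preserves the diagonal $\Delta:=\{(y,\eta,\eta)\}$: in the defining sum, the factor $\delta_{\xi_1}(\beta\eta_1)\delta_{\xi_2}(\beta\eta_2)$ with $\xi_1=\xi_2=\xi$ forces $\eta_1=\eta_2=\beta^{-1}\xi$, and the restriction of ${\rm Q}_t^{2,\l}$ to $\Delta$ conjugates to ${\rm Q}_t^{\l}$ under the identification $(x,\xi,\xi)\mapsto(x,\xi)$. Hence the diagonal push-forward ${\bf m}^{2,\l}$ of ${\bf m}^\l$, characterized by $\int f\,d{\bf m}^{2,\l}=\int f(x,\xi,\xi)\,d{\bf m}^{\l}(x,\xi)$, is automatically ${\rm Q}_t^{2,\l}$-invariant, and it delivers the announced formula.

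For uniqueness, let $\mu$ be any ${\rm Q}_t^{2,\l}$-invariant probability and, for small $\mathtt{b}>0$, consider the test function
\[
\phi_{\mathtt{b}}(x,\xi_1,\xi_2):=e^{-\mathtt{b}(\xi_1|\xi_2)_x^{\l}}.
\]
It vanishes precisely on $\Delta$ and extends continuously to $\Delta$ by zero, since $(\xi_1|\xi_2)_x^{\l}\to+\infty$ uniformly as $(\xi_1,\xi_2)\to\Delta$ in the compact space $M_0\times\pp\M\times\pp\M$ (a standard consequence of the visual-metric description of the topology of $\pp\M$); in particular $\phi_{\mathtt{b}}$ is bounded. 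The natural lift of $\phi_{\mathtt{b}}$ to $\M\times\pp\M\times\pp\M$ is $G$-invariant under the diagonal action, and the lifted process simply moves the base point by Brownian motion while keeping the boundary pair fixed, so
\[
{\rm Q}_t^{2,\l}\phi_{\mathtt{b}}(x,\xi_1,\xi_2)=\E_x^{\l}\bigl[e^{-\mathtt{b}(\xi_1|\xi_2)^{\l}_{B_t}}\bigr],
\]
where $B_t$ denotes the $\wt{g}^\l$-Brownian motion on $\M$ started at $x$. In the negatively curved setting, $B_t$ converges a.s.\ to $B_\infty\in\pp\M$ whose law is the continuous hitting measure $\wt{\bf m}^\l_x$ (\cite{K1}), so the event $\{B_\infty\in\{\xi_1,\xi_2\}\}$ is null; on its complement $B_t$ leaves every tubular neighborhood of the bi-infinite $\wt{g}^\l$-geodesic joining $\xi_1$ to $\xi_2$, whence $(\xi_1|\xi_2)^{\l}_{B_t}\to+\infty$ a.s. Dominated convergence gives ${\rm Q}_t^{2,\l}\phi_{\mathtt{b}}\to 0$ pointwise, and a second dominated convergence combined with the ${\rm Q}_t^{2,\l}$-invariance of $\mu$ yields $\int\phi_{\mathtt{b}}\,d\mu=0$. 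Since $\phi_{\mathtt{b}}>0$ off $\Delta$, $\mu$ is supported on $\Delta$; identifying $\Delta$ with $M_0\times\pp\M$, the restriction of $\mu$ is a ${\rm Q}_t^{\l}$-invariant probability, which by the uniqueness in \cite{Ga} must equal ${\bf m}^{\l}$, so $\mu={\bf m}^{2,\l}$.

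The main technical point I expect is the geometric fact that $(\xi_1|\xi_2)^{\l}_{B_t}\to+\infty$ almost surely under the Brownian motion: this combines the Gromov-hyperbolic interpretation of $(\xi_1|\xi_2)^\l_y$ as, up to bounded error, the distance from $y$ to the bi-infinite geodesic $(\xi_1,\xi_2)$ with the continuity of the hitting measure at $\pp\M$. Everything else is bookkeeping with the kernel ${\bf q}^{2,\l}$ and a double application of dominated convergence.
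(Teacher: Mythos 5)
Your proof is correct, and the strategy (diagonal push-forward for existence, concentration of any invariant probability on $\Delta$ by driving $\phi_{\mathtt{b}}=e^{-\mathtt{b}(\xi_1|\xi_2)_x^{\l}}$ to zero under the semigroup) is the same contraction-at-infinity mechanism that underlies the cited Proposition~1 of \cite{L}; the paper itself re-uses exactly this family of test functions in Lemma~\ref{UC-step 0}. Two small points worth spelling out if you write this up: the pointwise limit ${\rm Q}_t^{2,\l}\phi_{\mathtt{b}}\to 0$ must also be checked on $\Delta$, where it holds trivially because $\Delta$ is forward-invariant and $\phi_{\mathtt{b}}\big|_{\Delta}\equiv 0$; and the two applications of dominated convergence use the uniform bound $0\le\phi_{\mathtt{b}}\le 1$, which requires the continuous-by-zero extension of $\phi_{\mathtt{b}}$ across $\Delta$, a consequence of compactness of $M_0\times\pp\M\times\pp\M$ and the visual description of the boundary topology. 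With those recorded, the argument is complete; you may also note that the identity ${\rm Q}_t^{2,\l}\phi_{\mathtt{b}}(x,\xi_1,\xi_2)=\E_x^{\l}\big[e^{-\mathtt{b}(\xi_1|\xi_2)_{B_t}^{\l}}\big]$ follows from $G$-invariance of the Gromov product together with the unfolding $\sum_{\beta}\int_{M_0}(\cdot)p^{\l}(t,x,\beta y)\,d{\rm Vol}^{\l}(y)=\int_{\M}(\cdot)p^{\l}(t,x,z)\,d{\rm Vol}^{\l}(z)$.
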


For $g'\in \Re^k(M)$, let $\wt{g}'$ be its $G$-invariant extension to $\M$, ${\rm x}_t^{\wt{g}'}({\rm w})$ its Brownian motion on $\M$ and ${\bf m}^{\wt{g}'}$ its harmonic measure.  The following limit  exists almost surely:
\[
\lim\limits_{t\to +\infty}\frac{1}{t}b_{(x, \xi)}\left({\rm x}_t^{\wt{g}'}({\rm w})\right)=\int_{M_0\times \partial \M} \Delta^{\wt{g}'}b_{(x, \xi)}\ d{\bf m}^{\wt{g}'}=:\ell'_{g'}.
\]
As $g'\to g$, ${\bf m}^{\wt{g}'}\to {\bf m}$ and hence both $\ell'_{g'}, \ell_{g'}$ converges to $\ell$. We may assume the  neighborhood  $\mathcal{V}_g$ of $g$ in Theorem \ref{diff-HK-estimations-gen} is  such that $\underline{\ell}:=\min\limits_{g'\in \mathcal{V}_g}\{\ell_{g'}, \ell'_{g'}\}$ is positive.
Consequently,  for any curve $\l\to g^{\l}\in \mathcal{V}_g$, 
\[\min_{\l\in (-1, 1)}\{\ell_{g^\l}, \ell'_{g^\l}\}\geq \underline{\ell}>0.\]

\begin{lem}\label{L-Prop2}  Let $\mathcal{V}_g$ be as in Theorem \ref{diff-HK-estimations-gen}. For $T>0$ large enough,  for all $\l\in (-1, 1)$, $x\in M_0$ and  $\xi, \eta\in \partial\M$, $\xi\not=\eta$, 
\[
\frac{1}{T}\E_{x, \xi}^{\l}\left((\xi\big| \eta)_{{\rm x}_T^{\l}}-(\xi\big|\eta)_x\right)\geq \frac{1}{4}\underline{\ell}. 
\] 
\end{lem}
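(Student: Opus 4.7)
The plan is to reduce the Gromov product difference to a sum of Busemann functions, apply It\^o's formula to express its expectation as a time integral of the Busemann Laplacian, and close the argument via ergodicity of the leafwise Brownian motion.

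Expanding $(\xi|\eta)^{\lambda}_y - (\xi|\eta)^{\lambda}_x$ via the defining limit (\ref{Gromov-product-def-sec3}), the common term $d^{\lambda}(\zeta_1,\zeta_2)$ cancels between the two Gromov products and, taking $\zeta_1 \to \xi$, $\zeta_2 \to \eta$, one obtains
\[
(\xi|\eta)^{\lambda}_y - (\xi|\eta)^{\lambda}_x \; = \; \tfrac{1}{2}\bigl(b^{\lambda}_{(x,\xi)}(y) + b^{\lambda}_{(x,\eta)}(y)\bigr).
\]
Hence it suffices to show that, for all $\lambda\in(-1,1)$, $x\in M_0$ and $\zeta\in\partial\M$, one has $\tfrac{1}{T}\E_x^{\lambda}\bigl(b^{\lambda}_{(x,\zeta)}({\rm x}_T^{\lambda})\bigr) \geq \tfrac{1}{2}\underline{\ell}$ for $T$ large enough; averaging the contributions from $\xi$ and $\eta$ then yields the desired $\tfrac{1}{4}\underline\ell$.

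Since $b^{\lambda}_{(x,\zeta)}$ is $1$-Lipschitz with Laplacian equal to the Busemann Laplacian $\mathtt{B}^{\lambda}(\cdot,\zeta) = -{\rm Div}^{\lambda}\overline{X}^{\lambda}(\cdot,\zeta)$ by (\ref{Div-trace})---a bounded continuous function that descends to $SM$---It\^o's formula gives
\[
b^{\lambda}_{(x,\zeta)}({\rm x}_T^{\lambda}) \; = \; (\text{martingale}) + \int_0^T \mathtt{B}^{\lambda}({\rm x}_s^{\lambda},\zeta)\,ds,
\]
so taking expectations,
\[
\E_x^{\lambda}\bigl(b^{\lambda}_{(x,\zeta)}({\rm x}_T^{\lambda})\bigr) \;=\; \int_0^T (Q^{\lambda}_s \mathtt{B}^{\lambda})(x,\zeta)\, ds.
\]
For each fixed $\lambda$, the results of \cite{L} give that $Q_t^{\lambda}$ contracts exponentially on $\mathcal{H}^{\lambda}_{\mathtt{b}}$, so $(Q^{\lambda}_s\mathtt{B}^{\lambda})(x,\zeta) \to \int \mathtt{B}^{\lambda}\,d{\bf m}^{\lambda} = \ell'_{g^{\lambda}}$ as $s\to\infty$, uniformly in $(x,\zeta)\in SM$. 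Since $\ell'_{g^{\lambda}} \geq \underline{\ell}$ by the choice of $\mathcal{V}_g$, the Ces\`aro average satisfies $\tfrac{1}{T}\int_0^T (Q^{\lambda}_s\mathtt{B}^{\lambda})(x,\zeta)\,ds \geq \tfrac{1}{2}\underline{\ell}$ for all $T\geq T_{\lambda}$.

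The main obstacle is producing a single $T$ that works for every $\lambda\in(-1,1)$, since Proposition \ref{Uni-contraction} is the very statement for which this lemma is a preparatory step, and cannot be invoked. One addresses this by shrinking $\mathcal V_g$ further and revisiting the contraction argument of \cite[Theorem 3]{L}, checking that its constants depend only on the $C^k$-norm of $g^{\lambda}$ and on the geodesic-flow data (stable tangent distribution, Busemann Laplacian, strong stable regularity)---all of which depend continuously on $\lambda$ by Proposition \ref{regularity-Div} and Theorem \ref{diff-HK-estimations-gen}. Combined with the compactness of $SM$, this yields a $\lambda$-uniform exponential contraction rate of $Q^{\lambda}_t$ on a common function space, and hence a uniform $T$.
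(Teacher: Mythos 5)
Your reduction of the Gromov-product increment to Busemann functions, $(\xi|\eta)^{\lambda}_{{\rm x}_T}-(\xi|\eta)^{\lambda}_{x}=\tfrac12\bigl(b^{\lambda}_{(x,\xi)}({\rm x}_T)+b^{\lambda}_{(x,\eta)}({\rm x}_T)\bigr)$, and the Dynkin-formula identity $\E_x^{\lambda}\bigl(b^{\lambda}_{(x,\zeta)}({\rm x}_T^{\lambda})\bigr)=\int_0^T(Q_s^{\lambda}\mathtt B^{\lambda})(x,\zeta)\,ds$ are both correct, and for each \emph{fixed} $\lambda$ the contraction of $Q_t^{\lambda}$ on $\mathcal H^{\lambda}_{\mathtt b}$ from \cite{L} does give a threshold $T_{\lambda}$. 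The gap is exactly where you locate it, and your proposed fix does not close it: asserting a ``$\lambda$-uniform exponential contraction rate of $Q_t^{\lambda}$ on a common function space'' is not a checkable refinement of \cite[Theorem 3]{L} whose constants ``depend continuously on $\lambda$'' — it \emph{is} Proposition \ref{Uni-contraction}, the statement this lemma (via Lemma \ref{UC-step 0}) is designed to prepare. Worse, the contraction constant in \cite{L} is itself extracted from a strictly positive drift of the Gromov product along the diffusion, i.e. from the single-metric analogue of the present lemma, so deriving the lemma from a uniform contraction is circular. Continuity statements such as Proposition \ref{regularity-Div} control ${\rm Div}^{\lambda}\overline X^{\lambda}$ and the stable data, but they do not by themselves yield a spectral gap for $Q_T^{\lambda}$ that is uniform in $\lambda$, because the limit object $\int\cdot\,d{\bf m}^{\lambda}$ and its rate of attainment involve the harmonic measures, whose $\lambda$-regularity is only established later (Theorem \ref{regularity-harmonic measure}), itself downstream of Proposition \ref{Uni-contraction}.

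A non-circular repair has to trade the uniform contraction for something weaker, and this is what the paper does: it argues by contradiction, extracting sequences $\lambda_j\to\lambda_0$, $T_j\to\infty$ violating the bound, forming Ces\`aro averages of the dual operators $(Q^{2,\lambda_j}_{t_0})^*$ applied to Dirac masses on $M_0\times\partial\M\times\partial\M$, passing to a weak limit, and using (a) the \emph{norm} continuity $\|Q^{2,\lambda_j}_{t_0}-Q^{2,\lambda_0}_{t_0}\|\le C|\lambda_j-\lambda_0|$, which follows from the heat-kernel estimates of Theorem \ref{diff-HK-estimations-gen} at the single fixed time $t_0$ (so no uniformity in $T$ is ever needed), and (b) the uniqueness of the $Q^{2,\lambda_0}_t$-invariant measure (Lemma \ref{L-Prop 1}) to identify the limit with ${\bf m}^{2,\lambda_0}$ and contradict $\int\phi^{\lambda_0}\,d{\bf m}^{2,\lambda_0}\ge\underline\ell$. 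If you wish to keep your Busemann/ergodic-average framing, you would have to supply an analogous compactness argument in $\lambda\in[-1,1]$ (fixed-time continuity of $\lambda\mapsto Q^{\lambda}_{t_0}$ from Theorem \ref{diff-HK-estimations-gen}, plus a Markov-property patching of time scales), rather than invoking a uniform spectral gap; as written, the last step of your proposal assumes what is to be proved.
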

\begin{proof}We may assume $g^{\l}$ is defined for $\l\in [-1, 1]$.  Assume the conclusion is not  true. Then there exist $\l_n\in [-1, 1]$,  $T_n\in \Bbb R_+$, $T_n\to \infty$, and points $x_n, \xi_n, \eta_n, \xi_n\not=\eta_n$, such that
\begin{equation}\label{eq-UC-C-1}
\frac{1}{T_n}\E_{x_n, \xi_n}^{\l_n}\left((\xi_n\big|\eta_n)_{{\rm x}_{T_n}^{\l_n}}-(\xi_n\big|\eta_n)_{x_n}\right)<\frac{1}{4}\underline{\ell}. 
\end{equation}
By definition of the Gromov product $(\cdot |\cdot)$, for all $\xi\not=\eta\in \partial \M$, $y, z\in M_0$ and $\l\in [-1, 1]$, 
\[
\big|(\xi|\eta)_{y}-(\xi|\eta)_z\big|\leq 2d(y, z)\leq {\mbox Const.}\cdot d^{\l}(y, z),
\]
where the constant is independent of $\l$, $\xi, \eta, y$ and $z$. 
Hence by uniform continuity of $\l\mapsto p^{\l}(t, x, \cdot)$ in $x$, we can find $t_0$ small enough such that 
\begin{equation}\label{eq-UC-C-2}
\sup\limits_{\l\in [-1, 1]}\sup\limits_{0\leq t\leq t_0}\sup\limits_{x, \xi}\sup\limits_{\eta\not=\xi}\E_{x, \xi}^{\l}\left(\big|(\xi\big|\eta)_{\lfloor {\rm x}_t \rceil^{\l}}-(\xi\big|\eta)_{x}\big|\right)\leq \frac{1}{4}\underline{\ell}. 
\end{equation}
By using (\ref{eq-UC-C-1}), (\ref{eq-UC-C-2}) and suitably relabelling $\l_n, x_n, \xi_n$ and  $\eta_n$,  we can find a sequence $\l_j\in [-1, 1]$, a sequence of integers $N_j\to \infty$,  and points $x_j, \xi_j$ and  $\eta_j$ such that, for all $j$, 
\begin{equation}\label{eq-UC-C-3}
\frac{1}{N_j t_0}\E_{x_j, \xi_j}^{\l_{j}}\left((\xi\big|\eta)_{{\rm x}_{N_j t_0}^{\l_j}}-(\xi\big|\eta)_{x_j}\right)<\frac{1}{2}\underline{\ell}. 
\end{equation}
By passing to suitable subsequences, we may also assume that $\l_n$ converges to some $\l_0\in [-1, 1]$, as $n$ goes to infinity. 
For $\l\in [-1, 1]$, write $\phi^{\l}$ for the function on $M_0\times \partial \M\times \partial \M$ defined for $x\in M_0$ and  $\xi, \eta\in \partial \M$, $\xi\not=\eta$, by
\[
\phi^{\l}(x, \xi, \eta)=\frac{1}{t_0}\E_{x, \xi}^{\l}\left((\xi\big|\eta)_{{\rm x}_{t_0}^{\l}}-(\xi\big|\eta)_x\right). 
\]
Then, by (\ref{Bus-xi-zeta}),  $\phi^{\l}$ has a continuous extension to the diagonal,  still denoted $\phi^{\l}$, given by 
\[
\phi^{\l}(x, \xi, \xi)=\frac{1}{t_0}\E_{x, \xi}^{\l}\left(b_{(x, \xi)}({\rm x}_{t_0}^{\l})\right). 
\]
Write $\lfloor {\rm x}_t \rceil^{\l}=\beta_t^{\l}\underline{\rm x}_t^{\l}$, where $\beta_t^{\l}\in G$ and $\underline{\rm x}_t^{\l}\in M_0$. 
Using $\phi^{\l}$, (\ref{eq-UC-C-3}) shows that  there exist sequences $\l_j\to \l_0$, $N_j\to +\infty$, as $j\to \infty$, and points $x_j, \xi_j, \eta_j$,  such that for all $j$, 
\[
\frac{1}{N_j}\sum_{k=0}^{N_j-1}\E_{x_j, \xi_j}^{\l_j}\left(\phi(\underline{\rm x}_{kt_0}^{\l_j}, (\beta_{kt_0}^{\l_j})^{-1}\xi_{j}, (\beta_{kt_0}^{\l_j})^{-1}\eta_{j})\right)<\frac{1}{2}\underline{\ell}.
\]
This means for $\l_j, N_j, x_j, \xi_j$ and  $\eta_j$ as above, 
\begin{equation}\label{eq-UC-C-4}
\frac{1}{N_j}\sum_{k=0}^{N_j-1}{\rm Q}_{kt_0}^{2, \l_j}\phi^{\l_j}(x_j, \xi_j, \eta_j)<\frac{1}{2}\underline{\ell}. 
\end{equation}
Define a sequence of probability measures $\mu_j$ on $M_0\times \partial \M\times \partial \M$ by 
\[
\mu_j:=\frac{1}{N_j}\sum_{k=0}^{N_j-1}({\rm Q}_{kt_0}^{2, \l_j})^*\left(\delta(x_j, \xi_j, \eta_j)\right) d(\cdot, \cdot, \cdot), 
\]
where $({\rm Q}_{kt_0}^{2, \l_j})^*$ is the dual action of ${\rm Q}_{kt_0}^{2, \l_j}$  and $\delta(x_j, \xi_j, \eta_j)$ is the Dirac measure at $(x_j, \xi_j, \eta_j)$. 
Then,
\[
\big\|({\rm Q}_{t_0}^{2, \l_j})^*\mu_j-\mu_j\big\|\leq \frac{2}{N_j}. 
\]
Moreover, $({\rm Q}_{t_0}^{2, \l_j})^*$ converges to $({\rm Q}_{t_0}^{2, \l_0})^*$ in norm as $j$ goes to infinity by Theorem \ref{diff-HK-estimations-gen} since 
\begin{align*}
\big\|{\rm Q}_{t_0}^{2, \l_j}-{\rm Q}_{t_0}^{2, \l_0}\big\|\leq & \sup\limits_{x\in \M}\left|\int_{\M}p^{
\l_j}(t, x, y)\ d{\rm Vol}^{\l_j}(y)-p^{
\l_0}(t, x, y)\ d{\rm Vol}^{\l_0}(y)\right|\\
= &\sup\limits_{x\in \M}\int_{\l_0}^{\l_j}\int_{\M}\left|(\ln p^{\l})^{(1)}_{\l}(t_0, x, y)+(\ln \rho^{\l})^{(1)}_{\l}(y)\right| p^{\l}(t_0, x, y)\ d{\rm Vol}^{\l}(y)\ d\l \\
\leq & Const. \left|\l_{j}-\l_0\right|,
\end{align*}
where  ${\rho}^{\l}=d{\rm Vol}^{\l}/d{\rm Vol}^{0}$. 
Consequently,  if  $\mu$ is a weak limit of $\mu_j$, we have
\[
({\rm Q}_{t_0}^{2, \l_0})^*\mu=\mu.
\]
Let $\mu'=(1/t_0)\int_0^{t_0}({\rm Q}_s^{2, \l_0})^*\mu\ ds$. The measure $\mu'$ is ${\rm Q}_{t}^{2, \l_0}$-invariant  ($t>0$) and hence coincides with ${\bf m}^{2, \l_0}$ by Lemma \ref{L-Prop 1}.
Note that $\phi^{\l_j}$ converges to $\phi^{\l_0}$ as $j$ goes to infinity. We conclude from (\ref{eq-UC-C-4}) that $\int \phi^{\l_0}\ d\mu\leq \underline{\ell}/2$. Using (\ref{eq-UC-C-2}) again, we find that 
\[
\int \phi^{\l_0}\ d{\bf m}^{2, \l_0}\leq \frac{3}{4}\underline{\ell}. 
\]
But, by Lemma \ref{L-Prop 1}, we also have
\[
\int \phi^{\l_0}\ d{\bf m}^{2, \l_0}=\frac{1}{t_0}\int \E_{x, \xi}^{\l_0}\left(b_{x, \xi}({\rm x}^{\l_0}_{t_0})\right)\ d{\bf m}^{2, \l_0}=\lim\limits_{t\to \infty}\frac{1}{t}\int \E_{x, \xi}^{\l_0}\left(b_{x, \xi}({\rm x}_t^{\l_0})\right)\ d{\bf m}^{\l_0}\geq \underline{\ell},
\]
which is a contradiction. 
\end{proof}

\begin{proof}[Proof of Lemma \ref{UC-step 0}]For $\l\in (-1, 1)$, $x\in M_0$, $\xi, \eta\in \partial \M$ and  $t\in \Bbb R_+$, write
\[
\psi^{\l}_{{\mathtt{b}}}(x, \xi, \eta, t):=\E_{x, \xi}^{\l}\left(e^{-{\mathtt{b}}\big((\xi |\eta)_{\lfloor {\rm x}_t \rceil^{\l}}-(\xi |\eta)_x\big)}\right).
\]
For each $\l$ and ${\mathtt{b}}$, it is true by  the Markov property of the $\wt{g}^{\l}$-Brownian motion that 
\[
\sup\limits_{x, \xi, \eta}\psi^{\l}_{{\mathtt{b}}}(x, \xi, \eta, t_1+t_2)\leq \sup\limits_{x, \xi, \eta}\psi^{\l}_{{\mathtt{b}}}(x, \xi, \eta, t_1)\cdot \sup\limits_{x, \xi, \eta}\psi^{\l}_{{\mathtt{b}}}(x, \xi, \eta, t_2). 
\]
Hence  for Lemma \ref{UC-step 0}, it suffices to find,  for a fixed $T$ and ${\mathtt{b}}'$ sufficiently small,  positive numbers $C'$ and $\Bbbk'$ such that for all $\l\in (-1, 1)$ and ${\mathtt{b}}<{\mathtt{b}}'$, 
\begin{align}
 \sup\limits_{x, \xi, \eta}\sup\limits_{0\leq t<T}\psi^{\l}_{{\mathtt{b}}}(x, \xi, \eta, t)&\leq C',\label{L-CLT-a}\\
 \sup\limits_{x, \xi, \eta}\psi^{\l}_{{\mathtt{b}}}(x, \xi, \eta, T)&\leq \ \Bbbk'<1. \label{L-CLT-b}
\end{align} 
Let $T$ be as in Lemma \ref{L-Prop2}. Note that there is some constant ${C}$ such that 
\[
\big|(\xi |\eta)_{\lfloor {\rm x}_t \rceil^{\l}}-(\xi |\eta)_x\big|\leq 2d(\lfloor {\rm x}_t \rceil^{\l}, x)\leq {C} d^{\l}(\lfloor {\rm x}_t \rceil^{\l}, x).
\]
Using  Taylor's expansion of the exponential function, we obtain  
\[
e^{-{\mathtt{b}}((\xi |\eta)_{\lfloor {\rm x}_t \rceil^{\l}}-(\xi |\eta)_x)}\leq 1-{\mathtt{b}} ((\xi |\eta)_{\lfloor {\rm x}_t \rceil^{\l}}-(\xi |\eta)_x)+(C {\mathtt{b}} d^{\l}(\lfloor {\rm x}_t \rceil^{\l}, x))^2e^{C {\mathtt{b}} d^{\l}(\lfloor {\rm x}_t \rceil^{\l}, x)}. 
\]
Since  the metrics ${\wt g}^{\l}$ have negative sectional curvatures bounded uniformly away from $0$ for all  $\l$, we have the  exponential decay of  the  kernel functions, which implies  that there exists some constant $C_1$ such that for all $t$, $0\leq t\leq T$,  and all $\l$, 
\[
\E_{x, \xi}^{\l}\left(\big(Cd^{\l}(\lfloor {\rm x}_t \rceil^{\l}, x)\big)^2e^{C d^{\l}(\lfloor {\rm x}_t \rceil^{\l}, x)}\right)<C_1.
\]
So, using Lemma \ref{L-Prop2},  we obtain for ${\mathtt{b}}\leq 1$, 
\begin{eqnarray*}
\sup\limits_{0\leq t<T}\psi^{\l}_{{\mathtt{b}}}(x, \xi, \eta, t)\leq 1+{\mathtt{b}} C_1+{\mathtt{b}}^2 C_1,\\
\psi^{\l}_{{\mathtt{b}}}(x, \xi, \eta, T)\leq 1-\frac{1}{4}{\mathtt{b}}\underline{\ell}+{\mathtt{b}}^2C_1.
\end{eqnarray*}
Put ${\mathtt{b}}'=\min\{1, \underline{\ell}/(8C_1)\}$. We see that (\ref{L-CLT-a}) and (\ref{L-CLT-b}) are satisfied for  all $\l\in (-1, 1)$ and ${\mathtt{b}}<{\mathtt{b}}'$ with $C'=1+\underline{\ell}/8+\underline{\ell}^2/(64C_1)$, $\Bbbk'=1-\underline{\ell}^2/(64C_1)$. 
\end{proof}

\begin{proof}[Proof of Theorem \ref{regularity-harmonic measure}] Let $T>0$ be fixed. Assume $g\in \Re^k(M)$.  By Proposition \ref{Uni-contraction}, there exist some neighborhood $\mathcal{V}_g$ of $g$ in $\Re^k(M)$ such that for any continuous curve $\l\mapsto g^{\l}$ in $\mathcal{V}_g$, there is some positive ${\mathtt{b}}$ and $\Bbbk_0<1$ such that for all $f\in \mathcal{H}_{{\mathtt{b}}}^0, n\in \Bbb N$, 
\begin{equation}\label{RHM-1}
\left\|({\rm Q}_T^{\l})^n f-\int f\ d{\bf m}^{\l}\right\|_{{\mathtt{b}}}\leq \Bbbk_0^n\|f\|_{{\mathtt{b}}}. 
\end{equation}
(For later consideration, we choose ${\mathtt{b}}$ to be small such that $2{\mathtt{b}}$ also fulfills the requirement of Proposition \ref{Uni-contraction} and $2{\mathtt{b}}<{\mathtt{b}}'$, where ${\mathtt{b}}'$ is from Lemma \ref{UC-step 0}.)
The inequality (\ref{RHM-1}) means  each operator ${\rm Q}_T^{\l}$ is a bounded operator on $\mathcal{H}_{{\mathtt{b}}}^{0}$,  $1$ is its  isolated eigenvalue  and  ${\bf m}^{\l}$ is the eigenfunction of eigenvalue 1 of the dual operator $({\rm Q}_T^{\l})^*$.
By the classical spectrum theory on operators in Banach space (cf. \cite[Theorem 6.17]{Ka}), we can decompose $\mathcal{H}_{{\mathtt{b}}}^{0}$ into the direct sum of one-dimensional $E_{\rho}$ associated to the eigenvalue  1, and an infinite-dimensional space $E_{<1}$ on which $({\rm Q}_T^{\l})^n$ tends exponentially fast to $0$. Let $\mathcal{C}$ be any  circle around 1 with a small radius.  Then the projection of $f\in \mathcal{H}_{{\mathtt{b}}}^0$ to $E_1$ is given by
\[
\frac{1}{2i\pi}\int_{\mathcal{C}}\left(z{\rm Id}-{\rm Q}_T^{\l}\right)^{-1} f\ dz. 
\]
Using  this and (\ref{RHM-1}), we conclude that the following two functional on $\mathcal{H}_{{\mathtt{b}}}^0$ coincide:
\[
\int \cdot \ d{\bf m}^{\l}= \frac{1}{2i\pi}\int_{\mathcal{C}}\left(z{\rm Id}-{\rm Q}_T^{\l}\right)^{-1} \cdot \ dz.
\]

For the regularity of $\l\mapsto {\bf m}^{\l}$, we mean the regularity of $\l\mapsto \int\cdot\ {\bf m}^{\l}$, which is the composition of two mappings
\[
\l\mapsto {\rm Q}_T^{\l}\ \ \mbox{and}\ \ {\rm Q}_T^{\l}\mapsto  \frac{1}{2i\pi}\int_{\mathcal{C}}\left(z{\rm Id}-{\rm Q}_T^{\l}\right)^{-1} \cdot\ dz.
\]
Note that by spectral continuity results for isolated simple eigenvalues (cf. \cite[Theorem 3.11]{Ka}), for $L\in( \mathcal{H}_{{\mathtt{b}}}^{0})^*$ in a small neighborhood of  $Q_T^0$,  the mapping
\[L\mapsto  \frac{1}{2i\pi}\int_{\mathcal{C}}\left(z{\rm Id}-L\right)^{-1} \cdot\ dz\]
is analytic. We may assume $\mathcal{V}_g$ is such that all ${\rm Q}_T^{\l}$ belong to this neighborhood. Then for the regularity of $\l\mapsto \int\cdot\ {\bf m}^{\l}$, it remains to show the regularity of the mapping $\l\mapsto {\rm Q}_T^{\l}$. 

For $f\in \mathcal{H}_{{\mathtt{b}}}^{0}$, let $\wt{f}$ be its  $G$-invariant extension to $\M\times \partial\M$.  Then 
\[
{\rm Q}_T^{\l}f(x, \xi)=\int_{\wt{M}}\wt{f}(y, \xi)p^{\l}(T, x, y)\ d{\rm Vol}^{\l}(y).
\]
Put ${\rho}^{\l}:=d{\rm Vol}^{\l}/d{\rm Vol}^{0}$. Then $\l\mapsto {\rho}^{\l}$  is $C^k$ in $\l$ in $C^k(\M)$. By Theorem \ref{diff-HK-estimations-gen} i) and iii), for every  $i$, $\ 1\leq i\leq k-2,$  and every $(x, \xi)\in \M\times \partial \M$, the following  differential exists:
\[
\big({\rm Q}_T^{\l}f(x, \xi)\big)^{(i)}_{\l}=\sum_{j=0}^{i}\left(\begin{matrix}i\\ j\end{matrix}\right)\int \wt{f}(y, \xi)(p^{\l})^{(j)}_{\l}(T, x, y)({\rho}^{\l})^{(i-j)}_{\l}(y)\ d{\rm Vol}^{0}(y). 
\]
To conclude this defines the $i$-th  differential of ${\rm Q}_T^{\l}$ in $\l$ in $(\mathcal{H}_{{\mathtt{b}}}^0)^*$, we only need to show it defines a bounded operator from $\mathcal{H}_{{\mathtt{b}}}^0$ into itself. For $\mathcal{V}_g$ small,  the norms of the differentials $({\ln \rho}^{\l})^{(i)}_{\l}$, $i=1, \cdots, k-2$, and hence the norms of $({\rho}^{\l})^{(i)}_{\l}/\rho^{\l}$, $i=1, \cdots, k-2$, are all bounded. So   it suffices to consider  $S^i_{\l}$, where 
\[
\big(S^i_{\l}f\big)(x, \xi):=\int_{y\in \M}\wt{f}(y, \xi) (p^{\l})^{(i)}_{\l}(T, x, y)\ d{\rm Vol}^{\l}(y),
\]
and show it is a bounded functional of $\mathcal{H}_{{\mathtt{b}}}^0$. For each $\xi\in \M$, $\wt{f}(\cdot, \xi)$ is uniformly continuous in $x$ and bounded. Hence Theorem \ref{diff-HK-estimations-gen} iv) applies and shows that $\big(S^i_{\l}f\big)(x, \xi)$ is continuous in $x$.  Using Theorem \ref{diff-HK-estimations-gen} iii), we continue to compute that 
\[
\big|\big(S^i_{\l}f\big)(x, \xi)\big|\leq \|f\|_{\infty}\cdot\int \frac{(p^{\l})^{(i)}_{\l}(T, x, y)}{p^\l(T, x, y)}p^\l(T, x, y)\ d {\rm Vol}^\l(y)\leq {c}_{\l, (i)}(2) \|f\|_{{\mathtt{b}}},  
\]
where ${c}_{\l, (i)}(2)$ is as in (\ref{p-lam-i-p-equ}). For the  H\"{o}lder continuity  of $\xi\mapsto \big(S^i_{\l}f\big)(x, \xi)$ and the corresponding H\"{o}lder  norm estimation, it suffices to show the latter is bounded.   By H\"{o}lder's inequality, Theorem \ref{diff-HK-estimations-gen} iii) and  Lemma \ref{UC-step 0}, we obtain   \begin{align*}
&\big|\big(S_{\l}^{i}f)(x, \xi_1)-\big(S_{\l}^{i}f)(x, \xi_2)\big|e^{{\mathtt{b}}(\xi_1\big|\xi_2)_x}\\
&\leq \left(\int_{\M}\big|\wt{f}(y, \xi_1)-\wt{f}(y, \xi_2)\big|\cdot \big|(p^{\l})^{(i)}_{\l}(T, x, y)\big|\ d{\rm Vol}^{\l}(y)\right) e^{{\mathtt{b}}(\xi_1\big|\xi_2)_x}\\
&\leq  \|f\|_{{\mathtt{b}}}\int_{\M}e^{-{\mathtt{b}}\left((\xi_1\big|\xi_2)_y-(\xi_1\big|\xi_2)_x\right)}\left|\frac{(p^{\l})^{(i)}_{\l}(T, x, y)}{p^{\l}(T, x, y)}\right| p^{\l}(T, x, y)\ d{\rm Vol}^{\l}(y)\\
&= \|f\|_{{\mathtt{b}}}\cdot\left( \E_{x, \xi_1}^{\l}\left(e^{-2{\mathtt{b}}((\xi_1 |\xi_2)_{\lfloor {\rm x}_T \rceil^{\l}}-(\xi_1 |\xi_2)_x)}\right)\right)^{\frac{1}{2}}\left\|\frac{(p^{\l})^{(i)}_{\l}(T, x, y)}{p^{\l}(T, x, y)}\right\|_{L^2}\\
&\leq {c}_{\l, (i)}(2)(\Bbbk_1^T)^{\frac{1}{2}}\|f\|_{{\mathtt{b}}}. 
\end{align*}
Altogether, we have  that each $S^i_{\l}$ maps $\mathcal{H}_{\mathtt{b}}^0$ into itself and is a bounded operator since  
\begin{align*}
\|S^i_{\l}f\|_{{\mathtt{b}}}= &\; \sup\limits_{x, \xi}\big|\big(S^i_{\l}f\big)(x, \xi)\big|+\sup\limits_{x, \xi_1,
\xi_2} \big|\big(S_{\l}^{i}f)(x, \xi_1)-\big(S_{\l}^{i}f)(x, \xi_2)\big|e^{{\mathtt{b}}(\xi_1\big|\xi_2)_x}\\
\leq &\; {c}_{\l, (i)}(2)\big(1+(\Bbbk_1^T)^{\frac{1}{2}}\big)\|f\|_{{\mathtt{b}}}. 
\end{align*}
\end{proof}

\subsection{Differentials of the linear drift}\label{sec-3.1}We are in a situation to prove Theorem \ref{main}. 

\begin{proof}[Proof of Theorem \ref{main}] It suffices to show the first statement. 

Let $\mathcal{V}_g$ be such that Proposition \ref{regularity-Div} and Theorem \ref{regularity-harmonic measure} hold true. We may also assume the H\"{o}lder exponents $\alpha$ of Proposition \ref{regularity-Div}  and ${\mathtt{b}}$ of Theorem \ref{regularity-harmonic measure} coincide.  As before, for  any $C^{k}$ curve  $\lambda\in (-1,
1)\mapsto g^{\lambda}\in \mathcal{V}_{g}$ with $g^0=g$, we write ${\ov{X}}^{\l}$ for the $\wt{g}^{\l}$-geodesic spray, ${\rm
Div}^{\l}$ for the divergence operator associated with the $\wt{g}^{\l}$-stable foliation and  ${\bf m}^{\l}$ for the $g^{\l}$-harmonic measure on $SM$.   Let $\ell_{\l}$ be the linear drift of $g^{\l}$.  By (\ref{equ-1.1}), 
\begin{equation}\label{equ-1.1-lam} \ell_{\l}=-\int_{\scriptscriptstyle{M_0\times \pp\M}} \big({\rm
Div}^{\l}{\ov{X}^{\l}}\big)(x, \xi)\
  d{\bf m}^{\l}=-{L_{\l}}\big({\rm
Div}^{\l}{\ov{X}^{\l}}\big). 
\end{equation}

By Proposition \ref{regularity-Div} iii),  $\lambda\mapsto {\rm
  Div}^{\lambda}\ov{X}^{\lambda}$ is $C^{k-3}$ into $C^{{\mathtt b}}(\M\times \partial\M, \Bbb
  R^+)$ and is $C^{k-2}$ into $C^{0}(\M\times \partial\M, \Bbb
  R^+)$. Write $({\rm
  Div}^{\lambda} \ov{X}^{\lambda})^{(0)}_{\l}={\rm
  Div}^{\lambda} \ov{X}^{\lambda}$ and  $({\rm
  Div}^{\lambda} \ov{X}^{\lambda})^{(i)}_{\l}$, $i=1, \cdots, k-2$, for its $i$-th derivative in $\l$. Then   $({\rm
  Div}^{\lambda} \ov{X}^{\lambda})^{(i)}_{\l}$ belongs to  $C^{0}(\M\times \partial\M, \Bbb
  R^+)$ for $i\leq k-2$,  and belongs to $C^{{\mathtt{b}}}(\M\times \partial\M, \Bbb
  R^+)$ for $i\leq k-3$.  Regard each ${\bf m}^{\l}$ as a measure on $M_0\times \partial\M$.   The operator ${L_{\l}}:=\int_{M_0\times \partial \M}\cdot \ d{\bf m}^{\l}$ is an bounded operator on continuous functions on $M_0\times \partial\M$. Moreover,  by Theorem \ref{regularity-harmonic measure}, $\l\mapsto {L_{\l}}$ is $C^{k-2}$ differentiable as elements of $(\mathcal{H}_{{\mathtt{b}}}^0)^*$. Using these regularities and (\ref{equ-1.1-lam}),   we  conclude  that  the function $\lambda\mapsto \ell_{\lambda}$ is $C^{k-2}$ differentiable. Denote by ${L_{\l}^{(i)}}$, $i=1, \cdots, k-2$, the $i$-th differential functional of $L_{\l}$. 
Then, for every  $i$, $1\leq i\leq k-2$, the $i$-th differential of $\ell_{\l}$ in $\l$, i.e., $\ell_{\l}^{(i)}$,  is given  by 
\begin{equation}\label{l-derivative-j}
\ell_{\l}^{(i)}=-\sum_{j=0}^{i}\left(\begin{matrix}i\\ j\end{matrix}\right) {L^{(j)}_{\l}}\left(({\rm
  Div}^{\lambda} \ov{X}^{\lambda})^{(i-j)}_{\l}\right). 
\end{equation}
\end{proof}

Specifying  (\ref{l-derivative-j}) for $i= 1$, we write:
\begin{cor}\label{prop-thm1.4}Let $g\in \Re^3(M)$.  For any $C^3$ curve $\lambda\in (-1, 1)\mapsto g^{\lambda}\in \Re^3(M)$ with $g^0=g$ and constant volume, we have
\begin{equation}\label{l-derivative}
(\ell_{\l})_0'=-\int {\rm{Div}}^{0}\overline{X}^{0} d({\bf m}^{\l})'_0-\int \big({\rm{Div}}^{\l}\overline{X}^{\l}\big)'_0\ d{\bf m}^0.
\end{equation}
In particular, if  $g=g^0$ is a locally symmetric metric and the volume ${\rm{Vol}}^\l (M) $ is constant in $\l$, then we have  $(\ell_{\l})_0'=0$.
\end{cor}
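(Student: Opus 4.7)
The formula (\ref{l-derivative}) is precisely the $i=1$ instance of (\ref{l-derivative-j}) from the proof of Theorem \ref{main}, so my plan is to take it as established and focus on the criticality assertion.

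The first term on the right of (\ref{l-derivative}) I would dispatch by exploiting the special geometry at $g^0$. Writing $\mathtt{B}^\l := -{\rm Div}^\l \ov X^\l$, formula (\ref{Div-trace}) identifies this with the mean curvature of the $\wt g^\l$-strong stable horospheres. When $g^0$ is a rank-one locally symmetric metric, the isometry group of $(\M, \wt g^0)$ acts transitively on the unit tangent bundle, and in particular on horospheres, so $\mathtt{B}^0$ is a constant function on $\M \times \partial \M$; integrating it against the probability measure ${\bf m}^0$ identifies this constant with $\ell_0$. The first term in (\ref{l-derivative}) therefore reduces to $\ell_0 \int d({\bf m}^\l)'_0$, which vanishes upon differentiating the normalization $\int d{\bf m}^\l \equiv 1$. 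Observe that the constant-volume hypothesis has not yet entered.

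For the second term I would not attempt a direct evaluation; instead, I would sandwich $\ell_\l$ against the volume entropy $\upsilon_\l$. By (\ref{ineq}), $\ell_\l \leq \upsilon_\l$ along the whole curve, and all the inequalities in (\ref{ineq}) become equalities at any locally symmetric metric, so $f(\l) := \upsilon_\l - \ell_\l \ge 0$ attains its minimum $0$ at $\l = 0$. Volume entropy coincides with the topological entropy of the geodesic flow in negative curvature and is $C^1$ in $\l$ along $C^3$ curves of metrics by \cite{KKPW}, while $\ell_\l$ is $C^1$ by Theorem \ref{main}; therefore $f'(0) = 0$, i.e.\ $(\ell_\l)'_0 = (\upsilon_\l)'_0$. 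Finally, the minimization theorem of Besson--Courtois--Gallot (\cite{BCG}) shows that under the constant-volume constraint the locally symmetric metric $g^0$ is a global minimum, hence a critical point, of $\l \mapsto \upsilon_\l$; this yields $(\upsilon_\l)'_0 = 0$ and the claim $(\ell_\l)'_0 = 0$ follows.

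The main obstacle is conceptual rather than computational: a direct analysis of the second term in (\ref{l-derivative}) would have to control the first variation of the horospherical mean curvature under an arbitrary perturbation of the metric, integrated against the harmonic measure, which is delicate even granting that ${\bf m}^0$ is proportional to Liouville in the locally symmetric case. Invoking the inequality (\ref{ineq}) together with the deep rigidity of \cite{BCG} and the entropy regularity of \cite{KKPW} bypasses this computation entirely, and at the same time explains why the constant-volume hypothesis is used in exactly the place that the BCG minimization requires it.
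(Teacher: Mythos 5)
Your proposal is correct and follows essentially the same route as the paper: formula (\ref{l-derivative}) is read off from (\ref{l-derivative-j}) with $i=1$, and criticality at the locally symmetric metric is obtained from the sandwich $\ell_{\l}\leq \upsilon_{\l}$ with equality at $\l=0$, the Besson--Courtois--Gallot minimization of the volume entropy under the constant-volume constraint, and the differentiability of $\upsilon_{\l}$ (\cite{KKPW}) and of $\ell_{\l}$ (Theorem \ref{main}). Your first paragraph, showing $\int {\rm{Div}}^{0}\ov{X}^{0}\, d({\bf m}^{\l})'_0=0$ at the symmetric metric via constancy of the horospherical mean curvature and the normalization of ${\bf m}^{\l}$, is correct but superfluous once the sandwich argument is invoked; it is precisely the observation the paper records separately in Remark \ref{cri-remark}.
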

\begin{proof} We apply  (\ref{l-derivative-j}) for $i= 1$ and $\l = 0$. The operator $L_0$ extends to the harmonic measure ${\bf m}^0$ and $L_0^{(1)}$ is a linear functional on the space  $\mathcal{H}_{{\mathtt{b}}}^0$ that we denote $({\bf m}^{\l})'_0.$ Formula (\ref{l-derivative}) follows.

Let $\upsilon^\l $ be the volume entropy of $(\M, \wt g^\l),$ \[\upsilon^\l := \lim\limits _{R \to \infty } \frac{1}{R} \ln {\rm {Vol}}^\l B(x,R), \] where $B(x, R) $ is the ball of radius $R$ about $x$ in $\M$. We know by \cite{K1} that for all $\l $, $\ell ^\l \leq \upsilon^\l $ and by \cite{BCG} that the volume entropy of a negatively curved locally symmetric space achieves its minimum over all metrics of  the same volume on that space. Since $\l \mapsto \ell _\l $  and $\l \mapsto \upsilon^\l $ are differentiable at $0$ (by  Theorem \ref{main} and by \cite{KKPW}), the derivative has to be $0.$
\end{proof}

We develop formula (\ref{l-derivative}). The vector $(\overline{X}^{\l})'_0$ is a  vertical vector given by \cite[Proposition 4.5]{LS2}. For ${\bf {v}} = (x, \xi)$,  it is the sum of $\left(\|\overline {X}^{\l} \|_{\wt{g}^0}\right)'_0 ({\bf {v}} ) {\overline{X}^0({\bf v})} $ and of a vector $Y({\bf v})$ orthogonal to ${\bf {v}} ,$ where $Y$ is a $C^1$ vector field along the stable manifolds. Let $u_0$ be the function  such that 
\[
\Delta u_0=-{\rm{Div}}\ov{X}-\ell, \ ({\mbox{see \cite[(5.12)]{LS2}}}). 
\]

\begin{theo}\label{Equiv-Thm1.5} Let  $M$ be a closed connected smooth manifold and let $g\in \Re^3(M)$.   For any $C^3$ curve $\lambda\in (-1, 1)\mapsto g^{\lambda}\in \Re^3(M)$ with $g^0=g$ and constant volume,
\begin{align}
(\ell_{\l})'_0 \;=\; &\int \left(-\frac{1}{2}\langle\nabla {\rm{trace}}\XX, \overline{X}\rangle + \frac{1}{2} \XX (\overline {X}, \overline {X} ) {\rm {Div}} (\ov{X})  +\frac{1}{2}  \<\nabla (\XX (\overline {X}, \overline {X})), \ov{X} \>-{\rm{Div}}Y \right)\ d{\bf m}\notag\\
&+\int \left(-\frac{1}{2}\langle \nabla {\rm{trace}}\XX, \nabla u_0 \rangle +{\rm{Div}}\big(\XX (\nabla u_0)\big)\right)\ d{\bf m}, \label{Great}
\end{align}
where we omit  the index $0$ for $\nabla^0, \ov{X}^0, \langle \cdot, \cdot\rangle_0, {\rm{Div}}^0$ and ${\bf m}^0$ at $g^0$,    and  where $\XX(\cdot)$ is considered as the $(1, 1)$-form in $\M$ such that $\langle \XX(Z), Z'\rangle=\XX(Z, Z')$.     In particular, there is a linear functional $\LL$ on $C^k(S^2T^*)$ such that $(\ell_{\l})_0' \; = \; \LL (\XX)$.
\end{theo}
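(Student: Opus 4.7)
The plan is to start from the expression (\ref{l-derivative}) already established in Corollary \ref{prop-thm1.4} and compute each of its two integrals explicitly as a linear functional of $\XX$. The second integral $-\int ({\rm Div}^{\l}\ov{X}^{\l})'_0\, d{\bf m}^0$ should yield the first line of (\ref{Great}) and arises from the infinitesimal variation of the divergence of the geodesic spray; the first integral $-\int {\rm Div}\,\ov{X}\, d({\bf m}^{\l})'_0$ should yield the second line after rewriting $-{\rm Div}\,\ov{X}$ via the Poisson potential $u_0$ and differentiating the harmonicity relation satisfied by ${\bf m}^{\l}$.

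For the second integral, first derive the general infinitesimal formula
\[
\big({\rm Div}^{\l}V^{\l}\big)'_0 \;=\; {\rm Div}(V'_0) + \tfrac{1}{2}\langle \nabla\,{\rm trace}\,\XX,\, V\rangle
\]
by differentiating the identity ${\rm Div}^{\l}(V^{\l})\,\mu^{\l} = \mathcal{L}_{V^{\l}}\mu^{\l}$ at $\l=0$ and using $(\mu^{\l})'_0 = \tfrac{1}{2}({\rm trace}\,\XX)\,\mu^0$. Applied with $V^{\l}=\ov{X}^{\l}$, this requires a formula for $(\ov{X}^{\l})'_0$: differentiating the normalization $g^{\l}(\ov{X}^{\l},\ov{X}^{\l})=1$ forces the $\ov{X}$-component of $(\ov{X}^{\l})'_0$ to equal $-\tfrac{1}{2}\XX(\ov{X},\ov{X})$, and combined with \cite[Proposition 4.5]{LS2} one obtains the decomposition $(\ov{X}^{\l})'_0 = -\tfrac{1}{2}\XX(\ov{X},\ov{X})\,\ov{X} + Y$ with $Y\perp \ov{X}$. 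Expanding the divergence of this vector field by the Leibniz rule and integrating against ${\bf m}^0$ reproduces exactly the first line of (\ref{Great}).

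For the first integral, use the defining relation $\Delta u_0 = -{\rm Div}\,\ov{X}-\ell$; since $({\bf m}^{\l})'_0$ annihilates constants (being the derivative of a family of probability measures), the integral reduces to $\int \Delta u_0\, d({\bf m}^{\l})'_0$. Differentiating the harmonicity identity $\int \Delta^{\l} u_0\, d{\bf m}^{\l}=0$ at $\l=0$ then gives
\[
\int \Delta u_0\, d({\bf m}^{\l})'_0 \;=\; -\int \big(\Delta^{\l} u_0\big)'_0\, d{\bf m}^0.
\]
Differentiating $\nabla^{\l} u_0$ using $(g^{\l,ij})'_0 = -g^{ik}g^{jl}\XX_{kl}$ yields $(\nabla^{\l}u_0)'_0 = -\XX(\nabla u_0)$ (with $\XX$ viewed as the $(1,1)$-form of the theorem statement), and combined with the divergence variation formula above this gives $(\Delta^{\l}u_0)'_0 = -{\rm Div}(\XX(\nabla u_0)) + \tfrac{1}{2}\langle\nabla\,{\rm trace}\,\XX,\nabla u_0\rangle$, which is precisely the integrand (with the correct sign) of the second line of (\ref{Great}). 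The final linearity assertion $(\ell_{\l})'_0 = \LL(\XX)$ is then automatic, since every summand in (\ref{Great}) depends linearly on $\XX$---the vector field $Y$ being the derivative of the Morse correspondence, whose dependence on the perturbation is linear.

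The principal technical obstacle is justifying the differentiation of $\l\mapsto \int \Delta^{\l} u_0\, d{\bf m}^{\l}$ at $\l=0$: by Theorem \ref{regularity-harmonic measure} the measure ${\bf m}^{\l}$ is only differentiable in the weak topology of $(\mathcal{H}_{{\mathtt{b}}}^0)^{\ast}$, so one must verify that both $\Delta^{\l}u_0$ and its $\l$-derivative define admissible test functions in $\mathcal{H}_{{\mathtt{b}}}^0$, uniformly in $\l$. This reduces to the Hölder continuity of the strong stable distribution together with leafwise elliptic regularity for the Poisson equation defining $u_0$, and is analogous to the analysis already carried out in \cite{LS2} for conformal deformations.
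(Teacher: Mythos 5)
Your proposal is correct and follows essentially the same route as the paper: it starts from (\ref{l-derivative}), computes $({\rm Div}^{\l}\ov{X}^{\l})'_0$ via the variation of the volume form together with the decomposition $(\ov{X}^{\l})'_0=-\tfrac12\XX(\ov{X},\ov{X})\,\ov{X}+Y$ from \cite[Proposition 4.5]{LS2}, and converts $-\int{\rm Div}\,\ov{X}\,d({\bf m}^{\l})'_0$ through $u_0$ and the differentiated harmonicity relation (\ref{har-diff-eq}), exactly as in the paper's proof, including your (correct) remark that one must check (\ref{har-diff-eq}) applies to leafwise-$C^2$ test functions such as $u_0$. The only point the paper treats more carefully is the term $\int{\rm Div}\,Y\,d{\bf m}$: instead of declaring linearity in $\XX$ automatic, it rewrites this term by the integration-by-parts identity (\ref{ln-Martin}) against $\nabla_y\ln k(x,y,\xi)|_{y=x}$ and uses the exponential decay in the construction of $Y$ to obtain linearity together with the bound $C\|\XX\|_{C^1}$, which makes the functional $\LL$ manifestly well defined and bounded; you may want to add this step rather than rely on the bare assertion.
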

\begin{proof}To obtain (\ref{Great}), 
we use the decomposition of $(\ell_{\l})_0'$ given by (\ref{l-derivative}) as above:
\[ (\ell_{\l})_0' \; = \; -\int ({\rm{Div}}^{\l}\overline{X})'_0\ d{\bf m} -\int ({\rm{Div}}\overline{X}^{\l})'_0\ d{\bf m}-\int {\rm{Div}}\overline{X}\ d({\bf m}^{\l})'_0 \]
and study the three terms successively. 

Firstly, we have $({\rm{Div}}^{\l}\overline{X})'_0 =\frac{1}{2} \< {\nabla(\rm {Trace}}\XX), \overline{X}\>$. 

Then, for ${\bf {v}} = (x, \xi)$,   $(\overline{X}^{\l})'_0$ is the sum of $\left(\|\overline {X}^{\l} \|\right)'_0 ({\bf {v}} ) \overline{X}({\bf {v}}) $ and  $Y({\bf v})$. Hence, 
\[
({\rm{Div}}\overline{X}^{\l})'_0={\rm{Div}}Y+  {\rm{Div}} \left(\left(\|\overline {X}^{\l} \|\right)'_0 ({\bf {v}} ) \overline{X}({\bf {v}})\right). 
\]
Since $\|\overline {X}^{\l} \|_{g^\l}^2 = 1$, we have 
\[ \left(\|\overline {X}^{\l} \|\right)'_0 ({\bf {v}} ) \; = \; -\frac{1}{2} \XX (\overline {X} ({\bf v}), \overline {X} ({\bf v}) ).\]
Thus, 
\[ {\rm{Div}} \left(\left(\|\overline {X}^{\l} \|\right)'_0 ({\bf {v}} ) \overline{X}({\bf {v}})\right)  =  -\frac{1}{2} \XX (\overline {X} ({\bf v}), \overline {X} ({\bf v}) ) {\rm {Div}} (\overline{X}({\bf {v}}))  -\frac{1}{2}  \<\nabla (\XX (\overline {X} ({\bf v}), \overline {X} ({\bf v}) )), \overline{X}({\bf {v}}) \> .\]

Lastly, we discuss the term $\int {\rm{Div}}\overline{X}\ d({\bf m}^{\l})'_0$.  Recall that, by Theorem \ref{regularity-harmonic measure}, $\l \mapsto {\bf m}^\l $ is differentiable at $0$, with derivative $({\bf m}^\l)'_0 \in (\mathcal{H}^{0}_{{\mathtt{b}}})^\ast$ (denoted as an integral). It follows that, for $f$ smooth on $SM$, 
\begin{equation}\label{har-diff-eq}  \int (\D^\l)'_0 f \  d{\bf m}  \; +\; \int \D f  \  d({\bf m}^\l)'_0 \; = \; 0.
\end{equation}
The  equation (\ref{har-diff-eq}) extends to functions $f$ that are of  class $C^2$ along the stable leaves with globally continuous second order derivatives. In particular, (\ref{har-diff-eq}) applies to the function $u_0$  and therefore, 
\[
\int {\rm{Div}}\overline{X}\ d({\bf m}^{\l})'_0= \int (\D^\l)'_0 u_0 \  d{\bf m}=\int \left(\frac{1}{2}\langle \nabla u_0, \nabla {\rm{trace}}\XX \rangle -{\rm{Div}}\big(\XX (\nabla u_0)\big)\right)\ d{\bf m}.
\]

To show $(\ell_{\l})_0'$ is linear in $\XX$, it remains to consider $\int {\rm {Div}} Y\, d{\bf m}$.  If we denote $k(x,y,\xi) $ the continuous version of the density $\big({dm_y}/{dm_x}\big) (\xi) $ (see e.g. \cite[Proposition 2.2]{LS2}), the integration by parts formula yields
\begin{equation}\label{ln-Martin} \int {\rm {Div}} Y\ d{\bf m} \; = \; -\int \left\langle Y, \nabla_y \ln k(x,y, \xi )|_{y=x} \right\rangle \ d{\bf {m}}.
\end{equation}
We recall from \cite[Proposition 4.5]{LS2}  the construction of the vector field $Y$.  Let ${\bf v} \in TM$. We define the vector $\Upsilon ({\bf v}) \in TTM$ as the vertical vector with vertical component given by
\[ \Upsilon ({\bf v}) \; := \;\big( \nabla ^\l_{{\bf v}}({\bf v})\big)'_0 - \big\langle \big( \nabla ^\l_{{\bf v}}({\bf v})\big)'_0, {\bf v} \big\rangle. \]
Clearly, for all ${\bf v } \in SM$, $\Upsilon ({\bf v}) $ depends linearly on $\XX$, and $\sup _{{\bf v}} \| \Upsilon ({\bf v}) \|$ is bounded by $C \| \XX \|_{C^1}$.
In order to obtain $Y({\bf v}) $, we consider the orbit ${\bf \Phi} _s ({\bf v}), s \geq 0$, under the geodesic flow. For each $s \geq 0,$ we decompose $\Upsilon ({\bf \Phi} _s ({\bf v})) $ into a sum of its unstable part $\Upsilon ({\bf \Phi}_s ({\bf v}))^{\rm{u}}$ and its stable part. The vector $Y({\bf v}) $ is the vertical part of \[ \int _0 ^\infty (D{\bf \Phi}_s)^{-1} \Upsilon ({\bf \Phi}_s ({\bf v}))^{\rm{u}} \, ds.\]
Since the geodesic flow is Anosov, there are $C, \tau >0 $ such that $(D{\bf \Phi}_s)^{-1} $ restricted to the unstable manifold has norm smaller than $C e^{-\tau s}.$ It follows that the expression $ \int {\rm {Div}} Y\, d{\bf m}$ is linear in $\XX $ and bounded by $C \| \XX \|_{C^1}.$
\end{proof}

 \begin{remark}\label{cri-remark}We can also verify that the formula (\ref{Great}) gives indeed $0$ in the case when $ g = g^0$ is locally symmetric.  
 
Assume that $g$ is a locally symmetric metric, then ${\rm{Div}}\overline{X}$ is the constant $ -\ell$ and the measure ${\bf m}$ is the normalized Liouville measure. Since the measures ${\bf m}^\l$ are normalized (and the constant functions belong to the space $\mathcal{H}_{{\mathtt{b}}}^0$), $\int {\rm{Div}}\overline{X}\ d({\bf m}^{\l})'_0 = 0 $ and formula (\ref{Great}) reduces to 
\begin{align*}(\ell_{\l})'_0\; =\; &\int \left(-\frac{1}{2}\langle\nabla {\rm{trace}}\XX, \overline{X}\rangle + \frac{1}{2} \XX (\overline {X}, \overline {X} ) {\rm {Div}} (\ov{X})  +\frac{1}{2}  \<\nabla (\XX (\overline {X}, \overline {X})), \ov{X} \>-{\rm{Div}}Y \right)\ d{\bf m}\\
\; =:\; &\int \left( {\rm{(I)}}+ {\rm{(II)}} + {\rm{(III)}}+ {\rm{(IV)}}\right)\ d{\bf m}.
\end{align*}
Since ${\rm{trace}}\XX, \XX (\overline {X}, \overline {X})$ are functions on $SM$ and we integrate with respect to the invariant Liouville measure, the integrals of $(\rm{I}), ({\rm{III}})$ vanish. Since $\wt g $ is a symmetric space, the $k(x, y, \xi)$ in formula (\ref{ln-Martin}) is given by  $-\ell _0 b_{(x, \xi )} (y) ,$ where $ b_{(x, \xi )} $ is the Busemann function (see Section \ref{Sec-Busemann}). It follows that $ \nabla_y \ln k(x,y, \xi )|_{y=x} = \ell\overline{X}({\bf {v}}).$ Since $Y({\bf v}) $ is orthogonal to $\overline{X}({\bf {v}})$, the integral $\int {\rm{(IV)}}\ d{\bf m}$ vanishes as well. Remains to consider 
\[\int {\rm{(II)}}\ d{\bf m}=-\frac{1}{2}\ell \int \XX (\overline {X}, \overline {X} ) \ d{\bf m}=-\frac{1}{2}\frac{\ell }{m} \int _{M_0}{\rm{trace}\XX}\ \frac{d{\rm{Vol}}_{\wt{g}}}{{\rm{Vol}}_{\wt{g}}(M_0)}=-\frac{\ell }{m}\frac{\big({\rm{Vol}}_{\wt{g}^{\l}}(M_0)\big)'_0}{{\rm{Vol}}_{\wt{g}}(M_0)},\]
where ${{\rm{Vol}}_{\wt{g}}}$ is the Riemannian volume. So, $\int {\rm{(II)}}\ d{\bf m}$ vanishes since the volume is constant.
\end{remark}

\section{Brownian motion and stochastic flows}\label{Sec-BM-SF}

In this section, we recall the Eells-Elworthy-Malliavin construction of the Brownian motion on a manifold through a stochastic  differential equation (SDE) on the orthogonal frame bundle and of the associated stochastic flow (see Proposition \ref{est-norm-u-t-j}). We give estimations on the growth in time of the derivatives of this stochastic flow. We will need in Sections 5 and 6  both uniform estimations and estimations in average with respect to the Brownian motion and Brownian bridge distributions in the non-compact case.

\subsection{Parallelism  and the Brownian motion}\label{BMM}
Let ${\rm N}$ be a $C^{\infty}$ $n$-dimensional Riemannian manifold.
A differential form $\vartheta$ on $\rm N$ with values in $\Bbb R^n$
is called a \emph{parallelism differential form} (\cite{M}), if it realizes for
every $u\in \rm N$ an isomorphism of $ T_{u}{\rm N}$ on $\Bbb R^n$.
A parallelism differential form $\vartheta$ is called $C^{k}$ if it
is  a $C^{k}$ section of the frame bundle space $\mathcal{F}(\rm N)$
of $\rm N$.

Let $f: [0, +\infty)\to \Bbb R^n$ be a $C^2$ curve. It defines a
one parameter family  of continuous vectors $\{(df/dt)|_{t=\tau}\}_{\tau\in
[0, +\infty)}$. Let $\vartheta$ be a $C^1$ parallelism differential
form. It, together with $f$, defines a $C^1$ vector field on ${\rm
N}\times \Bbb R^+$: \[
Z_{t, u}^f:=\vartheta_{u}^{-1}(\frac{df}{d t}), \ \forall  {u}\in {\rm N},\  t\in \Bbb R^+.
\]
By the classical theory of ordinary differential equation,  there
exists a flow $F_{f, t}$ generated by $Z_{t, {u}}^f$,
which solves Cauchy's problem
\[
\frac{d}{dt}\left(F_{f, t}({u_0})\right)=Z_{t, {u}(t)}^f,\ {\mbox{where }}\ {u}(t)=F_{f, t}({u_0})\ \mbox{and}\ F_{f, 0}({u_0})={u_0}\in {\rm N}.
\]
The orbit of each $u_0\in \rm N$ under  $F_{f,
t}$ is an analogue of the curve $f$  since the velocity at time $\tau$ is just the preimage of
$(df/dt)|_{t=\tau}$ by $\vartheta$. 
Moreover, the time $t$ map $F_{f, t}$ depends  $C^{1}$ on
the initial point $u_0$. The
variation of $F_{f, t}({u_0})$ with respect to ${u_0}$ reflects the geometric difference
between $\rm N$ and $\Bbb R^n$ and the pull back of the tangent map of $F_{f, t}$  in $\Bbb R^n$ via $\vartheta$ can be formulated using the equation of $d\vartheta$ (\cite[Proposition 3.2]{M}). In general, if $f$ is a $C^{k+1}$ curve
in $\Bbb R^n$ and $\vartheta$ is $C^{k}$, then the flow generated by
$Z_{t, {u}}^f$ depends  $C^{k}$ on the initial point.

In case $\rm N$ is the frame bundle space of $\M$, there are plenty of parallelism differential forms  using the dual form and the connection forms.    Recall that a  \emph{frame} $u$ for $T_x \M$, $x\in \M$,  is an ordered
basis $u=(u_1,\cdots, u_m)$ for $T_x \M$, which defines a linear
isomorphism form $\Bbb R^m$ to $T_x \M$ by letting $
u({y}):=\sum_{i=1}^{m}{y}^i u_i,\  \mbox{for}\  {y}=({y}^i)\in \Bbb
R^m$. 
The set of all frames $u$ for all tangent spaces $T_x \M$, denoted by
$\mathcal{F}(\M)$, is a $C^{\infty}$ manifold. The \emph{dual form} (or the canonical form) on $\mathcal{F}(\M)$ is
an $\Bbb R^m$-valued 1-form defined by
\[
\theta_{u}(Y):=u^{-1}(\pi_{*}Y), \ \forall  Y\in
T_u\mathcal{F}(\M),
\]
where $\pi_{*}$ is the tangent map of the natural  projection map from $\mathcal{F}(\M)$ to $\M$.
The  kernel of $\pi_{*}$ is  the \emph{vertical vector bundle} of $T\mathcal{F}(\M)$:
\[
VT\mathcal{F}(\M):=\big\{Y\in T\mathcal{F}(\M):\ \pi_{*}Y=0\big\}.
\]
For $A\in \mathfrak{gl}(m, \Bbb R)$,  let $A^*$ be the vector field on $\mathcal{F}(\M)$ with $A^*(u)=\dot{\gamma}_{u}(0)$, where $\gamma_u(t)=R_{\exp(tA)}u$ and $R_{a}$ denotes the right action by $a$.  A $C^{k}$ \emph{(Ehresmann) affine connection} $\varpi$ for $(\mathcal{F}(\M), \pi,\cdot)$ is a $C^{k}$ $\mathfrak{gl}(m, \Bbb R)$-valued $1$-form on $\mathcal{F}(\M)$ satisfying
\begin{align*}
\notag\varpi(A^*(u))=&\ A,\ \forall A\in \mathfrak{gl}(m, \Bbb R),\\
\varpi((R_{a})_*Y)=& \ Ad(a^{-1})\varpi(Y),\ \forall a\in GL(m, \Bbb R), \ Y\in T\mathcal{F}(\M).
\end{align*}
Each $C^k$ affine
connection form $\varpi$ of $\mathcal{F}(\M)$ assigns a unique
$C^k$-distributed complementary horizontal vector bundle
$HT\mathcal{F}(\M)$, the kernel of $\varpi$,  which is invariant under the right action of
$GL(m, \Bbb R)$.  Each  $\varpi$ induces the notion of  covariant derivative $\nabla, D$ on vector fields and forms on $\mathcal{F}(\M)$, respectively. Let $T, R$ be the corresponding torsion tensor and curvature tensor,  and ${\bf \Theta}:=D\theta$, ${\bf \Omega}:=D\varpi$ be the torsion form and curvature form. 
Then \begin{align*}
T(X, Y)=&\ u({\bf \Theta}(\breve{X},
\breve{Y})),\notag\\
R(X, Y)Z=&\ u({\bf \Omega}(\breve{X}, \breve{Y})\cdot
(u^{-1}Z)),
\end{align*}
where $\breve{X}, \breve{Y}, \breve{Z}\in T_u\mathcal{F}(\M)$ are any vectors which project to  
$X, Y, Z\in T_x\M$, respectively, and $u\in \mathcal{F}_{x}(M)$ can be chosen  arbitrarily. Any  pair $(\theta, \varpi)$  is  a parallelism
differential form for $\mathcal{F}(\M)$.
It satisfies the following structure equations (cf.
\cite[p. 327]{Sp}):
\begin{align}
\label{structure-11}
d\theta(Y_1, Y_2)=& -\{\varpi(Y_1)\cdot
\theta(Y_2)-\varpi(Y_2)\cdot\theta(Y_1)\}+{\bf \Theta}(Y_1, Y_2),\\
d\varpi(Y_1, Y_2)=& -\left[\varpi(Y_1), \varpi(Y_2)\right]+{\bf \Omega}(Y_1,
Y_2),\label{structure-22}
\end{align}
where $Y_1, Y_2\in T_u\mathcal{F}(\M)$ and $\varpi(Y_1)\cdot \theta(Y_2)$ is the action of the matrix
$\varpi(Y_1)$ on $\theta(Y_2)\in \Bbb R^m$.

 For  $g\in \mathcal{M}^k(M)$,  let   ${\mathcal{O}}^{\wt{g}}(\M)\subset \mathcal{F}(\M)$ be the collection of  $\wt{g}$-orthogonal frames, the so-called \emph{orthogonal frame bundle space} of $(\M, \wt{g})$.  Each $u\in\mathcal{O}^{\wt{g}}_x(\M)$ defines an isometry from $\Bbb R^m$ with the classical Euclidean metric to $(T_x \M, \wt{g})$.  Let $\varpi$ be the  unique  torsion free connection
form on $\mathcal{F}(\M)$ which induces the $\wt{g}$-connection $\nabla$ and curvature tensor $R$. Then  $\varpi=(\varpi_{j}^{i}), {\bf \Omega}=(\Omega_{j}^{i})$ satisfy
\[
\varpi_{j}^{i}=\sum_{k}{{\bf \Gamma}}_{kj}^{i}\theta^k, \
{\bf \Omega}_{j}^{i}=\frac{1}{2}\sum_{k, l}{{\bf
R}}_{jkl}^{i}\theta^k\wedge \theta^l,
\]
where ${\bf \Gamma}$ and ${\bf R}$ are $\nabla$ and $R$ read in the
frame $u$. The structural equations (\ref{structure-11}) and (\ref{structure-22}) of $(\theta, \varpi)$ are reduced to 
\begin{align}
  d\theta^i(Y_1, Y_2)&=-\left(\varpi_{j}^{i}(Y_1)\theta^j(Y_2)-\varpi_{j}^{i}(Y_2)\theta^j(Y_1)\right),   \label{structure-1}\\
  d\varpi_{j}^{i}(Y_1,
  Y_2)&=-\left(\varpi_{q}^{i}(Y_1)\varpi_{j}^{q}(Y_2)-\varpi_{q}^{i}(Y_2)\varpi_{j}^{q}(Y_1)\right)+{\bf
  R}_{jkl}^{i}\theta^k(Y_1)\theta^{l}(Y_2), \label{structure-2}
\end{align}
where  $Y_1, Y_2\in T_u\mathcal{F}(\M)$ and $u\in \mathcal{F}(\M)$.  The restriction of  $(\theta,
\varpi)$ to  ${\mathcal{O}}^{\wt{g}}(\M)$ also defines  a parallelism
differential form.  For instance, we can use this parallelism to  recover  the geodesic flow on $S\M$.   Let $f: [0, +\infty)\to\mathcal{O}(\Bbb R^m)$ be a half line 
with $df/dt\equiv(\vec{e}, 0)$ for some unit vector ${e}\in
\Bbb R^m$.  It defines a $C^{k-1}$ vector
field on $\mathcal{O}^{\wt{g}}(\M)\times \Bbb R^+$ by letting \[
Z_{t,u}^f:=(\theta, \varpi)_{u}^{-1}(\frac{df}{dt}), \ \forall  u\in \mathcal{O}^{\wt{g}}(\M), \] where each $Z_{t,u}^f$ is 
just the lift of $u{e}$ to $HT\mathcal{F}(\M)$. Let $F_{e, t}$ denote the flow
generated by $Z_{t,u}^f$ with $df/dt\equiv({e}, 0)$. It  projects to  the $\wt{g}$-geodesic flow on $S\M$ and the 
orbit of $u\in \mathcal{O}^{\wt{g}}(\M)$ under it  is the parallel transportation of $u$ along the
unit speed geodesic $\gamma_{u{e}}$.

The key point of the Eells-Elworthy-Malliavin construction  of the Brownian motion on a Riemannian manifold is to realize it as a transportation of the  $\Bbb R^m$-Brownian motion using the parallelism differential form of the orthogonal frame bundle.

Let $\Theta_+$ be the space of continuous paths ${w}:[0,
+\infty)\to \Bbb R^m$, equipped with the smallest $\sigma$-algebra
$\mathcal{F}$ for which the projections $R_t:{ w}\mapsto {
w}(t)$ are measurable. The sub $\sigma$-algebras
$\{\mathcal{F}_t\}_{t\in \Bbb R^+}$ of $\mathcal{F}$ is an
increasing sequence  such that  $\{R_s\}_{s\leq t}$ are measurable
in  $\mathcal{F}_t$. An  $\Bbb R^m$-Brownian motion is
a continuous time random process  $\{B_t:\ B_t({w})={
w}(t)\}_{t\in \Bbb R^+}$ on $\Theta_+$ with distribution ${\rm Q}$ so
that the induced actions ${\rm Q}_t: 
({\rm Q}_t\varphi)(x)=\E_x(\varphi(B_t({ w})))$ on smooth functions $\varphi$ form a semigroup with Euclidean Laplacian $\Delta_{\rm
Eu}$ as being the infinitesimal generator ($\lim_{t\to 0}({\rm Q}_t
\varphi-\varphi)/t=\Delta_{\rm Eu}\varphi$ whenever $\varphi\in
C_c^2(\Bbb R^m)$, the collection of $C^2$ functions on $\Bbb R^m$ with compact support). In other words,
\begin{equation}\label{Euclidean BM}
B_t=(B_t^1, \cdots, B_t^m),
\end{equation}
where all $B_t^i$ are independent 1-dimensional Brownian motions on
$\Bbb R$ with  time $t$  transition probability ${(4\pi
t)}^{-\frac{1}{2}}e^{-\frac{(x_i-y_i)^2}{4t}}$ between points $x_i$ and $y_i$
in $\Bbb R$.  In the language of Stratonovich stochastic differential
equation (SDE), (\ref{Euclidean BM}) is
\begin{equation*}
dB_t=\sum_{i=1}^{m}e_i(B_t)\circ dB_t^i,
\end{equation*}
where $\{e_i=\partial/\partial x_i\}$ is an orthogonal chart
of $\Bbb R^m$, which means for all $\varphi\in C_c^{\infty}(\Bbb R^m)$, the collection of $C^\infty$ functions on $\Bbb R^m$ with compact support,  and for all $t\in \Bbb R^+$,
\[\varphi(B_t)=\varphi(B_0)+\int_{0}^t\sum_{i=1}^{m}e_i\varphi(B_s)\circ dB_s^i. \]
Fix a $C^{\infty}$ function $\rm c$, with support contained in the
unit interval $[0, 1]$ with integral $1$.  For each $\epsilon>0$, let
${\rm c}_{\epsilon}(\tau):=\epsilon^{-1}{\rm c}(\epsilon^{-1}\tau)$
be an  approximate  unit function. For any sample path
$t\mapsto {{w}}(t)=({{w}}^1(t),\cdots, {{w}}^m(t))$ of  $B$,  we can smooth it using 
$c_{\epsilon}$ by letting
\[
{{w}}^i_{\epsilon}(t):=\int_{0}^{\epsilon}{{w}}^i(t+s){\rm
c}_{\epsilon}(s)\ ds,\ \  i=1, \cdots, m.
\]
Let ${{w}}_{\epsilon}(t)=({{w}}_{\epsilon}^{1}(t), \cdots, {
w}_{\epsilon}^{m}(t))$. We see that $t\mapsto {{w}}_{\epsilon}(t)$
is smooth and satisfies
\[
\lim_{\epsilon\to 0}\sup_{t\in \Bbb R^+}\big\|{{w}}_{\epsilon}(t)-{{w}}(t)\big\|=0.
\]
As ${{w}}$ varies, $B_t^{\epsilon}:\ {{w}}\mapsto {
w}_{\epsilon}(t)$ defines an $\mathcal{F}_{t+\epsilon}$-measurable
process on $\Theta_+$. Each $B_t^{\epsilon}$ solves
\begin{equation*}
\frac{d}{dt}(\wt{B}_t)=\sum_{i=1}^{m}e_i(\wt{B}_t)\cdot
\frac{d}{dt}({{w}}_{\epsilon}^{i}(t)) 
\end{equation*}
and,  almost surely,  the limit of $B_t^{\epsilon}$ (as
$\epsilon\to 0$) gives the  Brownian motion $B_t$ (\cite{M}).

 Given a  sample path ${{w}}$
of $B_t$ starting from the origin,  the smoothed
curve ${{w}}_{\epsilon}$ has its lift in
$\mathcal{O}(\Bbb R^m)$ with tangent vectors $(d{
w}_{\epsilon}/dt, 0)$.  Let  $g\in \mathcal{M}^k(M)$ and let $\theta, \varpi$ and $H$  be the associated dual form, $\wt{g}$-connection form and horizontal lift map, respectively.   Consider the 
$C^{k-1}$ vector field on $\mathcal{O}^{\wt{g}}(\M)\times \Bbb R^+$: 
\[
Z_{t,u}^{f, \epsilon}:=(\theta, \varpi)_{u}^{-1}(\frac{d{
w}_{\epsilon}}{dt}, 0), \ \forall u\in \mathcal{O}^{\wt{g}}(\M). 
\]
We see that 
\[
Z_{t,u}^{f, \epsilon}=\sum_{i=1}^{m}H(u, e_i)\cdot \frac{d{
w}^{i}_{\epsilon}}{dt},
\]
where  $H(u, e_i)$ is horizontal lift of $ue_i$ to $HT\mathcal{F}(\M)$.  Let $\Phi_{f, t}^{\epsilon}$ be the flow generated by $Z_{t,u}^{f,
\epsilon}$. For $u\in \mathcal{O}^{\wt{g}}(\M)$, its orbit
$u^{\epsilon}(t)$  under $\Phi_{f,
t}^{\epsilon}$ solves the differential equation
\begin{equation}\label{approx-BM-SDE}
\frac{du^{\epsilon}(t)}{dt}=\sum_{i=1}^{m}H(u^{\epsilon}(t), e_i)\cdot
\frac{d{w}^{i}_{\epsilon}}{dt}.
\end{equation}
The projection of the orbit $t\mapsto u^{\epsilon}(t)$ to $\M$ has
tangent $u^{\epsilon}(t)(d{w}_{\epsilon}/dt)$ at time $t$ and is
an analog of the curve ${w}_{\epsilon}$. As
${w}$ varies, the distribution of the projection of $u^{\epsilon}(t)$ on $\M$ simulates  the distribution of the $\Bbb R^m$ Brownian
motion.  As $\epsilon$ tends to $0$, almost surely,  the differential system (\ref{approx-BM-SDE}) tends to 
\begin{equation}\label{OM-BM-SDE}
d{\rm u}_t=\sum_{i=1}^{m}H({\rm u}_t, e_i)\circ dB_t^i({w}), 
\end{equation}
which means for all smooth function  $\varphi$ on $\mathcal{O}^{\wt{g}}(\M)$,
\[
\varphi({\rm u}_t)=\varphi({\rm
u}_0)+\int_{0}^{t}\sum_{i=1}^{m}(H({\rm u}_s, e_i)\varphi)({\rm u}_s)\circ dB_s^i, \
0\leq t<\infty.
\]
Since the vector fields $H(\cdot, e_i)$ are $C^{k-1}$, for any initial ${\rm u}_0$, there exists a unique solution ${\rm
u}=({\rm  u}_t)_{t\in \Bbb R_+}$ to (\ref{OM-BM-SDE}), which is 
continuous in $(t, {\rm u}_0)$ for all $t\in \Bbb R_+$  (see  Proposition \ref{El-Ku}).

Recall that the generator ${\rm A}$ of  ${\rm u}_t$ is such that
\begin{equation*}
\varphi({\rm u}_t)-\varphi({\rm u}_0)-\int_{0}^{t}{\rm
A}\varphi({\rm u}_s)\ ds
\end{equation*}
is a local martingale for all smooth $\varphi$. By It\^{o}'s
formula, we see that
\[
{\rm A}=\sum_{i=1}^{m}H(\cdot, e_i)^2, 
\]
which is the Bochner horizontal Laplacian
$\Delta_{\mathcal{O}^{\wt{g}}(\M)}$. It is a lift of the Laplacian
$\Delta$ in the sense that for any smooth function
$\underline{\varphi}$ on $\M$ and its lift $\varphi$ to
$\mathcal{O}^{\wt{g}}(\M)$,
\begin{equation}\label{equi-laplacian}
\Delta_{\mathcal{O}^{\wt{g}}(\M)}\varphi(u)=\Delta\underline{\varphi}(\pi
u).
\end{equation}
Let ${\rm x}=({\rm x}_t)_{t\in \Bbb R_+}$ be the projection on $\M$ of the
solution ${\rm u}=({\rm u}_t)_{t\in \Bbb R_+}$ of (\ref{OM-BM-SDE})
with initial value ${\rm u}_0\in \mathcal{O}^{\wt{g}}_{{\rm x}_0}(\M)$.  It defines a measurable map from orbits
in $\Theta_+$ starting from the origin  to $C_{{\rm x}_0}(\Bbb R^+, \M)$,
the space of continuous paths on $\M$ starting from ${\rm x}_0$.
As ${\rm x}_0$ varies, ${\rm Q} (\rm x^{-1})$ gives a
distribution in the space of continuous paths on $\M$. For
$\tau\in \Bbb R_+$, let $C_{{\rm x}_0}([0, \tau], \M)$ be the collection 
of continuous paths $\rho: [0, \tau]\to \M$ with
$\rho(0)={{\rm x}}_0$. Then ${\rm x}$ also induces a measurable map
${\rm x}_{[0, \tau]}:\ \Theta_+ \to C_{{\rm x}_0}([0, \tau], \M)$ sending  $w$ to $({\rm x}_t({w}))_{t\in [0, \tau]}$. So, 
\[
\P_{\tau}:={\rm Q} ({\rm x}_{[0, \tau]}^{-1})
\]
gives the distribution probability of  paths ${\rm x}({w})$ on
$\M$ up to time $\tau$ and this distribution  is independent of the choice of the initial 
orthogonal frame ${\rm u}_0$ that projects to ${\rm x}_0$. Since ${\rm x}$ has generator
$\Delta$ by  (\ref{equi-laplacian}), it 
visualizes the Brownian motion on $\M$. This is the 
Eells-Elworthy-Malliavin's approach to obtain the Brownian motion on a manifold  (cf. \cite{El}).

\subsection{A stochastic analogue of the geodesic flow}\label{BMM-flow}
The regularity of the Brownian companion process ${\rm u}_t$ with respect to its initials ${\rm u}_0$ can be understood by  general  theory on stochastic flows associated to SDEs.

Let
$X_1, \cdots, X_d$ be bounded vector fields on a smooth finite dimensional Riemannian
manifold $({\rm N}, \langle\cdot, \cdot\rangle)$. 
Let $(z_t)_{t\in \Bbb R_+}=(z_t^1, \cdots, z_t^d)$ be a continuous stochastic process on $\Bbb R^d$.  An ${\rm N}$-valued semimartingale $({x}_t)_{t\in \Bbb R_+}$ defined up to a stopping time $\tau$ is said
to be a solution of the following Stratonovich SDE
\begin{equation}\label{SDE-x}
d{x}_t=\sum_{i=1}^{d}X_i({x}_t)\circ dz_{t}^{i}, 
\end{equation}
 if for all $\psi\in C^{\infty}({\rm N})$, 
\[
\psi({x}_t)=\psi({x}_0)+\int_{0}^{t} \sum_{i=1}^{d} X_i \psi({x}_s)\circ dz_{s}^{i}, \ 0\leq t<\tau.
\]
The solution to (\ref{SDE-x}) always exists when all $X_i$  are $C^1$ bounded (\cite{El}). Note that  ${\bf X}=(X_1, \cdots, X_d)$ is  a linear isomorphism from $\Bbb R^d$ to $T{\rm N}$. So, $x_t$ is a parallel transportation of $z_t$ to the manifold ${\rm N}$ via ${\bf X}$.  The pair $({\bf X}, (z_t)_{t\in \Bbb R_+})$ is called  a \emph{stochastic dynamical system}  (SDS) on ${\rm N}$ (\cite{El}) and it is said to be   $C^{j}$ if all $X_i$ are $C^j$ bounded. Using ${\bf X}$, we also write (\ref{SDE-x}) as 
\begin{equation*}
d{x}_t={\bf X}(x_t)\circ dz_t.  
\end{equation*}
The mapping
\[
F_t(\cdot, {w}):\ x_0({w})\mapsto x_t({w})
\]
 has the following regularity with respect to the starting point $x_0({w})$. 

\begin{prop}\label{El-Ku}(\cite[Theorem 3, Chapter VIII]{El}) Let  $({\bf X}, (z_t)_{t\in \Bbb R_+})$ be a $C^j$ SDS  on ${\rm N}$. There is a version of the explosion time map $x\mapsto \tau^x$, defined for $x\in {\rm N}$, and a version of $F_t(x, {w})$, defined when $t\in [0, \tau^x({w}))$, such that if ${\rm N}(t, {w})=\{x\in {\rm N}:\ t<\tau^x\}$, then the following are true  for each $(t, {w})\in \Bbb R_+\times \Theta_+$. 
\begin{itemize}
\item[i)] The set ${\rm N}(t, {w})$ is open in ${\rm N}$. 
\item[ii)] The map $F_t(x, {w}):\ {\rm N}(t, {w})\to {\rm N}$ is $C^{j-1}$ and is  a diffeomorphism onto an open subset of ${\rm N}$. Moreover, the map $\tau\mapsto F_{\tau}(\cdot, {w})$ of $[0, t]$ into $C^{j-1}$ mappings of ${\rm N}(t, {w})$  is continuous.
\end{itemize}
\end{prop}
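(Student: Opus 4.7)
The plan is to reduce to a local coordinate statement, prove a $C^{j-1}$ flow theorem in a chart by Picard iteration plus differentiation under the stochastic integral, and then globalize by gluing across stopping times defined by chart exits.

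First, I would fix a locally finite atlas of relatively compact coordinate charts on ${\rm N}$ and, for a given initial point $x_0$, construct a sequence of stopping times corresponding to exits from the current chart. Inside a chart the Stratonovich SDE becomes an It\^o SDE whose coefficients involve the $X_i$ and one derivative of them, hence are $C^{j-1}$. Picard iteration
\[ x^{n+1}_t(x) \; = \; x + \int_0^t \sum_i X_i(x^n_s(x)) \circ dz^i_s \]
converges in $L^p$ uniformly on compact time-initial sets, thanks to the Burkholder-Davis-Gundy inequality and the boundedness of the $X_i$ on the chart; uniqueness follows by a Gronwall-type argument on the difference of two solutions.

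For regularity in $x$, I would formally differentiate with respect to the initial point. The Jacobian $J_t(x) = D_x x_t(x)$ satisfies a linear Stratonovich SDE
\[ dJ_t \; = \; \sum_i DX_i(x_t)\, J_t \circ dz^i_t, \qquad J_0 = \mathrm{Id}, \]
with $C^{j-1}$ bounded coefficients, and similar linear SDEs govern the higher derivatives up to order $j$. Applying the Kolmogorov-Chentsov criterion to the joint process $(x, t) \mapsto (x_t(x), J_t(x), \ldots)$ on bounded sets, one upgrades $L^p$ moment estimates to a pathwise modification that is $C^{j-1}$ in $x$ and continuous in $t$; the loss of one derivative is the price of Kolmogorov only yielding H\"older exponents strictly below those available from moments. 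Defining $\tau^x({w})$ as the supremum of $t$ for which the solution stays inside a fixed compact exhaustion of ${\rm N}$, openness of ${\rm N}(t,{w})$ is then immediate from continuity of $F_t(\cdot,{w})$: if $\tau^x > t$ the compact orbit up to time $t$ has a neighborhood of nearby orbits still confined to the exhaustion.

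For the diffeomorphism property I would use time-reversal, running the SDE $d\tilde x_s = -\sum_i X_i(\tilde x_s)\circ d\tilde z^i_s$ driven by $\tilde z_s = z_{t-s} - z_t$ from $F_t(x,{w})$; this inverts $F_t(\cdot,{w})$ and provides a $C^{j-1}$ inverse on its image, which must be open by the same argument applied to the reverse flow. Continuity of $\tau \mapsto F_\tau(\cdot,{w})$ into $C^{j-1}({\rm N}(t,{w}))$ follows from a Kolmogorov estimate applied jointly in $(x,\tau)$ to the derivatives up to order $j-1$. The main obstacle is the uniformity in the regularity argument: the linearized SDEs give only pointwise-in-$x$ $L^p$ control, and producing a single exceptional null set outside of which $F_t(\cdot,{w})$ is simultaneously $C^{j-1}$ in $x$ and continuous in $\tau$ on the random open domain ${\rm N}(t,{w})$ demands careful use of Kolmogorov (or Sobolev embedding) with exponents approaching but never reaching the full regularity of ${\bf X}$, combined with a consistent choice of modifications across the stopping times gluing successive charts.
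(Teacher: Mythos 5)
The paper gives no proof of this proposition: it is quoted from Elworthy \cite[Theorem 3, Chapter VIII]{El} and used as a black box, so there is no in-paper argument to compare with. Your sketch is the standard route of that source (localization in charts with exit stopping times, Picard iteration, linear Stratonovich SDEs for the derivative processes, Kolmogorov--Chentsov modifications to get a version $C^{j-1}$ in $x$ and continuous in $t$, time reversal for the inverse), and as an outline it is sound; note that in the paper's applications the driver $z$ is a Brownian motion (plus a drift), so the semimartingale calculus you invoke (BDG, It\^o correction) is available, and that the derivative flows can only be written down up to order $j-1$ with well-posed linear equations, since the order-$j$ equation would involve $D^jX_i$, which is merely continuous.

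The one place where the sketch undersells the actual content of the cited theorem is the ``partial flow'' aspect. Openness of ${\rm N}(t, {w})$ is not immediate from continuity of $F_t(\cdot, {w})$ --- that continuity on the random domain is part of what is being constructed --- and the heart of Elworthy's statement is precisely the pathwise lower semicontinuity of $x\mapsto \tau^x({w})$ together with the existence of a \emph{single} modification valid simultaneously for all $(x,t)$ with $t<\tau^x({w})$. This requires uniform-in-$x$ moment and Gronwall estimates on a compact neighborhood of the orbit up to time $t$, and a consistent choice of modifications when gluing across the chart-exit stopping times; you acknowledge this difficulty at the end but do not carry it out, and your definition of $\tau^x$ via a fixed compact exhaustion must also be shown independent of the exhaustion. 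Similarly, the time-reversal argument for the diffeomorphism property needs care: the reversed driver is not adapted to the original filtration, so the backward equation has to be interpreted through two-parameter flows $F_{s,t}$ (or backward Stratonovich integrals), and injectivity of $F_t(\cdot,{w})$ on ${\rm N}(t,{w})$ plus openness of the image rest on flow composition identities on the partial domain, not merely on the existence of a reverse solution from $F_t(x,{w})$. These are standard but nontrivial points, and they are exactly what the quoted theorem packages.
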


 \begin{cor}\label{El-Ku-Cor-1}
Let $g\in \mathcal{M}^k(M)$ ($k\geq 3$). There is a version of the solution flow 
\[
F_t(\cdot, {w}):\ {\rm u}_0({w})\to {\rm u}_t({w}), \ t\in \Bbb R_+, 
\]
to (\ref{OM-BM-SDE}) in $\mathcal{F}(\M)$, 
which is a $C^{k-2}$ diffeomorphism  into $ \mathcal{F}(\M)$ and is continuous in $t$.
\end{cor}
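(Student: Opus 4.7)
The plan is to apply Proposition \ref{El-Ku} directly to the Stratonovich SDE (\ref{OM-BM-SDE}) with driving vector fields $X_i = H(\cdot, e_i)$, $i = 1, \ldots, m$. Since $g \in \mathcal{M}^k(M)$ with $k \geq 3$, the Levi-Civita connection $\varpi$ and its horizontal lift map $H$ are of class $C^{k-1}$, so each $X_i$ is a $C^{k-1}$ vector field on $\mathcal{F}(\M)$. Applied with $j = k-1$, Proposition \ref{El-Ku} then supplies a version $F_t(\cdot, w)$ which, on its (random) open domain of definition, is a $C^{(k-1)-1} = C^{k-2}$ diffeomorphism into $\mathcal{F}(\M)$, and the map $\tau \mapsto F_\tau(\cdot, w)$ is continuous in the $C^{k-2}$ topology.

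The point requiring extra argument is that Proposition \ref{El-Ku}, as stated, only produces a flow on the random open set $\mathcal{F}(\M)(t, w) = \{u : t < \tau^u\}$, and the vector fields $H(\cdot, e_i)$ are not uniformly bounded on the non-compact cover $\mathcal{F}(\M)$. To upgrade to a flow defined on all of $\mathcal{F}(\M)$ for every $(t, w)$, I would exploit the $G$-equivariance of $\varpi$, $\theta$ and $H$, which comes from the fact that $\wt{g}$ is the $G$-invariant lift of the metric $g$ on the compact quotient $M$. Consequently, $H(\cdot, e_i)$ descends to a $C^{k-1}$ vector field on the compact bundle $\mathcal{F}(M) = \mathcal{F}(\M)/G$, which is automatically bounded together with all derivatives of order $\leq k-1$. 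Proposition \ref{El-Ku} applied downstairs thus yields a global $C^{k-2}$ diffeomorphism flow $\underline{F}_t$ of $\mathcal{F}(M)$ with no explosion and continuous $t$-dependence.

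To lift $\underline{F}_t$ back to $\mathcal{F}(\M)$, I would use that the covering projection $\pi_G: \mathcal{F}(\M) \to \mathcal{F}(M)$ is a local diffeomorphism: each continuous sample path of $\underline{F}_t(\cdot, w)$ starting at $\pi_G(u_0)$ admits a unique continuous lift to $\mathcal{F}(\M)$ beginning at $u_0$. By $G$-equivariance of $X_i$, this lift solves (\ref{OM-BM-SDE}) on $\mathcal{F}(\M)$, and pathwise uniqueness identifies it with the local flow supplied by Proposition \ref{El-Ku}, so $\tau^{u_0} = +\infty$ almost surely for every $u_0$. The $C^{k-2}$ regularity in the initial frame and the continuity in $t$ then descend from the compact quotient to $\mathcal{F}(\M)$ through the local diffeomorphism $\pi_G$.

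The main obstacle is precisely the non-explosion on the non-compact space $\mathcal{F}(\M)$; the compactness of $M$ combined with the $G$-equivariance of the horizontal lift is what reduces the problem to an SDE with globally bounded $C^{k-1}$ coefficients, where Proposition \ref{El-Ku} is clean. Once the flow is established globally on $\mathcal{F}(\M)$, the restriction to $\mathcal{O}^{\wt{g}}(\M) \subset \mathcal{F}(\M)$ that is actually used later in the paper is automatic, since the vector fields $H(\cdot, e_i)$ are tangent to $\mathcal{O}^{\wt{g}}(\M)$ by metric compatibility of $\varpi$, so $F_t$ preserves the orthogonal frame bundle.
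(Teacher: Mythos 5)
Your proof is correct, but it takes a longer route than the paper and, along the way, mis-identifies the obstacle. You write that ``the vector fields $H(\cdot, e_i)$ are not uniformly bounded on the non-compact cover $\mathcal{F}(\M)$,'' and then repair this by descending to $\mathcal{F}(M)=\mathcal{F}(\M)/G$, invoking Proposition~\ref{El-Ku} on the compact quotient, and lifting the resulting flow back through the covering map. The paper's proof is more direct and rests on precisely the observation your ``repair'' implicitly uses: since $\wt g$ is the $G$-invariant lift of a metric on the compact manifold $M$, the vector fields $H(\cdot, e_i)$ \emph{are} $C^{k-1}$ bounded on $\mathcal{O}^{\wt g}(\M)$ with respect to the Sasaki metric induced by $\wt g$ (bounds on the compact quotient lift verbatim by equivariance), so Proposition~\ref{El-Ku} applies upstairs without any detour. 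Non-explosion is then handled exactly as you handle it --- completeness of $\M$ makes the projected diffusion on $\M$ non-exploding, and the horizontal lift inherits this. So the two arguments prove the same thing from the same underlying fact (compactness of $M$ plus $G$-equivariance); the difference is that you run the ``descend--apply--lift'' loop explicitly, which costs a paragraph of bookkeeping (uniqueness of path lifts, identification with the local flow) that the paper avoids by stating the bound directly on the cover. Your version is perhaps more self-contained for a reader unsure why boundedness holds, but the paper's is cleaner once one accepts that $G$-invariant tensors on a cocompact cover are automatically bounded.
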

\begin{proof}
Each $x\in \M$ has infinite distance to the boundary. Hence each solution process  ${\rm u}$ to (\ref{OM-BM-SDE}) with ${\rm u}_0\in \mathcal{F}_x(\M)$ projects to be a diffusion process on $\M$ starting from $x$ and has infinity explosion time. Since $g\in \mathcal{M}^k(M)$, the vector fields $H(\cdot, e_i)$, $i=1, 2, \cdots, m$,  on $\mathcal{O}^{\wt{g}}(\wt{M})$ are all $C^{k-1}$ bounded with respect to the $\wt{g}$ metric. So  $F_t(\cdot, {w}): {\rm u}_0(w)\mapsto {\rm u}_t(w)$  is $C^{k-2}$ with respect to the initial points ${\rm u}_0$ and is continuous in $t$ by Proposition  \ref{El-Ku}. 
\end{proof}

For $l\leq j-1$, the $l$-th tangent map of $F_t$ in Proposition  \ref{El-Ku}, denoted by $D^{(l)}F_t(\cdot, {w})$, can  be formulated and its norm can be estimated if ${\rm N}$ is equipped with a reference connection. 
 
  \begin{prop}(\cite{El})\label{SDE-flow-regularity} Let  $({\bf X}, (z_t)_{t\in \Bbb R_+})$ be a $C^j$ SDS  on ${\rm N}$. Assume there is a Levi-Civita connection $\bf \nabla$ induced by some metric such that the covariant derivatives ${\bf \nabla}^{\iota}X_i$, $\iota=0, 1, \cdots, j$, $i=1, \cdots, d$,  are bounded and the curvature tensor $R$ of $\bf \nabla$ and its first $j-1$ derivatives are bounded. The following hold true. 
\begin{itemize}
\item[i)] There is a version of $\{F_t(\cdot, {w})\}$ such that almost surely, for $l\leq j-1$,  $t\in \Bbb R_+$ and ${\bf\mathsf v}_0({w})\in T^{(l)}{\rm N}$, ${\bf\mathsf v}_t({w}):=[D^{(l)}F_t(\cdot, {w})]{\bf\mathsf v}_0({w})$ satisfies the Stratonovich SDE 
\[d{\bf\mathsf v}_t=\sum_{i=1}^{d}[D^{(l)}X_i({x}_t)] {\bf\mathsf v}_t\circ dz_{t}^{i},\]
where, if we denote by $F_{t}^i$ the deterministic flow map generated by the vector field $X_i$ and $D^{(l)}F_{t}^i$ its $l$-th  differential map, then for $\mathsf v\in T^{(l)}N$ with footpoint $x\in \rm N$, 
\[[D^{(l)}X_i(x)]\mathsf v:= \frac{D}{dt}([D^{(l)}F_{t}^i]\mathsf v).\]
\item[ii)] For any $q\in [1, \infty)$, there is a bounded function $c_l(t, q)$, which depends on $t$, $m, q$, and the bounds of ${\bf \nabla}^{\iota}X_i$ and ${\bf \nabla}^{\iota-1}R$, $\iota\leq l+1$, such that 
$\|[D^{(l)}F_t(\cdot, {w})]\|_{L^q}<c_l(t, q)$.
\end{itemize}
\end{prop}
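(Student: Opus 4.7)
The plan is to prove the result by induction on the order of differentiation $l$, with the existence of the linearized SDE (part i) and the $L^q$ moment bound (part ii) handled in parallel at each induction step. The base case $l=1$ already contains all of the key analytical ideas; the higher-$l$ case is essentially a careful bookkeeping exercise on higher-tangent bundles.

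For part (i) at $l=1$, I would begin with a smooth approximation. Fix a mollifier and let $z^\epsilon_t$ denote a smoothing of $z_t$; then $x^\epsilon_t$ solves a classical time-dependent ODE whose flow $F^\epsilon_t$ is $C^{j}$ in initial conditions by standard ODE theory, and $\mathsf v^\epsilon_t := (DF^\epsilon_t)\mathsf v_0$ satisfies the linear ODE
\begin{equation*}
d\mathsf v^\epsilon_t = \sum_{i=1}^d [DX_i(x^\epsilon_t)]\,\mathsf v^\epsilon_t\ \dot z^{i,\epsilon}_t\, dt,
\end{equation*}
where $DX_i$ is interpreted intrinsically via the deterministic flow of $X_i$ exactly as in the statement. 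As $\epsilon \to 0$, Wong--Zakai type convergence (the Stratonovich integral respects the chain rule under smooth approximation of the driver) yields convergence of $x^\epsilon_t \to x_t$ and of $\mathsf v^\epsilon_t$ to a limit $\mathsf v_t$ satisfying the desired Stratonovich SDE. The identification $\mathsf v_t = (DF_t)\mathsf v_0$ then follows from the $C^{j-1}$ regularity of $F_t(\cdot, w)$ already provided by Proposition \ref{El-Ku}. For $l \geq 2$, I would proceed by induction: apply the same mollification to the SDE for $D^{(l-1)}F_t$ on the higher-tangent bundle $T^{(l-1)}{\rm N}$ (known inductively to be linear in its top-order component) and differentiate once more, using that $X_i \in C^j$ prolongs canonically to a vector field on $T^{(l)}{\rm N}$.

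For part (ii), the tangent SDE for $\mathsf v_t$ is linear in $\mathsf v_t$ with coefficients controlled by $\|\nabla X_i\|_\infty$. Converting to It\^o form introduces a drift correction involving $\nabla^2 X_i$ and, because the second-order expansion happens in the tangent bundle of ${\rm N}$, a curvature term from $R$. Applying It\^o's formula to the smooth function $(1 + \|\mathsf v_t\|^2)^q$ on $T{\rm N}$ gives
\begin{equation*}
d(1 + \|\mathsf v_t\|^2)^q \leq C_1(q)\,(1+\|\mathsf v_t\|^2)^q\, dt + dM_t,
\end{equation*}
where $M_t$ is a local martingale and $C_1(q)$ depends only on $q$, on $\|\nabla^\iota X_i\|_\infty$ for $\iota \leq 2$, and on $\|R\|_\infty$ and $\|\nabla R\|_\infty$. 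Gronwall applied to $\E(1+\|\mathsf v_t\|^2)^q$ then delivers the claimed $L^q$ bound with constant growing at most exponentially in $t$. For $l \geq 2$, the same It\^o/Gronwall argument applies: the SDE for $D^{(l)}F_t$ is linear in its top-order component but contains inhomogeneities built from $D^{(l')}F_t$ with $l' < l$, which by the inductive hypothesis already belong to every $L^q$, so the inhomogeneity enters as an additive term in Gronwall. Tracking the orders of derivatives that appear in these inhomogeneities is what accounts for the requirement of bounds on $\nabla^\iota X_i$ and $\nabla^{\iota-1}R$ for $\iota \leq l+1$.

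The main obstacle is the intrinsic geometric formulation on a curved manifold. On $\Bbb R^n$, differentiating the SDE coordinatewise is trivial, but here $D^{(l)}F_t$ takes values in the higher-tangent bundle $T^{(l)}{\rm N}$, and writing down its SDE intrinsically forces one to prolong each $X_i$ to a vector field on $T^{(l)}{\rm N}$ through its deterministic flow. Verifying that this prolongation agrees with what one obtains by formally differentiating the SDE produces curvature corrections (from commuting covariant derivatives), and these corrections multiply lower-order derivatives of $X_i$. This is exactly why bounds on $\nabla^{\iota-1}R$ must accompany those on $\nabla^\iota X_i$, and setting up this bookkeeping once with care is the real work of the argument; once done, the probabilistic steps (Wong--Zakai and It\^o--Gronwall) are routine.
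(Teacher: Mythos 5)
The paper itself gives no proof of this proposition: it is quoted verbatim from Elworthy \cite{El} and used as a black box, so there is no in-paper argument to match yours against. Judged on its own terms, your part (ii) is essentially the standard (and Elworthy's) argument: convert the linear tangent SDE to It\^o form, apply It\^o's formula to $(1+\|\mathsf v_t\|^2)^q$ with a connection-induced metric on $T^{(l)}{\rm N}$, localize the local-martingale term, and close with Gronwall; the inhomogeneous terms for $l\ge 2$ are handled exactly as you say (and as the paper later does for its own stronger estimates via a stochastic Duhamel formula in Corollary \ref{Duha-mathcal-V-t}). Note that where the paper needs quantitative bounds it proves a stronger statement, $\E\sup_{0\le\underline t<\overline t\le T}\|[D^{(l)}F_{\underline t,\overline t}]\|^q$, in Proposition \ref{est-norm-D-j-F-t}, and there it does not use Gronwall on moments but an exponential-martingale bound via Dambis--Dubins--Schwarz and Skorokhod's lemma; your Gronwall route suffices for the fixed-$t$ bound asserted here.

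The weak point is part (i). You derive the tangent SDE by mollifying the driver and invoking Wong--Zakai convergence of both the flows and their derivatives. Two issues: first, the proposition is stated for a general continuous (semimartingale) driver $(z_t)$, and Wong--Zakai approximation for such drivers is not automatic; second, even for a Brownian driver, convergence of the \emph{derivative} flows $DF^\epsilon_t\to DF_t$ locally uniformly is itself a theorem of essentially the same depth as the statement you are proving, so asserting it in one line is close to circular. The standard route, which is the one behind the cited result, avoids this: one proves differentiability of $x\mapsto F_t(x,w)$ directly by moment estimates on difference quotients plus Kolmogorov's continuity criterion (this is what Proposition \ref{El-Ku} encapsulates), and then identifies $\mathsf v_t=[D^{(l)}F_t]\mathsf v_0$ with the solution of the formally differentiated SDE by justifying the interchange of the spatial derivative with the stochastic integral, using the same moment bounds. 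Your mollification picture is in the spirit of Malliavin's construction that the paper uses elsewhere (Section \ref{BMM}), so it is salvageable for the Brownian case by citing a Wong--Zakai flow theorem explicitly, but as written the passage from $\mathsf v^\epsilon_t$ to $\mathsf v_t=[D^{(l)}F_t]\mathsf v_0$ is a genuine gap, not a routine limit.
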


Proposition \ref{SDE-flow-regularity} applies to  the flow map corresponding to  (\ref{OM-BM-SDE}). So we can formulate the SDEs of  $\{[D^{(l)}F_t(\cdot, {w})]\}$. We will use them to  specify $c_l(t, q)$ and give  a more detailed  study of their norm growths in time for later use.

 Let $F_t(\cdot, {w})$ be as in Corollary \ref{El-Ku-Cor-1}.  The first order tangent map $D^{(1)}F_t({\rm u}_0, {w})$ records the first order infinitesimal response of $F_t({\rm u}_0, {w})$ to the change of initial point ${\rm u}_0$. Let $\mathcal{C}: (-1, 1)\mapsto \mathcal{F}(\M)$ be a differential curve with $\mathcal{C}(0)={\rm u}_0, \mathcal{C}'(0)={\mathsf v}$. Then 
\[
{\mathsf v}_t:=\big[D^{(1)}F_t({\rm u}_0, {w})\big]{\mathsf v}=\left.\frac{D}{\partial s}F_{t}(\mathcal{C}(s), w)\right|_{s=0}. 
\]
The SDEs of ${\mathsf v}_t$ can be formulated using the parallelism form $(\theta, \varpi)$ as follows.

\begin{lem}\label{El-Ku-Cor}(\cite[Theorem 5.1]{M}) Let $F_t(\cdot, {w})$ be as in Corollary \ref{El-Ku-Cor-1}.  
\begin{itemize}
\item[i)] For any ${\mathsf v}\in T_{{\rm u}_0}{\mathcal{F}}(\M)$, ${\mathsf v}_t$ satisfies the Statonovich SDE 
\[
d{\mathsf v}_t({w})=\nabla({\mathsf v}_t({w}))H({\rm u}_t, \circ dB_t). 
\]
\item[ii)] Consider the  map 
\[
[\wt{D^{(1)}{F_t}}({\rm u}_0, {w})]:=(\theta, \varpi)_{{\rm u}_t}\circ [D^{(1)}{F_t}({\rm u}_0, {w})]\circ (\theta, \varpi)_{{\rm u}_0}^{-1}. 
\]
For  $(z(0), {\bf z}(0)):=(z^i(0), {\bf z}^l_j(0))\in T\mathcal{F}(\Bbb R^m)$,   $(z(t), {\bf z}(t)):=[\wt{D^{(1)}{F_t}}({\rm u}_0, {w})](z(0), {\bf z}(0))$  satisfies the Stratonovich SDE
\begin{align}\label{Mar-diff-flow-map}
\left\{ \begin{array}{ll}
dz(t)= {\bf z}(t)\circ d B_{t}({w}),\\
d{\bf z}(t)= {\rm u}_{t}^{-1}R\left({\rm u}_{t}\circ dB_t({w}), {\rm u}_{t}z(t)\right){\rm u}_{t}. 
\end{array}
 \right.
\end{align}
\item[iii)] The It\^{o} form of (\ref{Mar-diff-flow-map})  is 
\begin{align}\label{Mar-diff-flow-map-ito}\left\{ \begin{array}{ll}
dz(t)= {\bf z}(t)\ d B_{t}({w})+{\rm Ric}({\rm u}_{t}z(t))\ dt,\\
d{\bf z}(t)= {\rm u}_{t}^{-1}R\left({\rm u}_{t} dB_t({w}), {\rm u}_{t}z(t)\right){\rm u}_{t}+ {\rm u}_{t}^{-1}R\left({\rm u}_{t}e_i, {\rm u}_{t}{\bf z}(t)e_i\right)  {\rm u}_{t}\ dt\\
\ \ \ \ \ \ \ \  \ \ \ \ +{\rm u}_{t}^{-1}(\nabla ({\rm u}_{t}e_i)R)\left({\rm u}_{t} e_i, {\rm u}_{t}z(t)\right){\rm u}_{t}\ dt,
\end{array}
 \right.
\end{align}
where  the summation  $\Sigma_{i=1}^{m}$  is omitted in (\ref{Mar-diff-flow-map-ito}) for simplicity and 
\begin{align}\label{Ric-def}
{\rm Ric}({\rm u}z):= \sum_{i=1}^{m}{\rm u}^{-1}R({\rm u}e_i, {\rm u}z){\rm u}e_i,\  \forall {\rm u}\in \mathcal{F}(\M),\   z\in \Bbb R^m. 
\end{align}
\end{itemize}
\end{lem}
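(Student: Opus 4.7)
The plan is to handle the three parts in order, working entirely in the formalism of the parallelism $(\theta,\varpi)$ on the orthogonal frame bundle.

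Part (i) is a direct application of Proposition \ref{SDE-flow-regularity}\,i) to the defining equation (\ref{OM-BM-SDE}) of ${\rm u}_t$. Since $g\in\mathcal{M}^k(M)$ with $k\geq 3$, the horizontal vector fields $H(\cdot,e_i)$ on $\mathcal{O}^{\wt g}(\M)$ are $C^{k-1}$ with bounded first covariant derivatives, which is exactly what is required to differentiate (\ref{OM-BM-SDE}) in the initial frame ${\rm u}_0$ and produce the claimed Stratonovich SDE for ${\mathsf v}_t = [D^{(1)}F_t({\rm u}_0,w)]{\mathsf v}$.

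For part (ii), the idea is to push the SDE of part (i) onto $T\mathcal{F}(\Bbb R^m)$ via the parallelism form and then use the structure equations (\ref{structure-1})--(\ref{structure-2}) to convert covariant derivatives of $H(\cdot,e_i)$ into curvature. I would first record two geometric features of the setup: since (\ref{OM-BM-SDE}) is a purely horizontal equation, the Stratonovich velocity of the flow satisfies $\varpi(\dot{{\rm u}}_t)=0$ and $\theta(\dot{{\rm u}}_t)=\circ dB_t$; and since ${\mathsf v}_t$ is transported by $F_t$, the stochastic bracket $[\dot{{\rm u}}_t,{\mathsf v}_t]$ vanishes along the orbit. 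Applying Cartan's formula $L_X\alpha = i_X d\alpha + d(i_X\alpha)$ with $X=\dot{{\rm u}}_t$ to $\alpha=\theta$ and $\alpha=\varpi$, the second term vanishes in each case because $\theta(\dot{{\rm u}}_t)$ and $\varpi(\dot{{\rm u}}_t)$ are position-independent along the flow, leaving $dz(t)=d\theta(\dot{{\rm u}}_t,{\mathsf v}_t)$ and $d{\bf z}(t)=d\varpi(\dot{{\rm u}}_t,{\mathsf v}_t)$. Plugging into the torsion-free structure equation (\ref{structure-1}) gives $dz(t)={\bf z}(t)\circ dB_t$, while (\ref{structure-2}) combined with $\varpi(\dot{{\rm u}}_t)=0$ leaves only the curvature form ${\bf\Omega}(\dot{{\rm u}}_t,{\mathsf v}_t)$, which by the relation between ${\bf\Omega}$ and $R$ equals ${\rm u}_t^{-1}R({\rm u}_t\circ dB_t,{\rm u}_tz(t)){\rm u}_t$.

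Part (iii) is a routine Stratonovich-to-It\^o conversion: computing the quadratic covariation $d\langle{\bf z},B\rangle_t$ from the It\^o version of the equation for $d{\bf z}(t)$ and tracing over the frame indices produces exactly the ${\rm Ric}({\rm u}_tz(t))\,dt$ drift in the equation for $z(t)$, while the analogous correction applied to $d{\bf z}(t)$, together with a covariant-derivative term coming from differentiating the Stratonovich integrand $R$, yields the remaining two drift terms in the equation for $d{\bf z}(t)$. The main obstacle I anticipate is the rigorous justification of the Cartan-formula manipulations in part (ii): the symbols $\dot{{\rm u}}_t$ and $[\dot{{\rm u}}_t,{\mathsf v}_t]=0$ are purely formal and making them precise requires the stochastic exterior calculus of \cite{M}, after which the structure-equation computations become essentially one-line applications.
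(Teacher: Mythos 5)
Your proposal is correct and reproduces the standard Malliavin-style derivation that the paper itself does not spell out (it simply cites \cite[Theorem 5.1]{M}): part (i) reads off from Proposition \ref{SDE-flow-regularity} with $l=1$, part (ii) is the horizontality/structure-equation computation whose deterministic twin the paper carries out in the proof of Lemma \ref{HS-Th2.1}, and part (iii) is the routine Stratonovich-to-It\^o conversion bearing in mind the convention here that $\langle B^i,B^j\rangle_t=2\delta^{ij}t$ (generator $\Delta$, not $\Delta/2$). The one spot you flag yourself---making the Cartan-formula/vanishing-bracket step rigorous---is indeed where the work lies, and the paper's own preferred rigorous form of that step (see the proof of Lemma \ref{HS-Th2.1}) is the two-parameter-family identity $\partial_t\big[\alpha(\partial_s\Phi)\big]-\partial_s\big[\alpha(\partial_t\Phi)\big]=d\alpha(\partial_t\Phi,\partial_s\Phi)$, applied with $\partial_s\big[\theta(\partial_t\Phi)\big]=\partial_s\big[\varpi(\partial_t\Phi)\big]=0$ since $\partial_t\Phi$ is horizontal with constant $\theta$-component.
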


For $\underline{t}, \overline{t}$, $0\leq \underline{t}<\overline{t}\leq T$, let $F_{\underline{t}, \overline{t}}(\cdot, {w})$ be the flow map of (\ref{OM-BM-SDE}) sending ${\rm{u}}_{\underline{t}}$ to ${\rm{u}}_{\overline{t}}$. Then 
\[
F_{\overline{t}}({\rm{u}}_0, {w})\equiv F_{0, \overline{t}}({\rm{u}}_0, {w})=F_{\underline{t}, \overline{t}}({\rm{u}}_{\underline{t}}, {w})\circ F_{0, \underline{t}}({\rm{u}}_0, {w}). 
\]
Let $[D^{(l)}F_{\underline{t}, \overline{t}}(\cdot, {w})]$ ($l\leq k-2$) be the $l$-th  tangent map of $F_{\underline{t}, \overline{t}}$. When $l=1$,  let
\[
[\wt{D^{(1)}F_{\underline{t}, \overline{t}}}({\rm u}_0, {w})]:=(\theta, \varpi)_{{\rm u}_{\overline{t}}}\circ [D^{(1)}F_{\underline{t}, \overline{t}}({\rm u}_0, {w})]\circ (\theta, \varpi)_{{\rm u}_{\underline{t}}}^{-1}. 
\]
Then  $[D^{(1)}F_{\underline{t}, \overline{t}}]$ (resp. $[\wt{D^{(1)}F_{\underline{t}, \overline{t}}}]$)   satisfies the same SDE as $[D^{(1)}F_{0, t}]$ (resp. $[\wt{D^{(1)}F_{0, t}}]$).

To describe  $\big[D^{(2)}F_{t}({\rm u}_0, w)\big]$, we can follow \cite{El} to use the  horizontal/vertical Whitney sum decomposition of $T_{(u, {\mathsf v})}T{\mathcal{F}(\M)}=T_{u}{\mathcal{F}(\M)}\times T_{u}{\mathcal{F}(\M)}$ with respect to the Levi-Civita connection. The second order tangent vector
 \[
 (u, {\mathsf v}; {\mathbb V}_{0}, {\mathbb V}_{1})\in T_{(u, {\mathsf v})}T{\rm N}\]
 is in one-to-one correspondence with the Jacobi field $Y(s)$ along the geodesic $s\mapsto \mathcal{C}(s):=\exp(s{\mathsf v})$ with $Y(0)={\mathbb{V}}_0, \nabla Y(0)={\mathbb V}_1$, where $Y(0)$ tells the infinitesimal change of $\mathcal{C}(0)$ (i.e.,  the horizontal part change of $(\mathcal{C}(0), \mathcal{C}'(0))$) and $\nabla Y(0)$ tells the the infinitesimal change of $\mathcal{C}'(0)$ along the geodesic  from $u_0$ with initial velocity ${\mathbb{V}}_0$ (i.e.,  the vertical part change of $(\mathcal{C}(0), \mathcal{C}'(0))$.  For the geodesic  $\tau\mapsto \mathcal{C}_1(\tau):=\exp(\tau {\mathbb{V}_1})$,  let ${{\mathsf v}}_{\sslash}(\tau)$ be the parallel transportation of ${\mathsf v}$ along $\mathcal{C}_1$ to the point $ \mathcal{C}_1(\tau)$ and define 
\begin{align}\label{cov-der-F-e-t-ran}
\nabla_{\mathbb V_0} \big[D^{(1)}F_t({\rm u}_0, {w})\big]({{\mathsf v}})=\left.\frac{D}{\partial \tau}\big[D^{(1)}F_t(\mathcal{C}_1(\tau), {w})\big]({{\mathsf v}}_{\sslash}(\tau))\right|_{\tau=0}.
\end{align}
Then for  almost all ${w}$, 
 \begin{align*}
\big[D^{(2)}F_t({\rm u}_0, {w})\big] ({\rm u}_0, {{\mathsf v}}; {\mathbb V}_{0}, {\mathbb V}_{1})=\left(\big[D^{(1)}F_t({\rm u}_0, {w})\big]({\rm u}_0; {{\mathsf v}}); \big[D^{(2)}F_t({\rm u}_0, {w})\big]\big({\mathbb V}_{0}, {\mathbb V}_{1}\big) \right), \end{align*}
 where 
 \begin{align*}
&\big[D^{(2)}F_t({\rm u}_0, {w})\big]\big({\mathbb V}_{0}, {\mathbb V}_{1}\big)\\
&\ \ \ \ \ \ \ \ \ \ \ \  =\left(\big[D^{(1)}F_t({\rm u}_0, {w})\big]({\mathbb{V}}_0),  \nabla_{\mathbb V_0} \big[D^{(1)}F_t({\rm u}_0, {w})\big]({{\mathsf v}})+\big[D^{(1)}F_t({\rm u}_0, {w})\big]({\mathbb V}_{1})\right).
\end{align*}
 By Lemma \ref{El-Ku-Cor}, to describe $\left[D^{(2)}F_t(\cdot, {w})\right]({\mathbb{V}}_0, {\mathbb V}_{1})$, it remains  to identify 
 \[
 \mathcal{V}_t({\mathsf v}, {\mathbb V}_0, {w}):= \nabla_{\mathbb V_0} \big[D^{(1)}F_t({\rm u}_0, {w})\big]({{\mathsf v}}).
 \]

\begin{lem}\label{mathcal-V-t}(\cite[Lemma 5B, Chapter VIII]{El}) Let $g\in \mathcal{M}^k(M)$, $k\geq 4$. For  ${\mathsf v}\in T_{{\rm u}_0}\mathcal{F}(\M)$,  $(\mathbb{V}_0, 0)\in T_{({\rm u}_0, {\mathsf v})}T\mathcal{F}(\M)$, let ${\mathsf v}_t:=[D^{(1)}F_t({\rm u}_0, {w})]{\mathsf v}$,   $\mathbb{V}_t:=[D^{(1)}F_t({\rm u}_0, {w})]\mathbb{V}_0$. 
\begin{itemize}
\item[i)]On $T\mathcal{F}(\M)$, the process $\mathcal{V}_t:= \mathcal{V}_t({\mathsf v}, {\mathbb V}_0, {w})$ satisfies the Stratonovich SDE 
\[
d\mathcal{V}_{t}=\nabla(\mathcal{V}_{t})H({\rm u}_{t}, \circ dB_t)+\nabla^{(2)}({\mathsf v}_{t},  {\mathbb V}_{t})H({\rm u}_{t}, \circ dB_t)+R(H({
\rm u}_{t}, \circ dB_t),  {\mathbb V}_{t}){\mathsf v}_{t}.\]
\item[ii)]On $T\mathcal{F}(\Bbb R^m)$, the  process $(\theta, \varpi)_{{\rm u}_t}(\mathcal{V}_t)$ satisfies the Stratonovich SDE 
\begin{align}
 \ \ \ \ \ d\left((\theta, \varpi)_{{\rm u}_t}(\mathcal{V}_t)\right)=&\left(\varpi(\mathcal{V}_{t})\circ dB_t({w}), {\rm u}_{t}^{-1}R({\rm u}_{t}\circ dB_t, \theta(\mathcal{V}_{t})){\rm u}_{t}\right)\notag\\
&+(\theta, \varpi)_{{\rm u}_{t}}\left(\nabla^{(2)}({\mathsf v}_{t},  {\mathbb V}_{t})H({\rm u}_{t}, \circ dB_t({w}))+R(H({
\rm u}_{t}, \circ dB_t({w})),  {\mathbb V}_{t}){\mathsf v}_{t}\right).\label{2-theta-varpi-mathcal-V}
\end{align}
\item[iii)] The  It\^{o} form of (\ref{2-theta-varpi-mathcal-V}) is 
\begin{align}\label{ito-2-theta-varpi-mathcal-V}
d\theta(\mathcal{V}_{t})=&\ \varpi(\mathcal{V}_t) d B_{t}({w})+{\rm Ric}({\rm u}_{t}\theta(\mathcal{V}_{t})) dt+\Phi_{\theta}({\mathsf v}_t,{\Bbb V}_t, dB_t, dt),\\
\label{ito-2-varpi-mathcal-V}
d\varpi(\mathcal{V}_{t})=&\  {\rm u}_{t}^{-1}R\left({\rm u}_{t} dB_t({w}), {\rm u}_{t}\theta(\mathcal{V}_{t})\right){\rm u}_{t}+{\rm u}_{t}^{-1}R\left({\rm u}_{t}e_i, {\rm u}_{t}\varpi(\mathcal{V}_{t})e_i\right)  {\rm u}_{t}\ dt\\
&\  +{\rm u}_{t}^{-1}\big(\nabla({\rm u}_{t}e_i) R\big)\left({\rm u}_{t} e_i, {\rm u}_{t}\theta(\mathcal{V}_{t})\right){\rm u}_{t}\ dt+ \Phi_{\varpi}({\mathsf v}_t,{\Bbb V}_t, dB_t, dt),\notag
\end{align}
where  the summation  $\Sigma_{i=1}^{m}$  is  omitted  and 
\begin{align*}
\Phi_{\theta}({\mathsf v}_t,{\Bbb V}_t, dB_t, dt):=
&\theta\left(\nabla^{(2)}({\mathsf v}_{t},  {\mathbb V}_{t})H({\rm u}_{t},  dB_t({w}))+R(H({
\rm u}_{t},  dB_t({w})),  {\mathbb V}_{t}){\mathsf v}_{t}\right)\\
&+2\varpi\left(\nabla^{(2)}({\mathsf v}_{t},  {\mathbb V}_{t})H({\rm u}_{t}, e_i)+R(H({
\rm u}_{t}, e_i),  {\mathbb V}_{t}){\mathsf v}_{t}\right) e_i \ dt\\
&+\theta\left(\left[H({\rm u}_{t}, e_i), \nabla^{(2)}({\mathsf v}_{t},  {\mathbb V}_{t})H({\rm u}_{t}, e_i)+R(H({
\rm u}_{t}, e_i),  {\mathbb V}_{t}){\mathsf v}_{t}\right]\right)\ dt,\\ 
\Phi_{\varpi}({\mathsf v}_t,{\Bbb V}_t, dB_t, dt):=
&\varpi\left(\nabla^{(2)}({\mathsf v}_{t},  {\mathbb V}_{t})H({\rm u}_{t},  dB_t({w}))+R(H({
\rm u}_{t},  dB_t({w})),  {\mathbb V}_{t}){\mathsf v}_{t}\right)\\
&+2{\rm u}_{t}^{-1} R\left({\rm u}_{t}e_i, {\rm u}_{t}\theta\big(\nabla^{(2)}({\mathsf v}_{t},  {\mathbb V}_{t})H({\rm u}_{t}, e_i)+R(H({
\rm u}_{t}, e_i),  {\mathbb V}_{t}){\mathsf v}_{t}\big)\right){\rm u}_{t} dt \\
&+\varpi\left(\left[H({\rm u}_{t}, e_i), \nabla^{(2)}({\mathsf v}_{t},  {\mathbb V}_{t})H({\rm u}_{t}, e_i)+R(H({
\rm u}_{t}, e_i),  {\mathbb V}_{t}){\mathsf v}_{t}\right]\right)\ dt.
\end{align*}
\end{itemize}
\end{lem}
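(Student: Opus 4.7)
My plan is to obtain (i) by covariantly differentiating the SDE for $\mathsf v_t$ of Lemma \ref{El-Ku-Cor} in the initial direction $\mathbb V_0$; parts (ii) and (iii) will then follow by pushing the resulting equation through the trivialization $(\theta,\varpi)_{{\rm u}_t}$ as in the proof of Lemma \ref{El-Ku-Cor} ii)--iii) and performing the Stratonovich-to-It\^o conversion componentwise.

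For (i), I would realize $\mathbb V_0$ and $\mathsf v$ simultaneously via a two-parameter variation $(s,\tau)\mapsto \mathcal C(s,\tau)\in\mathcal F(\M)$ with $\partial_s\mathcal C(0,0)=\mathsf v$ and $\partial_\tau\mathcal C(0,0)=\mathbb V_0$, transport it along the flow to ${\rm u}_t(s,\tau):=F_t(\mathcal C(s,\tau),w)$, and use $\mathcal V_t=\nabla_{\partial_\tau}\mathsf v_t|_{\tau=0}$. Differentiating the defining equation $d\mathsf v_t(\tau)=\nabla_{\mathsf v_t(\tau)}H({\rm u}_t(\tau),\circ dB_t)$ by $\nabla_{\partial_\tau}$ at $\tau=0$ and applying the chain rule produces three contributions: differentiation in the fiber slot gives $\nabla(\mathcal V_t)H({\rm u}_t,\circ dB_t)$; differentiation in the base slot in the direction $\mathbb V_t$ yields $\nabla^{(2)}(\mathsf v_t,\mathbb V_t)H({\rm u}_t,\circ dB_t)$; and the obstruction to interchanging the two covariant differentiations, by the Ricci identity, contributes the curvature term $R(H({\rm u}_t,\circ dB_t),\mathbb V_t)\mathsf v_t$. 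Together these give (i).

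For (ii), I would apply $(\theta,\varpi)_{{\rm u}_t}$ to both sides of the equation in (i). Its action on the first summand $\nabla(\mathcal V_t)H({\rm u}_t,\circ dB_t)$ reproduces, by the same computation used to derive Lemma \ref{El-Ku-Cor} ii) from the structure equations (\ref{structure-1})--(\ref{structure-2}), the pair $(\varpi(\mathcal V_t)\circ dB_t,\ {\rm u}_t^{-1}R({\rm u}_t\circ dB_t,\theta(\mathcal V_t)){\rm u}_t)$. The remaining two summands of (i) pass through $(\theta,\varpi)_{{\rm u}_t}$ as inhomogeneous forcing and give the second line of (\ref{2-theta-varpi-mathcal-V}). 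For (iii), I would convert each Stratonovich integral in the $\theta$- and $\varpi$-components of (ii) into It\^o form via $X\circ dB_t^i=X\,dB_t^i+\tfrac12\,d\langle X,B^i\rangle$: the homogeneous part reproduces exactly the $\mathrm{Ric}$-, $R$- and $\nabla R$-drift terms of Lemma \ref{El-Ku-Cor} iii), while the self-coupling of $\nabla^{(2)}H$ and $R(H,\cdot)\cdot$ with $\circ dB_t$, handled through the brackets $[H({\rm u}_t,e_i),\cdot]$ of horizontal fields, will deliver the terms collected inside $\Phi_\theta$ and $\Phi_\varpi$.

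The hard part will be the bookkeeping in step (iii): because the horizontal fields $H({\rm u},e_i)$ do not mutually commute on $\mathcal F(\M)$, their Lie brackets being controlled by the curvature of the connection, the quadratic-covariation drifts produced by $\nabla^{(2)}(\mathsf v_t,\mathbb V_t)H({\rm u}_t,\circ dB_t)$ and $R(H({\rm u}_t,\circ dB_t),\mathbb V_t)\mathsf v_t$ have to be extracted by hand, with the summation over the frame $e_1,\dots,e_m$ and signs dictated by the structure equations matched precisely to the $\Phi_\theta,\Phi_\varpi$ stated. A secondary subtlety already in (i) is the precise definition of $\nabla^{(2)}$: the asymmetric second covariant derivative $\nabla_{\mathbb V_t}\nabla_{\mathsf v_t}H$ must be split consistently into its symmetric part plus a curvature correction via the Ricci identity, so that the final $R$-term in (i) appears with arguments $(H,\mathbb V_t)\mathsf v_t$ rather than in a transposed form.
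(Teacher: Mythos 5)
Your proposal reconstructs a result that the paper does not reprove: Lemma~\ref{mathcal-V-t} is stated with a citation to Elworthy (Lemma~5B, Ch.~VIII) and no argument is given, so there is no in-paper proof to compare against. That said, the three-step outline you give is the right one, and it matches both the paper's notation (compare (\ref{cov-der-F-e-t-ran}) and (\ref{Mar-diff-flow-map})--(\ref{Mar-diff-flow-map-ito})) and the standard derivation. Differentiating the flow SDE covariantly in the initial direction produces exactly the product-rule terms $\nabla(\mathcal V_t)H$ and $\nabla^{(2)}(\mathsf v_t,\mathbb V_t)H$, plus the curvature obstruction $R(H({\rm u}_t,\circ dB_t),\mathbb V_t)\mathsf v_t$ from commuting $\nabla_{D/\partial t}$ and $\nabla_{D/\partial \tau}$; the push-through by $(\theta,\varpi)_{{\rm u}_t}$ and the Stratonovich--It\^o conversion then go exactly as you describe.

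Two points are worth making explicit to close the gap in (i). First, in your two-parameter variation the stated hypothesis $(\mathbb V_0,0)\in T_{({\rm u}_0,\mathsf v)}T\mathcal F(\M)$ forces $\nabla_{\partial_\tau}\partial_s\mathcal C(0,0)=0$; equivalently, as in (\ref{cov-der-F-e-t-ran}), $\mathsf v$ is extended along the $\tau$-geodesic by parallel transport, which is what gives $\mathcal V_0=0$ and makes the Duhamel formula of Corollary~\ref{Duha-mathcal-V-t} start from zero. You allude to this under ``definition of $\nabla^{(2)}$,'' but it should be stated as part of the variation rather than discovered afterwards, since otherwise an unwanted $\nabla(\mathbb V_1)H$ term would appear. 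Second, when assembling $\Phi_\theta,\Phi_\varpi$ in (iii), keep in mind that the paper's Brownian motion has generator $\Delta$ (not $\Delta/2$), so the quadratic covariation $d B^i_t\,dB^j_t=2\delta_{ij}\,dt$; this is where the factor of $2$ in front of the $\varpi(\cdots)e_i\,dt$ and $R(\cdots)\,dt$ correction terms comes from, and it is the same normalization already visible in the footnote following (\ref{DF-multi-sto-int-0}).
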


  A corollary of Lemma \ref{mathcal-V-t} is that we can  describe $\mathcal{V}_t$ (resp.  $(\theta, \varpi)_{{\rm u}_t}(\mathcal{V}_t)$)  using the tangent maps $\big[D^{(1)}F_{\underline{t}, \overline{t}}({\rm u}_0, {w})\big]$ (resp.  $\big[\wt{D^{(1)}F_{\underline{t}, \overline{t}}}({\rm u}_0, {w})\big]$)  by a stochastic version of the variation of constant method, i.e., a stochastic Duhamel principle.

\begin{cor}\label{Duha-mathcal-V-t}Let $g\in \mathcal{M}^k(M)$ ($k\geq 4$) and let ${\mathsf v}_t$, $\mathbb{V}_t$ and  $\mathcal{V}_t$ be as in Lemma \ref{mathcal-V-t}. 
\begin{align}
{\rm i)}\ &\mathcal{V}_t=\int_{0}^{t}\big[D^{(1)}F_{\tau, t}({\rm u}_{\tau}, {w})\big]\left(\nabla^{(2)}({\mathsf v}_{\tau},  {\mathbb V}_{\tau})H({\rm u}_{\tau}, e_i)+R(H({
\rm u}_{\tau}, e_i),  {\mathbb V}_{\tau}){\mathsf v}_{\tau}\right)\circ dB_{\tau}^i.\notag\\
{\rm ii)}\ &(\theta, \varpi)_{{\rm u}_t}(\mathcal{V}_t)=\notag\\
&\int_{0}^{t}\big[\wt{D^{(1)}F_{\tau, t}}({\rm u}_{\tau}, {w})\big](\theta, \varpi)_{{\rm u}_{\tau}}\!\!\left(\nabla^{(2)}({\mathsf v}_{\tau},  {\mathbb V}_{\tau})H({\rm u}_{\tau}, e_i)+R(H({
\rm u}_{\tau}, e_i),  {\mathbb V}_{\tau}){\mathsf v}_{\tau}\right)\circ dB_{\tau}^i.\label{VC-mathcal-V-t}\end{align}
\begin{itemize}
\item[iii)]The It\^{o} form of (\ref{VC-mathcal-V-t}) is 
\begin{align}
&(\theta, \varpi)_{{\rm u}_t}(\mathcal{V}_t)=\int_{0}^{t}\big[\wt{D^{(1)}F_{\tau, t}}({\rm u}_{\tau}, {w})\big]\left(\wt{\Phi}_{\theta}({\mathsf v}_\tau,{\Bbb V}_\tau, dB_\tau, d\tau), \wt{\Phi}_{\varpi}({\mathsf v}_\tau,{\Bbb V}_\tau, dB_\tau, d\tau)\right),\label{VC-mathcal-V-t-Ito}\end{align}
where
\begin{align*}
&\wt{\Phi}_{\theta}({\mathsf v}_\tau,{\Bbb V}_\tau, dB_\tau, d\tau)=\Phi_{\theta}({\mathsf v}_\tau,{\Bbb V}_\tau, dB_\tau, d\tau)\\
&\ \ \ \ \ \ \ \ \ \ \ \ \ \ \ \ \ \ \ \ \ \ \ \ \ \  -\varpi\left(\nabla^{(2)}({\mathsf v}_{\tau},  {\mathbb V}_{\tau})H({\rm u}_{\tau}, e_i)+R(H({
\rm u}_{\tau}, e_i),  {\mathbb V}_{\tau}){\mathsf v}_{\tau}\right) e_i d\tau,\\
&\wt{\Phi}_{\varpi}({\mathsf v}_\tau,{\Bbb V}_\tau, dB_\tau, d\tau)=\Phi_{\varpi}({\mathsf v}_\tau,{\Bbb V}_\tau, dB_\tau, d\tau)\\
&\ \ \ \ \ \ \ \ \ \ \ \ \ \ \ \ \ \ \ \ \ \ \ \ \ \   -{\rm u}_{\tau}^{-1} R\left({\rm u}_{\tau}e_i, {\rm u}_{\tau}\theta\big(\nabla^{(2)}\!({\mathsf v}_{\tau},  {\mathbb V}_{\tau})H({\rm u}_{\tau}, e_i)\!+\! R(H({
\rm u}_{\tau}, e_i),  {\mathbb V}_{\tau}){\mathsf v}_{\tau}\big)\right){\rm u}_{\tau} d\tau.
\end{align*}
\end{itemize}
\end{cor}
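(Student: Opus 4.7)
The approach is the stochastic variation of constants (Duhamel) principle, exploiting the fact that the SDE for $\mathcal V_t$ from Lemma \ref{mathcal-V-t} i) splits naturally into a homogeneous part whose fundamental solution is precisely the flow $[D^{(1)}F_{\tau,t}]$ and an inhomogeneous forcing term coming from $\nabla^{(2)}({\mathsf v}_\tau,{\mathbb V}_\tau)H$ and $R(H,{\mathbb V}_\tau){\mathsf v}_\tau$.

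First I would pin down the initial condition: since $F_0(\cdot,w) = \mathrm{Id}$ and $[D^{(1)}F_0({\rm u}_0, w)] = \mathrm{Id}$ for every $w$, the covariant derivative $\nabla_{{\mathbb V}_0}[D^{(1)}F_0({\rm u}_0, w)]({\mathsf v}_\sslash(\tau))|_{\tau=0}$ defined in (\ref{cov-der-F-e-t-ran}) vanishes, so $\mathcal V_0 = 0$. Next, rewrite the SDE from Lemma \ref{mathcal-V-t} i) schematically as
\[
d\mathcal V_t \;=\; \bigl[\nabla(\cdot)H({\rm u}_t,\circ dB_t)\bigr](\mathcal V_t) \;+\; \Psi_t \circ dB_t,
\]
where $\Psi_t := \nabla^{(2)}({\mathsf v}_{t},{\mathbb V}_{t})H({\rm u}_{t},e_i)+R(H({\rm u}_{t},e_i),{\mathbb V}_{t}){\mathsf v}_{t}$ (viewed as a map $\mathbb R^m \to T_{{\rm u}_t}\mathcal F(\M)$) and the bracketed operator is exactly the one whose fundamental solution, by Lemma \ref{El-Ku-Cor} i), is $[D^{(1)}F_{0,t}({\rm u}_0,w)]$. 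The cocycle identity $[D^{(1)}F_{0,t}] = [D^{(1)}F_{\tau,t}]\circ[D^{(1)}F_{0,\tau}]$ then makes the Duhamel ansatz
\[
\mathcal V_t \;=\; \int_0^t [D^{(1)}F_{\tau,t}({\rm u}_\tau,w)]\,\Psi_\tau \circ dB^i_\tau
\]
natural; its verification is direct by applying a Stratonovich differential to the right-hand side (Stratonovich calculus obeys the ordinary chain rule, so the computation parallels the deterministic variation-of-constants check), yielding i).

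For part ii), apply the parallelism $(\theta,\varpi)_{{\rm u}_t}$ to both sides of i) and insert $(\theta,\varpi)_{{\rm u}_\tau}^{-1}(\theta,\varpi)_{{\rm u}_\tau}$ between $[D^{(1)}F_{\tau,t}({\rm u}_\tau,w)]$ and $\Psi_\tau$; the composition $(\theta,\varpi)_{{\rm u}_t}\circ[D^{(1)}F_{\tau,t}]\circ(\theta,\varpi)_{{\rm u}_\tau}^{-1}$ is by definition $[\wt{D^{(1)}F_{\tau,t}}({\rm u}_\tau,w)]$, and we obtain (\ref{VC-mathcal-V-t}). For iii), convert the Stratonovich integral in (\ref{VC-mathcal-V-t}) to Itô form: the Stratonovich-to-Itô correction is one half of the quadratic covariation of the integrand with $B_\tau$, which can be computed component-by-component using (\ref{Mar-diff-flow-map-ito}) for the SDE satisfied by $[\wt{D^{(1)}F_{\tau,t}}]$ together with the explicit expressions for $(\theta,\varpi)_{{\rm u}_\tau}(\Psi_\tau)$. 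The resulting correction terms are exactly the ones absorbed into the definitions of $\wt\Phi_\theta$ and $\wt\Phi_\varpi$, the discrepancy with $\Phi_\theta,\Phi_\varpi$ from Lemma \ref{mathcal-V-t} iii) being accounted for by the drift terms already produced when Itô-expanding the $[\wt{D^{(1)}F_{\tau,t}}]$ factor itself.

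The main technical point is the legitimacy of the Duhamel formula in this degenerate setting (the SDE on $\mathcal F(\M)$ has a rank-$m$ diffusion coefficient on an $(m+m^2)$-dimensional manifold). This is handled by noting that the variation-of-constants argument is purely algebraic in Stratonovich form and depends only on the cocycle property of the flow together with the $C^{k-2}$ regularity and $L^q$ estimates guaranteed by Propositions \ref{El-Ku} and \ref{SDE-flow-regularity}, which ensure that both sides of the proposed identity are well-defined semimartingales whose Stratonovich differentials coincide; uniqueness for the linear SDE of Lemma \ref{mathcal-V-t} i) with given initial data then forces equality.
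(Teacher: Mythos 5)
Your proposal is correct and follows essentially the same route as the paper: both use the stochastic variation-of-constants (Duhamel) principle, verified by differentiating the product $\mathbf{y}_t\upsilon_t$ where $\mathbf{y}_t = [D^{(1)}F_t]$ is the homogeneous fundamental solution and $\upsilon_t = \int_0^t[D^{(1)}F_\tau]^{-1}\Psi_\tau\circ dB_\tau^i$, the cocycle identity collapsing $\mathbf{y}_t[D^{(1)}F_\tau]^{-1}$ into $[D^{(1)}F_{\tau,t}]$; part ii) is then the $(\theta,\varpi)$ conjugation exactly as you describe, and part iii) is the Itô product-rule computation whose cross-variation term $d\mathbf{y}_t\cdot d\widetilde\upsilon_t$ produces the drift pieces that explain the subtractions in $\widetilde\Phi_\theta,\widetilde\Phi_\varpi$, matching your account. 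The only point worth spelling out slightly more carefully than you do is the step from the Duhamel ansatz $\int_0^t[D^{(1)}F_{\tau,t}]\Psi_\tau\circ dB_\tau^i$ back to a semimartingale in $t$: one should factor through $[D^{(1)}F_t]$ and integrate against the fixed backward flow before differentiating, which is exactly the $\mathbf{y}_t\upsilon_t$ device the paper uses and the implicit content of your appeal to the cocycle property and uniqueness.
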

\begin{proof} For i) and ii), it suffices to show i) since it implies ii) by applying the $(\theta, \varpi)$ map. 
Regard the tangent map $\big[D^{(1)}F_t({\rm u}_0, {w})\big]$ as a random matrix solution ${\bf y}_t({w})$ to 
\[
d{\bf y}_t({w})=\nabla({\bf y}_t({w}))H({\rm u}_t, \circ dB_t),\ \  {\bf y}_0={\mbox{Id}}.
\]
Put
\begin{align*}
\upsilon_t:= &\mathcal{V}_0+\int_{0}^{t}\big[D^{(1)}F_\tau({\rm u}_0, {w})\big]^{-1}\left(\nabla^{(2)}({\mathsf v}_{\tau},  {\mathbb V}_{\tau})H({\rm u}_{\tau}, \circ dB_{\tau})+R(H({
\rm u}_{\tau}, \circ dB_{\tau}),  {\mathbb V}_{\tau}){\mathsf v}_{\tau}\right).
\end{align*}
Then the differentiation rule of Stratonovich integral shows that 
\begin{align*}
d({\bf y}_t\upsilon_t)&=(\circ d{\bf y}_t)\upsilon_t+ {\bf y}_t\circ d\upsilon_t\\
&=  \nabla({\bf y}_t({w})\upsilon_t)H({\rm u}_t, \circ dB_t)  +\nabla^{(2)}({\mathsf v}_{t},  {\mathbb V}_{t})H({\rm u}_{t}, \circ dB_t)+R(H({
\rm u}_{t}, \circ dB_t),  {\mathbb V}_{t}){\mathsf v}_{t},
\end{align*}
where $d$ should be understood as the covariant derivative. Since ${\bf y}_0\upsilon_0=\mathcal{V}_0=0$, we obtain 
 \begin{align*}
\mathcal{V}_t&={\bf y}_t\upsilon_t=\int_{0}^{t}\big[D^{(1)}F_{\tau, t}({\rm u}_{\tau}, {w})\big]\left(\nabla^{(2)}({\mathsf v}_{\tau},  {\mathbb V}_{\tau})H({\rm u}_{\tau}, e_i)+R(H({
\rm u}_{\tau}, e_i),  {\mathbb V}_{\tau}){\mathsf v}_{\tau}\right)\circ dB_{\tau}^i.
\end{align*}

Regard $\big[\wt{D^{(1)}F_t}(\cdot, {w})\big]$ as a matrix solution ${\bf y}_t({w})$ to (\ref{Mar-diff-flow-map-ito}) with ${\bf y}_0={\mbox{Id}}$. 
Put
\begin{align*}
\wt{\upsilon}_t&:=\mathcal{V}_0+\int_{0}^{t}\big[\wt{D^{(1)}F_\tau}({\rm u}_0, {w})\big]^{-1}\left(\wt{\Phi}_{\theta}({\mathsf v}_\tau,{\Bbb V}_\tau, dB_\tau, d\tau), \wt{\Phi}_{\varpi}({\mathsf v}_\tau,{\Bbb V}_\tau, dB_\tau, d\tau)\right).
\end{align*}
Write ${\bf y}_t\upsilon_t:=(({\bf y}_t\upsilon_t)_{\theta}, ({\bf y}_t\upsilon_t)_{\varpi})$, where $({\bf y}_t\upsilon_t)_{\theta}\in \Bbb R^m$ and $({\bf y}_t\upsilon_t)_{\varpi}\in \mathcal{F}(\Bbb R^m)$. 
Then the It\^{o} form infinitesimal differentiation rule  shows that 
\begin{align*}
d({\bf y}_t\wt{\upsilon}_t)&=(d{\bf y}_t)\wt{\upsilon}_t+ {\bf y}_t d\wt{\upsilon}_t+d{\bf y}_t\cdot d\wt{\upsilon}_t\\
&= \big(({\bf y}_t\wt{\upsilon}_t)_{\varpi} d B_{t}({w})+{\rm Ric}({\rm u}_{t}({\bf y}_t\wt{\upsilon}_t)_{\theta}) dt+\Phi_{\theta}({\mathsf v}_t,{\Bbb V}_t, dB_t, dt),\\
&\ \ \ \ \ {\rm u}_{t}^{-1}R\left({\rm u}_{t} dB_t({w}), {\rm u}_{t}({\bf y}_t\wt{\upsilon}_t)_{\theta}\right){\rm u}_{t}+{\rm u}_{t}^{-1}R\left({\rm u}_{t}e_i, {\rm u}_{t}({\bf y}_t\wt{\upsilon}_t)_{\varpi}e_i\right)  {\rm u}_{t}\ dt\\
&\ \ \ \ \left.+{\rm u}_{t}^{-1}\nabla R({\rm u}_{t}e_i)\left({\rm u}_{t} e_i, {\rm u}_{t}({\bf y}_t\wt{\upsilon}_t)_{\theta}\right){\rm u}_{t}\ dt+ \Phi_{\varpi}({\mathsf v}_t,{\Bbb V}_t, dB_t, dt)\right).
\end{align*}
This means  ${\bf y}_t\wt{\upsilon}_t$ with $\wt{\upsilon}_0=(0, 0)$ solves (\ref{ito-2-theta-varpi-mathcal-V}) and  (\ref{ito-2-varpi-mathcal-V}). Thus  (\ref{VC-mathcal-V-t-Ito}) holds true. 
\end{proof}

For $\big[D^{(2)}F_t({\rm u}_0, {w})\big]$ on $T_{({\rm u}, {\rm v})}T_{{\rm u}}\mathcal{F}(\M)=T_{{\rm u}}\mathcal{F}(\M)\times T_{{\rm u}}\mathcal{F}(\M)$, we can define its Euclidean companion map $\big[\wt{D^{(2)}F_t}({\rm u}_0, {w})\big]$ on $T\mathcal{F}(\Bbb R^m)\times T\mathcal{F}(\Bbb R^m)$ as follows. For $\big(\underline{\Bbb {V}}_0, \underline{\Bbb {V}}_1\big)\in  T\mathcal{F}(\Bbb R^m)\times T\mathcal{F}(\Bbb R^m)$, let $(\Bbb V_0, \Bbb V_1) :=(((\theta, \varpi)_{{\rm u}_0})^{-1}(\underline{\Bbb {V}}_0), ((\theta, \varpi)_{{\rm u}_0})^{-1}(\underline{\Bbb {V}}_1))$. Let $\Bbb{V}_{i, t}:=[D^{(1)}F_t({\rm u}_0, {w})]\Bbb{V}_i$ for $i=0, 1$ and let  ${\rm v}_t, \mathcal{V}_t$ be defined as in Lemma \ref{mathcal-V-t}. Then 
\begin{align*}
\big[\wt{D^{(2)}F_t}({\rm u}_0, {w})\big]\big(\underline{\Bbb {V}}_0, \underline{\Bbb {V}}_1\big):=\big((\theta, \varpi)(\Bbb{V}_{0, t}), (\theta, \varpi)({\Bbb {V}}_{1, t}+\mathcal{V}_t)\big).
\end{align*}

We can continue the above discussion to formulate  $\big[D^{(l)}F_t(\cdot, {w})\big]$, $3\leq l\leq k-2$.  Put 
\begin{align*}
\left\{\begin{array}{l}
\big({\rm{u}}^{(2)}; {{\mathsf v}}^{(2)}\big)=: ({\rm{u}}, {{\mathsf v}}; {\mathbb V}_{0}, {\mathbb V}_{1}),\\
\big({\rm{u}}^{(l)}; {{\mathsf v}}^{(l)}\big)\ =:\big({\rm{u}}^{(l-1)}, {{\mathsf v}}^{(l-1)};  {\mathbb V}_{0}^{(l-1)}, {\mathbb V}_{1}^{(l-1)}\big), \ \forall({\mathbb V}_{0}^{(l-1)}, {\mathbb V}_{1}^{(l-1)})\in T_{u^{(l-1)}}T^{l-1}\mathcal{F}(\M).
\end{array}
\right. 
\end{align*}
Then,  \begin{align}\notag\left[D^{(l)}F_{t}({\rm{u}}_0, {w})\right]\!({\rm{u}}^{(l)}; {{\mathsf v}}^{(l)})=&\!\left(\left[D^{(l-1)}F_{t}({\rm{u}}_0, {w})\right](u^{(l-1)}, {{\mathsf v}}^{(l-1)});\  \big[D^{(l-1)}F_{t}({\rm{u}}_0, {w})\big]({\mathbb{V}}_0^{(l-1)}), \right.\\
\label{ind-der-F-e-t-ran}&\left.\ \!\nabla_{\mathbb V_0^{(l-1)}}\big[D^{(l-1)}F_{t}({\rm{u}}_0, {w})\big]({{\mathsf v}}^{(l-1)})+\big[D^{(l-1)}F_{t}({\rm{u}}_0, {w})\big]({\mathbb V}_{1}^{(l-1)})\right)\end{align}
and the covariant derivative term $\nabla_{\mathbb V_0^{(l-1)}} \big[D^{(l-1)}F_{t}({\rm{u}}_0, {w})\big]({{\mathsf v}}^{(l-1)})$  involves  a combination of  the $l'$-th ($l'\leq l-1$) covariant derivatives
\begin{align}\label{all-cov-der-F-e-t-ran}
\nabla_{\mathbb V_{0,l'}}\nabla_{\mathbb V_{0,l'-1}}\cdots \nabla_{\mathbb V_{0,0}}\left[D^{(1)}F_{t}({\rm u}, {w})\right](\mathsf v),  \ \forall {\mathsf v}, \mathbb V_{0,0}, \cdots, \mathbb V_{0, l'}\in T_{{\rm u}}\mathcal{F}(\M),
\end{align}
where for $l'=1$, (\ref{all-cov-der-F-e-t-ran}) was given in (\ref{cov-der-F-e-t-ran}), and for $l'>1$, let $\tau\mapsto \mathcal{C}_{l'}(\tau):=\exp(\tau {\mathbb{V}_{0, l'}})$ be the geodesic passing through ${\rm u}$ and let  ${{\mathsf v}}_{\sslash}(\tau), {{\mathbb V}_{0, 0}}_{\sslash}(\tau), \cdots, {{\mathbb V}_{0, l'-1}}_{\sslash}(\tau)$ be the parallel transportations of ${\mathsf v}, {{\mathbb V}_{0, 0}}, \cdots, {{\mathbb V}_{0, l'-1}}$ along $\mathcal{C}_{l'}$ to the point $ \mathcal{C}_{l'}(\tau)$, then 
\begin{align*}
&\nabla_{\mathbb V_{0,l'}}\nabla_{\mathbb V_{0,l'-1}}\cdots \nabla_{\mathbb V_{0,0}}\left[D^{(1)}F_{t}({\rm u}, {w})\right](\mathsf v)\\
&\ \ \ \ \ \ \ \ =\left.\frac{D}{\partial \tau}\left( \nabla_{{\mathbb V_{0,l'-1}}_{\sslash}(\tau)}\cdots \nabla_{{\mathbb V_{0,0}}_{\sslash}(\tau)}\left[D^{(1)}F_{t}(\mathcal{C}_{l'}(\tau), w)\right]({\mathsf v}_{\sslash}(\tau))\right)\right|_{\tau=0}.
\end{align*}
The Stratonovich SDE of (\ref{all-cov-der-F-e-t-ran}) involves 
$\{{\bf \nabla}^{\iota}H\}_{\iota\leq l'}$,  $\{{\bf \nabla}^{\iota}R\}_{\iota\leq l'-1}$. But the It\^{o} SDE of (\ref{all-cov-der-F-e-t-ran})  involves  $\{{\bf \nabla}^{\iota}H\}_{\iota\leq l'+1}$, $\{{\bf \nabla}^{\iota}R\}_{\iota\leq l'}$.

By Corollary \ref{El-Ku-Cor-1}, all  the tangent maps $[D^{(l)}F_t(\cdot, {w})]$ are invertible.  The inverse maps  $[D^{(l)}F_t(\cdot, {w})]^{-1}$ can be formulated by the same equation as $[D^{(l)}F_t(\cdot, {w})]$, but using the backward infinitesimals $d\overrightarrow{B}_{\tau}, -d\tau$ instead of $d{B}_{\tau}, d\tau$. We skip the details.

\subsection{Growth of the stochastic tangent maps in time}
 We use the above SDEs to estimate   the $L^q$-norm  ($q\geq 1$)  of   $\sup_{0\leq \underline{t}<\overline{t}\leq T}\|[D^{(l)}F_{\underline{t}, \overline{t}}({\rm u}, {w})]\|$. 

A  useful tool to the $L^q$-norm estimations of  stochastic integrals is Burkholder's inequality which can be  obtained using It\^{o}'s formula for $|\cdot|^q$ and Doob's inequality of martingales.

 \begin{lem}(cf. \cite[Theorem 2.3.12]{Ku2})\label{Ku-lem} For an $\mathcal{F}_{\tau}$-adapted $\Bbb R^m$ or $\mathcal{O}(\Bbb R^m)$ process $f_{\tau}$,  \begin{equation}\label{equ-Ku-lem}
 \E\left(\left|\int_{t}^{t'}\langle f_{\tau}, d B_{\tau}\rangle\right|^q\right)\leq {\mathtt C}_1(q)\cdot\E\left|\int_{t}^{t'}|f_{\tau}|^2\ d\tau\right|^{\frac{q}{2}}, \ \forall q\geq 2,
 \end{equation}
 where ${\mathtt C}_1(q)=(\frac{1}{2}q(q-1)({q}/{(q-1)})^{q-2})^{\frac{q}{2}}$.
 \end{lem}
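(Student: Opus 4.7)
The lemma is the classical $L^q$ Burkholder inequality for the continuous martingale $M_\tau := \int_t^\tau \langle f_s, dB_s\rangle$, and the hint supplied in the excerpt already pins down the strategy: apply It\^o to $\varphi(x)=|x|^q$ and then Doob's maximal inequality. I would carry out precisely that.

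Since $q\geq 2$, $\varphi$ is $C^2$ with $\varphi''(x)=q(q-1)|x|^{q-2}$, and $M$ is a continuous local martingale with quadratic variation $\langle M\rangle_\tau=\int_t^\tau|f_s|^2\,ds$ and $M_t=0$. It\^o's formula gives
\[
|M_{t'}|^q \;=\; q\int_t^{t'}|M_\tau|^{q-2}M_\tau\,dM_\tau \;+\; \frac{q(q-1)}{2}\int_t^{t'}|M_\tau|^{q-2}|f_\tau|^2\,d\tau.
\]
After localizing by $T_n:=\inf\{\tau\geq t:|M_\tau|\geq n\}$ to make the stochastic integral an honest $L^2$-martingale, taking expectations and letting $n\to\infty$ yields
\[
\E|M_{t'}|^q \;\leq\; \frac{q(q-1)}{2}\,\E\!\int_t^{t'}|M_\tau|^{q-2}|f_\tau|^2\,d\tau.
\]

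Next I would decouple the integrand on the right by H\"older's inequality with conjugate exponents $q/(q-2)$ and $q/2$, bounding the integral by
\[
\bigl(\E\sup\nolimits_{\tau\in[t,t']}|M_\tau|^q\bigr)^{(q-2)/q}\;\biggl(\E\Bigl(\int_t^{t'}|f_\tau|^2 d\tau\Bigr)^{\!q/2}\biggr)^{\!2/q}.
\]
Doob's $L^q$ maximal inequality controls the supremum in terms of the endpoint, $\E\sup_\tau|M_\tau|^q\leq (q/(q-1))^q\,\E|M_{t'}|^q$. Writing $A:=\E|M_{t'}|^q$ and $B:=\E(\int_t^{t'}|f_\tau|^2 d\tau)^{q/2}$ and combining the two estimates gives
\[
A \;\leq\; \frac{q(q-1)}{2}\Bigl(\frac{q}{q-1}\Bigr)^{q-2} A^{(q-2)/q}\, B^{2/q}.
\]
Cancelling the factor $A^{(q-2)/q}$ and then raising both sides to the power $q/2$ recovers exactly ${\mathtt C}_1(q)=\bigl(\tfrac12 q(q-1)(q/(q-1))^{q-2}\bigr)^{q/2}$.

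The only genuine technical point is integrability: the cancellation of $A^{(q-2)/q}$ and the use of Doob both presuppose $A<\infty$. This is standard and is the reason for the stopping times $T_n$: on the stopped process $M^{T_n}$ every quantity is bounded, the chain of inequalities applies verbatim, and one lets $n\to\infty$ using Fatou on the left and monotone convergence on the right. If $B=+\infty$ the claim is vacuous, so no further hypothesis on $f$ is required beyond adaptedness.
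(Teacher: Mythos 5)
Your argument is correct and follows exactly the route the paper suggests in the sentence preceding the lemma (It\^o's formula for $|\cdot|^q$, H\"older on the resulting integral, Doob's $L^q$ maximal inequality, then cancellation and raising to the power $q/2$); the paper itself gives no proof but only cites Kunita. Your constant-chasing reproduces ${\mathtt C}_1(q)$ precisely, and your remark about stopping times to justify the localization and the cancellation of $A^{(q-2)/q}$ is the right way to handle the integrability issue.
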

 (When $q=2$, the inequality in (\ref{equ-Ku-lem})  becomes  an equality  and is  referred to as the  isometry property of Brownian motion.)

We would  like to  list a  simple fact  that will be used  from time to time  for  computations   in the remaining paper:  for any $q\geq 1$ and $a_1, \cdots, a_{i_0}\in \Bbb R_+\cup\{0\}$, $i_0\in \Bbb N$, 
\begin{equation}\label{abcq}
(\sum_{i=1}^{i_0} a_i)^q\leq (i_0)^{q-1}\sum_{i=1}^{i_0} a_i^q. 
\end{equation}

  Recall the Dambis-Dubins-Schwarz Theorem which relates local martingales with Brownian motion using L\'{e}vy's characterization (see Section \ref{BBCE}).

\begin{lem}(cf. \cite[Theorems 1.6 \&1.7, p. 181]{RY})\label{D-D-S} If ${\mathsf M}$ is a $(\mathtt{\Omega}, \mathtt {F}, \mathtt{P})$-continuous local martingale vanishing at $0$. Let  $T_t=\inf\{s: \langle{\mathsf M}, {\mathsf M}\rangle_s>t\}$. 
\begin{itemize}
\item[i)] If $\langle {\mathsf M}, {\mathsf M}\rangle_{\infty}=\infty$, then $\mathsf B_{t}={\mathsf M}_{T_t}$ is a $(\mathtt{F}_{T_{t}})$-Brownian motion and ${\mathsf M}_t=B_{\langle{\mathsf M}, {\mathsf M}\rangle_t}$. 
\item[ii)] If $\langle {\mathsf M}, {\mathsf M}\rangle_{\infty}<\infty$, then there exist an enlargement $(\wt{\mathtt{\Omega}}, \wt{\mathtt{F}}, \wt{\mathtt{P}})$ of $(\mathtt{\Omega}, \mathtt {F}, \mathtt{P})$ and a Brownian motion $\wt{\mathsf B}$ on $\wt{\mathtt{\Omega}}$ independent of $\mathsf M$ such that the process
\begin{align*}
\mathsf B_t=\left\{ \begin{array}{ll}
{\mathsf M}_{T_t}, &\mbox{if}\ \  t<\langle\mathsf{M}, \mathsf{M}\rangle_{\infty},\\ 
{\mathsf M}_{\infty}+\wt{\mathsf B}_{t-\langle\mathsf{M}, \mathsf{M}\rangle_{\infty}}, &\mbox{if}\ \  t\geq \langle\mathsf{M}, \mathsf{M}\rangle_{\infty}
\end{array}\right.
\end{align*}
is a standard linear Brownian motion. The process $\mathsf W$ given by
\begin{align*}
\mathsf W_t=\left\{ \begin{array}{ll}
{\mathsf M}_{T_t}, &\mbox{if}\ \  t<\langle\mathsf{M}, \mathsf{M}\rangle_{\infty},\\ 
{\mathsf M}_{\infty}, &\mbox{if}\ \  t\geq \langle\mathsf{M}, \mathsf{M}\rangle_{\infty}
\end{array}\right.
\end{align*}
is a $(\wt{\mathtt F}_t)$-Brownian motion stopped at $\langle\mathsf{M}, \mathsf{M}\rangle_{\infty}$. 
\end{itemize}
\end{lem}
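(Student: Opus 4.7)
The plan is to establish this as the classical Dambis--Dubins--Schwarz theorem, via a time change followed by L\'evy's characterization of Brownian motion. Let $C_t:=\langle \mathsf{M},\mathsf{M}\rangle_t$ and let $T_t:=\inf\{s:C_s>t\}$ be its right-continuous inverse. The first step is to verify the basic pathwise properties of the time change: $T_t$ is a $(\mathtt{F}_s)$-stopping time for each $t$, the family $(\mathtt{F}_{T_t})$ is a filtration satisfying the usual conditions, and $C_{T_t}=t$ on $\{T_t<\infty\}$. One also needs the fact that $\mathsf{M}$ is constant on each interval on which $C$ is constant (a standard consequence of $\langle \mathsf{M},\mathsf{M}\rangle$ controlling $\mathsf{M}$), so the composition $\mathsf{M}_{T_t}$ is well-defined and continuous.

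Next, for case (i) where $C_\infty=\infty$, the plan is to apply the optional stopping theorem to the local martingales $\mathsf{M}$ and $\mathsf{M}^2-C$ along the stopping times $T_t$. Via a localization argument (stopping at times where $C$ is bounded) this shows that $\mathsf{B}_t:=\mathsf{M}_{T_t}$ is a continuous $(\mathtt{F}_{T_t})$-local martingale with $\langle \mathsf{B},\mathsf{B}\rangle_t=t$. L\'evy's characterization then identifies $\mathsf{B}$ as a standard Brownian motion. The identity $\mathsf{M}_t=\mathsf{B}_{C_t}$ follows by substituting $t\mapsto C_t$ and using the constancy of $\mathsf{M}$ on the intervals where $C$ is flat.

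For case (ii) where $C_\infty<\infty$, the martingale $\mathsf{B}_t=\mathsf{M}_{T_t}$ is only defined on $[0,C_\infty)$ and is a continuous local martingale there with quadratic variation $t$. The obstacle is that one cannot directly apply L\'evy's theorem without a process defined on all of $[0,\infty)$ with $\langle\cdot,\cdot\rangle_t=t$. The remedy is to enlarge the probability space: set $(\wt{\mathtt{\Omega}},\wt{\mathtt F},\wt{\mathtt{P}}):=(\mathtt{\Omega}\times \mathtt{\Omega}',\mathtt F\otimes \mathtt F',\mathtt{P}\otimes \mathtt{P}')$, where $(\mathtt{\Omega}',\mathtt F',\mathtt{P}')$ carries a standard Brownian motion $\wt{\mathsf B}$, enlarge the filtration appropriately after time $C_\infty$, and splice $\wt{\mathsf B}$ onto $\mathsf M$ as in the statement. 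The independence guarantees that the resulting glued process has quadratic variation equal to $t$ on $[0,\infty)$ and is a continuous local martingale in the enlarged filtration; L\'evy's characterization then finishes the proof. The stopped process $\mathsf W$ is obtained by simply freezing at $\langle\mathsf M,\mathsf M\rangle_\infty$, and being the stopped version of a Brownian motion at a stopping time it is itself a Brownian motion stopped at that time.

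The main obstacle, and the reason this theorem requires care, is precisely the enlargement step in (ii): one must verify that after gluing, the process remains a martingale with respect to a filtration that satisfies the usual conditions and for which the spliced-in increments are still Brownian increments. Everything else is routine once L\'evy's characterization is available, so in practice this proof would be invoked by citation to \cite{RY} rather than reproduced in full.
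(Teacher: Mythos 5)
Your sketch is correct and is precisely the standard Dambis--Dubins--Schwarz argument (time change, optional stopping with localization, L\'evy's characterization, and the product-space enlargement in the finite-variation case) from the cited source; the paper itself gives no proof and simply quotes this as \cite[Theorems 1.6 \& 1.7]{RY}, exactly as you anticipate in your closing remark.
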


Given an  $(\mathtt{\Omega}, \mathtt {F}, \mathtt{P})$-Brownian motion ${\mathsf B}$,  we know that for almost all $w$, $t\mapsto \mathsf B_t(w)$ is not differentiable, but is $\mathtt a$-H\"{o}lder continuous for every $\mathtt a\in (0, 1/2)$. Let $\intercal>0$ be fixed. Define 
\begin{equation}\label{B_0_T_a}
\|{\mathsf B}_{[0, \intercal]}({w})\|_{\mathtt a}:=\sup\limits_{0\leq t\leq \intercal}|{\mathsf B}_t({w})|+\sup\limits_{0\leq t<t'\leq \intercal}\frac{|{\mathsf B}_{t'}({w})-{\mathsf B}_{t}({w})|}{|t'-t|^{\mathtt a}}.
\end{equation}

\begin{lem}(cf. \cite{Sk})\label{lem-Sk} Let ${\mathsf B}$ be an  $(\mathtt{\Omega}, \mathtt {F}, \mathtt{P})$-Brownian motion. For any $\mathtt a\in (0, 1/2)$, there exists $\epsilon>0$ such that $\E\left(e^{\epsilon\|\mathsf B_{[0, \intercal]}\|_{\mathtt a}}\right)<\infty$. 
\end{lem}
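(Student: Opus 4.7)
The plan is to realize the Brownian path as a centered Gaussian element of a separable Banach space of H\"{o}lder continuous functions and then invoke Fernique's theorem, which gives exponential integrability of the \emph{square} of the norm and hence, a fortiori, of the norm itself.

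First I would fix an auxiliary exponent $\mathtt a' \in (\mathtt a, 1/2)$. The moment identity $\E[|\mathsf B_t - \mathsf B_s|^{2p}] = c_p |t-s|^{p}$, together with the Kolmogorov--Chentsov continuity criterion applied with $p$ so large that $(p-1)/(2p) > \mathtt a'$, shows that almost every sample path is $\mathtt a'$-H\"{o}lder continuous on $[0, \intercal]$. Since $\mathtt a' > \mathtt a$, such a path in fact lies in the little H\"{o}lder space $c^{\mathtt a}([0, \intercal])$, namely the closure of the $C^{1}$-functions under $\|\cdot\|_{\mathtt a}$, which is a separable Banach space. The map $w \mapsto \mathsf B_{[0, \intercal]}(w) \in c^{\mathtt a}([0, \intercal])$ is Borel measurable (cylindrical and Borel $\sigma$-algebras agree on separable Banach spaces), and its law is a centered Gaussian measure, because every continuous linear functional on $c^{\mathtt a}$ is a probability-limit of cylindrical functionals $\sum_{j} \alpha_j \mathsf B_{t_j}$, each of which is centered Gaussian.

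Fernique's theorem applied to this pushforward Gaussian measure then yields $\epsilon_0 > 0$ with
\[
\E\!\left(\exp\bigl(\epsilon_0 \, \|\mathsf B_{[0, \intercal]}\|_{\mathtt a}^{2}\bigr)\right) < \infty,
\]
and since $\epsilon x \leq \epsilon_0 x^{2} + \epsilon^{2}/(4\epsilon_0)$ for any $\epsilon > 0$, the claim $\E\bigl(\exp(\epsilon \|\mathsf B_{[0, \intercal]}\|_{\mathtt a})\bigr) < \infty$ follows for all sufficiently small $\epsilon$. The main point requiring care is the bookkeeping that identifies the H\"{o}lder seminorm with a genuine Gaussian seminorm on a separable Banach space containing almost every trajectory; once this is done, the rest is automatic. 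A more hands-on alternative would be to apply the Garsia--Rodemich--Rumsey inequality with $\Psi(x) = e^{\gamma x^{2}}$ and $p(u) = u^{\mathtt a + \delta}$ to bound $\|\mathsf B_{[0, \intercal]}\|_{\mathtt a}$ directly by an explicitly Gaussian-integrable functional of the path, which would yield the estimate without appealing to the abstract Fernique statement.
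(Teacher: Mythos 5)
Your proof is correct and follows essentially the same route as the paper: realize the Brownian trajectory as a centered Gaussian element of a separable Banach space of H\"{o}lder functions, then invoke a general exponential integrability theorem for Gaussian measures. The only difference is the choice of Gaussian integrability input: the paper cites Skorokhod's result, which gives precisely $\E\bigl(e^{\epsilon\|\cdot\|}\bigr)<\infty$ for a Gaussian measure on a Banach space, whereas you invoke Fernique's theorem, which gives the stronger $\E\bigl(e^{\epsilon_0\|\cdot\|^2}\bigr)<\infty$ and then deduce the weaker statement via the inequality $\epsilon x\leq \epsilon_0 x^2+\epsilon^2/(4\epsilon_0)$. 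Both are standard and both hinge on the same bookkeeping you correctly flag as the real content: Kolmogorov--Chentsov with $\mathtt a'\in(\mathtt a,1/2)$ places almost every path in the \emph{little} H\"{o}lder space $c^{\mathtt a}([0,\intercal])$, which is separable, and the pushforward law is Gaussian because evaluation functionals generate the cylindrical $\sigma$-algebra and every continuous linear functional is a limit in probability of finite linear combinations of evaluations. Your alternative via Garsia--Rodemich--Rumsey with $\Psi(x)=e^{\gamma x^2}$ would indeed give a more hands-on proof avoiding abstract Gaussian measure theory entirely; either route is fine, and the paper's actual proof is essentially a pointer to Skorokhod plus the scaling observation recorded in the adjacent remark.
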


\begin{remark}\label{rem-lem-sk} It is true for $\intercal=1$ by following Skorokhod (\cite{Sk}) to consider $\|B_{[0, 1]}\|_{\mathtt a}$ instead of $\|B_{1}\|$. Note that 
for any $t>0$ and  $a>0$, $\mathsf B_{t}$ has the same distribution as $\sqrt{a}\mathsf B_{t/a}$. In particular, this holds for $a=\intercal$. A simple calculation shows  that  $\|\mathsf B_{[0, \intercal]}\|_{\mathtt a}\leq (\sqrt{\intercal}+\sqrt{\intercal}/\intercal^{\mathtt a})\|\mathsf B_{[0, 1]}\|_{\mathtt a}$.  Hence for every $\intercal$, we can choose  $\epsilon(\intercal)=\min\{\epsilon(1)/2,\epsilon(1)/(\sqrt{\intercal}+\sqrt{\intercal}/\intercal^{\mathtt a})\}$.
\end{remark}

The following estimations are  similar to the estimation for the first order tangent map with   $\underline{t}=0, \overline{t}=T$ fixed  (see  \cite[Proposition 5A, Chapter VIII]{El}).

\begin{prop}\label{est-norm-D-j-F-t}Let $g\in \mathcal{M}^k(M)$ with $k\geq 3$.   For $x\in \M$ and  $T\in \Bbb R_+$,  let $\{{\rm{u}_t}\}_{t\in[0, T]}$ be the solution to (\ref{OM-BM-SDE}) in $\mathcal{O}^{\wt{g}}(\M)$ with ${\rm{u}}_0\in \mathcal{O}^{\wt{g}}_x(\M)$.  Then for every $l$, $1\leq l\leq k-2$,  and $q\geq 1$, there exist $\underline{c}_{l}(q)>0$, which depends on $l, m, q$ and the norm bounds of   $\{{\bf \nabla}^{l'}H\}_{l'\leq l+1}, \{{\bf \nabla}^{l'}R\}_{l'\leq l}$,  and $c_l(q)>0$,  which depends on $l, m, q$ and the norm bounds of   $\{{\bf \nabla}^{l'}H\}_{l'\leq 2}, {\bf \nabla} R$, such that
\begin{equation}\label{wt-DF-j}
\E\sup\limits_{0\leq \underline{t}<\overline{t}\leq T}\left\|\big[ D^{(l)} F_{\underline{t}, \overline{t}}( {\rm{u}}_{\underline{t}}, {w})\big]^{\pm 1} \right\|^q<\underline{c}_{l}(q)e^{c_l(q)T}.
\end{equation}
\end{prop}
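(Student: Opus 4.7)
I would argue by induction on $l$, reducing the double supremum to a one-parameter one via the flow cocycle identity: for $l=1$, $[D^{(1)}F_{\underline{t},\overline{t}}]=[D^{(1)}F_{0,\overline{t}}]\circ[D^{(1)}F_{0,\underline{t}}]^{-1}$, and for $l\geq 2$ a Fa\`a di Bruno--type expansion writes $[D^{(l)}F_{\underline{t},\overline{t}}]$ as a polynomial in $\{[D^{(j)}F_{0,s}]^{\pm 1}\}_{j\leq l,\ s\in\{\underline{t},\overline{t}\}}$. By H\"older's inequality, this reduces matters to estimates of the form $\E\sup_{0\leq t\leq T}\|[D^{(l)}F_{0,t}({\rm u}_0,w)]^{\pm 1}\|^q$ for arbitrary $q\geq 1$.

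\textbf{Base case $l=1$.} For the Euclidean companion $[\widetilde{D^{(1)}F_{0,t}}]$, I would write down the It\^o system (\ref{Mar-diff-flow-map-ito}), whose coefficients are bounded in terms of $\|R\|_\infty$ (martingale part) and $\|{\rm Ric}\|_\infty,\|\nabla R\|_\infty,\{\|\nabla^{l'}H\|_\infty\}_{l'\leq 2}$ (drift). Applying It\^o's formula to $\|\cdot\|^{2q}$, then Burkholder (Lemma \ref{Ku-lem}) plus Doob for the martingale piece and a direct pointwise bound on the drift, one arrives at the integral inequality
\[\E\sup_{0\leq s\leq t}\|[\widetilde{D^{(1)}F_{0,s}}]\|^{2q}\leq 1+C(q)\int_0^t\E\sup_{0\leq r\leq s}\|[\widetilde{D^{(1)}F_{0,r}}]\|^{2q}\,ds,\]
from which Gr\"onwall gives the desired exponential bound with $c_1(q)$ depending only on $\{\nabla^{l'}H\}_{l'\leq 2}$ and $\nabla R$. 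The inverse is handled analogously through the backward SDE indicated at the end of Section \ref{BMM-flow}.

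\textbf{Inductive step.} Assuming the bound for all $l'<l$, I would invoke the iterated Duhamel identity of Corollary \ref{Duha-mathcal-V-t} together with (\ref{ind-der-F-e-t-ran})--(\ref{all-cov-der-F-e-t-ran}) to write
\[[D^{(l)}F_{0,t}({\rm u}_0,w)]=\sum\int_0^t[D^{(1)}F_{\tau,t}({\rm u}_\tau,w)]\cdot\Xi_\tau^{(l)}\circ dB_\tau\ +\ \text{drift terms of analogous form},\]
with each $\Xi_\tau^{(l)}$ polynomial in $\{[D^{(j)}F_{0,\tau}]\}_{j<l}$ and with coefficients depending on $\{\nabla^{l'}H\}_{l'\leq l+1}$ and $\{\nabla^{l'}R\}_{l'\leq l}$. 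Passing to It\^o form, applying Burkholder, (\ref{abcq}), and H\"older at a sufficiently high exponent $Nq$, the $L^q$-estimate decomposes into a product of the inductive bounds and an $L^{Nq}$-bound on $[D^{(1)}F_{\tau,t}]$; the latter brings in only the first-order rate $c_1(Nq)$. A final Gr\"onwall loop produces $\underline{c}_l(q)e^{c_l(q)T}$ with $c_l(q)$ inherited entirely from the first-order factor, as required.

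\textbf{Main obstacle.} The principal difficulty is the bookkeeping needed to ensure that $\nabla^{l+1}H$ and $\nabla^l R$, which appear in the direct It\^o SDE for $[D^{(l)}F]$, are confined to the prefactor $\Xi^{(l)}$ and do not multiply $[D^{(l)}F]$ itself in the linear kernel; a naive Gr\"onwall on that SDE would otherwise let these higher-derivative bounds contaminate $c_l(q)$. This requires careful combinatorial accounting in the iterated covariant derivative expansion (\ref{all-cov-der-F-e-t-ran}). The non-compactness of $\M$ and of the fibers of $\mathcal{F}(\M)$ is in contrast benign, since $G$-invariance of $H$, $R$ and their covariant derivatives ensures that only their pointwise norms on the compact base $M$ enter the estimates.
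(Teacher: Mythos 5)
Your proposal is correct and follows the same overall architecture as the paper's proof: reduce to $\underline{t}=0$ via the cocycle property, handle $l=1$ from the It\^o system (\ref{Mar-diff-flow-map-ito}), and obtain higher $l$ from the Duhamel representation (Corollary \ref{Duha-mathcal-V-t} and (\ref{ind-der-F-e-t-ran})--(\ref{all-cov-der-F-e-t-ran})) together with Burkholder and H\"older at inflated exponents, so that only $c_1(Nq)$ enters the growth rate. The one genuine divergence is the mechanism in the $l=1$ base case. You close the $\E\sup$ estimate by the textbook route: It\^o on $\|\cdot\|^{2q}$, BDG, and Gr\"onwall. The paper instead applies It\^o to $\ln\|\cdot\|^{2q}$, where both drift and bracket are pointwise bounded, exponentiates to get $\|\bar{\bf z}_{\overline t}\|^{2q}\le C(q)e^{C(q)\overline t}e^{\wt q\mathsf M_{\overline t}}$ for a martingale $\mathsf M$ of bounded quadratic variation, and then controls $\E\sup_t e^{\wt q\mathsf M_t}$ by Dambis--Dubins--Schwarz (Lemma \ref{D-D-S}) and Skorokhod's exponential moment bound for the H\"older seminorm of Brownian motion (Lemma \ref{lem-Sk}), chaining short intervals by the cocycle and Markov properties. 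Both routes give the same result and the same dependency of $c_1(q)$ on $R$, $\nabla R$ and the low-order derivatives of $H$; yours is shorter and more standard, the paper's avoids the exponent-doubling step. Two small caveats on your version: when closing Gr\"onwall from ``Burkholder plus Doob,'' note that the martingale bracket is of order $\|\cdot\|^{4q}$, so one must insert a Young-inequality absorption (or, equivalently, BDG at $p=1$ followed by $\sqrt{ab}\le \epsilon a+b/(4\epsilon)$) before the inequality takes the self-improving form you write; and in the inductive step the Duhamel formula already gives an explicit representation of $[D^{(l)}F_{0,t}]$ in terms of $[D^{(1)}F_{\tau,t}]$ and lower-order factors, so no additional Gr\"onwall loop is needed --- that representation is exactly what confines $\nabla^{l+1}H$, $\nabla^l R$ to the prefactors $\Xi^{(l)}$ and keeps them out of $c_l(q)$, as you correctly observe.
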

\begin{proof} By using the cocycle property of the tangent maps, it suffices to show (\ref{wt-DF-j}) with $\underline{t}=0$.  We show it  by induction.   At each step, we only check the bound  for the tangent map since the estimation on the inverse map  can be obtained analogously  using its SDE.

We begin with the $l=1$ case.  It suffices to consider $\big[\wt{D^{(1)} F_{\overline{t}}}( {\rm{u}}_{0}, {w})\big]$. Following  \cite[Theorem 5.1]{M} (see Lemma \ref{El-Ku-Cor}), the solutions to (\ref{Mar-diff-flow-map-ito}) can be understood using multiplicative stochastic integral in Ito's form.  For each  $j\in \{1, 2, \cdots, m\}$ and  ${\rm u}\in \mathcal{F}(\M)$, define  a $m(m+1)\times m(m+1)$  matrix ${\bf M}_j({\rm u})$, which is an endomorphism from  $T_{o}\mathcal{F}(\Bbb R^m)$ to itself,
 such that  for $(z, {\bf z})\in T_{o}\mathcal{F}(\Bbb R^m)$, 
\begin{equation}\label{matrix-M-j-def}
{\bf M}_j({\rm u})\big((z, {\bf z})\big)=\left(({\bf z}^\iota_j)_{\iota=1}^m, ({{\bf
R}}_{q,j, l}^{\iota}({\rm u}){z}^{l})_{\iota, q=1}^{m}\right).
\end{equation}
Define another $m(m+1)\times m(m+1)$ matrix ${\bf N}({\rm u})$  (or an endomorphism  from  $T_{o}\mathcal{F}(\Bbb R^m)$ to itself)   such that  for $(z, {\bf z})\in T_{o}\mathcal{F}(\Bbb R^m)$, 
\begin{align}\label{matrix-N-def}
&{\bf N}({\rm u})\big((z, {\bf z})\big)=\left(0, {\bf N}^{\iota}_{q, l}({\rm u}){z}^l\right), 
\end{align}
where 
\begin{align*}
&{\bf N}^{\iota}_{q, l}({\rm u}):=\sum_{j=1}^m\big\langle (\nabla ({{\rm u} e_j})R\left({\rm u} e_j, {\rm u} e_l\right){\rm u} e_{q}, {\rm u} e_\iota\big\rangle.\notag
\end{align*}
Using ${\bf M}, {\bf N}$,  we conclude from  (\ref{Mar-diff-flow-map-ito}) that the  It\^{o} form of the  SDE of ${\bar{\bf z}}_t:=(z_t, {\bf z}_t)$ is\footnote{In  terms of the multiplicative stochastic integral,  (\ref{DF-multi-sto-int-0}) shows 
\begin{equation*}\label{DF-multi-sto-int}
 \big[\wt{D^{(1)} F_{\overline{t}}}( {\rm{u}}_{0}, {w})\big]=e^{\left\{\int_{0}^{\overline{t}}{\bf M}_j({\rm u}_{\tau}) \ dB^j_{\tau}({w})+{\bf N}({\rm{u}}_{\tau})\ d\tau\right\}}.\end{equation*}} 
\begin{equation}\label{DF-multi-sto-int-0}
d{\bar{\bf z}}_t({w})=\sum_{j=1}^m\left({\bf M}_j({\rm u}_t){\bar{\bf z}}_t({w})\ dB_t^j({w})+[{\bf M}_j({\rm u}_t)]^2{\bar{\bf z}}_t({w})\ dt\right) +{\bf N}({\rm u}_t){\bar{\bf z}}_t({w})\ dt.
\end{equation}
(The coefficient of ${\bf N}$ in (\ref{DF-multi-sto-int-0}) is  different  from that in \cite[Theorem 5.1]{M} since we are considering Brownian motion with generator $\Delta$ instead of $\Delta/2$.) By It\^{o}'s formula, 
\begin{align*}
d|{\bar{\bf z}}_t({w})|^{2q}=&2q|{\bar{\bf z}}_t({w})|^{2(q-1)}\langle {\bar{\bf z}}_t({w}), d{\bar{\bf z}}_t({w})\rangle+q|{\bar{\bf z}}_t({w})|^{2(q-1)}\langle d {\bar{\bf z}}_t({w}), d {\bar{\bf z}}_t({w})\rangle\\
&+2q(q-1)| {\bar{\bf z}}_t({w})|^{2(q-2)}\langle  {\bar{\bf z}}_t({w}), d {\bar{\bf z}}_t({w}) \rangle^2;\\
d\ln |{\bar{\bf z}}_t({w})|^{2q}=& \frac{1}{ |{\bar{\bf z}}_t({w})|^{2q}}d |{\bar{\bf z}}_t({w})|^{2q}-\frac{1}{2 |{\bar{\bf z}}_t({w})|^{4q}}\langle d |{\bar{\bf z}}_t({w})|^{2q}, d |{\bar{\bf z}}_t({w})|^{2q}\rangle. 
\end{align*}
Note that $\{{\bf M}_j\}, {\bf N}$ all have norms bounded by some constant depending on $R, \nabla R$.  Hence, 
\begin{equation}\label{DF-z-bound}
|{\bar{\bf z}}_{\overline{t}}({w})|^{2q}=e^{\int_{0}^{\overline{t}}d\ln |{\bar{\bf z}}_t({w})|^{2q} } |{\bar{\bf z}}_{0}|^{2q}\leq C(q)e^{C(q)\overline{t}}e^{\wt{q}\int_{0}^{\overline{t}}{\rm M}_j({\rm{u}}_{\tau})\ dB_{\tau}^j}|{\bar{\bf z}}_{0}|^{2q},
\end{equation}
where $C(q)$ depends on the norm bound of $R, \nabla R$ and $m, q$,  the number $\wt{q}$ depends on $q$,  and $\{{\rm M}_j\}_{j=1}^{m}$ are continuous real valued processes with bounds depending  on the norm bound of $R$.  Consider the  process 
\[
{\mathsf M}_t({w}):=\int_{0}^{t}{\rm M}_j({\rm{u}}_{\tau})\ dB_{\tau}^j. 
\]
It  is a continuous martingale with ${\mathsf M}_0=0$ and  with the quadratic variation $\langle \mathsf M, \mathsf M\rangle_t\leq C_1t$ for some constant $C_1$ which depends on the norm bound of  $R$. By Lemma \ref{D-D-S},  there exist a continuous martingale $\wt{\mathsf M}$ and a Brownian motion $\mathsf B$ on an enlargement $(\wt{\Theta}_+, \wt{\mathcal{F}}, \wt{\rm{Q}})$ of $(\Theta_+, \mathcal{F}, {\rm{Q}})$ so that $\wt{\mathsf M}$ has the same law as ${\mathsf M}$ and 
\[
\wt{\mathsf M}_t=\mathsf B_{\langle \wt{\mathsf M}, \wt{\mathsf M}\rangle_t}. 
\]
Fix $\mathtt a\in (0, 1/2)$ and consider  $\|\mathsf B_{[0, C_1]}\|_{\mathtt a}$. Let $\epsilon(1)$ be as in Lemma \ref{lem-Sk} and put  $\epsilon=\min\{\epsilon(1)/2, \epsilon(1)/(\sqrt{C_1}+\sqrt{C_1}/(C_1)^{\mathtt a})\}$. By Remark \ref{rem-lem-sk}, \[
\E_{\wt{\rm{Q}}}\big(e^{\epsilon \big\|\mathsf B_{[0, C_1]}\big\|_{\mathtt a}}\big)<\wt{C_1}<\infty,
\]
where $\wt{C_1}$ depends on $C_1$ and $\mathtt a$.   Let $t_1= \min\{C_1^{-1}(\epsilon \wt{q}^{-1})^{\frac{1}{\mathtt a}}, T\}$. By the definition of  $\|\cdot\|_{\mathtt a}$, 
\[
\big|{\mathsf M}_{t}-{\mathsf M}_{0}\big|\leq (C_1t)^{\mathtt a}\big\|\mathsf B_{[0, C_1]}\big\|_{\mathtt a}\leq \epsilon \wt{q}^{-1} \big\|\mathsf B_{[0, C_1]}\big\|_{\mathtt a}.
\]
Using this and (\ref{DF-z-bound}), we obtain \[
\E\sup\limits_{0\leq t\leq t_1}|{\bar{\bf z}}_{\overline{t}}({w})|^{2q}\leq C(q)e^{C(q)t_1}|{\bar{\bf z}}_{0}|^{2q}\cdot \E_{\wt{\rm{Q}}}\big(e^{\epsilon \big\|\mathsf B_{[0,  C_1]}\big\|_{\mathtt a}}\big)\leq  \wt{C_1}C(q)e^{C(q)t_1}|{\bar{\bf z}}_{0}|^{2q}. 
\]
This implies
\[
\E\sup\limits_{0\leq t\leq t_1}\left\|\big[ DF_{t}( {\rm{u}}_{0}, {w})\big] \right\|^{2q}\leq \wt{C}(q)e^{C(q)t_1},
\]
where $\wt{C}(q)$ depends on $\wt{C_1}, C(q), m, q$.  In the same way, we obtain 
\begin{align*}
& \E\sup\limits_{(i-1)t_1\leq t\leq it_1}\left\|\big[ DF_{(i-1)t_1, t}( {\rm{u}}_{(i-1)t_1}, {w})\big] \right\|^{2q}, \ \E\sup\limits_{i_1(T)t_1\leq t\leq T}\left\|\big[ DF_{i_1(T)t_1, t}( {\rm{u}}_{i_1(T)t_1}, {w})\big] \right\|^{2q}\\
&\ \ \ \ \ \leq \wt{C}(q)e^{C(q)t_1},\ \forall i, 1\leq i\leq i_1(T)=\max\{i\in \Bbb N:\ it_1<T\}. 
\end{align*}
Hence by using the  cocycle property and Markov property, we conclude that there are some $\underline{c}_{1}(q), c_1(q)$ of the prescribed type in the statement of the proposition such that 
\[
\E\sup\limits_{0\leq \overline{t}\leq T}\left\|\big[ D F_{0, \overline{t}}( {\rm{u}}_{0}, {w})\big] \right\|^{2q}<\big(\wt{C}(q)e^{C(q)t_1}\big)^{i_1(T)+1}<\underline{c}_{1}(q)e^{c_1(q)T}.
\]

We proceed  to show (\ref{wt-DF-j}) with $l=2$ and $\underline{t}=0$.  By the above conclusion in the $l=0$ case and the definition of $\big[D^{(2)}F_{{t}}({\rm u}_0, {w})\big]$,  it remains to analyze 
\[
\E\sup_{0\leq t\leq T}\left\|(\theta, \varpi)_{{\rm u}_t}(\mathcal{V}_t)\right\|^{2q},
\]
where $\mathcal{V}_t:=\nabla_{\mathbb V_0} \big[D^{(1)}F_t({\rm u}_0, {w})\big]({{\mathsf v}})$ and ${{\mathsf v}}, {\mathbb{V}}_0$ have norm 1.  
Put ${\mathsf v}_\tau:=[D^{(1)}F_{\tau}({\rm u}_0, {w})]{\mathsf v}$,   $\mathbb{V}_\tau:=[D^{(1)}F_{\tau}({\rm u}_0, {w})]\mathbb{V}_0$. Let 
\begin{align}
\label{DF2-thevarA} {\rm{A}}_\tau({w})&:=(\theta, \varpi)_{{\rm{u}}_{\tau}}\left(\nabla^{(2)}({\mathsf v}_{\tau},  {\mathbb V}_{\tau})H({\rm u}_{\tau}, \cdot)+R(H({
\rm u}_{\tau},  \cdot),  {\mathbb V}_{\tau}){\mathsf v}_{\tau}\right),\\
\notag{\rm{B}}_\tau({w})&:=(\theta, \varpi)_{{\rm{u}}_{\tau}} \left(\left[H({\rm u}_{\tau}, e_i), \nabla^{(2)}({\mathsf v}_{\tau},  {\mathbb V}_{\tau})H({\rm u}_{\tau}, e_i)+R(H({
\rm u}_{\tau}, e_i),  {\mathbb V}_{\tau}){\mathsf v}_{\tau}\right]\right),\\
\notag {\rm{C}}_\tau({w})&:=\left(\varpi\left(\nabla^{(2)}({\mathsf v}_{\tau},  {\mathbb V}_{\tau})H({\rm u}_{\tau}, e_i)+R(H({
\rm u}_{\tau}, e_i),  {\mathbb V}_{\tau}){\mathsf v}_{\tau}\right) e_i, \right. \\
&\ \ \ \ \ \   \left.  {\rm u}_{\tau}^{-1} R\left({\rm u}_{\tau}e_i, {\rm u}_{\tau}\theta\big(\nabla^{(2)}({\mathsf v}_{\tau},  {\mathbb V}_{\tau})H({\rm u}_{\tau}, e_i)+R(H({\rm u}_{\tau}, e_i),  {\mathbb V}_{\tau}){\mathsf v}_{\tau}\big)\right){\rm u}_{\tau}\right) \notag
\end{align}
and let 
\begin{align*}
\wt{\rm{A}}_t({w}) &:= \int_{0}^{t}\big[\wt{D^{(1)}F_{\tau, t}}({\rm u}_{\tau}, {w})\big] {\rm{A}}_\tau({w})\ dB_{\tau}({w}), \\
\wt {\rm{B}}_t({w}) &:=  \int_{0}^{t}\big[\wt{D^{(1)}F_{\tau, t}}({\rm u}_{\tau}, {w})\big] {\rm{B}}_{\tau}({w})\ d\tau,\  \wt {\rm{C}}_t({w}) :=  \int_{0}^{t}\big[\wt{D^{(1)}F_{\tau, t}}({\rm u}_{\tau}, {w})\big] {\rm{C}}_{\tau}({w})\ d\tau. 
\end{align*}
By Corollary \ref{Duha-mathcal-V-t}, 
\begin{align*}
\!\!\!\!(\theta, \varpi)_{{\rm u}_t}(\mathcal{V}_t)=\wt {\rm{A}}_t({w})+\wt {\rm{B}}_t({w})+\wt {\rm{C}}_t({w}). 
\end{align*}
Hence, by using (\ref{abcq}), we obtain 
\begin{align*}
3^{1-2q}\E\sup_{0\leq t\leq T}\left\|(\theta, \varpi)_{{\rm u}_t}(\mathcal{V}_t)\right\|^{2q}\leq\ &  \E\sup_{0\leq t\leq T}\left\|\wt {\rm{A}}_t({w})\right\|^{2q}+\E\sup_{0\leq t\leq T}\left\|\wt {\rm{B}}_t({w})\right\|^{2q}+\E\sup_{0\leq t\leq T}\left\|\wt {\rm{C}}_t({w})\right\|^{2q}\\
=:\ & (\wt{\rm{A}})+(\wt{\rm{B}})+(\wt{\rm{C}}). 
\end{align*}
For $(\wt{\rm{A}})$, it is true  by Doob's inequality of sub-martingales and  Burkholder's inequality that
\begin{align*}
(\wt{\rm{A}})\leq& {\mathtt C}(2q) \E\left\|\int_{0}^{T}\big[\wt{D^{(1)}F_{\tau, T}}({\rm u}_{\tau}, {w})\big]{\rm{A}}_\tau({w})\ dB_{\tau}({w})\right\|^{2q}\\
\leq& {\mathtt C}(2q){\mathtt C}_1(2q)  \E\left|\int_{0}^{T}\left\|\big[\wt{D^{(1)}F_{\tau, T}}({\rm u}_{\tau}, {w})\big]{\rm{A}}_\tau({w})\right\|^2\ d\tau\right|^{q}\\
\leq&  {\mathtt C}(2q){\mathtt C}_1(2q)T^{q}\left(\E \sup\limits_{0\leq \underline{t}<\overline{t}\leq T}\left\|\big[ \wt{D^{(1)} F_{\underline{t}, \overline{t}}}( {\rm{u}}_{\underline{t}}, {w})\big] \right\|^{4q}\right)^{\frac{1}{2}}\cdot \left(\E\sup\limits_{0\leq \tau\leq T}\left\|{\rm{A}}_\tau({w})\right\|^{4q}\right)^{\frac{1}{2}}, 
\end{align*}
where ${\mathtt C}(q):=(q/q-1)^q$ and ${\mathtt C}_1(q)$ is given in Lemma \ref{Ku-lem}.  
Using (\ref{DF2-thevarA}), we compute that \begin{align*}
\E\sup\limits_{0\leq \tau\leq T}\left\|{\rm{A}}_\tau({w})\right\|^{4q}\leq& C_{{\rm A}}^{4q}\left(\E\sup\limits_{0\leq \tau\leq T}\left\|{{\mathsf v}}_\tau({w})\right\|^{8q}\right)^{\frac{1}{2}}\cdot \left(\E\sup\limits_{0\leq \tau\leq T}\left\|\mathbb{V}_\tau({w})\right\|^{8q}\right)^{\frac{1}{2}}\\
\leq& (C_{{\rm A}}')^{4q}\E \sup\limits_{0\leq \underline{t}<\overline{t}\leq T}\left\|\big[ \wt{D^{(1)} F_{\underline{t}, \overline{t}}}( {\rm{u}}_{\underline{t}}, {w})\big] \right\|^{8q},
\end{align*}
where $C_{{\rm A}}, C_{{\rm A}}'$  depend on the  norm bounds  of  $\{{\bf \nabla}^{l}H\}_{l\leq 2}$ and  $\{{\bf \nabla}^{l}R\}_{l\leq 1}$.  With (\ref{wt-DF-j}) for $l=1$, we conclude that 
\begin{align*}
(\wt{\rm{A}})\leq C'(q)(C'_{{\rm A}}\sqrt{T})^{2q}\sqrt{c_1(4q) c_1(8q)}e^{\frac{1}{2}(c_1(4q)+c_1(8q))T}.
\end{align*}
Using  (\ref{wt-DF-j}) with $l=1$ and  H\"{o}lder's inequality, we have 
\begin{align*}
(\wt{\rm{B}})
\leq & T^{2q}\left(\E \sup\limits_{0\leq \underline{t}<\overline{t}\leq T}\left\|\big[ \wt{D^{(1)} F_{\underline{t}, \overline{t}}}( {\rm{u}}_{\underline{t}}, {w})\big] \right\|^{4q}\right)^{\frac{1}{2}}\cdot \left(\E\sup\limits_{0\leq \tau\leq T}\left\|{\rm{B}}_\tau({w})\right\|^{4q}\right)^{\frac{1}{2}}\\
\leq & (CT)^{2q}\left(\E \sup\limits_{0\leq \underline{t}<\overline{t}\leq T}\left\|\big[ \wt{D^{(1)} F_{\underline{t}, \overline{t}}}( {\rm{u}}_{\underline{t}}, {w})\big] \right\|^{4q}\right)^{\frac{1}{2}}\cdot \left(\E \sup\limits_{0\leq \underline{t}<\overline{t}\leq T}\left\|\big[ \wt{D^{(1)} F_{\underline{t}, \overline{t}}}( {\rm{u}}_{\underline{t}}, {w})\big] \right\|^{8q}\right)^{\frac{1}{2}}\\
\leq & (CT)^{2q}\sqrt{c_1(4q) c_1(8q)} e^{\frac{1}{2}(c_1(4q)+c_1(8q))T}
\end{align*}
and the same inequality holds true  for $({\rm{C}})$, 
where $C$ depends on the  norm bounds  of   $\{{\bf \nabla}^{l}H\}_{l\leq 3}$, $\{{\bf \nabla}^{l}R\}_{l\leq 2}$.   Similarly, we can obtain the estimation on  $\big[D^{(2)}F_{{t}}({\rm u}_0, {w})\big]^{-1}$. This finishes  the proof of  (\ref{wt-DF-j}) for the $l=2$ case.

Let $l\geq 3$.  Assume (\ref{wt-DF-j}) holds  true for tangents up to the  $(l-1)$-th order.   For the estimation on  $l$-th tangent map, by the inductive definition of $\big[ D^{(l)} F_{t}( {\rm{u}}_{0}, {w})\big]$    (see  (\ref{ind-der-F-e-t-ran})),  it remains to show 
\begin{align}\label{Dj-Ft-u}
 \E\sup\limits_{0\leq t\leq T}\left\|\nabla_{(\cdot)}\big[D^{(l-1)}F_{t}({\rm{u}}_0, {w})\big](\cdot) \right\|^q<\underline{c}_{l}(q)e^{c_{l}(q)T}. 
\end{align} 
This can be done as in the $l=2$ case  by formulating  $\mathcal{V}_t^{(l-1)}$ in terms of  $\big[D^{(l-1)}F_{\tau, t}({\rm{u}}_\tau, {w})\big]$  by Duhamel's principle and using the inductive assumption on (\ref{wt-DF-j}). 
\end{proof}

\subsection{Brownian bridge and conditional estimations}\label{BBCE}
We want to  further estimate the growth of (\ref{wt-DF-j}) with respect to Brownian bridge distributions using their SDEs,   which can be derived from the classical Cameron-Martin-Girsanov formula. 

We begin with some classical estimations on heat kernels in the non-compact case. 
 \begin{lem}(\cite[Theorem 6.1]{Sa})\label{Sa-Thm-6.1} Let $g\in \mathcal{M}^k(M)$ and let $p(t, x, y)$ be the heat kernel functions of the $\wt{g}$-Brownian motion on $\M$.  There exist constants $b_1, c_1, c_2,
\kappa_1$ (depending on $m$ and the curvature bound) such that for any  $t>0$ and $x, y\in \M$, we
have
\begin{equation}\label{S-p-t-upper-b}
p(t, x, y)\leq\frac{1}{\sqrt{{\rm{Vol}}_{\wt{g}}B(x, \sqrt{t}){{\rm{Vol}}_{\wt{g}}}B(y,
\sqrt{t})}}e^{c_1(1+b_1 t+\sqrt{\kappa_1 t})-\frac{(d_{\wt{g}}(x, y))^2}{c_2
t}}.
\end{equation}
\end{lem}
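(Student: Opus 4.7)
The statement is the classical Li--Yau--Saloff-Coste Gaussian upper bound on a Riemannian manifold whose Ricci curvature is bounded below. Since $M$ is compact and $g\in \mathcal{M}^k(M)$, the Ricci tensor of $\wt g$ is bounded (both sides) by constants depending only on the $C^2$-bound of $g$; in particular there is $\kappa_1>0$ with ${\rm Ric}_{\wt g}\geq -(m-1)\kappa_1$. My plan is to follow Saloff-Coste's approach, deducing the bound from the conjunction of volume doubling and a scale-invariant Poincar\'e inequality, which together are equivalent to a parabolic Harnack inequality.

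First I would record the two geometric ingredients. Bishop--Gromov comparison, applied to ${\rm Ric}_{\wt g}\geq -(m-1)\kappa_1$, gives the volume doubling inequality on $(\wt M,\wt g)$ with an exponential correction of the form ${\rm Vol}B(x,2r)\leq e^{c(1+\sqrt{\kappa_1}\,r)}{\rm Vol}B(x,r)$, together with the reverse lower estimate. Buser's inequality (which uses only the lower Ricci bound) yields a local Poincar\'e inequality on every ball of radius $r$, again with a constant that depends on $r$ only through $\sqrt{\kappa_1}\,r$.

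Next I would invoke the Grigor'yan--Saloff-Coste theorem: volume doubling plus Poincar\'e imply a parabolic Harnack inequality on the heat equation $\partial_t u=\Delta u$, and in turn a two-sided Gaussian heat-kernel estimate. The on-diagonal upper bound $p(t,x,x)\leq C/{\rm Vol}B(x,\sqrt t)$ holds up to the exponential factor $e^{c_1(1+b_1 t+\sqrt{\kappa_1 t})}$, which absorbs the $\kappa_1$-corrections that enter doubling and Poincar\'e at every scale from $0$ up to $\sqrt t$. To go off-diagonal I would apply Davies' exponential perturbation trick: for a bounded Lipschitz $\xi$ with $|\nabla \xi|\leq \alpha$, the conjugated semigroup $e^{-\xi}e^{t\Delta}e^{\xi}$ has kernel $e^{-\xi(x)}p(t,x,y)e^{\xi(y)}$ and generator $\Delta+|\nabla\xi|^2-2\nabla\xi\cdot\nabla$; an $L^2\to L^\infty$ estimate on this perturbed kernel, together with the symmetric counterpart, produces the factor $1/\sqrt{{\rm Vol}B(x,\sqrt t){\rm Vol}B(y,\sqrt t)}$. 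Optimizing $\xi(z)=\alpha\, d_{\wt g}(x,z)$ in $\alpha$ yields the Gaussian decay $e^{-d_{\wt g}(x,y)^2/(c_2 t)}$.

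The main obstacle is precisely the accumulation of the $\sqrt{\kappa_1 t}$ correction factors at many scales, which is what prevents a purely Euclidean-type estimate from holding on the non-compact cover $\wt M$. These must be tracked carefully through the doubling constant, the Poincar\'e constant and the Moser iteration (or Nash inequality) used to produce the on-diagonal bound, and then shown to survive the Davies perturbation unchanged up to adjusting $c_1$, $b_1$ and $c_2$. Once this bookkeeping is done, the inequality (\ref{S-p-t-upper-b}) follows with constants depending only on $m$ and the curvature bound for $\wt g$, as claimed.
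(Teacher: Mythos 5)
The paper offers no proof of this lemma: it is quoted verbatim as Theorem 6.1 of Saloff-Coste's paper \cite{Sa}, so there is nothing internal to compare your argument against. Your sketch is essentially the standard route behind that citation (Ricci lower bound from compactness of $M$, volume comparison, and a Davies-type exponential perturbation to get the off-diagonal Gaussian factor), and it is correct in outline; the only minor historical remark is that Saloff-Coste's original derivation proceeds via a localized Sobolev/Nash inequality obtained from the Ricci lower bound rather than through the later doubling-plus-Poincar\'e $\Rightarrow$ parabolic Harnack equivalence, but either route yields (\ref{S-p-t-upper-b}) with constants of the stated form.
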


For later use, we would  like to  state a simplified rough  version of (\ref{S-p-t-upper-b}):  there are constants  $\underline{c}_0$ (which depends on $\|g^{\l}\|_{C^0}$) and $c_0$ (which depends on $\|g^{\l}\|_{C^2}$) such that 
\begin{equation}\label{p-t-rough}
p(T, x, y)\leq\  \underline{c}_0 T^{-m}e^{c_0(1+T)}.
\end{equation}

\begin{lem}(\cite[Theorem 1.5]{Li})  \label{Hs2-ST}Let $g\in \mathcal{M}^k(M)$ and let $p(t, x, y)$ be the heat kernel functions of the $\wt{g}$-Brownian motion on $\M$. Let $T>0$. There are constants $c(i, T)$,  $i\leq k-2$, which depend on $i, T$ and the curvature and its derivatives up to $i$-th order, such that,
for all $(t, x, y)\in (0, T]\times \M\times \M$,  the $i$-th covariant derivative of $\ln p$ satisfies
\begin{equation}\label{equ-Hs2-ST}
\|\nabla^{(i)}\ln p(t, x, y)\|\leq c(i, T)\left(\frac{1}{t}d_{\wt{g}}(x, y)+\frac{1}{\sqrt{t}}\right)^i. 
\end{equation}
\end{lem}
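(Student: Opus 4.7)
The plan is to follow the standard probabilistic approach via Bismut's integration by parts formula on path space, adapted to the non-compact setting, with iteration to handle higher derivatives. The main input is the tangent map estimate of Proposition \ref{est-norm-D-j-F-t} combined with the Cameron-Martin-Girsanov reweighting to the Brownian bridge.

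First I would prove the case $i=1$. Fix a unit vector $v \in T_y\M$ and a test function $h \in C^1([0,t],\R)$ with $h(0)=0$ and $h(t)=1$; Bismut's formula gives
\begin{equation*}
\langle \nabla_y \ln p(t,x,y),\, v\rangle \;=\; \E^{x,y}_{t}\!\left[\int_0^t \dot h(s)\, \big\langle {\rm u}_s^{-1} J_s(v),\, dB_s\big\rangle\right],
\end{equation*}
where $\E^{x,y}_t$ denotes expectation with respect to the Brownian bridge from $x$ to $y$ in time $t$, ${\rm u}_s$ is the horizontal lift of the Brownian motion, and $J_s(v)$ is the damped parallel transport of $v$ along the bridge (a solution to the Jacobi-type SDE involving $R$ and the stochastic tangent map $[D^{(1)}F_{s,t}]$). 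Taking $h(s)=s/t$ and applying Burkholder's inequality (Lemma \ref{Ku-lem}) together with the uniform $L^q$ bound of Proposition \ref{est-norm-D-j-F-t} shows
\begin{equation*}
|\nabla_y \ln p(t,x,y)| \;\leq\; \frac{C}{\sqrt t}\left(\E^{x,y}_t\!\int_0^t \|J_s\|^2\, ds\right)^{1/2}.
\end{equation*}
The $\sqrt t$ scaling is exactly the fluctuation scale of Brownian motion. The extra term $d(x,y)/t$ appears once we pass from $\E^{x,y}_t$ back to the unconditioned expectation via Girsanov: the Radon--Nikodym derivative $p(t-s, B_s, y)/p(t,x,y)$ produces, after a Li--Yau type argument controlling its first moment, a drift contribution of order $d(x,y)/t$.

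For the inductive step, I would iterate the Bismut formula: differentiating once more under the path integral and distributing a second derivative either onto $J_s$ (producing a factor bounded by Proposition \ref{est-norm-D-j-F-t} for $l=2$) or onto the bridge density (producing another factor of the form $d(x,y)/t+1/\sqrt t$ by the $i=1$ result applied to the reversed bridge). More cleanly, by the semigroup property
\begin{equation*}
p(t,x,y) \;=\; \int_{\M} p(t/2, x, z)\, p(t/2, z, y)\, d{\rm Vol}_{\wt g}(z),
\end{equation*}
any derivative $\nabla^{(i)}_y \ln p(t,x,y)$ can be expressed as a weighted combination of products $\prod_j \nabla^{(i_j)}_y \ln p(t/2, z, y)$ averaged against the reverse bridge, reducing the estimate at time $t$ to estimates at time $t/2$ and allowing a bootstrap from the $i=1$ case. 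At each step the derivatives land on tangent maps $[D^{(l)}F_{s,t}]$ for $l\leq i+1$, whose $L^q$-norms are controlled using curvature and its derivatives up to order $i$, which accounts for the dependence of $c(i,T)$ on these geometric quantities.

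The main obstacle is controlling the iterated stochastic integrals as $t\to 0$ and $y\to x$: the singularity of the kernel and of its derivatives must be captured precisely so that the bound has the scaling $(d(x,y)/t + 1/\sqrt t)^i$ rather than a worse polynomial in $1/\sqrt t$. The key is to keep track of which factors come from the bridge drift (giving $d(x,y)/t$) and which from the martingale part (giving $1/\sqrt t$), and to use Cauchy--Schwarz/H\"older distributions that match the scaling. The non-compactness of $\M$ enters only through the need for uniform-in-$(x,y)$ bounds on the tangent maps and on curvature, both of which are supplied by $g\in\mathcal{M}^k(M)$ together with Proposition \ref{est-norm-D-j-F-t}; the restriction $t\leq T$ then keeps the exponentials $e^{c_l(q)T}$ from the tangent map estimates inside the constant $c(i,T)$.
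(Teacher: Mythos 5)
The paper does not prove this lemma; it cites it as \cite[Theorem 1.5]{Li} (X.~D.~Li's Hamilton-type estimate on complete manifolds), so there is no internal proof to compare against, and the cited argument is PDE-based (a Li--Yau/Hamilton maximum-principle gradient estimate for $i=1$, then a bootstrap).

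Your probabilistic route has a genuine circularity in the base case $i=1$. The Bismut formula writes $\nabla_y\ln p(t,x,y)$ as a bridge expectation of $\int_0^t \dot h(s)\langle u_s^{-1}J_s v, dB_s\rangle$. But under the bridge measure, the anti-development $B$ is not a martingale: as the paper records in Lemma \ref{Hsu-thm-5.4.4}, $dB_s = db_s + 2u_s^{-1}\nabla\ln p(t-s,\cdot,y)\,ds$ with $b$ a bridge Brownian motion. Burkholder controls the $\int \cdot\,db$ piece at rate $1/\sqrt t$, but the drift piece contributes $\E^{x,y}_t\int_0^t \dot h(s)\,\|\nabla\ln p(t-s,\cdot,y)\|\,ds$, which is exactly the quantity you are trying to bound. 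Your phrase \emph{``after a Li--Yau type argument controlling its first moment''} is not a step in the proof --- it \emph{is} the theorem. The paper's logical ordering is the opposite of what you need: it imports Lemma \ref{Hs2-ST} from the literature precisely so it can establish Proposition \ref{cond-nabla-ln-p} and the conditional tangent-map bounds in Proposition \ref{est-norm-D-j-F-t-cond}, which are the probabilistic tools you are proposing to invoke. Only the unconditional Proposition \ref{est-norm-D-j-F-t} and Lemma \ref{Ku-lem} are independent of the lemma, and those alone give the $1/\sqrt t$ scale but not the $d(x,y)/t$ term. Your inductive step for $i\geq 2$ (Chapman--Kolmogorov splitting, distributing derivatives onto $J_s$ or onto the bridge density, tracking which factors carry $d/t$ versus $1/\sqrt t$) is a reasonable bootstrap once the $i=1$ bound is in hand, but you need an independent, non-probabilistic input --- the Hamilton/Souplet--Zhang/Li--Yau maximum-principle argument --- to start the induction.
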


Let $T>0$. For $x, y\in \M$,   the distribution of the \emph{Brownian bridge} from $x$ to $y$ in time $T$,  i.e., the Brownian motion starting from $x$ conditioning on paths that are at $y$ at time $T$,  is 
\[
\P_{x, y,  T}(\cdot):=\E_{\P_x}\left(\cdot\big| {\rm x}_T=y\right).
\]
It is a probability on the bridge space 
\[
C_{x, y}([0, T], \M):=\left\{w\in C_x([0, T], M):\ w_0=x, w_T=y\right\}.\]

\begin{prop}\label{cond-nabla-ln-p} Write $\P^*_{x, y, T}:=p(T, x, y)\P_{x, y, T}$. Fix $T_0>0$.  For any $q\in \Bbb R_+$  and $T>T_0$,   there exists   $c$ depending on $m, q, T, T_0$ and  $\|g\|_{C^2}$ such that for all $x, y\in \M$,  
\begin{align}\label{exp-grad-lnp}
\E_{\P^*_{x, y, T}} e^{q\int_{0}^{T}\|\nabla\ln p(T-t, {\rm x}_{t}, y)\|\ dt}\leq & e^{c(1+d(x, y))}.
\end{align}
\end{prop}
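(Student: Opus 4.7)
The plan is to combine the Li--Yau type gradient estimate of Lemma \ref{Hs2-ST} with a time-reversal that moves the singularity of the integrand from the endpoint $t=T$ to the initial point of a reversed bridge, where it can be controlled by an h-transform and small-time comparison with Brownian motion starting from $y$. Applying Lemma \ref{Hs2-ST} with $i=1$ gives $\|\nabla \ln p(T-t,{\rm x}_t,y)\|\leq c(1,T)\bigl(d({\rm x}_t,y)/(T-t)+1/\sqrt{T-t}\bigr)$ for $t\in[0,T)$, so the $1/\sqrt{T-t}$ part integrates to a deterministic constant $2c(1,T)\sqrt{T}$ and it remains to bound
\[
I(x,y,T):=\E_{\P^*_{x,y,T}}\exp\Bigl(Q\int_0^T\frac{d({\rm x}_t,y)}{T-t}\,dt\Bigr),\qquad Q:=q\,c(1,T).
\]
Setting $\tilde{\rm x}_\tau:={\rm x}_{T-\tau}$, the time-reversal of $\P_{x,y,T}$ is $\P_{y,x,T}$; combined with the symmetry $p(T,x,y)=p(T,y,x)$ this gives $I(x,y,T)=\E_{\P^*_{y,x,T}}\exp\bigl(Q\int_0^T d(\tilde{\rm x}_\tau,y)/\tau\,d\tau\bigr)$, and under $\P^*_{y,x,T}$ the reversed process starts at $y$, which is precisely what is needed to tame the $1/\tau$ singularity. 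I will split the integral at $\tau=T_0/2$ and combine the two pieces by H\"older's inequality with some pair of conjugate exponents $p,p'$.

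On the non-singular piece $[T_0/2,T]$, where $1/\tau\leq 2/T_0$, Jensen's inequality reduces the task to bounding $\E_{\P^*_{y,x,T}}e^{a\,d(\tilde{\rm x}_\tau,y)}$ for $\tau\in[T_0/2,T]$ and a constant $a>0$. Writing this as $\int_{\M} e^{a\,d(z,y)}\,p(\tau,y,z)\,p(T-\tau,z,x)\,d{\rm Vol}(z)$, applying the Gaussian upper bound of Lemma \ref{Sa-Thm-6.1} to $p(T-\tau,z,x)$, and using $d(z,y)\leq d(z,x)+d(x,y)$, the Gaussian damping $e^{-d(z,x)^2/(c_2(T-\tau))}$ absorbs $e^{a\,d(z,x)}$ uniformly in $z$; the remainder integrates against the probability density $p(\tau,y,\cdot)$, yielding a bound $C(T,T_0,a,\|g\|_{C^2})\,e^{a\,d(x,y)}$, of the required form.

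For the singular piece on $[0,T_0/2]$, the key tool is the h-transform identity $\E_{\P^*_{y,x,T}}[F]=\E_{\P_y}\bigl[F\cdot p(T-\tau_0,\tilde{\rm x}_{\tau_0},x)\bigr]$, valid for any $F\in\mathcal{F}_{\tau_0}$ with $\tau_0<T$. Taking $\tau_0=T_0/2$ and invoking the crude upper bound \eqref{p-t-rough} on $p(T-T_0/2,\cdot,\cdot)$, which is a uniform constant depending on $T,T_0,m,\|g\|_{C^2}$, reduces the estimate to $\E_{\P_y}\exp\bigl(pQ\int_0^{T_0/2} d(\tilde{\rm x}_\tau,y)/\tau\,d\tau\bigr)$. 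Under the ordinary Brownian motion $\P_y$, the Eells--Elworthy--Malliavin SDE \eqref{OM-BM-SDE} together with the boundedness of the horizontal vector fields $H(\cdot,e_i)$ (uniform in $y$ by $G$-invariance of $\wt g$) and a standard Gronwall argument applied to its parallelism representation give, for any $\mathtt a\in(0,1/2)$, a bound $d(\tilde{\rm x}_\tau,y)\leq C\,\tau^{\mathtt a}\,\|{\mathsf B}_{[0,T_0/2]}\|_{\mathtt a}$. Consequently $\int_0^{T_0/2} d(\tilde{\rm x}_\tau,y)/\tau\,d\tau\leq C'\,\|{\mathsf B}_{[0,T_0/2]}\|_{\mathtt a}$, and Lemma \ref{lem-Sk} supplies the required exponential integrability, uniformly in $y$.

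The main obstacle will be that the h-transform Radon--Nikodym derivative $p(T-\tau,\tilde{\rm x}_\tau,x)/p(T,y,x)$ is not uniformly bounded in $x,y$, which precludes a direct change of measure from $\P_{y,x,T}$ to $\P_y$ on the entire interval; working with the weighted measure $\P^*_{y,x,T}$ eliminates the denominator and leaves only the numerator, which the heat-kernel upper bound of Lemma \ref{Sa-Thm-6.1} controls. The linear growth in $d(x,y)$ allowed on the right-hand side of the statement comes entirely from the non-singular piece on $[T_0/2,T]$.
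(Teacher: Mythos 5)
Your overall strategy — Li--Yau gradient bound, time reversal, splitting at a fixed time, h-transform plus Skorokhod exponential integrability on the singular piece, Gaussian heat-kernel bound on the non-singular piece — is essentially the paper's, which splits the integral at a small time $t_0$, time-reverses the tail, invokes Driver's smooth comparison functions together with Dambis--Dubins--Schwarz there, and handles the rest by a triangle inequality and a chopping argument. There are, however, two concrete problems in your execution.

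\emph{Non-singular piece.} After Jensen you need $\sup_{\tau\in[T_0/2,T]}\E_{\P^*_{y,x,T}}e^{a\,d(\tilde{\rm x}_\tau,y)}=\sup_\tau\int e^{a\,d(z,y)}p(\tau,y,z)\,p(T-\tau,z,x)\,d{\rm Vol}(z)$. Applying Lemma \ref{Sa-Thm-6.1} to $p(T-\tau,z,x)$ does absorb $e^{a\,d(z,x)}$, but the volume prefactor $\left({\rm Vol}\,B(z,\sqrt{T-\tau})\,{\rm Vol}\,B(x,\sqrt{T-\tau})\right)^{-1/2}$ grows like $(T-\tau)^{-m/2}$ as $\tau\to T$. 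Integrating against $p(\tau,y,\cdot)$ leaves that factor untouched, and $(T-\tau)^{-m/2}$ is not integrable in $\tau$ near $T$ once $m\geq 2$, so the claimed bound $C\,e^{a\,d(x,y)}$ does not follow. The clean repair is to apply the Gaussian bound to the other factor $p(\tau,y,z)$: for $\tau\geq T_0/2$ its volume prefactor is uniformly bounded (balls of radius $\geq\sqrt{T_0/2}$ on the cover of compact $M$ have volume bounded below), the damping gives $e^{a\,d(z,y)-d(z,y)^2/(c_2\tau)}\leq e^{a^2c_2T/4}$ with no triangle inequality, and $\int p(T-\tau,z,x)\,d{\rm Vol}(z)=1$. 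This yields a bound independent of $d(x,y)$, so your closing remark that the linear growth comes from this piece is true of the paper's chopping argument but not of your (repaired) route.

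\emph{Singular piece.} The h-transform step with \eqref{p-t-rough} is correct, but the asserted pathwise bound $d(\tilde{\rm x}_\tau,y)\leq C\,\tau^{\mathtt a}\,\|\mathsf B_{[0,T_0/2]}\|_{\mathtt a}$ does not follow from the SDE \eqref{OM-BM-SDE} by a Gronwall argument: the stochastic integral in the martingale part of $d(\tilde{\rm x}_\cdot,y)$ is not pathwise dominated by the H\"older norm of the driving Brownian motion $B$. Two further ingredients, both in the paper's proof, are needed: Driver's trick of dominating $d(\cdot,y)$ by finitely many smooth functions $f_i$ with $f_i(y)=0$ and uniformly bounded first and second derivatives (to bypass the non-smoothness of the distance at $y$ and the cut locus), and the Dambis--Dubins--Schwarz theorem (Lemma \ref{D-D-S}), which rewrites the martingale $\int_0^\tau\mho_s^{-1}\nabla f_i\,dB_s$ as a time-changed Brownian motion $\mathsf B'$ with time change rate bounded by $\|\nabla f_i\|_\infty^2$; it is the H\"older norm of this \emph{new} $\mathsf B'$ that Lemma \ref{lem-Sk} controls. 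Your underlying idea is right, but as sketched the derivation does not go through.
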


\begin{proof} By (\ref{equ-Hs2-ST}), there is some  $\bar c$ which depends on $\|g\|_{C^2}$  and $T$ such that 
\begin{equation*}
\int_{0}^{T}\|\nabla\ln p(T-\tau, {\rm x}_{\tau}, y)\|\ d\tau\leq \bar c\sqrt{T} + \bar c\int_{0}^{T}\frac{1}{T-\tau}d({\rm x}_{\tau}, y)\ d\tau.\end{equation*}
Hence it is true by H\"{o}lder's inequality that  for $t_0\in (0, \min\{1, T_0/2\})$ small,
\begin{align*}
\!\left(\E_{\P^*_{x, y, T}}e^{q\int_{0}^{T}\|\nabla\ln p(T-t, {\rm x}_{t}, y)\|\ dt}\right)^2\!\! \leq & e^{2\bar cq\sqrt{T}}\E_{\P^*_{x, y, T}} e^{2\bar cq\int_{0}^{t_0}\frac{1}{\tau}d({\rm y}_{\tau}, y)\ d\tau} \cdot \E_{\P^*_{x, y, T}} e^{2\bar cq\int_{0}^{T-t_0}\!\frac{1}{t_0}d({\rm x}_{\tau}, y)\ d\tau}\\
=:  & e^{2\bar cq\sqrt{T}}({\mathtt E})(t_0)({\mathtt F})(t_0),
\end{align*}
where $({\rm y}_t)_{t\in [0, T]}$ denotes the Brownian motion starting from $y\in \M$. 
Let $t_0<T_0/2$. Then for $({\mathtt E})(t_0)$, by (\ref{p-t-rough}), we have 
\begin{align*}
({\mathtt E})(t_0)&= \E_{\P^*_{y, x, T}} e^{2\bar cq\int_{0}^{t_0}\frac{1}{\tau}d({\rm y}_{\tau}, y)\ d\tau}\\
&= \E_{\P_{y}} \left(e^{2\bar cq\int_{0}^{t_0}\frac{1}{\tau}d({\rm y}_{\tau}, y)\ d\tau}\cdot p(T-t_0, {\rm y}_{t_0}, x)\right)\\
&\leq \underline{c}_0 2^mT_0^{-m}e^{c_0(1+T)} \E_{\P_{y}} e^{2\bar cq\int_{0}^{t_0}\frac{1}{\tau}d({\rm y}_{\tau}, y)\ d\tau}. 
\end{align*}
To show there is some small $t_0>0$ (depending on $m, q, T, T_0$ and  $\|g\|_{C^2}$)  such that $({\mathtt E})(t_0)$ is bounded,  we can  use a trick from  Driver (\cite[Lemma 3.8]{D94}) to  compare  it  with Euclidean Brownian motions. Find finite many  smooth functions $\{f_i\}_{i=1}^{l}$ on $\M$ with $f_i(y)=0$ and  $
d(z, y)\leq \sum_{i=1}^{l}|f_i(z)|$ for all $z\in \M$, where all $f_i$ have bounded first  and second order differentials on $\M$. So for an upper bound estimation of $({\mathtt E})(t_0)$,  it suffices to consider  
\[
\E_{\P_y} e^{2\overline{c}q\int_{0}^{t_0}\frac{1}{\tau}|f({\rm y}_\tau)|\ d\tau} =:({\mathtt E}')(t_0)\]
for  any $C^2$  function $f$ on $\M$ with bounded differentials up to second order. Let $({\rm y}, {\mho},  B)$ be the triple which defines the Brownian motion on $\M$ starting from $y$.  By It\^{o}'s formula, 
\[
|f({\rm y}_t)|\leq \left|\int_{0}^{t}{\mho}_\tau^{-1}\nabla f({\rm y}_\tau)\ dB_\tau+\int_{0}^{t}\Delta f({\rm y}_\tau)\ d\tau\right|\leq \left|\int_{0}^{t}{\mho}_\tau^{-1}\nabla f({\rm y}_\tau)\ dB_\tau\right|+C(f)t, 
\]
where $C(f)$ is some constant which bounds $|\Delta f|$.  Hence, 
\[
({\mathtt E}')(t_0)\leq e^{2\overline{c}q t_0 C(f)}\cdot \E_{\P_x}\big(e^{2\overline{c}q\int_{0}^{t_0}\frac{1}{\tau}\|{\mathsf M}_{\tau}'\|\ d\tau}\big), \ \mbox{where}\ {\mathsf M}_t':=\int_{0}^{t}{\mho}_\tau^{-1}\nabla f({\rm y}_\tau)\ dB_\tau. \]
The process ${\mathsf M}_t'$ is a continuous martingale with ${\mathsf M}_0'$ being the zero vector and  has quadratic variation $\langle \mathsf M', \mathsf M'\rangle_t\leq C't$ for some constant $C'$ which depends on the bound of  $|\nabla f|$. So,  to show $({\mathtt E}')(t_0)$ is finite for small $t_0$, it suffices to show $\E_{\P_x}\big(e^{2\overline{c}q\int_{0}^{t_0}\frac{1}{\tau}\|{\mathsf M}_{\tau}'\|\ d\tau}\big)$ is for ${\mathsf M}_t'$ being in the  one-dimensional process case.   By Lemma \ref{D-D-S},  there exist a continuous martingale $\wt{\mathsf M}'$ and a Brownian motion $\mathsf B'$ on an enlargement $(\wt{\Omega}', \wt{\mathcal{F}}', \wt{\P}')$ of $(\Omega, \mathcal{F}, \P)$ such that $\wt{\mathsf M}'$ has the same law as $\mathsf M'$ and 
\[
\wt{\mathsf M}_t'=\mathsf B'_{\langle \wt{\mathsf M}', \wt{\mathsf M}'\rangle_t}. 
\]
Let $\mathtt a\in (0, 1/2)$.  By Lemma \ref{lem-Sk}, there is some $\epsilon'>0$ which depends on $\|g\|_{C^2}$ such that $
\E_{\wt{\P}'}\big(e^{\epsilon'\big\|\mathsf B'_{[0, \frac{1}{2}C']}\big\|_{\mathtt a}}\big)$ is finite. 
By the definition of  the H\"{o}lder norm $\|\cdot\|_{\mathtt a}$, 
\[
\int_{0}^{t_0}\frac{1}{\tau}\|{\mathsf M}_{\tau}'\|\ d\tau\leq \big\|\mathsf B'_{[0, \frac{1}{2}C']}\big\|_{\mathtt a}\cdot( C')^{\mathtt a} \int_{0}^{t_0} \tau^{\mathtt a-1}\ d\tau\leq \frac{1}{\mathtt a}(C't_0)^{\mathtt a}\big\|\mathsf B'_{[0, \frac{1}{2}C']}\big\|_{\mathtt a}.
\]
Hence,  for $t_0=\min\{1, T_0/2,  (\mathtt a (2\overline{c}q)^{-1}\epsilon')^{\frac{1}{\mathtt a}}/C'\}$, we have 
\[
 \E_{\P_x}\big(e^{2\overline{c}q\int_{0}^{t_0}\frac{1}{\tau}\|{\mathsf M}_{\tau}'\|\ d\tau}\big)\leq \E_{\wt{\P}'}\big(e^{\epsilon'\big\|\mathsf B'_{[0, \frac{1}{2}C']}\big\|_{\mathtt a}}\big)<\infty. 
\]
For $({\mathtt F})(t_0)$,  by symmetry of the bridge distribution, 
\begin{align*}
({\mathtt F})(t_0)=&\ \E_{\P^*_{x, y, T}} e^{2\bar cq\int_{0}^{\frac{1}{2}T}\frac{1}{t_0}d({\rm x}_{\tau}, y)\ d\tau+2\bar cq\int_{\frac{1}{2}T}^{T-t_0}\frac{1}{t_0}d({\rm y}_{T-\tau}, y)\ d\tau}\\
\ \ \ \ \ \ \ \ \ \ \ \ \ \leq&\ e^{\bar c qt_0^{-1}Td(x, y)}\left(\E_{\P^*_{x, y, T}}\! e^{4\bar cq\int_{0}^{\frac{1}{2}T}\frac{1}{t_0}d({\rm x}_{\tau}, x)\ d\tau}\right)^{\frac{1}{2}}\left(\E_{\P^*_{y, x,  T}}\! e^{4\bar cq\int_{0}^{\frac{1}{2}T}\frac{1}{t_0}d({\rm y}_{\tau}, y)\ d\tau}\right)^{\frac{1}{2}}.
\end{align*}
By (\ref{S-p-t-upper-b}) and Markov property of $p$ (see (\ref{bridge-margin})), 
\begin{align*}
\E_{\P^*_{y, x, T}} e^{4\bar cq\int_{0}^{\frac{1}{2}T}\frac{1}{t_0}d({\rm y}_{\tau}, y)\ d\tau}&=\ \E_{\P_y}\left(e^{4\bar cqt_0^{-1}\int_{0}^{\frac{1}{2}T}d({\rm y}_\tau, y)\ d\tau}\cdot p({\frac{1}{2}T}, {\rm y}_{\frac{1}{2}T}, x)\right)\\
&\leq\ \underline{c}_0 2^mT_0^{-m}e^{c_0(1+T)} \E_{\P_y} e^{4\bar cqt_0^{-1}\int_{0}^{\frac{1}{2}T}d({\rm y}_\tau, y)\ d\tau}. 
\end{align*}
Let $t'_0<\min\{1, T_0/2\}$ be small.  Partition $[0, T/2]$ into $0=\tau^0<\tau^1<\cdots<\tau^{N}<T/2$, where $\tau^i:=it'_0$ and $N:=\max\{i, \tau^i<T/2\}$, and chop the integral $\int_{0}^{\frac{1}{2}T}d({\rm y}_\tau, y)\ d\tau$ into pieces accordingly.  Using the triangle inequality, we obtain 
\begin{align*}
\int_{0}^{\frac{1}{2}T}\!\! d({\rm y}_\tau, y)\ d\tau\leq &\sum_{i=1}^{N}\left(\int_{\tau^{i-1}}^{\tau^i}\! d({\rm y}_{t}, {\rm y}_{\tau^{i-1}})\ dt+t_0'd({\rm y}_{\tau^{i-1}}, y)\!\right)\! + \int_{\tau^{N}}^{\frac{1}{2}T}\! d({\rm y}_{t}, {\rm y}_{\tau^{N}})\ dt+t_0'd({\rm y}_{\tau^{N}}, y)\\
\leq &\sum_{i=1}^{N}\left(\int_{\tau^{i-1}}^{\tau^i}\!\! d({\rm y}_{t}, {\rm y}_{\tau^{i-1}})\ dt+(N+1-i)t_0'd({\rm y}_{\tau^i}, {\rm y}_{\tau^{i-1}})\!\right)\! + \int_{\tau^{N}}^{\frac{1}{2}T}\! d({\rm y}_{t}, {\rm y}_{\tau^{N}})\ dt. 
\end{align*}
Using  the Markov property of the Brownian motion and H\"{o}lder's inequality,  we see that 
\begin{align*}
\left(\E_{\P_y} e^{4\bar cqt_0^{-1}\int_{0}^{\frac{1}{2}T}d({\rm y}_\tau, y)\ d\tau}\right)^{2}&\!\leq \left(\sup_{y'\in \M}\E_{\P_{y'}}e^{\bar q \int_{0}^{t'_0}d({\rm y}'_t, y')\ dt}\right)^{N+1}\!\!\!\cdot \prod_{i=1}^{N} \sup_{y'\in \M}\E_{\P_{y'}}e^{\bar qt_0'(N+1-i)d({\rm y}'_{t'_0}, y')}\\
&=: \big(({\mathtt F})_0\big)^{N+1}\prod_{i=1}^{N}({\mathtt F})_i,
\end{align*} 
where $\bar q:=8\bar cqt_0^{-1}$, ${\rm y}'_t$ is the Brownian motion on $\M$ which starts from $y'$ and $\P_{y'}$ is its distribution probability.  For $({\mathtt F})_0$, we estimate as in the first part.  Note that $(\M, \wt{g})$ is the universal cover of the compact space $(M, g)$, although the choice of the $f_i$ may differ from point to point, we can ensure their differentials up to second order are uniformly bounded. So we can choose $t'_0$ (for instance, $t'_0=\min\{1, T_0,  \epsilon'/\bar q\}$) such that  $({\mathtt F})_0$ is bounded.  Fix such a $t'_0$ and estimate $({\mathtt F})_i$ using Lemma \ref{Sa-Thm-6.1}. We obtain some constant $c(t'_0)$ such that 
\[
({\mathtt F})_i\leq c(t'_0)e^{c_2\bar q^2(t'_0)^3(N+1-i)^2},
\]
where $c_2$ is as in (\ref{S-p-t-upper-b}). Hence, there is some constant $c(\bar q)$ depending on $m, q,T, T_0$ and  $\|g\|_{C^2}$ such that 
\begin{equation*}
\E_{\P_y}\big(e^{4\bar cq't_0^{-1}\int_{0}^{\frac{1}{2}T}d({\rm y}_\tau, y)\ d\tau}\big)\leq e^{c(\bar q)}. 
\end{equation*}
So, 
\begin{equation*}
({\mathtt F})(t_0)\leq \underline{c}_0 2^mT_0^{-m}e^{c(\bar q)+c_0(1+T)+\bar c qt_0^{-1}Td(x, y)}. 
\end{equation*}
Putting the estimations on $({\mathtt E})(t_0), ({\mathtt F})(t_0)$ together, we obtain (\ref{exp-grad-lnp}).\end{proof}

 Consider the Wiener space 
$C_0([0, T], \Bbb R^m)$ with the standard filtration $(\mathcal{F}_t)_{t\in [0, T]}$ and  let $(B_t)_{t\in [0, T]}$ be an $(\mathcal{F}_t)$-Brownian motion starting from $0$ with respect to a probability measure   ${{\rm Q}}$ on $\mathcal{F}_{T}$.  Let $f:[0, T]\mapsto \Bbb R^m$ be square integrable with respect to Lebesgue measure. Define a random process $({\rm M}_t)_{t\in [0, T]}$ on $[0, T]$ satisfying ${\rm
M}_{0}=1$ and It\^{o}'s SDE
\[
d{\rm M}_{t}= \frac{1}{2}{\rm M}_{t}\langle f_t, 
dB_t\rangle.
\]
Then 
\[{\rm M}_{t}=e^{\{\frac{1}{2}\int_{0}^{t}\langle f_\tau,   dB_\tau({w})\rangle-\frac{1}{4}\int_{0}^{t}|f_\tau|^2\ d\tau\}}.\]
Since $\E_{\rm Q}(e^{\frac{1}{4}\int_{0}^{t}|f_\tau|^2\ d\tau})$, $t\leq T$, are all finite, we have by Novikov
(\cite{N}),    that $({\rm M}_{t})_{t\in [0, T]}$ is a continuous
$(\mathcal{F}_t)_{t\in [0, T]}$-martingale, i.e.,
\[
\E_{\rm Q}\left({\rm M}_t\right)=1,\ \ \forall  t\in [0, T]. 
\]
For $t\in [0, T]$, let $\wt{\rm Q}_{t}$ be the probability
on $C_0([0, T], \Bbb R^m)$, which is absolutely continuous with respect to $\rm{Q}$
with
\[
\frac{d{\wt{{\rm Q}}_{t}}}{d{\rm Q}}({w})={{\rm M}}_{t}(w).
\]
Since  ${\rm M}_{t}$ is a martingale, the projection
of $\wt{{\rm Q}}_{t}$ on $\mathcal{F}_{\tau}$,  $\tau<t$,  is given by
the same formula.  The classical Cameron-Martin-Girsanov Theorem (\cite{CM1, CM2, Gi}) says that  the process $(B_{t}- \int_0^{t}f_\tau\ d\tau)_{t\in [0, T]}$ is a Brownian motion with respect to $\wt{\rm Q}_{T}$.  In other words,   we have that the probability $\rm{Q}$ on Wiener space is quasi-invariant under the transformation ${\bf T}:  C_0([0, T], \Bbb R^m) \to C_0([0, T], \Bbb R^m):\ {\rm w}\ \mapsto {\rm w}+ \int_0^{\cdot}f_\tau\ d\tau$
with 
\begin{equation}\label{CM-RN-derivative}
\frac{d{{\rm Q}}\circ {\bf T}^{-1}}{d{{{\rm Q}}}}({ w})=e^{\left\{\frac{1}{2}\int_{0}^{T}\langle f_\tau({ w}),  \ dB_\tau({ w})\rangle-\frac{1}{4}\int_{0}^{T}|f_\tau({ w})|^2\ d\tau\right\}}. 
\end{equation}

As in the compact case (see \cite[Theorem 5.4.4]{Hs}), we can deduce the SDE of the Brownian bridge on $\M$ from  the Cameron-Martin-Girsanov Theorem. Let $({\rm x}_t, {\rm u}_t)_{t\in [0, T]}$ be the stochastic pair which defines the Brownian motion starting from $x$ up to time $T$.  By the Markov property of $p$, 
\begin{equation}\label{bridge-margin}
\left.\frac{d\P_{x, y,  T}}{d\P_x}\right|_{\mathcal{F}_t}=\frac{p(T-t, {\rm x}_t, y)}{p(T, x, y)}=\frac{\wt{p}(T-t, {\rm u}_t, y)}{p(T, x, y)}=: \varXi_t, \ \forall t\in [0, T),  
\end{equation}
where $\wt{p}(t, {\rm u}, y):=p(t, \pi({\rm u}), y)$.  Using (\ref{OM-BM-SDE}) and the heat equation, one can calculate using It\^{o}'s formula to obtain 
\[
d\ln\varXi_t=\langle {\rm u}_t^{-1}\nabla^{H}\ln \wt{p}(T-t, {\rm u}_t, y), dB_t \rangle-\|\nabla^{H}\ln \wt{p}(T-t, {\rm u}_t, y)\|^2\ dt. 
\]
Hence 
\begin{equation*}
\left.\frac{d\P_{x, y,  T}}{d\P_x}\right|_{\mathcal{F}_t}=e^{\left\{\int_{0}^{t}\langle {\rm u}_t^{-1}\nabla^{H}\ln \wt{p}(T-\tau,  {\rm u}_\tau, y), dB_\tau \rangle-\int_{0}^{t}\|\nabla^{H}\ln \wt{p}(T-\tau,  {\rm u}_\tau, y)\|^2 d\tau\right\}}. 
\end{equation*}
Comparing this with (\ref{CM-RN-derivative}),  it implies 
\[b_t:=B_t-2\int_{0}^{t}{\rm u}_\tau^{-1}\nabla^{H}\ln \wt{p}(T-\tau, {\rm u}_\tau, y)\ d\tau\]
is a Brownian motion with respect to $\P_{x, y,  T}$ and hence  Proposition \ref{Hsu-thm-5.4.4} holds for $t\in [0, T)$. One can  conclude that  $\{ {\mathtt U}_t\}_{t\in [0, T]}$  is a   semi-martingale on $[0, T]$  since  \begin{equation*}
\E_{\P_{y, x,  T}}\left(\int_{0}^{T}\|\nabla \ln p(T-\tau,  {\rm x}_\tau, y)\| d\tau\right)<\infty
\end{equation*}
is also true on the non-compact universal cover space $(\M, \wt{g})$ by (\ref{exp-grad-lnp}). In summary,

\begin{lem}\label{Hsu-thm-5.4.4}There is a Brownian motion $(b_t)_{t\in [0, T)}$ such that the horizontal lift $\mathtt U$ of the Brownian bridge $\mathtt x$  is a semi-martingale on $[0, T]$ which satisfies the SDE 
\begin{equation}\label{B-bridge}
d{\mathtt U}_t=H({\mathtt U}_t, e_i)\circ \left(db_t^i+2H({\mathtt U}_t, e_i)\ln \wt{p}_{M}(T-t, {\mathtt U}_t, x)dt\right).
\end{equation}
In other words, the anti-development of the Brownian bridge $\mathtt x$ (i.e., the pre-image via  parallelism, see Section \ref{flow-F-S-y} for more precise definition) is 
\[
W_t=b_t+2\int_{0}^{t}{\mathtt U}_{\tau}^{-1}\nabla \ln p(T-\tau, {\mathtt x}_{\tau}, y)\ d\tau. 
\]
\end{lem}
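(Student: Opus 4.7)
The plan is to follow the classical Cameron-Martin-Girsanov derivation of the bridge SDE as in Hsu's Theorem 5.4.4 for the compact case, with extra care for integrability on the non-compact cover $\M$. The starting point is the Markov property, which gives on $\mathcal F_t$, $t<T$,
\[
\varXi_t \;=\; \frac{d\P_{x,y,T}}{d\P_x}\Big|_{\mathcal F_t}\;=\;\frac{p(T-t,{\rm x}_t,y)}{p(T,x,y)}\;=\;\frac{\wt p(T-t,{\rm u}_t,y)}{p(T,x,y)}.
\]
Since $\wt p(T-t,{\rm u},y)$ satisfies the backward heat equation in its first two arguments and $\Delta_{\mathcal{O}^{\wt g}(\M)}=\sum_i H(\cdot,e_i)^2$ lifts $\Delta$ by (\ref{equi-laplacian}), applying It\^o's formula along (\ref{OM-BM-SDE}) to $\ln\varXi_t$ gives (after the dt terms telescope)
\[
d\ln\varXi_t\;=\;\big\langle {\rm u}_t^{-1}\nabla^{H}\ln\wt p(T-t,{\rm u}_t,y),\,dB_t\big\rangle\;-\;\|\nabla^{H}\ln\wt p(T-t,{\rm u}_t,y)\|^2\,dt.
\]

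Next I match this exponential with the Cameron-Martin-Girsanov density (\ref{CM-RN-derivative}) applied to the deterministic-looking drift $f_\tau=2{\rm u}_\tau^{-1}\nabla^{H}\ln\wt p(T-\tau,{\rm u}_\tau,y)$. The identification is immediate and shows that
\[
b_t\;:=\;B_t\;-\;2\int_0^t {\rm u}_\tau^{-1}\nabla^{H}\ln\wt p(T-\tau,{\rm u}_\tau,y)\,d\tau
\]
is a Brownian motion under $\P_{x,y,T}$ on every sub-interval $[0,t]$ with $t<T$. Substituting this back into (\ref{OM-BM-SDE}) produces the bridge SDE (\ref{B-bridge}) in Stratonovich form on $[0,T)$, and reading off the anti-development (the $\Bbb R^m$-valued pre-image of the solution via the parallelism $(\theta,\varpi)$) yields $W_t=b_t+2\int_0^t{\mathtt U}_\tau^{-1}\nabla\ln p(T-\tau,{\mathtt x}_\tau,y)\,d\tau$.

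The main obstacle, and the only part where non-compactness matters, is promoting the semi-martingale property from $[0,T)$ to the closed interval $[0,T]$: the drift $\nabla\ln p(T-\tau,\cdot,y)$ has a singularity at $\tau=T$ of order $1/\sqrt{T-\tau}+d(\cdot,y)/(T-\tau)$ by the estimate (\ref{equ-Hs2-ST}), and we need
\[
\E_{\P_{x,y,T}}\!\left(\int_0^T\|\nabla\ln p(T-\tau,{\rm x}_\tau,y)\|\,d\tau\right)<\infty.
\]
In the compact case this is immediate, but here I will invoke Proposition \ref{cond-nabla-ln-p} (the exponential estimate (\ref{exp-grad-lnp})) with $q=1$: after dividing by the constant $p(T,x,y)$, this bound gives finiteness of the $\P_{x,y,T}$-expectation of the integrated gradient, so the finite-variation part of ${\mathtt U}$ is absolutely continuous up to $T$. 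Combined with the fact that $b_t$ is Brownian on $[0,T)$ and hence extends by continuity and L\'evy's theorem to a Brownian motion on $[0,T)$ whose quadratic variation grows linearly, this establishes that ${\mathtt U}$ is a semi-martingale on all of $[0,T]$, completing the proof.
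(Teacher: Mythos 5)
Your proposal follows the same route as the paper's own argument: the Markov-property expression for $\varXi_t$, the It\^o computation of $d\ln\varXi_t$ using the backward heat equation and the lifted Laplacian, the comparison with the Cameron-Martin-Girsanov density (\ref{CM-RN-derivative}) to identify $b_t$, and finally the use of (\ref{exp-grad-lnp}) with $q=1$ (after dividing out $p(T,x,y)$) to get $\E_{\P_{x,y,T}}\int_0^T\|\nabla\ln p(T-\tau,{\rm x}_\tau,y)\|\,d\tau<\infty$ and thereby extend the semi-martingale property from $[0,T)$ to $[0,T]$. The reasoning is correct and essentially identical to the paper's proof.
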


Now, we can use Proposition \ref{cond-nabla-ln-p} and Lemma \ref{Hsu-thm-5.4.4} to derive a bridge version  of (\ref{wt-DF-j}).

\begin{prop}\label{est-norm-D-j-F-t-cond} Let $g\in \mathcal{M}^k(M)$, $k\geq 3$.  For $x\in \M$, let $({\rm{u}_t})_{t\in[0, T]}$ be the solution to (\ref{OM-BM-SDE}) in $\mathcal{O}^{\wt{g}}(\M)$ with ${\rm{u}}_0\in \mathcal{O}^{\wt{g}}_x(\M)$.  For every $T_0>0$, $l$, $1\leq l\leq k-2$, $q\geq 1$ and $T>T_0$,  there exist $\underline{c}'_{l}(q)>0$, which depends on $l, m, q$ and the norm bounds of   $\{{\bf \nabla}^{l'}H\}_{l'\leq l}, \{{\bf \nabla}^{l'}R\}_{l'\leq l}$,  and $c_l'(q)>0$,  which depends on $l, m, q, T, T_0$ and the norm bounds of   $\{{\bf \nabla}^{l'} R\}_{l'\leq 1}$, such that
\begin{equation}\label{wt-DF-j-cond}
\E_{\P^*_{x, y, T}}\sup\limits_{0\leq \underline{t}<\overline{t}\leq T}\left\|\big[ D^{(l)} F_{\underline{t}, \overline{t}}( {\rm{u}}_{\underline{t}}, {w})\big]^{\pm 1} \right\|^q<\underline{c}'_{l}(q)e^{c'_l(q)(1+d(x, y))}, \ \forall x, y\in \M. 
\end{equation}
\end{prop}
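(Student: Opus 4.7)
The plan is to derive SDEs for $[D^{(l)}F_{\underline t,\overline t}]$ along the Brownian bridge via Lemma \ref{Hsu-thm-5.4.4}, then to repeat the It\^{o}/Gronwall analysis of Proposition \ref{est-norm-D-j-F-t} and absorb the extra drift using Proposition \ref{cond-nabla-ln-p}. First, by the cocycle relation $F_{0,\overline t}=F_{\underline t,\overline t}\circ F_{0,\underline t}$ we may reduce to $\underline t=0$. Under $\P_{x,y,T}$, Lemma \ref{Hsu-thm-5.4.4} furnishes a Brownian motion $b$ on $[0,T)$ such that $d{\rm u}_t = H({\rm u}_t,e_i)\circ(db_t^i+2H({\rm u}_t,e_i)\ln\wt p(T-t,{\rm u}_t,y)\,dt)$. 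Substituting the Cameron--Martin shift
\begin{equation*}
dB_t^j\;\longleftrightarrow\;db_t^j + 2\bigl\langle {\rm u}_t^{-1}\nabla\ln p(T-t,{\rm x}_t,y),\,e_j\bigr\rangle\,dt
\end{equation*}
into the Stratonovich SDEs of Lemma \ref{El-Ku-Cor}, Lemma \ref{mathcal-V-t} and their higher-order analogues, we find that along the bridge path the tangent maps satisfy the same SDEs as in Section \ref{BMM-flow} with $dB^j$ replaced by $db^j$, augmented by a drift of the form
\begin{equation*}
2\sum_j {\bf M}_j^{(l)}({\rm u}_t)\,[\wt{D^{(l)}F_{0,t}}]\,\bigl\langle {\rm u}_t^{-1}\nabla\ln p(T-t,{\rm x}_t,y),\,e_j\bigr\rangle\,dt,
\end{equation*}
linear in $[\wt{D^{(l)}F_{0,t}}]$ with coefficient pointwise bounded by $C\|\nabla\ln p(T-t,{\rm x}_t,y)\|$, for a $C$ depending only on the curvature bounds already entering Proposition \ref{est-norm-D-j-F-t}.

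Next, I would run the It\^{o} computation of $\ln|\bar{\bf z}_t|^{2q}$ (for $\bar{\bf z}_t=[\wt{D^{(1)}F_{0,t}}](z_0,{\bf z}_0)$) from the proof of Proposition \ref{est-norm-D-j-F-t} on the bridge SDE. The purely martingale part is handled as before using Dambis--Dubins--Schwarz (Lemma \ref{D-D-S}) applied to the $\P_{x,y,T}$-Brownian motion $b$ together with the Skorohod-type bound Lemma \ref{lem-Sk}; the only new ingredient is a multiplicative Gronwall factor
\begin{equation*}
\exp\!\Bigl(C'(q)\int_0^{T}\|\nabla\ln p(T-\tau,{\rm x}_\tau,y)\|\,d\tau\Bigr).
\end{equation*}
Approximating $\sup_{[0,T]}$ by $\sup_{[0,T-\varepsilon]}$ (valid on the half-open interval where $b$ is a Brownian motion) and invoking Fatou together with pathwise continuity of the flow, we take $\E_{\P^*_{x,y,T}}$; H\"{o}lder's inequality with exponents $2,2$ separates the martingale-exponential factor (bounded by $\sqrt{p(T,x,y)}\,\underline c''(q)\,e^{c''(q)T}$ by the same Dambis--Dubins--Schwarz argument as in Proposition \ref{est-norm-D-j-F-t}, where $p(T,x,y)\le\underline c_0 T_0^{-m}e^{c_0(1+T)}$ by (\ref{p-t-rough})) from the bridge-drift factor, bounded by $e^{c(1+d(x,y))}$ via Proposition \ref{cond-nabla-ln-p}. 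This yields (\ref{wt-DF-j-cond}) for $l=1$.

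For $l\ge 2$ I would proceed by induction on $l$, using Duhamel's formula (Corollary \ref{Duha-mathcal-V-t}) together with its higher-order analogues implicit in (\ref{ind-der-F-e-t-ran})--(\ref{all-cov-der-F-e-t-ran}): each Duhamel term is a Stratonovich integral of $[\wt{D^{(1)}F_{\tau,t}}]$ tested against polynomials in $[D^{(j)}F_\tau]$ for $j<l$, and on the bridge the very same representation is valid with $dB$ replaced by $db+$(Cameron--Martin shift). Burkholder's inequality combined with the inductive hypothesis and another application of Proposition \ref{cond-nabla-ln-p} then produces the claimed bound, the constants $\underline c_l'(q)$ accumulating the norm bounds of $\{{\bf \nabla}^{l'}H\}_{l'\le l},\{{\bf \nabla}^{l'}R\}_{l'\le l}$, while $c_l'(q)$ depends only on $m,q,T,T_0$ and $\|{\bf\nabla}^{l'}R\|_{l'\le 1}$ because the extra Gronwall factor always involves only the first derivative of $\ln p$. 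The main technical obstacle is the singularity of $\nabla\ln p(T-t,\cdot,y)$ as $t\uparrow T$: by Lemma \ref{Hs2-ST} it blows up like $(T-t)^{-1}d(\cdot,y)+(T-t)^{-1/2}$, so exponential integrability of $\int_0^T\|\nabla\ln p\|d\tau$ under the bridge is a subtle cancellation between this blow-up and the pull of the bridge toward $y$; this cancellation is entirely the content of Proposition \ref{cond-nabla-ln-p}, which therefore performs the heavy analytic lifting.
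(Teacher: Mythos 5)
Your proposal is correct and follows essentially the same route as the paper: reduce to $\underline{t}=0$ by the cocycle property, use Lemma \ref{Hsu-thm-5.4.4} to rewrite the flow SDEs along the bridge with the extra drift $2{\rm u}_\tau^{-1}\nabla\ln p(T-\tau,{\rm x}_\tau,y)\,d\tau$, absorb the resulting multiplicative factor $e^{C\int_0^T\|\nabla\ln p\|d\tau}$ by H\"older together with Proposition \ref{est-norm-D-j-F-t} and Proposition \ref{cond-nabla-ln-p}, and then induct on $l$ through the Duhamel representation of the higher tangent maps. The only differences are bookkeeping details (re-running Dambis--Dubins--Schwarz on the bridge and the $[0,T-\varepsilon]$/Fatou remark), which do not change the argument.
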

\begin{proof}Using the cocycle property of the tangent map, it suffices to show   (\ref{wt-DF-j-cond})  for $\underline{t}=0$. We show this by induction and in each step, we only verify it for  the forward tangent map. 

When $l=1$, it suffices to consider $\big[\wt{D^{(1)}F_{\overline{t}}}(\cdot, {w})\big]$, whose SDE is as in (\ref{DF-multi-sto-int-0}) with 
\begin{align}\label{B-b-relation-x}
dB_{\tau}=db_{\tau}+2{\rm u}_{\tau}^{-1}\nabla \ln p(T-\tau, {\rm x}_{\tau}, y)\ d\tau,
\end{align}
where $(b_\tau)_{\tau\in [0, T)}$ is a Brownian motion  for  $\P_{x, y, T}$ by Lemma \ref{Hsu-thm-5.4.4}.  Hence the conditional norm of $\big\|\big[\wt{D^{(1)} F_{\overline{t}}}( {\rm{u}}_{0}, {w})\big]\big\|^q$ differs in distribution with the nonconditional case by a multiple $e^{\wt{c}(q)\int_{0}^{T}\|\nabla\ln p(T-t, {\rm x}_{t}, y)\|\ dt}$ for some constant $\wt{c}(q)$ which depends on the norm bound of $R, \nabla R$ and $m, q$.  Hence by 
H\"{o}lder's inequality and Proposition \ref{est-norm-D-j-F-t}, we have 
\[
\left(\E_{\P^*_{x, y, T}}\!\!\sup\limits_{0\leq \overline{t}\leq T}\left\|\big[\wt{D^{(1)} F_{\overline{t}}}( {\rm{u}}_{0}, {w})\big]\right\|^q\right)^2\!\leq \underline{c}_{l}(2q)e^{c_l(2q)T}\E_{\P^*_{y, x, T}}e^{2\wt{c}(q)\int_{0}^{T}\|\nabla\ln p(T-t, {\rm x}_{t}, y)\|\ dt}. 
\]
This shows  (\ref{wt-DF-j-cond})  for the $l=1$ case by Proposition \ref{cond-nabla-ln-p}.

Using  the decomposition of $[D^{(2)}F_t]$ and the first step conclusion, the $l=2$ case of (\ref{wt-DF-j-cond}) can be reduced to the estimation of 
\[({\bf V}):=\E_{\P^*_{x, y, T}}\sup_{0\leq t\leq T}\left\|(\theta, \varpi)_{{\rm u}_t}(\mathcal{V}_t)\right\|^{2q}.\]
Let ${\rm{A}}, {\rm{B}}, {\rm{C}}$ and  $\wt{\rm{A}}, \wt{\rm{B}}, \wt{\rm{C}}$ be given in the proof of Proposition \ref{est-norm-D-j-F-t}. Then 
\begin{align*}
3^{1-2q}({\bf V}) \leq\ &  \E_{\P^*_{x, y, T}}\sup_{0\leq t\leq T}\left\|\wt {\rm{A}}_t({w})\right\|^{2q}+\E_{\P^*_{x, y, T}}\sup_{0\leq t\leq T}\left\|\wt {\rm{B}}_t({w})\right\|^{2q}+\E_{\P^*_{x, y, T}}\sup_{0\leq t\leq T}\left\|\wt {\rm{C}}_t({w})\right\|^{2q}\\
=:\ & ({\bf{A}})+({\bf{B}})+({\bf{C}}). 
\end{align*}
For $({\bf{B}})$,  following its   non-conditional estimation in the proof of Proposition  \ref{est-norm-D-j-F-t}, we obtain 
\begin{align*}
({\bf{B}})\leq & (CT)^{2q}\left(\E_{\P^*_{x, y, T}} \sup\limits_{0\leq \underline{t}<\overline{t}\leq T}\left\|\big[ \wt{D^{(1)} F_{\underline{t}, \overline{t}}}( {\rm{u}}_{\underline{t}}, {w})\big] \right\|^{4q} \E_{\P^*_{x, y, T}} \sup\limits_{0\leq \underline{t}<\overline{t}\leq T}\left\|\big[ \wt{D^{(1)} F_{\underline{t}, \overline{t}}}( {\rm{u}}_{\underline{t}}, {w})\big] \right\|^{8q} \right)^{\frac{1}{2}}\\
\leq & (CT)^{2q}\sqrt{\underline{c}'_{1}(4q)\underline{c}'_{1}(8q)}e^{\frac{1}{2}(c'_1(4q)+c'_1(8q))(1+d(x, y))},
\end{align*}
where $C$ depends on the norm bounds of   $\{{\bf \nabla}^{l}H\}_{l\leq 3}, \{{\bf \nabla}^{l}R\}_{l\leq 2}$ and $\underline{c}'_{1}, c'_1$ are from (\ref{wt-DF-j-cond}) for $l=1$, and the constants can be chosen such that the same bound is  valid for $({\bf{C}})$.  For $({\bf{A}})$, we use  (\ref{B-b-relation-x}). Let 
\begin{align*}
{\ov{\rm{A}}}_{{t}}^1(w)&:= \int_{0}^{t}\big[\wt{D^{(1)}F_{\tau, t}}({\rm u}_{\tau}, {w})\big] {\rm{A}}_\tau({w})\ db_{\tau}({w}),\\
{\ov{\rm{A}}}_{{t}}^2(w)&:= \int_{0}^{t}\big[\wt{D^{(1)}F_{\tau, t}}({\rm u}_{\tau}, {w})\big] {\rm{A}}_\tau({w})\  2(\lf {\rm{u}_{\tau}}\rc^{\l})^{-1}\nabla^{\l}\ln p(T-\tau, \lf {\rm{x}}_{\tau}\rc^{\l}, y)\ d\tau. 
\end{align*}
Then, 
\[
2^{1-2q}({\bf{A}})\leq {\E}_{\P^*_{x, y, T}}\sup\limits_{0\leq {t}\leq T}\left\|{\ov{\rm{A}}}_{{t}}^1(w)\right\|^{2q}+{\E}_{\P^*_{x, y, T}}\sup\limits_{0\leq {t}\leq T}\left\|{\ov{\rm{A}}}_{{t}}^2(w)\right\|^{2q}=: ({\bf{A}})_1+({\bf{A}})_2.
\]
Using the Brownian character of $b_\tau$ with respect to $\P_{x, y, T}$,  we can estimate $({\bf{A}})_1$ as in the non-conditional case  using  Doob's inequality of sub-martingales and Burkholder's inequality. This gives 
\begin{align*}
({\bf{A}})_1& \leq {\mathtt C}(2q){\mathtt C}_1(2q)T^{q}\left({\E}_{\P^*_{x, y, T}}\sup\limits_{0\leq \tau<{t} \leq T}\left\|\big[\wt{D^{(1)} F_{\tau, {t}}}( {\rm u}_{\tau}, {w})\big]\right\|^{4q}\right)^{\frac{1}{2}}\cdot \left({\E}_{\P^*_{x, y, T}}\sup\limits_{0\leq \tau \leq T}\left\|{\rm{A}}_{\tau}\right\|^{4q}\right)^{\frac{1}{2}},\end{align*}
where ${\mathtt C}, {\mathtt C}_1$ are as in the proof of Proposition \ref{est-norm-D-j-F-t}. 
Using (\ref{DF2-thevarA}), we compute that 
\begin{align}\label{P-xyt-A}
{\E}_{\P^*_{x, y, T}}\sup\limits_{0\leq \tau \leq T}\left\|{\rm{A}}_{\tau}\right\|^{jq} \leq &(C'_{{\rm A}})^{jq}  {\E}_{\P^*_{x, y, T}}\sup\limits_{0\leq \tau \leq T}\left\|\big[\wt{D^{(1)} F_{0, \tau}}( {\rm u}_0, {w})\big]\right\|^{2jq},\ \forall j\in \Bbb N, 
\end{align}
where $C'_{{\rm A}}$ depends on the norm bounds of   $\{{\bf \nabla}^{l}H\}_{l\leq 2}, \{{\bf \nabla}^{l}R\}_{l\leq 1}$.  Hence, by (\ref{wt-DF-j-cond}) for $l=1$, 
\begin{align*}
({\bf{A}})_1\leq &  C'(q)(C'_{{\rm A}}\sqrt{T})^{2q}\sqrt{c'_1(4q) c'_1(8q)}e^{\frac{1}{2}(c'_1(4q)+c'_1(8q))(1+d_{\wt{g}}(x, y))}. 
\end{align*}
For $({\bf{A}})_2$, we have 
\begin{align*}
\big(({\bf{A}})_2\big)^3 \leq&{\E}_{\P^*_{x, y, T}}\sup\limits_{0\leq \tau<{t} \leq T}\left\|\big[\wt{D^{(1)} F_{\tau, {t}}}( {\rm u}_{\tau}, {w})\big]\right\|^{6q} \cdot {\E}_{\P^*_{x, y, T}}\sup\limits_{0\leq \tau \leq T}\left\|{\rm{A}}_{\tau}\right\|^{6q}\\
& \cdot {\E}_{\P^*_{x, y, T}}\left|\int_{0}^{T}\left\|\nabla \ln p(T-\tau, {\rm{x}}_{\tau}, y)\right\|\ d\tau\right|^{6q}. 
\end{align*}
Note that 
\[
{\E}_{\P^*_{x, y, T}}\left|\int_{0}^{T}\left\|\nabla \ln p(T-\tau, {\rm{x}}_{\tau}, y)\right\|\ d\tau\right|^{6q}\leq {\E}_{\P^*_{x, y, T}}e^{6q\int_{0}^{T}\|\nabla\ln p(T-t, {\rm x}_{t}, y)\|\ dt}. 
\]
So by Proposition \ref{cond-nabla-ln-p}, (\ref{P-xyt-A}) and (\ref{wt-DF-j-cond}) for $l=1$, we compute that 
\begin{align*}
({\bf{A}})_2\leq (C'_{{\rm A}})^{2q}\sqrt[3]{\underline{c}'_1(6q)\underline{c}'_1(12q)\underline{c}(6q)}e^{\frac{1}{3}({c}'_1(6q)+{c}'_1(12q)+{c}(6q))(1+d(x, y))}. 
\end{align*}
Hence  $({\bf V})$ has the same type of  bound  as in (\ref{wt-DF-j-cond}) for  $l=2$ as claimed.

Assume we have shown (\ref{wt-DF-j-cond})  for $l\leq l_0-1\leq k-3$. Using the induction assumption and  (\ref{ind-der-F-e-t-ran}), we can reduce the estimation of (\ref{wt-DF-j}) at $l=l_0$ to the conditional estimation of (\ref{Dj-Ft-u}), which can be  done exactly as in the $l=2$ step.  
\end{proof}

\subsection{Regularity of the stochastic analogue of the geodesic flow}\label{sec-4-5}
 Finally, we  employ the  SDE theory in the previous subsections of this section   to  discuss   the regularity of the Brownian companion process ${\rm u}$ with respect to metric changes.

Let $\lambda\in
(-1, 1)\mapsto g^{\lambda}\in \mathcal{M}^k(M)$ be a $C^k$ curve.  Each lifted metric $\wt{g}^{\l}$  in $\M$ determines a horizontal space $H^{\lambda}T\mathcal{F}(\M)$ of the frame bundle space.  For any $u\in \mathcal{F}(\M)$, let $H^{\l}(u, e_i)$, $i=1, \cdots, m$, be the vector in $H^{\lambda}_uT\mathcal{F}(\M)$ which projects to  $ue_i$. Since $g^{\lambda}\in \mathcal{M}^k(M)$, the map $u\mapsto H^{\l}(u, e_i), u\in \mathcal{F}(\M), $ is $C^{k-1}$ bounded. Hence the SDE
\begin{equation}\label{FM-BM-SDE}
d\lfloor {\rm u}_t\rceil^{\l}=\sum_{i=1}^{m}H^{\l}(\lfloor {\rm u}_t\rceil^{\l}, e_i)\circ dB_t^i({w})
\end{equation} 
is solvable in $\mathcal{F}(\M)$ for any initial  $\lfloor {\rm u}_0\rceil^{\l}\in \mathcal{F}_x(\M)$, $x\in \M$. In particular, if $\lfloor {\rm u}_0\rceil^{\l}\in \mathcal{O}^{\wt{g}^{\l}}_{x}(\M)$, $\lfloor {\rm u}_t\rceil^{\l}$ remains in $\mathcal{O}^{\wt{g}^{\l}}(\M)$ and its projection to $\M$ gives the stochastic process of the $\wt{g}^{\l}$-Brownian motion starting from $x$. Let $\lf F_t \rc^{\l}:\ \lfloor {\rm u}_0\rceil^{\l}\mapsto \lfloor {\rm u}_t\rceil^{\l} $ denote the flow map associated to (\ref{FM-BM-SDE}). Let $[D^{(l)}\lf F_t\rc^{\l}(\cdot, {w})]$, $1\leq l\leq k-2$,  be the $l$-th tangent map of $\lf F_t \rc^{\l}$ and denote by $[\wt{D^{(l)}\lf F_t\rc^{\l}}(\cdot, {w})]$ its pull back map in $T^l\mathcal{F}(\Bbb R^m)$ via the map $(\theta, \varpi)$. They have the following regularity in $\l$ by applying Proposition  \ref{El-Ku}.

\begin{lem}\label{u-lambda-differential-1}
Let $\lambda\in
(-1, 1)\mapsto g^{\lambda}\in \mathcal{M}^k(M)$ $(k\geq 3)$ be a $C^k$ curve. Assume  $H^{\l}(\cdot, e_i)$ has bounded norms (independent of $\l$) for the  covariant derivatives up to the $(k-1)$-th order with respect to the reference metric $\widetilde{g}^0$. 
\begin{itemize}
\item[i)]  Let $\l\mapsto \lfloor {\rm{u}}_0\rceil^{\l}$ be a $C^{k-2}$ curve in $\mathcal{F}(\M)$ and let $\{\lfloor {\rm{u}}_t\rceil^{\l}\}_{t\in \Bbb R_+}$ be the solution to (\ref{FM-BM-SDE}) with initial value $\lfloor {\rm{u}}_0\rceil^{\l}$.  Then there is a version of the solution to (\ref{FM-BM-SDE}) such that almost surely, $\lfloor {\rm{u}}_t\rceil^{\l}({w})$ is $C^{k-2}$  in $\lambda$ for any $t\in \Bbb R_+$.
\item[ii)]For each $l$, $1\leq l\leq k-2$, the tangent map $[D^{(l)}\lf F_t\rc^{\l}(\cdot, {w})]$ is $C^{k-2-l}$ in $\l$. In particular, for any ${\rm v}\in T^{l}\mathcal{F}(\M)$, the map $
\l\mapsto \left[D^{(l)}\lf F_t\rc^{\l}(\cdot, {w})\right]{\rm v}$  is $C^{k-2-l}$. 
\end{itemize}
\end{lem}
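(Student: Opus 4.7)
The plan is to embed the whole $\lambda$-family of SDEs (\ref{FM-BM-SDE}) into a single time-homogeneous SDE on the extended manifold $\mathcal{N}:=\mathcal{F}(\M)\times(-1,1)$, and then read off both statements from Proposition \ref{El-Ku} applied on $\mathcal{N}$. Concretely, I would introduce the $\mathbb R^m$-valued SDS with vector fields
\[
\widehat H_i(u,\lambda):=\bigl(H^{\lambda}(u,e_i),\,0\bigr)\in T_u\mathcal{F}(\M)\oplus T_\lambda(-1,1),\qquad i=1,\dots,m,
\]
driven by the same Brownian motion $B_t=(B_t^1,\dots,B_t^m)$. Any solution $(\lfloor{\rm u}_t\rceil,\lambda_t)$ of
\[
d(\lfloor{\rm u}_t\rceil,\lambda_t)=\sum_{i=1}^m\widehat H_i(\lfloor{\rm u}_t\rceil,\lambda_t)\circ dB_t^i
\]
keeps $\lambda_t\equiv\lambda_0$ and, on the $\mathcal{F}(\M)$-component, reduces exactly to (\ref{FM-BM-SDE}) with parameter $\lambda=\lambda_0$.

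The first thing to check is that $\widehat H_i$ is a $C^{k-1}$ bounded SDS on $\mathcal{N}$. Since $\lambda\mapsto g^{\lambda}$ is $C^k$ in the $C^k$-topology, the Christoffel symbols of $\wt g^{\lambda}$ depend jointly $C^{k-1}$ on $(x,\lambda)$, hence so does $H^{\lambda}(u,e_i)$; combined with the standing hypothesis that the $\wt g^0$-covariant derivatives of $H^{\lambda}(\cdot,e_i)$ of orders $\leq k-1$ are bounded uniformly in $\lambda$, one gets uniform bounds for the covariant derivatives of $\widehat H_i$ with respect to a product connection on $\mathcal{N}$. Applying Proposition \ref{El-Ku} produces a version of the flow $\widehat F_t(u_0,\lambda,{w})=(\lfloor{\rm u}_t\rceil^{\lambda}({w};u_0),\lambda)$ which is, almost surely and for every $t$, a $C^{k-2}$ diffeomorphism of an open subset of $\mathcal{N}$, continuous in $t$. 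The non-compactness of $\M$ is not an issue because the projection of $\lfloor{\rm u}_t\rceil^{\lambda}$ to $\M$ has infinite explosion time (as already used in Corollary \ref{El-Ku-Cor-1}) and $\lambda_t$ is constant.

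Statement i) then follows by composing $\widehat F_t(\cdot,\cdot,{w})$ with the $C^{k-2}$ curve $\lambda\mapsto(\lfloor{\rm u}_0\rceil^{\lambda},\lambda)$. For ii) I would observe that for tangent vectors of the form $({\rm v}_j,0)\in T\mathcal{N}$, the $l$-fold tangent map of $\widehat F_t$ restricted to such vectors is exactly $[D^{(l)}\lfloor F_t\rceil^{\lambda}(u_0,{w})]({\rm v}_1,\dots,{\rm v}_l)$, still depending on $(u_0,\lambda)$; by Proposition \ref{SDE-flow-regularity} applied to the $C^{k-1}$ SDS $\widehat H$, this $l$-th tangent map is jointly $C^{k-2-l}$ in $(u_0,\lambda)$, and in particular $C^{k-2-l}$ in $\lambda$, which gives the claim about $[D^{(l)}\lfloor F_t\rceil^{\lambda}]{\rm v}$.

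The main obstacle I anticipate is bookkeeping the joint regularity: verifying that $(u,\lambda)\mapsto H^{\lambda}(u,e_i)$ really is $C^{k-1}$ with bounded $(u,\lambda)$-covariant derivatives of all orders $\leq k-1$, so that Proposition \ref{El-Ku} (and the regularity of its tangent maps in Proposition \ref{SDE-flow-regularity}) can be applied on the non-compact $\mathcal{N}$. This is where one must combine the $C^k$-dependence of $g^{\lambda}$ on $\lambda$ with the standing uniform bound hypothesis; once that is in place, the rest is a routine application of the stochastic flow theorem to an extended system, together with the moment estimates of Propositions \ref{est-norm-D-j-F-t} and \ref{est-norm-D-j-F-t-cond} to control the growth of the tangent maps in $t$.
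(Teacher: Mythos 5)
Your proposal is correct and follows essentially the same route as the paper: the paper also passes to the extended SDS on $\mathcal{F}(\M)\times(-1,1)$ with vector fields $\wt H_i=(H^{\l}(\cdot,e_i),0)$ and applies Proposition \ref{El-Ku} to get joint $C^{k-2}$ regularity in $(\mathrm{u}_0,\l)$, then handles ii) by viewing the tangent-map SDEs as a $C^{k-1-l}$ SDS on $T\mathcal{F}(\M)\times(-1,1)$, which matches your use of the tangent-flow regularity for the extended system.
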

\begin{proof}Consider the stochastic process  $\{\wt{{\rm{u}}}_t\}_{t\in \Bbb R_+}$ on $\mathcal{F}(\M)\times (-1, 1)$ with 
\begin{equation}\label{u-t-lambda}d\wt{{\rm{u}}}_t=\sum_{i=1}^{m}{\wt{H}_i}(\wt{{\rm{u}}}_t)\circ dB_t^i({w}), \ \mbox{where}\  {\wt{H}}_i=(H^{\l}(\cdot, e_i), 0).
\end{equation}
It has the solution   $\wt{{\rm{u}}}_t=(\lfloor {\rm{u}}_t\rceil^{\l}, \l)$ for $\wt{{\rm{u}}}_0=(\lfloor {\rm{u}}_0\rceil^{\l}, \l)$, where $\lfloor {\rm{u}}_t\rceil^{\l}$ is the solution of (\ref{FM-BM-SDE}) with initial value $\lfloor {\rm{u}}_0\rceil^{\l}$.  Since  (\ref{u-t-lambda}) is a $C^{k-1}$ SDS on $\mathcal{F}(\M)\times (-1, 1)$, we have by   Proposition  \ref{El-Ku} ii) that for almost all ${w}$, the mapping $\wt{{\rm{u}}}_0({w})\to \wt{{\rm{u}}}_t(w)$ is $C^{k-2}$. Consequently, for any $C^{k-2}$ curve $\l\mapsto \lfloor {\rm{u}}_0\rceil^{\l}$, $\lfloor {\rm{u}}_t\rceil^{\l}({w})$ is $C^{k-2}$  in $\lambda$ for almost all ${w}$. 

For each $l$, $1\leq l\leq k-2$,  the SDE of $[D^{(l)}\lf F_t\rc^{\l}(\cdot, {w})]$ was given in  Section \ref{BMM} and it forms a  $C^{k-1-l}$ SDS  on $T\mathcal{F}(\M)$. As in Lemma \ref{u-lambda-differential-1}, we can treat the one parameter family  SDEs of 
$[D^{(l)}\lf F_t\rc^{\l}(\cdot, {w})]$ as a $C^{k-1-l}$ SDS on $T\mathcal{F}(\M)\times (-1, 1)$ when $\l\mapsto g^{\l}$ is $C^k$ in $\mathcal{M}^k(M)$. So Proposition  \ref{El-Ku} applies and shows ii).
\end{proof}

For  $0\leq \underline{t}<\overline{t}<\infty$,  let  $\lf F_{\underline{t}, \overline{t}} \rc^{\l}:\ \lfloor {\rm u}_{\underline{t}}\rceil^{\l}\mapsto \lfloor {\rm u}_{\overline{t}}\rceil^{\l} $ denote the flow map associated to (\ref{FM-BM-SDE}) and let $[D^{(l)}\lf F_{\underline{t}, \overline{t}}\rc^{\l}(\cdot, {w})]$, $l\leq k-2$,  be its  $l$-th tangent map. As a corollary of  the cocycle property of $\lf F_{\underline{t}, \overline{t}} \rc^{\l}$ and Lemma \ref{u-lambda-differential-1}, $\big[D^{(l)}\lf F_{\underline{t}, \overline{t}}\rc^{\l}(\cdot, {w})\big]$ is $C^{k-2-l}$ differentiable in $\l$ and we denote its $j$-th differential by $\big([D^{(l)}\lf F_{\underline{t}, \overline{t}}\rc^{\l}(\cdot, {w})]\big)^{(j)}_{\l}$ for $j\leq k-2-l$. Let  $\lfloor {\rm{u}}_t\rceil^{\l}$ be as in Lemma \ref{u-lambda-differential-1} and let $(\lfloor {\rm u}_t\rceil^{\l})^{(j)}_\l$, $j\leq k-2$,  be  its $j$-th differential  in $\l$.  We identify
\[
(\lfloor {\rm u}_{\overline{t}}\rceil^{\l})^{(j)}_\l=\big([D^{(0)}\lf F_{\underline{t}, \overline{t}}\rc^{\l}(\lf {\rm{u}}_{\underline{t}}\rc^{\l}, {w})]\big)^{(j)}_{\l}. 
\]
In the following, we show the  $L^q$-norm  bounds  in Propositions  \ref{est-norm-D-j-F-t}  and \ref{est-norm-D-j-F-t-cond} are also valid for  $\big([D^{(l)}\lf F_{\underline{t}, \overline{t}}\rc^{\l}(\cdot, {w})]\big)^{(j)}_{\l}$ by a detailed  analysis of their SDEs.

Endow $\mathcal{F}(\M)\times (-1, 1)$ with the product metric $d_{\wt{g}^0}\times d_{(-1, 1)}$, where $d_{\wt{g}^{0}}$ is the induced metric of $d_{\wt{g}^0}$ in $\mathcal{F}(\M)$ and $d_{(-1, 1)}$ is canonical. Let $\nabla$  be the $\wt{g}^{0}$ Levi-Civita connection and $\theta, \varpi$ be the associated canonical form and curvature form.  Let $(H^{\l})^{(j)}_{\l}(u, \cdot)$,  $j\leq k-2$,  be the $j$-th differential in $\l$ of the maps $H^{\l}(u, \cdot)$. The SDEs of $\big([D^{(l)}\lf F_{\underline{t}, \overline{t}}\rc^{\l}(\cdot, {w})]\big)^{(j)}_{\l}$ can be formulated by  using Proposition \ref{SDE-flow-regularity}. We state them as follows.

\begin{lem}\label{u-t-lam-tangent-map-1}Let  $\lfloor {\rm{u}}_t\rceil^{\l}$ be as in Lemma \ref{u-lambda-differential-1}.
\begin{itemize}
\item[i)]The Stratonovich SDE of $(\lfloor {\rm u}_t\rceil^{\l})^{(1)}_{\l}$ in $T\mathcal{F}(\M)$ is
\[d(\lfloor {\rm u}_t\rceil^{\l})^{(1)}_{\l}=\nabla((\lfloor {\rm u}_t\rceil^{\l})^{(1)}_{\l})H^{\l}(\lfloor {\rm u}_t\rceil^{\l}, \circ dB_t)+ (H^{\l})^{(1)}_{\l}(\lfloor {\rm u}_t\rceil^{\l}, \circ dB_t).\] 
\item[ii)] The Stratonovich SDE of $(\theta, \varpi)_{\lfloor{\rm u}_t\rceil^{\l}}\big((\lfloor {\rm u}_t\rceil^{\l})^{(1)}_{\l}\big)$ in $T\mathcal{F}(\Bbb R^m)$ is
\begin{align*}
\begin{array}{ll}
&d\big(\theta(\lfloor {\rm u}_t\rceil^{\l})^{(1)}_{\l}\big)\ =d\theta\left(H^{\l}(\lfloor {\rm u}_t\rceil^{\l}, \circ dB_t), (\lfloor {\rm u}_t\rceil^{\l})^{(1)}_{\l}\right),\\
&d\big(\varpi(\lfloor {\rm u}_t\rceil^{\l})^{(1)}_{\l}\big)=d\varpi\left(H^{\l}(\lfloor {\rm u}_t\rceil^{\l}, \circ dB_t), (\lfloor {\rm u}_t\rceil^{\l})^{(1)}_{\l}\right)+\nabla((\lfloor {\rm u}_t\rceil^{\l})^{(1)}_{\l})\left(\varpi\big(H^{\l}(\lf {\rm u}_t\rc^{\l}, \circ dB_t)\big)\right)\\
&\ \ \ \ \ \ \ \ \ \ \ \ \ \ \ \ \ \ \ \ \ +\varpi\left((H^{\l})^{(1)}_{\l}(\lf {\rm u}_t\rc^{\l}, \circ dB_t)\right).
\end{array}
\end{align*}
\item[iii)] The It\^{o} SDE of $(\theta, \varpi)_{\lfloor{\rm u}_t\rceil^{\l}}\big((\lfloor {\rm u}_t\rceil^{\l})^{(1)}_{\l}\big)$ in $T\mathcal{F}(\Bbb R^m)$ is
\begin{align*}
\begin{array}{ll}
d\big(\theta(\lfloor {\rm u}_t\rceil^{\l})^{(1)}_{\l}\big)=&d\theta\left(H^{\l}(\lfloor {\rm u}_t\rceil^{\l},  dB_t), (\lfloor {\rm u}_t\rceil^{\l})^{(1)}_{\l}\right)\\
&+\big(\nabla(H^{\l}(\lfloor {\rm u}_t\rceil^{\l},  e_i))d\theta\big)\left(H^{\l}(\lfloor {\rm u}_t\rceil^{\l}, e_i), (\lfloor {\rm u}_t\rceil^{\l})^{(1)}_{\l}\right)\ dt\\
&+d\theta\left(\nabla(H^{\l}(\lfloor {\rm u}_t\rceil^{\l},  e_i))H^{\l}(\lfloor {\rm u}_t\rceil^{\l},  e_i), (\lfloor {\rm u}_t\rceil^{\l})^{(1)}_{\l}\right) \ dt\\
&+d\theta\left(H^{\l}(\lfloor {\rm u}_t\rceil^{\l},  e_i), \nabla((\lfloor {\rm u}_t\rceil^{\l})^{(1)}_{\l})H^{\l}(\lfloor {\rm u}_t\rceil^{\l}, e_i)+ (H^{\l})^{(1)}_{\l}(\lfloor {\rm u}_t\rceil^{\l}, e_i)\right) \ dt,
\end{array}
\end{align*}
\vspace*{-0.3cm}
\begin{align*}
\begin{array}{ll}
d\big(\varpi(\lfloor {\rm u}_t\rceil^{\l})^{(1)}_{\l}\big)=&d\varpi\left(H^{\l}(\lfloor {\rm u}_t\rceil^{\l},  dB_t), (\lfloor {\rm u}_t\rceil^{\l})^{(1)}_{\l}\right)\\
&+\big(\nabla(H^{\l}(\lfloor {\rm u}_t\rceil^{\l},  e_i))d\varpi\big)\left(H^{\l}(\lfloor {\rm u}_t\rceil^{\l}, e_i), (\lfloor {\rm u}_t\rceil^{\l})^{(1)}_{\l}\right)\ dt\\
&+d\varpi\left(\nabla(H^{\l}(\lfloor {\rm u}_t\rceil^{\l},  e_i))H^{\l}(\lfloor {\rm u}_t\rceil^{\l},  e_i), (\lfloor {\rm u}_t\rceil^{\l})^{(1)}_{\l}\right) \ dt\\
&+d\varpi\left(H^{\l}(\lfloor {\rm u}_t\rceil^{\l},  e_i), \nabla((\lfloor {\rm u}_t\rceil^{\l})^{(1)}_{\l})H^{\l}(\lfloor {\rm u}_t\rceil^{\l}, e_i)+ (H^{\l})^{(1)}_{\l}(\lfloor {\rm u}_t\rceil^{\l}, e_i)\right) \ dt\\
&+\nabla((\lfloor {\rm u}_t\rceil^{\l})^{(1)}_{\l})\left(\varpi\big(H^{\l}(\lf {\rm u}_t\rc^{\l}, dB_t)\big)\right)+\varpi\left((H^{\l})^{(1)}_{\l}(\lf {\rm u}_t\rc^{\l}, dB_t)\right)\\
&+\nabla(H^{\l}(\lfloor {\rm u}_t\rceil^{\l},  e_i))\left(\nabla((\lfloor {\rm u}_t\rceil^{\l})^{(1)}_{\l})\left(\varpi\big(H^{\l}(\lf {\rm u}_t\rc^{\l}, e_i)\big)\right)\right)\ dt\\
&+ \nabla(H^{\l}(\lfloor {\rm u}_t\rceil^{\l},  e_i))\left(\varpi\left((H^{\l})^{(1)}_{\l}(\lf {\rm u}_t\rc^{\l}, e_i)\right)\right)\ dt. 
\end{array}
\end{align*}
\end{itemize}
\end{lem}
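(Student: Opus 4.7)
My plan is to view the one–parameter family of SDEs (\ref{FM-BM-SDE}) as a single $C^{k-1}$ stochastic dynamical system on the enlarged manifold $\mathcal{F}(\M)\times(-1,1)$ driven by the vector fields $\widetilde H_i=(H^{\l}(\cdot,e_i),0)$, exactly as in the proof of Lemma \ref{u-lambda-differential-1}. The solution with initial point $(\lf{\rm u}_0\rc^{\l},\l)$ is $(\lf{\rm u}_t\rc^{\l},\l)$, and its first tangent flow evaluated on the vector $(0,\partial_\l)$ is $((\lf{\rm u}_t\rc^{\l})_\l^{(1)},\partial_\l)$. Proposition \ref{SDE-flow-regularity} (applied to the product manifold with the product connection $\nabla\oplus d$ induced by the reference metric $\wt g^0$ on $\mathcal{F}(\M)$ and the canonical connection on $(-1,1)$) then guarantees the existence of a Stratonovich SDE for this tangent process, and identifies its coefficients as the covariant derivatives of the $\widetilde H_i$. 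Projecting on $T\mathcal{F}(\M)$ and noting that $\nabla_{(W,c\partial_\l)}\widetilde H_i=\nabla_W H^{\l}(\cdot,e_i)+c\,(H^{\l})^{(1)}_\l(\cdot,e_i)$, with $c=1$ in our case, yields part~i). The same SDE is obtained directly by formally differentiating (\ref{FM-BM-SDE}) in $\l$ under the Stratonovich integral: the first term comes from differentiating $u\mapsto H^{\l}(u,e_i)$ along the $\l$-variation using the Levi-Civita connection of $\wt g^0$, and the second from differentiating the family $\{H^{\l}\}$ at a fixed frame.

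For part~ii), apply the parallelism $(\theta,\varpi)$ to the SDE in~i). The Stratonovich chain rule for a smooth $1$-form $\omega$ on $\mathcal{F}(\M)$ evaluated along a $T\mathcal{F}(\M)$-valued semimartingale $V_t$ over the base path $u_t$ reads
\[
d\bigl(\omega_{u_t}(V_t)\bigr) \;=\; d\omega\bigl(\circ du_t,\,V_t\bigr)\;+\;\omega(\circ DV_t),
\]
obtained from the torsion-free identity $d\omega(X,Y)=(\nabla_X\omega)(Y)-(\nabla_Y\omega)(X)$ combined with the standard Stratonovich calculation for functions on $T\mathcal{F}(\M)$. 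Substituting $du_t=H^{\l}(\lf{\rm u}_t\rc^{\l},e_i)\circ dB_t^i$ and the expression from part~i) for $\circ DV_t$, I would check the two forms separately. For $\omega=\theta$: the projection term $\theta(\nabla_V H^{\l})$ and $\theta((H^{\l})^{(1)}_\l)$ both vanish because $H^{\l}$ is $\wt g^{\l}$-horizontal (its projection $u e_i$ is $\l$-independent, so $(H^{\l})^{(1)}_\l$ is vertical, and the $\wt g^0$-covariant derivative of a horizontal field along a vertical direction is again vertical), so only the structure-equation contribution $d\theta(H^{\l}(\circ dB_t),V_t)$ survives. For $\omega=\varpi$, no such vanishing occurs and all three listed terms appear: $d\varpi(H^{\l}(\circ dB_t),V_t)$ from the $d\omega$ correction, $\nabla_{V_t}(\varpi\circ H^{\l})$ from the covariant-derivative of $H^{\l}$ piece in~i), and $\varpi((H^{\l})^{(1)}_\l(\circ dB_t))$ from the residual vertical piece. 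The structure equations (\ref{structure-1}), (\ref{structure-2}) then permit re-expressing $d\theta,d\varpi$ in a form consistent with the statement.

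For part~iii), I would convert from Stratonovich to It\^o via
\[
X_t\circ dB_t^i \;=\; X_t\,dB_t^i+\tfrac{1}{2}\,d\langle X,B^i\rangle_t,
\]
noting that $\langle B^i,B^j\rangle_t=\delta_{ij}t$. The correction $d\langle X,B^i\rangle_t$ for a state-dependent coefficient $X_t$ is $H^{\l}(\lf{\rm u}_t\rc^{\l},e_i)X_t\,dt$, evaluated via the $\wt g^0$-connection on the tensor being differentiated. Applying this to each of the three Stratonovich terms in part~ii), one obtains extra $dt$-summands of the two shapes $(\nabla_{H^\l(u,e_i)}d\omega)(H^\l(u,e_i),V_t)$ and $d\omega(\nabla_{H^\l(u,e_i)}H^\l(u,e_i),V_t)$ together with the induced corrections on $\nabla_{V_t}(\varpi\circ H^{\l})$ and $\varpi((H^{\l})^{(1)}_\l)$; collecting these gives precisely the It\^o expressions displayed in~iii).

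The main obstacle will not be conceptual but combinatorial: I expect the Stratonovich-to-It\^o conversion in part~iii) to produce numerous bracket-correction terms that must be regrouped using the structure equations and the definitions of $\nabla H^{\l}$ and $\nabla(H^{\l})^{(1)}_\l$ to match the compact form in the statement. In particular, the reference connection $\nabla$ is that of $\wt g^0$ while the horizontal distribution in play is the $\wt g^{\l}$-horizontal one, so $\varpi(H^{\l})$ is generally nonzero and gives genuine contributions under both $\circ dB_t$-differentiation and quadratic-variation corrections; keeping this mismatch transparent, and separating the contributions into the $\theta$-part and $\varpi$-part exactly as in the statement, will require the most care.
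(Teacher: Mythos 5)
Your overall strategy is the right one and coincides with what the paper intends: Lemma~\ref{u-t-lam-tangent-map-1} is obtained, as the paper indicates just before its statement, by applying Proposition~\ref{SDE-flow-regularity} to the enlarged $C^{k-1}$ SDS on $\mathcal{F}(\M)\times(-1,1)$ introduced in Lemma~\ref{u-lambda-differential-1}, and then translating the tangent SDE into the $(\theta,\varpi)$ chart and to It\^o form. Your part~i), and the identification $\nabla_{((\lfloor{\rm u}_t\rceil^{\l})^{(1)}_\l,\,\partial_\l)}\widetilde H_i = \nabla((\lfloor{\rm u}_t\rceil^{\l})^{(1)}_\l)H^\l(\cdot,e_i)+(H^\l)^{(1)}_\l(\cdot,e_i)$, are correct.

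However, there is a genuine flaw in your derivation of part~ii). The ``Stratonovich chain rule''
\[
d\bigl(\omega_{u_t}(V_t)\bigr)=d\omega(\circ du_t,V_t)+\omega(\circ DV_t)
\]
is missing a term. The correct identity, which is the stochastic version of the commutation formula used in the proof of Lemma~\ref{HS-Th2.1}, is
\[
d\bigl(\omega(V_t)\bigr)=d\omega(\circ du_t,V_t)+\nabla_{V_t}\bigl(\omega(\circ du_t)\bigr)+\omega\bigl((H^{\l})^{(1)}_{\l}(\cdot,\circ dB_t)\bigr),
\]
where $\nabla_{V_t}\bigl(\omega(\circ du_t)\bigr)=(\nabla_{V_t}\omega)(\circ du_t)+\omega\bigl(\nabla(V_t)H^\l(\cdot,\circ dB_t)\bigr)$. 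Your formula omits $(\nabla_{V_t}\omega)(\circ du_t)$. You compensate for this omission by claiming that $\theta(\nabla_V H^{\l})$ vanishes ``because the $\wt g^0$-covariant derivative of a horizontal field along a vertical direction is vertical''; but $V_t=(\lfloor{\rm u}_t\rceil^{\l})^{(1)}_{\l}$ is not a vertical vector, and the claim is not true in general. What actually happens is a cancellation: from $\theta(H^\l(\cdot,e_i))\equiv e_i$ (a constant, in both $u$ and $\l$) one gets $0=\nabla_{V_t}\bigl(\theta(H^\l(\cdot,e_i))\bigr)=(\nabla_{V_t}\theta)(H^\l)+\theta(\nabla_{V_t}H^\l)$, so the nonzero term $\theta(\nabla_{V_t}H^\l)$ is exactly cancelled by the missing $(\nabla_{V_t}\theta)(\circ du_t)$. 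For $\varpi$ the same missing term shows up concretely: your formula produces only $\varpi(\nabla(V_t)H^\l)$, whereas the correct middle term is $\nabla(V_t)\bigl(\varpi(H^\l(\cdot,\circ dB_t))\bigr)=(\nabla_{V_t}\varpi)(H^\l)+\varpi(\nabla(V_t)H^\l)$, which is what the lemma asserts --- you in fact write this correct expression, but it is not what your stated chain rule yields, and $(\nabla_{V_t}\varpi)(H^\l)$ is not zero here because $H^\l$ is $\wt g^\l$-horizontal while $\varpi$ is the $\wt g^0$-connection form. Replace your chain-rule identity with the full Leibniz/commutation formula above (or model your computation directly on Lemmas~\ref{HS-Th2.1} and~\ref{mathcal-V-t}) and both the $\theta$- and $\varpi$-equations in part~ii) follow cleanly; part~iii) is then a routine Stratonovich-to-It\^o conversion as you describe.
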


Note that  ${\rm Ker}(\theta)=VT\mathcal{F}(\M),{\rm Ker}(\varpi)=HT\mathcal{F}(\M)$ and for any ${\rm v}^1, {\rm v}^2\in HT\mathcal{F}(\M), {\rm v}^3\in VT\mathcal{F}(\M)$, the bracket $[\cdot, \cdot]$ satisfies the property  (cf. \cite[Lemma 5.5.1]{Hs})
\begin{align*}
[{\rm v}^1, {\rm v}^2]\in VT\mathcal{F}(\M), \ [{\rm v}^1, {\rm v}^3]\in HT\mathcal{F}(\M). 
\end{align*}
Using  these facts,  we can simplify the SDEs of  $(\theta, \varpi)\big((\lfloor {\rm u}_t\rceil^{\l})^{(1)}_{\l}\big)$ at $\l=0$.  

\begin{cor}\label{u-t-lam-0-tangent-map-1}Let  $\lfloor {\rm{u}}_t\rceil^{\l}$ be as in Lemma \ref{u-lambda-differential-1}.
\begin{itemize}
\item[i)]  The Stratonovich SDE of $(\theta, \varpi)_{\lfloor{\rm u}_t\rceil^0}\big((\lfloor {\rm u}_t\rceil^{\l})^{(1)}_0\big)$ on $T\mathcal{F}(\Bbb R^m)$ is
\begin{align*}
d\big(\theta(\lfloor {\rm u}_t\rceil^{\l})^{(1)}_0\big)& =\varpi(\lfloor {\rm u}_t\rceil^{\l})^{(1)}_0)\circ dB_t, \\
d\big(\varpi(\lfloor {\rm u}_t\rceil^{\l})^{(1)}_0\big)& ={(\lfloor{\rm u}_t\rceil^0)}^{-1}R\big(\lfloor{\rm u}_t\rceil^0\circ dB_t, \theta(\lfloor {\rm u}_t\rceil^{\l})^{(1)}_0)\big)\lfloor{\rm u}_t\rceil^0+\varpi\big((H^{\l})^{(1)}_0(\lfloor {\rm u}_t\rceil^{0}, \circ dB_t)\big). 
\end{align*}
\item[ii)]  The It\^{o} SDE of $(\theta, \varpi)_{\lfloor{\rm u}_t\rceil^0}\big((\lfloor {\rm u}_t\rceil^{\l})^{(1)}_0\big)$ on $T\mathcal{F}(\Bbb R^m)$ is
\begin{align*}
d\big(\theta(\lfloor {\rm u}_t\rceil^{\l})^{(1)}_0\big) & = \varpi(\lfloor {\rm u}_t\rceil^{\l})^{(1)}_0)\ dB_t+{\rm Ric}({\rm u}_{t}\theta(\lfloor {\rm u}_t\rceil^{\l})^{(1)}_0)\ dt+\varpi\big((H^{\l})^{(1)}_0(\lfloor {\rm u}_t\rceil^{0}, e_i)\big)e_i\ dt,  \\
d\big(\varpi(\lfloor {\rm u}_t\rceil^{\l})^{(1)}_0\big)& ={(\lfloor{\rm u}_t\rceil^0)}^{-1}R\big(\lfloor{\rm u}_t\rceil^0 dB_t, \lfloor {\rm u}_{t}\rceil^{0}\theta(\lfloor {\rm u}_t\rceil^{\l})^{(1)}_0)\big)\lfloor{\rm u}_t\rceil^0+\varpi\big((H^{\l})^{(1)}_0(\lfloor {\rm u}_t\rceil^{0},  dB_t)\big)\\
&\ \ \ \ +(\lfloor{\rm u}_{t}\rceil^0)^{-1}R\left(\lfloor {\rm u}_{t}\rceil^{0}e_i, \lfloor{\rm u}_{t}\rceil^0\varpi(\lfloor {\rm u}_t\rceil^{\l})^{(1)}_0)e_i\right)  \lfloor{\rm u}_{t}\rceil^0\ dt\\
&\ \ \ \ +(\lfloor{\rm u}_{t}\rceil^0)^{-1}\big(\nabla (\lfloor{\rm u}_{t}\rceil^0e_i) R\big)\left(\lfloor{\rm u}_{t}\rceil^0 e_i, \lfloor{\rm u}_{t}\rceil^0\theta(\lfloor {\rm u}_t\rceil^{\l})^{(1)}_0)\right)\lfloor{\rm u}_{t}\rceil^0\ dt.
\end{align*}
\end{itemize}
\end{cor}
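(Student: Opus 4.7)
The plan is to derive Corollary \ref{u-t-lam-0-tangent-map-1} directly from Lemma \ref{u-t-lam-tangent-map-1} by setting $\l=0$ and exploiting the fact that, at $\l=0$, the frame process $\lf {\rm u}_t\rc^0$ takes values in $\mathcal{O}^{\wt g^0}(\M)$ and the vector fields $H^0(\cdot,e_i)$ are horizontal with respect to the Levi-Civita connection whose form $\varpi$ we use throughout. The two key elementary identities are $\varpi(H^0(u,e_i))=0$ (horizontality) and $\theta(H^0(u,e_i))=e_i$ (definition of the dual form), and the two structural identities (\ref{structure-1}) and (\ref{structure-2}) for the torsion-free connection. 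All simplifications in i) will follow from these together with the bracket properties between $HT\mathcal{F}(\M)$ and $VT\mathcal{F}(\M)$ recorded just before the corollary.

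More precisely, for the Stratonovich form in i), I apply $(\theta,\varpi)$ to the SDE in Lemma \ref{u-t-lam-tangent-map-1} i) at $\l=0$. For the $\theta$-component I use (\ref{structure-1}):
\[
d\theta\bigl(H^0(\lf {\rm u}_t\rc^0,\circ dB_t),(\lf {\rm u}_t\rc^{\l})^{(1)}_0\bigr)=-\bigl\{\varpi(H^0)\cdot\theta(\cdot)-\varpi(\cdot)\cdot\theta(H^0)\bigr\},
\]
where the first term vanishes by horizontality and $\theta(H^0(\cdot,\circ dB_t))=\circ dB_t$ produces exactly $\varpi((\lf {\rm u}_t\rc^{\l})^{(1)}_0)\circ dB_t$; the $(H^\l)^{(1)}_\l$ correction in the $\theta$-component drops out because $(H^\l)^{(1)}_0$ takes values in the vertical bundle (since $\pi\circ \lf F_t\rc^\l=\lf{\rm x}_t\rc^\l$ is $C^{k-2}$ in $\l$ only through the footpoint, and the vertical projection of $(H^\l)^{(1)}_0$ agrees with the infinitesimal variation of the frame parallelism of $\wt g^\l$ in $\l$), so $\theta$ kills it. For the $\varpi$-component I use (\ref{structure-2}): the commutator $[\varpi(H^0),\varpi(\cdot)]$ vanishes, and ${\bf \Omega}(H^0,Y)$ rewrites as $(\lf {\rm u}_t\rc^0)^{-1}R(\lf {\rm u}_t\rc^0\!\circ\! dB_t,\lf {\rm u}_t\rc^0\theta(Y))\lf {\rm u}_t\rc^0$ by the very definition of ${\bf \Omega}$ in terms of $R$; the middle term $\nabla((\lf{\rm u}_t\rc^{\l})^{(1)}_0)(\varpi(H^0))$ in Lemma \ref{u-t-lam-tangent-map-1} ii) vanishes because $\varpi\circ H^0\equiv 0$ as a map on $\mathcal{F}(\M)$; the remaining piece $\varpi((H^\l)^{(1)}_0(\lf {\rm u}_t\rc^0,\circ dB_t))$ is kept as is.

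For ii), I start from i) and apply the Stratonovich--It\^o conversion $\circ\,dB_t^i = dB_t^i + \tfrac{1}{2}d[\cdot,B^i]_t$ to each of the two stochastic differentials. This generates correction terms of the form encountered in Lemma \ref{El-Ku-Cor} iii) and Lemma \ref{mathcal-V-t} iii): differentiating the coefficient $\varpi((\lf {\rm u}_t\rc^{\l})^{(1)}_0)e_i$ against $dB_t^i$ reproduces the Ricci trace $\mathrm{Ric}$ as defined in (\ref{Ric-def}), while the coefficient $\varpi\bigl((H^\l)^{(1)}_0(\lf{\rm u}_t\rc^0,e_i)\bigr)e_i$ contributes the additional drift in the $\theta$-line; the curvature-valued coefficient in the $\varpi$-equation contributes, by the same computation as in (\ref{Mar-diff-flow-map-ito}), two drift terms $u_t^{-1}R(u_te_i,u_t\varpi(\cdot)e_i)u_t$ and $u_t^{-1}(\nabla_{u_te_i}R)(u_te_i,u_t\theta(\cdot))u_t$.

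The main obstacle is book-keeping: one must be sure that the Stratonovich--It\^o corrections generated by the $(H^\l)^{(1)}_0$ term do not produce further contributions, which is the case because, as noted above, $(H^\l)^{(1)}_0$ is vertical at $\l=0$, so its bracket against the horizontal driving vector fields $H^0(\cdot,e_i)$ lies in $HT\mathcal{F}(\M)$ and is therefore killed by $\varpi$ in the $\varpi$-equation and produces no new It\^o drift in the $\theta$-equation beyond the $\varpi((H^\l)^{(1)}_0(\cdot,e_i))e_i\,dt$ already displayed. Once this is checked, everything else is a direct substitution into the structure equations.
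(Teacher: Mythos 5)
Your derivation is correct and follows essentially the same route as the paper: the corollary is obtained by specializing Lemma \ref{u-t-lam-tangent-map-1} at $\l=0$, using $\varpi(H^0(\cdot,e_i))=0$, $\theta(H^0(\cdot,e_i))=e_i$, the structure equations (\ref{structure-1})--(\ref{structure-2}), the verticality of $(H^{\l})^{(1)}_0$, and the horizontal/vertical bracket property quoted just before the corollary, and your Stratonovich-to-It\^o conversion of i) is equivalent to reading Lemma \ref{u-t-lam-tangent-map-1} iii) at $\l=0$. One small correction: the reason $(H^{\l})^{(1)}_0(u,e_i)$ is vertical is simply that $\pi_*H^{\l}(u,e_i)=ue_i$ is independent of $\l$, so its $\l$-derivative lies in $\ker\pi_*$; the footpoint argument you sketch is not needed (and as phrased is not accurate).
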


Using  Corollary \ref{u-t-lam-0-tangent-map-1} and It\^{o}'s formula, we can express  $(\lfloor {\rm u}_t\rceil^{\l})^{(1)}_0$  using $D^{(1)}\lf F_{\tau, t}\rc^{0}(\cdot, {w})$ by Duhamel's principle. This can be verified as in Corollary \ref{Duha-mathcal-V-t}. We omit the proof. 

\begin{cor}\label{cor-2--u-t-lam-tangent-map-1}Let  $\lfloor {\rm{u}}_t\rceil^{\l}$ be as in Lemma \ref{u-lambda-differential-1}.
\begin{itemize}
\item[i)] On $T\mathcal{F}(\M)$, 
\begin{align*}(\lfloor {\rm u}_t\rceil^{\l})^{(1)}_0=&\left[D^{(1)}\lfloor{F}_t\rceil^{0}(\lfloor {\rm u}_0\rceil^{0}, {w})\right](\lfloor {\rm u}_0\rceil^{\l})^{(1)}_{0}+{\rm V}_c((\lfloor {\rm u}_t\rceil^{\l})^{(1)}_0),
\end{align*}
where 
\begin{align*}
{\rm V}_c((\lfloor {\rm u}_t\rceil^{\l})^{(1)}_0):=\int_{0}^{t} \left[D^{(1)}\lfloor {F}_{\tau, t}\rceil^{0}(\lfloor {\rm u}_\tau \rceil^{0}, {w})\right](H^{\l})^{(1)}_{0}(\lfloor {\rm u}_\tau \rceil^{0}, \circ dB_{\tau}({w})). 
\end{align*}
\item[ii)] On $T\mathcal{F}(\Bbb R^m)$, the It\^{o} form of $\wt{(\lfloor {\rm u}_t\rceil^{\l})^{(1)}_0}:=(\theta, \varpi)_{\lfloor{\rm u}_t\rceil^0}\left((\lfloor {\rm u}_t\rceil^{\l})^{(1)}_0\right)$ is given by 
\begin{align*}
\wt{(\lfloor {\rm u}_t\rceil^{\l})^{(1)}_0}
&=\left[\wt{D^{(1)}\lfloor{F}_t\rceil^{0}}(\lfloor {\rm u}_0\rceil^{\l}, {w})\right]\wt{(\lfloor {\rm u}_0\rceil^{\l})^{(1)}_0}+\wt{{\rm V}_c}((\lfloor {\rm u}_t\rceil^{\l})^{(1)}_0),
\end{align*}
where 
\begin{align*}
&\wt{{\rm V}_c}((\lfloor {\rm u}_t\rceil^{\l})^{(1)}_0):=\int_{0}^{t} \left[\wt{D^{(1)}\lfloor {F}_{\tau, t}\rceil^{0}}(\lfloor {\rm u}_\tau \rceil^{0}, {w})\right]\\
&\ \ \ \ \ \ \ \ \ \ \ \ \ \ \ \ \ \ \ \ \ \ \ \ \ \    \left(\varpi\big((H^{\l})^{(1)}_0(\lfloor {\rm u}_{\tau}\rceil^{0}, e_i)\big)e_i\ d\tau, \varpi\big((H^{\l})^{(1)}_0(\lfloor {\rm u}_{\tau}\rceil^{0},  dB_{\tau})\big)\right).
\end{align*}
\end{itemize}
\end{cor}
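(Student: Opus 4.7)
The plan is to mimic the proof of Corollary~\ref{Duha-mathcal-V-t} via the stochastic variation of constants: view the tangent flow of (\ref{FM-BM-SDE}) at $\l=0$ as the fundamental solution of the homogeneous linear SDE, then verify that the proposed formula solves the driving SDE of the unknown process $(\lf{\rm u}_t\rc^{\l})^{(1)}_0$, and conclude by uniqueness.

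For part (i), I set ${\bf y}_t := [D^{(1)}\lf F_t\rc^{0}(\lf{\rm u}_0\rc^{0}, {w})]$. By Lemma~\ref{El-Ku-Cor}(i) applied to $g^0$, ${\bf y}_t$ satisfies the homogeneous linear Stratonovich SDE
\[
d{\bf y}_t = \nabla({\bf y}_t)\, H^0(\lf{\rm u}_t\rc^{0}, \circ dB_t), \qquad {\bf y}_0 = {\rm Id},
\]
and is invertible by Corollary~\ref{El-Ku-Cor-1}, with its inverse realized through the cocycle relation $[D^{(1)}\lf F_{\tau,t}\rc^{0}(\lf{\rm u}_\tau\rc^{0}, {w})] = {\bf y}_t\,{\bf y}_\tau^{-1}$. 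I then set
\[
\upsilon_t := (\lf{\rm u}_0\rc^{\l})^{(1)}_0 + \int_0^t {\bf y}_\tau^{-1}\,(H^{\l})^{(1)}_0(\lf{\rm u}_\tau\rc^0, \circ dB_\tau),
\]
so that ${\bf y}_t\upsilon_t$ reproduces the right-hand side of the formula in (i). The Stratonovich product rule $d({\bf y}_t\upsilon_t) = (\circ d{\bf y}_t)\upsilon_t + {\bf y}_t \circ d\upsilon_t$ yields
\[
d({\bf y}_t\upsilon_t) = \nabla({\bf y}_t\upsilon_t)\,H^0(\lf{\rm u}_t\rc^{0}, \circ dB_t) + (H^{\l})^{(1)}_0(\lf{\rm u}_t\rc^0, \circ dB_t),
\]
which is precisely the Stratonovich SDE satisfied by $(\lf{\rm u}_t\rc^{\l})^{(1)}_0$ according to Lemma~\ref{u-t-lam-tangent-map-1}(i) evaluated at $\l=0$. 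Since ${\bf y}_t\upsilon_t$ and $(\lf{\rm u}_t\rc^{\l})^{(1)}_0$ share this SDE and the initial value $(\lf{\rm u}_0\rc^{\l})^{(1)}_0$, uniqueness of solutions (valid because $g^0 \in \mathcal{M}^k(M)$ with $k\geq 3$ makes $H^0$ and $(H^{\l})^{(1)}_0$ sufficiently regular) delivers (i).

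For part (ii), I would either apply $(\theta, \varpi)_{\lf{\rm u}_t\rc^0}$ directly to (i) and convert Stratonovich integrals into It\^o form, or repeat the argument in $T\mathcal{F}(\Bbb R^m)$: let ${\bf Y}_t := [\wt{D^{(1)}\lf F_t\rc^0}(\lf{\rm u}_0\rc^0, {w})]$ be the fundamental solution of the It\^o SDE of Lemma~\ref{El-Ku-Cor}(iii) at $\l=0$, set
\[
\wt\upsilon_t := \wt{(\lf{\rm u}_0\rc^{\l})^{(1)}_0} + \int_0^t {\bf Y}_\tau^{-1}\!\left(\varpi\big((H^{\l})^{(1)}_0(\lf{\rm u}_\tau\rc^0, e_i)\big)e_i\,d\tau,\; \varpi\big((H^{\l})^{(1)}_0(\lf{\rm u}_\tau\rc^0, dB_\tau)\big)\right),
\]
and verify via the It\^o product rule $d({\bf Y}_t\wt\upsilon_t) = (d{\bf Y}_t)\wt\upsilon_t + {\bf Y}_t\,d\wt\upsilon_t + d{\bf Y}_t\cdot d\wt\upsilon_t$ that ${\bf Y}_t\wt\upsilon_t$ satisfies the It\^o system produced by Corollary~\ref{u-t-lam-0-tangent-map-1}(ii). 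Uniqueness closes the argument as before.

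The main technical nuisance will be in part (ii): the quadratic covariation $d{\bf Y}_t\cdot d\wt\upsilon_t$ couples the curvature and Ricci drifts inherent in the SDE of ${\bf Y}_t$ (cf.\ (\ref{Mar-diff-flow-map-ito})) with the $(H^{\l})^{(1)}_0$-driven terms in $\wt\upsilon_t$, and I will need to check carefully that these mixed contributions combine with $(d{\bf Y}_t)\wt\upsilon_t$ and ${\bf Y}_t\,d\wt\upsilon_t$ to reproduce exactly the Ricci drift on the $\theta$-component and the $R$ and $\nabla R$ drifts on the $\varpi$-component displayed in Corollary~\ref{u-t-lam-0-tangent-map-1}(ii). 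Once this bookkeeping is performed, the Stratonovich argument of part (i) transports to the It\^o setting without further difficulty.
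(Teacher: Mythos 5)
Your proposal is correct and is exactly the argument the paper has in mind: the paper omits the proof, remarking only that the formula follows by the stochastic Duhamel principle "as in Corollary \ref{Duha-mathcal-V-t}", which is precisely your variation-of-constants scheme (fundamental solution ${\bf y}_t=[D^{(1)}\lf F_t\rc^{0}]$, Stratonovich/It\^o product rule, comparison with the SDEs of Lemma \ref{u-t-lam-tangent-map-1} and Corollary \ref{u-t-lam-0-tangent-map-1}, and uniqueness). The covariation bookkeeping you flag in part (ii) is the only computation to carry out, and it simplifies because $\theta\big((H^{\l})^{(1)}_0\big)=0$, so the mixed terms reproduce exactly the drifts of Corollary \ref{u-t-lam-0-tangent-map-1}(ii).
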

To  describe the second order differential of $\lfloor {\rm u}_t\rceil^{\l}$ in $\l$, we use the  horizontal/vertical sum decomposition of $TT{\mathcal{F}(\M)}$ of $\wt{g}^0$. By Lemma \ref{u-t-lam-tangent-map-1}, it remains to find the SDEs of 
\[
(\lfloor {\rm u}_t\rceil^{\l})^{(2)}_{\l}:=\frac{D}{d\l}\left((\lfloor {\rm u}_t\rceil^{\l})^{(1)}_{\l}\right)=\nabla((\lfloor {\rm u}_t\rceil^{\l})^{(1)}_0)(\lfloor {\rm u}_t\rceil^{\l})^{(1)}_0. 
\]
\begin{lem}\label{u-t-lam-tangent-map-2}Let $\lambda\in
(-1, 1)\mapsto g^{\lambda}\in \mathcal{M}^k(M)$  be a $C^k$ curve with $k\geq 4$. Let $\l\mapsto \lfloor {\rm u}_0\rceil^{\l}$ be $C^{k-2}$ and let $\lfloor {\rm u}_t\rceil^{\l}$ be as in  Lemma \ref{u-lambda-differential-1} with $(\lfloor {\rm u}_t\rceil^{\l})^{(1)}_\l$, $(\lfloor {\rm u}_t\rceil^{\l})^{(2)}_{\l}$ defined as above.
\begin{itemize}
\item[i)] The Stratonovich SDE of $(\lfloor {\rm u}_t\rceil^{\l})^{(2)}_{\l}$ on $T\mathcal{F}(\M)$ is
\begin{align*}
d\big((\lfloor {\rm u}_t\rceil^{\l})^{(2)}_{\l}\big)=&\nabla\big((\lfloor {\rm u}_t\rceil^{\l})^{(2)}_{\l}\big)H^{\l}(\lf {\rm u}_{t}\rc^{\l}, \circ dB_t)+\nabla^{(2)}\big((\lfloor {\rm u}_t\rceil^{\l})^{(1)}_{\l}, (\lfloor {\rm u}_t\rceil^{\l})^{(1)}_{\l}\big)H^{\l}(\lf {\rm u}_{t}\rc^{\l}, \circ dB_t)\\
&+R\big(H^{\l}(\lf {\rm u}_{t}\rc^{\l}, \circ dB_t), (\lfloor {\rm u}_t\rceil^{\l})^{(1)}_{\l}\big)(\lfloor {\rm u}_t\rceil^{\l})^{(1)}_{\l}\\
&+2\nabla\big((\lfloor {\rm u}_t\rceil^{\l})^{(1)}_{\l}\big)\big(H^{\l}\big)^{(1)}_{\l}(\lf {\rm u}_{t}\rc^{\l}, \circ dB_t)+ \big(H^{\l}\big)^{(2)}_{\l}(\lf {\rm u}_{t}\rc^{\l}, \circ dB_t).
\end{align*}
\item[ii)] The Stratonovich SDE of $(\theta, \varpi)_{\lfloor{\rm u}_t\rceil^{\l}}\big((\lfloor {\rm u}_t\rceil^{\l})^{(2)}_{\l}\big)$ on $T\mathcal{F}(\Bbb R^m)$ is
\begin{align*}
\begin{array}{ll}
&d\left(\big(\theta, \varpi\big)\big((\lfloor {\rm u}_t\rceil^{\l})^{(2)}_{\l}\big)\right)\\
&=d\big(\theta, \varpi\big)\left(H^{\l}(\lfloor {\rm u}_t\rceil^{\l}, \circ dB_t), (\lfloor {\rm u}_t\rceil^{\l})^{(2)}_{\l}\right)+\nabla((\lfloor {\rm u}_t\rceil^{\l})^{(2)}_{\l})\left(\big(\theta, \varpi\big)\big(H^{\l}(\lf {\rm u}_t\rc^{\l}, \circ dB_t)\big)\right)\\
&\ \  +\big(\theta, \varpi\big)\left(\nabla^{(2)}\big((\lfloor {\rm u}_t\rceil^{\l})^{(1)}_{\l}, (\lfloor {\rm u}_t\rceil^{\l})^{(1)}_{\l}\big)H^{\l}(\lf {\rm u}_{t}\rc^{\l}, \circ dB_t)\right)\\
&\ \  +\big(\theta, \varpi\big)\left(R\big(H^{\l}(\lf {\rm u}_{t}\rc^{\l}, \circ dB_t), (\lfloor {\rm u}_t\rceil^{\l})^{(1)}_{\l}\big)(\lfloor {\rm u}_t\rceil^{\l})^{(1)}_{\l}\right)\\
&\ \  +\big(\theta, \varpi\big)\left(2\nabla\big((\lfloor {\rm u}_t\rceil^{\l})^{(1)}_{\l}\big)\big(H^{\l}\big)^{(1)}_{\l}(\lf {\rm u}_{t}\rc^{\l}, \circ dB_t)+ \big(H^{\l}\big)^{(2)}_{\l}(\lf {\rm u}_{t}\rc^{\l}, \circ dB_t)\right).
\end{array}
\end{align*}
\item[iii)] The It\^{o} SDE of $(\theta, \varpi)_{\lfloor{\rm u}_t\rceil^{\l}}\big((\lfloor {\rm u}_t\rceil^{\l})^{(2)}_{\l}\big)$ on $T\mathcal{F}(\Bbb R^m)$ is
\begin{align*}
\begin{array}{ll}
&d\left(\big(\theta, \varpi\big)\big((\lfloor {\rm u}_t\rceil^{\l})^{(2)}_{\l}\big)\right)\\
&=d\big(\theta, \varpi\big)\left(H^{\l}(\lfloor {\rm u}_t\rceil^{\l}, dB_t), (\lfloor {\rm u}_t\rceil^{\l})^{(2)}_{\l}\right)+\nabla((\lfloor {\rm u}_t\rceil^{\l})^{(2)}_{\l})\left(\big(\theta, \varpi\big)\big(H^{\l}(\lf {\rm u}_t\rc^{\l}, dB_t)\big)\right)\\
&\ \  +\big(\theta, \varpi\big)\left(\nabla^{(2)}\big((\lfloor {\rm u}_t\rceil^{\l})^{(1)}_{\l}, (\lfloor {\rm u}_t\rceil^{\l})^{(1)}_{\l}\big)H^{\l}(\lf {\rm u}_{t}\rc^{\l}, dB_t)\right)\\
&\ \  +\big(\theta, \varpi\big)\left(R\big(H^{\l}(\lf {\rm u}_{t}\rc^{\l}, dB_t), (\lfloor {\rm u}_t\rceil^{\l})^{(1)}_{\l}\big)(\lfloor {\rm u}_t\rceil^{\l})^{(1)}_{\l}\right)\\
&\ \  +\big(\theta, \varpi\big)\left(2\nabla\big((\lfloor {\rm u}_t\rceil^{\l})^{(1)}_{\l}\big)\big(H^{\l}\big)^{(1)}_{\l}(\lf {\rm u}_{t}\rc^{\l}, dB_t)+ \big(H^{\l}\big)^{(2)}_{\l}(\lf {\rm u}_{t}\rc^{\l},  dB_t)\right)\\
&\ \  +\nabla(H^{\l}(\lfloor {\rm u}_t\rceil^{\l},  e_i))\left\{d\big(\theta, \varpi\big)\left(H^{\l}(\lfloor {\rm u}_t\rceil^{\l}, e_i), (\lfloor {\rm u}_t\rceil^{\l})^{(2)}_{\l}\right)\right.\\
&\ \ \ \  \ \ \ \ \ \ \ \ \  \ \ \ \ \ \ \ \ \ \ \   +\nabla((\lfloor {\rm u}_t\rceil^{\l})^{(2)}_{\l})\left(\big(\theta, \varpi\big)\big(H^{\l}(\lf {\rm u}_t\rc^{\l}, e_i)\big)\right)\\
&\ \ \ \  \ \ \ \ \ \ \ \ \  \ \ \ \ \ \ \ \ \ \ \   +\big(\theta, \varpi\big)\left(\nabla^{(2)}\big((\lfloor {\rm u}_t\rceil^{\l})^{(1)}_{\l}, (\lfloor {\rm u}_t\rceil^{\l})^{(1)}_{\l}\big)H^{\l}(\lf {\rm u}_{t}\rc^{\l}, e_i)\right)\\
&\ \ \ \  \ \ \ \ \ \ \ \ \  \ \ \ \ \ \ \ \ \ \ \   +\big(\theta, \varpi\big)\left(R\big(H^{\l}(\lf {\rm u}_{t}\rc^{\l}, e_i), (\lfloor {\rm u}_t\rceil^{\l})^{(1)}_{\l}\big)(\lfloor {\rm u}_t\rceil^{\l})^{(1)}_{\l}\right)\\
&\ \ \ \  \ \ \ \ \ \ \ \ \  \ \ \ \ \ \ \ \ \ \ \   \left.+\big(\theta, \varpi\big)\left(2\nabla\big((\lfloor {\rm u}_t\rceil^{\l})^{(1)}_{\l}\big)\big(H^{\l}\big)^{(1)}_{\l}(\lf {\rm u}_{t}\rc^{\l}, e_i)+ \big(H^{\l}\big)^{(2)}_{\l}(\lf {\rm u}_{t}\rc^{\l},  e_i)\right)\right\} dt.
\end{array}
\end{align*}
\end{itemize}
\end{lem}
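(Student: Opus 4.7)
The plan is to mirror the argument used for Lemma \ref{u-t-lam-tangent-map-1} but applied at one higher order in $\l$. The idea is to start from the Stratonovich SDE for the first $\l$-derivative $(\lf{\rm u}_t\rc^\l)^{(1)}_\l$ given in Lemma \ref{u-t-lam-tangent-map-1}(i), and then apply the covariant derivative operator $D/d\l = \nabla_{(\lf{\rm u}_t\rc^\l)^{(1)}_\l}$ to both sides. Part (i) is the fundamental statement; parts (ii) and (iii) are then obtained from (i) by applying the parallelism $(\theta,\varpi)$ and the Stratonovich-to-It\^o conversion, respectively, exactly as in Lemma \ref{u-t-lam-tangent-map-1}.

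For (i), differentiating covariantly the term $\nabla\big((\lf{\rm u}_t\rc^\l)^{(1)}_\l\big)H^\l(\lf{\rm u}_t\rc^\l,\circ dB_t)$ produces three contributions: a term $\nabla\big((\lf{\rm u}_t\rc^\l)^{(2)}_\l\big)H^\l(\lf{\rm u}_t\rc^\l,\circ dB_t)$ that accounts for the derivative landing on the first factor, a term $\nabla^{(2)}\big((\lf{\rm u}_t\rc^\l)^{(1)}_\l,(\lf{\rm u}_t\rc^\l)^{(1)}_\l\big)H^\l(\lf{\rm u}_t\rc^\l,\circ dB_t)$ coming from the second covariant derivative of $H^\l$ evaluated along $\lf{\rm u}_t\rc^\l$, and the curvature commutator $R\big(H^\l(\lf{\rm u}_t\rc^\l,\circ dB_t),(\lf{\rm u}_t\rc^\l)^{(1)}_\l\big)(\lf{\rm u}_t\rc^\l)^{(1)}_\l$ which arises from the non-commutativity of the iterated covariant derivatives in $\l$. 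Differentiating the second term $(H^\l)^{(1)}_\l(\lf{\rm u}_t\rc^\l,\circ dB_t)$ produces $2\nabla\big((\lf{\rm u}_t\rc^\l)^{(1)}_\l\big)(H^\l)^{(1)}_\l(\lf{\rm u}_t\rc^\l,\circ dB_t)$ (the factor of $2$ reflecting that both the base-point dependence and the explicit $\l$-dependence contribute) together with the purely explicit piece $(H^\l)^{(2)}_\l(\lf{\rm u}_t\rc^\l,\circ dB_t)$. Assembling these gives (i).

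For (ii), applying $(\theta,\varpi)_{\lf{\rm u}_t\rc^\l}$ to (i) and using the fact that $(\theta,\varpi)$ is a parallelism but not a parallel tensor forces the extra term $\nabla\big((\lf{\rm u}_t\rc^\l)^{(2)}_\l\big)\big((\theta,\varpi)(H^\l(\lf{\rm u}_t\rc^\l,\circ dB_t))\big)$, in perfect analogy with (ii) of Lemma \ref{u-t-lam-tangent-map-1}; the remaining terms transform tensorially because they involve explicit forms and curvature. Finally, (iii) is obtained by the standard Stratonovich-to-It\^o conversion: for each semimartingale factor one adds one half of the differential of its quadratic covariation with the driving noise, which produces the $dt$-correction $\nabla(H^\l(\lf{\rm u}_t\rc^\l,e_i))\{\cdots\}\,dt$ where the bracket contains the same expression as the Stratonovich $\circ dB_t$ integrand with $dB_t$ replaced by $e_i$ (summed over $i$). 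The main obstacle is purely bookkeeping in this last step: both $\lf{\rm u}_t\rc^\l$ and $(\lf{\rm u}_t\rc^\l)^{(1)}_\l$ are driven by $B_t$, so cross variations generate several terms, and one must verify by direct expansion that only the stated ones survive, using the vanishing of quadratic covariations involving purely finite-variation pieces and the identifications of $\theta$ on vertical vectors and $\varpi$ on horizontal vectors as in the proof of Lemma \ref{u-t-lam-tangent-map-1}.
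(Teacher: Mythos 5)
The paper states Lemma \ref{u-t-lam-tangent-map-2} without an explicit proof, relying on the same mechanism it spells out just before: $(\lf {\rm u}_t\rc^{\l})^{(2)}_{\l}$ is by definition $\tfrac{D}{d\l}(\lf{\rm u}_t\rc^\l)^{(1)}_\l$, and one obtains its SDE by covariantly differentiating the SDE of Lemma \ref{u-t-lam-tangent-map-1}(i) in $\l$, then passing to $(\theta,\varpi)$-coordinates and to It\^o form. Your proposal is exactly this route, and the collected terms in (i), (ii), (iii) are the right ones, so the approach matches the paper.

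Two small imprecisions in your narration, neither of which affects the outcome. First, the factor $2$ in $2\nabla\big((\lf{\rm u}_t\rc^\l)^{(1)}_\l\big)(H^\l)^{(1)}_\l$ does not come entirely from differentiating the second term $(H^\l)^{(1)}_\l(\lf{\rm u}_t\rc^\l,\circ dB_t)$: one copy comes from the base-point dependence of that second term, while the other copy comes from differentiating the explicit $\l$-dependence of $H^\l$ inside the first term $\nabla\big((\lf{\rm u}_t\rc^\l)^{(1)}_\l\big)H^\l(\lf{\rm u}_t\rc^\l,\circ dB_t)$. Second, the curvature term $R\big(H^\l(\lf{\rm u}_t\rc^\l,\circ dB_t),(\lf{\rm u}_t\rc^\l)^{(1)}_\l\big)(\lf{\rm u}_t\rc^\l)^{(1)}_\l$ arises not from iterated $\l$-derivatives but from the non-commutativity of the covariant $\l$- and $t$-derivatives applied on the left-hand side (the Jacobi-type commutation $\nabla_{\partial_\l}\nabla_{\partial_t} - \nabla_{\partial_t}\nabla_{\partial_\l} = R(\partial_\l u,\partial_t u)$), with the sign flip $R(\partial_\l u,\partial_t u) = -R(\partial_t u,\partial_\l u)$ then moving the term to the stated side. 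With those corrections the bookkeeping is the same as the paper's.
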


Again, we can simplify the SDEs in Lemma \ref{u-t-lam-tangent-map-2} at $\l=0$.  
\begin{cor}\label{cor-u-t-lam-tangent-map-2}We retain all the notations in Lemma \ref{u-t-lam-tangent-map-2}. 
\begin{itemize}
\item[i)]The Stratonovich SDE of $(\theta, \varpi)_{\lfloor{\rm u}_t\rceil^0}\big((\lfloor {\rm u}_t\rceil^{\l})^{(2)}_0\big)$ on $T\mathcal{F}(\Bbb R^m)$ is\begin{align*}
d\left(\big(\theta, \varpi\big)\big((\lfloor {\rm u}_t\rceil^{\l})^{(2)}_{0}\big)\right)
&=\left(\varpi(\lfloor {\rm u}_t\rceil^{\l})^{(2)}_0)\circ dB_t, {(\lfloor{\rm u}_t\rceil^0)}^{-1}R\big(\lfloor{\rm u}_t\rceil^0\circ dB_t, \theta(\lfloor {\rm u}_t\rceil^{\l})^{(2)}_0)\big)\lfloor{\rm u}_t\rceil^0\right)\\
&\  +\big(\theta, \varpi\big)\left(\nabla^{(2)}\big((\lfloor {\rm u}_t\rceil^{\l})^{(1)}_{0}, (\lfloor {\rm u}_t\rceil^{\l})^{(1)}_{0}\big)H^{0}(\lf {\rm u}_{t}\rc^{0}, \circ dB_t)\right)\\
&\  +\big(\theta, \varpi\big)\left(R\big(H^{0}(\lf {\rm u}_{t}\rc^{0}, \circ dB_t), (\lfloor {\rm u}_t\rceil^{\l})^{(1)}_{0}\big)(\lfloor {\rm u}_t\rceil^{\l})^{(1)}_{0}\right)\\
&\  +\big(\theta, \varpi\big)\left(2\nabla\big((\lfloor {\rm u}_t\rceil^{\l})^{(1)}_{0}\big)\big(H^{\l}\big)^{(1)}_0(\lf {\rm u}_{t}\rc^{0}, \circ dB_t)+\!\big(H^{\l}\big)^{(2)}_0(\lf {\rm u}_{t}\rc^{0}, \circ dB_t)\right).
\end{align*}
\item[ii)] The  It\^{o} SDE of $\theta_{\lfloor{\rm u}_t\rceil^0}\big((\lfloor {\rm u}_t\rceil^{\l})^{(2)}_0\big)$ on $T\mathcal{F}(\Bbb R^m)$ is
\begin{align*}
d\big(\theta((\lfloor {\rm u}_t\rceil^{\l})^{(2)}_0))=&\varpi((\lfloor {\rm u}_t\rceil^{\l})^{(2)}_0) d B_{t}+{\rm Ric}(\lf {\rm u}_{t}\rc^{0}\theta((\lfloor {\rm u}_t\rceil^{\l})^{(2)}_0)) dt\notag\\
&+\Phi_{\theta}((\lfloor {\rm u}_t\rceil^{\l})^{(1)}_0,(\lfloor {\rm u}_t\rceil^{\l})^{(1)}_0, dB_t, dt)+\Phi_{\theta}^{0, 2}((\lfloor {\rm u}_t\rceil^{\l})^{(1)}_0, dB_t, dt),
\end{align*}
where $\Phi_{\theta}(\cdot,\cdot, dB_t, dt)$ is given in (\ref{ito-2-theta-varpi-mathcal-V}) for $\lf {\rm u}_{t}\rc^{0}$ and \footnote{The upper script $^{0, 2}$ is to indicate  that   $\Phi_{\theta}^{0, 2}, \Phi_{\varpi}^{0, 2}$ are associated with $\big([D^{(0)}\lf F_t(\cdot, {w})]\big)^{(2)}_{\l}$.} 
\begin{align*}
&\Phi_{\theta}^{0, 2}((\lfloor {\rm u}_t\rceil^{\l})^{(1)}_0, dB_t, dt)\\
&\ \ \ \ \ \ \ \ \   :=\varpi\left(2\nabla\big((\lfloor {\rm u}_t\rceil^{\l})^{(1)}_{0}\big)\big(H^{\l}\big)^{(1)}_0(\lf {\rm u}_{t}\rc^{0}, e_i)+ \big(H^{\l}\big)^{(2)}_0(\lf {\rm u}_{t}\rc^{0}, e_i)\right)e_i\ dt\\
&\ \ \ \ \ \ \ \ \ \ \ \ \ +\theta\left(\left[H(\lf {\rm u}_{t}\rc^{0}, e_i), 2\nabla\big((\lfloor {\rm u}_t\rceil^{\l})^{(1)}_{0}\big)\big(H^{\l}\big)^{(1)}_0(\lf {\rm u}_{t}\rc^{0}, e_i)+ \big(H^{\l}\big)^{(2)}_0(\lf {\rm u}_{t}\rc^{0}, e_i)\right]\right)\ dt,  
\end{align*}
and the  It\^{o} SDE of $\varpi_{\lfloor{\rm u}_t\rceil^0}\big((\lfloor {\rm u}_t\rceil^{\l})^{(2)}_0\big)$ on $T\mathcal{F}(\Bbb R^m)$ is
 \begin{align*}
d\big(\varpi((\lfloor {\rm u}_t\rceil^{\l})^{(2)}_0))
&=(\lf {\rm u}_{t}\rc^{0})^{-1}R\left(\lf {\rm u}_{t}\rc^{0} dB_t, \lf {\rm u}_{t}\rc^{0}\theta((\lfloor {\rm u}_t\rceil^{\l})^{(2)}_0)\right)\lf {\rm u}_{t}\rc^{0}\notag\\
&\ \ \ +(\lf {\rm u}_{t}\rc^{0})^{-1}R\left(\lf {\rm u}_{t}\rc^{0} e_i, \lf {\rm u}_{t}\rc^{0}\varpi((\lfloor {\rm u}_t\rceil^{\l})^{(2)}_0)e_i\right)  \lf {\rm u}_{t}\rc^{0}\ dt\notag\\
&\ \ \ +(\lf {\rm u}_{t}\rc^{0})^{-1}\big(\nabla(\lf {\rm u}_{t}\rc^{0}e_i) R\big)\left(\lf {\rm u}_{t}\rc^{0} e_i, \lf {\rm u}_{t}\rc^{0}\theta((\lfloor {\rm u}_t\rceil^{\l})^{(2)}_0)\right)\lf {\rm u}_{t}\rc^{0}\ dt\notag\\
&\ \ \  + \Phi_{\varpi}((\lfloor {\rm u}_t\rceil^{\l})^{(1)}_0,(\lfloor {\rm u}_t\rceil^{\l})^{(1)}_0, dB_t, dt)+\Phi_{\varpi}^{0, 2}((\lfloor {\rm u}_t\rceil^{\l})^{(1)}_0, dB_t, dt),
\end{align*}
where $\Phi_{\theta}(\cdot,\cdot, dB_t, dt)$ is given in (\ref{ito-2-theta-varpi-mathcal-V}) for $\lf {\rm u}_{t}\rc^{0}$ and 
\begin{align*}
&\Phi_{\varpi}^{0, 2}((\lfloor {\rm u}_t\rceil^{\l})^{(1)}_0, dB_t, dt):=\varpi\left(2\nabla\big((\lfloor {\rm u}_t\rceil^{\l})^{(1)}_{0}\big)\big(H^{\l}\big)^{(1)}_0(\lf {\rm u}_{t}\rc^{0}, dB_t)+ \big(H^{\l}\big)^{(2)}_0(\lf {\rm u}_{t}\rc^{0},  dB_t)\right).\end{align*}
\end{itemize}
\end{cor}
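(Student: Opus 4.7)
\textbf{Proof plan for Corollary \ref{cor-u-t-lam-tangent-map-2}.} The strategy is to specialize Lemma \ref{u-t-lam-tangent-map-2} at $\lambda=0$ and to simplify using the fact that $H^{0}(\cdot,e_i)$ is horizontal for the reference connection $\varpi$ induced by $\wt{g}^{0}$. First, I would record the two identities that drive every cancellation: $\varpi(H^{0}(\cdot,e_i))\equiv 0$ (since $H^{0}$ is $\wt{g}^{0}$-horizontal) and $\theta(H^{0}(\cdot,e_i))\equiv e_i$ (a constant function of $u$ valued in $\Bbb R^{m}$). In particular every expression of the form $\nabla(V)\bigl(\theta(H^{0}(\cdot,e_i))\bigr)$ or $\nabla(V)\bigl(\varpi(H^{0}(\cdot,e_i))\bigr)$ vanishes identically, which will kill one entire family of terms appearing in Lemma \ref{u-t-lam-tangent-map-2}~ii).

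For the Stratonovich SDE~i), I would apply the structure equations (\ref{structure-1})--(\ref{structure-2}) to the term $d(\theta,\varpi)\bigl(H^{0}(\lf{\rm u}_t\rc^{0},\circ dB_t),\,(\lf{\rm u}_t\rc^{\l})^{(2)}_{0}\bigr)$ from Lemma \ref{u-t-lam-tangent-map-2}~ii). Writing $Y_1=H^{0}(\cdot,\circ dB_t)$ and $Y_2=(\lf{\rm u}_t\rc^{\l})^{(2)}_{0}$ and using $\varpi(Y_1)=0$ together with $\theta(Y_1)=\circ dB_t$, (\ref{structure-1}) collapses to $d\theta(Y_1,Y_2)=\varpi(Y_2)\cdot\circ dB_t$, while (\ref{structure-2}) collapses to the pure curvature contribution $(\lf{\rm u}_t\rc^{0})^{-1}R(\lf{\rm u}_t\rc^{0}\circ dB_t,\lf{\rm u}_t\rc^{0}\theta(Y_2))\lf{\rm u}_t\rc^{0}$ (the horizontal component of $Y_2$ corresponds to $\theta(Y_2)$ under the identification of the horizontal bundle with the tangent space of $\M$). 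This gives the first line of~i). The remaining three lines are just the specialization at $\lambda=0$ of the terms in Lemma \ref{u-t-lam-tangent-map-2}~ii) involving $\nabla^{(2)}H^{\l}$, the curvature, and the $\lambda$-derivatives $(H^{\l})^{(1)}_{\l}$, $(H^{\l})^{(2)}_{\l}$, with no further simplification needed.

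For the Itô SDE~ii), I would pass from the Stratonovich form in~i) to Itô form in the standard way, i.e.\ by computing the quadratic variation brackets of the coefficients with $B$. The $dB\otimes dB$-contributions split into two packages. The first package, arising from the part of the Stratonovich coefficients that is already present in the flow for a fixed metric (the $\nabla^{(2)}H^{0}$ and $R(\cdot,V)V$ pieces), reproduces exactly the Itô corrections computed in Lemma \ref{mathcal-V-t}~iii); this is what accounts for the appearance of $\Phi_{\theta}((\lf{\rm u}_t\rc^{\l})^{(1)}_0,(\lf{\rm u}_t\rc^{\l})^{(1)}_0,dB_t,dt)$ and of $\Phi_{\varpi}(\cdots)$ together with the Ricci term $\mathrm{Ric}(\lf{\rm u}_t\rc^{0}\theta(\cdot))$ and the $\nabla R$ term. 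The second package, arising from the new Stratonovich coefficients $2\nabla((\lf{\rm u}_t\rc^{\l})^{(1)}_{0})(H^{\l})^{(1)}_{0}(\cdot,\circ dB_t)+(H^{\l})^{(2)}_{0}(\cdot,\circ dB_t)$, is what I would define to be $\Phi^{0,2}_{\theta}$ and $\Phi^{0,2}_{\varpi}$: the extra drift consists of the commutator bracket with $H(\lf{\rm u}_t\rc^{0},e_i)$ (for the $\theta$-part) and of a pure $\varpi$-term (for the $\varpi$-part), obtained by the same Stratonovich-to-Itô computation applied to these new coefficients.

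The main technical obstacle is the bookkeeping of the Itô correction, in particular correctly identifying which terms group into the already-known $\Phi_{\theta},\Phi_{\varpi}$ of Lemma \ref{mathcal-V-t} (with the role of $({\mathsf v}_t,\Bbb V_t)$ played by $((\lf{\rm u}_t\rc^{\l})^{(1)}_{0},(\lf{\rm u}_t\rc^{\l})^{(1)}_{0})$) and which terms form the genuinely new pieces $\Phi^{0,2}_{\theta},\Phi^{0,2}_{\varpi}$ that encode the response of the horizontal lift to the perturbation of the metric. The remaining computations are mechanical applications of $\varpi(H^{0})=0$ and the bracket relations $[HT\mathcal{F},VT\mathcal{F}]\subset HT\mathcal{F}$, $[HT\mathcal{F},HT\mathcal{F}]\subset VT\mathcal{F}$, which is what permits the clean splitting $(\theta\text{-equation},\varpi\text{-equation})$ in the statement.
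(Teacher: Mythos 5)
Your proposal is correct and matches what the paper intends: the paper gives no explicit proof of Corollary \ref{cor-u-t-lam-tangent-map-2}, simply asserting that one "can simplify the SDEs in Lemma \ref{u-t-lam-tangent-map-2} at $\l=0$," and the simplifications you identify ($\varpi(H^{0}(\cdot,e_i))\equiv 0$, $\theta(H^{0}(\cdot,e_i))\equiv e_i$ constant, hence $\nabla(V)\big((\theta,\varpi)(H^{0}(\cdot,e_i))\big)\equiv 0$, plus the $HT/VT$ bracket relations stated just before Corollary \ref{u-t-lam-0-tangent-map-1}) are exactly the ones the paper relies on. The only small deviation is in ii): you derive the It\^{o} form by converting the already-simplified Stratonovich equation from i), whereas the paper's implicit route is to specialize Lemma \ref{u-t-lam-tangent-map-2}~iii) (the general-$\lambda$ It\^{o} form) directly at $\lambda=0$; these are equivalent, and your route is arguably cleaner since the cancellations happen before the It\^{o} correction is expanded. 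One fact worth flagging explicitly in a writeup, since it is needed to see why $\Phi^{0,2}_{\theta}$ carries no $dB_t$-term while $\Phi^{0,2}_{\varpi}$ does: the perturbation fields $(H^{\l})^{(1)}_{0}(\cdot,e_i)$, $(H^{\l})^{(2)}_{0}(\cdot,e_i)$ are \emph{vertical} (all $H^{\l}(u,e_i)$ project to the same $ue_i$), and in fact $\theta\big(\nabla(V)(H^{\l})^{(1)}_{0}(\cdot,e_i)\big)=0$ as well — the paper invokes this vanishing in the proof of the next corollary (the remark after Corollary \ref{Cor-DF-t-la-diff-la}) but does not prove it at this point.
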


By  Corollary \ref{cor-u-t-lam-tangent-map-2} with  Lemma \ref{mathcal-V-t},  we can formulate $(\lfloor {\rm u}_t\rceil^{\l})^{(2)}_0$ and  $(\theta, \varpi)(\lfloor {\rm u}_t\rceil^{\l})^{(2)}_0$ using $D^{(1)}\lf F_{\tau, t}\rc^{0}(\cdot, {w})$ by stochastic Duhamel principle.  We only state the conclusion.

\begin{cor}\label{cor-u-t-lam-tangent-map-3} We retain all the notations in Lemma \ref{u-t-lam-tangent-map-2}.  
\begin{itemize}
\item[i)] On $T\mathcal{F}(\M)$, 
\begin{align*}(\lfloor {\rm u}_t\rceil^{\l})^{(2)}_0=&\left[D^{(1)}\lfloor{F}_t\rceil^{0}(\lfloor {\rm u}_0\rceil^{0}, {w})\right](\lfloor {\rm u}_0\rceil^{\l})^{(2)}_{0}+\nabla_{(\lfloor {\rm u}_0\rceil^{\l})^{(1)}_0}\left[D^{(1)}\lf F_t\rc^0(\lfloor {\rm u}_0\rceil^{0}, {w})\right]\big((\lfloor {\rm u}_0\rceil^{\l})^{(1)}_0\big)\\
&+{\rm V}_c\big((\lfloor {\rm u}_t\rceil^{\l})^{(2)}_0\big),
\end{align*}
where 
\begin{align*}
{\rm V}_c\big((\lfloor {\rm u}_t\rceil^{\l})^{(2)}_0\big)=&\int_{0}^{t} \left[D^{(1)}\lfloor {F}_{\tau, t}\rceil^{0}(\lfloor {\rm u}_\tau \rceil^{0}, {w})\right]\left\{\nabla^{(2)}\big((\lfloor {\rm u}_{\tau}\rceil^{\l})^{(1)}_{0}, (\lfloor {\rm u}_{\tau}\rceil^{\l})^{(1)}_{0}\big)H^{0}(\lf {\rm u}_{\tau}\rc^{0}, \circ dB_\tau)\right.\\
&\ \ \ \   -\nabla^{(2)}\big([D^{(1)}\lf F_{\tau}\rc^{0}](\lfloor {\rm u}_0\rceil^{\l})^{(1)}_{0}, [D^{(1)}\lf F_{\tau}\rc^{0}](\lfloor {\rm u}_0\rceil^{\l})^{(1)}_{0}\big)H^{0}(\lf {\rm u}_{\tau}\rc^{0}, \circ dB_\tau)\\
&\ \ \ \    +R\big(H^{0}(\lf {\rm u}_{\tau}\rc^{0}, \circ dB_\tau), (\lfloor {\rm u}_\tau\rceil^{\l})^{(1)}_{0}\big)(\lfloor {\rm u}_{\tau}\rceil^{\l})^{(1)}_{0}\\
&\ \ \ \      -R\big(H^{0}(\lf {\rm u}_{\tau}\rc^{0}, \circ dB_\tau),[D^{(1)}\lf F_{\tau}\rc^{0}](\lfloor {\rm u}_0\rceil^{\l})^{(1)}_{0}\big)[D^{(1)}\lf F_{\tau}\rc^{0}](\lfloor {\rm u}_0\rceil^{\l})^{(1)}_{0}\\
&\ \ \ \       \left.+\ 2\nabla\big((\lfloor {\rm u}_\tau\rceil^{\l})^{(1)}_{0}\big)\big(H^{\l}\big)^{(1)}_{0}(\lf {\rm u}_{\tau}\rc^{0}, \circ dB_t)+ \big(H^{\l}\big)^{(2)}_{0}(\lf {\rm u}_{\tau}\rc^{0}, \circ dB_\tau)\right\}.
\end{align*}
\item[ii)] On $T\mathcal{F}(\Bbb R^m)$, 
 the It\^{o} form of $ \wt{(\lfloor {\rm u}_t\rceil^{\l})^{(2)}_0}:=(\theta, \varpi)_{\lfloor{\rm u}_t\rceil^0}\big((\lfloor {\rm u}_t\rceil^{\l})^{(2)}_0\big)$ is
\begin{align*}
 \wt{(\lfloor {\rm u}_t\rceil^{\l})^{(2)}_0}=&\left[\wt{D^{(1)}\lfloor{F}_t\rceil^{0}}(\lfloor {\rm u}_0\rceil^{\l}, {w})\right] \wt{(\lfloor {\rm u}_0\rceil^{\l})^{(2)}_0}\\
&\ +(\theta, \varpi)\!\left(\nabla_{(\lfloor {\rm u}_0\rceil^{\l})^{(1)}_0}\left[D^{(1)}\lf F_t\rc^0(\lfloor {\rm u}_0\rceil^{\l}, {w})\right]\!\big((\lfloor {\rm u}_0\rceil^{\l})^{(1)}_0\big)\!\right)+\wt{{\rm V}_c}((\lfloor {\rm u}_t\rceil^{\l})^{(2)}_0), 
\end{align*}
where
\begin{align*}
\wt{{\rm V}_c}((\lfloor {\rm u}_t\rceil^{\l})^{(2)}_0) :=&\int_{0}^{t}\left[\wt{D^{(1)}\lfloor {F}_{\tau, t}\rceil^{0}}(\lfloor {\rm u}_\tau \rceil^{0}, {w})\right]\left\{\big(\Phi_{\theta}, \Phi_{\varpi}\big)\big((\lfloor {\rm u}_{\tau}\rceil^{\l})^{(1)}_0,(\lfloor {\rm u}_{\tau}\rceil^{\l})^{(1)}_0, dB_{\tau}, d\tau\big)\right.\\
\ \ \ \ \ \ \ \ \ \ \ \ \ \  &-\big(\Phi_{\theta}, \Phi_{\varpi}\big)\big([D^{1}\lf F_{\tau}\rc^0](\lfloor {\rm u}_0\rceil^{\l})^{(1)}_0,[D^{1}\lf F_{\tau}\rc^0](\lfloor {\rm u}_0\rceil^{\l})^{(1)}_0, dB_\tau, d\tau\big)\\
\ \ \ \ \ \ \ \ \ \ \ \ \ \  &+\big(\Phi_{\theta}^{0, 2}, \Phi_{\varpi}^{0, 2}\big)((\lfloor {\rm u}_\tau\rceil^{\l})^{(1)}_0, dB_\tau, d\tau)-\left(\varpi\big(\Phi_{e_i}^{0, 2}\big((\lfloor {\rm u}_{\tau}\rceil^{\l})^{(1)}_0,(\lfloor {\rm u}_{\tau}\rceil^{\l})^{(1)}_0\big)\big)e_i,\right.\\
\ \ \ \ \ \ \ \ \ \ \ \ \ \  &\left.\left.+\ (\lf {\rm u}_{\tau}\rc^{0})^{-1}R\big(\lf {\rm u}_{\tau}\rc^{0} e_i, \lf {\rm u}_{\tau}\rc^{0}\theta\big(\Phi_{e_i}^{0, 2}\big((\lfloor {\rm u}_\tau\rceil^{\l})^{(1)}_0,(\lfloor {\rm u}_\tau\rceil^{\l})^{(1)}_0\big)\big)\lf {\rm u}_{\tau}\rc^{0}\right)\ d\tau\right\},
\end{align*}
\vspace*{-0.7cm}
\begin{align*}
\Phi_{e_i}^{0, 2}\big((\lfloor {\rm u}_{\tau}\rceil^{\l})^{(1)}_0\!,(\lfloor {\rm u}_{\tau}\rceil^{\l})^{(1)}_0\big)\!:=&
\nabla^{(2)}\big((\lfloor {\rm u}_{\tau}\rceil^{\l})^{(1)}_{0}, (\lfloor {\rm u}_{\tau}\rceil^{\l})^{(1)}_{0}\big)H^{0}(\lf {\rm u}_{\tau}\rc^{0}, e_i)\\
&\!\!\!\!\!\!-\nabla^{(2)}\big([D^{(1)}\lf F_{\tau}\rc^{0}](\lfloor {\rm u}_0\rceil^{\l})^{(1)}_{0}, [D^{(1)}\lf F_{\tau}\rc^{0}](\lfloor {\rm u}_0\rceil^{\l})^{(1)}_{0}\big)H^{0}(\lf {\rm u}_{\tau}\rc^{0}, e_i)\\
&\!\!\!\!\!\! +R\big(H^{0}(\lf {\rm u}_{\tau}\rc^{0}, e_i), (\lfloor {\rm u}_\tau\rceil^{\l})^{(1)}_{0}\big)(\lfloor {\rm u}_{\tau}\rceil^{\l})^{(1)}_{0}\\
&\!\!\!\!\!\!-R\big(H^{0}(\lf {\rm u}_{\tau}\rc^{0}, e_i),[D^{(1)}\lf F_{\tau}\rc^{0}](\lfloor {\rm u}_0\rceil^{\l})^{(1)}_{0}\big)[D^{(1)}\lf F_{\tau}\rc^{0}](\lfloor {\rm u}_0\rceil^{\l})^{(1)}_{0}\\
&\!\!\!\!\!\!+2\nabla\big((\lfloor {\rm u}_\tau\rceil^{\l})^{(1)}_{0}\big)\big(H^{\l}\big)^{(1)}_{0}(\lf {\rm u}_{\tau}\rc^{0}, e_i)+ \big(H^{\l}\big)^{(2)}_{0}(\lf {\rm u}_{\tau}\rc^{0}, e_i).
\end{align*}
\end{itemize}
\end{cor}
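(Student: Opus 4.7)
The plan is to apply the stochastic Duhamel/variation of constants principle, in direct parallel with the argument for $\mathcal{V}_t$ in Corollary \ref{Duha-mathcal-V-t}. The key observation from Corollary \ref{cor-u-t-lam-tangent-map-2}(i) is that the Stratonovich SDE for $(\lfloor {\rm u}_t\rceil^{\l})^{(2)}_0$ has the form
\[
 d(\lfloor {\rm u}_t\rceil^{\l})^{(2)}_0 \; = \; \nabla\bigl((\lfloor {\rm u}_t\rceil^{\l})^{(2)}_0\bigr) H^0(\lf {\rm u}_t\rc^0, \circ dB_t) \; + \; \mathcal{G}_\tau \circ dB_\tau,
\]
whose homogeneous part is exactly the linearization that governs $[D^{(1)}\lf F_t\rc^0]$ (see Lemma \ref{El-Ku-Cor}), while the forcing $\mathcal{G}_\tau$ collects the second covariant derivatives $\nabla^{(2)}(\cdot,\cdot)H^0$, the curvature term $R(\cdot,(\lfloor {\rm u}_\tau\rceil^{\l})^{(1)}_0)(\lfloor {\rm u}_\tau\rceil^{\l})^{(1)}_0$, and the parameter derivatives $(H^\l)^{(1)}_0$, $(H^\l)^{(2)}_0$.

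My first step would be to treat $[D^{(1)}\lf F_t\rc^0]$ as the fundamental matrix solution and set
\[
 \upsilon_t \; := \; (\lfloor {\rm u}_0\rceil^{\l})^{(2)}_0 \; + \; \int_0^t [D^{(1)}\lf F_\tau\rc^0]^{-1}\,\mathcal{G}_\tau \circ dB_\tau.
\]
Applying the Stratonovich Leibniz rule to $[D^{(1)}\lf F_t\rc^0]\upsilon_t$, the cross terms cancel and both processes agree at $t=0$ while satisfying the same SDE, so the uniqueness conclusion of Proposition \ref{El-Ku} will yield the Duhamel representation. The cocycle identity $[D^{(1)}\lf F_t\rc^0]\circ [D^{(1)}\lf F_\tau\rc^0]^{-1}=[D^{(1)}\lf F_{\tau,t}\rc^0]$ will then recast the integral in the stated form.

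The main obstacle is reconciling the decomposition of the initial/propagated piece. A direct variation of constants would leave $[D^{(1)}\lf F_t\rc^0](\lfloor {\rm u}_0\rceil^{\l})^{(2)}_0$ as the homogeneous part, yet the statement further adds the correction $\nabla_{(\lfloor {\rm u}_0\rceil^{\l})^{(1)}_0}[D^{(1)}\lf F_t\rc^0]\bigl((\lfloor {\rm u}_0\rceil^{\l})^{(1)}_0\bigr)$ and, to compensate, subtracts inside $\wt{{\rm V}_c}$ the corresponding $\nabla^{(2)}$ and $R$ contributions evaluated at $[D^{(1)}\lf F_\tau\rc^0](\lfloor {\rm u}_0\rceil^{\l})^{(1)}_0$ instead of at $(\lfloor {\rm u}_\tau\rceil^{\l})^{(1)}_0$. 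This reorganization uses the decomposition $(\lfloor {\rm u}_\tau\rceil^{\l})^{(1)}_0 = [D^{(1)}\lf F_\tau\rc^0](\lfloor {\rm u}_0\rceil^{\l})^{(1)}_0 + {\rm V}_c((\lfloor {\rm u}_\tau\rceil^{\l})^{(1)}_0)$ from Corollary \ref{cor-2--u-t-lam-tangent-map-1}, together with the fact that, along a $\nabla$-geodesic variation of initial frames, the second-order term of $\l\mapsto\lf F_t\rc^0(\lf {\rm u}_0\rc^\l,w)$ is precisely $\nabla_{(\lfloor {\rm u}_0\rceil^{\l})^{(1)}_0}[D^{(1)}\lf F_t\rc^0]$ acting on $(\lfloor {\rm u}_0\rceil^{\l})^{(1)}_0$. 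Verifying this algebraic cancellation by expanding $\nabla^{(2)}$ and $R$ along the $\l$-variation is where the bulk of the bookkeeping sits.

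For part (ii), I would apply the parallelism $(\theta,\varpi)$ to (i), use the conjugation identity $[\wt{D^{(1)}\lf F_{\tau,t}\rc^0}]=(\theta,\varpi)_{\lf {\rm u}_t\rc^0}\circ[D^{(1)}\lf F_{\tau,t}\rc^0]\circ(\theta,\varpi)^{-1}_{\lf {\rm u}_\tau\rc^0}$, and convert each $\circ dB_\tau$ to its It\^o form via the standard $\tfrac12 d\langle\cdot,B\rangle$ correction. The Stratonovich-to-It\^o drifts regroup into $\Phi_\theta,\Phi_\varpi$ (already identified in Lemma \ref{mathcal-V-t}), into the new $\Phi^{0,2}_\theta,\Phi^{0,2}_\varpi$ coming from the $(H^\l)^{(1)}_0,(H^\l)^{(2)}_0$ forcings, and into the $\Phi^{0,2}_{e_i}$ piece that arises from the commutator $[H(\cdot,e_i),\cdot]$ when one performs the Stratonovich-to-It\^o conversion inside the frame-bundle lift. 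The final subtraction of the $\varpi(\Phi^{0,2}_{e_i})$ and curvature terms in $\wt{{\rm V}_c}$ is the analogue of the correction in Corollary \ref{Duha-mathcal-V-t}(iii), guaranteeing that the right-hand side is genuinely the It\^o integral of the stated integrand on $T\mathcal{F}(\R^m)$.
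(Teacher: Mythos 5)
Your proposal is correct and follows essentially the route the paper intends: the paper itself only states the conclusion, remarking that it follows from Corollary \ref{cor-u-t-lam-tangent-map-2} and Lemma \ref{mathcal-V-t} by the stochastic Duhamel principle verified in Corollary \ref{Duha-mathcal-V-t}, which is exactly your variation-of-constants argument with $[D^{(1)}\lf F_{\tau,t}\rc^0]$ as fundamental solution, the add-and-subtract identification of $\nabla_{(\lfloor {\rm u}_0\rceil^{\l})^{(1)}_0}[D^{(1)}\lf F_t\rc^0]\big((\lfloor {\rm u}_0\rceil^{\l})^{(1)}_0\big)$ via Corollary \ref{Duha-mathcal-V-t} i), and the Stratonovich-to-It\^o conversion for part ii).
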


Let $[D^{(2)}\lf F_t\rc^0(\cdot, {w})]$ be the restriction of the second order tangent map of  $\lf F_t\rc^0$ on the space $T_{(\lf{\rm u}\rc^0, (\lf{\rm u}\rc^{\l})^{(1)}_{\l})}T_{\lf{\rm u}\rc^0}\mathcal{F}(\M)$. We can deduce  from  Corollary \ref{cor-2--u-t-lam-tangent-map-1} and Corollary  \ref{cor-u-t-lam-tangent-map-3}  that 
\begin{align}\notag\left((\lfloor {\rm u}_t\rceil^{\l})^{(1)}_0,  (\lfloor {\rm u}_t\rceil^{\l})^{(2)}_0\right)=&\ [D^{(2)}\lf F_t\rc^0(\cdot, {w})]\left( (\lfloor {\rm u}_0\rceil^{\l})^{(1)}_0,  (\lfloor {\rm u}_0\rceil^{\l})^{(2)}_0\right)\\
\notag&+\left({\rm V}_c((\lfloor {\rm u}_t\rceil^{\l})^{(1)}_0), {\rm V}_c((\lfloor {\rm u}_t\rceil^{\l})^{(2)}_0)\right), \\
\big(\wt{(\lfloor {\rm u}_t\rceil^{\l})^{(1)}_0}\!,  \wt{(\lfloor {\rm u}_t\rceil^{\l})^{(2)}_0}\big)\label{ut2-wt-DF2}=&\  [\wt{D^{(2)}\lf F_t\rc^0}(\cdot, {w})]\big( \wt{(\lfloor {\rm u}_0\rceil^{\l})^{(1)}_0}\!,  \wt{(\lfloor {\rm u}_0\rceil^{\l})^{(2)}_0}\big)\\
\notag&\ +\left(\wt{{\rm V}_c}((\lfloor {\rm u}_t\rceil^{\l})^{(1)}_0), \wt{{\rm V}_c}((\lfloor {\rm u}_t\rceil^{\l})^{(2)}_0)\right).
\end{align}

Continuing the discussions in Lemma \ref{u-t-lam-tangent-map-1} and Lemma \ref{u-t-lam-tangent-map-2}, we can  derive the SDEs for the differentials  $(\lfloor {\rm{u}}_t\rceil^{\l})^{(j)}_\l$,  $3\leq j\leq k-2$, and their pull back  $\wt{(\lfloor {\rm{u}}_t\rceil^{\l})^{(j)}_\l}$ via  the $(\theta, \varpi)$-map, whose It\^{o} forms involve 
$\{{\bf \nabla}^{(l')}(H^{\l})^{(j')}_{\l}\}_{j'\leq j, l'+j'\leq j}, \{{\bf \nabla}^{(l')} R^{\l}\}_{l'\leq j}$.  We omit the details.

The SDEs of $\left([D^{(l)}\lf F_t\rc^{\l}(\cdot, {w})]\right)^{(j)}_{\l}$ can  be formulated as in Section \ref{BMM-flow} by analogy with the deterministic case.  We only  state the SDEs  for the  $(l, j)=(1, 1)$ case  using the reference connection of $\wt{g}^0$, whose calculations  can be done  as in Lemma \ref{mathcal-V-t}.

\begin{lem}\label{DF-t-la-diff-la}Let $\lambda\in
(-1, 1)\mapsto g^{\lambda}\in \mathcal{M}^k(M)$  be a $C^k$ curve with $k\geq 4$. Let $\l\mapsto (\lf{\rm u}_0\rc^{\l}, {\rm v}_0^{\l})\in T\mathcal{F}(\M)$ be $C^{1}$ and write 
\[
\big(\lf{\rm u}_t\rc^{\l}, {\rm v}_t^{\l}\big):=\left(\lf F_t\rc^{\l}(\lf{\rm u}_0\rc^{\l}), \big[D^{(1)}\lf F_t\rc^{\l}(\lf{\rm u}_0\rc^{\l}, {w})\big]{\rm v}_0^{\l}\right),\ \  ({\rm v}_t^{\l})^{(1)}_{\l}:=\nabla\big((\lf{\rm u}_t\rc^{\l})^{(1)}_{\l}\big){\rm v}_t^{\l}.
\]
\begin{itemize}
\item[i)] The process  $({\rm v}_t^{\l})^{(1)}_{\l}$ satisfies the Stratonovich SDE
\begin{align*}
d({\rm v}_t^{\l})^{(1)}_{\l}=& \nabla\big(({\rm v}_t^{\l})^{(1)}_{\l}\big)H^{\l}(\lf{\rm u}_t\rc^{\l}, \circ dB_t)+\nabla^{(2)}\big({\rm v}_t^{\l}, (\lf{\rm u}_t\rc^{\l})^{(1)}_{\l}\big)H^{\l}(\lf{\rm u}_t\rc^{\l}, \circ dB_t)\\
&+R\big(H^{\l}(\lf {\rm u}_t\rc^{\l}, e), (\lf{\rm u}_t\rc^{\l})^{(1)}_{\l}\big){\rm v}_t^{\l}+\nabla({\rm v}_t^{\l})(H^{\l})^{(1)}_{\l}(\lf{\rm u}_t\rc^{\l}, \circ dB_t).
\end{align*}
\item[ii)] The process  $(\theta, \varpi)_{\lf{\rm u}_t\rc^{\l}}\big(({\rm v}_t^{\l})^{(1)}_{\l}\big)$ satisfies the Stratonovich SDE
\begin{align*}
&d\big((\theta, \varpi)({\rm v}_t^{\l})^{(1)}_{\l}\big)\\
&=d(\theta, \varpi)\left(H^{\l}(\lf{\rm u}_t\rc^{\l}, \circ dB_t), ({\rm v}_t^{\l})^{(1)}_{\l}\right)+\nabla(({\rm v}_t^{\l})^{(1)}_{\l})\left((\theta, \varpi)\big(H^{\l}(\lf{\rm u}_t\rc^{\l}, \circ dB_t)\big)\right)\\
&\ \  +(\theta, \varpi)\left(\nabla^{(2)}\big({\rm v}_t^{\l}, (\lf{\rm u}_t\rc^{\l})^{(1)}_{\l}\big)H^{\l}(\lf{\rm u}_t\rc^{\l}, \circ dB_t)+R\big(H^{\l}(\lf {\rm u}_t\rc^{\l}, \circ dB_t), (\lf{\rm u}_t\rc^{\l})^{(1)}_{\l}\big){\rm v}_t^{\l}\right)\\
&\ \  +(\theta, \varpi)\left(\nabla({\rm v}_t^{\l})(H^{\l})^{(1)}_{\l}(\lf{\rm u}_t\rc^{\l}, \circ dB_t)\right).
\end{align*}
\item[iii)] The It\^{o} SDE of the process  $(\theta, \varpi)_{\lf{\rm u}_t\rc^{\l}}\big(({\rm v}_t^{\l})^{(1)}_{\l}\big)$ is
\begin{align*}
&\ \!d\big((\theta, \varpi)({\rm v}_t^{\l})^{(1)}_{\l}\big)\\
&=d(\theta, \varpi)\left(H^{\l}(\lf{\rm u}_t\rc^{\l},  dB_t), ({\rm v}_t^{\l})^{(1)}_{\l}\right)+\nabla(({\rm v}_t^{\l})^{(1)}_{\l})\big((\theta, \varpi)\big(H^{\l}(\lf{\rm u}_t\rc^{\l},  dB_t)\big)\big)\\
&\ \ \  +(\theta, \varpi)\left(\nabla^{(2)}\big({\rm v}_t^{\l}, (\lf{\rm u}_t\rc^{\l})^{(1)}_{\l}\big)H^{\l}(\lf{\rm u}_t\rc^{\l}, dB_t)+R\big(H^{\l}(\lf {\rm u}_t\rc^{\l}, e), (\lf{\rm u}_t\rc^{\l})^{(1)}_{\l}\big){\rm v}_t^{\l}\right)\\
&\ \ \   +(\theta, \varpi)\left(\nabla({\rm v}_t^{\l})(H^{\l})^{(1)}_{\l}(\lf{\rm u}_t\rc^{\l}, dB_t)\right)+\nabla(H^{\l}(\lfloor {\rm u}_t\rceil^{\l},  e_i))\left\{d(\theta, \varpi)\big(H^{\l}(\lf{\rm u}_t\rc^{\l},  e_i), ({\rm v}_t^{\l})^{(1)}_{\l}\big)\right.\\
&\ \ \  +\nabla(\big({\rm v}_t^{\l}\big)^{(1)}_{\l})\big((\theta, \varpi)\big(H^{\l}(\lf{\rm u}_t\rc^{\l}, e_i)\big)\big)+(\theta, \varpi)\big(\nabla^{(2)}\big({\rm v}_t^{\l}, (\lf{\rm u}_t\rc^{\l})^{(1)}_{\l}\big)H^{\l}(\lf{\rm u}_t\rc^{\l}, e_i)\big)\\
&\ \ \  +R\big(H^{\l}(\lf {\rm u}_{t}\rc^{\l}, e_i), (\lfloor {\rm u}_t\rceil^{\l})^{(1)}_{\l}\big){\rm v}_t^{\l}\left.+\big(\theta, \varpi\big)\big(\nabla({\rm v}_t^{\l})\big(H^{\l}\big)^{(1)}_{\l}(\lf {\rm u}_{t}\rc^{\l}, e_i)\big)\right\}\ dt.
\end{align*}
\end{itemize}
\end{lem}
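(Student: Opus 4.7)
The plan is to differentiate, covariantly in $\l$ (using the reference $\wt{g}^0$-connection introduced before Lemma \ref{u-t-lam-tangent-map-1}), the Stratonovich SDE satisfied by ${\rm v}_t^{\l}$. Since ${\rm v}_t^{\l}=\big[D^{(1)}\lf F_t\rc^{\l}(\lf{\rm u}_0\rc^{\l}, {w})\big]{\rm v}_0^{\l}$, Lemma \ref{El-Ku-Cor} i) applied to the $\wt{g}^{\l}$-flow gives
$$d{\rm v}_t^{\l}=\nabla({\rm v}_t^{\l})H^{\l}(\lf{\rm u}_t\rc^{\l}, \circ dB_t).$$
The derivation is then formally parallel to the passage from Lemma \ref{El-Ku-Cor} to Lemma \ref{mathcal-V-t}, with the new feature that the coefficient field $H^{\l}$ itself depends on the parameter $\l$, as in Lemma \ref{u-t-lam-tangent-map-1}.

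For part i), I view $(\l,t)\mapsto \lf{\rm u}_t\rc^{\l}$ as a two-parameter stochastic surface and ${\rm v}_t^{\l}$ as a vector field along it. On the left-hand side, the standard Jacobi identity for two-parameter families, valid formally in the Stratonovich calculus because of its ordinary chain-rule behavior, yields
$$\frac{D}{d\l}d{\rm v}_t^{\l}=d\big(({\rm v}_t^{\l})^{(1)}_{\l}\big)+R\big((\lf{\rm u}_t\rc^{\l})^{(1)}_{\l}, H^{\l}(\lf{\rm u}_t\rc^{\l}, \circ dB_t)\big){\rm v}_t^{\l}.$$
On the right-hand side, I apply the chain and Leibniz rules to $\nabla_{{\rm v}_t^{\l}}[H^{\l}(\cdot, \circ dB_t)](\lf{\rm u}_t\rc^{\l})$; there are exactly three contributions: variation of the parameter inside $H^{\l}$ giving $\nabla({\rm v}_t^{\l})(H^{\l})^{(1)}_{\l}$, variation of the basepoint $\lf{\rm u}_t\rc^{\l}$ giving the Hessian term $\nabla^{(2)}({\rm v}_t^{\l}, (\lf{\rm u}_t\rc^{\l})^{(1)}_{\l})H^{\l}$, and variation of the argument ${\rm v}_t^{\l}$ giving $\nabla(({\rm v}_t^{\l})^{(1)}_{\l})H^{\l}$. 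Equating the two sides and transferring the curvature contribution across via $R(X,Y)=-R(Y,X)$ produces the Stratonovich SDE of i).

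Parts ii) and iii) are then mechanical. For ii), I push the identity of i) forward via the parallelism $(\theta, \varpi)$, using precisely the same recipe used in passing from Lemma \ref{El-Ku-Cor} i) to ii) and from Lemma \ref{mathcal-V-t} i) to ii): the correction $\nabla(({\rm v}_t^{\l})^{(1)}_{\l})((\theta,\varpi)(H^{\l}))$ arises because $(\theta,\varpi)$ is not parallel under the reference connection. For iii), I convert the Stratonovich equation of ii) to It\^{o} form by adding the usual one-half quadratic-covariation correction; evaluating these covariations with the structure equations (\ref{structure-1})--(\ref{structure-2}) yields precisely the extra $\nabla(H^{\l}(\lf{\rm u}_t\rc^{\l}, e_i))\{\cdots\}\,dt$ terms listed in the statement. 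The only delicate step is the bookkeeping of these correction terms, and this is routine since all coefficient fields $\{\nabla^{l'}H^{\l}\}_{l'\leq 2}$, $(H^{\l})^{(1)}_{\l}$ and $R$ are bounded under the hypothesis $k\geq 4$, so no analytic obstruction arises.
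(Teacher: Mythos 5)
The paper states Lemma~\ref{DF-t-la-diff-la} without an explicit proof, noting only that the calculation "can be done as in Lemma~\ref{mathcal-V-t}'' (which is itself cited from Elworthy). The argument the paper has in mind is exactly the one you give: commute the covariant $t$- and $\l$-derivatives along the two-parameter stochastic surface $(\l,t)\mapsto\lf{\rm u}_t\rc^{\l}$ with the reference $\wt{g}^0$-connection, pick up the curvature correction $R\big((\lf{\rm u}_t\rc^{\l})^{(1)}_{\l},H^{\l}(\lf{\rm u}_t\rc^{\l},\circ dB_t)\big){\rm v}_t^{\l}$, and apply the Leibniz rule to $\nabla({\rm v}_t^{\l})H^{\l}(\lf{\rm u}_t\rc^{\l},\circ dB_t)$, obtaining the three terms because ${\rm v}_t^{\l}$, the basepoint $\lf{\rm u}_t\rc^{\l}$ and the coefficient field $H^{\l}$ all depend on $\l$; this is precisely the computation carried out in the deterministic Lemma~\ref{v-t-s-diff-determine}, and the Stratonovich calculus licenses the formal manipulations. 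Your derivation of (i) is therefore correct, and the passage to (ii) via the structure equations and to (iii) via the It\^o--Stratonovich correction follows the same recipe as the other lemmas of Section~4.5. One small inaccuracy of exposition: the extra term $\nabla\big(({\rm v}_t^{\l})^{(1)}_{\l}\big)\big((\theta,\varpi)(H^{\l})\big)$ in (ii) does not arise because $(\theta,\varpi)$ fails to be parallel; it arises because $H^{\l}$ is $\wt{g}^{\l}$-horizontal, not $\wt{g}^0$-horizontal, so that the scalar function $u\mapsto\varpi_u\big(H^{\l}(u,e_i)\big)$ is nonconstant for $\l\neq 0$ and its directional derivative along $({\rm v}_t^{\l})^{(1)}_{\l}$ survives. (In Lemma~\ref{mathcal-V-t}, where $\theta,\varpi$ and $H$ belong to the same metric, $\varpi(H)=0$ and $\theta(H(\cdot,e))=e$ identically, so the analogous term vanishes.) This does not affect the validity of the proof.
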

As before,  the formulas in  Lemma \ref{DF-t-la-diff-la}  can be simplified at $\l=0$. 

\begin{cor}\label{Cor-DF-t-la-diff-la}Let $\lambda\in
(-1, 1)\mapsto g^{\lambda}\in \mathcal{M}^k(M)$  be a $C^k$ curve with $k\geq 4$  and let $\l\mapsto (\lf{\rm u}_0\rc^{\l}, {\rm v}_0^{\l})\in T\mathcal{F}(\M)$ be $C^{1}$.  We retain all the notations in Lemma \ref{DF-t-la-diff-la}. 
\begin{itemize}
\item[i)]The process  $(\theta, \varpi)_{\lf{\rm u}_t\rc^{\l}}\big(({\rm v}_t^{\l})^{(1)}_{0}\big)$ satisfies the Stratonovich SDE
\begin{align*}
d\big((\theta, \varpi)(({\rm v}_t^{\l})^{(1)}_{0})\big)&=\left(\varpi(({\rm v}_t^{\l})^{(1)}_{0})\circ dB_t, {(\lfloor{\rm u}_t\rceil^0)}^{-1}R\big(\lfloor{\rm u}_t\rceil^0\circ dB_t, \theta(({\rm v}_t^{\l})^{(1)}_{0})\big)\lfloor{\rm u}_t\rceil^0\right)\\
&\ +(\theta, \varpi)\left(\nabla^{(2)}\big({\rm v}_t^{\l}, (\lf{\rm u}_t\rc^{\l})^{(1)}_{\l}\big)H^{\l}(\lf{\rm u}_t\rc^{\l}, \circ dB_t)\right)\\
&\ +(\theta, \varpi)\left(\! R\big(H^{\l}(\lf {\rm u}_t\rc^{\l}, \circ dB_t), (\lf{\rm u}_t\rc^{\l})^{(1)}_{\l}\big){\rm v}_t^{\l}+\nabla({\rm v}_t^{\l})(H^{\l})^{(1)}_{\l}(\lf{\rm u}_t\rc^{\l}, \circ dB_t)\right). 
\end{align*}
\item[ii)] The It\^{o} SDE of the process $\theta_{\lf{\rm u}_t\rc^{\l}}\big(({\rm v}_t^{\l})^{(1)}_{0}\big)$ in $T\mathcal{F}(\Bbb R^n)$ is 
\begin{align*}
d\big(\theta(({\rm v}_t^{\l})^{(1)}_{0}))=&\ \!\varpi(({\rm v}_t^{\l})^{(1)}_{0}) d B_{t}+{\rm Ric}(\lf {\rm u}_{t}\rc^{0}\theta(({\rm v}_t^{\l})^{(1)}_{0})) dt\\
&+\Phi_{\theta}({\rm v}_t^{\l}, (\lfloor {\rm u}_t\rceil^{\l})^{(1)}_0, dB_t, dt)+\Phi_{\theta}^{1, 1}({\rm v}_t^{\l},(\lfloor {\rm u}_t\rceil^{\l})^{(1)}_0, dB_t, dt),
\end{align*}
where $\Phi_{\theta}(\cdot,\cdot, dB_t, dt)$ is given in (\ref{ito-2-theta-varpi-mathcal-V}) associated to  $\lf {\rm u}_{t}\rc^{0}$ and \footnote{We use the  upper script $^{1, 1}$ to indicate the functions  $\Phi_{\theta}^{1, 1}, \Phi_{\varpi}^{1, 1}$ are associated with  $\big([D^{(1)}\lf F_t(\cdot, {w})]\big)^{(1)}_{\l}$.}
\begin{align*}
\Phi_{\theta}^{1, 1}({\rm v}_t^{0},(\lfloor {\rm u}_t\rceil^{\l})^{(1)}_0\!, dB_t, dt):=&\ \!2\varpi\big(\nabla({\rm v}_t^{0})(H^{\l})^{(1)}_{0}(\lf{\rm u}_t\rc^{0}, e_i)\big)e_i dt\\
&+\theta\left(\left[H^0(\lf{\rm u}_t\rc^{0}, e_i), \nabla({\rm v}_t^{0})(H^{\l})^{(1)}_{0}(\lf{\rm u}_t\rc^{0}, e_i)\right]\right)dt. 
\end{align*}
The It\^{o} SDE of the process $\varpi_{\lf{\rm u}_t\rc^{\l}}\big(({\rm v}_t^{\l})^{(1)}_{0}\big)$ in $T\mathcal{F}(\Bbb R^n)$ is
\begin{align*}
d\big(\varpi(({\rm v}_t^{\l})^{(1)}_{0}))
&=(\lf {\rm u}_{t}\rc^{0})^{-1}R\left(\lf {\rm u}_{t}\rc^{0} dB_t, \lf {\rm u}_{t}\rc^{0}\theta(({\rm v}_t^{\l})^{(1)}_{0})\right)\lf {\rm u}_{t}\rc^{0}\\
&\ \ \ +(\lf {\rm u}_{t}\rc^{0})^{-1}R\left(\lf {\rm u}_{t}\rc^{0} e_i, \lf {\rm u}_{t}\rc^{0}\varpi(({\rm v}_t^{\l})^{(1)}_{0})e_i\right)  \lf {\rm u}_{t}\rc^{0}\ dt\\
&\ \ \ +(\lf {\rm u}_{t}\rc^{0})^{-1}\big(\nabla(\lf {\rm u}_{t}\rc^{0}e_i) R\big)\left(\lf {\rm u}_{t}\rc^{0} e_i, \lf {\rm u}_{t}\rc^{0}\theta(({\rm v}_t^{\l})^{(1)}_{0})\right)\lf {\rm u}_{t}\rc^{0}\ dt\\
&\ \ \ +\Phi_{\varpi}({\rm v}_t^{\l}, (\lfloor {\rm u}_t\rceil^{\l})^{(1)}_0, dB_t, dt)+\Phi_{\varpi}^{1, 1}({\rm v}_t^{\l},(\lfloor {\rm u}_t\rceil^{\l})^{(1)}_0, dB_t, dt),
\end{align*}
where $\Phi_{\varpi}(\cdot,\cdot, dB_t, dt)$ is given in (\ref{ito-2-varpi-mathcal-V}) associated to  $\lf {\rm u}_{t}\rc^{0}$ and
\begin{align*}
\Phi_{\varpi}^{1, 1}({\rm v}_t^{0},(\lfloor {\rm u}_t\rceil^{\l})^{(1)}_0, dB_t, dt):=&\ \!\varpi\left(\nabla({\rm v}_t^{0})(H^{\l})^{(1)}_{0}(\lf{\rm u}_t\rc^{0}, dB_t)\right).
\end{align*}
\end{itemize}
\end{cor}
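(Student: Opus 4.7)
The plan is to specialize the formulas of Lemma \ref{DF-t-la-diff-la} to $\lambda=0$ and then simplify using the structure equations (\ref{structure-1})--(\ref{structure-2}) for the Levi-Civita parallelism $(\theta,\varpi)$ of $\widetilde g^0$, exactly along the lines already carried out in Corollary \ref{u-t-lam-0-tangent-map-1} (for $(\lfloor{\rm u}_t\rceil^\lambda)^{(1)}_0$) and Corollary \ref{cor-u-t-lam-tangent-map-2} (for $(\lfloor{\rm u}_t\rceil^\lambda)^{(2)}_0$). So the whole calculation is mechanical; what has to be set up carefully is the bookkeeping of horizontal versus vertical parts.

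First, for part i), I take the Stratonovich SDE of $(\theta,\varpi)_{\lfloor{\rm u}_t\rceil^\lambda}\big(({\rm v}_t^\lambda)^{(1)}_\lambda\big)$ in Lemma \ref{DF-t-la-diff-la} ii) and evaluate at $\lambda=0$, where $H^\lambda$ becomes the $\widetilde g^0$-horizontal lift $H$, the connection $\nabla$ is Levi-Civita, and the torsion form ${\bf\Theta}$ vanishes. The first two terms in Lemma \ref{DF-t-la-diff-la} ii) combine as $d(\theta,\varpi)\big(H(\lf{\rm u}_t\rc^0,\circ dB_t),({\rm v}_t^\lambda)^{(1)}_0\big)+\nabla\big(({\rm v}_t^\lambda)^{(1)}_0\big)\big((\theta,\varpi)(H(\lf{\rm u}_t\rc^0,\circ dB_t))\big)$. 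Using (\ref{structure-1}) with the horizontal vector $H(\lf{\rm u}_t\rc^0,\circ dB_t)\in\ker\varpi$ and the identity $\theta(H(\lf{\rm u}_t\rc^0,\circ dB_t))=dB_t$, this combination collapses to the pair $\big(\varpi(({\rm v}_t^\lambda)^{(1)}_0)\circ dB_t,\,{(\lf{\rm u}_t\rc^0)}^{-1}R(\lf{\rm u}_t\rc^0\circ dB_t,\theta(({\rm v}_t^\lambda)^{(1)}_0))\lf{\rm u}_t\rc^0\big)$, which is precisely the statement of i). The remaining three terms in Lemma \ref{DF-t-la-diff-la} ii) already appear in exactly the claimed form after applying $(\theta,\varpi)$.

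For part ii), I start from the Itô SDE in Lemma \ref{DF-t-la-diff-la} iii) at $\lambda=0$. The quadratic-variation (Stratonovich-to-Itô) correction is the $dt$-term involving $\nabla(H(\lf{\rm u}_t\rc^0,e_i))$ acting on the bracketed expression, summed over $i$. Splitting that correction into the four sub-terms matching the four summands of the Stratonovich equation, and using the bracket rules $[H(\cdot,e_i),\text{horizontal}]\in V$ and $[H(\cdot,e_i),\text{vertical}]\in H$ (which determine which of $\theta$ or $\varpi$ kills the result), together with the structure equations (\ref{structure-1})--(\ref{structure-2}) that identify the remaining brackets with curvature, one recovers exactly the Ricci term ${\rm Ric}(\lf{\rm u}_t\rc^0\theta(\cdot))\,dt$ and the curvature-type $dt$-terms ${(\lf{\rm u}_t\rc^0)}^{-1}R(\lf{\rm u}_t\rc^0 e_i,\lf{\rm u}_t\rc^0\varpi(\cdot)e_i)\lf{\rm u}_t\rc^0\,dt$ and ${(\lf{\rm u}_t\rc^0)}^{-1}(\nabla(\lf{\rm u}_t\rc^0 e_i)R)(\lf{\rm u}_t\rc^0 e_i,\lf{\rm u}_t\rc^0\theta(\cdot))\lf{\rm u}_t\rc^0\,dt$. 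The two terms coming from the Hessian-plus-curvature summand $\nabla^{(2)}({\rm v}_t^0,(\lf{\rm u}_t\rc^0)^{(1)}_0)H+R(H(\cdot,\cdot),(\lf{\rm u}_t\rc^0)^{(1)}_0){\rm v}_t^0$ assemble, after the same Stratonovich-to-Itô correction that produced $\Phi_\theta,\Phi_\varpi$ in Lemma \ref{mathcal-V-t}, into $\Phi_\theta({\rm v}_t^0,(\lf{\rm u}_t\rc^\lambda)^{(1)}_0,dB_t,dt)$ and $\Phi_\varpi(\cdot)$. The two terms carrying $(H^\lambda)^{(1)}_0$ assemble analogously into $\Phi_\theta^{1,1}$ and $\Phi_\varpi^{1,1}$.

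The only non-routine point, and what I would treat most carefully, is verifying the exact form of $\Phi_\theta^{1,1}$: after the Itô correction one has a $dt$-term $\nabla(H(\lf{\rm u}_t\rc^0,e_i))\,(\theta,\varpi)\big(\nabla({\rm v}_t^0)(H^\lambda)^{(1)}_0(\lf{\rm u}_t\rc^0,e_i)\big)$ summed over $i$, and one has to rewrite this using (\ref{structure-1})--(\ref{structure-2}) in terms of a bracket $[H(\lf{\rm u}_t\rc^0,e_i),\nabla({\rm v}_t^0)(H^\lambda)^{(1)}_0(\lf{\rm u}_t\rc^0,e_i)]$, together with a vertical contribution whose $\theta$-image comes from differentiating the connection form along the horizontal vector. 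A direct application of the identity $dF(X,Y)=X(F(Y))-Y(F(X))-F([X,Y])$ for $F=\theta$ and $F=\varpi$ against $X=H(\cdot,e_i)$ and the vertical/horizontal split of $\nabla({\rm v}_t^0)(H^\lambda)^{(1)}_0(\lf{\rm u}_t\rc^0,e_i)$ produces exactly the displayed $\Phi_\theta^{1,1}$ and $\Phi_\varpi^{1,1}$. No new analytic ingredient is needed beyond what was already used in the proofs of Corollary \ref{u-t-lam-0-tangent-map-1} and Corollary \ref{cor-u-t-lam-tangent-map-2}.
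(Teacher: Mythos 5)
Your proof is correct and matches the approach the paper intends: the paper gives no explicit argument for this corollary beyond the remark ``As before, the formulas in Lemma \ref{DF-t-la-diff-la} can be simplified at $\l=0$,'' and you correctly read this as the same specialization-to-$\l=0$ calculation already carried out in Corollaries \ref{u-t-lam-0-tangent-map-1} and \ref{cor-u-t-lam-tangent-map-2}, using the structure equations (\ref{structure-1})--(\ref{structure-2}), the kernel properties $\mathrm{Ker}(\theta)=VT\mathcal{F}(\M)$, $\mathrm{Ker}(\varpi)=HT\mathcal{F}(\M)$, and the bracket rules stated just before Corollary \ref{u-t-lam-0-tangent-map-1}. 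One small clarification: in part i) it would be cleaner to say the second term $\nabla\big(({\rm v}_t^\l)^{(1)}_0\big)\big((\theta,\varpi)(H(\lf{\rm u}_t\rc^0,\circ dB_t))\big)$ \emph{vanishes} at $\l=0$ (since $\theta(H(\cdot,e))\equiv e$ is constant and $\varpi(H(\cdot,e))\equiv 0$), rather than that the two terms ``combine''; the net simplification you state is nonetheless right.
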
 

We can formulate $({\rm v}_t^{\l})^{(1)}_{0}$ and  $(\theta, \varpi)({\rm v}_t^{\l})^{(1)}_{0}$ by stochastic Duhamel principle.

\begin{cor}
 We retain  all the notations  in Corollary \ref{Cor-DF-t-la-diff-la}.  
\begin{itemize}
\item[i)]  The process  $({\rm v}_t^{\l})^{(1)}_{0}$ has the expression 
\begin{align*}
({\rm v}_t^{\l})^{(1)}_{0}\!=\big[D^{(1)}\lf F_t\rc^0(\lf {\rm u}_0\rc^{0}, {w})\big](({\rm v}_0^{\l})^{(1)}_{0})+\! \nabla({(\lf {\rm u}_0\rc^{\l})^{(1)}_0})\big[D^{(1)}\lf F_t\rc^0(\lf {\rm u}_0\rc^{0}, {w})\big]({{\mathsf v}^0_0})+\! {\rm V}_c(({\rm v}_t^{\l})^{(1)}_{0}), 
\end{align*}
where 
\begin{align*}
{\rm V}_c(({\rm v}_t^{\l})^{(1)}_{0}):=\int_{0}^{t} \big[D^{(1)}\lfloor {F}_{\tau, t}\rceil^{0}(\lfloor {\rm u}_\tau \rceil^{0}, {w})\big]\left(\nabla({\rm v}_\tau^{0})(H^{\l})^{(1)}_{0}(\lf{\rm u}_\tau\rc^{0}, \circ dB_\tau)\right).
\end{align*}
\item[ii)]On $T\mathcal{F}(\Bbb R^n)$, the process  $\wt{( {\rm v}_t^{\l})^{(1)}_0}:=(\theta, \varpi)_{\lf{\rm u}_t\rc^{\l}}\big(({\rm v}_t^{\l})^{(1)}_{0}\big)$ has the expression
\begin{align*}
\wt{( {\rm v}_t^{\l})^{(1)}_0}=&[\wt{D^{(1)}\lf F_t\rc^0}(\lf {\rm u}_0\rc^{0}, {w})]\wt{({\rm v}_0^{\l})^{(1)}_{0}}+(\theta, \varpi)\left(\nabla({(\lf {\rm u}_0\rc^{\l})^{(1)}_0}) \big[D^{(1)}\lf F_t\rc^0(\lf {\rm u}_0\rc^{0}, {w})\big]({{\mathsf v}^0_0})\right)\\
&+\wt{{\rm V}_c}(({\rm v}_t^{\l})^{(1)}_{0}), 
\end{align*}
where 
\begin{align*}
\wt{{\rm V}_c}(({\rm v}_t^{\l})^{(1)}_{0})=&\int_{0}^{t}\left[\wt{D^{(1)}\lfloor {F}_{\tau, t}\rceil^{0}}(\lfloor {\rm u}_\tau \rceil^{0}, {w})\right]\left\{\big(\Phi_{\theta}^{1, 1}, \Phi_{\varpi}^{1, 1}\big)({\rm v}_\tau^{0},(\lfloor {\rm u}_\tau\rceil^{\l})^{(1)}_0\!, dB_\tau, d\tau)\right.\\
&\ \ \ \ \ \ \ \ \ \ \ \ \ \ \ \ \ \ \ \ \ \ \ \ \ \ \ \ \ \ \  \left.-\left(\varpi\big(\nabla({\rm v}_\tau^{0})(H^{\l})^{(1)}_{0}(\lf{\rm u}_\tau\rc^{0}, e_i)\big)e_i,\ 0\right)\  d\tau\right\}.
\end{align*}
\end{itemize}
\end{cor}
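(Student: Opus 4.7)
The plan is to mimic the stochastic Duhamel (variation-of-parameters) argument of Corollary \ref{Duha-mathcal-V-t}. I start from the Stratonovich SDE for $({\rm v}_t^{\l})^{(1)}_{0}$ given by Corollary \ref{Cor-DF-t-la-diff-la} i). Reading that SDE, the coefficient of the unknown $({\rm v}_t^{\l})^{(1)}_{0}$ in the drift-free part is exactly the vector field $\nabla(\,\cdot\,)H^{0}(\lf{\rm u}_t\rc^0,\circ dB_t)$ that generates the first tangent map $[D^{(1)}\lf F_t\rc^{0}(\lf{\rm u}_0\rc^{0},w)]$. The remaining pieces of the SDE are viewed as external forcings: two forcings involving $\nabla^{(2)}(\,\cdot\,,\,\cdot\,)H^{0}$ and $R(H^{0},\,\cdot\,)\cdot$ applied to the pair $({\rm v}_t^{0},(\lf{\rm u}_t\rc^\l)^{(1)}_0)$, plus one forcing of the form $\nabla({\rm v}_t^{0})(H^{\l})^{(1)}_0(\lf{\rm u}_t\rc^{0},\circ dB_t)$.

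Next I would introduce the random ``fundamental matrix'' ${\bf y}_t(w):=[D^{(1)}\lf F_t\rc^0(\lf{\rm u}_0\rc^0,w)]$, which satisfies $d{\bf y}_t=\nabla({\bf y}_t)H^{0}(\lf{\rm u}_t\rc^0,\circ dB_t)$ with ${\bf y}_0={\rm Id}$, and set
\begin{align*}
\upsilon_t \;:=\;({\rm v}_0^{\l})^{(1)}_{0}+\int_0^t {\bf y}_\tau^{-1}\Bigl(&\nabla^{(2)}\bigl({\rm v}_\tau^{0},(\lf{\rm u}_\tau\rc^\l)^{(1)}_0\bigr)H^{0}(\lf{\rm u}_\tau\rc^0,\circ dB_\tau)\\
&+R\bigl(H^{0}(\lf{\rm u}_\tau\rc^0,\circ dB_\tau),(\lf{\rm u}_\tau\rc^\l)^{(1)}_0\bigr){\rm v}_\tau^{0}\\
&+\nabla({\rm v}_\tau^{0})(H^{\l})^{(1)}_0(\lf{\rm u}_\tau\rc^0,\circ dB_\tau)\Bigr).
\end{align*}
Applying the Stratonovich product rule $d({\bf y}_t\upsilon_t)=(\circ d{\bf y}_t)\upsilon_t+{\bf y}_t\circ d\upsilon_t$ (with $d$ read as covariant derivative along the reference connection of $\wt g^0$) and simplifying, ${\bf y}_t\upsilon_t$ satisfies precisely the SDE of Corollary \ref{Cor-DF-t-la-diff-la} i) with the correct initial value $({\rm v}_0^{\l})^{(1)}_{0}$. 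Pathwise uniqueness of the linear SDE then forces $({\rm v}_t^{\l})^{(1)}_{0}={\bf y}_t\upsilon_t$.

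To match this with the claimed decomposition, I split the integral into its three pieces. The first two pieces, namely the $\nabla^{(2)}$ and $R$ contributions, are exactly the integrand that appears in Corollary \ref{Duha-mathcal-V-t} i) with the choices ${\mathsf v}={\mathsf v}_0^{0}$ and $\mathbb V_0=(\lf{\rm u}_0\rc^\l)^{(1)}_0$, so this part of ${\bf y}_t\upsilon_t$ reproduces $\mathcal V_t=\nabla_{(\lf{\rm u}_0\rc^\l)^{(1)}_0}[D^{(1)}\lf F_t\rc^0(\lf{\rm u}_0\rc^{0},w)]({\mathsf v}_0^0)$ by definition \eqref{cov-der-F-e-t-ran}. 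The third piece, transported forward by $[D^{(1)}\lf F_{\tau,t}\rc^0(\lf{\rm u}_\tau\rc^0,w)]$ via the cocycle identity ${\bf y}_t{\bf y}_\tau^{-1}=[D^{(1)}\lf F_{\tau,t}\rc^0]$, is precisely ${\rm V}_c(({\rm v}_t^\l)^{(1)}_0)$. Together with the homogeneous part ${\bf y}_t({\rm v}_0^\l)^{(1)}_0$, this yields the formula in~i).

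For part~ii), the analogous computation is carried out directly on $T\mathcal F(\Bbb R^m)$ using the pull-back via $(\theta,\varpi)$: let $\wt{\bf y}_t=[\wt{D^{(1)}\lf F_t\rc^0}]$ and use the It\^o form of the SDE for $(\theta,\varpi)_{\lf{\rm u}_t\rc^0}\big(({\rm v}_t^\l)^{(1)}_0\big)$ from Corollary \ref{Cor-DF-t-la-diff-la} ii), repeating the Stratonovich-to-It\^o conversion of Corollary \ref{Duha-mathcal-V-t} iii). The only subtlety is that the It\^o correction produces the additional $-\varpi(\nabla({\rm v}_\tau^{0})(H^\l)^{(1)}_0(\lf{\rm u}_\tau\rc^0,e_i))e_i$ term in $\wt{\rm V}_c$; this comes directly from rewriting $\Phi_\theta^{1,1}$ with $\circ dB_\tau$ replaced by $dB_\tau$, and is precisely the correction displayed in the statement. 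The main technical obstacle in carrying this out cleanly is bookkeeping: one must carefully distinguish horizontal from vertical components and use the brackets $[\cdot,\cdot]$ on $T\mathcal F(\M)$ together with $d\theta,d\varpi$ (as in the passage preceding Corollary \ref{u-t-lam-0-tangent-map-1}) so that the Stratonovich product rule and the It\^o correction are applied consistently; no new analytic input is needed beyond SDE uniqueness and the calculus already developed in Lemmas~\ref{mathcal-V-t}, \ref{DF-t-la-diff-la}.
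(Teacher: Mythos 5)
Your proposal is correct and follows essentially the same route as the paper: a stochastic Duhamel/variation-of-constants argument with the tangent flow $[D^{(1)}\lf F_t\rc^0(\lf {\rm u}_0\rc^{0}, {w})]$ as fundamental solution, exactly as in Corollary \ref{Duha-mathcal-V-t}, with the $\nabla^{(2)}$- and $R$-forcings reassembled into $\nabla({(\lf {\rm u}_0\rc^{\l})^{(1)}_0})\big[D^{(1)}\lf F_t\rc^0\big]({{\mathsf v}^0_0})$ via (\ref{cov-der-F-e-t-ran}) and the $(H^{\l})^{(1)}_{0}$-forcing giving ${\rm V}_c$. One small correction for ii): the extra term $-\varpi\big(\nabla({\rm v}_\tau^{0})(H^{\l})^{(1)}_{0}(\lf{\rm u}_\tau\rc^{0}, e_i)\big)e_i\,d\tau$ arises from the It\^o cross-variation of the tangent flow with the $dB$-part of $\Phi_{\varpi}^{1,1}$ (not from rewriting $\Phi_{\theta}^{1,1}$, which carries no $dB$ term), and one must also observe that the companion curvature correction in the $\varpi$-slot vanishes because $\theta\big(\nabla({\rm v}_\tau^{0})(H^{\l})^{(1)}_{0}(\lf{\rm u}_\tau\rc^{0}, e_i)\big)=0$ — the one point the paper's proof makes explicit.
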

\begin{proof}By a comparison of  the SDEs in Lemma \ref{DF-t-la-diff-la} and Corollary \ref{Cor-DF-t-la-diff-la} with those in Lemma \ref{mathcal-V-t}, we can  compute  as in Corollary \ref{Duha-mathcal-V-t} to derive  i) and ii).  We note that for ii), $\wt{{\rm V}_c}(({\rm v}_t^{\l})^{(1)}_{0})$ has an extra term  
\begin{align*}
&-\left(0, \ (\lf{\rm u}_{\tau}\rc^0)^{-1} R\left(\lf{\rm u}_{\tau}\rc^0 e_i, \lf{\rm u}_{\tau}\rc^0\theta\big(\nabla({\rm v}_\tau^{0})(H^{\l})^{(1)}_{0}(\lf{\rm u}_\tau\rc^{0}, e_i)\big)\right)\lf{\rm u}_{\tau}\rc^0 \right)\ d\tau, 
\end{align*}
which turns out to be  zero since $\theta\big(\nabla({\rm v}_\tau^{0})(H^{\l})^{(1)}_{0}(\lf{\rm u}_\tau\rc^{0}, e_i)\big)$ is zero.
\end{proof}

We are in a situation to state the norm estimations on the differential processes.

\begin{prop}\label{est-norm-u-t-j} Let $\l\mapsto g^{\l}\in \mathcal{M}^k(M)$  be a $C^k$  curve  with $k\geq 3$.  Let $x\in \M$  and  $\l\mapsto \lfloor {\rm{u}}_0\rceil^{\l}\in \mathcal{O}^{\wt{g}^{\l}}_x(\M)$ be a $C^{k-2}$ curve in $\mathcal{F}(\M)$ and let $\{\lfloor {\rm{u}}_t\rceil^{\l}\}_{t\in [0, T]}$ be the solution to (\ref{FM-BM-SDE}) with initial $\lfloor {\rm{u}}_0\rceil^{\l}$. 
\begin{itemize}
\item[i)] For every $q\geq 1$ and  $(l, j)$ with $1\leq l+j\leq k-2$, 
  there exist $\underline{c}_{(l,j)}(q)$ depending  on $m, q$ and the norm bounds of  $\{{\bf \nabla}^{(l')}(H^{\l})^{(j')}_{\l}\}_{j'\leq j, l'+j'\leq l+j}, \{{\bf \nabla}^{(l')}R^{\l}\}_{l'\leq l}$,  and ${c}_{(l,j)}(q)$ depending  on $(l, j), m, q$ and the norm bounds of   $\{{\bf \nabla}^{(l')}R^{\l}\}_{l'\leq 1}$, such that\begin{align}
\label{DF-j-lambda} &\E\sup\limits_{0\leq \underline{t}<\overline{t}\leq T}\left\|\left(\big[D^{(l)}\lf F_{\underline{t}, \overline{t}}\rc^{\l}({\rm{u}}_{\underline{t}}, {w})\big]\right)^{(j)}_{\l}\right\|^q\leq   \underline{c}_{(l,j)}(q)e^{c_{(l,j)}(q)T}, \ \forall  T\in \Bbb R_+. 
 \end{align}
\item[ii)]Let $T_0>0$.  For each $q\geq 1$, $(l, j)$ with $1\leq l+j\leq k-2$ and $T>T_0$, 
  there exist $\underline{c}'_{(l,j)}(q)$, which depends   on $m, q$ and the norm bounds of  $\{{\bf \nabla}^{(l')}(H^{\l})^{(j')}_{\l}\}_{j'\leq j, l'+j'\leq l+j}$, $\{{\bf \nabla}^{(l')} R^{\l}\}_{l'\leq l}$,  and ${c}'_{(l,j)}(q)$,  which depends on $(l, j), m, q, T, T_0$ and the norm bounds of   $\{{\bf \nabla}^{(l')}R^{\l}\}_{l'\leq 1}$, such that,   for any $x, y\in \M$, 
\begin{align}
\label{DF-j-lambda-cond}&\E_{\P^{\lambda, *}_{x, y, T}}\sup\limits_{0\leq \underline{t}<\overline{t}\leq T}\left\|\left(\big[D^{(l)}\lf F_{\underline{t}, \overline{t}}\rc^{\l}({\rm{u}}_{\underline{t}}, {w})\big]\right)^{(j)}_{\l}\right\|^q\leq   \underline{c}'_{(l,j)}(q)e^{c'_{(l,j)}(q)(1+d_{\wt{g}^{\l}}(x, y))}.
 \end{align}
\end{itemize}
\end{prop}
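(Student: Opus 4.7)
The plan is to prove both parts by induction on the total order $l+j$, using the cocycle property of the stochastic flow to reduce to $\underline{t}=0$ exactly as in the proofs of Propositions~\ref{est-norm-D-j-F-t} and \ref{est-norm-D-j-F-t-cond}. The base case $j=0$ is covered by those propositions applied to the metric $\wt{g}^\l$, so the content is the successive gain of one derivative in $\l$. At each inductive step, I intend to realise $\left([D^{(l)}\lf F_t\rc^\l]\right)^{(j)}_\l$ as the solution of a linear SDE whose homogeneous part coincides with the SDE for $[D^{(l)}\lf F_t\rc^\l]$ itself, with an inhomogeneous driving term built from strictly lower-order differentials. Stochastic Duhamel (as in Corollaries~\ref{cor-2--u-t-lam-tangent-map-1} and \ref{cor-u-t-lam-tangent-map-3} for $l=0$, and the analogous representations for $l\geq 1$ obtained from Lemma~\ref{DF-t-la-diff-la}) then expresses the quantity of interest as a stochastic integral whose integrand carries one copy of $[D^{(1)}\lf F_{\tau,t}\rc^0]$ against a polynomial expression in previously controlled differentials and in the geometric coefficients $\nabla^{(l')}(H^\l)^{(j')}_\l$, $\nabla^{(l')}R^\l$.

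For the unconditional estimate~\eqref{DF-j-lambda}, the model case $(l,j)=(0,1)$ proceeds by writing $\wt{(\lf {\rm u}_t\rc^\l)^{(1)}_0} = [\wt{D^{(1)}\lf F_t\rc^0}]\wt{(\lf {\rm u}_0\rc^\l)^{(1)}_0} + \wt{\rm V}_c$ from Corollary~\ref{cor-2--u-t-lam-tangent-map-1}, splitting the supremum via \eqref{abcq}, and estimating the stochastic integral $\wt{\rm V}_c$ by Doob's maximal inequality and Burkholder's inequality~\eqref{equ-Ku-lem}, followed by a H\"older decomposition that separates $[\wt{D^{(1)}\lf F_{\tau,t}\rc^0}]$ (bounded by Proposition~\ref{est-norm-D-j-F-t}) from the coefficient $(H^\l)^{(1)}_0$, whose $C^0$-norm is controlled by $\|(\XX^\l)^{(1)}\|_{C^1}$. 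Higher $(0,j)$ follow the same pattern since, as read off from Corollary~\ref{cor-u-t-lam-tangent-map-3} and its inductive continuation, the integrand only involves $(\lf {\rm u}_\tau\rc^\l)^{(j')}_0$ with $j'<j$ together with $\nabla^{(l')}(H^\l)^{(j')}_\l$ and $\nabla^{(l')}R^\l$ satisfying the dependency list in~(i).

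For $(l,j)$ with $l\geq 1$ I would mirror the strategy: the SDE of Lemma~\ref{DF-t-la-diff-la} (and its higher-order analogues, which I would derive by the same parallelism computation) is again a perturbation of the homogeneous tangent SDE by an inhomogeneous term polynomial in $\left([D^{(l')}\lf F_\tau\rc^\l]\right)^{(j')}_\l$ with $l'+j'<l+j$, in $(\lf {\rm u}_\tau\rc^\l)^{(j'')}_0$ with $j''\leq j$, and in the stated covariant derivatives of $H^\l$ and $R^\l$. The Doob--Burkholder--H\"older scheme, combined with the induction hypothesis and Proposition~\ref{est-norm-D-j-F-t}, then delivers constants $\underline{c}_{(l,j)}(q), c_{(l,j)}(q)$ with exactly the dependence claimed in~(i); the dependence of $c_{(l,j)}$ only on $\{\nabla^{(l')}R^\l\}_{l'\leq 1}$ comes, as in Proposition~\ref{est-norm-D-j-F-t}, from the fact that the $T$-exponential arises solely from the multiplicative structure of the underlying tangent SDE of $[D^{(1)}\lf F_t\rc^0]$.

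For the conditional bound~\eqref{DF-j-lambda-cond} in~(ii), I would perform the same Duhamel decompositions but replace $dB_\tau$ using Lemma~\ref{Hsu-thm-5.4.4} for $g^\l$, writing $dB_\tau = db_\tau + 2(\lf {\rm u}_\tau\rc^\l)^{-1}\nabla^\l \ln p^\l(T-\tau, \lf{\rm x}_\tau\rc^\l, y)\,d\tau$. This splits each stochastic integral into a $db_\tau$-martingale piece, handled exactly as in~(i) since $(b_\tau)$ is a Brownian motion under $\P^\l_{x,y,T}$, and a drift piece controlled after H\"older separation by the exponential moment bound~\eqref{exp-grad-lnp} of Proposition~\ref{cond-nabla-ln-p}; the other factors are absorbed by Proposition~\ref{est-norm-D-j-F-t-cond} and the inductive hypothesis for~(ii). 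The main obstacle I anticipate is precisely the book-keeping of H\"older exponents: each additional $\l$-differentiation introduces extra multiplicative factors arising from the Girsanov drift, and one must choose the exponents in the repeated H\"older splittings so that the powers of the exponential in~\eqref{exp-grad-lnp} required at each step remain finite while the powers of the tangent maps required stay within the range covered by Proposition~\ref{est-norm-D-j-F-t-cond}; the linear bound $e^{c(1+d(x,y))}$ of Proposition~\ref{cond-nabla-ln-p}, which survives any power, and the fact that only finitely many iterations are needed for each fixed $(l,j)$, leave enough room to close the induction and yield the claimed constants $\underline{c}'_{(l,j)}(q), c'_{(l,j)}(q)$.
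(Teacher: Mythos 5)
Your proposal matches the paper's intended proof: the paper omits the argument, indicating only that it follows "by using the SDEs formulated in Section \ref{sec-4-5}" together with (\ref{wt-DF-j}), (\ref{wt-DF-j-cond}), Lemma \ref{Hsu-thm-5.4.4} and (\ref{exp-grad-lnp}), "similar to the second steps in the proofs of Proposition \ref{est-norm-D-j-F-t} and Proposition \ref{est-norm-D-j-F-t-cond}," and your scheme — cocycle reduction to $\underline{t}=0$, stochastic Duhamel via Corollaries~\ref{cor-2--u-t-lam-tangent-map-1}, \ref{cor-u-t-lam-tangent-map-3} and Lemma~\ref{DF-t-la-diff-la}, Doob--Burkholder--H\"older against the base bound (\ref{wt-DF-j}), and Girsanov substitution with the exponential moment (\ref{exp-grad-lnp}) for the conditional case — is precisely that "second step" carried out inductively in $l+j$. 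Your bookkeeping of where the dependencies on $\{{\bf\nabla}^{(l')}(H^\l)^{(j')}_\l\}$ versus $\{{\bf\nabla}^{(l')}R^\l\}_{l'\leq 1}$ enter also matches the paper's stated constants.
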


By using the SDEs  formulated in Section \ref{sec-4-5}, the estimation in (\ref{DF-j-lambda}) can be obtained using (\ref{wt-DF-j})  and the estimation in (\ref{DF-j-lambda-cond}) can be obtained using Lemma \ref{Hsu-thm-5.4.4} (\ref{exp-grad-lnp}) and (\ref{wt-DF-j-cond}).  The proofs are similar to the second steps in the proofs of Proposition \ref{est-norm-D-j-F-t} and Proposition \ref{est-norm-D-j-F-t-cond}, respectively. We omit them.

\section{The first differential of the heat kernels in metrics}\label{sec5}

Our main result in this section is a first step of the proof of Theorem \ref{diff-HK-estimations-gen}. 

\begin{theo}\label{regu-p-1st} For  any $g^0=g\in \mathcal{M}^k(M)$ ($k\geq 3$), there exist $\iota\in (0,
1)$ and a neighborhood $\mathcal{V}_g$ of $g$ in $\mathcal{M}^{k}(M)$ such that  the following hold true for any $C^{k}$ curve  $\lambda\in
(-1, 1)\mapsto g^{\lambda}\in \mathcal{V}_{g}$. 
\begin{itemize}
\item[i)]For any $x\in \M$ and $T\in \Bbb R_+$, $\l\mapsto p^{\l}(T, x, \cdot)$ is $C^1$ in $C^{k, \iota}(\M)$ with \begin{align}\label{Thm1.3-step 1}
(\ln p^{\l})^{(1)}_{\l}(T, x, y)+ (\ln \rho^{\l})^{(1)}_{\l}(y)= \phi_{\l}^{1}(T, x, y),
\end{align}
where $\rho^{\l}(y)=(d{\rm Vol}^{\l}/d{\rm Vol}^0)(y)$ and $\phi_{\l}^{1}$ is as in (\ref{phi-1-candidate}).
\item[ii)]Let $T_0>0$. For $q\geq 1$ and $l$, $0\leq l\leq k-3$, there are constants ${c}_{\l, (l, 1)}(q)$ which depend  on $m, q$, $T$,  $T_0$, $\|g^{\l}\|_{C^{l+3}}$ and  $\|\XX^{\l}\|_{C^{l+2}}$ such that  for all $x\in \M$ and  $T>T_0$,
\begin{equation}\label{grad-lnp-lam-1}
\hspace{6.5mm} \left\|\nabla^{(l)}(\ln p^{\l})^{(1)}_{\l}(T, x, \cdot)\right\|_{L^q}\leq {c}_{\l, (l, 1)}(q).
\end{equation}
\item[iii)] The function   $x\mapsto \int_{\M} (p^{\l})^{(1)}_{\l}(T, x, y) \wt{f}(y)\ d{\rm{Vol}}_{\wt{g}^{\l}}(y)$ is continuous  for any uniformly continuous and bounded $\wt{f}\in C(\M)$. 
\end{itemize}
\end{theo}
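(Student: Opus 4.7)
The plan is to exploit the Eells-Elworthy-Malliavin representation. For any smooth compactly supported test function $f$ on $\M$ we have $\int f(y) p^\lambda(T,x,y)\, d{\rm Vol}^\lambda(y) = \E[f(\lfloor {\rm x}_T \rceil^\lambda)]$, where $\lfloor {\rm x}_T \rceil^\lambda = \pi \lfloor {\rm u}_T \rceil^\lambda$ and the orthogonal frame process $\lfloor {\rm u}_t\rceil^\lambda$ solves (\ref{FM-BM-SDE}). By Lemma \ref{u-lambda-differential-1}, a.s.\ $\lambda \mapsto \lfloor {\rm u}_T\rceil^\lambda(w)$ is $C^{k-2}$, and Proposition \ref{est-norm-u-t-j} provides uniform $L^q$-bounds on its $\lambda$-derivatives both under the Brownian probability and under the Brownian bridge. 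Dominated convergence allows one to differentiate under the expectation:
\[
\frac{d}{d\lambda}\int f(y)p^\lambda(T,x,y)\, d{\rm Vol}^\lambda(y) \;=\; \E\left\langle \nabla^\lambda f(\lfloor{\rm x}_T\rceil^\lambda),\, (\lfloor{\rm x}_T\rceil^\lambda)^{(1)}_\lambda\right\rangle_\lambda.
\]

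Conditioning on $\lfloor {\rm x}_T\rceil^\lambda = y$ via the Brownian bridge $\P^\lambda_{x,y,T}$, this equals $\int \langle \nabla^\lambda f(y), {\rm z}_T^{\lambda,1}(y)\rangle_\lambda\, p^\lambda(T,x,y)\,d{\rm Vol}^\lambda(y)$, where
\[
{\rm z}_T^{\lambda,1}(y) \;:=\; \E_{\P^\lambda_{x,y,T}}\!\left[(\lfloor{\rm x}_T\rceil^\lambda)^{(1)}_\lambda\right]
\]
is the conditional expectation of the stochastic flow derivative along the bridge. Once $C^1$-regularity in $y$ of ${\rm z}_T^{\lambda,1}$ is secured, the classical integration by parts on $(\M,\wt g^\lambda)$ yields
\[
-\int f(y)\left({\rm Div}^\lambda {\rm z}_T^{\lambda,1}(y) + \langle {\rm z}_T^{\lambda,1}(y), \nabla^\lambda \ln p^\lambda(T,x,y)\rangle_\lambda\right)p^\lambda(T,x,y)\, d{\rm Vol}^\lambda(y).
\]
Equating this with $\int f(y)\big((p^\lambda)^{(1)}_\lambda(T,x,y) + p^\lambda(T,x,y)(\ln \rho^\lambda)^{(1)}_\lambda(y)\big)\,d{\rm Vol}^0(y)$ and using arbitrariness of $f$, one identifies $\phi_\lambda^1(T,x,y)$ pointwise, proving (\ref{Thm1.3-step 1}).

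The main technical obstacle, and where the bulk of the work lies, is constructing ${\rm z}_T^{\lambda,1}$ with genuine $C^1$-regularity in $y$ on $\M$. The difficulty is that the horizontal SDE (\ref{OM-BM-SDE}) on the orthogonal frame bundle is degenerate, so Malliavin calculus or pathwise integration by parts cannot be applied directly to pass the $y$-derivative through the conditional expectation $\E_{\P^\lambda_{x,y,T}}$. My plan is to follow the constructive approach announced by the authors in Sections 5.1--5.4: rather than defining ${\rm z}_T^{\lambda,1}$ through stochastic calculus of variations, I would build an auxiliary $\mathcal{F}_t$-adapted vector field along the Brownian paths (in the spirit of \cite{CE, D2, Hs1, M}) whose projection on $\M$ has the same distribution under the bridge as $(\lfloor{\rm x}_T\rceil^\lambda)^{(1)}_\lambda$, yet manifestly depends smoothly on the endpoint $y$, and then define ${\rm z}_T^{\lambda,1}$ as its conditional expectation. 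Once this is in place, parabolic Schauder regularity combined with the distributional identity $(\partial_t - \Delta^\lambda)(p^\lambda)^{(1)}_\lambda = (\Delta^\lambda)^{(1)}_\lambda p^\lambda$ bootstraps regularity in $y$ up to $C^{k,\iota}$ for some $\iota\in(0,1)$, and the $L^q$-bounds (\ref{grad-lnp-lam-1}) follow by combining the conditional estimates of Proposition \ref{est-norm-u-t-j} with the bridge gradient estimate of Proposition \ref{cond-nabla-ln-p} and the heat-kernel derivative bounds of Lemma \ref{Hs2-ST}. Continuity in $x$ of the integrated derivative in (iii) then reduces, via the same representation, to continuous dependence of the Brownian motion and of the bridge measure on the starting point, and is handled by dominated convergence using the uniform bounds of Lemma \ref{Sa-Thm-6.1}.
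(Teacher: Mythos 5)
Your overall scaffolding is correct and matches the paper's strategy: represent $p^\lambda$ via the Eells-Elworthy-Malliavin construction, differentiate under the expectation using Lemma \ref{u-lambda-differential-1} and Proposition \ref{est-norm-u-t-j}, condition on the endpoint to extract ${\rm z}_T^{\lambda,1}$, integrate by parts, and then bootstrap regularity via Lemmas \ref{Friedman-lem}--\ref{weak-strong-same}. However, there is a genuine gap where you identify the central technical difficulty but do not actually resolve it.

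The bottleneck --- and the reason Sections \ref{flow-F-S-y}--\ref{the flow F-S} occupy so much of the paper --- is proving that $y \mapsto {\rm z}_T^{\lambda,1}(y)$ is $C^1$. You write that you would ``build an auxiliary $\mathcal{F}_t$-adapted vector field along the Brownian paths whose projection on $\M$ has the same distribution under the bridge as $(\lfloor{\rm x}_T\rceil^\lambda)^{(1)}_\lambda$, yet manifestly depends smoothly on the endpoint $y$, and then define ${\rm z}_T^{\lambda,1}$ as its conditional expectation.'' This sentence names the goal but does not say how to achieve it, and in particular it does not capture what the paper actually does. The paper does \emph{not} construct an alternate version of the conditioned random variable that is visibly smooth in $y$. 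Instead, it extends each deterministic flow $F^s$ on $\M$ to a one-parameter family ${\bf F}^s$ of transformations on path space (via the time-inhomogeneous transport field $\Upsilon_{{\rm V},{\rm y}^s}(t)=\mathtt{s}(t)\sslash_{0,t}^s({\rm V}({\rm y}_0^s))$ with a cutoff $\mathtt{s}$ satisfying $\mathtt{s}(0)=1$, $\mathtt{s}(T)=0$), establishes quasi-invariance of the (bridge) Brownian law under ${\bf F}^s$ through Cameron-Martin-Girsanov (Propositions \ref{Quasi-IP-1}--\ref{Quasi-IP-2}), and then compares $\overline{\Phi}_\lambda^1(Y)(F^sy)$ with $\overline{\Phi}_\lambda^1(Y)(y)$ by a change of variable on path space as in (\ref{Ch-Var-compare}). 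Differentiability in $y$ is obtained from differentiability in the flow parameter $s$ of the Radon-Nikodym density and of $\Phi_\lambda^1\circ {\bf F}^s$, not from a manifestly smooth representation of the conditioned variable. Without this flow-on-paths mechanism, the step from ``$L^q$-bounded'' to ``$C^1$ in $y$'' is simply asserted in your proposal.

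A second, smaller omission: the paper replaces $\E_{\P^\lambda_{x,y,T}}$ by the averaged expectation $\overline{\E}$, taking the initial frame $\lfloor {\rm u}_0\rceil^\lambda \in \mathcal{O}^{\wt g^\lambda}_x(\M)$ uniformly at random. This averaging is not cosmetic; it is what lets the symmetry argument of Lemmas \ref{Brown-bridge-symmetry}--\ref{diff-P-F-S} go through when passing between paths from $x$ to $y$ and paths from $y$ to $x$, which in turn is needed because the extension ${\bf F}^s$ is naturally defined on paths \emph{starting} at the endpoint $y$ while the random variable $(\lfloor{\rm u}_T\rceil^\lambda)^{(1)}_\lambda$ lives on paths starting at $x$. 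Your definition of ${\rm z}_T^{\lambda,1}$ via $\E_{\P^\lambda_{x,y,T}}$ alone would leave this reversal unjustified.
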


\subsection{Strategy}\label{Obs-Stra}

We   show Theorem \ref{regu-p-1st} by  describing the $C^1$ vector field  ${\rm z}_{T}^{\l, 1}$ such that (\ref{CIBP}) holds true. Before that,  let us recall some classical results for parabolic equations.

Let $\mathcal{D}\subset \mathcal{D}_1\times \mathcal{D}_2$ with $\mathcal{D}_1$ being a bounded interval of $\Bbb R_+$ and $D_2$ being a bounded connected open  domain of $\M$.  For $g\in \mathcal{M}^k(M)$,  consider the parabolic equation  
\begin{equation}\label{parabolic-equ}
{\rm L} q:=(\frac{\partial}{\partial t}-\Delta)q=r, 
\end{equation}
where  $\Delta$ is  the $\wt{g}$-Laplacian on  $C^2$ functions on
$\wt{M}$ and $r$ is a continuous function on $\mathcal{D}$.  By a solution $q$ to (\ref{parabolic-equ}), we mean a function $q$ on $\mathcal{D}$  which satisfies (\ref{parabolic-equ}) and all the derivatives of which appear in ${\rm L}q$ are continuous functions  on $\mathcal{D}$.  Such a $q$ can be smoother, depending on the regularities of  both ${\rm{L}}$ and $r$.  For instance, $q$ is $C^{\infty}$ if both both ${\rm L}$ and $r$ are  $C^{\infty}$.  In our case,  ${\rm L}$ varies  $C^{k-2}$ H\"{o}lder with respect to base points and  $q$  is mostly $C^k$ H\"{o}lder in general even in case $r$ is smooth. 

\begin{lem}\label{Friedman-lem}(\cite[Theorem 11, p.74]{Fr}) Let ${\rm L}$ be given in (\ref{parabolic-equ})  which is $C^{k-2}$ and H\"{o}lder continuous with exponent $\iota$. Assume $r$ in (\ref{parabolic-equ}) is such that 
\[
D_x^{n}D_t^{l}r, \ 0\leq n+2l\leq k-2, l\leq l',
\]
are H\"{o}lder continuous with exponent $\iota$, where $D_{a}^{b}$ means the $b$-th differential form  with respect to the $a$-coordinate.  If $q$ is a solution to (\ref{Friedman-lem}), then 
\[
D_x^{n}D_t^{l}q, \ 0\leq n+2l\leq k, l\leq l'+1, 
\]
exist and are H\"{o}lder continuous with exponent $\iota$. 
\end{lem}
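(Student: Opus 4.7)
The plan is to prove the lemma by induction on the order $n + 2l$ of the derivatives, using the classical interior Schauder estimate for uniformly parabolic equations as the base case and a bootstrap argument for the inductive step. Since the statement is local in both space and time, I would fix a point $(t_0, x_0) \in \mathcal{D}$ and work in a normal coordinate chart centered at $x_0$ on a small parabolic cylinder contained in $\mathcal{D}$. In these coordinates ${\rm L}$ becomes a uniformly parabolic operator $\partial_t - a^{ij}(x)\partial_i \partial_j - b^i(x)\partial_i$ whose coefficients inherit the $C^{k-2, \iota}$ regularity of the Riemannian metric; uniform parabolicity is automatic from positive definiteness of $\widetilde g$.

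For the base case $n + 2l \leq 2$ with $l \leq 1$, I would apply the standard interior parabolic Schauder estimate (Friedman, Chapter~4, or Ladyzhenskaya--Solonnikov--Ural'tseva, Chapter~IV). Since the coefficients of ${\rm L}$ are in $C^{\iota}$ and the hypothesis with $n = l = 0$ gives $r \in C^{\iota}$ (and hence, together with the equation itself, $r \in C^{\iota, \iota/2}$ after possibly shrinking the cylinder), any solution $q$ on a slightly larger cylinder belongs to $C^{2+\iota, 1+\iota/2}$ on the inner cylinder, establishing the conclusion for $n + 2l \leq 2$, $l \leq 1$.

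For the inductive step, suppose the conclusion has been proved for all $(n', l')$ with $n' + 2l' \leq N < k$ (and the corresponding bound on $l'$). For a pair $(n, l)$ with $n + 2l = N + 1$ and $l \leq l' + 1$, I would formally apply $D_x^n D_t^l$ to both sides of the equation to obtain
\[
{\rm L}\bigl(D_x^n D_t^l q\bigr) \; = \; D_x^n D_t^l r \; + \; \bigl[\,D_x^n D_t^l,\, a^{ij}\partial_i \partial_j + b^i \partial_i \,\bigr] q,
\]
where the commutator expands into a sum of terms of the form $D_x^{\alpha} a^{ij} \cdot D_x^{\beta} D_t^l \partial_i\partial_j q$ and $D_x^{\alpha} b^i \cdot D_x^{\beta} D_t^l \partial_i q$ with $|\alpha| \geq 1$, $|\alpha| + |\beta| = n$, and hence $|\beta| + 2 + 2l \leq N + 1$. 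By the induction hypothesis these derivatives of $q$ are H\"older continuous with exponent $\iota$, while the assumed $C^{k-2,\iota}$ regularity of the coefficients ensures that the $D_x^\alpha a^{ij}$ and $D_x^\alpha b^i$ are H\"older continuous with exponent $\iota$ since $|\alpha| \leq n \leq k - 2$. The term $D_x^n D_t^l r$ is controlled directly by hypothesis when $n + 2l \leq k - 2$; the top time derivative $l = l' + 1$ is handled separately by writing $D_t^{l'+1} q = D_t^{l'}\Delta q + D_t^{l'} r$ and applying the spatial part of the induction to $D_t^{l'} q$. A second application of the interior Schauder estimate to the equation displayed above then gives $D_x^n D_t^l q \in C^{2+\iota, 1+\iota/2}$ locally, completing the step.

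The main technical obstacle is that, strictly speaking, one cannot differentiate $q$ before knowing that the required derivatives exist. The standard workaround is to replace each spatial derivative by a difference quotient $\delta_h^i q(t, x) := h^{-1}(q(t, x + h e_i) - q(t, x))$. The function $\delta_h^i q$ satisfies a parabolic equation whose right-hand side involves $\delta_h^i r$ and difference quotients of the coefficients applied to $q$, all of which are uniformly bounded in the relevant H\"older norm as $h \to 0$. The Schauder estimate then gives a uniform $C^{2+\iota, 1+\iota/2}$ bound on $\delta_h^i q$, and Arzel\`a--Ascoli provides a subsequential limit which must coincide with the weak derivative $\partial_i q$ and hence realizes the required regularity. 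Iterating this difference-quotient argument at each stage of the induction, and treating time derivatives via the equation $D_t q = \Delta q + r$, delivers the full conclusion. Finally, one globalises from parabolic cylinders to arbitrary $\mathcal{D}$ by a partition of unity argument in $(t, x)$.
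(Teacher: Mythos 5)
A preliminary remark on the comparison: the paper does not prove this lemma at all --- it is quoted verbatim from \cite[Theorem 11, p.74]{Fr}, with only the remark (after Lemma \ref{weak-strong-same}) that the Euclidean statement applies here because the equation can be treated locally in coordinate charts. So what you are reconstructing is Friedman's own argument, and your overall strategy --- localize to a parabolic cylinder, interior Schauder estimate as the base case, bootstrap by differentiating the equation, difference quotients plus Arzel\`a--Ascoli to legitimize the differentiation, the extra time derivative read off from $D_t q=\Delta q+r$, then a partition of unity --- is indeed the standard route and essentially the cited one.

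However, the inductive step as you wrote it does not close. If you apply $D_x^nD_t^l$ with $n+2l=N+1$ to the equation, three things go wrong at the top orders. First, the worst commutator term has $|\alpha|=1$, $|\beta|=n-1$, hence involves $D_x^{n-1}D_t^l\partial_i\partial_j q$, of parabolic order $N+2$, not $N+1$ as you claim ($|\beta|+2+2l=N+3-|\alpha|$); it is covered neither by the induction hypothesis (order $\le N$) nor by the conclusion you are proving. Second, you need $D_x^nD_t^l r$ and $D_x^{\alpha}a^{ij}$ with $|\alpha|\le n$, but when $n+2l$ equals $k-1$ or $k$ these exceed what is assumed ($r$ only up to parabolic order $k-2$, coefficients only $C^{k-2,\iota}$); your aside that $D_x^nD_t^l r$ is ``controlled by hypothesis when $n+2l\le k-2$'' leaves exactly the two top orders unhandled. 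Third, concluding $D_x^nD_t^l q\in C^{2+\iota,1+\iota/2}$ would give $q$ parabolic regularity of order $k+2$, which is false in general for $C^{k-2,\iota}$ coefficients. The standard repair is to differentiate the equation only up to parabolic order $k-2$: to produce $D_x^nD_t^l q$ with $n+2l=N+1\le k$ (and $n\ge2$; pure time derivatives come from iterating $D_tq=\Delta q+r$ as you note), write the equation for $w=D_x^{n-2}D_t^l q$. Its right-hand side involves $D_x^{n-2}D_t^l r$ (allowed, since $n-2+2l\le k-2$), coefficient derivatives of order $\le n-2\le k-2$, and derivatives of $q$ of parabolic order $\le N$ (covered by the induction hypothesis); the Schauder gain of two spatial and one time derivative then delivers exactly the order-$(N+1)$ derivatives, including the case $l=l'+1$. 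With that recalibration, carried out on difference quotients as you describe, the proof is the standard one. A further small slip: in the base case, H\"older continuity of $r$ in $t$ is part of the hypothesis (read jointly in $(t,x)$), not something one can extract ``from the equation itself''.
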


In particular, if $r$ is $\iota$-H\"{o}lder and $q$ solves (\ref{parabolic-equ}), Lemma \ref{Friedman-lem} shows  that all the differentials of $q$ up to the second order (where $\partial /\partial t$ is considered as second order differential) exist and are $\iota$-H\"{o}lder. The next lemma from \cite{Fr} further shows these differentials have bounds  completely determined by the bounds  of $q$ and $r$. For ${\it P}=(\tau, x)\in \mathcal{D}$, define 
\[
d_{\it P}=\sup_{Q\in \mathcal{D}(\tau)}d({\it P}, {\it Q}), 
\]
where $\mathcal{D}({\tau})$ is the intersection of the boundary of $\mathcal{D}$ with the half-space $t\leq \tau$. 
For a function $f$ on  $\mathcal{D}$ and any non-negative integers $n, j$ and for $\iota\in (0, 1)$, define
\begin{eqnarray*}
|f|_{n, j}= \sum_{l=0}^{j}N_{n,l}[f], \  \ 
|f|_{n, j+\iota}= |f|_{n, j}+\sum_{0}^{j}N_{n, l+\iota}[f],
\end{eqnarray*}
where 
\begin{eqnarray*}
N_{n,l}[f]&=& \sum \sup_{\it{P}\in \mathcal{D}}\left\{d^{n+l}_{\it P}| D^{l}_xf(\it{P})|\right\},\\
N_{n, l+\iota}[f]&=& \sum \sup_{\it{P, Q}\in \mathcal{D}}\left\{\min\{d^{n+l+\iota}_{\it P}, d^{n+l+\iota}_{\it Q}\}\cdot \frac{|D_x^l f({\it P})-D_x^l f({\it Q})|}{d({\it P}, {\it Q})^{\iota}}\right\}, 
\end{eqnarray*}
and the summation is over all the differentials of order $l$.

\begin{lem}\label{Friedman-lem-2}(\cite[Theorem 1, p.92]{Fr}) Let ${\rm L}$ be as in Lemma \ref{Friedman-lem}. There exists some geometric constant $\kappa$ (which depends on $\iota$, $\|g\|_{C^1}$) such that if $|r|_{2, \iota}<+\infty$ and $q$ is a bounded solution to (\ref{parabolic-equ}) and all its derivatives appearing in ${\rm L}q$ are $\iota$ H\"{o}lder, then \begin{equation*}
 |q|_{0, 2+\iota}<\kappa(|q|_{0, 0}+|r|_{2, \iota}). \end{equation*}\end{lem}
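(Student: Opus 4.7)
The plan is to treat Lemma \ref{Friedman-lem-2} as a weighted interior Schauder estimate and obtain it from the standard (unit-cylinder) parabolic Schauder theory by a scaling/freezing argument. The peculiar weights $d_{\it P}^{n+l}$ attached to the $l$-th spatial derivative in $N_{n,l}$ are precisely the ones that make the corresponding norms scale-invariant when one rescales a parabolic cylinder of parabolic radius $\sim d_{\it P}$ to unit size while simultaneously allowing the source $r$ to blow up like $d_{\it P}^{-n}$ near the parabolic boundary. So the guiding principle is: prove the estimate on a unit cylinder by a classical fixed-point / parametrix argument, and rescale.

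I would proceed in three steps. First, I fix ${\it P}_0=(\tau_0,x_0)\in\mathcal{D}$ and set $d=d_{{\it P}_0}$. The parabolic cylinder $Q_{d/2}({\it P}_0):=\{(\tau,x):\tau_0-d^2/4<\tau\le\tau_0,\ d_{\wt g}(x,x_0)<d/2\}$ is contained in $\mathcal{D}$, and on $Q_{d/2}({\it P}_0)$ the operator ${\rm L}=\partial_t-\Delta$ has $C^{k-2,\iota}$ coefficients whose $C^{0,\iota}$-norms are bounded purely in terms of $\|g\|_{C^1}$ (the curvature contributions to the Christoffel symbols bring in no extra dependence). Second, I make the parabolic change of variables $(\tau,x)=(\tau_0+d^2 s,\exp_{x_0}(dy))$ sending $Q_{d/2}({\it P}_0)$ to the unit cylinder $Q_1=\{(s,y):-1/4<s\le0,\ |y|<1/2\}$, and set $\tilde q(s,y):=q(\tau_0+d^2s,\exp_{x_0}(dy))$, $\tilde r(s,y):=d^2\,r(\tau_0+d^2s,\exp_{x_0}(dy))$. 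Under this rescaling the operator ${\rm L}$ becomes a uniformly parabolic operator $\tilde{\rm L}$ with coefficients whose $C^{0,\iota}(Q_1)$-norms are bounded in terms of $\|g\|_{C^1}$ and $\iota$, and the equation ${\rm L}q=r$ becomes $\tilde{\rm L}\tilde q=\tilde r$.

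Third, I apply the classical interior Schauder estimate for parabolic equations with H\"older-continuous coefficients on the unit cylinder (Friedman, \cite[Chapter 3]{Fr}): there exists $\kappa_0=\kappa_0(\|g\|_{C^1},\iota)$ such that
\begin{equation*}
\|\tilde q\|_{C^{2+\iota}(Q_{1/4})}\;\le\;\kappa_0\bigl(\|\tilde q\|_{C^0(Q_1)}+\|\tilde r\|_{C^{\iota}(Q_1)}\bigr).
\end{equation*}
Unwinding the scaling, the spatial derivative of order $l$ of $\tilde q$ at scale $1$ corresponds to $d^{l}D_x^l q$ at $P_0$, and the $\iota$-H\"older seminorm on $Q_{1/4}$ corresponds to $d^{l+\iota}$ times the same H\"older quantity for $D_x^l q$ at $P_0$; similarly $\|\tilde r\|_{C^\iota}$ picks up factors $d^{2+l+\iota}$ on the $l$-th derivative of $r$, which is exactly the weight $d_{\it P}^{n+l+\iota}$ with $n=2$ appearing in $N_{2,l+\iota}[r]$. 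Thus the rescaled estimate at ${\it P}_0$ reads
\begin{equation*}
d^{2+\iota}\bigl|D_x^2 q({\it P}_0)\bigr|+\text{(lower order + H\"older)}\;\le\;\kappa\bigl(|q|_{0,0}+|r|_{2,\iota}\bigr),
\end{equation*}
and taking the supremum over ${\it P}_0\in\mathcal{D}$ gives $|q|_{0,2+\iota}\le\kappa(|q|_{0,0}+|r|_{2,\iota})$.

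The one genuinely non-trivial ingredient is the unit-scale Schauder estimate itself; I would either quote it from \cite[Ch.\ 3]{Fr} or reprove it by freezing the coefficients at the center of $Q_1$, treating the difference $\tilde{\rm L}-\tilde{\rm L}_{\rm frozen}$ as a perturbation with small H\"older norm on small subcylinders, and running a parametrix / Picard iteration based on the Gaussian heat kernel of $\tilde{\rm L}_{\rm frozen}$. The main obstacle is really only bookkeeping: one must check that the H\"older norm of the rescaled coefficients on $Q_1$ is controlled uniformly by $\|g\|_{C^1}$ (independently of $d\le{\rm diam}(\mathcal{D})$), which holds because rescaling by $d\le 1$ only decreases H\"older seminorms. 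Once the scale-invariant estimate is in hand, the weighted form follows purely by the scaling bookkeeping above.
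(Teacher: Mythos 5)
The paper does not actually prove this lemma: it is quoted verbatim as Theorem~1, p.~92 of Friedman's book [Fr], and that citation is the entirety of the argument. Your scaling proof is therefore a reconstruction of the cited result rather than a match for anything in the paper, but it is a correct one. The route you take — rescale a backward parabolic cylinder of parabolic radius $\sim d_{\it P}$ to unit size, observe the weights $d_{\it P}^{n+l}$ and $d_{\it P}^{n+l+\iota}$ are exactly what make the weighted seminorms scale-invariant, apply the unit-scale interior Schauder estimate (via coefficient-freezing and a parametrix), and take a supremum over the center point — is the modern, cleaner way to obtain weighted interior estimates. Friedman's own proof is genuinely different in style: he works directly with the fundamental-solution representation, uses potential-theoretic bounds on the singular integrals, and combines them with interpolation inequalities and a localization, never invoking parabolic rescaling as the organizing principle. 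Both methods yield the same constant dependence, and your approach makes it more transparent why the $\kappa$ can be taken uniform over all interior scales $d_{\it P}$. Two small points: $|r|_{2,\iota}$ only involves $r$ and its $\iota$-H\"older quotient (the index $j=0$, so $l$ ranges only over $0$), so the ``$l$-th derivative of $r$'' phrasing is unnecessary; and for the cylinder $Q_{d/2}({\it P}_0)$ to lie in $\mathcal{D}$ you need $d_{\it P}$ to be the \emph{infimum} of the distance to the parabolic boundary below $\tau_0$ — the paper's $\sup$ in the displayed definition of $d_{\it P}$ appears to be a typographical slip when transcribing from Friedman, and your argument implicitly (and correctly) uses the infimum.
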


(Both Lemma \ref{Friedman-lem} and Lemma \ref{Friedman-lem-2} were stated in \cite{Fr} for domains in the Euclidean case. They apply to the manifold case since (\ref{parabolic-equ}) can be treated locally in coordinate charts.)

A companion notion of a solution to a parabolic equation is a solution in the  distribution sense.   Recall that the distributions on the domain $\mathcal{D}$ are the linear continuous functionals on the test function space $C_c^{\infty}(\mathcal{D})$ of compactly supported smooth functions on $\mathcal{D}$. Given a distribution $q$ on $\mathcal{D}$, one can define its weak derivative of any order $\alpha$, denoted by $D^{\alpha, {\rm{w}}}q$,  as a distribution on $\mathcal{D}$ by letting
\[
(D^{\alpha, {\rm{w}}}q)(f):=(-1)^{|\alpha|}q(D^{\alpha}f),\  \forall f\in C_c^{\infty}(\mathcal{D}).
\]
Any locally integrable function $q\in L^1_{\rm loc}(\mathcal{D})$ can be identified with a distribution by letting 
\[
q(f):=\int_{\mathcal{D}} qfdt\times d{\rm Vol},\ \forall f\in C_c^{\infty}(\mathcal{D}), 
\]
and hence its weak derivatives of any order always exist. Let ${\rm L}$ be as in (\ref{parabolic-equ}). The ${\rm L}$ distributional derivative of a distribution $q$ on $\mathcal{D}$ will be denoted by ${\rm L}^{\rm{w}}q$. Using mollifier and Lemma \ref{Friedman-lem-2}, we have the following classical result.  

\begin{lem}\label{weak-strong-same}Let $g\in C^k(M)$ and let ${\rm L}$ be as in (\ref{parabolic-equ}). Assume $r\in C^{0, \iota}(\mathcal{D})$ for some $\iota>0$ with $|r|_{2, \iota}<\infty$. Then for any $q\in C(\mathcal{D})$, 
\[
{\rm L}^{\rm{w}}q=r\   \Longrightarrow \ {\rm L}q=r.
\]
\end{lem}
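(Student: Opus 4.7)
The strategy is classical Friedrichs mollification combined with the Schauder-type a priori estimate of Lemma \ref{Friedman-lem-2}. The plan is to mollify $q$ in the spatial variable, show that the resulting smooth approximations $q_\epsilon$ have uniformly bounded $C^{2,\iota}$ norm, and then pass to the limit to identify $q$ as a classical solution. Since the assertion is local, I would fix a coordinate chart and precompact cylinders $\mathcal{D}'' \Subset \mathcal{D}' \Subset \mathcal{D}$. In these coordinates ${\rm L} = \partial_t - a^{ij}(x)\partial_i\partial_j - b^i(x)\partial_i - c(x)$, and because $g\in C^k(M)$ with $k\geq 3$ the coefficients lie in $C^{k-2,\iota}$, hence in particular are $C^{1,\iota}$.

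With $\rho_\epsilon$ a standard spatial mollifier and $q_\epsilon := q *_x \rho_\epsilon$, testing the weak equation ${\rm L}^{\rm w}q=r$ against $y\mapsto \rho_\epsilon(x-y)$ in its adjoint form yields the pointwise identity
\[
{\rm L}\, q_\epsilon \; = \; r_\epsilon \, + \, C_\epsilon[q] \quad \text{on } \mathcal{D}'',
\]
where $r_\epsilon := r *_x \rho_\epsilon$ and $C_\epsilon[q]$ is a commutator expression arising from the interaction of the variable coefficients of ${\rm L}$ with the mollification. The key technical input is Friedrichs' commutator lemma: because the coefficients are $C^{1,\iota}$ and $q$ is continuous (hence locally bounded), $C_\epsilon[q]\to 0$ uniformly on $\mathcal{D}''$ as $\epsilon\to 0$, and moreover $|C_\epsilon[q]|_{2,\iota}$ stays uniformly bounded in $\epsilon$. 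Combined with $|r_\epsilon|_{2,\iota}\leq |r|_{2,\iota}<\infty$, this delivers a uniform bound $|{\rm L}\,q_\epsilon|_{2,\iota}\leq C$ on $\mathcal{D}''$.

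Since $q_\epsilon$ is smooth, Lemma \ref{Friedman-lem-2} applies directly and gives
\[
|q_\epsilon|_{0,2+\iota} \; \leq \; \kappa\bigl(|q_\epsilon|_{0,0}+|{\rm L}\,q_\epsilon|_{2,\iota}\bigr) \; \leq \; \kappa',
\]
with $\kappa'$ independent of $\epsilon$, using $|q_\epsilon|_{0,0}\leq \|q\|_{C^0(\mathcal{D}')}<\infty$. Arzel\`a--Ascoli then extracts a subsequence converging in $C^{2,\iota'}(\mathcal{D}'')$ for any $\iota'<\iota$; uniform convergence $q_\epsilon\to q$ identifies the limit with $q$, so $q\in C^{2,\iota}(\mathcal{D}'')$. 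Passing to the limit in the identity of the previous paragraph gives ${\rm L}q=r$ classically on $\mathcal{D}''$, and a standard covering argument extends this to all of $\mathcal{D}$.

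The main obstacle is the commutator estimate on $C_\epsilon[q]$: because $q$ carries no a priori spatial derivatives, every derivative appearing in $C_\epsilon[q]$ must first be transferred onto the mollifier kernel by integration by parts against the $C^{1,\iota}$ coefficients before any H\"older or $L^\infty$ bound can be extracted. The uniformity in $\epsilon$ requires careful book-keeping of derivatives of the coefficients paired against $\rho_\epsilon$ (whose $\alpha$-th derivative grows like $\epsilon^{-|\alpha|}$), and it is precisely here that the regularity hypothesis $g\in C^k(M)$ with $k\geq 3$ is used. Once this Friedrichs-type estimate is in hand, the rest is a routine compactness argument built on Lemma \ref{Friedman-lem-2}.
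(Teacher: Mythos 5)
Your overall skeleton (mollify, get uniform bounds from Lemma \ref{Friedman-lem-2}, pass to the limit) is the route the paper itself gestures at, but the step you single out as "the main obstacle" is in fact where the argument breaks: the commutator claim is not a theorem in the generality you need. The Friedrichs/DiPerna--Lions commutator lemma controls \emph{first-order} commutators $[b\cdot\nabla,\rho_\epsilon*]$ acting on functions that already carry the derivative being transported (or on $L^p$ functions for a first-order operator). Here the commutator is second order and $q$ has no derivatives at all. Writing it out, with ${\rm L}=\partial_t-a^{ij}\partial_{ij}-b^i\partial_i$ in coordinates,
\[
C_\epsilon[q](t,x)=\int q(t,y)\Bigl\{\bigl[a^{ij}(x)-a^{ij}(y)\bigr]\partial_{y_iy_j}\rho_\epsilon(x-y)-2\partial_{y_i}a^{ij}(y)\,\partial_{y_j}\rho_\epsilon(x-y)-\partial_{y_iy_j}a^{ij}(y)\,\rho_\epsilon(x-y)\Bigr\}\,dy+\cdots,
\]
and the kernel in braces has total integral zero but $L^1$-mass of order $\epsilon^{-1}$: the coefficient difference gains one factor $\epsilon$, the two derivatives on $\rho_\epsilon$ cost $\epsilon^{-2}$, and there is no derivative of $q$ available to absorb the remaining $\epsilon^{-1}$. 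Subtracting the mean, the best one gets is $\|C_\epsilon[q]\|_\infty\lesssim \omega_q(\epsilon)\,\epsilon^{-1}$, where $\omega_q$ is the modulus of continuity of $q$; already in one dimension with the smooth coefficient $a(x)=x$ the kernel is $\epsilon^{-2}\bigl(w\rho''(w)+2\rho'(w)\bigr)$, $w=z/\epsilon$, so for $q$ merely $\alpha$-H\"older with $\alpha<1$ the commutator is of size $\epsilon^{\alpha-1}$ and blows up. A fortiori the weighted norm $|C_\epsilon[q]|_{2,\iota}$, which in Friedman's notation also requires a H\"older seminorm of $C_\epsilon[q]$ itself, is not uniformly bounded, so the uniform bound on $|{\rm L}q_\epsilon|_{2,\iota}$ --- the input to Lemma \ref{Friedman-lem-2} and the heart of your compactness argument --- is not established. (A secondary point: mollifying only in $x$ leaves $\partial^2_x q_\epsilon$ merely continuous in $t$, so the hypothesis of Lemma \ref{Friedman-lem-2} that the derivatives occurring in ${\rm L}q_\epsilon$ be $\iota$-H\"older is also unverified; that part is fixable by mollifying in $t$ as well, but it does not cure the commutator.)

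The missing idea is some device that never lets two derivatives fall on the merely continuous $q$. The classical repairs do exactly this: for instance, solve by Schauder theory the first boundary value problem ${\rm L}\tilde q=r$ on a small cylinder with $\tilde q=q$ on the parabolic boundary (possible since the coefficients and $r$ are H\"older and the boundary data are continuous), so that $v=q-\tilde q$ is a continuous distributional solution of the homogeneous equation vanishing on the parabolic boundary, and then prove $v\equiv 0$ by a uniqueness/duality argument (testing against the adjoint problem or against the fundamental solution, whose existence for H\"older coefficients is classical); alternatively, first upgrade $q$ by one derivative through a parametrix/volume-potential representation of $r$, after which a commutator argument of the type you propose does close, since one derivative of $q$ is then available to pair against the mollifier. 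As written, however, the claimed uniform bound and uniform vanishing of $C_\epsilon[q]$ for a merely continuous $q$ is false, and the proof does not go through.
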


As a corollary of the above lemmas, we have the following. 
\begin{lem}\label{weak-reg-p-1}  Assume  there are locally  $L^1$ integrable functions $\{\phi_{\l}^{1}(T, x, y)\}_{x\in \M, T\in \Bbb R_+}$ on $\M$ which are continuous in $\l$-parameter and are continuous in  $(T, y)$-parameter, locally uniformly in $\l$,  such that,  for any $f\in C_{c}^{\infty} (\M)$, 
\begin{equation}\label{p-diff-induction}
\left(\int_{\M}f(y)p^{\l}(T, x, y)\ d{\rm{Vol}}^{\l}(y)\right)^{(1)}_{\l}=\int_{\M}f(y) \phi_{\l}^{1}(T, x, y)p^{\l}(T, x, y)\ d{\rm{Vol}}^{\l}(y).
\end{equation}
Then,  Theorem \ref{regu-p-1st} {\rm i)}  holds  true. 
 \end{lem}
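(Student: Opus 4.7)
The plan is to upgrade the weak-derivative identity (\ref{p-diff-induction}) to a pointwise formula for $(p^{\l})^{(1)}_{\l}$, and then read off $C^{k,\iota}$ regularity from the parabolic equation this derivative satisfies. First I would use $d{\rm Vol}^{\l}=\rho^{\l}d{\rm Vol}^0$ to rewrite (\ref{p-diff-induction}) as
\begin{equation*}
\left(\int_{\M} f(y)\,p^{\l}(T,x,y)\rho^{\l}(y)\,d{\rm Vol}^{0}(y)\right)^{(1)}_{\l}=\int_{\M} f(y)\,\phi^{1}_{\l}(T,x,y)\,p^{\l}(T,x,y)\rho^{\l}(y)\,d{\rm Vol}^{0}(y)
\end{equation*}
for every $f\in C^{\infty}_c(\M)$. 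The continuity hypotheses on $\phi^{1}_{\l}$, combined with the joint continuity of $p^{\l}$ and $\rho^{\l}$, let me apply the fundamental theorem of calculus in $\l$ and Fubini to deduce, testing against $f$ and then invoking density,
\begin{equation*}
p^{\l}(T,x,y)\rho^{\l}(y)-p^{0}(T,x,y)\rho^{0}(y)=\int_{0}^{\l}\phi^{1}_{s}(T,x,y)\,p^{s}(T,x,y)\rho^{s}(y)\,ds
\end{equation*}
pointwise in $y$ (both sides are continuous). Differentiating this identity at $\l$ gives the pointwise existence of $(p^{\l}\rho^{\l})^{(1)}_{\l}=\phi^{1}_{\l}\,p^{\l}\rho^{\l}$, and since $\l\mapsto \rho^{\l}$ is $C^{k}$, I recover $(p^{\l})^{(1)}_{\l}=(\phi^{1}_{\l}-(\ln\rho^{\l})^{(1)}_{\l})\,p^{\l}$, which is precisely formula (\ref{Thm1.3-step 1}).

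Next, writing $q^{\l}:=(p^{\l})^{(1)}_{\l}$, I differentiate the heat equation $(\partial_t-\Delta^{\l})p^{\l}=0$ in $\l$ (justified by the pointwise differentiability just established together with the smoothness of the coefficients of $\Delta^{\l}$ in $\l$) to obtain the parabolic equation
\begin{equation*}
(\partial_t-\Delta^{\l})\,q^{\l}=(\Delta^{\l})^{(1)}_{\l}\,p^{\l},\qquad q^{\l}(0,x,\cdot)\equiv 0.
\end{equation*}
A priori, this identity holds only in the distributional sense (it is (\ref{b-argu-1})), but Lemma \ref{weak-strong-same} applies because the forcing $(\Delta^{\l})^{(1)}_{\l}p^{\l}$ is H\"older continuous away from $t=0$ and $y=x$, and the solution $q^{\l}$ we just constructed is continuous in $y$. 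Invoking Lemma \ref{Friedman-lem} with the $C^{k-2,\iota}$ coefficients of the operator and the $C^{k-2,\iota}$ regularity of the forcing then yields $q^{\l}(T,x,\cdot)\in C^{k,\iota}(\M)$.

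Finally, to upgrade pointwise differentiability to $C^{1}$ differentiability in $\l$ into $C^{k,\iota}(\M)$, I consider the difference quotient $D_{h}:=h^{-1}(p^{\l+h}-p^{\l})-q^{\l}$. A direct computation shows that $D_h$ satisfies a parabolic equation of the form
\begin{equation*}
(\partial_t-\Delta^{\l})D_h=\text{(forcing that tends to $0$ in the $|\cdot|_{2,\iota}$ norm as $h\to 0$)},
\end{equation*}
with $D_h(0,x,\cdot)\equiv 0$. Applying Lemma \ref{Friedman-lem-2} to $D_h$ on appropriate bounded cylinders, together with standard heat-kernel sup-norm bounds for $D_h$ obtained from the pointwise convergence just proved, gives the uniform $|D_h|_{0,2+\iota}$ decay required for $C^{k,\iota}$ convergence. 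Iterating this argument on higher-order spatial derivatives, using the propagation of H\"older regularity in Lemma \ref{Friedman-lem}, completes the proof of Theorem \ref{regu-p-1st} i).

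The main obstacle will be Step 3, that is, controlling the H\"older norm of the forcing term in the equation for $D_h$ uniformly in $h$ and upgrading the pointwise decay to $C^{k,\iota}$ decay on the universal cover. The difficulty comes from the non-compactness of $\M$ and from singularities of $p^{\l}$ and its $\l$-derivatives near $t=0$ and $y=x$; the weighted seminorms $|\cdot|_{n,j+\iota}$ in Lemma \ref{Friedman-lem-2} are designed to absorb exactly these singularities, but applying them requires a careful localization away from the diagonal and a patching argument using the semigroup property of $p^{\l}$ to pass from short-time to long-time regularity.
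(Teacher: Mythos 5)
Your steps 1 and 2 follow the paper's proof essentially verbatim: the pointwise upgrade via Fubini and density, the identification of $(p^{\l})^{(1)}_{\l}$ through (\ref{Thm1.3-step 1}), and the passage to the parabolic equation combined with Lemmas \ref{weak-strong-same}, \ref{Friedman-lem-2} and \ref{Friedman-lem} to obtain $C^{k,\iota}$ regularity in $y$. One small technical remark on step 2: you invoke ``smoothness of the coefficients of $\Delta^{\l}$ in $\l$'' to differentiate the heat equation directly, but at that stage you only have pointwise differentiability of $p^{\l}$ in $\l$, not of its spatial derivatives; the paper avoids this by first differentiating in the distributional sense (which is licit because $(p^{\l})^{(1)}_{\l}$ is continuous and hence has weak derivatives) and only then upgrading to a classical solution via Lemma \ref{weak-strong-same}. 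Your argument can be made rigorous the same way, but as written it is slightly ahead of itself.

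Step 3 is where you genuinely part ways with the paper, and this is also where the gap lies. You propose controlling the difference quotient $D_h=h^{-1}(p^{\l+h}-p^{\l})-q^{\l}$ via its parabolic equation. Computing the forcing gives
\begin{equation*}
(\partial_t-\Delta^{\l})D_h \;=\;\bigl[h^{-1}(\Delta^{\l+h}-\Delta^{\l})-(\Delta^{\l})^{(1)}_{\l}\bigr]p^{\l+h}\;+\;(\Delta^{\l})^{(1)}_{\l}\bigl(p^{\l+h}-p^{\l}\bigr),
\end{equation*}
and to drive the second term to zero in the $|\cdot|_{2,\iota}$ norm you need $p^{\l+h}\to p^{\l}$ in $C^{2,\iota}$, not merely pointwise — but $C^{2,\iota}$ convergence of $p^{\l+h}$ is essentially what you are trying to establish. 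To break this circularity you would have to invoke the uniform-in-$\l$ bound on $|p^{\l}(T,x,\cdot)|_{C^{k,\iota}(\mathcal D_1)}$ from Lemma \ref{Friedman-lem-2}, then interpolate (Arzel\`a--Ascoli in H\"older classes) to upgrade pointwise to $C^{2,\iota'}$ convergence. Once you do that, you might as well apply the same uniform-bound-plus-interpolation argument directly to $q^{\l}=(p^{\l})^{(1)}_{\l}$: the formula $(p^{\l})^{(1)}_{\l}=(\phi^{1}_{\l}-(\ln\rho^{\l})^{(1)}_{\l})p^{\l}$ together with the continuity of $\phi^{1}_{\l}$ in $\l$ already gives $C^{0}$-continuity of $\l\mapsto q^{\l}$, and the uniform $C^{k,\iota}$ bound on $q^{\l}$ (from (\ref{p-lambda-1}) and Lemmas \ref{Friedman-lem}, \ref{Friedman-lem-2}) then yields $C^{k,\iota}$-continuity, from which strong $C^{1}$ differentiability follows by the standard Bochner-integral mean-value argument. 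This is exactly what the paper does, and it avoids the difference-quotient machinery entirely. Your instinct that step 3 is the ``main obstacle'' is right, but the obstacle is better dissolved than confronted: the compactness/interpolation argument you would need to close your $D_h$ estimate is the same one that finishes the proof directly at the level of $q^{\l}$.
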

\begin{proof}Let $T\in \Bbb R_+$ and $x\in \M$.  
If  (\ref{p-diff-induction}) is true, then for any $f\in C_c^{\infty}(\M)$,
\begin{align}
\notag&\int_{\M}f(y)\left(p^{\l}(T, x, y)\rho^{\l}(y)-p^0(T, x, y)\rho^0(y)\right) d{\rm Vol}^0(y)\\
\notag&\  =\int_{0}^{\l}\int_{\M}f(y)\phi_{\wt{\l}}^1(T, x, y)p^{\wt{\l}}(T, x, y)\rho^{\wt{\l}}(y)\ d{\rm Vol}^0(y)d\wt{\l}\\
&\ =\int_{\M}f(y)\left(\int_{0}^{\l}\phi_{\wt{\l}}^1(T, x, y)p^{\wt{\l}}(T, x, y)\rho^{\wt{\l}}(y)d\wt{\l} \right)\ d{\rm Vol}^0(y),\label{inte-equal-cont}
\end{align}
where $\rho^{\wt{\l}}=d{{\rm{Vol}}^{\wt{\l}}}/d{\rm{Vol}^0}$ and  the second equality holds by Fubini theorem.  Note that if a continuous function $\phi$ is such that  $\int_{\M}\phi(y) f(y)\ d{\rm Vol}(y)=0$ for all $f\in C_c^{\infty}(\M)$ and a volume element ${\rm Vol}$ of a $C^2$ Riemannian metric, then $\phi$ is zero.
Hence we can conclude from (\ref{inte-equal-cont}) that 
\begin{equation}\label{p-diff-1'}
p^{\l}(T, x, y)\rho^{\l}(y)-p^0(T, x, y)\rho^0(y)=\int_{0}^{\l}\phi_{\wt{\l}}^1(T, x, y)p^{\wt{\l}}(T, x, y)\rho^{\wt{\l}}(y)d\wt{\l}
\end{equation}
since the functions appearing on both sides  are all continuous in $y$-variable and  $\l$-variable.  Since $\l\mapsto \rho^{\l}$ is $C^k$,  (\ref{p-diff-1'}) implies the existence of  $(p^{\l})^{(1)}_{\l}(T, x, y)$  for every $y$ and 
\begin{equation}\label{p-diff-induction-2}
(p^{\l})^{(1)}_{\l}(T, x, y)\cdot \rho^{\l}(y)+p^{\l}(T, x, y)\cdot (\rho^{\l})^{(1)}_{\l}(y)=\phi_{\l}^{1}(T, x, y)p^{{\l}}(T, x, y)\rho^{\l}(y). 
\end{equation}  
Then (\ref{p-diff-induction-2}) implies  that   $(p^{\l})^{(1)}_{\l}(\cdot, x, \cdot)$ is  a continuous function on $\Bbb R_+\times \M$ since we have the continuity in the $(T, y)$-coordinate of  both $p^{\l}(T, x, y)$ and $\phi_{\l}^1(T, x, y)$ by assumption.  

 Shrinking the neighborhood $\mathcal{V}_g$ of $g$ if necessary, we may assume  there is $\iota>0$ such that $p^{\l}(T, x, \cdot)\in C^{k, \iota}(\M)$ for all $\l$.  Since it  is a local problem, for $(T, y)\in \Bbb R_+\times \M$, we can also restrict ourselves to a bounded domain $\mathcal{D}$ containing $(T, y)$. Note that  $
{\rm L}^{\l}p^{\l}=0$. 
Lemma \ref{Friedman-lem-2} implies  $|p^{\l}(T, x, \cdot)|_{0, 2+\iota}<\infty$ on $\mathcal{D}$.  For each $x\in \M$, since $(p^{\l})^{(1)}_{\l}(T, x, y)$ is continuous  in $(T, y)$,  its weak derivatives in $(T, y)$ of any order are well-defined. So 
\begin{equation}\label{induction-p-1}
{\rm L}^{\l,{\rm{w}}}(p^{\l})^{(1)}_{\l}(T, x, \cdot)=({\rm L}^{\l})^{(1), {\rm{w}}}p^{\l}(T, x, \cdot)=({\rm L}^{\l})^{(1)}_{\l}p^{\l}(T, x, \cdot). 
\end{equation}
We can handle the equation locally. 
Shrinking the domain $\mathcal{D}$ to $\mathcal{D}_1$ if necessary, we deduce from $|p^{\l}(T, x, \cdot)|_{0, 2+\iota}<\infty$ on $\mathcal{D}$  that $|({\rm L}^{\l})^{(1)}_{\l}p^{\l}(T, x, \cdot)|_{2, \iota}<\infty$ on $\mathcal{D}_1$.  
Since $(p^{\l})^{(1)}_{\l}(T, x, \cdot)$ is continuous,  Lemma \ref{weak-strong-same} implies that (\ref{induction-p-1}) holds true in the usual sense, i.e., 
\begin{equation}\label{p-lambda-1}
{\rm L}^{\l}(p^{\l})^{(1)}_{\l}(T, x, \cdot)=-({\rm L}^{\l})^{(1)}_{\l}p^{\l}(T, x, \cdot). 
\end{equation}
Then we can apply Lemma \ref{Friedman-lem-2} to conclude that  $|(p^{\l})^{(1)}_{\l}(T, x, \cdot)|_{0, 2+\iota}<\infty$ on $\mathcal{D}_1$ and apply Lemma \ref{Friedman-lem} to conclude that  $(p^{\l})^{(1)}_{\l}(T, x, \cdot)\in C^{k, \iota}(\mathcal{D}_1)$. The norms of $(p^{\l})^{(1)}_{\l}(T, x, \cdot)$ in $C^{k, \iota}(\mathcal{D}_1)$ are locally uniformly bounded in $\l$ by using (\ref{p-lambda-1}), Lemma \ref{Friedman-lem} and Lemma \ref{Friedman-lem-2}. So the continuity of $\l\mapsto (p^{\l})^{(1)}_{\l}(T, x, \cdot)$ in $C(\M)$ is improved to the continuity in $C^{k, \iota}(\M)$. 
\end{proof}

For Theorem \ref{regu-p-1st}, it remains to find a  candidate  $\phi^1_{\l}(T, x, y)$ for Lemma \ref{weak-reg-p-1}. Let   $x\in\M$ and let $\lfloor {\rm{u}}_0\rceil^{\l}\in \mathcal{O}^{\wt{g}^{\l}}_x(\M)$. Recall that the  solution  to the SDE
\begin{equation*}
d\lfloor {\rm{u}}_t\rceil^{\l}=\sum_{i=1}^{m}H^{\l}(\lfloor {\rm{u}}_t\rceil^{\l}, e_i)\circ dB_t^i(w)
\end{equation*}
with initial value $\lfloor {\rm{u}}_0\rceil^{\l}$ projects to be the Brownian motion  $\lfloor {\rm x}_t\rceil^{\l}$ on $\M$ starting from $x$ and the heat kernel function $p^{\l}(T, x, \cdot)$ is just the density of the distribution of $w\mapsto \lfloor {\rm x}_T\rceil^{\l}(w)$ under ${\rm Q}$. Hence for any $f\in C_c^{\infty}(\M)$, we have \[
\int_{\M} f(y)p^{\l}(T, x, y)\ d{\rm Vol}^{\l}(y)=\E\big(f(\lfloor {\rm x}_T \rceil^{\l}(w))\big)
\]
and the equality continues to hold if we  differentiate both sides in $\l$. 
Choose $\l\mapsto \lfloor {\rm{u}}_0\rceil^{\l}$ to be a $C^{k-2}$ curve. By Lemma \ref{u-lambda-differential-1},   for almost all $w$  and all  $t\in \Bbb R_+$,   $\l\mapsto \lfloor {\rm{u}}_t\rceil^{\l}(w)$  is $C^{k-2}$. By Proposition  \ref{est-norm-u-t-j}, the differentials $(\lfloor {\rm{u}}_t\rceil^{\l})^{(j)}_{\l}(w)$, $j\leq k-2$,  are  $L^1$ integrable, uniformly  in $\l$. Hence, 
\begin{align}
& \left(\E\big(f(\lfloor {\rm x}_T \rceil^{\wt{\l}}(w))\big)\right)^{(1)}_{\l}\notag\\
&\ \ =\  \E\left(\left\langle \nabla^{\l}_{\lfloor {\rm x}_T \rceil^{\l}(w)} (f\circ \pi)(\lfloor {\rm{u}}_T\rceil^{\l}(w)), (\lfloor {\rm{u}}_T\rceil^{\l})^{(1)}_{\l}(w) \right\rangle_{\l}\right)\label{indentity-diff-expect}\\
&\ \ =\  \int_{\M} \E\left(\left.\big\langle \nabla^{\l}_y f(y), D\pi(\lfloor {\rm{u}}_T\rceil^{\l})^{(1)}_{\l}(w)\big\rangle_{\l}\right| \lfloor {\rm x}_T \rceil^{\l}(w)=y\right)\cdot p^{\l}(T, x, y)\ d{\rm Vol}^{\l}(y). \label{put-random}
\end{align}
Note that  (\ref{put-random}) holds for every choice of $\lfloor {\rm{u}}_0\rceil^{\l}\in \mathcal{O}^{\wt{g}^{\l}}(\M)$ at $\l$. For some technical considerations which we will mention later, we  choose $\lfloor {\rm{u}}_0\rceil^{\l}$ in $\mathcal{O}^{\wt{g}^{\l}}(\M)$ at random with a uniform distribution normalized to be probability 1 and then choose   
\[
\lfloor {\rm{u}}_0\rceil^{\wt{\l}}=\lfloor \overline{{\rm{u}}}_0\rceil^{\wt{\l}}(\lfloor \overline{{\rm{u}}}_0\rceil^{\l})^{-1}\lfloor {\rm{u}}_0\rceil^{\l},\ \wt{\l}\in (-1, 1), 
\]
where $\lfloor \overline{{\rm{u}}}_0\rceil^{\wt{\l}}$ is some fixed $C^k$ curve in $\mathcal{F}(\M)$ with $\lfloor \overline{{\rm{u}}}_0\rceil^{\wt{\l}}\in \mathcal{O}^{\wt{g}^{\wt{\l}}}(\M)$. Write $\overline{\E}$ for the new expectation when the random choices of $\lfloor {\rm{u}}_0\rceil^{\l}$ are taken into account. Then 
 \begin{align}\label{put-random-1}
\left(\E f(\lfloor {\rm x}_T \rceil^{\wt{\l}})\right)^{(1)}_{\l}\!=\int_{\M} \overline{\E}\left(\!\left.\left\langle \nabla^{\l}_y f(y), D\pi(\lfloor {\rm{u}}_T\rceil^{\l})^{(1)}_{\l}(w)\right\rangle_{\l}\right| \lfloor {\rm x}_T \rceil^{\l}(w)=y\right)\cdot p^{\l}(T, x, y)\ d{\rm Vol}^{\l}(y).
\end{align}
For any $C^{k}$ bounded vector field $Y$ on $\M$, let 
\begin{equation}\label{Phi-lam-1-Y}
\overline{\Phi}_{\l}^1(Y)(y):=\overline{\E}\left(\left.\Phi_{\l}^1(Y, w)\right| \lfloor {\rm x}_T \rceil^{\l}(w)=y\right),
\end{equation}
where 
\begin{equation}\label{def-phi-1}
\Phi_{\l}^1(Y, w):=\big\langle Y(\lfloor {\rm x}_T \rceil^{\l}(w)), D\pi(\lfloor {\rm{u}}_T\rceil^{\l})^{(1)}_{\l}(w)\big\rangle_{\l}. 
\end{equation}
We will show the linear functional $\overline{\Phi}_{\l}^1$ is such that $\overline{\Phi}_{\l}^1(Y)$ is $C^1$ in $y$ variable, from which we can  deduce that \begin{equation}\label{z-T-lam-1}
{\rm z}_T^{\l, 1}(y):=\overline{\E}\left(\left.D\pi(\lfloor {\rm{u}}_T\rceil^{\l})^{(1)}_{\l}(w)\right| \lfloor {\rm x}_T \rceil^{\l}(w)=y\right)
\end{equation}
is a  $C^1$ vector field on $\M$.  Hence,  we can apply the the classical  integration by parts  formula   to   (\ref{put-random-1}) and compute that 
\begin{align*}
&\left(\E(f(\lfloor {\rm x}_T \rceil^{\wt{\l}}(w)))\right)^{(1)}_{\l}\\
&\ \ \  =-\int_{\M} f(y)\left(({\rm Div}^{\l}{\rm z}_T^{\l, 1}(y))+\big\langle {\rm z}_T^{\l, 1}(y), \nabla^{\l}\ln p^{\l}(T, x, y) \big\rangle_{\l}\right)p^{\l}(T, x, y) \ d{\rm Vol}^{\l}(y). 
\end{align*}
This gives a candidate of $\phi_{\l}^{1}$ for Lemma \ref{weak-reg-p-1} as 
\begin{equation}\label{phi-1-candidate}
 \phi_{\l}^{1}(T, x, y):=-\left(({\rm Div}^{\l}{\rm z}_T^{\l, 1}(y))+\big\langle {\rm z}_T^{\l, 1}(y), \nabla^{\l}\ln p^{\l}(T, x, y) \big\rangle_{\l}\right).
\end{equation}

To justify that (\ref{phi-1-candidate}) is well-defined, we  need to  show the $C^1$ dependence of $\overline{\Phi}_{\l}^1(Y)(y)$ in $y$-variable.  Let ${\rm V}$ be a  smooth  bounded vector field on $\M$ and let $\{F^s\}_{s\in \Bbb R}$ be the flow it generates.
To  compare $\overline{\Phi}_{\l}^1(Y)(F^s(y))$ with $\overline{\Phi}_{\l}^1(Y)(y)$, our strategy is to  extend every map $F^s$ on $\lfloor {\rm x}_T \rceil^{\l}(w)$, the endpoint of Brownian motion paths at time $T$,  to be a map ${\bf F}^s$ on Brownian paths up to time $T$. Let $\overline{\P}_x^{\l}$ denote the product of the probability $\P_x^{\l}$ with the uniform probability on $\mathcal{O}^{\wt{g}^{\l}}(\M)$ for the choice of $\lfloor {\rm u}_0\rceil^{\l}$. We will ensure the maps ${\bf F}^s$ are such that $\overline{\P}_x^{\l}\circ {\bf F}^s$ are absolutely continuous with respect to $\overline{\P}_x^{\l}$. Clearly,   for any bounded measurable function $f$  on $\M$, 
\begin{align}\label{L-R-compare}
\overline{\E}\left(\Phi_{\l}^1 (Y, w)f( \lfloor {\rm x}_T\rceil^{\l}(w))\right)=& \overline{\E}\left(\overline{\Phi}_{\l}^1(Y)(y) f(y)\right)\\
\label{L-R-compare-L} =& \int \overline{\Phi}_{\l}^1(Y)(F^s(y)) f(F^s(y)) p^{\l}(T, x, F^s y)\ d{\rm Vol}^{\l}(F^s(y)), 
\end{align}
where the first equality holds  by the definition of conditional measures and the second equality holds by changing the variable  to $F^s(y)$. 
The left hand side of (\ref{L-R-compare}), after a change of variable under ${\bf F}^s$, is equal to 
\begin{align}
\notag&\overline{\E}\left(\Phi_{\l}^1 \circ {\bf F}^s \cdot f\circ {\bf F}^s\cdot \frac{d\overline{\P}_x^{\l}\circ {\bf F}^s}{d\overline{\P}_x^{\l}}\right)\\
&=\int \overline{\E}\left(\left.\Phi_{\l}^1 \circ {\bf F}^s \cdot \frac{d\overline{\P}_x^{\l}\circ {\bf F}^s}{d\overline{\P}_x^{\l}}\right|\lfloor {\rm x}_T \rceil^{\l}(w)=y\right)\cdot f(F^s(y)) p^{\l}(T, x, y)\ d{\rm Vol}^{\l}(y). \label{L-R-compare-R}
\end{align}
Since $f$ is arbitrary, a comparison of (\ref{L-R-compare-L}) with (\ref{L-R-compare-R}) implies that 
\begin{align}
\overline{\Phi}_{\l}^1(Y)(F^s(y))=& \overline{\E}\left(\left.\Phi_{\l}^1\right| \lfloor {\rm x}_T \rceil^{\l}(w)=F^s(y)\right)\notag\\
=&\overline{\E}\left(\left.\Phi_{\l}^1\circ {\bf F}^s \cdot \frac{d\overline{\P}_x^{\l}\circ {\bf F}^s}{d\overline{\P}_x^{\l}}\right|\lfloor {\rm x}_T \rceil^{\l}(w)=y\right)\frac{p^{\l}(T, x, y)}{p^{\l}(T, x, F^s y)}\frac{d{\rm Vol}^{\l}}{d{\rm Vol}^{\l}\circ F^s}(y).\label{Ch-Var-compare}
\end{align}
Note that $p^{\l}(T, x, y)$ and  the volume element ${\rm Vol}^{\l}$  are $C^k$ in  the $y$ variable. So the differentiability in the $s$ parameter of $\overline{\Phi}_{\l}^1(Y)(F^s(y))$ will follow from  the differentiability in the $s$ parameter of  
\begin{equation}\label{L-R-compare-F}\overline{\E}\left(\left.\Phi_{\l}^1 \circ {\bf F}^s \cdot \frac{d\overline{\P}_x^{\l}\circ {\bf F}^s}{d\overline{\P}_x^{\l}}\right|\lfloor {\rm x}_T \rceil^{\l}(w)=y\right).
\end{equation}
In order to show this differentiability in  $s$, we will show that  our one-parameter family of maps  ${\bf F}^s$ satisfy  the following properties (see Proposition \ref{Quasi-IP-2}, Proposition \ref{Diff-int-U-s} and Proposition \ref{Diff-int-U-s-gen}),  where  all the integrals  are taken with respect to $\overline{\P}_x^{\l}$ conditioned on $\lfloor {\rm x}_T \rceil^{\l}(w)=y$.

\begin{itemize}
\item[i)] $\overline{\P}_x^{\l}\circ {\bf F}^s$ is absolutely continuous with respect to $\overline{\P}_x^{\l}$ and the Radon-Nikodyn derivative $d\overline{\P}_x^{\l}\circ {\bf F}^s/d\overline{\P}_x^{\l}$ is  $L^q$ integrable for $q\geq 1$,  locally uniformly in the $s$ parameter,
\item[ii)]the differential of $d\overline{\P}_x^{\l}\circ {\bf F}^s/d\overline{\P}_x^{\l}$ in $s$ is $\overline{\mathcal{E}^s_T}\cdot (d\overline{\P}_x^{\l}\circ {\bf F}^s/d\overline{\P}_x^{\l})$, where $\overline{\mathcal{E}^s_T}$ is  $L^q$ integrable for $q\geq 1$, locally uniformly in the $s$ parameter, and   
\item[iii)] $(\lfloor {\rm{u}}_T\rceil^{\l})^{(1)}_{\l}\circ {\bf F}^s$ is differentiable in $s$  with the differential stochastic process $L^q$ integrable for $q\geq 1$, locally uniformly  in the $s$ parameter.
\end{itemize}
With these three properties, we will obtain 
 \begin{equation*}
\left(\Phi_{\l}^1 \circ {\bf F}^s \cdot \frac{d\overline{\P}_x^{\l}\circ {\bf F}^s}{d\overline{\P}_x^{\l}}\right)'_s=\Phi_{\l}^1 \circ {\bf F}^s \cdot \overline{\mathcal{E}^s_T}\cdot \frac{d\overline{\P}_x^{\l}\circ {\bf F}^s}{d\overline{\P}_x^{\l}}+\left(\Phi_{\l}^1 \circ {\bf F}^s\right)'_s\frac{d\overline{\P}_x^{\l}\circ {\bf F}^s}{d\overline{\P}_x^{\l}}
\end{equation*}
and this differential is absolutely integrable, locally uniformly  in  the $s$ parameter.  Hence (\ref{L-R-compare-F}) is differentiable in the $s$ parameter and we are allowed to take the differential inside the expectation sign.  The uniform continuity of ${\rm z}_T^{\l, 1}(y) $ and $ {\rm Div}^{\l}{\rm z}_T^{\l, 1}(y)$ in $T$ and $y$  will follow from (see the proof of Theorem \ref{regu-p-1st} with $k=3$)
\begin{itemize}
\item[iv)]  the uniform continuity in $T$ and $y$ of 
\[
\overline{\E}\big(\left.\Phi_{\l}^1\right| \lfloor {\rm x}_T \rceil^{\l}(w)=y\big)\ \ \mbox{and}\  \ \overline{\E}\left(\left.\big(\Phi_{\l}^1 \circ {\bf F}^s \cdot \frac{d\overline{\P}_x^{\l}\circ {\bf F}^s}{d\overline{\P}_x^{\l}}\big)'_0\right|\lfloor {\rm x}_T \rceil^{\l}(w)=y\right). 
\]
\end{itemize}

The major part of the remaining subsections  is devoted to the construction of ${\bf F}^s$ and the verification of its properties  i)-iv) mentioned above, which will conclude i) of Theorem \ref{regu-p-1st}. We will discuss  Theorem \ref{regu-p-1st} ii) and iii)   in the last subsection. 

Fix $T>0$.  For each $y\in \M$, we will construct a one parameter family  of maps ${\bf F}^s_y$ on Brownian motion paths starting from $y$ up to time $T$ with ${\bf F}^{s}_{y, x}$ being  its conditional map on paths that will arrive at $x$ in time $T$. We will achieve this in two steps: one for  the   SDE description of ${\bf F}^s_y$ and the other  for its  existence by Picard's iteration argument.  The desired map ${\bf F}^{s}$ will be the collection of all ${\bf F}^{s}_{{\rm x}_T, x}$. But, we need to justify the meaning of  $\Phi_{\l}^1 \circ {\bf F}^s$ and ${d\overline{\P}_x^{\l}\circ {\bf F}^s}/{d\overline{\P}_x^{\l}}$ since  $\Phi_{\l}^1$  and $\overline{\P}_x^{\l}$ are associated with the diffusion paths from $x$. 
This and the verification of i)-iv) will be done in Sections \ref{QIPF-y-s} and \ref{the flow F-S}.  Finally, in Section \ref{TDOp-lam}, we will show the assumption of Lemma \ref{weak-reg-p-1} is satisfied and will give the estimations in  (\ref{grad-lnp-lam-1})  by an  analysis of ${\rm z}_T^{\l, 1}(y)$  and ${\rm Div}^{\l}{\rm z}_T^{\l, 1}(y)$ using the SDE theory.

\subsection{A description of  ${\bf F}_y^s$}\label{flow-F-S-y} In this part,  we fix  $T\in \Bbb R^+$. 
Let $y\in \M$ and $\overline{\beta}_0\in \mathcal{O}^{\wt{g}}_y(\M)$.  For a smooth segment $t\mapsto \alpha_t=(\alpha_{t, 1}, \cdots, \alpha_{t, m})\in \Bbb R^m, t\in [0, T]$,  with $\alpha_0=o$,   let 
$\overline{\beta}=(\overline{\beta}_t)_{t\in [0, T]}$ in  $\mathcal{O}^{\wt{g}}(\M)$ be the unique smooth segment with initial  $\overline{\beta}_0$ satisfying the differential equation 
\[
\nabla_{\frac{D}{\partial t}}\overline{\beta}_t=\sum_{i=1}^{m}H(\overline{\beta}_t, e_i)\cdot \frac{d\alpha_{t, i}}{dt}.
\]
In the language  of Section \ref{Sec-BM-SF},   this means that $\overline{\beta}$ is the transportation (or development) of $\alpha$ in  $\mathcal{O}^{\wt{g}}(\M)$  with starting point  $\overline{\beta}_0$ using  the parallelism differential form $(\theta, \varpi)$. The It\^{o} map $\mathcal{I}_{\overline{\beta}_0}:\ C_{o}^{\infty}([0, T], \Bbb R^m)\to C_{y}^{\infty}([0, T], \M)$ is given by 
\begin{equation}\label{ITO-MAP}\mathcal{I}_{\overline{\beta}_0}(\alpha):=\pi(\overline{\beta})=\beta, 
\end{equation}
where $\pi$ is the projection map from $\mathcal{O}^{\wt{g}}_y(\M)$ to $\M$. 
It is invertible  since  $\alpha$ for (\ref{ITO-MAP})   can be  uniquely determined by the equation \[
\frac{d\alpha_t}{dt}=(\overline{\beta}_t)^{-1}\nabla_{\frac{D}{\partial t}}{\beta}_t,\]
where $\overline{\beta}\subset \mathcal{O}^{\wt{g}}(\M)$  is the horizontal lift of $\beta$ with initial value $\overline{\beta}_0$, i.e., 
\[
\nabla_{\frac{D}{\partial t}}\overline{\beta}_t=H(\overline{\beta}_t, {\overline{\beta}}_t^{-1}\nabla_{\frac{D}{\partial t}}{\beta}_t). 
\]
For $\beta\in C_{y}^{\infty}([0, T], \M)$, its $\mathcal{I}$-preimage $\mathcal{I}_{\overline{\beta}_0}^{-1}(\beta)$ is called the \emph{anti-development} of $\beta$ in $\Bbb R^m$.

For a smooth segment (or curve) $\beta=(\beta_t)_{t\in [0, T]}$ on $\M$,  the classical \emph{parallel transportation map} $\sslash^{\beta}_{\smaller{t_1, t_2}}$ of tangent vectors along the  segments $({\beta}_t)_{t\in [t_1, t_2]}$ ($0\leq t_1\leq t_2\leq T$)  is given by 
\[
\sslash^{\beta}_{\smaller{t_1, t_2}}({\bf v})={\overline{\beta}}_{t_2}\circ \overline{\beta}_{t_1}^{-1}({\bf v}), \ \forall {\bf v}\in T_{\beta_{t_1}}\M, 
\]
where $\overline{\beta}$  is a horizontal lift of $\beta$. 
This definition is independent of the horizontal lift chosen since if $\overline{\beta}'$ is another horizontal lift of $\beta$, then $\overline{\beta}'_t={\overline{\beta}}_t{\overline{\beta}_0}^{-1}\overline{\beta}'_0$ for $t\in [0, T]$ and hence 
\[
\overline{\beta}'_{t_2}\circ (\overline{\beta}'_{t_1})^{-1}={\overline{\beta}}_{t_2}{\overline{\beta}_0}^{-1}\overline{\beta}'_0\circ \big((\overline{\beta}'_0)^{-1}\overline{\beta}_0 {\overline{\beta}}_{t_1}^{-1}\big)= {\overline{\beta}}_{t_2}\circ \overline{\beta}_{t_1}^{-1}.
\]

The  It\^{o} map and the parallel transportation map can also be defined in the stochastic case.  Call an $\mathcal{O}^{\wt{g}}(\M)$-valued  continuous stochastic process $\overline{\beta}=(\overline{\beta}_t)_{t\in [0, T]}$   \emph{horizontal} if there exists  a  $\Bbb R^m$-valued continuous stochastic process $\alpha=(\alpha_{t, 1}, \cdots, \alpha_{t, m})_{t\in [0, T]}$ with $\alpha_0=o$ such that $\overline{\beta}$ solves the Stratonovich SDE 
\begin{equation}\label{Parallel-general}
d\overline{\beta}_t=\sum_{i=1}^{m}H(\overline{\beta}_t, e_i)\circ d\alpha_{t, i}. 
\end{equation}
For a continuous stochastic process $({\beta}_t)_{t\in [0, T]}$  on $\M$,  its \emph{horizontal lifts} are those horizontal processes $\overline{\beta}$ in $\mathcal{O}^{\wt{g}}(\M)$ projecting to it and its \emph{anti-developments} in $\Bbb R^m$ are those $\alpha$ satisfying (\ref{Parallel-general}) (cf. \cite{Hs}).  For a fixed $y\in \M$ and $\overline{\beta}_0\in \mathcal{O}^{\wt{g}}_y(\M)$, (\ref{Parallel-general}) is uniquely solvable for every semi-martingale $\alpha$ and the \emph{It\^{o} map} 
\[
\mathcal{I}_{\overline{\beta}_0}(\alpha):=\pi(\overline{\beta})=\beta
\]
is well-defined.  In the sprit  of Section \ref{Sec-BM-SF},   $\mathcal{I}_{\overline{\beta}_0}(\alpha)$ is the projection process of a transportation (or development) of $\alpha$ in  $\mathcal{O}^{\wt{g}}(\M)$ using  the parallelism differential form $(\theta, \varpi)$.
The one-to-one correspondence between $\alpha, \beta, $ and $ \overline{\beta}$ for semi-martingales is discussed in \cite{Hs}.

For a  semi-martingale $\beta=(\beta_t)_{t\in [0, T]}$  on $\M$, its horizontal lifts $\overline{\beta}$ are  uniquely determined by the distribution of $\overline{\beta}_0$ (cf. \cite[Theorem 2.3.5]{Hs}). Hence, for almost all ${\rm w}\in \Theta_+$, we can define  a stochastic  `parallel transportation map'  $\sslash^{\beta}_{\smaller{t_1, t_2}}$ of tangent vectors along the path segments $({\beta}_t({\rm w}))_{t\in [t_1, t_2]}$ ($0\leq t_1\leq t_2\leq T$) by letting 
\[
\sslash^{\beta}_{\smaller{t_1, t_2}}({\bf v}):={\overline{\beta}}_{t_2}\circ \overline{\beta}_{t_1}^{-1}({\bf v}), \ \forall {\bf v}\in T_{\beta_{t_1}({\rm w})}\M. 
\]
As in the deterministic case, this definition is independent of the horizontal lift $\overline{\beta}$ chosen. 
Each $\sslash^{\beta}_{\smaller{t_1, t_2}}$  is an isometry between $T_{{\beta}_{t_1}({\rm w})}\M$ and $T_{{\beta}_{t_2}({\rm w})}\M$ with the inverse map 
\[
(\sslash_{\smaller{t_1, t_2}}^{\beta})^{-1}({\bf v}'):= {\overline{\beta}}_{t_1}\circ {\overline{\beta}}_{t_2}^{-1}({\bf v}'), \ \forall  {\bf v}'\in T_{{\beta}_{t_2}({\rm w})}\M. 
\]
Moreover, the  parallel transportation maps $\sslash_{\smaller{t_1, t_2}}^{\beta}$ also satisfy the cocycle property
\[
\sslash_{\smaller{t_1, t_3}}^{\beta}=\sslash_{\smaller{t_3, t_2}}^{\beta}\circ \sslash_{\smaller{t_1, t_2}}^{\beta}, \ \forall 0\leq t_1\leq t_2\leq t_3\leq T. 
\]

Let ${\rm V}$ be a smooth bounded vector field on $\M$.  For each $y\in \M$, we obtain a smooth curve $s\mapsto F^s(y), s\in \Bbb R$,  with 
 \[
 \frac{dF^s(y)}{ds}={\rm V}(F^s(y)). \]
Let $T>0$ be fixed and let $\{({\rm y}_t, \mho_t)\}_{t\in [0, T]}$ be a stochastic pair which defines the $\wt{g}$-Brownian motion starting from $y$. The mapping ${\rm w}\mapsto ({\rm y}_t({\rm w}))_{t\in [0, T]}$ gives the distribution of Brownian paths up to time $T$ in $C_{y}([0, T], \M)$, where we  use ${\rm w}$ to differ it from $w$ for $\lfloor {\rm x}_t\rceil^{\l}$.  
 We want to construct a one parameter family  of mappings ${\bf F}^s_y$ on Brownian distributions $({\rm y}_t)_{t\in [0, T]}$ so that 
 \[{\rm y}^s_t({\rm w}):=\left({\bf F}^s_y({\rm y}_{[0, T]}({\rm w}))\right)(t),\  \forall  t\in [0, T], \]  is differentiable in the $s$ `direction'  for almost all ${\rm w}$ with  initial restriction $
{d{\rm y}_0^s}/{ds}={\rm V}(F^s(y)).$ 
 
Choose a $C^1$ function $\mathtt{s}:\ [0, T]\rightarrow \Bbb R_+$  with 
\begin{align}\label{mathtts-cond}
\mathtt{s}(0)=1, \mathtt{s}(T)=0 \ \mbox{and}  \ \varlimsup_{t\to T}\frac{1}{T-t}\mathtt{s}(t)<\infty. 
\end{align}
For almost all ${\rm w}$,  we obtain a vector field along  the paths ${\rm y}_{[0, T]}({\rm w})$ with 
\[
\Upsilon_{{\rm V}, {\rm y}}(t):=\mathtt{s}(t)\cdot \sslash_{0, t}({\rm V}(y)),\  t\in [0, T],\]
where \[
\sslash_{\smaller{t_1, t_2}}({\bf v}):={\mho}_{t_2}\circ \mho_{t_1}^{-1}({\bf v}), \ \forall {\bf v}\in T_{{\rm y}_{t_1}({\rm w})}\M, \ 0\leq t_1\leq t_2\leq T. 
\]
Our desired maps ${\bf F}^s_y$ on  $({\rm y}_t({\rm w}))_{t\in [0, T]}$ are such that $({\rm y}^s_t({\rm w}))_{t\in [0, T]}$ satisfy the equation 
\begin{equation}\label{flow F_s-on M}
\frac{d {\rm y}^s_t({\rm w})}{ds}=\Upsilon_{{\rm V}, {\rm y}^s}(t),
\end{equation}
where 
\[
\Upsilon_{{\rm V}, {\rm y}^s}(t):=\mathtt{s}(t)\cdot \sslash_{0, t}^s({\rm V}({\rm y}_0^s)),\  t\in [0, T], \]
and $\sslash^s$ denotes the parallel transportation map for the process ${\rm y}^s$. The length of tangent vectors remain unchanged under parallel transportations.
Hence 
\[
\|\Upsilon_{{\rm V}, {\rm y}^s}(t)\|=\mathtt{s}(t)\cdot\|{\rm V}({\rm y}_0^s)\|\leq  \mathtt{s}(t)\sup\{\|V(y)\|\}, 
\]
which tends to zero of order $(T-t)$ as $t\to T$ by our choice of $\mathtt{s}$. So,  if the processes ${\rm y}^s$ exist, the ending points ${\rm y}^s_T$ remain in    ${\rm y}_T$.

\begin{remark}
In \cite{Hs}, Hsu introduced a  class of maps  for the  Brownian motion starting from some  point $y$ on a compact manifold:  in our notation, 
\[
\Upsilon_{{\rm V}, {\rm y}^s}(t)=\mho_t(\dot{h}(t)), 
\]
where $h$ is a fixed $\Bbb R^m$ valued curve  from the Euclidean Cameron-Martin space, i.e.,  the completion of the space of  smooth paths $h: [0, T]\mapsto \Bbb R^m$ starting from the origin $o$ with the Hilbert norm $|h|=(\int_0^1 |\dot{h}(t)|^2)^{\frac{1}{2}}$. In his construction,   the initial point ${\rm y}^s_0$ remain unchanged since $h$ starts from $o$ and hence the equations of all ${\rm y}^s$ can be transferred back to $\Bbb R^m$ using a single It\^{o} map at $y$.  In contrast,  in our construction, our manifold is non-compact and we use a vector field $\rm V$ on the manifold instead of a Euclidean Cameron-Martin space element $h$ to generate the random vector field $\Upsilon_{{\rm V}, {\rm y}^s}$. Our ends ${\rm y}^s_T$ remain unchanged for almost all paths  since  ${\mathtt s}(t)$ tends to zero as $t$ goes to $T$;  while  the  initials ${\rm y}^s_0$  changes  as $s$ varies so the It\^{o} transfer map of ${\rm y}^s$ to $\Bbb R^m$ also changes with $s$.   The $C^1$ requirement of $\mathtt s(t)$  is stronger than the $L^2$ integrability of the differentials of $h(t)$. This is to guarantee that we can obtain a continuous version of the resulting process ${\rm y}_t^s$ (and all other related processes) in the parameter $(t, s)$ (see Theorem \ref{Main-alpha-x-v-Q}), which is not true for general  $h$.
\end{remark}

We will solve the SDE (\ref{flow F_s-on M}) by identifying the  anti-developments  $\alpha^s_t=\mathcal{I}^{-1}_{\mho^s_0}({\rm y}^s_t)$ using Picard's iteration method, where   $\mho_0^s$ is the  parallel transportation of $\mho_0$ along the curve $(F^s(y))_{s\in \Bbb R}$.  In many places, the transferred equations using $\rm V$ only differ  in notations from  that for the case of   $h$ in  \cite{Hs}.  But, technically, we have to write every steps in details since the construction is different, the footpoints of the It\^{o} maps are shifting,  and we need more regularity  of ${\rm y}_t^s$ and also  more information of the associated random structures.

We first consider (\ref{flow F_s-on M})  for smooth paths.  Let $y\in \M$ and  $\overline{\beta}_0\in \mathcal{O}^{\wt{g}}_y(\M)$ be fixed.  For  $\beta=(\beta_t)\in C_{y}^{\infty}([0, T], \M)$, the  equation 
\begin{equation}\label{flow-beta}
\frac{\partial\beta^s_t}{\partial s}=\Upsilon_{{\rm V}, \beta^s}(t) :=\mathtt{s}(t)\cdot \sslash_{0, t}^s({\rm V}(F^sy)), \ \beta^0=\beta, 
\end{equation}
where $\sslash_{0, t}^s:=\sslash_{0, t}^{\beta^s}$, is always solvable.   Consider   $\alpha^s_t=\mathcal{I}^{-1}_{\overline{\beta}^s_0}(\beta^s_t)$, where  $\overline{\beta}_0^s$ is the  parallel transportation of $\overline{\beta}_0$ along the curve $s\mapsto F^s(y)$.   Then  $\left.(\partial\alpha^s_t/\partial s)\right|_{s=0}$ differs from $\overline{\beta}_t^{-1}(\Upsilon_{{\rm V}, \beta^0}(t))$ by an integral of curvature term, which can be determined  by a standard calculation exactly as in   \cite[Theorem 2.1]{Hs1}. We give the proof for completeness.

\begin{lem}\label{HS-Th2.1} Let ${\rm V}$ be a smooth bounded vector field on $\M$.  For  $\beta\in C_{y}^{\infty}([0, T], \M)$, let $\beta^{s}$ be the solution to (\ref{flow-beta})  and let   $\overline{\beta}^s$  be its horizontal lift in $\mathcal{O}^{\wt{g}}(\M)$ with initial point $\overline{\beta}_0^s$. 
\begin{itemize}
\item[i)]The differential $(\alpha_t^s)'_s:={\partial \alpha^s_t}/{\partial s}$ is given by 
\[
(\alpha_t^s)'_s=\Upsilon_{{\rm V}, \alpha^s}(t):=\int_{0}^{t}\mathtt{s}'(\tau)(\overline{\beta}_0^s)^{-1}\big({\rm V}(\beta_0^s)\big)\ d\tau-\int_{0}^{t}K_{{\rm V}, \alpha^s}(\tau)\ d\alpha_{\tau}^s,
\]
where 
 \begin{align*}
  K_{{\rm V}, \alpha^s}(\tau) &= \int_{0}^{\tau} (\overline{\beta}_{\varsigma}^s)^{-1}R\big(\overline{\beta}_{\varsigma}^s(\frac{\partial \alpha_\varsigma^s}{\partial \varsigma}), \Upsilon_{{\rm V}, \beta^s}(\varsigma)\big)\overline{\beta}_\varsigma^s\ d\varsigma.
  \end{align*}
 \item[ii)]The differential  $(\overline{\beta}^s_t)'_s:=\nabla_{\frac{D}{\partial s}}\overline{\beta}^s_t$ satisfies the equation \begin{align}\label{bar-beta-s-diff-1}
\left\{ \begin{array}{ll}
\nabla_{\frac{D}{\partial t}}\big(\theta\big((\overline{\beta}^s_t)'_s\big)\big)=\mathtt{s}'(t)(\overline{\beta}_0^s)^{-1}V(F^s y),\\
\nabla_{\frac{D}{\partial t}}\big(\varpi\big((\overline{\beta}^s_t)'_s\big)\big)=(\overline{\beta}_t^s)^{-1}R\left(\overline{\beta}_t^s (\frac{\partial \alpha^s_t}{\partial t}),  \overline{\beta}_t^s(\overline{\beta}_0^s)^{-1}\mathtt{s}(t){\rm{V}}(F^s y)\right)\overline{\beta}_t^s. 
\end{array}\right.
\end{align}
 \end{itemize}
\end{lem}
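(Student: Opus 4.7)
The plan is to apply the Cartan structure equations (\ref{structure-1})--(\ref{structure-2}) of the parallelism $(\theta,\varpi)$ to the smooth two-parameter family $(s,t)\mapsto\overline{\beta}^s_t\in \mathcal{O}^{\wt g}(\M)$ and read off the equations for the two parts of the lemma from the horizontal/vertical components of $(\overline{\beta}_t^s)'_s:=\nabla_{\frac{D}{\partial s}}\overline{\beta}^s_t$. Throughout, write $X^s_t:=\partial_t\overline{\beta}^s_t$ and $Y^s_t:=\partial_s\overline{\beta}^s_t$; these commute, so $[X,Y]=0$. By construction $\overline{\beta}^s$ is the horizontal lift of $\beta^s$ with initial frame $\overline{\beta}^s_0$, so $\varpi(X^s_t)=0$ and $\theta(X^s_t)=d\alpha^s_t/dt$. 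Moreover, $\overline{\beta}^s_0$ is the parallel transport of $\overline{\beta}_0$ along the integral curve $s\mapsto F^s(y)$ of ${\rm V}$, so by definition $\varpi(Y^s_0)=0$.

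First, I would identify $\theta(Y^s_t)$. Projecting (\ref{flow-beta}) gives $\pi_*Y^s_t=\Upsilon_{{\rm V},\beta^s}(t)=\mathtt s(t)\cdot\sslash^s_{0,t}({\rm V}(F^sy))=\mathtt s(t)\,\overline{\beta}^s_t(\overline{\beta}^s_0)^{-1}{\rm V}(F^sy)$, hence
\[
\theta(Y^s_t)=(\overline{\beta}^s_t)^{-1}\pi_*Y^s_t=\mathtt s(t)(\overline{\beta}^s_0)^{-1}{\rm V}(F^sy).
\]
Differentiating in $t$ yields the first line of (\ref{bar-beta-s-diff-1}). Next, the torsion-free structure equation (\ref{structure-1}) applied to $(X,Y)$, combined with $[X,Y]=0$ and $\varpi(X)=0$, gives
\[
\partial_t\theta(Y^s_t)-\partial_s\theta(X^s_t)=d\theta(X,Y)=\varpi(Y^s_t)\cdot\theta(X^s_t),
\]
which rewrites as
\[
\partial_t(\alpha^s_t)'_s=\partial_t\theta(Y^s_t)-\varpi(Y^s_t)\cdot\frac{d\alpha^s_t}{dt}=\mathtt s'(t)(\overline{\beta}^s_0)^{-1}{\rm V}(F^sy)-\varpi((\overline{\beta}^s_t)'_s)\cdot\frac{d\alpha^s_t}{dt}.
\]
Similarly, the curvature structure equation (\ref{structure-2}) applied to $(X,Y)$, again using $\varpi(X)=0$ and $[X,Y]=0$, gives
\[
\partial_t\varpi(Y^s_t)=d\varpi(X,Y)=\Omega(X,Y)=(\overline{\beta}^s_t)^{-1}R\!\left(\overline{\beta}^s_t\tfrac{d\alpha^s_t}{dt},\,\Upsilon_{{\rm V},\beta^s}(t)\right)\overline{\beta}^s_t,
\]
which is precisely the second line of (\ref{bar-beta-s-diff-1}), proving ii).

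Finally, to deduce i), I would integrate the $\varpi$-equation from $0$ to $t$. Since $\varpi(Y^s_0)=0$, this yields
\[
\varpi((\overline{\beta}^s_t)'_s)=\int_0^t(\overline{\beta}^s_\varsigma)^{-1}R\!\left(\overline{\beta}^s_\varsigma\tfrac{d\alpha^s_\varsigma}{d\varsigma},\,\Upsilon_{{\rm V},\beta^s}(\varsigma)\right)\overline{\beta}^s_\varsigma\,d\varsigma=K_{{\rm V},\alpha^s}(t).
\]
Inserting this into the $\theta$-equation and integrating from $0$ to $t$, with the initial condition $(\alpha^s_0)'_s=0$ coming from $\alpha^s_0\equiv o$, produces
\[
(\alpha^s_t)'_s=\int_0^t\mathtt s'(\tau)(\overline{\beta}^s_0)^{-1}{\rm V}(F^sy)\,d\tau-\int_0^tK_{{\rm V},\alpha^s}(\tau)\,d\alpha^s_\tau,
\]
as claimed. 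The only genuinely delicate point, and the one I would flag as the main obstacle, is the justification that the initial frame derivative $Y^s_0$ is horizontal; this is exactly why we insist on defining $\overline{\beta}^s_0$ by parallel transport of $\overline{\beta}_0$ along $s\mapsto F^s(y)$, rather than by any other lift. Everything else is bookkeeping on the Cartan structure equations. The argument is pathwise-deterministic, so the smooth-path hypothesis on $\beta$ makes all manipulations classical, matching the calculation in \cite[Theorem 2.1]{Hs1} once the moving initial frame is accounted for.
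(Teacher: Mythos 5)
Your proof is correct and follows essentially the same approach as the paper's: both apply the Cartan structure equations (\ref{structure-1})--(\ref{structure-2}) together with the commuting-fields version of the exterior-derivative identity to the two-parameter family $\overline{\beta}^s_t$, read off the $\theta$- and $\varpi$-components of $\nabla_{\frac{D}{\partial s}}\overline{\beta}^s_t$, and integrate. The only differences are organizational (you prove ii) first and integrate to get i), whereas the paper does the computation inside the proof of i) and then notes ii) is a byproduct) and notational (your explicit $X,Y$ with $[X,Y]=0$ makes the use of $[\partial_s,\partial_t]=0$ visible, where the paper invokes the covariant exterior-derivative formula from \cite{GHL} without naming the fields). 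You correctly flag the horizontality of $Y^s_0$ as the point where the parallel-transport definition of $\overline{\beta}^s_0$ enters, which is indeed the one place where the setup is used nontrivially.
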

\begin{proof}For ${\rm i)}$, we have 
\begin{align*}
\frac{\partial}{\partial t}\left(\frac{\partial \alpha^s_t}{\partial s}\right)=\frac{\partial}{\partial s}\left(\frac{\partial \alpha_t^s}{\partial t}\right)=\nabla_{\frac{D}{\partial s}}\left(\theta\big(\nabla_{\frac{D}{\partial t}} \overline{\beta}_t^s\big)\right). 
\end{align*}
Using the exterior differentiation  formula in  covariant derivative (cf. \cite{GHL}) and the structure equation (\ref{structure-1})  for $\theta$,  we obtain 
\begin{align*}
\nabla_{\frac{D}{\partial s}}\left(\theta\big(\nabla_{\frac{D}{\partial t}} \overline{\beta}_t^s\big)\right)
&=\nabla_{\frac{D}{\partial t}}\left(\theta \big(\nabla_{\frac{D}{\partial s}} \overline{\beta}_t^s\big)\right)+d\theta\left(\nabla_{\frac{D}{\partial s}} \overline{\beta}_t^s,  \nabla_{\frac{D}{\partial t}} \overline{\beta}_t^s\right)\\
&=\mathtt{s}'(t) (\overline{\beta}_0^s)^{-1}({\rm V}(\beta_0^s))-\varpi\big(\nabla_{\frac{D}{\partial s}} \overline{\beta}_t^s\big)(\frac{\partial\alpha_t^s}{\partial t}).
\end{align*}
We continue to compute that 
\[
\varpi\big(\nabla_{\frac{D}{\partial s}} \overline{\beta}_t^s\big)=\int_{0}^{t}\nabla_{\frac{D}{\partial \tau}}\left(\varpi\big(\nabla_{\frac{D}{\partial s}} \overline{\beta}_\tau^s\big)\right)\ d\tau,
\]
where, by using the exterior differentiation  formula, ${\rm{Ker}}(\varpi)=HT\mathcal{F}(\M)$ and (\ref{structure-2}),  
\begin{align}
\notag\nabla_{\frac{D}{\partial \tau}}\left(\varpi\big(\nabla_{\frac{D}{\partial s}} \overline{\beta}_\tau^s\big)\right)&=\nabla_{\frac{D}{\partial s}}\left(\varpi \big(\nabla_{\frac{D}{\partial \tau}} \overline{\beta}_\tau^s\big)\right)+d\varpi\left(\nabla_{\frac{D}{\partial s}} \overline{\beta}_\tau^s,  \nabla_{\frac{D}{\partial \tau}} \overline{\beta}_\tau^s\right)\\
\notag&= \Omega\left(H({\overline{\beta}_{\tau}^s}, (\frac{\partial\alpha_\tau^s}{\partial \tau})), H\left({\overline{\beta}_{\tau}^s}, (\overline{\beta}_{0}^s)^{-1}[\mathtt{s}(\tau){\rm V}(\beta_0^s)]\right)\right)\\
\label{bar-beta-s-diff-1-pf}&= (\overline{\beta}_\tau^s)^{-1} R\left(\overline{\beta}_{\tau}^s(\frac{\partial\alpha_\tau^s}{\partial \tau}),\Upsilon_{{\rm V}, \beta^s}(\tau)\right) \overline{\beta}_\tau^s.
\end{align}
For (\ref{bar-beta-s-diff-1}), the  first equation is true by the construction. The second equation holds by (\ref{bar-beta-s-diff-1-pf}) since 
$\Upsilon_{{\rm V}, \beta^s}(\tau)=\overline{\beta}_{\tau}^s(\overline{\beta}_0^s)^{-1}\mathtt{s}(\tau){\rm{V}}(F^s y)$.  
\end{proof}

For every  smooth segment  $\alpha=(\alpha_t)_{t\in [0, T]}$ in  $\Bbb R^m$,  consider the associated flow maps $\{F_{t_1, t_2}^{\alpha}\}_{0\leq t_1<t_2\leq T}$ for the  transportation of $\alpha$ to $\M$ using the parallelism differential form $(\theta, \varpi)$, where  
$F_{t_1, t_2}^{\alpha}: \mathcal{F}(\M)\to \mathcal{F}(\M); \  \overline{\beta}_{t_1}^{\alpha}\mapsto \overline{\beta}_{t_2}^{\alpha}$ with  $(\overline{\beta}_{t}^{\alpha})_{t\in [t_1, t_2]}$ solving  the equation 
\begin{equation*}\label{parallel-flow}
\nabla_{\frac{D}{\partial t}}\overline{\beta}^{\alpha}_t=H(\overline{\beta}^{\alpha}_t, \frac{d\alpha_t}{dt}). 
\end{equation*}
Each  $F_{t_1, t_2}^{\alpha}$ is a $C^{k-1}$ diffeomorphism since $H$ is $C^{k-1}$ and $\alpha$ is smooth.  Let $DF_{t_1, t_2}^{\alpha}$  be the  tangent map of $F_{t_1, t_2}^{\alpha}$. It can be read in the $(\theta, \varpi)$-coordinate as follows. 
\begin{lem}(\cite[Proposition 3.2]{M})
Let  $\alpha=(\alpha_t)_{t\in [0, T]}\subset \Bbb R^m$ be a smooth segment. For any $t_1\in [0, T]$ and ${\bf\mathsf v}_{t_1}^\alpha\in T_{\overline{\beta}_{t_1}^\alpha}\mathcal{F}(\M)$,  let  $
{\bf\mathsf v}_t^\alpha:=\left[DF_{t_1, t}^\alpha\big(\overline{\beta}_{\tau}^\alpha, {\rm w}\big)\right]{\bf\mathsf v}_{t_1}^\alpha$ for $t\in [t_1, T]$. Then ${\bf\mathsf v}_t^\alpha$  satisfies the equation 
\begin{align}\label{DF-alpha-eq}
\nabla_{\frac{D}{\partial t}}\big({\bf\mathsf v}_t^\alpha\big)=\big(\nabla ({\bf\mathsf v}_t^\alpha)H(\overline{\beta}^\alpha_t, \cdot)\big)\frac{d\alpha_t}{d t}. 
\end{align}
In the $(\theta, \varpi)$-coordinate,  we have 
  \begin{align*}
\left\{ \begin{array}{ll}
\frac{d}{dt}\left(\theta\big({\bf\mathsf v}_t^\alpha\big)\right)= \varpi\big({\bf\mathsf v}_t^\alpha\big)\frac{d\alpha_t}{dt},\\ 
\frac{d}{dt}\left(\varpi\big({\bf\mathsf v}_t^\alpha\big)\right)=(\overline{\beta}_t^{\alpha})^{-1}R\left(\overline{\beta}_t^{\alpha}\frac{d\alpha_t}{dt}, \overline{\beta}_t^{\alpha}\left(\theta\big({\bf\mathsf v}_t^\alpha\big)\right)\right) \overline{\beta}_t^{\alpha}.
\end{array}\right.
\end{align*}
\end{lem}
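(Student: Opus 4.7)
The plan is to view ${\bf\mathsf v}_t^\alpha$ as the derivative along a one-parameter family of horizontal curves. Specifically, choose a smooth curve $r \mapsto \overline{\beta}_{t_1}^{\alpha, r}$ in $\mathcal{F}(\wt M)$ with $\overline{\beta}_{t_1}^{\alpha, 0} = \overline{\beta}_{t_1}^\alpha$ and $\nabla_{\frac{D}{\partial r}} \overline{\beta}_{t_1}^{\alpha, r}|_{r=0} = {\bf\mathsf v}_{t_1}^\alpha$. For each $r$ let $\overline{\beta}_t^{\alpha, r}$ denote the solution of the transport equation $\nabla_{\frac{D}{\partial t}} \overline{\beta}_t^{\alpha, r} = H(\overline{\beta}_t^{\alpha, r}, d\alpha_t/dt)$ issued from $\overline{\beta}_{t_1}^{\alpha, r}$. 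By the very definition of the tangent map, ${\bf\mathsf v}_t^\alpha = \nabla_{\frac{D}{\partial r}} \overline{\beta}_t^{\alpha, r}|_{r=0}$.

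To obtain (\ref{DF-alpha-eq}), I would differentiate the transport equation in $r$ at $r = 0$. Because the two-parameter map $(t, r) \mapsto \overline{\beta}_t^{\alpha, r}$ has vanishing bracket $[\partial_t, \partial_r] = 0$ and the target connection is torsion free in the sense compatible with these variations, one may swap the order of covariant differentiations, so that $\nabla_{\frac{D}{\partial t}}{\bf\mathsf v}_t^\alpha = \nabla_{\frac{D}{\partial r}}H(\overline{\beta}_t^{\alpha, r}, d\alpha_t/dt)|_{r=0}$. Since $d\alpha_t/dt$ does not depend on $r$ and $H$ is linear in its second slot, the right-hand side equals $(\nabla({\bf\mathsf v}_t^\alpha) H(\overline{\beta}_t^\alpha, \cdot))(d\alpha_t/dt)$, which is the claimed identity.

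For the $(\theta, \varpi)$-coordinate formulas, the core identity is the exterior differentiation rule for any $1$-form $\omega$ on $\mathcal{F}(\wt M)$ applied to a two-parameter map: $\nabla_{\frac{D}{\partial t}}\omega(\partial_r \overline{\beta}) - \nabla_{\frac{D}{\partial r}}\omega(\partial_t \overline{\beta}) = d\omega(\partial_t \overline{\beta}, \partial_r \overline{\beta})$. I will apply this first to $\omega = \theta$ and then to $\omega = \varpi$, using in both cases the key simplifications that $\partial_t \overline{\beta}_t^{\alpha, r} = H(\overline{\beta}_t^{\alpha, r}, d\alpha_t/dt)$ is horizontal, so $\varpi(\partial_t \overline{\beta}) \equiv 0$ and $\theta(\partial_t \overline{\beta}) \equiv d\alpha_t/dt$, and that both $\theta$ and $\varpi$ evaluated on $H(\overline{\beta}, d\alpha_t/dt)$ are independent of the footpoint $\overline{\beta}$ (equal to $d\alpha_t/dt$ and $0$ respectively), so the second term on the left-hand side vanishes.

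Plugging into the torsion-free structure equation (\ref{structure-1}) gives $d\theta(\partial_t\overline{\beta}, {\bf\mathsf v}_t^\alpha) = \varpi({\bf\mathsf v}_t^\alpha)\theta(\partial_t\overline{\beta}) = \varpi({\bf\mathsf v}_t^\alpha)(d\alpha_t/dt)$, yielding the first coordinate equation. For the second, (\ref{structure-2}) combined with $\varpi(\partial_t\overline{\beta}) = 0$ leaves only the curvature term $\mathbf{\Omega}(\partial_t\overline{\beta}, {\bf\mathsf v}_t^\alpha)$, which by the identification $R(X,Y)Z = \overline{\beta}(\mathbf{\Omega}(\breve X, \breve Y)\overline{\beta}^{-1}Z)$ equals $(\overline{\beta}_t^\alpha)^{-1} R(\overline{\beta}_t^\alpha (d\alpha_t/dt), \overline{\beta}_t^\alpha \theta({\bf\mathsf v}_t^\alpha))\overline{\beta}_t^\alpha$. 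There is no genuine obstacle here; the only delicate point is keeping straight which covariant derivatives and which structure equations are being invoked, which is a matter of careful bookkeeping rather than substantive difficulty.
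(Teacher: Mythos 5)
Your proof is correct and is essentially the argument intended here: differentiate the horizontal transport equation along a variation with initial velocity ${\bf\mathsf v}_{t_1}^\alpha$, use $[\partial_t,\partial_r]=0$ together with the horizontality of $\partial_t\overline{\beta}$ (so $\theta(\partial_t\overline{\beta})=d\alpha_t/dt$, $\varpi(\partial_t\overline{\beta})=0$), and then apply the structure equations (\ref{structure-1})--(\ref{structure-2}) and the identification of $\mathbf{\Omega}$ with $R$. The paper itself only cites Malliavin for this lemma, but your scheme is exactly the one it carries out in the proofs of the neighboring Lemmas \ref{HS-Th2.1} and \ref{v-t-s-diff-determine}, so there is nothing materially different to compare.
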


Let $\alpha^s=(\alpha_t^s)_{t\in [0, T]}$ be a one parameter family  of smooth segments of curves in $\Bbb R^m$.  For any $t_1, t_2$ with $0\leq t_1<t_2\leq T$, 
$s\mapsto DF_{t_1, t_2}^{s}:=DF_{t_1, t_2}^{\alpha^s}$ is said to be  $C^1$ in $s$ if the image curve $s\mapsto [DF_{t_1, t_2}^{s}]{\bf\mathsf v}_{t_1}^s $ is  $C^1$ 
for any  $C^1$ curve $s\mapsto {\bf\mathsf v}_{t_1}^s \in T_{\overline{\beta}^s_{t_1}}\mathcal{F}(\M)$.

\begin{lem}\label{v-t-s-diff-determine}Let   $\alpha^s_t=\mathcal{I}^{-1}_{\overline{\beta}^s_0}(\beta^s_t)$, where  $\beta^s$ are given in Lemma \ref{HS-Th2.1}.  The tangent maps $(DF_{t_1, t_2}^{s})_{0\leq t_1<t_2\leq T}$ are $C^1$ in $s$.  Let $s\mapsto {\bf\mathsf v}_{t_1}^s\in T_{\overline{\beta}_{t_1}^s}\mathcal{F}(\M)$ be $C^1$.  Then the differential $({\bf\mathsf v}_t^s)'_s:=\nabla_{D/\partial s}{\bf\mathsf v}_t^s$, where $
{\bf\mathsf v}_t^s:=\big[DF_{t_1, t}^s\big(\overline{\beta}_{\tau}^s, {\rm w}\big)\big]{\bf\mathsf v}_{t_1}^s$ for $t\in [t_1, t_2]$,  solves the  equation
\begin{align}\label{mathsfv-t-s-d}
\nabla_{\frac{D}{\partial t}}({\bf\mathsf v}_t^s)'_s=&\big(\nabla(({\bf\mathsf v}_t^s)'_s)H(\overline{\beta}^s_t, \cdot)\big)\frac{\partial \alpha^s_t}{\partial t}+\circledast\big({\bf\mathsf v}_t^s, (\overline{\beta}_t^s)'_s\big),
\end{align}
where \begin{align*}
\circledast\big({\bf\mathsf v}_t^s, (\overline{\beta}_t^s)'_s\big)=\!
\nabla^{(2)}\big({\bf\mathsf v}_t^s, (\overline{\beta}_t^s)'_s\big)H(\overline{\beta}^s_t, \frac{\partial \alpha^s_t}{\partial t})\!+\!\nabla({\bf\mathsf v}_t^s)H\big({\overline{\beta}_t^s}, \Upsilon_{{\rm V}, \alpha^s}(t)\big)\!+\!R\left(H(\overline{\beta}^s_t, \frac{\partial \alpha^s_t}{\partial t}), (\overline{\beta}^s_t)'_s\right){\bf\mathsf v}_t^s.
\end{align*}
In the $(\theta, \varpi)$-coordinate, we have
 \begin{align*}
 \left\{ \begin{array}{ll}
\nabla_{\frac{D}{\partial t}}\big(\theta\big(({\bf\mathsf v}_t^s)'_s\big)\big)=\varpi\big(({\bf\mathsf v}_t^s)'_s\big)\frac{\partial \alpha^s_t}{\partial t}+\theta\left(\circledast\big({\bf\mathsf v}_t^s, (\overline{\beta}_t^s)'_s\big)\right),\\
\nabla_{\frac{D}{\partial t}}\big(\varpi\big(({\bf\mathsf v}_t^s)'_s\big)\big)=(\overline{\beta}_t^s)^{-1}R\left(\overline{\beta}_t^s \frac{\partial \alpha^s_t}{\partial t},  \overline{\beta}_t^s\theta\big(({\bf\mathsf v}_t^s)'_s\big)\right)\overline{\beta}_t^s+\varpi\left(\circledast\big({\bf\mathsf v}_t^s, (\overline{\beta}_t^s)'_s\big)\right). 
\end{array}\right.
\end{align*}
\end{lem}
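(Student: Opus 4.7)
The $C^1$ (indeed $C^{k-1}$) dependence of $DF^s_{t_1,t_2}$ on $s$ is a consequence of classical smooth dependence for ODEs: by Lemma \ref{HS-Th2.1} the anti-development $\alpha^s$ is smooth in $(s,t)$, so the defining equation (\ref{DF-alpha-eq}) for ${\bf\mathsf v}^s_t$ has smooth coefficients in $(s,t)$ (working in local charts on $\mathcal{F}(\M)$ equipped with the connection $\varpi$). This justifies forming $({\bf\mathsf v}^s_t)'_s := \nabla_{D/\partial s}{\bf\mathsf v}^s_t$. To obtain (\ref{mathsfv-t-s-d}) I will differentiate (\ref{DF-alpha-eq}) in $s$ and then read off the $(\theta,\varpi)$ version using the structure equations, exactly mirroring the two-step derivation carried out in the proof of Lemma \ref{HS-Th2.1}.

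On the left side of (\ref{DF-alpha-eq}), the Ricci identity for the connection on $\mathcal{F}(\M)$ gives
\[
\nabla_{D/\partial s}\nabla_{D/\partial t}{\bf\mathsf v}^s_t = \nabla_{D/\partial t}\nabla_{D/\partial s}{\bf\mathsf v}^s_t - R\big((\overline{\beta}^s_t)'_s,\, H(\overline{\beta}^s_t,\partial_t\alpha^s_t)\big){\bf\mathsf v}^s_t,
\]
where I used $\partial_t\overline{\beta}^s_t = H(\overline{\beta}^s_t,\partial_t\alpha^s_t)$; antisymmetry of $R$ turns the curvature term into $+R(H(\overline{\beta}^s_t,\partial_t\alpha^s_t),(\overline{\beta}^s_t)'_s){\bf\mathsf v}^s_t$, the third summand of $\circledast$. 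On the right side each factor in $\nabla_{{\bf\mathsf v}^s_t}H(\overline{\beta}^s_t,\cdot)\,\partial_t\alpha^s_t$ varies with $s$: differentiating in the base point $\overline{\beta}^s_t$ contributes $\nabla^{(2)}({\bf\mathsf v}^s_t,(\overline{\beta}^s_t)'_s)H(\overline{\beta}^s_t,\partial_t\alpha^s_t)$, differentiating in the direction ${\bf\mathsf v}^s_t$ gives $\nabla(({\bf\mathsf v}^s_t)'_s)H(\overline{\beta}^s_t,\partial_t\alpha^s_t)$, and differentiating the $\Bbb R^m$-factor $\partial_t\alpha^s_t$ produces a term involving $\partial_s\partial_t\alpha^s_t=\partial_t\Upsilon_{{\rm V},\alpha^s}(t)$. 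Combining this last term with the curvature piece already produced, and invoking the explicit formula for $\Upsilon_{{\rm V},\alpha^s}(t)$ from Lemma \ref{HS-Th2.1} i) (whose time-derivative contains precisely the $K_{{\rm V},\alpha^s}(t)\,d\alpha^s$ curvature contraction), allows one to reorganize the three resulting pieces into the single expression $\nabla({\bf\mathsf v}^s_t)H(\overline{\beta}^s_t,\Upsilon_{{\rm V},\alpha^s}(t))$ appearing in $\circledast$. Isolating $\nabla_{D/\partial t}({\bf\mathsf v}^s_t)'_s$ yields (\ref{mathsfv-t-s-d}).

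For the $(\theta,\varpi)$-coordinate statement, I will apply $\theta$ and $\varpi$ to both sides of (\ref{mathsfv-t-s-d}) and expand. The structure equation (\ref{structure-1}) gives
\[
\theta\big(\nabla_{D/\partial t}\,{\mathsf w}\big) = \tfrac{d}{dt}\theta({\mathsf w}) + \varpi({\mathsf w})\,\partial_t\alpha^s_t,
\]
and (\ref{structure-2}) gives the analogous identity for $\varpi$ with the curvature contribution $(\overline{\beta}^s_t)^{-1}R(\overline{\beta}^s_t\,\partial_t\alpha^s_t,\overline{\beta}^s_t\theta({\mathsf w}))\overline{\beta}^s_t$; applied to ${\mathsf w}=({\bf\mathsf v}^s_t)'_s$ these produce the two displayed equations. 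The main obstacle in the proof will be precisely the bookkeeping sketched in the middle of the previous paragraph: one must verify that the curvature term from the Ricci identity and the term coming from $\partial_t\Upsilon_{{\rm V},\alpha^s}$ (which itself decomposes into a boundary-type piece $\mathtt{s}'(t)(\overline{\beta}_0^s)^{-1}{\rm V}(\beta^s_0)$ and a curvature-contraction piece) recombine exactly into the clean term $\nabla({\bf\mathsf v}^s_t)H(\overline{\beta}^s_t,\Upsilon_{{\rm V},\alpha^s}(t))$ in $\circledast$; everything else is a routine application of Lemma \ref{HS-Th2.1} and the structure equations.
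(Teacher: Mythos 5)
Your skeleton is the same as the paper's: commute $\nabla_{D/\partial t}$ and $\nabla_{D/\partial s}$ via the curvature identity (producing the $R(H(\overline{\beta}^s_t,\partial_t\alpha^s_t),(\overline{\beta}^s_t)'_s){\bf\mathsf v}^s_t$ term), differentiate the right side of (\ref{DF-alpha-eq}) in $s$ by Leibniz (producing the $\nabla^{(2)}$ term, the $\nabla(({\bf\mathsf v}^s_t)'_s)H$ term, and the term from $\partial_s\partial_t\alpha^s_t$), and then pass to the $(\theta,\varpi)$-chart via the structure equations (\ref{structure-1})--(\ref{structure-2}). However, the step you single out as ``the main obstacle'' is where your argument goes wrong. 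You claim that the term involving $\partial_s\partial_t\alpha^s_t=\partial_t\Upsilon_{{\rm V},\alpha^s}(t)$ must be \emph{combined with the Ricci-identity curvature piece}, using the explicit $K_{{\rm V},\alpha^s}$-formula from Lemma \ref{HS-Th2.1}, so as to ``reorganize'' into the summand $\nabla({\bf\mathsf v}^s_t)H(\overline{\beta}^s_t,\Upsilon_{{\rm V},\alpha^s}(t))$ of $\circledast$. No such recombination takes place, and it cannot: the curvature term produced by commuting the covariant derivatives is precisely the \emph{third} summand of $\circledast$, which you yourself identified as such a few lines earlier, so absorbing it into the second summand would leave that third summand unaccounted for (or count the curvature twice). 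Moreover there is no identity converting $R(H(\overline{\beta}^s_t,\partial_t\alpha^s_t),(\overline{\beta}^s_t)'_s){\bf\mathsf v}^s_t$ plus a $K_{{\rm V},\alpha^s}$-contraction into a derivative of the horizontal lift map; indeed in the stochastic analogue (Lemma \ref{diff-D-F-s-crit}) the corresponding pieces $\nabla({\bf\mathsf v}_t^s)H(\mho_t^s,K_{{\rm V},\alpha^s}(t)e_i)$ and $R(H(\mho_t^s,e_i),(\mho_t^s)'_s){\bf\mathsf v}_t^s$ are kept as separate summands. The correct reading is simply that the $s$-derivative of the $\Bbb R^m$-velocity is the second summand of $\circledast$ as it stands: the $\Upsilon_{{\rm V},\alpha^s}(t)$ there plays the role of the time-increment of the variation of $\alpha^s$, exactly as the $\circ\,d\Upsilon_{{\rm V},\alpha^s}(t)$ does in Lemma \ref{diff-D-F-s-crit}, and nothing further has to be reorganized.

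A secondary point: for the coordinate version the paper does not use the identity you quote. It expands $\nabla_{D/\partial t}\big(\theta(({\bf\mathsf v}_t^s)'_s)\big)=(\nabla_{D/\partial t}\theta)(({\bf\mathsf v}_t^s)'_s)+\theta\big(\nabla_{D/\partial t}({\bf\mathsf v}_t^s)'_s\big)$, inserts (\ref{mathsfv-t-s-d}), recognizes the sum of $(\nabla_{D/\partial t}\theta)(({\bf\mathsf v}_t^s)'_s)$ and $\theta\big(\nabla(({\bf\mathsf v}_t^s)'_s)H(\overline{\beta}^s_t,\cdot)\partial_t\alpha^s_t\big)$ as $d\theta\big(H(\overline{\beta}^s_t,\partial_t\alpha^s_t),({\bf\mathsf v}_t^s)'_s\big)$, and only then invokes (\ref{structure-1}) (and analogously (\ref{structure-2}) for $\varpi$). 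As written, your formula $\theta(\nabla_{D/\partial t}{\mathsf w})=\tfrac{d}{dt}\theta({\mathsf w})+\varpi({\mathsf w})\partial_t\alpha^s_t$ does not reproduce the displayed equations when combined with (\ref{mathsfv-t-s-d}) unless the contribution $\theta\big(\nabla({\mathsf w})H(\overline{\beta}^s_t,\cdot)\partial_t\alpha^s_t\big)$ is tracked correctly, so this bookkeeping needs to be redone along the paper's lines. Your justification of the $C^1$ dependence on $s$ via smooth dependence of the ODE (\ref{DF-alpha-eq}) on parameters is fine and is in fact more explicit than the paper's.
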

\begin{proof}Using (\ref{DF-alpha-eq}), we obtain 
\begin{align*}
\nabla_{\frac{D}{\partial t}}\nabla_{\frac{D}{\partial s}}{\bf\mathsf v}_t^s&=\nabla_{\frac{D}{\partial s}}\nabla_{\frac{D}{\partial t}}{\bf\mathsf v}_t^s+R\left(H(\overline{\beta}^s_t, \frac{\partial \alpha^s_t}{\partial t}), (\overline{\beta}^s_t)'_s\right){\bf\mathsf v}_t^s\\
&=\big(\nabla(({\bf\mathsf v}_t^s)'_s)H(\overline{\beta}^s_t, \cdot)\big)\frac{\partial \alpha^s_t}{\partial t}+\circledast\big({\bf\mathsf v}_t^s, (\overline{\beta}_t^s)'_s\big). 
\end{align*}
Using  (\ref{mathsfv-t-s-d}) and the structure equation (\ref{structure-1})  for $\theta$, we continue to compute that 
\begin{align*}
\nabla_{\frac{D}{\partial t}}\big(\theta\big(({\bf\mathsf v}_t^s)'_s\big)\big)&=\big(\nabla_{\frac{D}{\partial t}}\theta\big)\big(({\bf\mathsf v}_t^s)'_s\big)+\theta\left(\big(\nabla(({\bf\mathsf v}_t^s)'_s)H(\overline{\beta}^s_t, \cdot)\big)\frac{\partial \alpha^s_t}{\partial t}\right)+\theta\left(\circledast\big({\bf\mathsf v}_t^s, (\overline{\beta}_t^s)'_s\big)\right)\\
&=d\theta\left(H(\overline{\beta}^s_t, \frac{\partial \alpha^s_t}{\partial t}), ({\bf\mathsf v}_t^s)'_s\right)+\theta\left(\circledast\big({\bf\mathsf v}_t^s, (\overline{\beta}_t^s)'_s\big)\right)\\
&=\varpi\big(({\bf\mathsf v}_t^s)'_s\big)\frac{\partial \alpha^s_t}{\partial t}+\theta\left(\circledast\big({\bf\mathsf v}_t^s, (\overline{\beta}_t^s)'_s\big)\right).
\end{align*}
Similarly, using  (\ref{mathsfv-t-s-d}) and the structure equation (\ref{structure-2})  for  $\varpi$, we obtain
\begin{align*}
\nabla_{\frac{D}{\partial t}}\big(\varpi\big(({\bf\mathsf v}_t^s)'_s\big)\big)
&=d\varpi\left(H(\overline{\beta}^s_t, \frac{\partial \alpha^s_t}{\partial t}), ({\bf\mathsf v}_t^s)'_s\right)+\varpi\left(\circledast\big({\bf\mathsf v}_t^s, (\overline{\beta}_t^s)'_s\big)\right)\\
&=(\overline{\beta}_t^s)^{-1}R\left(\overline{\beta}_t^s \frac{\partial \alpha^s_t}{\partial t},  \overline{\beta}_t^s\theta\big(({\bf\mathsf v}_t^s)'_s\big)\right)\overline{\beta}_t^s+\varpi\left(\circledast\big({\bf\mathsf v}_t^s, (\overline{\beta}_t^s)'_s\big)\right).
\end{align*}
\end{proof}

We will solve  (\ref{flow F_s-on M})  by  identifying  the anti-development of  ${\alpha}^s$ of ${\rm y}^s$ in the  set 
  \[
 \mathcal{A}:=\left\{ \alpha_t=\int_0^{t} O_\tau dB_\tau+\int_0^{t}{\mathtt g}_\tau \ d\tau, \ \ t\in[0, T]\right\},\]
where $O_\tau$ is an $\mathcal{O}(\Bbb R^m)$ valued ${\mathcal{F}}_\tau$-adapted process, ${\mathtt g}_\tau$ is a $\Bbb R^m$ valued  ${\mathcal{F}}_\tau$-adapted process with 
$|{\mathtt g}|\leq {\rm Const.} \sup|{\rm V}|$ and  $\{\mathcal{F}_t\}_{t\in \Bbb R^+}$ is the filtration of the Brownian motion in $\Bbb R^m$. We see that $\mathcal{A}$ is a complete infinite dimensional  Banach space under the  norm  \[
\| \alpha\|_{\infty, T}:=\sqrt{\|\mathtt g\|_{\infty, T}^2+\| O\|_{\infty, T}^2},
\]
where 
\[
\|\mathtt g\|^2_{\infty, T}=\E \sup\limits_{t\in [0, T]} |\mathtt g_{t}|^2, \ \|O\|_{\infty, T}^2=\E \sup\limits_{t\in[0, T]}|O_{t}|^2.
\]

Let ${\rm V}$  and $\mathtt{s}$ be as above.  
For $\alpha\in \mathcal{A}$,  let $\overline{\beta}$ be a horizontal process in $\mathcal{O}^{\wt{g}}(\M)$ with projection $\beta=\mathcal{I}_{\overline{\beta}_0}(\alpha)$ on $\M$.  For $t\in [0, T]$, put 
\[\Upsilon_{{\rm V}, \beta}(t):=\mathtt{s}(t)\cdot \sslash_{0, t}^{\beta}({\rm V}(\beta_0))=\overline{\beta}_{t}\overline{\beta}_{0}^{-1}[\mathtt{s}(t){\rm V}(\beta_0)].\]
We define 
 \begin{equation*}
\underline{\Upsilon}_{{\rm V}, \alpha}(t):=\int_{0}^{t}\mathtt{s}'(\tau)\overline{\beta}_0^{-1}({\rm V}(\beta_0))\ d\tau-\int_{0}^{t}K_{{\rm V}, \alpha}(\tau)\circ d\alpha_{\tau}, 
\end{equation*}
where $\circ$ denotes the Stratonovich stochastic integral, and 
 \begin{align}
  K_{{\rm V}, \alpha}(\tau) := &  \int_{0}^{\tau} \overline{\beta}_{\varsigma}^{-1}R\left(\overline{\beta}_{\varsigma}(\circ d\alpha_\varsigma), \Upsilon_{{\rm V}, \beta}(\varsigma)\right)\overline{\beta}_\varsigma.\label{K-Stratonovich-O}
  \end{align}
\begin{lem}\label{gamma-V-apha}
For $\alpha\in \mathcal{A}$,   the It\^{o} forms of  $\underline{\Upsilon}_{{\rm V}, \alpha}$, $K_{{\rm V}, \alpha}$  are
\begin{align} \underline{\Upsilon}_{{\rm V}, \alpha}(t)&=\int_{0}^{t}\left\{\overline{\beta}_0^{-1}[\mathtt{s}'(\tau){\rm V}(\beta_0)]-{\rm Ric}\left(\Upsilon_{{\rm V}, \beta}(\tau)\right)\right\}\  d\tau-\int_{0}^{t}\langle K_{{\rm V}, \alpha}(\tau), d\alpha_{\tau}\rangle \notag\\
&=: {\rm R}_{{\rm V}, \alpha}(t)-\int_{0}^{t}\langle K_{{\rm V}, \alpha}(\tau), d\alpha_{\tau}\rangle, \label{Ito-Upsilon}
\end{align}
where ${\rm Ric}$ was defined in (\ref{Ric-def}) and 
\begin{equation}
\label{Ito-K-V}
K_{{\rm V}, \alpha}(t)=\int_{0}^{t}{\overline{\beta}_{\tau}}^{-1}R\left({\overline{\beta}_{\tau}}d\alpha_{\tau},\Upsilon_{{\rm V}, \beta}(\tau)\right){\overline{\beta}_{\tau}}+\int_{0}^{t}{\overline{\beta}_{\tau}}^{-1}\left(\nabla (\overline{\beta}_{\tau} e_i)R\right)\left({\overline{\beta}_{\tau}}e_i, \Upsilon_{{\rm V}, \beta}(\tau)\right){\overline{\beta}_{\tau}}\ d\tau.
  \end{equation} 
 \end{lem}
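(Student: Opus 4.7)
My plan is to obtain both formulas by a careful Stratonovich-to-It\^o conversion, the one nontrivial input being a horizontal-derivative identity on $\mathcal{F}(\M)$. I would start with (\ref{Ito-K-V}). Set $A(\tau):=\overline{\beta}_0^{-1}[\mathtt{s}(\tau){\rm V}(\beta_0)]$, so that $\Upsilon_{{\rm V},\beta}(\tau)=\overline{\beta}_\tau A(\tau)$, and write the integrand of (\ref{K-Stratonovich-O}) as $\psi_l(\overline{\beta}):=\overline{\beta}^{-1}R(\overline{\beta} e_l,\overline{\beta} A(\tau))\overline{\beta}$, so that $dK_{{\rm V},\alpha}(\tau)=\sum_l\psi_l(\overline{\beta}_\tau)\circ d\alpha_\tau^l$. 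Because $\alpha\in\mathcal{A}$ has martingale part $\int_0^\tau O_\sigma\,dB_\sigma$ with $O_\sigma$ orthogonal, and because throughout the paper $B$ is a Brownian motion with generator $\Delta$, we have $d\langle B^i,B^j\rangle=2\delta^{ij}d\tau$ and therefore $d\langle\alpha^i,\alpha^j\rangle=2\delta^{ij}d\tau$. The Stratonovich-to-It\^o correction is thus
\[
\tfrac{1}{2}\sum_{l}d\langle\psi_l(\overline{\beta}),\alpha^l\rangle_\tau \;=\; \sum_{l}\bigl(H(\overline{\beta}_\tau,e_l)\psi_l\bigr)\,d\tau,
\]
the factor $\tfrac12$ cancelling the factor $2$ from $d\langle B^l,B^l\rangle$.

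The key step is to identify $H(\overline{\beta},e_l)\psi_l$. For any fixed $X,Y,Z\in\mathbb{R}^m$, the horizontal curve through $\overline{\beta}$ with initial velocity $H(\overline{\beta},e_l)$ is precisely the parallel transport of the frame along the geodesic emanating from $\pi(\overline{\beta})$ with velocity $\overline{\beta} e_l$; the tangent vector fields $\overline{\beta} X$, $\overline{\beta} Y$, $\overline{\beta} Z$ are therefore parallel along this curve, and $\overline{\beta}^{-1}$ composed with parallel transport is the identity in components, so only $R$ is differentiated:
\[
H(\overline{\beta},e_l)\bigl[\overline{\beta}^{-1}R(\overline{\beta} X,\overline{\beta} Y)\overline{\beta} Z\bigr] \;=\; \overline{\beta}^{-1}(\nabla_{\overline{\beta} e_l}R)(\overline{\beta} X,\overline{\beta} Y)\overline{\beta} Z.
\]
Applying this identity with $X=e_l$, $Y=A(\tau)$ and summing over $l$ yields exactly the $\nabla R$ drift term in (\ref{Ito-K-V}). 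This horizontal-derivative identity is the only point requiring real care; it is a standard consequence of the geometry of the orthonormal frame bundle rather than a genuine obstacle.

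For (\ref{Ito-Upsilon}) I would apply a second Stratonovich-to-It\^o conversion to $-\int_0^t K_{{\rm V},\alpha}(\tau)\circ d\alpha_\tau$, producing $-\int_0^t\langle K_{{\rm V},\alpha}(\tau),d\alpha_\tau\rangle-\tfrac{1}{2}\int_0^t dK\cdot d\alpha$. Using the It\^o form of $K_{{\rm V},\alpha}$ just derived, the martingale part of $dK_\tau$ is $\sum_l\overline{\beta}_\tau^{-1}R(\overline{\beta}_\tau e_l,\Upsilon_{{\rm V},\beta}(\tau))\overline{\beta}_\tau\,d\alpha_\tau^l$, and hence
\[
-\tfrac{1}{2}(dK\cdot d\alpha)_\tau \;=\; -\sum_j \overline{\beta}_\tau^{-1}R(\overline{\beta}_\tau e_j,\Upsilon_{{\rm V},\beta}(\tau))\overline{\beta}_\tau e_j\,d\tau \;=\; -{\rm Ric}(\Upsilon_{{\rm V},\beta}(\tau))\,d\tau,
\]
the last equality using $\Upsilon_{{\rm V},\beta}(\tau)=\overline{\beta}_\tau A(\tau)$ together with the definition (\ref{Ric-def}) of ${\rm Ric}$. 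Combined with the $\overline{\beta}_0^{-1}[\mathtt{s}'(\tau){\rm V}(\beta_0)]\,d\tau$ term present in the definition of $\underline{\Upsilon}_{{\rm V},\alpha}$, this gives exactly (\ref{Ito-Upsilon}). The remainder of the argument is bookkeeping; no genuine obstacle beyond the parallel-transport interpretation of $H(\overline{\beta},e_l)$ is expected.
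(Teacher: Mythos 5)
Your argument is correct and takes essentially the same route as the paper's proof: a Stratonovich-to-It\^o conversion using $d\langle\alpha^i,\alpha^j\rangle=2\delta^{ij}\,d\tau$ (the generator-$\Delta$ convention, $O_\tau$ orthogonal), with the correction term in $\underline{\Upsilon}_{{\rm V},\alpha}$ identified as $-{\rm Ric}(\Upsilon_{{\rm V},\beta})\,d\tau$. The only difference is that you spell out the derivation of (\ref{Ito-K-V}) via the horizontal-derivative identity $H(\overline{\beta},e_l)\bigl[\overline{\beta}^{-1}R(\overline{\beta}X,\overline{\beta}Y)\overline{\beta}Z\bigr]=\overline{\beta}^{-1}(\nabla_{\overline{\beta}e_l}R)(\overline{\beta}X,\overline{\beta}Y)\overline{\beta}Z$, a step the paper compresses into ``obtained similarly using It\^o's formula.''
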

\begin{proof}Using It\^{o}'s formula, we can identify the It\^{o} integral expression of  $\underline{\Upsilon}_{{\rm V}, \alpha}$ as
  \[
   \underline{\Upsilon}_{{\rm V}, \alpha}(t)=\int_{0}^{t}\mathtt{s}'(\tau)\overline{\beta}_0^{-1}({\rm V}(\beta_0))\ d\tau-\int_{0}^{t}\langle K_{{\rm V}, \alpha}(\tau), d\alpha_{\tau}\rangle-\frac{1}{2}\int_{0}^{t}\langle d K_{{\rm V}, \alpha}(\tau), \circ d\alpha_{\tau}\rangle. 
  \]
 Let $\alpha_t=\alpha_{t, 1}e_1+\cdots +\alpha_{t, m}e_m$, where $\{e_1, \cdots, e_m\}$ is  the standard orthogonal base of $\Bbb R^m$.  
Since $\alpha\in \mathcal{A}$, we see that  $\langle \circ d\alpha_{t, i}, \circ d\alpha_{t, i}\rangle=2dt$.  So, using (\ref{K-Stratonovich-O}), we obtain 
  \begin{eqnarray*}
\frac{1}{2} \int_{0}^{t}\langle d K_{{\rm V}, \alpha}(\tau), \circ d\alpha_{\tau}\rangle&=& \frac{1}{2}\int_{0}^{t}\langle \overline{\beta}_{\tau}^{-1}R\left(\overline{\beta}_{\tau}(\circ d\alpha_{\tau}), \Upsilon_{{\rm V}, \beta}(\tau)\right)\overline{\beta}_\tau, \circ d\alpha_{\tau}\rangle\\
 &=& \int_{0}^{t}\overline{\beta}_{\tau}^{-1}R\left(\overline{\beta}_{\tau} e_i, \Upsilon_{{\rm V}, \beta}(\tau)\right)\overline{\beta}_\tau e_i\ d\tau\\
 &=&\int_{0}^{t} {\rm Ric}\left(\Upsilon_{{\rm V}, \beta}(\tau)\right)\ d\tau.
  \end{eqnarray*}
  The  It\^{o} integral expression  for  $K_{{\rm V}, \alpha}(t)$  can be obtained similarly using It\^{o}'s formula. 
\end{proof}

We want  to solve (\ref{flow F_s-on M}) with ${\rm y}^0$ being the Brownian motion on $\M$ starting from $y$. 

\begin{lem}\label{Cor-O-g-formula} Let ${\rm V}$ be a smooth bounded vector field on $\M$ with the associated flow  $\{F^s\}_{s\in \Bbb R}$. For $y\in \M$,  let  $({\mho}^s_0\in \mathcal{O}^{\wt{g}}_{F^s y}(\M))_{s\in \Bbb R}$ be a solution to $d{\mho}^s_0/ds=H(\mho_{0}^s, (\mho_0^{s})^{-1}{\rm V}(F^sy))$.  \begin{itemize}
\item[i)] Let $\alpha^s=\int_0^{\cdot} O_\tau^s dB_\tau+\int_0^{\cdot}{\mathtt g}_\tau^s \ d\tau\in \mathcal{A}$ be a one parameter family  of  stochastic processes  with $\alpha_t^0=B_t$.   Then $\alpha^s$  solves ${d {\alpha}^s_t({\rm w})}/{ds}=\underline{\Upsilon}_{{\rm V}, {\alpha}^s}(t)$ iff 
 \begin{align}\label{iteration-O}
\ \  O^s_{\tau}&= {\rm Id}-\int_{0}^{s}K_{{\rm V}, \alpha^{\jmath}}(\tau)O^{\jmath}_{\tau}\ d\jmath, \\
\ \   {\mathtt g}_{\tau}^{s}&=O^{s}_{\tau}\int_{0}^{s}[O_{\tau}^{\jmath}]^{-1}\left\{({\mho}_{0}^{\jmath})^{-1}\left[\mathtt{s}'(\tau){\rm V}(F^{\jmath}y)\right]-{\rm Ric}\left({\mho}_{\tau}^{\jmath}({\mho}_0^{\jmath})^{-1}[\mathtt{s}(\tau){\rm V}(F^{\jmath}y)]\right)\right\} \ d\jmath.\label{iteration-g}
 \end{align}
 \item[ii)]Let  $\alpha^s$ be as in ${\rm i)}$ and let  ${\mho}^s$ be its horizontal lift in $\mathcal{O}^{\wt{g}}(\M)$ with initial $\mho_0^s$.  Then $\mho^s$ is differentiable in $s$ iff  the following SDE is uniquely solvable with initial $(\mho_0^s)'_s$:
\begin{align}\label{diff-mho-s-R-m}
\left\{ \begin{array}{l}
d\theta(Y^s_t)=\varpi\big(Y^s_t\big)\circ d\alpha^s_t+\circ d\underline{\Upsilon}_{{\rm V}, \alpha^{s}},\\
d\varpi(Y^s_t)=(\mho_t^s)^{-1}R\big(\mho_t^s \circ d\alpha^s_t,  \mho_t^s\theta(Y^s_t)\big)\mho_t^s.
\end{array}\right.
\end{align}
 \item[iii)]Let $\alpha^s, \mho^s$ be as in {\rm i)}, {\rm ii)}.  Then  $s\mapsto {\rm y}^s=\mathcal{I}_{\mho_0^s}(\alpha^s)$ has  the differential  process $\Upsilon_{{\rm V}, {\rm y}^{s}}$.  
  \end{itemize}
 \end{lem}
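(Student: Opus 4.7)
The three parts will be handled by treating them as the stochastic counterparts of Lemma~\ref{HS-Th2.1}. For part~i), the plan is to split $\underline{\Upsilon}_{{\rm V},\alpha^s}(t)$ via the It\^o form (\ref{Ito-Upsilon}) into a $dB_\tau$--part and a $d\tau$--part. On one hand, since
\[
\alpha^s_t=\int_0^t O^s_\tau\,dB_\tau+\int_0^t{\mathtt g}^s_\tau\,d\tau,
\]
differentiating in $s$ (and justifying the interchange of differentiation and stochastic integration via the $\|\cdot\|_{\infty,T}$ norm on $\mathcal{A}$) gives
\[
\frac{d\alpha^s_t}{ds}=\int_0^t(O^s_\tau)'_s\,dB_\tau+\int_0^t({\mathtt g}^s_\tau)'_s\,d\tau.
\]
On the other, writing $\int_0^t\langle K_{{\rm V},\alpha^s}(\tau),d\alpha^s_\tau\rangle=\int_0^t K_{{\rm V},\alpha^s}(\tau)O^s_\tau\,dB_\tau+\int_0^t K_{{\rm V},\alpha^s}(\tau){\mathtt g}^s_\tau\,d\tau$, and matching the $dB_\tau$ and $d\tau$ coefficients, one sees that $d\alpha^s_t/ds=\underline{\Upsilon}_{{\rm V},\alpha^s}(t)$ is equivalent to the ODEs (in $s$)
\[
(O^s_\tau)'_s=-K_{{\rm V},\alpha^s}(\tau)\,O^s_\tau,\quad
({\mathtt g}^s_\tau)'_s=-K_{{\rm V},\alpha^s}(\tau){\mathtt g}^s_\tau+\bigl\{(\mho^s_0)^{-1}[{\mathtt s}'(\tau){\rm V}(F^sy)]-{\rm Ric}(\Upsilon_{{\rm V},{\rm y}^s}(\tau))\bigr\}.
\]
With the initial condition $\alpha^0=B$ (so $O^0=\mathrm{Id}$, ${\mathtt g}^0=0$), integrating these ODEs in $s$ yields exactly (\ref{iteration-O}) and (\ref{iteration-g}), by using the variation of constants formula with integrating factor $(O^s_\tau)$.

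For part~ii), the plan is to differentiate the defining SDE $d\mho^s_t=H(\mho^s_t,e_i)\circ d\alpha^s_{t,i}$ in the parameter $s$. Writing $Y^s_t:=\nabla_{D/\partial s}\mho^s_t$ and exchanging $\nabla_{D/\partial s}$ with the Stratonovich integral (legitimate because Stratonovich calculus obeys the ordinary chain rule), one gets
\[
\nabla_{D/\partial t}Y^s_t=\bigl(\nabla(Y^s_t)H(\mho^s_t,\cdot)\bigr)\circ d\alpha^s_t/dt+H(\mho^s_t,\cdot)\,d\underline{\Upsilon}_{{\rm V},\alpha^s}/dt,
\]
in the Stratonovich sense. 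Projecting via $\theta$ and $\varpi$ and invoking the structure equations (\ref{structure-1}), (\ref{structure-2}) --- exactly as in the deterministic computation of Lemma~\ref{HS-Th2.1}~ii) --- yields the $(\theta,\varpi)$-decomposed system (\ref{diff-mho-s-R-m}). The ``iff'' follows from existence and uniqueness: if $(\mho^s_t)'_s$ exists it must solve (\ref{diff-mho-s-R-m}) with initial value $(\mho^s_0)'_s$, and conversely any semimartingale solution $Y^s_t$ of (\ref{diff-mho-s-R-m}) can be re-integrated along $s$ to furnish the covariant derivative of $\mho^s_t$ by reversing the calculation.

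For part~iii), since ${\rm y}^s_t=\pi(\mho^s_t)$, the chain rule gives $(\,{\rm y}^s_t)'_s=\pi_*Y^s_t=\mho^s_t\,\theta(Y^s_t)$. It therefore suffices to identify $\theta(Y^s_t)=(\mho^s_0)^{-1}[{\mathtt s}(t){\rm V}(F^sy)]$, so that $\mho^s_t\,\theta(Y^s_t)=\mho^s_t(\mho^s_0)^{-1}[{\mathtt s}(t){\rm V}(F^sy)]=\Upsilon_{{\rm V},{\rm y}^s}(t)$. Plug the candidate pair $\bigl(\theta(Y^s_t),\varpi(Y^s_t)\bigr)=\bigl((\mho^s_0)^{-1}[{\mathtt s}(t){\rm V}(F^sy)],\,-K_{{\rm V},\alpha^s}(t)\bigr)$ into (\ref{diff-mho-s-R-m}); the first equation reduces to $d((\mho^s_0)^{-1}[{\mathtt s}(t){\rm V}(F^sy)])=(\mho^s_0)^{-1}[{\mathtt s}'(t){\rm V}(F^sy)]\,dt-K_{{\rm V},\alpha^s}(t)\circ d\alpha^s_t+\circ d\underline{\Upsilon}_{{\rm V},\alpha^s}$, which holds by the Stratonovich expression (\ref{K-Stratonovich-O}) of $K$ together with the definition of $\underline{\Upsilon}_{{\rm V},\alpha^s}$, while the second equation is (\ref{K-Stratonovich-O}) itself. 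Uniqueness of solutions to (\ref{diff-mho-s-R-m}) with the prescribed initial data $(\mho^s_0)'_s=H(\mho^s_0,(\mho^s_0)^{-1}{\rm V}(F^sy))$ then forces this identification.

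The main obstacle is neither the formal calculation (which parallels the smooth case once Stratonovich calculus is used) nor the verification of the candidate solution in~iii), but rather the justification that $\mho^s$ truly depends differentiably on $s$, so that interchanging $d/ds$ with Stratonovich integration is legitimate --- equivalently, that the SDE (\ref{diff-mho-s-R-m}) has a unique continuous semimartingale solution. This is where the assumption $\alpha^s\in\mathcal{A}$ (with the $\|\cdot\|_{\infty,T}$ norm) and the boundedness of ${\mathtt g}^s$, together with the H\"older regularity of the coefficients of (\ref{diff-mho-s-R-m}), will be used in the subsequent subsection on existence.
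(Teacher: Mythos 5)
Your parts~i) and~ii) follow essentially the same route as the paper: differentiate the integral form in $s$, match the $dB_\tau$- and $d\tau$-coefficients, and integrate the resulting ODEs; for~ii) differentiate the defining SDE and read off the $(\theta,\varpi)$-system using the structure equations. These are fine.

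For part~iii) you take a genuinely different route: rather than the paper's direct Gronwall estimate on the difference $\mathcal{Z}^\jmath_t=\theta(Y^\jmath_t)-\int_0^t(\mho^\jmath_0)^{-1}[\mathtt s'(\tau){\rm V}(F^\jmath y)]\,d\tau$ (which proves $\mathcal{Z}\equiv0$ and hence identifies $\theta(Y^s_t)$ in one step), you guess the closed-form pair $(\theta(Y^s_t),\varpi(Y^s_t))$ and verify it against (\ref{diff-mho-s-R-m}), then appeal to the uniqueness built into the hypothesis of~iii). That is a legitimate alternative. However, your candidate has a sign error: you wrote $\varpi(Y^s_t)=-K_{{\rm V},\alpha^s}(t)$, whereas it must be $\varpi(Y^s_t)=+K_{{\rm V},\alpha^s}(t)$ (compare Lemma~\ref{HS-Th2.1}~ii), where $\varpi((\overline\beta^s_t)'_s)=\int_0^t(\overline\beta^s_\tau)^{-1}R(\cdot,\Upsilon_{{\rm V},\beta^s}(\tau))\overline\beta^s_\tau\,d\tau$, i.e. $+K$, and the resulting Corollary~\ref{cor-mho-t-s-d}). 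With your sign, the first equation of (\ref{diff-mho-s-R-m}) does not close: its right-hand side becomes $\mathtt s'(t)(\mho^s_0)^{-1}{\rm V}(F^sy)\,dt-2K_{{\rm V},\alpha^s}(t)\circ d\alpha^s_t$, which is not $d\big(\mathtt s(t)(\mho^s_0)^{-1}{\rm V}(F^sy)\big)$, and the second equation would force $-dK=+dK$. With the corrected sign $+K$, the $K$-contributions from $\varpi(Y^s_t)\circ d\alpha^s_t$ and from $\circ\,d\underline\Upsilon_{{\rm V},\alpha^s}$ cancel in the first equation and the second is immediate from (\ref{K-Stratonovich-O}); the verification then goes through and the rest of your argument is sound.
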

 \begin{proof} By analogy with the deterministic case (Lemma \ref{HS-Th2.1}), we have  ${\rm y}^s$ solves  (\ref{flow F_s-on M}) iff $\alpha^s$  solves ${d {\alpha}^s_t({\rm w})}/{ds}=\underline{\Upsilon}_{{\rm V}, {\alpha}^s}(t)$,  which means 
 \begin{align*}
 \alpha_t^s-\alpha_t^0 =&\int_{0}^{s}\int_0^t ({\mho}_0^{\jmath})^{-1}[\mathtt{s}'(\tau){\rm V}(F^{\jmath}y)]\ d\tau\ d\jmath-\int_{0}^{s}\int_{0}^{t}\langle K_{{\rm V}, {\alpha}^{\jmath}}(\tau), d{\alpha}_{\tau}^{\jmath}\rangle\ d\jmath\\
 & -\int_{0}^{s}\int_{0}^{t} {\rm Ric}\left({\mho}_{\tau}^{\jmath}({\mho}_{0}^{\jmath})^{-1}[\mathtt{s}(\tau){\rm V}(F^{\jmath}y)]\right)\ d\tau\ d\jmath\\
 =&  \int_{0}^{s}\int_0^t ({\rm u}_T^{\jmath})^{-1}[\mathtt{s}'(\tau){\rm V}(F^{\jmath}y)]\ d\tau\ d\jmath- \int_{0}^{s}\int_{0}^{t}{\rm Ric}\left({\mho}_{\tau}^{\jmath}({\mho}_{0}^{\jmath})^{-1}[\mathtt{s}(\tau){\rm V}(F^{\jmath}y)]\right)\ d\tau\ d\jmath\\
 &-\int_{0}^{s}\int_{0}^{t} K_{{\rm V}, {\alpha}^{\jmath}}(\tau){\mathtt g}^{\jmath}_{\tau}\ d\tau\ d\jmath -\int_{0}^{s}\int_{0}^{t} K_{{\rm V}, {\alpha}^{\jmath}}(\tau)O^{\jmath}_{\tau}\ d{B}_\tau \ d\jmath.
 \end{align*}
 Note that $\alpha^0_t=B_t$ and hence $O^0={\rm{Id}}, {\mathtt g}^0=0$. So a  comparison of the above expression with the the assumption that $\alpha^s_t=\int_0^{t} O_\tau^s dB_\tau+\int_0^{t}{\mathtt g}_\tau^s \ d\tau$  gives  (\ref{iteration-O}) and 
 \begin{align*}
{\mathtt g}_{\tau}^{s}=\int_{0}^{s}({\mho}_{0}^{\jmath})^{-1}[\mathtt{s}'(\tau){\rm V}(F^{\jmath} y)]\ d\jmath-\int_{0}^{s}{\rm Ric}\left({\mho}_{\tau}^{\jmath}({\mho}_0^{\jmath})^{-1}[\mathtt{s}(\tau){\rm V}(F^{\jmath} y)]\right)\ d\jmath-\int_{0}^{s} K_{{\rm V}, \alpha^{\jmath}}(\tau){\mathtt g}_{\tau}^{\jmath}\ d\jmath.
\end{align*}
 Hence by the variation of constants method (i.e., Duhamel's principle), we obtain (\ref{iteration-g}).

 Let $\alpha^s$ be as in {\rm i)} which solves  ${d {\alpha}^s_t({\rm w})}/{ds}=\underline{\Upsilon}_{{\rm V}, {\alpha}^s}(t)$.  Then  $\mho^s$ is differentiable in $s$ iff  the following SDE is solvable with initial $(\mho_0^s)'_s$:
 \begin{eqnarray}\label{diff-u-equation-Y}d Y^s_t=\big(\nabla (Y^s_t)H\big)({\mho}_t^s, \circ d\alpha_{t}^s)+H({\mho}^s_t, (\circ d\alpha^{s}_t)'_s). \end{eqnarray}
 Writing (\ref{diff-u-equation-Y}) in the $(\theta, \varpi)$-coordinate,  we have 
 \begin{align*}
 d(\theta(Y_t^s))&=d\theta(\circ d\mho_{t}^{s}, Y_t^s)+\theta\big(H({\mho}^s_t, (\circ d\alpha^{s}_t)'_s)\big)=\omega(Y_t^s)\circ d{\alpha}_t^s+\circ d\underline{\Upsilon}_{{\rm V}, \alpha^{\jmath}},\\
 d(\varpi(Y_t^s))&=\Omega\left(H( {\mho}_{t}^{s},  \circ d\alpha_{t}^{s}), H({\mho}_{t}^{s}, \theta(Y_t^s))\right)=(\mho_{t}^{s})^{-1}R\big(\mho_{t}^{s} \circ d\alpha_{t}^{s}, \mho_{t}^{s}\theta(Y_t^s)\big)\mho_{t}^{s}.
 \end{align*}

Let $\alpha^s, \mho^s$ be such that  {\rm i)}, {\rm ii)} hold true.   For {\rm iii)},   it suffices to  check the equality  $\nabla_{D/\partial s}(\pi({\mho}^s))=\Upsilon_{{\rm V}, {\rm y}^{s}}$.  Let $Z_t^{\jmath}:=\theta\left(({\mho}_t^s)'_\jmath\right)$.  By (\ref{diff-mho-s-R-m}), 
 \[
 Z_t^{\jmath}=\underline{\Upsilon}_{{\rm V}, \alpha^{\jmath}}(t)-\underline{\Upsilon}_{{\rm V}, \alpha^{\jmath}}(0)+\int_{0}^{t}  \left(\int_{0}^{\tau}(\mho_{\tau'}^{\jmath})^{-1}R\big(\mho_{\tau'}^{\jmath} \circ d\alpha_{\tau'}^{\jmath}, \mho_{\tau'}^{\jmath}Z_{\tau'}^{\jmath}\big)\mho_{\tau'}^{\jmath}\right)\circ d\alpha_{\tau}^s. 
 \]
Write $\mathcal{Z}_t^{\jmath}:=Z_{t}^{\jmath}-\int_{0}^{t}({\mho}_0^{\jmath})^{-1}[\mathtt{s}'(\tau){\rm V}(F^{\jmath}y)]\ d\tau$. Then we have
\begin{equation*}
\mathcal{Z}_t^{\jmath}=\int_{0}^{t}  \left(\int_{0}^{\tau}(\mho_{\tau'}^{\jmath})^{-1}R\big(\mho_{\tau'}^{\jmath} \circ d\alpha_{\tau'}^{\jmath}, \mho_{\tau'}^{\jmath}\mathcal{Z}_{\tau'}^{\jmath}\big)\mho_{\tau'}^{\jmath}\right)\circ d\alpha_{\tau}^s. 
\end{equation*}
Using It\^{o}'s formula for $|\cdot|^2=\langle\cdot, \cdot\rangle$ or  the isometry property of Brownian motion,   we can find some constant $C(s_0, T)$ depending on $R, s_0, T$ such that 
\[
\E((\mathcal{Z}_t^{\jmath})^2)\leq C(s_0, T)\int_{0}^{t} \E((\mathcal{Z}_\tau^{\jmath})^2)\ d\tau.
\]
This gives $\mathcal{Z}_t^{\jmath}=0$  by  Gronwall's Lemma (see Lemma \ref{Gronwall}). Thus $({\rm y}^s)'_s=\Upsilon_{{\rm V}, {\rm y}^{s}}$.  
 \end{proof}
 
 \begin{cor}\label{cor-mho-t-s-d}Let $\mho^s$ be as in {\rm ii)} of Lemma \ref{Cor-O-g-formula}.  Then $Y^s=(\mho^s)'_s$  is given by 
  \begin{align}\label{Y-t-s-diff-Str}
\left\{ \begin{array}{l}
d\theta(Y^s_t)=\mathtt{s}'(t)({\mho}_0^s)^{-1}V(F^s y)\ dt,\\
d\varpi(Y^s_t)=(\mho_t^s)^{-1}R\big(\mho_t^s \circ d\alpha^s_t,  \mathtt{s}(t)\mho_t^s({\mho}_0^s)^{-1}V(F^s y)\big)\mho_t^s,
\end{array}\right.
\end{align}
whose  It\^{o} form is 
 \begin{align}\label{Y-t-s-diff-ITO}
\left\{ \begin{array}{l}
d\theta(Y^s_t)=\mathtt{s}'(t)({\mho}_0^s)^{-1}V(F^s y)\ dt,\\
d\varpi(Y^s_t)=(\mho_t^s)^{-1}R\big(\mho_t^s d\alpha^s_t,  \mathtt{s}(t)\mho_t^s({\mho}_0^s)^{-1}V(F^s y)\big)\mho_t^s\\
\ \ \ \ \ \ \ \ \ \ \ \ \   +(\mho_t^s)^{-1}(\nabla ({\mho}_{t}^se_i)R)\left({\mho}_{t}^s e_i,  \mathtt{s}(t)\mho_t^s({\mho}_0^s)^{-1}V(F^s y)\right){\mho}_{t}^s\ dt.
\end{array}\right.
\end{align}
 \end{cor}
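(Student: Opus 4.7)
\smallskip
\noindent\emph{Plan of proof.} The corollary is a direct consequence of parts ii) and iii) of Lemma~\ref{Cor-O-g-formula}, and the strategy is to identify the horizontal component $\theta(Y^s_t)$ explicitly and then substitute it into the Stratonovich system satisfied by $Y^s_t = (\mho^s_t)'_s$. Once the Stratonovich form is established, the It\^o form follows by the same quadratic-variation computation used in Lemma~\ref{gamma-V-apha}.

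\smallskip
\noindent\emph{Step 1: identifying $\theta(Y^s_t)$.} By the definition of the dual form,
\[
\theta_{\mho^s_t}(Y^s_t) \;=\; (\mho^s_t)^{-1}\pi_*(Y^s_t)\;=\;(\mho^s_t)^{-1}\bigl({\rm y}^s_t\bigr)'_s.
\]
From iii) of Lemma~\ref{Cor-O-g-formula}, $({\rm y}^s_t)'_s=\Upsilon_{{\rm V},{\rm y}^s}(t)=\mho^s_t(\mho^s_0)^{-1}[\mathtt{s}(t)\,{\rm V}(F^s y)]$, so
\[
\theta(Y^s_t) \;=\; \mathtt{s}(t)\,(\mho^s_0)^{-1}{\rm V}(F^s y).
\]
Since the right-hand side is of bounded variation in $t$ (the factors $\mho^s_0$ and ${\rm V}(F^s y)$ are $t$-independent), differentiating in $t$ yields the first line of both (\ref{Y-t-s-diff-Str}) and (\ref{Y-t-s-diff-ITO}). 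As a consistency check, plugging this $\theta(Y^s_t)$ into the Stratonovich equation of Lemma~\ref{Cor-O-g-formula}~ii) together with (\ref{Ito-Upsilon}) forces $(\varpi(Y^s_t)-K_{{\rm V},\alpha^s}(t))\circ d\alpha^s_t\equiv 0$, which is consistent with the identification $\varpi(Y^s_t)=K_{{\rm V},\alpha^s}(t)$ to be used below.

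\smallskip
\noindent\emph{Step 2: the $\varpi$ equation in Stratonovich form.} Substituting $\mho^s_t\,\theta(Y^s_t)=\mathtt{s}(t)\mho^s_t(\mho^s_0)^{-1}{\rm V}(F^s y)$ directly into the second equation of Lemma~\ref{Cor-O-g-formula}~ii),
\[
d\varpi(Y^s_t) \;=\;(\mho^s_t)^{-1}R\bigl(\mho^s_t\circ d\alpha^s_t,\ \mho^s_t\,\theta(Y^s_t)\bigr)\mho^s_t,
\]
immediately produces the second line of (\ref{Y-t-s-diff-Str}).

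\smallskip
\noindent\emph{Step 3: passage to It\^o form.} This is the only computation that requires care, though it parallels exactly the Stratonovich-to-It\^o conversion carried out for $K_{{\rm V},\alpha}$ in Lemma~\ref{gamma-V-apha}. Writing the Stratonovich integral as an It\^o integral plus $\tfrac12$ of the joint quadratic variation, and using that $O^s_\tau\in\mathcal O(\Bbb R^m)$ so that $\langle d\alpha^s_{t,i},d\alpha^s_{t,j}\rangle=2\delta_{ij}\,dt$ after taking the sum $\sum_{i=1}^m$, the cross-variation of $R(\mho^s_t\,\circ d\alpha^s_t,\cdot)\mho^s_t$ with the stochastic differential produces exactly the correction term
\[
(\mho^s_t)^{-1}\bigl(\nabla(\mho^s_t e_i)R\bigr)\bigl(\mho^s_t e_i,\ \mathtt{s}(t)\mho^s_t(\mho^s_0)^{-1}{\rm V}(F^s y)\bigr)\mho^s_t\,dt,
\]
where the $d\mho^s_t$-term contributes via $d\mho^s_t=H(\mho^s_t,\circ d\alpha^s_t)$ and the differentiation of $R$ along the horizontal direction $H(\mho^s_t,e_i)$ produces the covariant derivative $\nabla(\mho^s_t e_i)R$. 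This yields the second line of (\ref{Y-t-s-diff-ITO}), completing the proof. The main (minor) obstacle is simply keeping the bookkeeping of the Stratonovich correction straight; no new estimates are required, and all remaining work is a direct substitution.
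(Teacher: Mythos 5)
Your proposal is correct and follows essentially the same route as the paper: the paper's proof likewise substitutes the horizontality of $\mho^s$ and the identity $({\rm y}^s)'_s=\Upsilon_{{\rm V},{\rm y}^s}$ (Lemma \ref{Cor-O-g-formula} iii)) into (\ref{diff-mho-s-R-m}) to get (\ref{Y-t-s-diff-Str}), and then obtains (\ref{Y-t-s-diff-ITO}) by the It\^o formula. Your Step 3 merely spells out the Stratonovich-to-It\^o correction (with the factor $2\,dt$ quadratic variation cancelling the $\tfrac12$, exactly as in Lemma \ref{gamma-V-apha}), which the paper leaves implicit.
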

 \begin{proof}Note that $\mho^s$ is a horizontal lift of ${\rm y}^s$. Reporting this and  $({\rm y}^s)'_s=\Upsilon_{{\rm V}, {\rm y}^{s}}$ in (\ref{diff-mho-s-R-m}) shows  (\ref{Y-t-s-diff-Str}). Then  (\ref{Y-t-s-diff-ITO})  follows  by applying the  It\^{o} formula. 
 \end{proof}
 
For $\alpha=(\alpha_{t, 1}, \cdots, \alpha_{t, m})\in \mathcal{A}$,  consider the associated flow maps $\{F_{t_1, t_2}^{\alpha}\}_{0\leq t_1<t_2\leq T}$, 
where  
$F_{t_1, t_2}^{\alpha}: \mathcal{F}(\M)\to \mathcal{F}(\M); \  \overline{\beta}_{t_1}^{\alpha}\mapsto \overline{\beta}_{t_2}^{\alpha}$,  with  $(\overline{\beta}_{t}^{\alpha})_{t\in [t_1, t_2]}$ solving  the Stratonovich SDE
\begin{equation}\label{Horizontal-u}
d\overline{\beta}^{\alpha}_t=H(\overline{\beta}^{\alpha}_t, \circ d\alpha_t). 
\end{equation}
By Proposition \ref{SDE-flow-regularity}, $F_{t_1, t_2}^{\alpha}$ are  $C^{k-2}$ diffeomorphisms for almost all ${\rm w}$ and the first order tangent map  $DF_{t_1, t_2}^{\alpha}$ satisfies the following (see also Lemma \ref{El-Ku-Cor}).

\begin{lem}\label{Tangent map-SDE}Let $\alpha\in \mathcal{A}$.   For almost all ${\rm w}$, any $t_1\in [0, T]$ and ${\bf\mathsf v}_{t_1}^\alpha\in T_{\overline{\beta}_{t_1}^\alpha}\mathcal{F}(\M)$,   $
{\bf\mathsf v}_t^\alpha:=\big[DF_{t_1, t}^\alpha\big(\overline{\beta}_{t_1}^\alpha, {\rm w}\big)\big]{\bf\mathsf v}_{t_1}^\alpha, t\in [t_1, T]$ satisfies the Stratonovich SDE 
\[
d{\bf\mathsf v}_t^\alpha =\big(\nabla ({\bf\mathsf v}_t^\alpha)H\big)(\overline{\beta}^\alpha_t, \circ d\alpha_t). 
\]
In the $(\theta, \varpi)$-coordinate,   we have 
  \begin{align*}
\left\{ \begin{array}{ll}
d\left(\theta\big({\bf\mathsf v}_t^\alpha\big)\right)= \varpi\big({\bf\mathsf v}_t^\alpha\big)\circ d\alpha_t,\\ 
d\left(\varpi\big({\bf\mathsf v}_t^\alpha\big)\right)=(\overline{\beta}_t^{\alpha})^{-1}R\left(\overline{\beta}_t^{\alpha}\circ d\alpha_t, \overline{\beta}_t^{\alpha}\left(\theta\big({\bf\mathsf v}_t^\alpha\big)\right)\right) \overline{\beta}_t^{\alpha}
\end{array}\right.
\end{align*}
and its  It\^{o} form  is
\begin{align*}
\left\{ \begin{array}{l}
d\theta({\bf\mathsf v}_t^\alpha)=\varpi\big({\bf\mathsf v}_t^\alpha\big) d\alpha_t+{\rm Ric}\big({\ov{\beta}}_{t}^{\alpha}\theta({\bf\mathsf v}_t^\alpha)\big)\ dt,\\
d\varpi({\bf\mathsf v}_t^\alpha)=(\overline{\beta}_t^{\alpha})^{-1}R\left(\overline{\beta}_t^{\alpha} d\alpha_t,  \overline{\beta}_t^{\alpha}\theta({\bf\mathsf v}_t^\alpha)\right)\overline{\beta}_t^{\alpha}+(\overline{\beta}_t^{\alpha})^{-1}R\left(\overline{\beta}_t^{\alpha} e_i, \varpi({\bf\mathsf v}_t^\alpha)e_i\right)\overline{\beta}_t^{\alpha}\ dt\\
\ \ \ \ \ \ \ \ \ \ \ \ \ +(\overline{\beta}_t^{\alpha})^{-1}(\nabla (\overline{\beta}_t^{\alpha} e_i)R)\left(\overline{\beta}_t^{\alpha} e_i, \overline{\beta}_t^{\alpha}\theta({\bf\mathsf v}_t^\alpha)\right)\overline{\beta}_t^{\alpha}\ dt.
\end{array}\right.
\end{align*}
\end{lem}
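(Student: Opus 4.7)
The plan is to apply Proposition \ref{SDE-flow-regularity} directly to the Stratonovich SDE (\ref{Horizontal-u}) defining $\overline{\beta}_t^\alpha$. Since $\alpha\in\mathcal{A}$ is a continuous semimartingale and the vector fields $H(\cdot,e_i)$ on $\mathcal{F}(\M)$ are $C^{k-1}$ bounded, the associated flow $F_{t_1,t}^\alpha$ is a $C^{k-2}$-diffeomorphism on the image of $F_{t_1,t}^\alpha$, and its first order tangent map satisfies the linearized equation obtained by differentiating the coefficient $u\mapsto H(u,\circ d\alpha_t)$ at the base point $\overline{\beta}_t^\alpha$. Since Stratonovich calculus obeys the ordinary chain rule, this yields
\[
d\mathsf{v}_t^\alpha \;=\; \nabla(\mathsf{v}_t^\alpha)\,H\bigl(\overline{\beta}_t^\alpha,\circ d\alpha_t\bigr),
\]
which is the abstract Stratonovich SDE of the lemma, in complete analogy with Lemma \ref{El-Ku-Cor}(i).

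Next, I would project onto the parallelism $(\theta,\varpi)$. Using the structure equation (\ref{structure-1}) together with $\theta\bigl(H(\overline{\beta}_t^\alpha,e)\bigr)=e$ and $\varpi\bigl(H(\overline{\beta}_t^\alpha,e)\bigr)=0$ for $e\in\Bbb R^m$, the same computation that was carried out for the horizontal lift of Brownian motion gives
\[
d\bigl(\theta(\mathsf{v}_t^\alpha)\bigr)\;=\;d\theta\bigl(H(\overline{\beta}_t^\alpha,\circ d\alpha_t),\mathsf{v}_t^\alpha\bigr)\;=\;\varpi(\mathsf{v}_t^\alpha)\circ d\alpha_t.
\]
The analogous application of (\ref{structure-2}) produces the Stratonovich $\varpi$-equation, the curvature contribution appearing because the bracket of two horizontal vector fields is vertical with vertical part captured by $R$. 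These are precisely the $(\theta,\varpi)$-coordinate equations of Lemma \ref{El-Ku-Cor}(ii) with $dB_t$ replaced by $d\alpha_t$.

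The final step is the Stratonovich-to-It\^o conversion, which is where one must check that no new term appears compared to the Brownian case. The only input needed is the quadratic covariation $d\langle\alpha,\alpha\rangle_t$. Writing $\alpha_t=\int_0^t O_\tau\,dB_\tau+\int_0^t\mathtt{g}_\tau\,d\tau$ with $O_\tau\in\mathcal{O}(\Bbb R^m)$, the drift $\int\mathtt{g}\,d\tau$ contributes nothing and the martingale part gives $d\langle\alpha,\alpha\rangle_t=O_\tau O_\tau^{T}\,dt=\mathrm{Id}\,dt$, exactly as for a standard $\Bbb R^m$-Brownian motion. Consequently the It\^o corrections are formally identical to those appearing in Lemma \ref{El-Ku-Cor}(iii): a Ricci term in the $\theta$-equation coming from the bracket $\tfrac12 d\langle\varpi(\mathsf{v}_t^\alpha),\alpha_t\rangle$, together with a curvature term and a $\nabla R$ term in the $\varpi$-equation. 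I expect no real obstacle beyond this bookkeeping; the one point worth emphasizing is the orthogonality of $O_\tau$, which is precisely what collapses the bracket to $\mathrm{Id}\,dt$ and makes the It\^o form identical in shape to the Brownian case.
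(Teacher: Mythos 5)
Your proposal is correct and follows exactly the route the paper indicates (applying Proposition \ref{SDE-flow-regularity} to (\ref{Horizontal-u}), reading the result in the $(\theta,\varpi)$-chart via the structure equations, and noting that the It\^o correction is unchanged from the Brownian case of Lemma \ref{El-Ku-Cor} because $O_\tau\in\mathcal{O}(\mathbb R^m)$ forces $\langle\alpha,\alpha\rangle$ to match $\langle B,B\rangle$). The only minor slip is the normalization: in this paper the Brownian motion has generator $\Delta$ (not $\Delta/2$), so $d\langle\alpha^i,\alpha^j\rangle_t=2\delta_{ij}\,dt$ rather than $\delta_{ij}\,dt$, but since your argument is a comparison with the paper's own $B_t$ this does not affect the conclusion.
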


Let $\alpha^s\in \mathcal{A}$ be a one parameter family  of random processes. We abbreviate 
\[
\overline{\beta}_{t}^s:=\overline{\beta}_t^{\alpha^s},\  F_{t_1, t_2}^{s}:=F^{\alpha^s}_{t_1, t_2} \  \mbox{and}\  DF_{t_1, t_2}^{s}:=DF_{t_1, t_2}^{\alpha^s}. 
\]
The maps $\{DF_{t_1, t_2}^{s}\}_{0\leq t_1<t_2\leq T}$  are said to be  $C^1$ in $s$ if, for almost all ${\rm w}$  and any  $(\upsilon_{t_1}^{s}, Q_{t_1}^{s})\in T_{\overline{\beta}^s}\mathcal{F}(\M)$ which is $C^1$ in $s$, $[DF_{t_1, t_2}^{s}](\upsilon_{t_1}^{s}, Q_{t_1}^{s})$ is also $C^1$ in $s$. The following can be formulated  using Lemma \ref{Tangent map-SDE} and It\^{o}'s formula by analogy with  Lemma \ref{v-t-s-diff-determine}.

\begin{lem}\label{diff-D-F-s-crit}
Let  $\alpha^s, {\rm y}^s$  and  $\mho^s$ be as in  Lemma \ref{Cor-O-g-formula}.  Then $\{DF_{t_1, t_2}^{s}\}_{0\leq t_1<t_2\leq T}$  are  $C^1$ in $s$ iff for any ${\bf\mathsf v}_{t_1}^s\in T_{\mho_{t_1}^s}\mathcal{F}(\M)$ $C^1$ in $s$,  there is a unique  $(\upsilon_t^s)_{t\in [t_1, t_2]}$,   continuous in $(t, s)$ with $\upsilon_{t_1}^s=\nabla_{D/\partial s}{\bf\mathsf v}_{t_1}^s$,   that solves the SDE
\begin{align}\label{mathsfv-t-s-d-stoch}
d\upsilon_t^s=&\big(\nabla(\upsilon_t^s)H\big)(\mho^s_t, \circ d\alpha^s_t)+\circledast\big({\bf\mathsf v}_t^s, (\mho_t^s)'_s\big), 
\end{align}
  where\begin{align*}
\circledast\big({\bf\mathsf v}_t^s, (\mho_t^s)'_s\big)=&
\nabla^{(2)}\big({\bf\mathsf v}_t^s, (\mho_t^s)'_s\big)H(\mho^s_t, \circ d \alpha^s_t)+\nabla({\bf\mathsf v}_t^s)H\big({\mho_t^s}, \circ d\Upsilon_{{\rm V}, \alpha^s}(t)\big)\\
&+R\left(H(\mho^s_t, \circ d\alpha^s_t), (\mho_t^s)'_s\right){\bf\mathsf v}_t^s. 
\end{align*}
In the $(\theta, \varpi)$-coordinate,  (\ref{mathsfv-t-s-d-stoch}) is 
 \begin{align}\label{diff-bfv-t-s-t-v}
\left\{ \begin{array}{ll}
d\big(\theta\big(\upsilon_t^s\big)\big)\ =\varpi\big(\upsilon_t^s\big)\circ d \alpha^s_t+\theta\left(\circledast\big({\bf\mathsf v}_t^s, (\mho_t^s)'_s\big)\right),\\
d\big(\varpi\big(\upsilon_t^s\big)\big)=(\mho_t^s)^{-1}R\left(\mho_t^s \circ d \alpha^s_t,  \mho_t^s\theta\big(\upsilon_t^s\big)\right)\mho_t^s+\varpi\left(\circledast\big({\bf\mathsf v}_t^s, (\mho_t^s)'_s\big)\right). 
\end{array}\right.
\end{align}
The  It\^{o} form of (\ref{diff-bfv-t-s-t-v}) is 
\begin{align}\label{mathsfv-t-s-d-Ito}
\left\{ \begin{array}{ll}
d\big(\theta\big(\upsilon_t^s\big)\big)\ =\varpi\big(\upsilon_t^s\big) d \alpha^s_t+{\rm Ric}\big({\mho}_{t}^s\theta(\upsilon_t^s)\big)\ dt+\theta\left(\circledast_{{\rm I}}\big({\bf\mathsf v}_t^s, (\mho_t^s)'_s\big)\right)+\circledast_{{\rm A}}^{\theta}\big({\bf\mathsf v}_t^s, (\mho_t^s)'_s\big),\\
d\big(\varpi\big(\upsilon_t^s\big)\big)=(\mho_t^s)^{-1}R\left(\mho_t^s d \alpha^s_t,  \mho_t^s\theta\big(\upsilon_t^s\big)\right)\mho_t^s+(\mho_t^s)^{-1}R\left(\mho_t^s e_i, \mho_t^s\varpi(\upsilon_t^s)e_i\right)\mho_t^s\ dt\\
\ \ \ \ \ \ \ \ \ \ \ \ \ \ \ \ +(\mho_t^s)^{-1}(\nabla ({\mho}_{t}^se_i)R)\left({\mho}_{t}^s e_i, {\mho}_{t}^s\theta(\upsilon_t^s)\right){\mho}_{t}^s\ dt\\
\ \ \ \ \ \ \ \ \ \ \ \ \ \ \ \  +\varpi\left(\circledast_{{\rm I}}\big({\bf\mathsf v}_t^s, (\mho_t^s)'_s\big)\right)+\circledast_{{\rm A}}^{\varpi}\big({\bf\mathsf v}_t^s, (\mho_t^s)'_s\big),
\end{array}\right.
\end{align}
where $\circledast_{{\rm I}}\big({\bf\mathsf v}_t^s, (\mho_t^s)'_s\big)$ is $\circledast\big({\bf\mathsf v}_t^s, (\mho_t^s)'_s\big)$ with $\circ d\alpha^s_t$ replaced by the It\^{o} infinitesimal  $d\alpha^s_t$, 
\begin{align*}
\circledast_{{\rm A}}^{\theta}\big({\bf\mathsf v}_t^s, (\mho_t^s)'_s\big)=& 2\varpi\left(\circledast\big({\bf\mathsf v}_t^s, (\mho_t^s)'_s, e_i\big)\right)e_i\ dt+\theta\left(\left[H(\mho_t^s, e_i), \circledast\big({\bf\mathsf v}_t^s, (\mho_t^s)'_s, e_i\big)\right]\right)\ dt,\\
\circledast_{{\rm A}}^{\varpi}\big({\bf\mathsf v}_t^s, (\mho_t^s)'_s\big)=& 2(\mho_{t}^s)^{-1}\!R\!\left(\mho_{t}^se_i, \mho_{t}^s\theta\big(\circledast({\bf\mathsf v}_t^s, (\mho_t^s)'_s, e_i)\big)\right)\!\mho_{t}^s\ dt\\
&+\varpi\left(\left[H(\mho_t^s, e_i), \circledast\big({\bf\mathsf v}_t^s, (\mho_t^s)'_s, e_i\big)\right]\right)\ dt, \\
\circledast\big({\bf\mathsf v}_t^s, (\mho_t^s)'_s, e_i\big)=&\nabla^{(2)}\!\big({\bf\mathsf v}_t^s, (\mho_{t}^s)'_0\big)H(\mho_{t}^s, e_i)\!+\!\!R\big(H(\mho^s_t, e_i), (\mho_t^s)'_s\big){\bf\mathsf v}_t^s\!+\!\!\nabla({\bf\mathsf v}_t^s)H\big({\mho_t^s}, K_{{\rm V}, \alpha^s}(t)e_i\big).
\end{align*}
\end{lem}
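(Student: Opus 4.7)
My plan is to treat this lemma as the direct stochastic counterpart of Lemma \ref{v-t-s-diff-determine}: since Stratonovich integrals obey the ordinary chain rule, all the deterministic manipulations using the structure equations (\ref{structure-1})--(\ref{structure-2}) and the exterior differentiation formula $d\omega(X,Y) = X(\omega(Y)) - Y(\omega(X)) - \omega([X,Y])$ transfer essentially verbatim, provided one justifies the commutation of $\nabla_{D/\partial s}$ with the Stratonovich differentiation in $t$. The Picard-iteration machinery built in Section \ref{flow-F-S-y} (in particular the existence and characterization of $\alpha^s \in \mathcal A$ together with the formulas (\ref{iteration-O})--(\ref{iteration-g}) for $O^s_\tau, \mathtt g^s_\tau$ and the SDE (\ref{Y-t-s-diff-Str}) for $(\mho^s_t)'_s$) will be used freely.

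\textbf{Necessity.} Assume the tangent maps $DF_{t_1,t_2}^s$ are $C^1$ in $s$ and set $\upsilon_t^s := \nabla_{D/\partial s}{\bf\mathsf v}_t^s$. Apply $\nabla_{D/\partial s}$ to the Stratonovich SDE
\begin{equation*}
d{\bf\mathsf v}_t^s = \big(\nabla({\bf\mathsf v}_t^s)H\big)(\mho^s_t, \circ d\alpha_t^s)
\end{equation*}
of Lemma \ref{Tangent map-SDE}. On the right one picks up three contributions: a term $\big(\nabla(\upsilon_t^s)H\big)(\mho^s_t, \circ d\alpha_t^s)$ from differentiating ${\bf\mathsf v}_t^s$; a term $\nabla^{(2)}({\bf\mathsf v}_t^s,(\mho_t^s)'_s)H(\mho^s_t, \circ d\alpha_t^s)$ from differentiating the footpoint $\mho^s_t$ (using Corollary \ref{cor-mho-t-s-d}); and a term $\nabla({\bf\mathsf v}_t^s)H\big(\mho_t^s, \circ d\underline\Upsilon_{{\rm V},\alpha^s}(t)\big)$ from differentiating $\alpha^s_t$, whose $s$-derivative is $\underline\Upsilon_{{\rm V},\alpha^s}$ by Lemma \ref{Cor-O-g-formula} i). The commutation of $\nabla_{D/\partial s}$ with $d$ produces the curvature correction $R\!\left(H(\mho^s_t,\circ d\alpha^s_t),(\mho^s_t)'_s\right){\bf\mathsf v}_t^s$, accounting for the final summand in $\circledast$. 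This yields (\ref{mathsfv-t-s-d-stoch}) in the Stratonovich form. Reading it in the parallelism coordinate $(\theta,\varpi)$ and applying the structure equations (exactly as in the deterministic calculation of Lemma \ref{v-t-s-diff-determine}, together with the identity $d\varpi(H(\mho^s_t,\cdot),\cdot) = (\mho^s_t)^{-1}R(\mho^s_t(\cdot),\mho^s_t\theta(\cdot))\mho^s_t$) gives (\ref{diff-bfv-t-s-t-v}). The It\^o form (\ref{mathsfv-t-s-d-Ito}) then follows by the standard Stratonovich-to-It\^o conversion, bookkeeping the quadratic variations $\langle d\alpha^s,d\alpha^s\rangle = 2\,{\rm Id}\,dt$ carried by the independent Brownian components; the $\circledast_{{\rm I}}$ piece absorbs the direct conversion of the $\circledast$ term, while $\circledast_{{\rm A}}^\theta,\circledast_{{\rm A}}^\varpi$ collect the extra Itô corrections coming from the $\circ d\alpha^s_t$ appearing inside $\nabla({\bf\mathsf v}_t^s)H(\mho^s_t,\cdot)$ (which produces a bracket term against $H(\mho^s_t,e_i)$) and from the quadratic covariation with the curvature factors.

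\textbf{Sufficiency and uniqueness.} Conversely, since the right-hand side of (\ref{mathsfv-t-s-d-stoch}) is linear in $\upsilon_t^s$ with $C^{k-2}$-bounded coefficients and a forcing term built from processes already produced by Lemma \ref{Cor-O-g-formula} and Corollary \ref{cor-mho-t-s-d}, the Itô SDE (\ref{mathsfv-t-s-d-Ito}) is uniquely solvable on every interval $[t_1,t_2]\subset[0,T]$ by the standard theory for linear SDEs; joint continuity in $(t,s)$ follows from a Kolmogorov criterion applied after $L^q$-estimates in the spirit of Proposition \ref{est-norm-D-j-F-t}. To see that the candidate solution $\upsilon_t^s$ actually equals the covariant $s$-derivative of ${\bf\mathsf v}_t^s$, I would imitate the identification argument used at the end of Lemma \ref{Cor-O-g-formula} iii): form the difference process $\mathcal Z^s_t := \upsilon_t^s - \nabla_{D/\partial s}{\bf\mathsf v}_t^s$ (the latter being understood as a limit of finite differences along a smoothed family $\alpha^{s,\epsilon}$ of anti-developments), show that $\mathcal Z^s_t$ satisfies the same linear SDE with vanishing initial data and drift, and conclude $\mathcal Z^s_t\equiv 0$ by It\^o's formula applied to $|\mathcal Z^s_t|^2$ followed by Gronwall's inequality.

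\textbf{Main obstacle.} The genuine difficulty is not the algebraic derivation, which is dictated by the structure equations, but the justification of commuting $\nabla_{D/\partial s}$ past the Stratonovich integral in the non-compact universal cover $\M$, where one cannot appeal to global boundedness and where the forcing term $\circ d\underline\Upsilon_{{\rm V},\alpha^s}(t)$, spelled out via (\ref{Ito-Upsilon})--(\ref{Ito-K-V}), is itself a semi-martingale whose quadratic variation couples to $K_{{\rm V},\alpha^s}$. The pathwise regularity argument needs the integrability bounds produced in Section 4, most notably Proposition \ref{est-norm-u-t-j}; the remainder of Section \ref{flow-F-S-y} will supply exactly those estimates, which is why this lemma is merely stated at this point and its careful verification is deferred to Section \ref{sec-exist-F-s}.
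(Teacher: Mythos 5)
Your proposal is correct and follows the same route the paper intends: the necessity direction mirrors the deterministic calculation of Lemma \ref{v-t-s-diff-determine} (commutation of $\nabla_{D/\partial s}$ with $\nabla_{D/\partial t}$ at the price of a curvature correction, then the structure equations to pass to the $(\theta,\varpi)$-chart), the Stratonovich-to-It\^o conversion supplies $\circledast_{\rm I}$ and the bracket/quadratic-covariation corrections $\circledast_{\rm A}^\theta,\circledast_{\rm A}^\varpi$, and you correctly defer the unique solvability and the identification $\upsilon^s_t=\nabla_{D/\partial s}{\bf\mathsf v}_t^s$ to the Picard-iteration and Gronwall argument of Section \ref{sec-exist-F-s}. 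This is exactly what the paper's one-line remark ``can be formulated using Lemma \ref{Tangent map-SDE} and It\^o's formula by analogy with Lemma \ref{v-t-s-diff-determine}'' refers to.
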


\subsection{The existence of  ${\bf F}^s_y$} \label{sec-exist-F-s}

In this part, we prove the existence of the mapping ${\rm y}\mapsto {\bf F}^s_y({\rm y})$.   By Lemma \ref{Cor-O-g-formula}, it suffices to solve ${d {\alpha}^s_t({\rm w})}/{ds}=\underline{\Upsilon}_{{\rm V}, {\alpha}^s}(t)$ in $\mathcal{A}$  with $\alpha^0=B$. We will do this using  the classical Picard method as in \cite[Theorem 3.1]{Hs1}.  In the meanwhile, we will also show the existence of the differential processes of  $\mho^s$ and $DF^{s}_{t_1, t_2}$ in $s$.  The tool we will use to obtain a continuous version of a two-parameter process is  Kolmogorov's criterion.

\begin{lem}(cf. \cite[Theorem 1.4.1]{Ku2})\label{Kol-criterion}
Let $\{\mathcal{Y}^{s}_{t}({\rm w})\}_{t\in [0, T], s\in [-s_0, s_0]}$ be a one parameter family  of random processes on a complete manifold. Suppose there are positive constants $\flat, \flat_1, \flat_2$,  with $\flat_1, \flat_2>2$,  and ${\mathtt C}_0(\flat)$ such that for  all $t, t'\in [0, T]$ and $s, s'\in [-s_0, s_0]$, 
\[
\E\left[\left|\mathcal{Y}^s_t-\mathcal{Y}^{s'}_{t'}\right|^{\flat}\right]\leq  {\mathtt C}_0(\flat)\left(|t-t'|^{\flat_1}+|s-s'|^{\flat_2}\right),\]
then  $\mathcal{Y}^{s}_t$ has a continuous modification with respect to the parameter $(t, s)$. 
\end{lem}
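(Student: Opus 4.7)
\emph{Proof proposal.} This is the classical two-parameter Kolmogorov--Chentsov continuity criterion, and the plan is to follow the usual dyadic chaining argument, with the sole novelty being that values lie in a complete manifold rather than a Euclidean space. Since the hypothesis only involves the intrinsic distance $|\mathcal{Y}^s_t-\mathcal{Y}^{s'}_{t'}|$, and since a complete manifold is a complete metric space, every Cauchy sequence of values has a well-defined limit; therefore the construction of a continuous modification can proceed exactly as in the vector-valued case, with no need to pass to a chart.

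First, I would rescale and set $[0,T]\times[-s_0,s_0]$ to a fixed square, say $[0,1]^2$, and consider the dyadic grid
$$D_n=\big\{(j2^{-n},k2^{-n}):\ 0\leq j,k\leq 2^n\big\},\qquad D=\bigcup_n D_n.$$
Call two points of $D_n$ \emph{neighbors} if they differ by $2^{-n}$ in exactly one coordinate; the number of such neighbor pairs is $O(2^{2n})$. For any neighbor pair, the hypothesis and Chebyshev's inequality give, for any $\gamma>0$,
$$\P\!\left(|\mathcal{Y}^{s}_t-\mathcal{Y}^{s'}_{t'}|\geq 2^{-\gamma n}\right)\;\leq\;2^{\gamma\flat n}\cdot{\mathtt C}_0(\flat)\cdot 2\cdot 2^{-n\min(\flat_1,\flat_2)}.$$

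Next, I would choose $\gamma\in(0,1/\flat)$ small enough that
$$\gamma\flat\ +\ 2\ <\ \min(\flat_1,\flat_2),$$
which is possible because $\flat_1,\flat_2>2$. A union bound over the $O(2^{2n})$ neighbor pairs at level $n$ yields a summable probability, so by Borel--Cantelli there is an almost sure event on which, for all large enough $n$, every neighbor pair in $D_n$ satisfies $|\mathcal{Y}^{s}_t-\mathcal{Y}^{s'}_{t'}|<2^{-\gamma n}$. A standard chaining argument (writing any two close points of $D$ as a finite concatenation of neighbor-pair steps across decreasing scales) then shows that on this event $\mathcal{Y}$, restricted to $D$, is $\gamma'$-H\"older continuous for every $\gamma'<\gamma$, with a random but finite constant.

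Finally, since a complete manifold is a complete metric space, the uniformly continuous restriction $\mathcal{Y}|_D$ admits a unique continuous extension $\widetilde{\mathcal{Y}}$ to all of $[0,1]^2$. To verify that $\widetilde{\mathcal{Y}}$ is a \emph{modification} of $\mathcal{Y}$, I would fix $(t,s)$ and choose $(t_n,s_n)\in D$ with $(t_n,s_n)\to(t,s)$; the hypothesis gives
$$\E\big[|\mathcal{Y}^{s_n}_{t_n}-\mathcal{Y}^{s}_{t}|^{\flat}\big]\longrightarrow 0,$$
so $\mathcal{Y}^{s_n}_{t_n}\to \mathcal{Y}^{s}_{t}$ in probability, while by construction $\mathcal{Y}^{s_n}_{t_n}\to\widetilde{\mathcal{Y}}^{s}_{t}$ almost surely, forcing $\mathcal{Y}^{s}_{t}=\widetilde{\mathcal{Y}}^{s}_{t}$ a.s. The only genuine point of care is the choice of exponents in the second step (needing the $+2$ to absorb the $2^{2n}$ pair count coming from the two-dimensional parameter space), which is precisely why the statement requires $\flat_1,\flat_2>2$ rather than $>1$ as in the one-parameter version; no substantive obstacle arises.
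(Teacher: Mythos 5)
Your proof is correct: it is the standard two-parameter Kolmogorov--Chentsov chaining argument (dyadic grid, Chebyshev plus Borel--Cantelli with the exponent condition $\gamma\flat+2<\min(\flat_1,\flat_2)$ absorbing the $2^{2n}$ neighbor-pair count, extension by completeness, and identification of the modification via convergence in probability), and the manifold setting indeed causes no trouble since only the metric-space structure and completeness are used. The paper does not prove this lemma but simply cites Kunita's Theorem 1.4.1, and your argument is precisely the proof behind that cited result, so there is nothing to compare beyond noting agreement.
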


Besides Burkholder's inequality (Lemma \ref{Ku-lem}), another useful tool to estimate the $L^q$-norm  of stochastic integrals   is Gronwall's lemma:

\begin{lem}(cf. \cite[p. 13]{El}) \label{Gronwall} Let $\phi, \phi_1$ be real valued Lebesgue integrable functions on the interval $[0, s]$ such that for  some ${\mathtt C}>0$,  
\[
\phi(\jmath)\leq \phi_1(\jmath)+{\mathtt C}\int_{0}^{\jmath}\phi(\jmath')\ d\jmath', \ \forall \jmath\in [0, s].
\] 
Then 
\[
\phi(\jmath)\leq \phi_1(\jmath)+{\mathtt C}\int_{0}^{\jmath}e^{{\mathtt C}(\jmath-\jmath')}\phi_1(\jmath')\ d\jmath', \ \mbox{for almost all}\ \jmath\in [0, s]. 
\]
\end{lem}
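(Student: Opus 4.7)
The plan is to reduce the statement to a first-order linear differential inequality for the antiderivative of $\phi$ and then integrate an explicit integrating factor, which is the most direct route when $\phi_1$ is not assumed monotone. Define $\Phi(\jmath):=\int_0^\jmath \phi(\jmath')\,d\jmath'$, so that $\Phi$ is absolutely continuous on $[0,s]$ with $\Phi'(\jmath)=\phi(\jmath)$ almost everywhere. Rewriting the hypothesis gives
\[
\Phi'(\jmath)-{\mathtt C}\,\Phi(\jmath)\ \leq\ \phi_1(\jmath)\quad\text{a.e. on }[0,s].
\]
Multiplying by the integrating factor $e^{-{\mathtt C}\jmath}$ yields
\[
\frac{d}{d\jmath}\bigl(e^{-{\mathtt C}\jmath}\Phi(\jmath)\bigr)\ \leq\ e^{-{\mathtt C}\jmath}\phi_1(\jmath),
\]
and integrating from $0$ to $\jmath$ (using $\Phi(0)=0$) gives
\[
\Phi(\jmath)\ \leq\ \int_0^\jmath e^{{\mathtt C}(\jmath-\jmath')}\phi_1(\jmath')\,d\jmath'.
\]
Plugging this into the original inequality $\phi(\jmath)\le \phi_1(\jmath)+{\mathtt C}\,\Phi(\jmath)$ then yields the claim for almost every $\jmath\in[0,s]$.

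The one subtle point to check is that $\Phi$ really is absolutely continuous and that the product rule for $e^{-{\mathtt C}\jmath}\Phi(\jmath)$ is valid in the a.e. sense; this follows because $\phi$ is Lebesgue integrable, so $\Phi$ is absolutely continuous by the fundamental theorem of calculus for Lebesgue integrals. An alternative, iterative route would be to substitute the inequality into itself $n$ times, producing
\[
\phi(\jmath)\ \leq\ \phi_1(\jmath)+\sum_{k=1}^{n}{\mathtt C}^{k}\!\int_0^\jmath\!\frac{(\jmath-\jmath')^{k-1}}{(k-1)!}\phi_1(\jmath')\,d\jmath'+R_n(\jmath),
\]
with remainder $R_n(\jmath)={\mathtt C}^{n+1}\int_0^\jmath\frac{(\jmath-\jmath')^n}{n!}\phi(\jmath')\,d\jmath'\to 0$ as $n\to\infty$; recognizing the series as $\mathtt{C}\,e^{\mathtt{C}(\jmath-\jmath')}$ under the integral then delivers the same conclusion. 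I would present the integrating-factor argument since it is shorter and makes the dependence on the data transparent.

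There is essentially no main obstacle here: the only routine care required is that $\phi_1$ is only assumed Lebesgue integrable (not continuous), so the manipulations have to be interpreted almost everywhere, and the conclusion is therefore stated for almost every $\jmath\in[0,s]$, in agreement with the statement of the lemma.
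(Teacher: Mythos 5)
Your integrating-factor argument is correct and is the standard proof of this linear Gronwall inequality; the paper itself does not prove the lemma but simply cites Elworthy, so there is no "paper's own proof" to diverge from. One minor remark: since the hypothesis holds for every $\jmath$ and the bound on $\Phi(\jmath)=\int_0^\jmath\phi$ obtained by integrating the a.e.\ inequality holds pointwise (by absolute continuity of $e^{-\mathtt{C}\jmath}\Phi(\jmath)$), your argument in fact gives the conclusion for \emph{all} $\jmath\in[0,s]$, not merely almost all, which is slightly stronger than what the lemma asserts.
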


We are in a situation to state the existence theorem of the maps ${\bf F}^s_y$.

\begin{theo}\label{Main-alpha-x-v-Q}Let ${\rm V}$ be a bounded  smooth vector field on $\M$ and let $\{F^s\}_{s\in \Bbb R}$ be the flow it generates. For $y\in \M$,   let $({\mho}^s_0\in \mathcal{O}^{\wt{g}}_{F^s y}(\M))_{s\in \Bbb R}$ with $d{\mho}^s_0/ds=H(\mho_{0}^s, (\mho_0^{s})^{-1}{\rm V}(F^sy))$ be a fixed horizontal lift of the smooth curve $(F^sy)_{s\in \Bbb R}$. 
\begin{itemize}
\item[i)] There exists a unique family of stochastic processes $\alpha^s\in \mathcal{A}$ such that  for almost all ${\rm w}$,  $s\mapsto \alpha^s({\rm w})$ is  differentiable with
\begin{equation}\label{unique-alpha-s}
\alpha^s_t({\rm w})={\rm w}+\int_0^s  \underline{\Upsilon}_{{\rm V}, {\alpha}^\jmath}(t, {\rm w})\ d\jmath, \ \forall t\in [0, T]. 
\end{equation}
The process  $\underline{\Upsilon}_{{\rm V}, {\alpha}^s}(t)$ has a continuous  modification in the parameter $(t, s)$. 
\item[ii)] Let  ${\mho}^s\in \mathcal{O}^{\wt{g}}(\M)$ be a horizontal  lift of  ${\rm y}^s$ with initial ${\mho}^s_0$.  There exists a one parameter family  of $\mathcal{F}_t$-adapted stochastic processes $(Y^s_t)_{t\in [0, T]}$ with $Y^s_t({\rm w})\in T_{{\mho}_t^s({\rm w})}(\mathcal{O}^{\wt{g}}(\M))$ for almost all ${\rm w}$, which satisfies
\begin{equation*}
\nabla_{\frac{D}{\partial s}}{\mho}^s_t({\rm w})= Y_t^{s}({\rm w}), \ \forall t\in [0, T].
\end{equation*}
The process $Y^s_t$  has a continuous modification in the parameter $(t, s)$. 
\item[iii)] Let ${\rm y}^s=\mathcal{I}_{\mho_0^s}(\alpha^s)$. Then $s\mapsto {\rm y}^s({\rm w})$ is differentiable  for  almost all ${\rm w}$ with 
\begin{equation}\label{unique-y-s}
\nabla_{\frac{D}{\partial s}}{\rm y}^s_t({\rm w})=\Upsilon_{{\rm V}, {\rm y}^{s}}(t, {\rm w}), \ \forall t\in [0, T].\end{equation}
The process  $\Upsilon_{{\rm V}, {\rm y}^{s}}(t)$ has a continuous modification in the parameter $(t, s)$. 
\item[iv)] For almost all ${\rm w}$,  $(DF_{t_1, t}^{s})_{0\leq t_1<t\leq T}$  are  $C^1$ in $s$.  For  ${\bf\mathsf v}_{t_1}^s\in T_{\mho_{t_1}^s}\mathcal{F}(\M)$ $C^1$ in $s$, the process ${\bf\mathsf v}_{t}^s=[DF_{t_1, t}^{s}]{\bf\mathsf v}_{t_1}^s$  is differentiable in $s$ and the differential process $\upsilon_t^s$ has a continuous modification in the parameter $(t, s)$. 
\end{itemize}
\end{theo}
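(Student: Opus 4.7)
The plan is to construct $\alpha^s$ by Picard iteration in $\mathcal{A}$ and then read off (ii)--(iv) as solutions of linear SDEs whose drivers already involve the constructed $\alpha^s$ and $\mho^s$. For (i), parametrize a candidate $\alpha^s$ by the pair $(O^s,\mathtt g^s)$ via $\alpha^s_t=\int_0^tO^s_\tau\,dB_\tau+\int_0^t\mathtt g^s_\tau\,d\tau$ and apply Lemma \ref{Cor-O-g-formula} to rewrite the fixed-point equation (\ref{unique-alpha-s}) as the coupled system (\ref{iteration-O})--(\ref{iteration-g}). Iterate starting from $\alpha^{s,0}\equiv B$: at each stage $\mho^{s,n}$ is produced by (\ref{Horizontal-u}) with driver $\alpha^{s,n}$, the kernel $K_{{\rm V},\alpha^{s,n}}$ by (\ref{Ito-K-V}), and then $(O^{s,n+1},\mathtt g^{s,n+1})$ by the integral formulas. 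Since $R,\nabla R, {\rm V}$ and $\mathtt s$ are all uniformly bounded (the first three are lifts from the compact quotient $M$), $K_{{\rm V},\alpha^{s,n}}$ is bounded as an endomorphism; Burkholder's inequality (Lemma \ref{Ku-lem}) combined with Gronwall's lemma (Lemma \ref{Gronwall}) in the $s$-variable then gives $L^q$-contraction of the increments $\|\alpha^{s,n+1}-\alpha^{s,n}\|_{\infty,T}$ on $[-s_0,s_0]$ for $s_0$ small depending only on $T$ and the curvature bounds; iterating in $s$ delivers the unique solution on all of $\mathbb R$. Uniqueness of $\alpha^s$ in $\mathcal{A}$ satisfying (\ref{unique-alpha-s}) is another Gronwall argument applied to the difference of two solutions.

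To produce a jointly continuous modification of $\underline{\Upsilon}_{{\rm V},\alpha^s}(t)$ in $(t,s)$, I will estimate, from the It\^o expression (\ref{Ito-Upsilon})--(\ref{Ito-K-V}),
\[
\E\bigl|\underline{\Upsilon}_{{\rm V},\alpha^s}(t)-\underline{\Upsilon}_{{\rm V},\alpha^{s'}}(t')\bigr|^{\flat}\leq C\bigl(|t-t'|^{\flat/2}+|s-s'|^{\flat}\bigr)
\]
for every even $\flat$; the $t$-piece uses Burkholder on the stochastic integral and the boundedness of the drift and curvature terms, while the $s$-piece uses the Picard estimate together with continuity of $s\mapsto\mho_0^s$ and $s\mapsto F^s y$. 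Choosing $\flat$ large and invoking Kolmogorov's criterion (Lemma \ref{Kol-criterion}) gives the continuous modification, and a parallel estimate on $\alpha^s$ itself gives the continuous modification of $\mho^s$ via the It\^o map.

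With $\alpha^s$ and $\mho^s$ in hand, (ii) reduces to solving the inhomogeneous linear SDE (\ref{Y-t-s-diff-ITO}) for $Y^s_t$ (which is linear in $Y^s_t$ only through curvature, and whose inhomogeneous term $\mathtt s'(t)(\mho_0^s)^{-1}{\rm V}(F^sy)\,dt$ is smooth in $s$); existence, uniqueness and $L^q$ bounds come from the usual Picard/Burkholder argument and Kolmogorov again delivers the joint continuity in $(t,s)$. Part (iii) is then essentially a computation: once $\alpha^s$ is $s$-differentiable and $\mho^s$ has the derivative $Y^s$, Lemma \ref{Cor-O-g-formula}~iii) identifies the $s$-derivative of ${\rm y}^s_t=\pi(\mho_t^s)$ as $\Upsilon_{{\rm V},{\rm y}^s}(t)$, and the joint continuity transfers. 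For (iv), the process $\upsilon^s_t$ must solve the linear SDE (\ref{mathsfv-t-s-d-Ito}); its source terms $\circledast_{\rm I},\circledast_{\rm A}^{\theta,\varpi}$ are polynomial in $\mathsf v^s_t,(\mho^s_t)'_s$ and involve $R,\nabla R,\nabla^{(2)}R$ together with $\Upsilon_{{\rm V},\alpha^s}$, all already constructed and $L^q$-controlled, so Burkholder plus Gronwall yields a unique solution whose joint continuity is obtained once more by Kolmogorov.

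The main obstacle is maintaining these $L^q$ bounds \emph{uniformly in $s$} through the Picard iteration on the non-compact cover $\widetilde M$: naive iteration would let curvature terms accumulate. The two ingredients that tame this are (a) the uniform bound $|\mathtt g^s|\le\mathrm{Const}\cdot\sup|{\rm V}|$ built into the definition of $\mathcal{A}$, which prevents the drift part of $\alpha^s$ from drifting away during iteration, and (b) the uniform boundedness of $R,\nabla R$ (and higher covariant derivatives used in (iv)) inherited from the compact quotient. A secondary subtlety is that the Kolmogorov criterion demands joint $(t,s)$-estimates with exponents strictly greater than $2$ in each variable; I will therefore work with arbitrary even $\flat$ rather than $\flat=2$ and exploit that the driving equations are linear in the unknowns so that all $L^q$ moments propagate by the same Burkholder--Gronwall scheme.
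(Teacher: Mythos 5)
Your proposal matches the paper's proof in all essentials: Picard iteration on $(O^s,\mathtt g^s)$ via (\ref{iteration-O})--(\ref{iteration-g}) starting from $\alpha^{s,0}=B$, Burkholder/Gronwall to control the increments, the two-parameter moment bound $\E|\cdots|^{\flat}\le C(|t-t'|^{\flat/2}+|s-s'|^{\flat})$ with $\flat>4$ fed into Kolmogorov for joint continuity, Corollary \ref{cor-mho-t-s-d} and Lemma \ref{Cor-O-g-formula}~iii) for (ii)--(iii), and a second Picard scheme for the linear SDE (\ref{mathsfv-t-s-d-Ito}) in (iv). Two small points worth knowing: the paper's Volterra-type increment estimate (\ref{iteration-alpha}) actually gives factorial decay $\le \frac1{n!}(C_0+C_1)^n s^n$, so the Picard scheme converges for all $s_0$ at once and no restriction to $s_0$ small plus re-patching is needed; and in (ii) the equation the paper solves by iteration is the coupled system (\ref{diff-mho-s-R-m}) (in which $\theta(Y^s)$ and $\varpi(Y^s)$ drive each other), after which Corollary \ref{cor-mho-t-s-d} shows the solution collapses to the fully explicit formulas (\ref{Y-t-s-diff-ITO}), whose right-hand sides do not involve $Y^s_t$ at all -- so (\ref{Y-t-s-diff-ITO}) is a conclusion, not the fixed-point equation you iterate on.
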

\begin{proof}For simplicity, we will use $C$ to denote a constant depending on $\|g\|_{C^3}$ and the norm bound of  ${\rm V}$ and  use $C(\cdot)$ to indicate the extra coefficients it depends on, for instance, $C(s_0, T)$ means $C$ also depends on $s_0, T$.  These constants  $C$ may vary from line to line.  

We first show i). 
  For any  $s_0\in \Bbb R^+$, we  use (\ref{iteration-O}), (\ref{iteration-g})  for Picard's  iteration and show the iteration converges to a one-parameter family of  processes $\alpha^s$ ($s\leq s_0$) in norm $\|\cdot \|_{\infty, T}$.   Let  ${\mathtt g}^{s, 0}=0, {O}^{s, 0}={\mbox Id}$, $\alpha^{s, 0}=B$ and let $\mho^{s, 0}$ be the horizontal lift of $\mathcal{I}_{\mho_0^s}(B)$ in $\mathcal{O}^{\wt{g}}(\M)$ with ${\mho}_0^{s, 0}=\mho_0^s$.  Assume ${\mathtt g}^{s, n-1}, {O}^{s, n-1}$ and  $\alpha^{s, n-1}$ are obtained for some $n\in \Bbb N$.  We  write  ${\mho}^{s, n-1}$ for the horizontal development of  $\alpha^{s, n-1}$ in $\mathcal{O}^{\wt{g}}(\M)$ with ${\mho}_0^{s, n-1}=\mho_0^s$ and put
\begin{align*}
K_{{\rm V}, \alpha^{s, n-1}}(t)=&\int_{0}^{t}({\mho_{\tau}^{s, n-1}})^{-1}R\left({\mho_{\tau}^{s, n-1}}d\alpha_{\tau}^{s, n-1}, {\mho_{\tau}^{s, n-1}}(\mho_{0}^s)^{-1}[\mathtt{s}(\tau){\rm V}(F^s y)]\right){\mho_{\tau}^{s, n-1}}\\
&\!+\!\int_{0}^{t}\!({\mho_{\tau}^{s, n-1}})^{-1}\!(\nabla ({\mho_{\tau}^{s, n-1}} e_i)R)\!\left({\mho_{\tau}^{s, n-1}}e_i, {\mho_{\tau}^{s, n-1}}\!(\mho_0^s)^{-1}\![\mathtt{s}(\tau){\rm V}(F^sy)]\right){\mho_{\tau}^{s, n-1}}d\tau\notag, \\
 {\rm R}_{{\rm V}, \alpha^{s, n-1}}(t)=&\ ({\mho}_{0}^{s})^{-1}[\mathtt{s}'(t){\rm V}(F^s y)]-{\rm Ric}\left({\mho}_{t}^{s, n-1}({\mho}_0^{s})^{-1}[\mathtt{s}(t){\rm V}(F^{s}y)]\right).
  \end{align*} 
Then define ${\mathtt g}^{s, n}, {O}^{s, n}, \alpha^{s, n}$ as the processes determined by the following SDEs:
\begin{align*}
\left\{ \begin{array}{l}
 O^{s, n}_{t}= {\rm{Id}}-\int_{0}^{s}K_{{\rm V}, \alpha^{\jmath, n-1}}(t)O^{\jmath, n}_{t}\ d\jmath, \\ \\
 {\mathtt g}_{t}^{s, n}= O^{s, n}_{t}\int_{0}^{s}[O_{t}^{\jmath, n}]^{-1}{\rm R}_{{\rm V}, \alpha^{\jmath, n-1}}(t) \ d\jmath,\\ \\
 \alpha^{s, n}_{t}=\int_{0}^{t}O_{\tau}^{s, n}\ dB_{\tau}+\int_{0}^{t}{\mathtt g}_{\tau}^{s, n}\ d\tau. 
 \end{array}
 \right.
\end{align*}
When $n=1$, the  definitions of ${\mathtt g}^{s, 0}, {O}^{s, 0}, {\mathtt g}^{s, 1}$ and ${O}^{s, 1}$ show that 
\begin{align*}
O^{s, 1}_{t}-O^{s, 0}_{t}= -\int_{0}^s K_{{\rm V}, \alpha^{\jmath, 0}}(t)O^{\jmath, 1}_{t}\ d\jmath,\ \ \ {\mathtt g}_{t}^{s, 1}- {\mathtt g}_{t}^{s, 0}=O^{s, 1}_{t}\int_{0}^{s}[O_{t}^{\jmath, 1}]^{-1}{\rm R}_{{\rm V}, \alpha^{\jmath, 0}}(t) \ d\jmath.
 \end{align*} 
 Abbreviate  $\|\cdot\|_{\infty, T}$ as $\|\cdot\|$. There is some $C$ such that 
\begin{align*}
 \|K_{{\rm V}, \alpha^{\jmath, 0}}\|^2&\leq 2\E\sup_{t\in [0, T]}\left|\int_{0}^{t}({\mho_{\tau}^{s, 0}})^{-1}R\left({\mho_{\tau}^{s, 0}}dB_{\tau}, {\mho_{\tau}^{s, 0}}(\mho_{0}^s)^{-1}[\mathtt{s}(\tau){\rm V}(F^s y)]\right){\mho_{\tau}^{s, 0}}\right|^2+CT^2,\\
 &\leq 4\E\left|\int_{0}^{T}({\mho_{\tau}^{s, 0}})^{-1}R\left({\mho_{\tau}^{s, 0}}dB_{\tau}, {\mho_{\tau}^{s, 0}}(\mho_{0}^s)^{-1}[\mathtt{s}(\tau){\rm V}(F^s y)]\right){\mho_{\tau}^{s, 0}}\right|^2+CT^2,\\
 &\leq 4\E\int_{0}^{T}\left|({\mho_{\tau}^{s, 0}})^{-1}R\left({\mho_{\tau}^{s, 0}}e_i, {\mho_{\tau}^{s, 0}}(\mho_{0}^s)^{-1}[\mathtt{s}(\tau){\rm V}(F^s y)]\right){\mho_{\tau}^{s, 0}}\right|^2\ d\tau+CT^2\\
 &\leq C(T+T^2),
 \end{align*}
 where the second inequality holds by  Doob's inequality of sub-martingales and the third inequality holds by  Lemma \ref{Ku-lem}.  Hence there is some $C(T)$ such that \[\| O^{s, 1}-O^{s, 0}\|\leq \int_{0}^{s} \|K_{{\rm V}, \alpha^{\jmath, 0}}\|\ ds\leq C(T)s.\]
There  also exists some $C$ such that $\| \mathtt{g}^{s, 1}-\mathtt{g}^{s, 0}\|\leq Cs$ since ${\rm R}_{{\rm V}, \alpha^{\jmath, 0}}$ is bounded. 
So we  obtain some  $C_0(T)$ such that 
\[
\|\alpha^{s, 1}-\alpha^{s, 0}\| \leq C_0(T) s. 
\]
If we  can further find some constant $C_1(T)$ such that  
\begin{eqnarray}\label{iteration-alpha}
\| \alpha^{s, n}-\alpha^{s, n-1} \|\leq C_1(T)\int_{0}^{s}\| \alpha^{\jmath, n-1}-\alpha^{\jmath, n-2}\|\ d\jmath, 
\end{eqnarray}
 we will obtain
\[
\|\alpha^{s, n}-\alpha^{s, n-1}\|\leq \frac{1}{n!}(C_0(T)+C_1(T))^n s^n, 
\]
which will imply the existence of the limits 
\[
\mathtt g^{s}=\lim\limits_{n\rightarrow +\infty}\mathtt g^{s, n}, \ O^{s}=\lim\limits_{n\rightarrow +\infty} O^{s, n}. 
\]
Then   $\alpha^s_{t}=\int_{0}^{t} O^{s}_{\tau}\ dB_\tau+\int_{0}^{t}{\mathtt g}^{s}_{\tau}\ d\tau$ will be our desired process for i)  by Lemma  \ref{Cor-O-g-formula}.

   For (\ref{iteration-alpha}), let us  analyze $\|O^{s, n}-O^{s, n-1}\|$ and $\|{\mathtt g}^{s, n}-{\mathtt g}^{s, n-1}\|$.  Since each $O^{s, n}$ is $\mathcal{O}(\Bbb R^m)$ valued and is invertible, we have 
\begin{align*}
O^{s, n}-O^{s, n-1}&= O^{s, n}\left({\rm Id}-(O^{s, n})^{-1}O^{s, n-1}\right)\\
&= -O^{s, n}\int_0^s \left.\frac{d}{ds}\left[(O^{s, n})^{-1}O^{s, n-1}\right]\right|_{s=\jmath}\ d\jmath\\
&=  -O^{s, n}\int_0^s \left(\left.\frac{d}{ds}\left[(O^{s, n})^{-1}\right]\right|_{s=\jmath}O^{\jmath, n-1}+ (O^{\jmath, n})^{-1}\left.\frac{d}{ds}\left[O^{s, n-1}\right]\right|_{s=\jmath} \right) d\jmath. 
\end{align*}
By the inductive defining  equations of  $O^{s, n-1}$, $O^{s, n}$,  we obtain 
\begin{align*}
\left.\frac{d}{ds}\left[O^{s, n-1}\right]\right|_{s=\jmath}&=-K_{{\rm V}, \alpha^{\jmath, n-2}}O^{\jmath, n-1}, \\
\left.\frac{d}{ds}\left[(O^{s, n})^{-1}\right]\right|_{s=\jmath}&=-(O^{\jmath, n})^{-1}\left.\frac{d}{ds}\left[O^{s, n}\right]\right|_{s=\jmath}(O^{\jmath, n})^{-1}=(O^{\jmath, n})^{-1}K_{{\rm V}, \alpha^{\jmath, n-1}}.
\end{align*}
Hence
\[
O^{s, n}-O^{s, n-1}=- O^{s, n}\int_{0}^{s}(O^{\jmath, n})^{-1}\left(K_{{\rm V}, \alpha^{\jmath, n-1}}-K_{{\rm V}, \alpha^{\jmath, n-2}}\right)O^{\jmath, n-1} d\jmath. 
\]
Using  (\ref{Ito-K-V}), we conclude  that there are  some constants $C, C'$ such that 
\begin{align}
\|O^{s, n}-O^{s, n-1}\|\leq C \int_{0}^{s} \left\|K_{{\rm V}, \alpha^{\jmath, n-1}}-K_{{\rm V}, \alpha^{\jmath, n-2}}\right\|\ d\jmath\leq C' \int_{0}^{s}\| \alpha^{\jmath, n-1}-\alpha^{\jmath, n-2}\|\ d\jmath.\label{iteration-O-norm}
\end{align}
For $\|{\mathtt g}^{s, n}-{\mathtt g}^{s, n-1}\|$, we can use the inductive definitions of $\mathtt g^{s, n}$ and $\mathtt g^{s, n-1}$ to compute that \begin{align*}
{\mathtt g}^{s, n}_t-{\mathtt g}^{s, n-1}_t&=\left(O^{s, n}_{t}-O^{s, n-1}_{t}\right)\int_{0}^{s}[O_{t}^{\jmath, n}]^{-1} {\rm R}_{{\rm V}, \alpha^{\jmath, n-1}}(t)\ d\jmath\\
&\ \ \ \ + O^{s, n-1}_{t}\int_{0}^{s}[O_{t}^{\jmath, n}]^{-1}\left(O_{t}^{\jmath, n-1}-O_{t}^{\jmath, n}\right)[O_{t}^{\jmath, n-1}]^{-1} {\rm R}_{{\rm V}, \alpha^{\jmath, n-1}}(t)\ d\jmath\\
&\ \ \  \ + O^{s, n-1}_{t}\int_{0}^{s}[O_{t}^{\jmath, n-1}]^{-1}\left({\rm R}_{{\rm V}, \alpha^{\jmath, n-1}}(t)- {\rm R}_{{\rm V}, \alpha^{\jmath, n-2}}(t)\right)\ d\jmath\\
&=: {\rm (a)}_t+{\rm (b)}_t+{\rm (c)}_t. 
\end{align*}
Hence 
\[
\|{\mathtt g}^{s, n}-{\mathtt g}^{s, n-1}\|\leq \|{\rm (a)}\|+  \|{\rm (b)}\| +\|{\rm (c)}\|. 
\]
Since ${\rm V}$ is bounded on $\M$ and $\mathtt{s}$ is $C^1$ on $[0, T]$, ${\rm R}_{{\rm V}, \alpha^{\jmath, n-1}}(t)$ is also bounded. 
So there are some  constants $C, C'$ such that 
\begin{align*}
{\rm \|(a)\|}&\leq C s_0\big\|O^{s, n}-O^{s, n-1}\big\|\leq C's_0\int_{0}^{s}\| \alpha^{\jmath, n-1}-\alpha^{\jmath, n-2}\|\ d\jmath,\\
{\rm \|(b)\|}&\leq C\int_{0}^{s}\big\|O^{\jmath, n-1}-O^{\jmath, n}\big\|\ d\jmath\leq C' \int_{0}^{s}\| \alpha^{\jmath, n-1}-\alpha^{\jmath, n-2}\|\ d\jmath. 
\end{align*}
For ${\rm (c)}$,  we also have 
\[
{\rm \|(c)\|}\leq C\int_{0}^{s}\left\|{\rm R}_{{\rm V}, \alpha^{\jmath, n-1}}- {\rm R}_{{\rm V}, \alpha^{\jmath, n-2}}\right\|\ d\jmath\leq C'\int_0^s\left\|{\mho}^{\jmath, n-1}-{\mho}^{\jmath, n-2}\right\|\ d\jmath,
\]
where the last norm is measured using the distance function on $\mathcal{O}^{\wt{g}}(\M)$.  Recall that 
\begin{eqnarray}\label{u-s-int-expression-1}
{\mho}^{\jmath, n}_t={\mho}^{\jmath, n}_0+\int_{0}^{t}\sum_{i=1}^{m}H({\mho}^{\jmath, n}_\tau, e_i)\circ d\alpha_\tau^{\jmath, n; i}, \ 0\leq t\leq T,
\end{eqnarray}
where $\alpha^{\jmath, n; i}$, $i=1, \cdots, m$, denotes the $i$-th component of $\alpha^{\jmath, n}$. 
Embedding  $\mathcal{O}^{\wt{g}}(\M)$ into some higher dimensional Euclidean space $\Bbb R^l$ and extending all $H(\cdot, e_i)$ to a small tube neighborhood of $\mathcal{O}^{\wt{g}}(\M)$, we can consider (\ref{u-s-int-expression-1}) as a Euclidean SDE and compare $\left\|{\mho}^{\jmath, n-1}-{\mho}^{\jmath, n-2}\right\|$  with  $\|\alpha^{\jmath, n-1}-\alpha^{\jmath, n-2}\|$ using the Euclidean norm. 
By (\ref{u-s-int-expression-1}), 
\begin{align*}
{\mho}^{\jmath, n-1}_t-{\mho}^{\jmath, n-2}_t=&\int_{0}^{t}\big(H({\mho}^{\jmath, n-1}_\tau, e_i)-H({\mho}^{\jmath, n-2}_\tau, e_i)\big)\circ (O_\tau^{\jmath, n-1}dB_{\tau})^i\\
&+\int_{0}^{t}H({\mho}^{\jmath, n-2}_\tau, e_i)\circ ((O_\tau^{\jmath, n-1}-O_\tau^{\jmath, n-2})dB_{\tau})^i\\
&+\int_{0}^{t}\big(H({\mho}^{\jmath, n-1}_\tau, e_i)-H({\mho}^{\jmath, n-2}_\tau, e_i)\big)\  {\mathtt g}_\tau^{\jmath, n-1; i}d\tau\\
&+\int_{0}^{t}H({\mho}^{\jmath, n-2}_\tau, e_i) ({\mathtt g}_\tau^{\jmath, n-1; i}-{\mathtt g}_\tau^{\jmath, n-2; i})d\tau\\
=:&\  ({\rm I})_t+ ({\rm II})_t+({\rm III})_t+({\rm IV})_t.
\end{align*}
Using (\ref{abcq}),  we obtain,  for $\wt{t}\in [0, T]$,  
\begin{align*}
\E\sup_{t\in [0, \wt{t}]}\big|{\mho}^{\jmath, n-1}_t-{\mho}^{\jmath, n-2}_t\big|^2\leq 4\E\left(\sup_{t\in [0, \wt{t}]}\left|({\rm I})_t\right|^2+\sup_{t\in [0, \wt{t}]} \left|({\rm II})_t\right|^2+ \sup_{t\in [0, \wt{t}]}\left| ({\rm III})_t\right|^2+\sup_{t\in [0, \wt{t}]} \left|({\rm IV})_t\right|^2\right).
\end{align*}
For $({\rm I})_t$, we can consider its It\^{o} form and then apply Doob's inequality of sub-martingales and Lemma \ref{Ku-lem}, which gives 
\begin{align*}
\E\sup_{t\in [0, \wt{t}]}\left|({\rm I})_t\right|^2\leq C\E\int_{0}^{\wt{t}}\left| {\mho}^{\jmath, n-1}_\tau-{\mho}^{\jmath, n-2}_\tau\right|^2\ d\tau\leq C\int_{0}^{\wt{t}}\E\sup_{t\in [0, \tau]}\big|{\mho}^{\jmath, n-1}_t-{\mho}^{\jmath, n-2}_t\big|^2\ d\tau.
\end{align*}
The same argument shows there is some $C$ such that 
\begin{align*}
\E\sup_{t\in [0, \wt{t}]}\left|({\rm II})_t\right|^2\leq C\E\int_{0}^{\wt{t}}\left| {O}^{\jmath, n-1}_\tau-{O}^{\jmath, n-2}_\tau\right|^2\ d\tau\leq CT\left\| \alpha^{\jmath, n-1}-\alpha^{\jmath, n-2}\right\|^2.\end{align*}
Note that  $\|H(\cdot, e_i)\|$  and $|\mathtt g^{s, n-1}|$ (for all $s$ and $n$) are bounded.  Hence 
\begin{align*}
\E\sup_{t\in [0, \wt{t}]}\left|({\rm III})_t\right|^2\leq C\wt{t} \E\int_{0}^{\wt{t}}\left| {\mho}^{\jmath, n-1}_\tau-{\mho}^{\jmath, n-2}_\tau\right|^2\ d\tau\leq CT\int_{0}^{\wt{t}}\E\sup_{t\in [0, \tau]}\left|{\mho}^{\jmath, n-1}_t-{\mho}^{\jmath, n-2}_t\right|^2\ d\tau.
\end{align*}
For  $({\rm IV})_t$, we have
\begin{align*}
\E\sup_{t\in [0, \wt{t}]}\left|({\rm IV})_t\right|^2\leq C\wt{t}^2\left\|\mathtt g^{\jmath, n-1}-\mathtt g^{\jmath, n-2}\right\|\leq  CT^2\left\| \alpha^{\jmath, n-1}-\alpha^{\jmath, n-2}\right\|^2. 
\end{align*}
Altogether, there are  some constants $C_2(T), C_3(T)$ such that 
\[
\E\sup_{t\in [0, \wt{t}]}\big|{\mho}^{\jmath, n-1}_t\!-{\mho}^{\jmath, n-2}_t\big|^2\leq C_2(T)\left\| \alpha^{\jmath, n-1}\!-\alpha^{\jmath, n-2}\right\|^2+C_3(T)\int_{0}^{\wt{t}}\E\sup_{t\in [0, \tau]}\big|{\mho}^{\jmath, n-1}_t\!-{\mho}^{\jmath, n-2}_t\big|^2\ d\tau.
\]
Applying Lemma \ref{Gronwall}, we obtain some  constant $C(T)$ independent of $\jmath$ such that 
\[
\left\|{\mho}^{\jmath, n-1}-{\mho}^{\jmath, n-2}\right\|\leq C(T)\left\| \alpha^{\jmath, n-1}-\alpha^{\jmath, n-2}\right\|. 
\]
So, 
\[
{\rm \|(c)\|}\leq C\int_0^s\left\|{\mho}^{\jmath, n-1}-{\mho}^{\jmath, n-2}\right\|\ d\jmath\leq C(T)\int_{0}^{s}\left\| \alpha^{\jmath, n-1}-\alpha^{\jmath, n-2}\right\|\ d\jmath. 
\]
Putting {together the estimations of ${\rm \|(a)\|}, {\rm \|(b)\|}$ and ${\rm \|(c)\|}$,}  we conclude that 
\[
\left\|{\mathtt g}^{s, n}-{\mathtt g}^{s, n-1}\right\|\leq C(T) \int_{0}^{s}\left\|\alpha^{\jmath, n-1}-\alpha^{\jmath, n-2}\right\|\ d\jmath. 
\]
This and (\ref{iteration-O-norm}) imply  (\ref{iteration-alpha}). Hence the limit 
\[
\lim_{n\rightarrow\infty}\alpha^{s, n}=\int_{0}^{\cdot} O^{s}_{\tau}\ d\tau+\int_{0}^{\cdot}{\mathtt g}^{s}_{\tau}\ d\tau=:\alpha^s
\]
exists  and $\alpha^s$ satisfies the equation (\ref{unique-alpha-s}) by i) of  Lemma \ref{Cor-O-g-formula}.   

The  $\{\alpha^s\}$ obtained by the above iteration is  the the unique parameter of processes in $\mathcal{A}$ satisfying (\ref{unique-alpha-s}). Assume  $\{\widetilde{\alpha}^s\}\subset \mathcal{A}$ is another parameter of processes solving (\ref{unique-alpha-s}). Then, by  
using (\ref{iteration-O}), (\ref{iteration-g}) for $\alpha^s$ and $\widetilde{\alpha}^s$, respectively, the above argument shows that  (\ref{iteration-alpha}) holds true by replacing $\alpha^{s, n}$, $\alpha^{s, n-1}$ by $\alpha^{s}$, $\wt{\alpha}^{s}$, respectively, for all $s$, i.e.,  \[
\|\alpha^s-\widetilde{\alpha}^s\|\leq C(T)\int_{0}^{s}\| \alpha^{\jmath}-\widetilde{\alpha}^{\jmath}\|\ d\jmath.
\]
This implies $\alpha^s=\widetilde{\alpha}^s$ by Gronwall's lemma.   

We  proceed to show   \begin{align}\label{E-Upsilon-aim}\E\big|\underline{\Upsilon}_{{\rm V}, \alpha^{s'}}(t')-\underline{\Upsilon}_{{\rm V}, \alpha^s}(t)\big|^\flat\leq C(\flat, s_0, T)\left(|t'-t|^{\frac{1}{2}\flat}+|s'-s|^{\flat}\right)
\end{align}
for any $\flat>4$,  $t, t'\in [0, T]$ and  $s, s'\in [-s_0, s_0]$. 
This,  by  applying Lemma \ref{Kol-criterion},   will imply that $\underline{\Upsilon}_{{\rm V}, {\alpha}^s}(t)$  has a continuous modification  in the parameter $(t, s)$.  Without loss of generality, we  assume $t<t'$.  
Using (\ref{Ito-Upsilon}) and (\ref{abcq}),  we  compute that 
\begin{align*}
&\E\big|\underline{\Upsilon}_{{\rm V}, \alpha^{s'}}(t')-\underline{\Upsilon}_{{\rm V}, \alpha^s}(t)\big|^\flat\\
&\leq 5^{\flat-1}\left(\ \E\big|\int_{t}^{t'}{\rm R}_{{\rm V}, \alpha^{s'}}(\tau)\ d\tau\big|^{\flat}+\E\big|\int_{t}^{t'}\langle K_{{\rm V}, \alpha^{s'}}(\tau), d\alpha_{\tau}^{s'}\rangle\big|^{\flat}+\E\big|\int_{0}^{t}({\rm R}_{{\rm V}, \alpha^{s'}}-{\rm R}_{{\rm V}, \alpha^s})(\tau)\ d\tau\big|^{\flat}
\right.\\
&\ \ \ \ \ \ \ \ \ \ \ \ \  -\E\big|\int_{0}^{t}\langle (K_{{\rm V}, \alpha^{s'}}-K_{{\rm V}, \alpha^{s}})(\tau), d\alpha_{\tau}^{s'}\rangle\big|^{\flat}+\left.\E\big|\int_{0}^{t}\langle K_{{\rm V}, \alpha^{s}}(\tau), d(\alpha_{\tau}^{s'}-\alpha_{\tau}^s)\rangle\big|^{\flat}\right)\\
&\ \ \ =: 5^{\flat-1}\big((\rm d)+(\rm e)+(\rm f)+(\rm g)+(\rm h)\big). 
\end{align*}
To conclude (\ref{E-Upsilon-aim}), we will show 
\begin{align}\label{d-e-f-g-h}
({\rm d}), ({\rm e})\leq C(\flat, s_0, T)|t'-t|^{\frac{1}{2}\flat}\ \mbox{and}\ ({\rm f}), ({\rm g}), ({\rm h})\leq C(\flat, s_0, T)|s'-s|^{\flat}. 
\end{align}
Clearly,  \[
({\rm d})\leq C^\flat\left|t'-t\right|^{\flat}\leq (CT)^{\flat}|t'-t|^{\frac{1}{2}\flat} 
\]
for some $C$ which bounds $|{\rm R}_{{\rm V}, \alpha^{\jmath}}|$.  
For $(\rm e)$, we have 
\begin{eqnarray*}
2^{1-\flat}({\rm e})\leq \E\big|\int_{t}^{t'}\langle K_{{\rm V}, \alpha^{s'}}(\tau), O_{\tau}^{s'}dB_{\tau}\rangle\big|^{\flat}+\E\big|\int_{t}^{t'}\langle K_{{\rm V}, \alpha^{s'}}(\tau), {\mathtt g}_{\tau}^{s'}\rangle\ d\tau\big|^{\flat}=:({\rm e})_1+({\rm e})_2. 
\end{eqnarray*}
By Lemma \ref{Ku-lem}, with the constant ${\mathtt{C}}_1(\flat)$ there, 
\begin{align*}
(\rm e)_1\leq {\mathtt{C}}_1(\flat)\E\big|\int_{t}^{t'} |K_{{\rm V}, \alpha^{s'}}(\tau)|^2\ d\tau\big|^{\frac{1}{2}\flat}\leq{\mathtt{C}}_1(\flat) \big(\int_{t}^{t'}\E (|K_{{\rm V}, \alpha^{s'}}(\tau)|^\flat)\ d\tau\big)\cdot |t'-t|^{\frac{1}{2}\flat-1}.
\end{align*}
Using (\ref{abcq}) and Lemma \ref{Ku-lem}, it is easy to deduce that 
\begin{align}
\E (|K_{{\rm V}, \alpha^{s'}}(\tau)|^\flat)\leq&\ 3^{\flat-1}\left(\E\left|\int_{0}^{\tau}({\mho_{\tau'}^{s'}})^{-1}R\left({\mho_{\tau'}^{s'}}O_{\tau'}^{s'}dB_{\tau'}, {\mho_{\tau'}^{s'}}(\mho_{0}^{s'})^{-1}[\mathtt{s}(\tau'){\rm V}(F^{s'} y)]\right){\mho_{\tau'}^{s'}}\right|^{\flat} \right.\notag\\
&  +\E\left|\int_{0}^{\tau}({\mho_{\tau'}^{s'}})^{-1}R\left({\mho_{\tau'}^{s'}}{\mathtt g}_{\tau'}^{s'}d\tau', {\mho_{\tau'}^{s'}}(\mho_{0}^{s'})^{-1}[\mathtt{s}(\tau'){\rm V}(F^{s'} y)]\right){\mho_{\tau'}^{s'}}\right|^{\flat}\notag\\
&  +\left.\E\left|\int_{0}^{\tau} ({\mho_{\tau'}^{s'}})^{-1}\!(\nabla ({\mho_{\tau'}^{s'}} e_i)R)\big({\mho_{\tau'}^{s'}}e_i, {\mho_{\tau'}^{s'}}\!(\mho_0^{s'})^{-1}\![\mathtt{s}(\tau'){\rm V}(F^{s'}y)]\big){\mho_{\tau'}^{s'}}\ d\tau' \right|^{\flat}\right)\notag\\
\leq &\ 3^{\flat-1}C(\tau^{\frac{1}{2}\flat}+\tau^{\flat}). \label{K-in-t-est}
\end{align}
Hence there is a constant $C(\flat, T)$  such that 
\[
({\rm e})_1\leq C(\flat, T)|t'-t|^{\frac{1}{2}\flat}. 
\]
Since $|{\mathtt g}_{\tau}^{s}|$ is bounded by some constant depending on $s_0$ and $\sup|{\rm V}|$, using H\"{o}lder's inequality and the estimation in (\ref{K-in-t-est}), we obtain \[
({\rm e})_2\leq \big(\int_{t}^{t'}\E (|K_{{\rm V}, \alpha^{s'}}(\tau)|^\flat)\ d\tau\big)\cdot |t'-t|^{\flat-1}\leq C(\flat, T)|t'-t|^{\flat}.
\]
Thus, 
\[
({\rm e})\leq 2^{\flat-1}\left(({\rm e})_1+({\rm e})_2\right)\leq 2^{\flat-1}(T^{\frac{1}{2}\flat}+1)C(\flat, T)|t'-t|^{\frac{1}{2}\flat}=C(\flat, T)|t'-t|^{\frac{1}{2}\flat}.
\]
Using H\"{o}lder's inequality and  Burkholder's inequality,  the conclusion  in (\ref{d-e-f-g-h}) for $({\rm f}), ({\rm g})$ and  $ ({\rm h})$  holds if 
\[
\E\big|({\rm R}_{{\rm V}, \alpha^{s'}}-{\rm R}_{{\rm V}, \alpha^s})(\tau)\big|^{\flat},\  \E\big|(K_{{\rm V}, \alpha^{s'}}-K_{{\rm V}, \alpha^{s}})(\tau)\big|^{\flat}, \  \E\big|\alpha_{\tau}^{s'}-\alpha_{\tau}^s\big|^{\flat}\leq C(\flat, s_0, T)|s'-s|^{\flat},
\]
which can be further reduced to  verifying 
\[
 \E\big|{O}^{s'}_\tau-{O}^{s}_\tau\big|^{\flat}, \ \E\big|{\mathtt g}_{\tau}^{s'}-{\mathtt g}_{\tau}^{s}\big|^\flat,\  \E\big|{\mho}^{s'}_\tau-{\mho}^{s}_\tau\big|^{\flat}\leq C(\flat, s_0, T)|s'-s|^{\flat}. 
\]
By  (\ref{iteration-O}) and (\ref{K-in-t-est}), there is some constant $C(\flat, T)$ such that 
\begin{align}
 \E\big|{O}^{s'}_\tau-{O}^{s}_\tau\big|^{\flat} =\E\left|\int_{s}^{s'}K_{{\rm V}, \alpha^{\jmath}}(\tau)O^{\jmath}_{\tau}\ d\jmath\right|^{\flat}&\leq\left|\int_{s}^{s'}\E (|K_{{\rm V}, \alpha^{s'}}(\tau)|^\flat)
  \ d\tau\right|\cdot |s'-s|^{\flat-1}\notag\\
 &\leq C(\flat, T)|s'-s|^{\flat}. \label{c-kom-cri-O}
\end{align}
Using (\ref{iteration-g}),  (\ref{c-kom-cri-O}) and H\"{o}lder's inequality,  we obtain  some constant $C(\flat, s_0, T)$ such that 
\begin{align}
\E\big|{\mathtt g}_{\tau}^{s'}-{\mathtt g}_{\tau}^{s}\big|^\flat&\leq 2^{\flat-1}\!\left(\E\big(|O_{\tau}^{s'}-O_{\tau}^s|^{\flat}\cdot \big|\int_{0}^{s'}[O_{\tau}^{\jmath}]^{-1}{\rm R}_{{\rm V}, \alpha^{\jmath}}(\tau) \ d\jmath\big|^{\flat}\big)+\E\left|\int_{s}^{s'}[O_{\tau}^{\jmath}]^{-1}{\rm R}_{{\rm V}, \alpha^{\jmath}}(\tau) \ d\jmath\right|^{\flat}\right)\notag\\
&\leq C(\flat, s_0,  T)|s'-s|^{\flat}. \label{c-kom-cri-g}
\end{align}
Recall that each $\mho^s$ satisfies the SDE
\begin{align*}
{\mho}^{s}_\tau({\rm w})={\mho}^{s}_0+\int_{0}^{\tau}\sum_{i=1}^{m}H({\mho}^{s}_{\tau'}({\rm w}), e_i)\circ d\alpha_{\tau'}^{s; i}({\rm w}), \ \forall \tau\in [0, T].
\end{align*}
As before, we can treat  it as a Euclidean SDE.  Hence,  
\begin{align*}
\E\big|{\mho}^{s'}_\tau-{\mho}^{s}_\tau\big|^{\flat}&\leq 3^{\flat-1}\left(\big|{\mho}^{s'}_0-{\mho}^{s}_0\big|^{\flat} +\E\big|\int_{0}^{\tau}\sum_{i=1}^{m}H({\mho}^{s}_{\tau'}, e_i)\circ d\big(\alpha_{\tau'}^{s', i}-\alpha_{\tau'}^{s, i}\big)\big|^{\flat}\right.\\
&\ \ \ \ \ \ \ \ \ \ \ \ \ \ \  \ \ \ \ \ \ \ \ \ \ \ \ \left. +\E\big|\int_{0}^{\tau}\sum_{i=1}^{m}\big(H({\mho}^{s'}_{\tau'}, e_i)-H({\mho}^{s}_{\tau'}, e_i)\big)\circ d\alpha_{\tau'}^{s', i}\big|^{
\flat}\right)\\
&=:3^{\flat-1}\big(({\rm i})+({\rm j})+({\rm k})\big). 
\end{align*}
Clearly, $({\rm i})\leq C|s'-s|^{\flat}$ for some $C$ depending on $\sup|{\rm V}|$ and $\wt{g}$. 
 For ${\rm (j)}$,  we have 
\begin{align*}
({\rm j})&\leq \E\big|\int_{0}^{\tau}\sum_{i=1}^{m}H({\mho}^{s}_{\tau'}, e_i)\circ ({O}^{s'}_{\tau'}-{O}^{s}_{\tau'})dB_{
\tau'})^i\big|^{\flat}+ \E\big|\int_{0}^{\tau}\sum_{i=1}^{m}H({\mho}^{s}_{\tau'}, e_i) ({\mathtt g}^{s'}_{\tau'}-{\mathtt g}^{s}_{\tau'})^id{
\tau'}\big|^{\flat}\\
&=:({\rm j})_1+({\rm j})_2. 
\end{align*}
where we use the superscript $i$ to denote the $i$-th component of a vector.   For $({\rm j})_1$, we can transfer the integral into It\^{o}'s form.  Note that all $H(\cdot, e_i)$ are $C^1$ vector fields on $\mathcal{O}^{\wt{g}}(\M)$ with bounded first order differentials. Hence, using Lemma \ref{Ku-lem}, H\"{o}lder's inequality and (\ref{c-kom-cri-O}), we can  conclude that there is some constant $C(\flat, T)$ such that   
\begin{align*}
({\rm j})_1&\leq {C}(\flat, T) \big(\E\big(\int_{0}^{\tau}|{O}^{s'}_{\tau'}-{O}^{s}_{\tau'}|^2\ d\tau'\big)^{\frac{\flat}{2}}+\E\big(\int_{0}^{\tau}|{O}^{s'}_{\tau'}-{O}^{s}_{\tau'}|^2\ d\tau'\big)^{\flat}\big)\\
&\leq {C}(\flat, T)(T^{\frac{1}{2}\flat-1}+T^{\flat-1})\int_{0}^{\tau}\big(\E|{O}^{s'}_{\tau'}-{O}^{s}_{\tau'}|^{\flat}+\E|{O}^{s'}_{\tau'}-{O}^{s}_{\tau'}|^{2\flat}\big)\ \! d\tau'\\
&\leq  {C}(\flat, T)(2s_0)^{\flat}|s'-s|^{\flat}.
\end{align*}
For $({\rm j})_2$, we can use H\"{o}lder's inequality and  (\ref{c-kom-cri-g}) to conclude that 
\[
({\rm j})_2\leq  C(\flat, s_0,  T)T^{\flat}|s'-s|^{\flat}. 
\]
For $({\rm k})$, the same argument as for $({\rm j})$ gives some constant $C(\flat, s_0, T)$ such that
\[
({\rm k})\leq  C(\flat, s_0,  T)\int_{0}^{\tau}\E\big|{\mho}^{s'}_{\tau'}-{\mho}^{s}_{\tau'}\big|^{\flat}\ d\tau'.
\]
Altogether, there is some constant $C(\flat, s_0, T)$ such that 
\[
\E\big|{\mho}^{s'}_\tau-{\mho}^{s}_\tau\big|^{\flat}\leq C(\flat, s_0, T)\left(|s'-s|^{\flat}+\int_{0}^{\tau}\E\big|{\mho}^{s'}_{\tau'}-{\mho}^{s}_{\tau'}\big|^{\flat}\ d\tau'\right).
\]
The inequality also holds for $\sup_{\wt{\tau}\in [0, \tau]}\E\big|{\mho}^{s'}_{\wt{\tau}}-{\mho}^{s}_{\wt{\tau}}\big|^{\flat}$. Hence we can apply Lemma \ref{Gronwall} to conclude that there is  some constant $C(\flat, s_0, T)$ such that 
\begin{equation}\E\big|{\mho}^{s'}_\tau-{\mho}^{s}_\tau\big|^{\flat}\leq C(\flat, s_0, T)|s'-s|^{\flat}.\label{c-kom-cri-mho}
\end{equation}
This finishes  the proof of (\ref{d-e-f-g-h}) and hence (\ref{E-Upsilon-aim}) holds true.  By Lemma \ref{Kol-criterion},  we can obtain a continuous  modification of  $\underline{\Upsilon}_{{\rm V}, {\alpha}^s}(t)$ in the parameter $(t, s)$.

Let $\alpha^s, \mho^s$ be as above.  By (\ref{c-kom-cri-mho}) and Lemma \ref{Kol-criterion}, $\mho^s$ has  a version such that $s\mapsto {\mho}^s(\rm w)$ is continuous.  By Lemma \ref{Cor-O-g-formula}, to  show $\mho^s$ is differentiable in $s$,  it suffices to show (\ref{diff-mho-s-R-m}) is uniquely solvable with $Y^s_0=(\mho_0^s)'_s$.  Let $Y^{s, 0}\equiv 0$ and  let  $Y^{s, n}$ ($n\geq 1$) be such that 
\begin{align}\label{diff-mho-s-R-m-iteration}
\left\{ \begin{array}{l}
d\theta(Y^{s, n}_t)\  =\ \varpi\big(Y^{s, n-1}_t\big)\circ d\alpha^s_t+\circ d\underline{\Upsilon}_{{\rm V}, \alpha^{s}},\\
d\varpi(Y^{s, n}_t) = (\mho_t^{s})^{-1}R\big(\mho_t^s \circ d\alpha^s_t,  \mho_t^s\theta(Y^{s, n-1}_t)\big)\mho_t^s.
\end{array}\right.
\end{align}
For a $\Bbb R^m\times \mathcal{F}(\Bbb R^m)$ valued process  $(\mathfrak{v},  \mathfrak{Q})_{t\in [0, T]}$, let 
\[
\|(\mathfrak{v},  \mathfrak{Q})\|:=\sqrt{\|\mathfrak{v}\|^2+\|\mathfrak{Q}\|^2}, \ \mbox{where}\ \|\mathfrak{v}\|^2=\E\sup_{t\in [0, T]}|\mathfrak{v}_t|^2, \|\mathfrak{Q}\|^2=\E\sup_{t\in [0, T]}|\mathfrak{Q}_t|^2.
\]
We show the sequence $(\theta, \varpi)(Y^{s, n})$ converges  in norm $\|\cdot\|$.  Clearly, 
\begin{equation}\label{Y-iteration-1st}
\left\|(\theta, \varpi)\big(Y^{s, 1}\big)-(\theta, \varpi)\big(Y^{s, 0}\big)\right\|\leq CT\|\alpha^s\|.  
\end{equation}
We continue to estimate $\|(\theta, \varpi)\big(Y^{s, n}\big)-(\theta, \varpi)\big(Y^{s, n-1}\big)\|$, $n\geq 2$. By (\ref{diff-mho-s-R-m-iteration}), 
\begin{align*}
\left\{ \begin{array}{l}
d\big(\theta(Y^{s, n}_t)-\theta(Y^{s, n-1}_t)\big)\ =\big(\varpi(Y^{s, n}_t)-\varpi(Y^{s, n-1}_t)\big)\circ d\alpha^s_t,\\
d\big(\varpi(Y^{s, n}_t)-\theta(Y^{s, n-1}_t)\big)=(\mho_t^{s})^{-1}R\big(\mho_t^s \circ d\alpha^s_t,  \mho_t^s\big(\theta(Y^{s, n}_t)-\theta(Y^{s, n-1}_t)\big)\big)\mho_t^s.
\end{array}\right.
\end{align*}
Following the above discussion on $\left\|{\mho}^{\jmath, n-1}-{\mho}^{\jmath, n-2}\right\|$, we can use Doob's inequality of submartingale and Lemma \ref{Ku-lem} to conclude that 
\begin{align} \begin{array}{l}
\E\sup_{t\in [0, \wt{t}]}\big|(\theta, \varpi)\big(Y^{s, n}\big)-(\theta, \varpi)\big(Y^{s, n-1}\big)\big|^2\\
\ \ \ \ \leq C(s_0, T)\int_{0}^{\wt{t}}\E\sup_{t\in [0, \tau]}\big|(\theta, \varpi)\big(Y^{s, n-1}\big)-(\theta, \varpi)\big(Y^{s, n-2}\big)\big|^2\ d\tau.\label{uni-Y-t-s-iter}
\end{array}
\end{align}
Iterating this inequality for $n$ steps,  which, together with (\ref{Y-iteration-1st}),  imply  
\[
\E\sup_{t\in [0, \wt{t}]}\big|(\theta, \varpi)\big(Y^{s, n}\big)-(\theta, \varpi)\big(Y^{s, n-1}\big)\big|^2\leq \frac{1}{n!}(CT+C(s_0, T))^n \wt{t}^n.
\]
In particular, when  $\wt{t}=T$, this is 
\[
\big\|(\theta, \varpi)\big(Y^{s, n}\big)-(\theta, \varpi)\big(Y^{s, n-1}\big)\big\|\leq \frac{1}{n!}(CT+C(s_0, T))^n T^n. 
\]
Hence   $(\theta, \varpi)(Y^{s, n})$ converges in $\|\cdot\|$ with some limit $(\theta, \varpi)(Y^s)$ which solves  (\ref{diff-mho-s-R-m}). 

Such a solution $Y^s$ is unique.  Assume $\mathcal{Y}^s$ is another solution to (\ref{diff-mho-s-R-m}) with $\mathcal{Y}^s_0=(\mho_0^s)'_s$. Then the same argument as for (\ref{uni-Y-t-s-iter}) shows that 
\[
\E\sup_{t\in [0, \wt{t}]}\big|(\theta, \varpi)(Y^{s}_t)-(\theta, \varpi)(\mathcal{Y}^{s}_t)\big|^2\leq C(s_0, T)\int_{0}^{\wt{t}}\E\sup_{t\in [0, \tau]}\big|(\theta, \varpi)(Y^{s}_t)-(\theta, \varpi)(\mathcal{Y}^{s}_t)\big|^2\ d\tau,
\]
from which we can conclude $Y^s=\mathcal{Y}^s$ by Gronwall's Lemma.  

By Corollary \ref{cor-mho-t-s-d},  the solution $Y^s$ to (\ref{diff-mho-s-R-m}) is actually given by (\ref{Y-t-s-diff-ITO}).  Hence,  to show  the process $Y^s_t$  has a continuous modification in the parameter $(t, s)$,  it suffices to show both $\mho_{t}^s$  and $\varpi(Y_t^s)$ have  a $(t, s)$-continuous version.  Let  $\flat>4$, $t, t'\in [0, T]$ with $t<t'$ and $s, s'\in [-s_0, s_0]$.  Using  (\ref{c-kom-cri-mho}) and applying Burkholder's inequality and  H\"{o}lder's inequality to the difference ${\mho}^{s}_{t'}-{\mho}^{s}_t$,  we obtain 
\[
2^{1-\flat}\E\big|{\mho}^{s'}_{t'}-{\mho}^{s}_t\big|^{\flat}\leq \E\big|{\mho}^{s'}_{t'}-{\mho}^{s}_{t'}\big|^{\flat}+\E\big|{\mho}^{s}_{t'}-{\mho}^{s}_t\big|^{\flat}\leq C(\flat, s_0, T)\big(|s'-s|^{\flat} +|t'-t|^{\frac{1}{2}\flat}\big). 
\]
So Lemma \ref{Kol-criterion} applies and shows that there is a version of $\mho_{t}^s$ which is  continuous in the parameter $(t, s)$. 
Since $\varpi(Y_0^s)=0$, by  (\ref{Y-t-s-diff-ITO}), 
\begin{align*}
\varpi(Y_t^s)=& \int_{0}^{t}(\mho_\tau^s)^{-1}R\big(\mho_\tau^s d\alpha^s_\tau,  \mathtt{s}(\tau)\mho_\tau^s({\mho}_0^s)^{-1}V(F^s y)\big)\mho_\tau^s\\
&+\int_{0}^{t}(\mho_\tau^s)^{-1}(\nabla ({\mho}_{\tau}^se_i)R)\left({\mho}_{\tau}^s e_i,  \mathtt{s}(\tau)\mho_\tau^s({\mho}_0^s)^{-1}V(F^s y)\right){\mho}_{\tau}^s\ d\tau.
\end{align*}
Again, by Burkholder's inequality and  H\"{o}lder's inequality,  it is easy to deduce that 
\begin{align*}
&\E\big|\varpi(Y_{t'}^{s'})-\varpi(Y_{t'}^{s})\big|^{\flat}\\
 &\leq C(s_0, b, T)\big(\E\big|\int_{0}^{t'}|\alpha_\tau^{s'}-\alpha_{\tau}^s|^{2}\ d\tau\big|^{\frac{\flat}{2}}+\E\big|\int_{0}^{t'}|\mho_\tau^{s'}-\mho_{\tau}^s|^{2}\ d\tau\big|^{\frac{\flat}{2}}+\E\int_{0}^{t'}|\mho_\tau^{s'}-\mho_{\tau}^s|^{{\flat}}\ d\tau \big)\\
 &\leq C(s_0, b, T)\int_0^{t'}\big(\E|\alpha_\tau^{s'}-\alpha_{\tau}^s|^{\flat}+\E|\mho_\tau^{s'}-\mho_{\tau}^s|^{\flat}\big)\ d\tau\\
 &\leq C(\flat, s_0, T)|s'-s|^{\flat}
\end{align*}
and 
\begin{align*}
\E\big|\varpi(Y_{t'}^{s})-\varpi(Y_{t}^{s})\big|^{\flat}\leq C(s_0, b, T)\left(|t'-t|^{\frac{\flat}{2}}+|t'-t|^{\flat}\right)\leq C(s_0, b, T)|t'-t|^{\frac{\flat}{2}}.
\end{align*}
Hence 
\begin{align*}
2^{1-\flat}\E\big|\varpi(Y_{t'}^{s'})-\varpi(Y_{t}^{s})\big|^{\flat}&\leq \E\big|\varpi(Y_{t'}^{s'})-\varpi(Y_{t'}^{s})\big|^{\flat}+\E\big|\varpi(Y_{t'}^{s})-\varpi(Y_{t}^{s})\big|^{\flat}\\
&\leq C(\flat, s_0, T)\big(|s'-s|^{\flat} +|t'-t|^{\frac{1}{2}\flat}\big),
\end{align*}
which implies that  $\varpi(Y_t^s)$ has a $(t, s)$-continuous modification  by Lemma \ref{Kol-criterion}. 

Now we have shown {\rm i)} and {\rm ii)}.  Hence we can use  Lemma \ref{Cor-O-g-formula} to conclude  that ${\rm y}^s=\mathcal{I}_{\mho_0^s}(\alpha^s)$  is differentiable in $s$ and satisfies  (\ref{unique-y-s}).  The differential  process 
\[
\Upsilon_{{\rm V}, {\rm y}^{s}}(t, {\rm w})={\mathtt s}(t)\mho_{t}^s(\mho_0^s)^{-1}V(F^sy)
\]
has a $(t, s)$-continuous version since  $\mho_{t}^s$ does. 

Finally,  by Lemma \ref{diff-D-F-s-crit}, for {\rm iv)}, it suffices to show  for ${\bf\mathsf v}_{t_1}^s\in T_{\mho_{t_1}^s}\mathcal{F}(\M)$ $C^1$ in $s$, (\ref{mathsfv-t-s-d-Ito}) is uniquely  solvable with initial $({\bf\mathsf v}_{t_1}^s)'_s$.  Again, this can be done by Picard's iteration method. Let $\upsilon_t^{s, 0}=(\theta, \varpi)_{\mho_t^s}^{-1}(\theta, \varpi)_{\mho_{t_1}^s}({\bf\mathsf v}_{t_1}^s)'_s$. For $n\geq 1$, let $\upsilon_t^{s, n}$ with initial $({\bf\mathsf v}_{t_1}^s)'_s$ be such that 
\begin{align*}
\left\{ \begin{array}{ll}
d\big(\theta(\upsilon_t^{s, n})\big)\ =\varpi\big(\upsilon_t^{s, n-1}\big) d \alpha^s_t+{\rm Ric}({\mho}_{t}^s\theta(\upsilon_t^{s,n-1}))\ dt+\theta\left(\circledast_{{\rm I}}\big({\bf\mathsf v}_t^s, (\mho_t^s)'_s\big)\right)+\circledast_{{\rm A}}^{\theta}\big({\bf\mathsf v}_t^s, (\mho_t^s)'_s\big),\\
d\big(\varpi(\upsilon_t^{s, n})\big)=(\mho_t^s)^{-1}R\big(\mho_t^s d \alpha^s_t,  \mho_t^s\theta(\upsilon_t^{s, n-1})\big)\mho_t^s+(\mho_t^s)^{-1}R\big(\mho_t^s e_i, \mho_t^s\varpi(\upsilon_t^{s, n-1})e_i\big)\mho_t^s\ dt\\
\ \ \ \ \ \ \ \ \ \ \ \ \ \ \ \ \ \ \ +(\mho_t^s)^{-1}(\nabla ({\mho}_{t}^se_i)R)\big({\mho}_{t}^s e_i, {\mho}_{t}^s\theta(\upsilon_t^{s, n-1})\big){\mho}_{t}^s\ dt\\
\ \ \ \ \ \ \ \ \ \ \ \ \ \ \ \ \ \ \  +\varpi\left(\circledast_{{\rm I}}\big({\bf\mathsf v}_t^s, (\mho_t^s)'_s\big)\right)+\circledast_{{\rm A}}^{\varpi}\big({\bf\mathsf v}_t^s, (\mho_t^s)'_s\big),
\end{array}\right.
\end{align*}
where ${\bf\mathsf v}_t^s$, $\circledast_{{\rm I}}\big({\bf\mathsf v}_t^s, (\mho_t^s)'_s\big)$, $\circledast_{{\rm A}}^{\theta}\big({\bf\mathsf v}_t^s, (\mho_t^s)'_s\big)$, $\circledast_{{\rm A}}^{\varpi}\big({\bf\mathsf v}_t^s, (\mho_t^s)'_s\big)$ are as in Lemma \ref{diff-D-F-s-crit}. We will show $(\theta, \varpi)(\upsilon_t^{s, n})$ converges in norm $\|\cdot\|$, where, for any $\Bbb R^m\times \mathcal{F}(\Bbb R^m)$ valued process  $(\mathfrak{v},  \mathfrak{Q})_{t\in [t_1, t_2]}$, 
\[
\|(\mathfrak{v},  \mathfrak{Q})\|:=\sqrt{\|\mathfrak{v}\|^2+\|\mathfrak{Q}\|^2}, \ \|\mathfrak{v}\|^2=\E\sup_{t\in [t_1, t_2]}|\mathfrak{v}_t|^2\ \mbox{and}\ \|\mathfrak{Q}\|^2=\E\sup_{t\in [t_1, t_2]}|\mathfrak{Q}_t|^2.
\]
Clearly, we have
\begin{align*}
\big\|(\theta, \varpi)(\upsilon^{s, 1})-(\theta, \varpi)(\upsilon^{s, 0})\big\|\; <\; &C(s_0, T)+\left\|\!\int_{t_1}^{t}\theta\left(\circledast_{{\rm I}}\big({\bf\mathsf v}_\tau^s, (\mho_\tau^s)'_s\big)\right)\!\right\|+\left\|\!\int_{t_1}^{t}\varpi\left(\circledast_{{\rm I}}\big({\bf\mathsf v}_\tau^s, (\mho_\tau^s)'_s\big)\right)\!\right\|\\
&\ \ \ \ \ \ \ \ \ \ \ +\left\|\!\int_{t_1}^{t}\circledast_{{\rm A}}^{\theta}\big({\bf\mathsf v}_\tau^s, (\mho_\tau^s)'_s\big)\right\|+\left\|\!\int_{t_1}^{t}\circledast_{{\rm A}}^{\varpi}\big({\bf\mathsf v}_\tau^s, (\mho_\tau^s)'_s\big)\right\|\\
\; =: &\; C(s_0, T)+{(\rm A)_1}+{(\rm A)_2}+{(\rm A)_3}+{(\rm A)_4}. 
\end{align*}
Using Doob's inequality of submartingale, Lemma \ref{Ku-lem} and H\"{o}lder's inequality, we see from the expressions of $\circledast_{{\rm I}}\big({\bf\mathsf v}_t^s, (\mho_t^s)'_s\big)$, $\circledast_{{\rm A}}^{\theta}\big({\bf\mathsf v}_t^s, (\mho_t^s)'_s\big)$ and $\circledast_{{\rm A}}^{\varpi}\big({\bf\mathsf v}_t^s, (\mho_t^s)'_s\big)$ that  
\[
{({\rm A})_i}\leq C(s_0, T)\big(\left\|(\theta, \varpi)({\bf\mathsf v}_t^s)\right\|+\left\||(\theta, \varpi)({\bf\mathsf v}_t^s)|^2\right\|\cdot\left\||(\mho_t^s)'_s|^2\right\|\big), \ i=1,\; 2,\; 3,\; 4. 
\]
Note that  the process ${\bf\mathsf v}_t^s$  satisfies the SDE
\begin{align}\label{mathsf-v-t-s-SDE}
\left\{ \begin{array}{ll}
d\big(\theta({\bf\mathsf v}_t^s)\big)\ =\varpi\big({\bf\mathsf v}_t^s\big) d \alpha^s_t+{\rm Ric}({\mho}_{t}^s\theta({\bf\mathsf v}_t^s))\ dt,\\
d\big(\varpi({\bf\mathsf v}_t^s)\big)=(\mho_t^s)^{-1}R\big(\mho_t^s d \alpha^s_t,  \mho_t^s\theta({\bf\mathsf v}_t^s)\big)\mho_t^s+(\mho_t^s)^{-1}R\big(\mho_t^s e_i, \mho_t^s\varpi({\bf\mathsf v}_t^s)e_i\big)\mho_t^s\ dt\\
\ \ \ \ \ \ \ \ \ \ \ \ \ \ \ \  +(\mho_t^s)^{-1}(\nabla ({\mho}_{t}^se_i)R)\left({\mho}_{t}^s e_i, {\mho}_{t}^s\theta({\bf\mathsf v}_t^s)\right){\mho}_{t}^s\ dt.
\end{array}\right.
\end{align}
So, using Doob's inequality of sub-martingales and Lemma \ref{Ku-lem}, we compute that 
\begin{align*}
\E\sup_{t\in [t_1, \wt{t}]}\big|(\theta, \varpi)({\bf\mathsf v}_t^s)\big|^\flat\leq C(s_0, T)\int_{t_1}^{\wt{t}}\E\sup_{t\in [t_1, \tau]}\big|(\theta, \varpi)({\bf\mathsf v}_t^s)\big|^\flat\ d\tau, \ \flat=2,\; 4, 
\end{align*}
which, by Gronwall's lemma,  implies 
\begin{equation}\label{bfmathsf-v-bd}
\|(\theta, \varpi)({\bf\mathsf v}_t^s)\|, \left\||(\theta, \varpi)({\bf\mathsf v}_t^s)|^2\right\|\leq C(s_0, T). 
\end{equation}
With a similar computation,  we  conclude from (\ref{Y-t-s-diff-ITO}) that $\||(\mho_t^s)'_s|^2\|$ is also bounded by constant $C(s_0, T)$. So, 
\begin{equation}\label{upsilon-s-1-0}
\big\|(\theta, \varpi)(\upsilon^{s, 1})-(\theta, \varpi)(\upsilon^{s, 0})\big\|\leq C(s_0, T). 
\end{equation}
For $n\geq 2$, the difference $(\theta, \varpi)\big(\upsilon^{s, n}\big)-(\theta, \varpi)\big(\upsilon^{s, n-1}\big)$ satisfies the SDE
\begin{align*}
\left\{\!\!\begin{array}{ll}
d\big(\theta({\upsilon}_t^{s, n})-\theta({\upsilon}_t^{s, n-1})\big) =\big(\varpi({\upsilon}_t^{s, n-1}\!)\!-\!\varpi({\upsilon}_t^{s, n-2}\!)\big) d \alpha^s_t+{\rm Ric}\big({\mho}_{t}^s\big(\theta({\upsilon}_t^{s, n-1}\!)\!-\!\theta({\upsilon}_t^{s, n-2}\!)\big)\big) dt,\\
d\big(\varpi({\upsilon}_t^{s, n})-\varpi({\upsilon}_t^{s, n-1})\big) =(\mho_t^s)^{-1}R\big(\mho_t^s d \alpha^s_t,  \mho_t^s\big(\theta({\upsilon}_t^{s, n-1}\!)\!-\!\theta({\upsilon}_t^{s, n-2}\!)\big)\big)\mho_t^s\\
\ \ \ \ \ \ \ \ \ \ \ \ \ \ \ \  \ \ \ \ \ \ \ \ \  \ \ \ \  \ \ \ \ \ +(\mho_t^s)^{-1}R\big(\mho_t^s e_i, \mho_t^s\big(\varpi({\upsilon}_t^{s, n-1}\!)\!-\!\varpi({\upsilon}_t^{s, n-2}\!)\big)e_i\big)\mho_t^s\ dt\\
\ \ \ \ \ \ \ \ \ \ \ \ \ \ \ \ \ \ \ \ \ \ \ \ \  \ \ \ \ \ \ \ \  \  +(\mho_t^s)^{-1}(\nabla ({\mho}_{t}^se_i)R)\left({\mho}_{t}^s e_i, {\mho}_{t}^s\big(\theta({\upsilon}_t^{s, n-1}\!)\!-\!\theta({\upsilon}_t^{s, n-2}\!)\big)\right){\mho}_{t}^s\ dt.
\end{array}\right.
\end{align*}
As before, we can use Doob's inequality of sub-martingales and Lemma \ref{Ku-lem} to  obtain 
\begin{align}\label{upsilon-s-t-iter-uni} \begin{array}{l}
\E\sup_{t\in [t_1, \wt{t}]}\big|(\theta, \varpi)\big(\upsilon^{s, n}_t\big)-(\theta, \varpi)\big(\upsilon^{s, n-1}_t\big)\big|^2\\
\ \ \ \ \ \ \ \leq C(s_0, T)\int_{t_1}^{\wt{t}}\E\sup_{t\in [0, \tau]}\big|(\theta, \varpi)\big(\upsilon^{s, n-1}_t\big)-(\theta, \varpi)\big(\upsilon^{s, n-2}_t\big)\big|^2\ d\tau.
\end{array}
\end{align}
Iterate this inequality for $n$ steps and then let $\wt{t}=t_2$.  This,  together with (\ref{upsilon-s-1-0}),  implies
\[
\big\|(\theta, \varpi)\big(\upsilon^{s, n}\big)-(\theta, \varpi)\big(\upsilon^{s, n-1}\big)\big\|\leq \frac{1}{n!}C(s_0, T)^n T^n.  
\]
Hence  $(\theta, \varpi)(\upsilon^{s, n})$ converges in $\|\cdot\|$ with some limit $(\theta, \varpi)(\upsilon^s)$ which solves   (\ref{mathsfv-t-s-d-Ito}).  We can also use  (\ref{upsilon-s-t-iter-uni}) and Gronwall's Lemma to conclude the uniqueness of such   $\upsilon_t^s$. 

For the existence of a continuous version of $\upsilon_t^s$ in the $(t, s)$ parameter, we use Lemma \ref{Kol-criterion}.  Let  $\flat>4$, $t, t'\in [t_1, T]$ with $t<t'$ and $s, s'\in [-s_0, s_0]$.  Using (\ref{mathsf-v-t-s-SDE}) and Lemma \ref{Ku-lem}, we deduce that
\begin{align*}
\E\big|(\theta, \varpi)({\upsilon}^{s'}_{t'})-(\theta, \varpi)({\upsilon}_{t'}^s)\big|^\flat\leq C(\flat, s_0, T)\big(|s'-s|^{\flat}+\int_{t_1}^{t'}\E\big|(\theta, \varpi)({\upsilon}^{s'}_{\tau})-(\theta, \varpi)({\upsilon}_{\tau}^s)\big|^{\flat}\ d\tau\big), 
\end{align*}
which, by Gronwall's lemma, implies 
\[
\E\big|(\theta, \varpi)({\upsilon}^{s'}_{t'})-(\theta, \varpi)({\upsilon}_{t'}^s)\big|^\flat\leq C(\flat, s_0, T)|s'-s|^{\flat}. 
\]
Similarly, it is true that 
\begin{align*}
&\E\big|(\theta, \varpi)({\upsilon}^{s}_{t'})-(\theta, \varpi)({\upsilon}_{t}^s)\big|^\flat\\
&\leq C(\flat, s_0, T)(\big\||(\theta, \varpi)\upsilon^s|^{\frac{1}{2}\flat}\big\|+\big\||(\theta, \varpi){\bf\mathsf v}|^{\frac{1}{2}\flat}\big\|+\big\||(\theta, \varpi){\bf\mathsf v}|^{\frac{1}{2}\flat}\big\|\cdot \big\||({\mho}^s)'_s|^{\frac{1}{2}\flat}\big\|)\big(|t'-t|^{\frac{1}{2}\flat}\!+|t'-t|^{\flat}\big)\\
&\leq C(\flat, s_0, T)|t'-t|^{\frac{1}{2}\flat},
\end{align*}
where,  to obtain the last inequality, we first show $\||({\mho}^s)'_s|^{\frac{1}{2}\flat}\|<C(\flat, s_0, T)$ by (\ref{Y-t-s-diff-ITO}) and then argue as for (\ref{bfmathsf-v-bd}) to show  $\||{\bf\mathsf v}|^{\frac{1}{2}\flat}\|,$ $\big\||\upsilon^s|^{\frac{1}{2}\flat}\big\|$ is also bounded by some $C(\flat, s_0, T)$. Thus, \begin{align*}
2^{1-\flat}\E\big|(\theta, \varpi)({\upsilon}^{s'}_{t'})-(\theta, \varpi)({\upsilon}_{t}^s)\big|^\flat\leq&\E\big|(\theta, \varpi)({\upsilon}^{s'}_{t'})-(\theta, \varpi)({\upsilon}_{t'}^s)\big|+\E\big|(\theta, \varpi)({\upsilon}^{s'}_{t'})-(\theta, \varpi)({\upsilon}_{t'}^s)\big|^\flat\\
\leq&  C(\flat, s_0, T)\left(|s'-s|^{\flat}+|t'-t|^{\frac{1}{2}\flat}\right). 
\end{align*}
By Lemma \ref{Kol-criterion}, there is a continuous modification of $(\theta, \varpi)(\upsilon_t^s)$ in the $(t, s)$ parameter. Note that $(\theta, \varpi)_{\mho_t^s}$ varies continuously with respect to $\mho_{t}^s$, which is also continuous in the $(t, s)$ parameter.  So, we can also obtain a $(t, s)$-continuous version of  the process $\upsilon_t^s$. 
\end{proof}

\begin{remark} As we will see  in the proof of Proposition \ref{absolute-Q-0-s}, for almost all ${\rm w}$, 
\[
\alpha^{s_1}\circ \alpha^{s_2}({\rm w})=\alpha^{s_1+s_2}({\rm w}), \ \mbox{for all}\ s_1, s_2\in \Bbb R. \]
Hence, intuitively, $\{\alpha^s\}_{s\in \Bbb R}$ introduced a one parameter family  of `flow' maps on Brownian paths starting from the $o\in \Bbb R^m$. Consequently, $\{{\bf F}^s\}_{s\in \Bbb R}$ also behaves like a one parameter family  of `flow' maps  which satisfy the cocycle property ${\bf F}^{s_1}\circ {\bf F}^{s_2}={\bf F}^{s_1+s_2}$ for any $s_1, s_2\in \Bbb R$. 
\end{remark}

\subsection{Quasi-invariance property of ${\bf F}_{y}^s$}\label{QIPF-y-s} Let  ${\rm y}^s={\bf F}_y^s {\rm y}$ be as in Theorem \ref{Main-alpha-x-v-Q}. We continue to study its distribution using the classical  Cameron-Martin-Girsanov formula.

Let $({\rm y}_t, {\mho}_t)$ be the stochastic process pair which defines the Brownian motion on $(\M, \wt{g})$ starting from ${\rm y}$ up to time $T$,   i.e., ${\rm y}_t=\pi({\mho}_t)$ and ${\mho}_t\in \mathcal{O}^{\wt{g}}(\M)$ solves the Stratonovich SDE
\begin{equation*}
d{\mho}_t=\sum_{i=1}^{m}H({\mho}_t, e_i)\circ dB_t^i({\rm w}), \ \forall t\in [0, T]. 
\end{equation*}
By an abuse of notation, we continue to use $\P_{y}$  to denote the Brownian distribution  in  $C_{y}([0, T], \M)$ (i.e., the distribution of $({\rm y}_t)_{t\in [0, T]}$)  and use ${\rm Q}$ to denote the distribution of $(B_t)_{t\in [0, T]}$ in $C_{o}([0, T], \Bbb R^m)$. Using  the It\^{o} map, we have the relation 
\[
B=(\mathcal{I}_{\mho_0})^{-1}({\rm y}) \ {\rm and}\  \P_y={\rm Q}\circ ({\mathcal{I}_{\mho_0}})^{-1}. 
\]
Similarly, let $\P_{F^sy}$ denote the  Brownian motion distribution on $C_{F^sy}([0, T], \M)$. Then 
\[
\P_{F^sy}={\rm Q}\circ ({\mathcal{I}_{\mho_0^s}})^{-1}. 
\]
Let ${\rm y}^s$ and $\alpha^s$ be the one parameter family  of stochastic processes on $\M$ and in $\Bbb R^m$, respectively,  that we obtained in Theorem \ref{Main-alpha-x-v-Q}. They are related by the identity 
\[
\alpha^s=({\mathcal{I}_{\mho_0^s}})^{-1}({\rm y}^s). 
\]
Let $\P^s, {\rm Q}^s$ be the distributions of ${\rm y}^s, \alpha^s$, respectively, where ${\rm Q}^0\equiv {\rm Q}$. Then 
\begin{align}\label{Iden-P-s-Q-s}
\P^s={\rm Q}^s\circ ({\mathcal{I}_{\mho_0^s}})^{-1}. 
\end{align}
To compare $\P^s$ with $\P_{F^s y}$, it suffices to compare ${\rm Q}^s$ with ${\rm Q}^0$, which can be understood by a simple application of the  Cameron-Martin-Girsanov formula. 

\begin{prop}\label{absolute-Q-0-s}  The distribution  ${\rm Q}^s$ is equivalent to ${\rm Q}^0$  with 
\begin{eqnarray}\label{Q-0-s-s}
\frac{d{\rm Q}^s}{d{\rm  Q^0}}({\rm w})=e^{\left\{\frac{1}{2}\int_{0}^{T}\langle {\mathtt g}^s_t(\alpha^{-s}({\rm w})),  \ dB_t({\rm w})\rangle-\frac{1}{4}\int_{0}^{T}|{\mathtt g}^s_t(\alpha^{-s}({\rm w}))|^2\ dt\right\}}. \end{eqnarray}
Consequently, the distribution $\P^s$ is equivalent to the Brownian distribution $\P_{F^s y}$ with 
\begin{equation}\label{Q-0-s-1}
\frac{d\P^s}{d\P_{F^s y}}(\beta)=\frac{d{\rm Q}^s}{d{\rm  Q}^0}\left(({\mathcal{I}_{\mho_0^s}})^{-1}(\beta)\right), \ \beta\in C_{F^sy}([0, T], \M). 
\end{equation}
\end{prop}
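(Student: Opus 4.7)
The strategy is to apply the Cameron-Martin-Girsanov formula (displayed before Lemma~\ref{Hsu-thm-5.4.4}) to the drift-diffusion structure of $\alpha^s$ and identify the resulting drift through the inverse flow $\alpha^{-s}$. The second identity~(\ref{Q-0-s-1}) is then automatic from~(\ref{Iden-P-s-Q-s}), since $\P^s$ and $\P_{F^sy}$ are the pushforwards of ${\rm Q}^s$ and ${\rm Q}$ under the same It\^o map $\mathcal{I}_{\mho_0^s}$, so $d\P^s/d\P_{F^sy}=(d{\rm Q}^s/d{\rm Q})\circ\mathcal{I}_{\mho_0^s}^{-1}$.

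The first step is to establish the cocycle identity $\alpha^{s_1}\circ\alpha^{s_2}=\alpha^{s_1+s_2}$ a.s., so that $\alpha^{-s}$ is the functional inverse of $\alpha^s$. Working upstairs on $\M$, equation~(\ref{flow F_s-on M}) coupled with the reference-frame ODE $d\mho_0^s/ds=H(\mho_0^s,(\mho_0^s)^{-1}{\rm V}(\pi\mho_0^s))$ is autonomous in the joint state $(\mho_0^s,{\rm y}^s)\in\mathcal{F}(\M)\times C([0,T],\M)$, so standard ODE uniqueness yields a genuine flow; descending via the It\^o maps $\mathcal{I}_{\mho_0^s}$ then gives the cocycle for $\alpha^s$ on Wiener space.

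Next, write $\alpha^s_t=\tilde B^s_t+\int_0^t\mathtt{g}^s_\tau\,d\tau$ with $\tilde B^s_t:=\int_0^t O^s_\tau\,dB_\tau$. Since $O^s$ is $\mathcal{F}_\tau$-adapted and $\mathcal{O}(\Bbb R^m)$-valued, $d\langle\tilde B^{s,i},\tilde B^{s,j}\rangle_\tau=2\delta_{ij}\,d\tau$, so L\'evy's characterization makes $\tilde B^s$ a ${\rm Q}$-Brownian motion with the same law as $B$; invertibility of $O^s$ further forces the filtrations of $B$ and $\tilde B^s$ to coincide. Combined with the cocycle, the canonical process ${\rm w}$ under ${\rm Q}^s=(\alpha^s)_*{\rm Q}$ admits the Doob-Meyer decomposition
\[
{\rm w}_t=M_t+\int_0^t\phi^s_\tau({\rm w})\,d\tau,\qquad \phi^s_\tau({\rm w}):=\mathtt{g}^s_\tau(\alpha^{-s}({\rm w})),
\]
with $M_t:=\tilde B^s(\alpha^{-s}({\rm w}))_t={\rm w}_t-\int_0^t\phi^s_\tau({\rm w})\,d\tau$ a ${\rm Q}^s$-Brownian motion by L\'evy, and $\phi^s$ adapted to the canonical filtration since $\alpha^{-s}$ is.

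Since $|\mathtt{g}^s|$ is uniformly bounded by a constant depending only on $s_0$ and $\sup|{\rm V}|$ (from the iteration in the proof of Theorem~\ref{Main-alpha-x-v-Q}), Novikov's condition is trivial, and CMG applied with $f=\phi^s$ produces a probability ${\bf T}_*{\rm Q}$ with density~(\ref{Q-0-s-s}) under which the canonical process has exactly the same Doob-Meyer decomposition as above. Weak uniqueness for the SDE $dX_t=dW_t+\phi^s_t(X)\,dt$ then identifies ${\bf T}_*{\rm Q}={\rm Q}^s$, yielding~(\ref{Q-0-s-s}). The main technical obstacle I anticipate is the cocycle step: $\underline{\Upsilon}_{{\rm V},\alpha^s}$ in~(\ref{unique-alpha-s}) depends on $\alpha^s$ through its horizontal lift $\mho^s$ starting from the shifting frame $\mho_0^s$, so one must carefully verify that the autonomous structure upstairs on $\M$ really translates into the flow property of $\alpha^s$ after passing through the $s$-family of It\^o maps $\mathcal{I}_{\mho_0^s}$.
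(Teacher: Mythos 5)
Your overall architecture (L\'evy's characterization to see that $\tilde B^s=\int_0^\cdot O^s\,dB$ is a ${\rm Q}$-Brownian motion, Girsanov with the bounded adapted drift $\phi^s=\mathtt g^s\circ\alpha^{-s}$, then uniqueness in law for the path-dependent drift SDE to identify the reweighted measure with ${\rm Q}^s$) is a viable alternative to the paper's argument, which instead pushes the reweighted measure ${\rm M}^s_T\,d{\rm Q}$ forward under $\alpha^s$, obtains $d{\rm Q}^s/d{\rm Q}=1/{\rm M}^s_T(\alpha^{-s}(\cdot))$, and evaluates this exponential by It\^o's formula using $\alpha^s\circ\alpha^{-s}={\rm id}$; your treatment of (\ref{Q-0-s-1}) via (\ref{Iden-P-s-Q-s}) is the same as the paper's. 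However, there is a genuine gap exactly at the point you flag: the cocycle $\alpha^{s_1}\circ\alpha^{s_2}=\alpha^{s_1+s_2}$. ``Standard ODE uniqueness'' upstairs does not apply: for a fixed driving path the map $s\mapsto{\rm y}^s$ is not governed by a classical ODE, because the right-hand side $\Upsilon_{{\rm V},{\rm y}^s}(t)=\mathtt s(t)\,\sslash_{0,t}^s({\rm V}({\rm y}^s_0))$ involves the stochastic parallel transport along ${\rm y}^s$, an object defined only almost surely through an SDE and not continuously (let alone Lipschitz) in the path, so there is no pathwise vector field to which Picard--Lindel\"of applies. The correct tool is the uniqueness statement of Theorem \ref{Main-alpha-x-v-Q} i) for families in $\mathcal{A}$: for fixed $s_2$ one checks that $s_1\mapsto\alpha^{s_1}\circ\alpha^{s_2}$ satisfies the same integral relation as $s_1\mapsto\alpha^{s_1+s_2}$ and has a continuous version in $(s_1,s_2)$ (Kolmogorov's criterion), whence the two families coincide; this is how the paper argues, and your proof needs this step carried out, not merely anticipated.

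Second, there is a circularity in the order of your steps: to define the drift functional $\phi^s_\tau({\rm w})=\mathtt g^s_\tau(\alpha^{-s}({\rm w}))$, to write $\mathtt g^s_\tau({\rm w})=\phi^s_\tau(\alpha^s({\rm w}))$, and to make sense of $M=\tilde B^s\circ\alpha^{-s}$ under ${\rm Q}^s$, you compose maps that are only defined ${\rm Q}$-a.e.\ with $\alpha^{\pm s}$, and this requires $(\alpha^{\pm s})_*{\rm Q}\ll{\rm Q}$ --- the very absolute continuity you only obtain at the end. This is repaired by first establishing ${\rm Q}^{\pm s}\sim{\rm Q}$ without the explicit density: Girsanov applied with the bounded adapted drift $-(O^s)^{-1}\mathtt g^s$ (i.e.\ the paper's martingale ${\rm M}^s$) turns $\alpha^s$ into a Brownian motion under a measure equivalent to ${\rm Q}$, which yields ${\rm Q}^{\pm s}\sim{\rm Q}$ and legitimizes all compositions; only then run your identification of the density (\ref{Q-0-s-s}). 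Finally, the claim that invertibility of $O^s$ forces the filtrations of $B$ and $\tilde B^s$ to coincide is false in general, but also unnecessary: the adaptedness of $\phi^s$ follows from the non-anticipativity of ${\rm w}\mapsto\alpha^{-s}({\rm w})$ (its components lie in $\mathcal{A}$) together with the adaptedness of $\mathtt g^s$.
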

\begin{proof} We follow the proof of  \cite[Theorem 3.5]{Hs}.  Clearly, (\ref{Q-0-s-1}) follows from (\ref{Q-0-s-s}) by using  the identity (\ref{Iden-P-s-Q-s}).  For (\ref{Q-0-s-s}), recall that  ${\rm Q}^s$ is the distribution of $(\alpha_t^s)_{t\in [0, T]}$, where 
 \[{\alpha}^s_t({\rm w})=\int_{0}^{t}O^s_{\tau}({\rm w})\ dB_{\tau}({\rm w})+\int_{0}^{t}{\mathtt g}^s_{\tau}({\rm w})\ d\tau.\]
The process $\int_{0}^{t}O^s_{\tau}\ dB_{\tau}$ has the same Brownian distribution as $B_t$  since $O^s$ are orthogonal frames and  the distribution of a Euclidean Brownian motion is invariant under orthogonal transfers.  So $\alpha^s$ only differs from a Brownian motion by a drift term $\int_{0}^{t}{\mathtt g}^s_{\tau}\ d\tau$.  Let \[
{\rm M}^s_t({\rm w}):=e^{\left\{-\frac{1}{2}\int_{0}^{t}\left\langle  {\mathtt g}^s_{\tau}({\rm w}),  O^{s}_{\tau}({\rm w}) dB_{\tau}({\rm w})\right\rangle-\frac{1}{4}\int_{0}^{t}\left|{\mathtt g}^s_{\tau}({\rm w})\right|^2\ d\tau\right\} }
 \]
and consider a new distribution $\wt{\rm Q}^s$ on $C_0([0, T], \Bbb R^m)$ which is given  by 
\[
\frac{d{\wt{{\rm Q}}^s}}{d{\rm Q}}({\rm w})={{\rm M}}^{s}_T(\rm w).
\]
Since $|\mathtt g^s|$ is bounded from above by a multiple of $s\cdot\sup|{\rm V}|$, the Novikov's condition is satisfied. Hence the classical  Carmeron-Martin-Girsanov Theorem says that  the distribution of $\alpha^s$ under $\wt{{\rm Q}}^s$ is the same as ${\rm Q}$, i.e.,  for any measurable subset $A$ of $C_0([0, T], \Bbb R^m)$,  
\[
{\rm Q}\left(\{{\rm w}\in A\}\right)=\wt{{\rm Q}}^s\left(\{\alpha^s({\rm w})\in A\}\right),
\]
which, by a change of  variable, gives 
\[
{\rm Q}\left(\{{\rm w}\in A\}\right)={\rm Q}^s\left(\{{\rm M}^s(\alpha^{-s}({\rm w})):\ {\rm w}\in A\}\right). 
\]
Since $A$ is arbitrary, this means ${\rm Q}$ and ${\rm Q}^s$ are equivalent and 
\begin{equation}\label{equi-Q-Q-s-1}
\frac{d{\rm Q}^s}{d{\rm Q}}({\rm w})=\frac{1}{{{\rm M}}^s_T(\alpha^{-s}({\rm w}))}. 
\end{equation}
Note that the process ${\rm M}^s_t$ satisfies the equation 
\[
d{\rm M}^s_t=-\frac{1}{2}{\rm M}^s_t\left\langle  {\mathtt g}^s_t({\rm w}),  O^{s}_{t}({\rm w}) dB_t({\rm w})\right\rangle. 
\]
So, by It\^{o}'s formula, 
\begin{equation}\label{lnM-formula}
-d\ln {\rm M}^s_t(\alpha^{-s}({\rm w}))=\frac{1}{2}\langle  {\mathtt g}^s_t(\alpha^{-s}({\rm w})),  O^{s}_{t}(\alpha^{-s}({\rm w}))\  d\alpha^{-s}_t({\rm w})\rangle+\frac{1}{4}| {\mathtt g}^s_t(\alpha^{-s}({\rm w}))|^2\  dt,
\end{equation}
where the second term of the right hand side of (\ref{lnM-formula}) has coefficient $1/4$  since $\alpha^{-s}_t$ has variance $2t$. On the other hand,  we have 
\begin{equation}\label{alpha-s--s}\alpha^s\circ \alpha^{-s}({\rm w})={\rm w}=B({\rm w}), \ \mbox{for almost all}\ {\rm w}. 
\end{equation}
(Because of (\ref{equi-Q-Q-s-1}), the composition $\alpha^{s_1}\circ \alpha^{s_2}$, $s_1, s_2\in \Bbb R$, is well-defined and has a continuous version in the parameter $(s_1, s_2)$ using  Kolmogorov's criterion  as in Theorem \ref{Main-alpha-x-v-Q}. So, by the uniqueness of the $\alpha^s$ family and its continuous in $s$, we must have $\alpha^{s_1}\circ \alpha^{s_2}=\alpha^{s_1+s_2}$. In particular, (\ref{alpha-s--s}) holds true.) Now, from (\ref{alpha-s--s}), we deduce 
\[
O^{s}_{t}(\alpha^{-s}({\rm w}))\  d\alpha^{-s}({\rm w})+ {\mathtt g}^{s}_{t}(\alpha^{-s}({\rm w}))\ dt=dB_t({\rm w}). 
\]
So, (\ref{lnM-formula}) is also of the  form 
\[
-d\ln {\rm M}^s_t(\alpha^{-s}({\rm w}))=\frac{1}{2}\langle  {\mathtt g}^s_t(\alpha^{-s}({\rm w})),  dB_t({\rm w})\rangle-\frac{1}{4}| {\mathtt g}^s_t(\alpha^{-s}({\rm w}))|^2\  dt
\]
and hence 
\[
\frac{1}{{{\rm M}}^s_T(\alpha^{-s}({\rm w}))}=e^{\left\{\frac{1}{2}\int_{0}^{T}\langle {\mathtt g}^s_t(\alpha^{-s}({\rm w})),  \ dB_t({\rm w})\rangle-\frac{1}{4}\int_{0}^{T}|{\mathtt g}^s_t(\alpha^{-s}({\rm w}))|^2\ dt\right\}}. 
\]
 \end{proof}

\begin{prop}\label{Quasi-IP-1} The probability  $\P_{F^s y}\circ {\bf F}^s_y$ is absolutely continuous with respect to $\P_y$ and the Radon-Nikodyn derivative $d\P_{F^s y}\circ {\bf F}^s_y/d\P_y$ conditioned on ${\rm y}_T=x$ is $L^q$ integrable for every  $q\geq 1$, locally uniformly in the $s$ parameter.  Moreover,  $d\P_{F^s y}\circ {\bf F}^s_y/d\P_y$ is  differentiable in  $s$ with  differential $\overline{\mathcal{E}^s_T} ({d\P_{F^s y}\circ {\bf F}^s_y}/{d\P_y})$, where $\overline{\mathcal{E}^s_T}$ conditioned on ${\rm y}_T=x$ is also $L^q$ integrable for every $q\geq 1$,  locally uniformly in the  $s$ parameter. 
\end{prop}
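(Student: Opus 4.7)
The plan is to bootstrap on Proposition \ref{absolute-Q-0-s}. Since ${\bf F}^s_y$ pushes the Brownian distribution $\P_y$ forward to $\P^s$, i.e.\ $\P^s = \P_y\circ ({\bf F}^s_y)^{-1}$, the relation $\P_y = \P^s\circ {\bf F}^s_y = \bigl(d\P^s/d\P_{F^sy}\circ {\bf F}^s_y\bigr)\,(\P_{F^sy}\circ {\bf F}^s_y)$ yields
\[
\frac{d\P_{F^sy}\circ {\bf F}^s_y}{d\P_y}({\rm y})=\left(\frac{d\P^s}{d\P_{F^sy}}\bigl({\bf F}^s_y({\rm y})\bigr)\right)^{-1}={\rm M}^s_T(B({\rm w})),
\]
where ${\rm M}^s_t({\rm w})=\exp\{-\tfrac12\int_0^t\langle \mathtt g^s_\tau,O^s_\tau dB_\tau\rangle-\tfrac14\int_0^t|\mathtt g^s_\tau|^2d\tau\}$ is the local martingale from the proof of Proposition \ref{absolute-Q-0-s}, evaluated along the anti-development ${\rm w}=(\mathcal I_{\mho_0})^{-1}({\rm y})$. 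Thus the first task is simply to read off the Radon-Nikodym derivative from the Cameron-Martin-Girsanov computation already performed, with no new input needed for absolute continuity.

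For the $L^q$-integrability under the bridge conditioning ${\rm y}_T=x$, I would exploit the fact that $|\mathtt g^s_t|\le C s_0\sup|{\rm V}|\cdot\sup_{[0,T]}{\mathtt s}$ is a bounded adapted process (by the construction in Lemma \ref{Cor-O-g-formula} and the choice of $\mathtt s$), and $O^s_t$ is orthogonal. Consequently $\int_0^T\langle \mathtt g^s_\tau,O^s_\tau dB_\tau\rangle$ has deterministically bounded quadratic variation, so by Novikov's criterion the process $({\rm M}^s_t)^q$ has finite expectation uniformly in $s\in[-s_0,s_0]$ and $q\geq 1$. To transfer this to the bridge $\P^*_{y,x,T}=p(T,y,x)\P_{y,x,T}$, apply Lemma \ref{Hsu-thm-5.4.4}: under $\P^*_{y,x,T}$ the path ${\rm w}$ satisfies $dB_\tau = db_\tau + 2\mho_\tau^{-1}\nabla\ln p(T-\tau,{\rm y}_\tau,x)\,d\tau$ for a bridge Brownian motion $b$. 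Rewriting the stochastic integral in terms of $db_\tau$ produces an additional drift term $\int_0^T\langle \mathtt g^s_\tau,2\mho_\tau^{-1}\nabla\ln p(T-\tau,{\rm y}_\tau,x)\rangle d\tau$ in the exponent; since $|\mathtt g^s|$ is bounded, Hölder's inequality and Proposition \ref{cond-nabla-ln-p} (which controls $\exp(q\int_0^T\|\nabla\ln p\|d\tau)$) give the required conditional $L^q$ bound, locally uniformly in $s$.

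Differentiability in $s$ follows by differentiating $-\ln{\rm M}^s_T(B({\rm w}))$ under the integral sign:
\[
\overline{\mathcal E^s_T}:=\frac{d}{ds}\ln\frac{d\P_{F^sy}\circ{\bf F}^s_y}{d\P_y}=-\frac12\!\int_0^T\!\!\langle (\mathtt g^s_\tau)'_s,O^s_\tau dB_\tau\rangle-\frac12\!\int_0^T\!\!\langle \mathtt g^s_\tau,(O^s_\tau)'_s dB_\tau\rangle-\frac12\!\int_0^T\!\langle \mathtt g^s_\tau,(\mathtt g^s_\tau)'_s\rangle d\tau.
\]
The existence of $(\mathtt g^s)'_s, (O^s)'_s$ is part of the Picard construction in Theorem \ref{Main-alpha-x-v-Q} (equations (\ref{iteration-O}), (\ref{iteration-g}) explicitly give the $s$-derivatives via $K_{{\rm V},\alpha^s}$ and ${\rm R}_{{\rm V},\alpha^s}$, both of which are bounded and $C^1$ in $s$ by the estimates developed in the proof). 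Once the formula is in hand, the same scheme as in the previous paragraph (bounded integrands for $O$-part, bounded drift for $\mathtt g$-part, then Girsanov transfer to the bridge plus Proposition \ref{cond-nabla-ln-p}) yields the conditional $L^q$ bound on $\overline{\mathcal E^s_T}$, locally uniformly in $s$. Finally, to justify that this pointwise derivative is indeed the Radon-Nikodym derivative in $s$, I would verify uniform integrability of the difference quotients on bounded $s$-intervals using the $L^{2q}$ bound on ${\rm M}^s_T$ combined with a Cauchy-Schwarz splitting.

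The main obstacle is the last technical point: controlling the conditional moments of $\overline{\mathcal E^s_T}$ uniformly in $s$ under the non-trivial drift introduced by the bridge transformation. Because $(\mathtt g^s)'_s$ and $(O^s)'_s$ are themselves only characterized implicitly through the Picard-type system of Lemma \ref{Cor-O-g-formula}, one must iterate the bounds in Theorem \ref{Main-alpha-x-v-Q} one order higher, keeping track of their joint dependence on the anti-development $\alpha^s$ to guarantee the requisite $L^q$ estimates survive the Girsanov change to the bridge measure. Everything else is a bookkeeping exercise on top of Proposition \ref{absolute-Q-0-s}, Lemma \ref{Hsu-thm-5.4.4} and Proposition \ref{cond-nabla-ln-p}.
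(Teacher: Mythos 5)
Your proposal is correct and follows essentially the same route as the paper: read off the Radon–Nikodym derivative $d\P_{F^s y}\circ{\bf F}^s_y/d\P_y={\rm M}^s_T({\rm w})$ from Proposition \ref{absolute-Q-0-s} via the cocycle relation, transfer to the bridge conditioning with Lemma \ref{Hsu-thm-5.4.4} and control the resulting drift via Proposition \ref{cond-nabla-ln-p}, and obtain $\overline{\mathcal{E}^s_T}$ by differentiating the log of the exponential martingale. (A small bonus: after simplifying with $\mathtt g^s=O^s\ov{\mathtt g}^s$ and the orthogonality of $O^s$, your formula for $\overline{\mathcal{E}^s_T}$ reduces to the paper's, and your ${\rm M}^s_T$ having coefficient $-\tfrac14$ on $\int|\ov{\mathtt g}^s|^2$ is the one consistent with the martingale property — the paper's $+\tfrac14$ in $\mathcal{E}^s_T$ appears to be a sign slip.)
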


\begin{proof}For $\P_{y}$ almost all path $\beta$, let ${\rm w}=\mathcal{I}_{\mho_0}^{-1}(\beta)$. Then $
\mathcal{I}_{\mho_0^s}^{-1}\left({\bf F}^s_y(\beta)\right)=\alpha^s({\rm w})$. 
As a corollary of Proposition \ref{absolute-Q-0-s}, we have $\P_{F^sy}\circ {\bf F}^s_y$ is  equivalent to $\P_{y}$   with 
\begin{equation*}
\frac{d\P_{F^sy}\circ {\bf F}^s_y}{d\P_{y}}(\beta)=\frac{d\P_{F^sy}}{d\P_{y}\circ ({\bf F}^{s}_y)^{-1}}\circ {\bf F}^s_y(\beta)= \frac{d\P_{F^sy}}{d\P^s}\circ {\bf F}^s_y(\beta)=\frac{d{\rm Q}^0}{d{\rm  Q}^s}\big(\alpha^s({\rm w})\big).  \ 
\end{equation*}
Note that $d\alpha^s({\rm w})=O^s({\rm w})dB({\rm w})+g^s({\rm w})dt$.  So, by (\ref{Q-0-s-s}) and (\ref{iteration-g}), we have 
\begin{align*}
\frac{d\P_{F^sy}\circ {\bf F}^s_y}{d\P_{y}}(\beta)=e^{\left\{-\frac{1}{2}\int_{0}^{T}\langle \ov{{\mathtt g}}^s_\tau({\rm w}),  \  dB_\tau({\rm w})\rangle+\frac{1}{4}\int_{0}^{T}|\ov{{\mathtt g}}^s_\tau({\rm w})|^2\ d\tau\right\}},
\end{align*}
where 
\begin{align*}
&\ov{{\mathtt g}}^s_t({\rm w}):  
=\int_{0}^{s}[O_{\tau}^{\jmath}]^{-1}\left\{({\mho}_{0}^{\jmath})^{-1}\left[\mathtt{s}'(\tau){\rm V}(F^{\jmath}y)\right]-{\rm Ric}\left({\mho}_{\tau}^{\jmath}({\mho}_0^{\jmath})^{-1}[\mathtt{s}(\tau){\rm V}(F^{\jmath} y)]\right)\right\} \ d\jmath. 
\end{align*}
Put 
\[
\mathcal{E}^{s}_t({\rm w}):=e^{\left\{-\frac{1}{2}\int_{0}^{t}\langle \ov{{\mathtt g}}^s_\tau({\rm w}),  \  dB_\tau({\rm w})\rangle+\frac{1}{4}\int_{0}^{t}|\ov{{\mathtt g}}^s_\tau({\rm w})|^2\ d\tau\right\}}, \ \forall t\in [0, T]. 
\]
For $q\geq 1$, we estimate $\E_{\P^*_{y, x, T}}|\mathcal{E}^{s}_T({\rm w})|^q$.
Let $b_t$  be the Brownian motion with respect to the bridge distribution (from $y$ to $x$ in time $T$) as  in Lemma \ref{Hsu-thm-5.4.4} such that 
\[dB_{\tau}({\rm w})=db_{\tau}({\rm w})+2{\mho}_{\tau}^{-1}\nabla \ln p(T-\tau, {\rm y}_{\tau}, x)\ d\tau.
\]
Then conditioned on ${\rm y}_T=x$, $|\mathcal{E}^{s}_T({\rm w})|^q$ has the same distribution as 
\[
e^{\{-\frac{1}{2}q\int_{0}^{T}\langle \ov{{\mathtt g}}^s_\tau({\rm w}),  \ db_\tau({\rm w})\rangle+\frac{1}{4}q\int_{0}^{T}|\ov{{\mathtt g}}^s_\tau({\rm w})|^2\ d\tau-q\int_{0}^{T}\langle \ov{{\mathtt g}}^s_\tau({\rm w}),  \mho_{\tau}^{-1}\nabla \ln p(T-\tau, {\rm y}_{\tau}, x)\rangle\ d\tau\}}.
\]
So, by  H\"{o}lder's inequality and the Cameron-Martin-Girsanov Theorem, 
\begin{align}
\notag\E_{\P^*_{y, x, T}}|\mathcal{E}^{s}_T({\rm w})|^q
&\leq\!\!\sqrt{p(T, x, y)} \left[\E_{\P_{y, x, T}}e^{\{-\int_{0}^{T}q\langle \ov{{\mathtt g}}^s_\tau({\rm w}),  \ db_\tau({\rm w})\rangle-q^2\int_{0}^{T}|\ov{{\mathtt g}}^s_\tau({\rm w})|^2\ d\tau\}}\right]^{\frac{1}{2}}\\
\notag&\ \ \ \cdot\left[\E_{\P^*_{y, x, T}}e^{\{-2q\int_{0}^{T}\langle \ov{{\mathtt g}}^s_\tau,  \mho_{\tau}^{-1}\nabla \ln p(T-\tau, {\rm y}_{\tau}, x)\rangle\ d\tau+(\frac{1}{2}q+q^2)\int_{0}^{T}|\ov{{\mathtt g}}^s_\tau|^2\ d\tau\}}\right]^{\frac{1}{2}}\\
& \leq\!\!\sqrt{p(T, x, y)}\!\left[\E_{\P^*_{y, x, T}}\! e^{\{-2q\int_{0}^{T}\langle \ov{{\mathtt g}}^s_\tau,  \mho_{\tau}^{-1}\nabla \ln p(T-\tau, {\rm y}_{\tau}, x)\rangle\ d\tau+(\frac{1}{2}q+q^2)\int_{0}^{T}|\ov{{\mathtt g}}^s_\tau|^2\ d\tau\}}\right]^{\frac{1}{2}}. \label{E-s-T-pre-est}
\end{align}
Let us  continue to use $C$ to denote  a constant depending on $\|g\|_{C^3}$, $m$ and the norm bound of ${\rm V}$ and  use $C(\cdot)$ to indicate the extra coefficients it depends on.  By our choice of $\mathtt s$ (see (\ref{mathtts-cond})), for $s\in [-s_0, s_0]$, $|\ov{{\mathtt g}}^s_\tau({\rm w})|\leq C(s_0, T)$ for some $C(s_0, T)$.  Apply  this in (\ref{E-s-T-pre-est}) and then  use (\ref{p-t-rough}) and  (\ref{equ-Hs2-ST}).  We obtain some  $C(q, s_0, T), \wt{C}(q, s_0, T)$ such that 
\[
 \E_{\P^*_{y, x, T}}|\mathcal{E}^{s}_T({\rm w})|^q\leq C(q, s_0, T)\left[\E_{\P^*_{y, x, T}} e^{\{\wt{C}(q, s_0, T)\int_{0}^{T}\|\nabla \ln p(T-\tau, {\rm y}_{\tau}, x)\|\ d\tau}\right]^{\frac{1}{2}},\]
which, by  (\ref{exp-grad-lnp}), shows  that  $d\P_{F^s y}\circ {\bf F}^s/d\P_y$ conditioned on ${\rm y}_T=x$ is  $L^q$ integrable for $q\geq 1$, locally uniformly  in the $s$ parameter. 
 
 Note that $\mathcal{E}^{s}_t({\rm w})$ satisfies the SDE 
  \[
d\mathcal{E}^{s}_t({\rm w})=\mathcal{E}^{s}_t({\rm w})\left(-\frac{1}{2}\langle \ov{{\mathtt g}}^s_t({\rm w}),  \ dB_t({\rm w})\rangle+\frac{1}{2}|\ov{{\mathtt g}}^s_t({\rm w})|^2\ dt\right). 
\]
The  differential process  $(\mathcal{E}^{s}_t({\rm w}))'_s=(d\mathcal{E}^{\jmath}_t({\rm w})/d\jmath)|_{\jmath=s}$ exists and satisfies the It\^{o}  SDE
\begin{align*}
d(\mathcal{E}^{s}_t)'_s({\rm w})=& (\mathcal{E}^{s}_t)'_s({\rm w})\left(-\frac{1}{2}\langle \ov{{\mathtt g}}^s_t({\rm w}),  \ dB_t({\rm w})\rangle+\frac{1}{2}|\ov{{\mathtt g}}^s_t({\rm w})|^2\ dt\right)\\
&\  + \mathcal{E}^{s}_t({\rm w})\left(-\frac{1}{2}\langle (\ov{{\mathtt g}}^s_t)'_s({\rm w}),  dB_\tau({\rm w})\rangle+ \langle (\ov{{\mathtt g}}^s_t)'_s({\rm w}), \ov{{\mathtt g}}^s_t({\rm w})\rangle\ d\tau \right).
\end{align*}
Hence the  Radon-Nikodyn derivative $d\P_{F^s y}\circ {\bf F}^s_y/d\P_y$ is  differentiable in  $s$ with differential $(\mathcal{E}^{s}_T)'_s({\rm w})$, which, by using  stochastic Duhamel principle (or It\^{o}'s formula), is 
\begin{align*}
(\mathcal{E}^{s}_T)'_s &=\ \mathcal{E}^{s}_T\cdot \left((\mathcal{E}^{s}_0)'_s-\frac{1}{2}\int_{0}^{T}\langle (\ov{{\mathtt g}}^s_t)'_s({\rm w}),  dB_\tau({\rm w})\rangle+\frac{1}{2}\int_0^T \langle (\ov{{\mathtt g}}^s_t)'_s({\rm w}), \ov{{\mathtt g}}^s_t({\rm w})\rangle\ d\tau \right)\\
&=\ \mathcal{E}^{s}_T\cdot \left(-\frac{1}{2}\int_{0}^{T}\langle (\ov{{\mathtt g}}^s_t)'_s({\rm w}),  dB_t({\rm w})\rangle+\frac{1}{2}\int_0^T \langle (\ov{{\mathtt g}}^s_t)'_s({\rm w}), \ov{{\mathtt g}}^s_t({\rm w})\rangle\ dt \right)\\
&=:\  \mathcal{E}^{s}_T\cdot \overline{\mathcal{E}}^{s}_T.
\end{align*}
Conditioned on ${\rm y}_T=x$, $\overline{\mathcal{E}}^{s}_T$ has the same distribution as 
\[-\frac{1}{2}\int_{0}^{T}\langle (\ov{{\mathtt g}}^s_\tau)'_s,  db_\tau\rangle+\frac{1}{2}\int_0^T \langle (\ov{{\mathtt g}}^s_\tau)'_s, \ov{{\mathtt g}}^s_\tau\rangle\ d\tau-\int_{0}^{T}\big\langle (\ov{{\mathtt g}}^s_\tau)'_s, \mho_{\tau}^{-1}\nabla \ln p(T-\tau, {\rm y}_{\tau}, x) \big\rangle\ d\tau,\]
where both $|\ov{{\mathtt g}}^s_t|$ and  $|(\ov{{\mathtt g}}^s_t)'_s|$ are bounded by some constant  $C(s_0, T)$. Hence, by H\"{o}lder's inequality and   (\ref{abcq}), we compute that 
\begin{align*}
\left(\E_{\P_{y, x, T}}|\overline{\mathcal{E}}^{s}_T|^{q}\right)^2
&\leq 3^{2q-1}\left(\E_{\P_{y, x, T}}\left|\int_{0}^{T}\langle (\ov{{\mathtt g}}^s_\tau)'_s,  db_\tau\rangle\right|^{2q}+ (C(s_0, T))^{2q}\right.\\
&\ \ \ \ \ \ \ \ \ \ \ \ \ \left.+\E_{\P_{y, x, T}}\left|\int_{0}^{T}\big\langle (\ov{{\mathtt g}}^s_\tau)'_s, \mho_{\tau}^{-1}\nabla \ln p(T-\tau, {\rm y}_{\tau}, x) \big\rangle\ d\tau\right|^{2q}\right)\\
&=:  3^{2q-1}\left(\ov{{\rm{(I)}}}+\ov{\rm{(II)}}+\ov{\rm{(III)}}\right). 
\end{align*}
Since $b$ is a Brownian motion with respect to $\P_{y, x, T}$,  by Lemma \ref{Ku-lem}, 
\[
\ov{{\rm{(I)}}}\leq {\mathtt{C}}_1(2q) \int_{0}^{T}\E_{\P_{y, x, T}}|(\ov{{\mathtt g}}^s_\tau)'_s|^{2q}\ d\tau\leq C(q, s_0, T),\]
where ${\mathtt{C}}_1$ is from (\ref{equ-Ku-lem}). 
Using  $|(\ov{{\mathtt g}}^s_\tau)'_s{\rm w})|\leq C(s_0, T)$ and (\ref{exp-grad-lnp}), we obtain 
\begin{align*}
\ov{\rm{(III)}} \leq & C(s_0, T)^{2q} \E_{\P_{y, x, T}}\big(\int_{0}^{T}\|\nabla \ln p(T-\tau, {\rm y}_{\tau}, x)\|\ d\tau\big)^{2q}\\
\leq & C(s_0, T)^{2q} \E_{\P_{y, x, T}} e^{\left\{2q\int_{0}^{T}\|\nabla \ln p(T-\tau, {\rm y}_{\tau}, x)\|\ d\tau\right\}}\\
\leq & C(s_0, T)^{2q} (p(T, x, y))^{-1} e^{c'(1+d(x, y))}.
\end{align*}
Putting all the estimations on  $\ov{{\rm{(I)}}}$, $\ov{\rm{(II)}}$ and  $\ov{\rm{(III)}}$ together,  we conclude that  $\overline{\mathcal{E}^s_T}$ conditioned on ${\rm y}_T=x$ is $L^q$ integrable for $q\geq 1$, locally uniformly in the  $s$ parameter.
\end{proof}

 Consider the distribution of $\overline{\P}_x$ on $C_x([0, T], \M)$. 
Let $({\rm x},  {\rm u})$ be the stochastic pair which defines the Brownian motion on $(\M, \wt{g})$ which starts from $x$. The distribution of $({\rm x}_t)_{t\in [0, T]}$ is independent of the choice of ${\rm u}_0$.  Hence ${\P}_{(x, {\rm u}_0)}$, which is the distribution of $({\rm x}_t)_{t\in [0, T]}$ with a  initial  frame ${\rm u}_0$, coincides with $\P_x$ on $C_x([0, T], \M)$ and  ${\P}_{(x, {\rm u}_0), y, T}:=\E_{{\P}_{(x, {\rm u}_0)}}\left(\cdot\big| {\rm x}_T=y \right)$ coincides with $\P_{x, y, T}$ on $C_{x, y}([0, T], \M)$.  This means 
\begin{align*}
\overline{\P}_x&=\int\int {\P}_{(x, {\rm u}_0), y, T}\cdot p(T, x, y)\ d{\rm Vol}(y)\ d{\rm Vol}({{\rm u}_0})\left(\!=\int \int \P_{x, y, T}\cdot p(T, x, y)\ d{\rm Vol}({{\rm u}_0})\ d{\rm Vol}(y)\right)\\
&=\int\int {\P}_{(x, {\rm u}_0), y, T}\cdot p(T, x, y)\ d{\rm Vol}({{\rm u}_0})\ d{\rm Vol}(y), 
\end{align*}
where $d{\rm Vol}({{\rm u}_0})$ is the uniform distribution  on $\mathcal{O}^{\wt{g}}_x(\M)$.  For any $y\in \M$, the Brownian bridge process connecting $x$ and $y$ in time $T$ has the following   symmetric property. 

\begin{lem}\label{Brown-bridge-symmetry}Let $(X_t, U_t)_{t\in [0, T]}$ be the pair of stochastic processes for Brownian bridge from $x$ to $y$ in time $T$.  
\begin{itemize}
\item[i)] Under $\P_{x, y, T}$, the process $(X_{T-t})_{t\in [0, T]}$ has the law $\P_{y, x, T}$.
\item[ii)]  If $U_0$ is chosen randomly with the uniform distribution in $\mathcal{O}^{\wt{g}}_x(\M)$, then $U_{T}$ is also uniformly distributed in $\mathcal{O}^{\wt{g}}_y(\M)$. 
\end{itemize}
\end{lem}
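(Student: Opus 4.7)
\smallskip

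\emph{Proof plan.} My plan is to handle the two parts separately, with part (i) reduced to a finite-dimensional computation using symmetry of the heat kernel, and part (ii) reduced to the right $O(m)$-equivariance of the horizontal lift SDE.

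For part (i), I will write out the joint law of $(X_{t_1},\ldots,X_{t_n})$ for $0<t_1<\cdots<t_n<T$ under $\P_{x,y,T}$ in terms of the heat kernel: by the Markov property and (\ref{bridge-margin}), the joint density with respect to $d{\rm Vol}^{\otimes n}$ is
\begin{equation*}
\frac{1}{p(T,x,y)}\, p(t_1,x,z_1)\, p(t_2-t_1,z_1,z_2)\cdots p(T-t_n,z_n,y).
\end{equation*}
For the time-reversed process $(X_{T-t})_{t\in[0,T]}$, the joint density of $(X_{T-t_n},\ldots,X_{T-t_1})$ is obtained by reading the above product in reverse order. Using the symmetry $p(t,a,b)=p(t,b,a)$, which follows from self-adjointness of $\Delta$ with respect to $d{\rm Vol}_{\wt g}$, together with $p(T,x,y)=p(T,y,x)$, this density equals the joint density of $(X_{t_1},\ldots,X_{t_n})$ under $\P_{y,x,T}$. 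Since continuous processes are determined by their finite-dimensional distributions, this yields (i).

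For part (ii), the key observation is that the SDE (\ref{B-bridge}) for the horizontal lift is equivariant under the right action of $O(m)$ on frames. Using the identity $H(ug,v) = (R_g)_\ast H(u,gv)$ for $g\in O(m)$, $u\in\mathcal{F}(\M)$, $v\in\Bbb R^m$, a direct computation shows that if $U_t$ solves (\ref{B-bridge}) with driving Brownian motion $b_t$, then $\widetilde U_t := U_t g$ solves the analogous SDE with driving Brownian motion $g^{-1}b_t$ (which has the same law as $b_t$ by rotational invariance of Brownian motion), and with the drift term transformed consistently because $\widetilde U_t^{-1}\nabla\ln p = g^{-1}U_t^{-1}\nabla\ln p$. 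Consequently, for each fixed starting frame $U_0\in\mathcal{O}^{\wt g}_x(\M)$, if $\nu_{U_0}$ denotes the conditional law of $U_T$ on $\mathcal{O}^{\wt g}_y(\M)$, then $\nu_{U_0 g}= (R_g)_\ast \nu_{U_0}$.

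Finally, averaging over $U_0$ with respect to the uniform (Haar) probability measure $\mu_x$ on $\mathcal{O}^{\wt g}_x(\M)$, the marginal law of $U_T$ is $\nu=\int \nu_{U_0}\, d\mu_x(U_0)$. By the equivariance just established and the right-$O(m)$-invariance of $\mu_x$, one has $(R_g)_\ast\nu=\int \nu_{U_0 g}\, d\mu_x(U_0)=\int \nu_{U_0}\, d\mu_x(U_0 g^{-1})=\nu$ for every $g\in O(m)$. Since $\mathcal{O}^{\wt g}_y(\M)$ is a single $O(m)$-orbit, this forces $\nu$ to be the uniform probability measure on $\mathcal{O}^{\wt g}_y(\M)$. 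The main technical point to verify carefully will be the equivariance of the bridge SDE, because although the Brownian part is manifestly $O(m)$-equivariant, one must check that the $\nabla\ln p$ drift term transforms correctly after the right action, which it does since the gradient lives on the base and is pulled back through the frame.
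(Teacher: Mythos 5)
Your proposal is correct and follows essentially the same route as the paper: part (i) via the explicit finite-dimensional density $\tfrac{1}{p(T,x,y)}\prod p(t_{i+1}-t_i,x_i,x_{i+1})$ and the symmetry of the heat kernel, and part (ii) via the $O(m)$-equivariance of the horizontal-lift SDE combined with rotational invariance of the driving Brownian motion; the paper simply states the equivariance (``if $U_t$ solves (\ref{B-bridge}) with initial frame $U_0$, then $U_t\upsilon$ solves (\ref{B-bridge})'') and the final Haar-averaging step tersely, whereas you carry out the verification that both the martingale term $H(U_t,e_i)\circ db_t^i$ and the drift $H(U_t,U_t^{-1}\nabla\ln p)$ transform consistently under $R_g$ and then make the averaging over the fiber explicit.
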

\begin{proof}i) is \cite[Proposition 5.4.3]{Hs}. (It is true since by (\ref{bridge-margin}), the finite margin of $X$, or the joint density function of $X_{t_1}, \cdots, X_{t_n}$, $0=t_0<t_1<\cdots<t_n<t_{n+1}=T$, is given by 
\[
\frac{1}{p(T, x, y)}\prod_{i=0}^{n}p(t_{i+1}-t_i, x_i, x_{i+1}), \ \mbox{where} \ x_0=x, x_{n+1}=y, 
\]
which is the same as the joint density function of $\wt{X}_{T-t_n}, \cdots, \wt{X}_{T-t_1}$ of  the bridge $\wt{X}$ from $y$ to $x$ in time $T$.) For ii), we consider (\ref{B-bridge}). 
Note  that  the distribution of the  $\Bbb R^m$-Brownian motion $b_t$ is invariant under rotations. So if  $(U_t)_{t\in [0, T]}$ solves (\ref{B-bridge}) with initial frame $U_0$, then for $\wt{U}_0=U_0\upsilon$ with $\upsilon\in\mathcal{O}(\Bbb R^m)$,  $(\wt{U}_t=U_t\upsilon)_{t\in [0, T]}$ solves (\ref{B-bridge}). This implies ii). 
\end{proof} 

 Let ${\bf F}^s_y$ be as in Theorem \ref{Main-alpha-x-v-Q}. It induces a map from $C_{y, x}([0, T], \M)$ to $C_{F^sy, x}([0, T], \M)$. We define ${\bf F}^s$ on $C_{x}([0, T], \M)$ conditioned on the value of $\beta_T$ by letting 
\[
{\bf F}^s(\beta):={\bf F}^{s}_{\beta_T}(\beta). 
\]
By  Lemma \ref{Brown-bridge-symmetry}, 
a uniform random choice of ${\rm u}_0$ at $x$ will result in a  uniform distribution of ${\rm u}_T$ at $y$  for the Brownian bridge connecting $x$ and $y$ in time $T$. Therefore, to analyze  $\overline{\P}_x\circ {\bf F}^s$, we can choose the initial  $\mho_0\in \mathcal{O}^{\wt{g}}_{\beta_T}(\M)$ with a uniform distribution to define ${\bf F}^s_{\beta_T}$. 

\begin{lem}\label{diff-P-F-S}For $\overline{\P}_x$ almost all $\beta\in C_x([0, T], \M)$, 
\begin{equation}\label{equ-P-F-S}
\frac{d\overline{\P}_x\circ{\bf F}^s}{d\overline{\P}_x}(\beta)=\frac{d\P_{F^s \beta_T}\circ {\bf F}^s}{d\P_{\beta_T}}(\beta)\cdot \frac{d{\rm Vol}(F^s \beta_T)}{d{\rm Vol}(\beta_T)}. 
\end{equation}
\end{lem}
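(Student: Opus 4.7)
The strategy is to disintegrate $\overline{\P}_x$ along the endpoint map $\beta\mapsto\beta_T$ and track how ${\bf F}^s$ transforms the endpoint marginal and the conditional bridge measure separately. Writing
\[
\overline{\P}_x\;=\;\int_{\M}\overline{\P}_x^{\,y,T}\, p(T,x,y)\, d{\rm Vol}(y),
\]
where $\overline{\P}_x^{\,y,T}$ is the Brownian bridge from $x$ to $y$ averaged over uniform ${\rm u}_0\in \mathcal{O}^{\wt g}_x(\M)$, the change of variables $y\mapsto F^s y$ at the endpoint produces a marginal factor $\tfrac{p(T,x,F^s y)}{p(T,x,y)}\cdot \tfrac{d{\rm Vol}(F^s y)}{d{\rm Vol}(y)}$. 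The remaining conditional factor will need to absorb the heat-kernel ratio to match the right-hand side of (\ref{equ-P-F-S}).

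For the conditional part, Lemma \ref{Brown-bridge-symmetry} is essential: part i) identifies the time-reversed bridge from $x$ to $y$ with the forward bridge $\P_{y,x,T}$, and part ii) guarantees that a uniform ${\rm u}_0$ at $x$ corresponds, after reversal, to a uniform initial frame $\mho_0$ at $y$, which is precisely the setting in which ${\bf F}^s_y$ is constructed in Section \ref{flow-F-S-y}. To compute the bridge-level Radon-Nikodym derivative, I would disintegrate the identity $\P_{F^s y}\circ {\bf F}^s_y=\rho_y^s\cdot \P_y$ from Proposition \ref{Quasi-IP-1} by endpoint. Since ${\bf F}^s_y$ preserves endpoints, the endpoint marginal of $\P^s=(\P_y)\circ ({\bf F}^s_y)^{-1}$ is $p(T,y,z)\,d{\rm Vol}(z)$, not $p(T,F^s y,z)\,d{\rm Vol}(z)$, and a direct matching of the two disintegrations yields
\[
\frac{d(\P_{F^s y,x,T}\circ {\bf F}^s_y)}{d\P_{y,x,T}}(\gamma)\;=\;\frac{d(\P_{F^s y}\circ {\bf F}^s_y)}{d\P_y}(\gamma)\cdot\frac{p(T,y,x)}{p(T,F^s y,x)}.
\]
Combining with the marginal factor and using the heat-kernel symmetry $p(T,x,y)=p(T,y,x)$, the two heat-kernel ratios cancel exactly, so (\ref{equ-P-F-S}) reduces to the asserted product of the bridge Radon-Nikodym derivative and the volume Jacobian $\tfrac{d{\rm Vol}(F^s\beta_T)}{d{\rm Vol}(\beta_T)}$.

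The main technical obstacle is the bookkeeping of time reversal and frame averaging. One must verify carefully that the uniform ${\rm u}_0$ appearing in $\overline{\P}_x$ indeed becomes a uniform $\mho_0$ at $\beta_T$ after reversal, so that Proposition \ref{Quasi-IP-1} is legitimately applied on each conditional bridge; this is exactly what the two parts of Lemma \ref{Brown-bridge-symmetry} are designed to supply. Under the natural notational identification of $\frac{d\P_{F^s\beta_T}\circ {\bf F}^s}{d\P_{\beta_T}}(\beta)$ with the bridge derivative $\frac{d(\P_{F^s\beta_T}\circ {\bf F}^s_{\beta_T})}{d\P_{\beta_T}}$ evaluated on the time-reversed path, the identity then follows from the two disintegrations sketched above, with no further analytic input.
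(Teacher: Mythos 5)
Your proposal is correct and follows essentially the same route as the paper's proof: disintegrate $\overline{\P}_x$ according to the endpoint (and endpoint frame, using Lemma~\ref{Brown-bridge-symmetry}), apply the change of variables $y\mapsto F^s y$ to pick up the marginal factor, and derive the bridge-level Radon--Nikodym derivative by disintegrating $\P_{F^s y}\circ {\bf F}^s_y$ against $\P_y$ by endpoint, after which the heat-kernel ratios cancel. The only notational divergence is that you phrase the two disintegration comparisons directly rather than via arbitrary measurable test sets $\widetilde A$, but the substance and the reliance on both parts of Lemma~\ref{Brown-bridge-symmetry} and on Proposition~\ref{Quasi-IP-1} are the same.
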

\begin{proof}
Lemma \ref{Brown-bridge-symmetry} implies that the distribution of ${\rm u}_T$ is uniform if ${\rm u}_0$ is. 
So if we disintegrate $\overline{\P}_x$ according to the value of $({\rm x}_T, {\rm u}_T)$, we obtain 
\begin{equation*}
\overline{\P}_x=\int\int {\P}_{(x, {\rm u}_0), y, T}\ \cdot p(T, x, y)\ \! d{\rm Vol}({\rm u}_0)\ \! d{\rm Vol}(y)=\int\int \overline{\P}_{x, (y, {\mho_0}),  T}\cdot p(T, x, y)\ \! d{\rm Vol}({\mho_0})\ \! d{\rm Vol}(y),
\end{equation*}
where $d{\rm Vol}({\mho_0})$ is the uniform probability on $\mathcal{O}^{\wt{g}}_y(\M)$.   For  any measurable subset $A\subset C_x([0, T], \M)$, by the change of variable formula, 
\begin{eqnarray*}
\overline{\P}_x\left({\bf F}^s(A)\right)\!&=&\!\int\int \overline{\P}_{x, (F^s y, F^s {\mho_0}), T} \left({\bf F}^s(A)\right)\cdot p(T, x, F^sy)\ d{\rm Vol}(F^s{\mho_0})\  d{\rm Vol}(F^s y)\notag\\
&=&\!\int\int \overline{\P}_{ x, (F^s y, F^s {\mho_0}), T} \left({\bf F}^s(A)\right)\cdot p(T, x, F^sy)\frac{d{\rm Vol}\circ F^s}{d{\rm Vol}}(y)\ d{\rm Vol}({\mho_0})\  d{\rm Vol}(y).
\end{eqnarray*}
By Lemma \ref{Brown-bridge-symmetry}, the distribution of  $\overline{\P}_{x, (F^sy, F^s {\mho_0}),  T}$ on $C_{x, F^s y}([0, T], \M)\equiv C_{F^sy, x}([0, T], \M)$ can be identified with that of ${\P}_{(F^s y, F^s{\mho_0}), x,  T}$,  the Brownian bridge from $F^s y$ to $x$ in time $T$ with the initial frame $F^s{\mho_0}\in \mathcal{O}^{\wt{g}}_{F^sy}(\M)$. Hence
\begin{equation}\label{abs-disintegration}
\overline{\P}_x({\bf F}^s(A))=\int\int {\P}_{(F^s y, F^s {\mho_0}),x,  T}({\bf F}^s(A)) \cdot p(T, F^sy, x)\frac{d{\rm Vol}\circ F^s}{d{\rm Vol}}(y)\ d{\rm Vol}({\mho_0})\  d{\rm Vol}(y).
\end{equation}
The absolute continuity of $\overline{\P}_x\circ{\bf F}^s$ with respect to $\overline{\P}_x$ will follow if  ${\P}_{(F^s y, F^s {\mho_0}),x,  T}\circ {\bf F}^s$ is absolutely continuous with respect to ${\P}_{(y,{\mho_0}),x,  T}$ and the Radon-Nikodym derivative  $d{\P}_{(F^s y, F^s {\mho_0}),x,  T}\circ {\bf F}^s/d{\P}_{(y,{\mho_0}),x,  T}$ is integrable.  Since the bridge process from $y$ to $x$ in time $T$ is  just  the conditional process of ${\rm y}$ on ${\rm y}_T=x$, 
Lemma \ref{absolute-Q-0-s}  implies that ${\P}_{(F^s y, F^s {\mho_0}),x,  T}\circ {\bf F}^s$ is absolutely continuous with respect to ${\P}_{(y,{\mho_0}),x,  T}$. 

 As to (\ref{equ-P-F-S}), we see that for any  measurable set $\wt{A}\subset C_y([0, T], \M)$,  \begin{align}
\P_{F^sy}\circ {\bf F}^s (\wt{A})=\P_{y}\left(\chi_{\wt{A}}\cdot \frac{d\P_{F^s y}\circ {\bf F}^s}{d\P_{y}}\right)= \int \P_{y, z, T}\left(\chi_{\wt{A}_z}\cdot \frac{d\P_{F^s y}\circ {\bf F}^s}{d\P_{y}} \right)p(T, y, z)\ d{\rm Vol}(z),\label{pf-prop520-1}
\end{align}
where $\wt{A}_z$ is the collection of elements $w\in \wt{A}$ with $w_T=z$.  On the other hand, 
 \begin{eqnarray}
\notag\P_{F^sy}\circ {\bf F}^s (\wt{A})&=& \int \P_{F^sy, z, T}\circ {\bf F}^s(\chi_{\wt{A}_z})\cdot p(F^s y, z, T)\ d{\rm Vol}(z)\\
&=&  \int \P_{y, z, T}\left(\chi_{\wt{A}_z}\cdot \frac{d\P_{F^sy, z, T}\circ {\bf F}^s}{d\P_{y, z, T}}\right) p(T, F^s y, z)\ d{\rm Vol}(z).\label{pf-prop520-2}
\end{eqnarray}
Since $\wt{A}$ is arbitrary, we conclude from (\ref{pf-prop520-1}) and (\ref{pf-prop520-2}) that 
\[
\frac{d\P_{F^sy, z, T}\circ {\bf F}^s}{d\P_{y, z, T}}=\frac{d\P_{F^s y}\circ {\bf F}^s}{d\P_{y}}\cdot\frac{p(T, y, z)}{p(T, F^sy, z)}. 
\]
Reporting  this in (\ref{pf-prop520-2}) and (\ref{abs-disintegration}) shows  (\ref{equ-P-F-S})  for  $\overline{\P}_x$ almost all $\beta$ with ${\beta}_T=y\in \M$. \end{proof}
An immediate corollary of  Proposition \ref{Quasi-IP-1} and Lemma \ref{diff-P-F-S}  is

\begin{prop}\label{Quasi-IP-2} The probability  $\overline{\P}_x \circ {\bf F}^s$ is absolutely continuous with respect to $\overline{\P}_x$ and the Radon-Nikodyn derivative $d\overline{\P}_x \circ {\bf F}^s/d\overline{\P}_x $ conditioned on ${\rm x}_T=y$ is  $L^q$ integrable for every  $q\geq 1$, locally uniformly in the $s$ parameter.  The differential of $d\overline{\P}_x\circ {\bf F}^s/d\overline{\P}_x$ in $s$ exists and  is of the form  $\oa{\mathcal{E}^s_T}\cdot (d\overline{\P}_x\circ {\bf F}^s/d\overline{\P}_x)$,  where $\oa{\mathcal{E}^s_T}$ conditioned on ${\rm x}_T=y$ is  square integrable, locally uniformly  in the $s$ parameter. 
\end{prop}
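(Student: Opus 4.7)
The plan is to read off Proposition~\ref{Quasi-IP-2} directly from the factorization supplied by Lemma~\ref{diff-P-F-S} and the bounds supplied by Proposition~\ref{Quasi-IP-1}. By Lemma~\ref{diff-P-F-S}, for $\overline{\P}_x$-almost every $\beta\in C_x([0,T],\M)$,
\[
\frac{d\overline{\P}_x \circ {\bf F}^s}{d\overline{\P}_x}(\beta) \; = \; \frac{d\P_{F^s \beta_T}\circ {\bf F}^s_{\beta_T}}{d\P_{\beta_T}}(\beta) \cdot \rho^s(\beta_T), \qquad \rho^s(y):=\frac{d\mathrm{Vol}\circ F^s}{d\mathrm{Vol}}(y).
\]
Since ${\rm V}$ is smooth with bounded derivatives on $\M$, the deterministic factor $\rho^s(y)$ is $C^\infty$ in $s$, uniformly bounded on $[-s_0,s_0]\times \M$, with logarithmic derivative $(\rho^s)'_s(y)/\rho^s(y)=({\rm Div}\,{\rm V})(F^s y)$ likewise bounded. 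Once we condition on ${\rm x}_T=y$, the value $\beta_T=y$ is frozen and $\rho^s(y)$ is a purely deterministic, smooth, bounded multiplicative factor.

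It remains to transfer Proposition~\ref{Quasi-IP-1} to the present conditioning. Proposition~\ref{Quasi-IP-1} gives the $L^q$-integrability of $d\P_{F^sy}\circ {\bf F}^s_y/d\P_y$ conditioned on the endpoint ${\rm y}_T=x$ of the Brownian motion issued from $y$, i.e.\ along the bridge from $y$ to $x$ in time $T$. By Lemma~\ref{Brown-bridge-symmetry}~i), the bridge from $x$ to $y$ is the time-reversal of the bridge from $y$ to $x$, so after disintegrating $\overline{\P}_x$ via $\beta_T=y$ the factor $d\P_{F^s y}\circ {\bf F}^s_y/d\P_y(\beta)$ has the same (conditional) law as the Radon--Nikodym derivative appearing in Proposition~\ref{Quasi-IP-1}. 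Combined with the boundedness of $\rho^s(y)$, this yields the claimed $L^q$-integrability of $d\overline{\P}_x\circ {\bf F}^s/d\overline{\P}_x$ conditioned on ${\rm x}_T=y$, locally uniformly in $s$, for every $q\geq 1$.

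For the differential in $s$, apply the Leibniz rule to the above factorization. The first factor contributes, by Proposition~\ref{Quasi-IP-1}, $\overline{\mathcal{E}^s_T}$ times the density itself, while the second contributes $({\rm Div}\,{\rm V})(F^s\beta_T)$ times the density. This identifies
\[
\oa{\mathcal{E}^s_T} \; = \; \overline{\mathcal{E}^s_T} \; + \; ({\rm Div}\,{\rm V})(F^s {\rm x}_T).
\]
Conditioned on ${\rm x}_T=y$, the second summand is a bounded deterministic quantity, and the first is $L^q$-integrable for every $q\geq 1$ (locally uniformly in $s$) by Proposition~\ref{Quasi-IP-1} and the bridge-reversal argument above; in particular $\oa{\mathcal{E}^s_T}$ is $L^2$-integrable, locally uniformly in $s$, as required.

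There is no genuine obstacle: the nontrivial analytic work (Cameron--Martin--Girsanov change of drift, the $L^q$ control along the bridge via the Driver-type estimate~\eqref{exp-grad-lnp}, and the disintegration of $\overline{\P}_x$ using the frame symmetry of the bridge in Lemma~\ref{Brown-bridge-symmetry}~ii)) has already been carried out in Proposition~\ref{Quasi-IP-1} and Lemma~\ref{diff-P-F-S}. The only bookkeeping point is the bridge reversal that matches the conditioning ``${\rm y}_T=x$'' of Proposition~\ref{Quasi-IP-1} with the conditioning ``${\rm x}_T=y$'' in the statement at hand.
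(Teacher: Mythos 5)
Your proposal is correct and follows essentially the same route as the paper, which states Proposition~\ref{Quasi-IP-2} as ``an immediate corollary of Proposition~\ref{Quasi-IP-1} and Lemma~\ref{diff-P-F-S}'' and remarks that $\oa{\mathcal{E}^s_T}$ differs from $\ov{\mathcal{E}^s_T}$ by the $s$-differential of the volume Jacobian; you have simply spelled out the factorization, the bridge time-reversal identification of the conditionings, and the explicit Leibniz computation giving $\oa{\mathcal{E}^s_T}=\ov{\mathcal{E}^s_T}+({\rm Div}\,{\rm V})(F^s{\rm x}_T)$.
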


Using   (\ref{equ-P-F-S}) and the proof of Proposition \ref{Quasi-IP-1}, we can  deduce  that $\oa{\mathcal{E}^s_T}$ differs from $\ov{\mathcal{E}^s_T}$ by the differential of ${d{\rm Vol}(F^s y)}/{d{\rm Vol}(y)}$ in the $s$ parameter, where $\ov{\mathcal{E}^s_T}$ can be understood as a backward stochastic integral on the bridge paths from $x$ to $y$ in time $T$. 

\subsection{The extended map  ${\bf F}^s$ 
}\label{the flow F-S}

 In order to show the  properties iii), iv) of ${\bf F}^s$ in Section  \ref{Obs-Stra}, we need to clarify  $(D\pi(\lfloor {\rm u}_T\rceil^{\l})^{(1)}_\l)\circ {\bf F}^s$ for ${\Phi}_{\l}^1\circ {\bf F}^s$, where ${\Phi}_{\l}^1$ is as in  (\ref{def-phi-1}).  We  will achieve this   by extending ${\bf F}^s$  to the process $(\lfloor {\rm u}_T\rceil^{\l})^{(1)}_\l$ and letting 
\[\left(D\pi(\lfloor {\rm u}_T\rceil^{\l})^{(1)}_\l\right)\circ {\bf F}^s:=D\pi\left((\lfloor {\rm u}_T\rceil^{\l})^{(1)}_\l\circ {\bf F}^s\right).\]
The rough idea is that  the maps ${\bf F}^s$ on orbits extend naturally to their tangent maps for the parallel transportations and hence can be defined for the objects they make.

We first deal with $(\lfloor {\rm u}_T\rceil^{\l})^{(1)}_0\circ {\bf F}^s$.  Let  $\l\mapsto \lfloor {\rm u}_0\rceil^{\l}\in \mathcal{O}^{\wt{g}^{\l}}(\M)$ be $C^{k-2}$ in $\mathcal{F}(\M)$ and let  $(\lfloor {\rm u}_t\rceil^{\l}\in \mathcal{O}^{\wt{g}^{\l}}(\M))_{t\in [0, T]}$ with initials $ \lfloor {\rm u}_0\rceil^{\l}$ be the unique  solution to\begin{equation}\label{FM-BM-SDE-11}
d\lfloor {\rm u}_t\rceil^{\l}=\sum_{i=1}^{m}H^{\l}(\lfloor {\rm u}_t\rceil^{\l}, e_i)\circ dB_t^i({w}), \ \forall t\in [0, T]. 
\end{equation} 
By Lemma \ref{u-lambda-differential-1}, there is a version of $\{\lfloor {\rm u}_t\rceil^{\l}\}$ such that  $\l\mapsto \lfloor {\rm u}_t\rceil^{\l}({w})$ is $C^{k-2}$  in $\lambda$ for almost all ${w}$.  By Lemma \ref{cor-2--u-t-lam-tangent-map-1},  the differential process  $(\lfloor {\rm u}_t\rceil^{\l})^{(1)}_0$  is given by  
\begin{equation}\label{diff-in-lam-1}
(\lfloor {\rm u}_T\rceil^{\l})^{(1)}_0=\left[D\overrightarrow{F}_{0, T}({\rm u}_0, {w})\right](\lfloor {\rm u}_0\rceil^{\l})^{(1)}_{0}+ \int_{0}^{T} \left[D \overrightarrow{F}_{t, T}({\rm u}_t, {w})\right](H^{\l})^{(1)}_{0}({\rm u}_t, e_i)\circ dB_t^i({w}),
\end{equation}
where  ${\rm u}=\lfloor {\rm u}\rceil^{0}$  and $\{D\overrightarrow{F}_{\underline{t}, \overline{t}}\}_{0\leq \underline{t}<\overline{t}\leq T}$ are  the tangent maps of the flow maps $\{\overrightarrow{F}_{\underline{t}, \overline{t}}\}_{0\leq \underline{t}<\overline{t}\leq T}$ associated to  (\ref{FM-BM-SDE-11}) at $\l=0$ (the  arrow  is to indicate the time is recorded starting from  $x$).  By Lemma \ref{El-Ku-Cor} (see also Lemma \ref{Tangent map-SDE}),  the $\{D \overrightarrow{F}_{t, T}\}$ are  determined by the paths $({\rm x}_\tau({w})=\pi({\rm u}_t(w)))_{\tau\in [0, T]}$ (or its anti-development in $\Bbb R^m$). Hence (\ref{diff-in-lam-1}) shows that $(\lfloor {\rm u}_T\rceil^{\l})^{(1)}_0({w})$ are objects completely determined by   $({\rm x}_\tau({w}))_{\tau\in [0, T]}$, $(\lfloor {\rm u}_0\rceil^{\l})^{(1)}_0$  and $(H^{\l})^{(1)}_{0}$.

 By symmetry of the Brownian motion,  we can  describe the distribution of $(\lfloor {\rm u}_T\rceil^{\l})^{(1)}_0$ conditioned on  ${\rm x}_T=y$  using  $({\rm y}_t, \mho_t)_{t\in [0, T]}$, which  is the stochastic pair defining  the  Brownian motion on $(\M, \wt{g})$ starting from $y$.  The two path spaces $C_{y, x}([0, T], \M)$  and $C_{x, y}([0, T], \M)$  can be identified.  Moreover,   the distribution of ${\rm y}$  conditioned on ${\rm y}_T=x$ coincides with ${\rm x}$  conditioned on ${\rm x}_T=y.$  This means for almost all such path $({\rm y}_\tau)_{\tau\in [0, T]}({\rm w})=:\beta$, it is associated with a path $({\rm x}_t)_{t\in [0, T]}({w})=(\beta_{T-\tau})_{\tau\in [0, T]}=:\overrightarrow{\beta}$. So the stochastic parallel transportation of ${\rm u}_t$ along $\overrightarrow{\beta}$ is well-defined and is given by 
$${\rm u}_t=\mho_{T-t}(\mho_{T})^{-1}{\rm u}_0.$$
For any element ${\rm X}\in T_{{\rm u}_t}\mathcal{F}(\M)$, let 
\[
(\theta,  \varpi)_{{\rm u}_t}^{-1}{\rm X}=:({\rm X}^1, {\rm X}^2). 
\]
Note that the orthonormal frames  ${\rm u}_t$ and $\mho_{T-t}$ have the same footpoint ${\rm x}_t(\omega)={\rm y}_{T-t}({\rm w})$. Hence ${\rm X}$ also naturally corresponds to an element ${\rm Y}({\rm X})={\rm Y}$ in $T_{{\mho}_{T-t}}\mathcal{F}(\M)$ with 
\begin{equation*}
(\theta, \varpi)_{\mho_{T-t}}^{-1}{\rm Y}:=\left(\mho_{T-t}^{-1}{\rm u}_t({\rm X}^1),  Ad(\mho_{T-t}^{-1}{\rm u}_t)({\rm X}^2)\right).
\end{equation*}
We see that ${\rm X}$ and ${\rm Y}({\rm X})$ are just the same vector expressed in different frame charts. Denote by ${\rm Y}$ this map which sends tangents ${\rm X}\in T_{{\rm u}_\tau}\mathcal{F}(\M)$ to ${\rm Y}({\rm X})\in T_{{\mho}_{T-\tau}}\mathcal{F}(\M)$ for any $\tau\in [0, T]$.  Let $(F_{t_1, t_2})_{0\leq t_1<t_2\leq T}$ and $(DF_{t_1, t_2})_{0\leq t_1<t_2\leq T}$ be the invertible stochastic flow maps and tangent maps associated to  ${\rm y}$ (cf. (\ref{Horizontal-u})). The following is true. 

\begin{lem}\label{lem-fore-back}Let $\beta$, ${\rm X}, {\rm Y}$ be introduced as above. Then for almost all $\beta$, we have 
\begin{equation}\label{fore-back}
{\rm Y}\big(D\overrightarrow{F}_{t, T}({\rm u}_t, {w}){\rm X}\big)=D(F_{0, T-t}({\mho}_0, {\rm w}))^{-1}({\rm Y}({\rm X}))=\big[DF_{0, T-t}({\mho}_0, {\rm w})\big]^{-1}({\rm Y}({\rm X})).
\end{equation}
\end{lem}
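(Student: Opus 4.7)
The identity is a geometric fact about the Jacobi (frame-bundle tangent) equation along the common path $\beta = ({\rm x}_t)_{t \in [0,T]} = ({\rm y}_{T-t})_{t \in [0,T]}$: both sides encode the same linear transport along $\beta|_{[t,T]}$, merely computed from opposite endpoints and reconciled through the gauge identification ${\rm Y}$. The plan is to verify that both sides, regarded as elements of $T_{\mho_0}\mathcal{F}(\M)$, satisfy the same stochastic differential equation with the same initial condition, and then to invoke pathwise uniqueness.

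The crucial structural observation is that, since ${\rm u}_s = \mho_{T-s}(\mho_T)^{-1}{\rm u}_0$, the element $g := \mho_{T-s}^{-1}{\rm u}_s = (\mho_T)^{-1}{\rm u}_0 \in O(m)$ does \emph{not} depend on $s$. Hence, pointwise in $s$, the identification ${\rm Y}$ is the right action $R_{g^{-1}}$ of a single fixed $g$, and it intertwines the canonical forms via the standard equivariance $R_g^*\theta = g^{-1}\theta$, $R_g^*\varpi = \mathrm{Ad}(g^{-1})\varpi$. Setting ${\rm X}_s := D\overrightarrow{F}_{t,s}({\rm u}_t, w){\rm X}$ and applying Lemma \ref{Tangent map-SDE}, its $(\theta, \varpi)$-coordinates solve the Stratonovich SDE driven by the anti-development $\alpha = \mathcal{I}_{{\rm u}_0}^{-1}({\rm x})$; translating via ${\rm Y}$ yields the same form of SDE for ${\rm Y}({\rm X}_s)$ at $\mho_{T-s}$, now driven by the $\mho$-frame anti-development $g^{-1}\alpha$ of $\beta$. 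Separately, on the $\mho$-side, writing $\bar{\rm X}_\sigma := [DF_{\sigma, T-t}(\mho_\sigma, {\rm w})]^{-1}{\rm Y}({\rm X})$ for $\sigma \in [0, T-t]$ and differentiating the cocycle identity $DF_{\sigma, T-t} = DF_{\sigma', T-t}\circ DF_{\sigma, \sigma'}$---which is legitimate for Stratonovich flows---produces the backward-in-$\sigma$ analogue of the same SDE, driven by $b = \mathcal{I}_{\mho_0}^{-1}({\rm y})$. Under the identity ${\rm x}_s = {\rm y}_{T-s}$ on the bridge, the two drivers agree after the $C^1$ change of variable $\sigma = T - s$; the initial conditions at $\sigma = T-t$ (equivalently $s=t$) are both ${\rm Y}({\rm X})$; and pathwise uniqueness for this linear Stratonovich system closes the argument.

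The main obstacle I foresee is the clean treatment of the time reversal for the stochastic integrals, since the forward filtrations of $B$ and $b$ do not coincide, so neither of the two Stratonovich systems is literally adapted to a common filtration after reversal. The workaround is to take advantage of the pathwise continuity of both sides as functionals of $\beta \in C_{x,y}([0,T], \M)$---guaranteed by Proposition \ref{El-Ku} together with the continuous-in-path solution theory underlying Theorem \ref{Main-alpha-x-v-Q}---to reduce the identity, by density, to its deterministic counterpart for piecewise-smooth paths $\beta$. For smooth $\beta$ everything is an ODE (as in Lemma \ref{HS-Th2.1} and (\ref{DF-alpha-eq})), and both sides reduce to classical parallel transport composed with the Jacobi-tensor equation along $\beta$; the identity is then immediate from the fact that parallel transport along a curve and along its reverse are mutually inverse at the level of frame-bundle tangent maps, once the constant-gauge nature of ${\rm Y}$ is factored in. Passing to the almost-sure limit over smooth approximants yields (\ref{fore-back}).
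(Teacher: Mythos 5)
Your core argument is essentially the paper's proof: the paper also writes ${\rm X}_\tau=D\overrightarrow{F}_{t,\tau}({\rm u}_t,w){\rm X}$ in $(\theta,\varpi)$-coordinates via Lemma \ref{Tangent map-SDE}, uses the constant-gauge identity $(\mho_{T-\tau})^{-1}{\rm u}_\tau\equiv(\mho_T)^{-1}{\rm u}_0$ to transfer the equation to the $\mho$-frames, and then recognizes the resulting system, driven by the backward Stratonovich infinitesimal $\circ d\overrightarrow{B}_{T-\tau}$, as exactly the SDE solved by $\big[DF_{0,T-t}(\mho_0,{\rm w})\big]^{-1}$, so the identity follows from uniqueness with the common initial value ${\rm Y}({\rm X})$. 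Your paragraph on time reversal is a detour the paper does not need, since both sides are Stratonovich integrals of the same path and the rewriting in terms of $\circ d\overrightarrow{B}$ is just a change of variable in the Stratonovich calculus; moreover, as literally stated your reduction ``by density'' is not justified, because solution maps of SDEs are not continuous functionals of the driving path in the uniform topology, and Proposition \ref{El-Ku} gives continuity in the initial point and in time, not in the driver, so if you insisted on an approximation argument you would have to run a Wong--Zakai scheme (as in the mollified construction of Section \ref{BMM}) rather than appeal to density plus pathwise continuity.
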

\begin{proof}By   Corollary \ref{El-Ku-Cor-1},  for almost all ${\rm w}$, the maps $F_{0, T-t}(\cdot, {\rm w})$ are $C^{k-2}$ diffeomorphisms. So for almost all ${\rm w}$, the tangent maps $D(F_{0, T-t}({\mho}_0, {\rm w}))^{-1}$ and $[DF_{0, T-t}({\mho}_0, {\rm w})]^{-1}$ exist and are equal. For (\ref{fore-back}), it suffices to verify the first equality. 

Write $(\theta,  \varpi)_{{\rm u}_t}^{-1}{\rm X}=:({\rm X}^1_t, {\rm X}^2_t)$ and let 
\[
({\rm X}^1_\tau, {\rm X}^2_\tau):=(\theta,  \varpi)_{{\rm u}_{\tau}}^{-1}D\overrightarrow{F}_{t, \tau}({\rm u}_t, {w}){\rm X}, \ \forall \tau\in [t, T]. 
\]
It is true by Lemma \ref{Tangent map-SDE}  that 
\begin{align}
d{\rm X}^1_{\tau}&={\rm X}^2_{\tau}\circ dB_\tau(w),\label{T-SDE-1}\\
d{\rm X}_\tau^{2}&=({\rm u}_{\tau})^{-1}R\left({\rm u}_{\tau}\circ dB_{\tau}(w), {\rm u}_{\tau}{\rm X}^1_{\tau}\right) {\rm u}_{\tau}.\label{T-SDE-2}
\end{align}
Let 
\[({\rm Y}^1_{\tau}, {\rm Y}^2_{\tau}):=(\theta,  \varpi)_{{\mho}_{T-\tau}}^{-1}{\rm Y}\left((\theta,  \varpi)_{{\rm u}_\tau}({\rm X}^1_{\tau}, {\rm X}^2_{\tau})\right).\] Note that $(\mho_{T-\tau})^{-1}{\rm u}_\tau\equiv(\mho_{T})^{-1}{\rm u}_0$. So (\ref{T-SDE-1}) gives 
\begin{align*}
d{\rm Y}^1_{\tau}=(\mho_{T-\tau})^{-1}{\rm u}_\tau d{\rm X}^1_{\tau}&=(\mho_{T-\tau})^{-1}{\rm u}_\tau {\rm X}^2_{\tau}\circ dB_\tau(w)\\
&=-(\mho_{T-\tau})^{-1}{\rm u}_\tau {\rm X}^2_{\tau}\left((\mho_{T-\tau})^{-1}{\rm u}_\tau\right)^{-1}\circ (\mho_{T-\tau})^{-1}{\rm u}_\tau d\overrightarrow{B}_{T-\tau}({\rm w})\\
&=-{\rm Y}^2_{\tau}\circ d\overrightarrow{B}_{T-\tau}({\rm w}),
\end{align*}
where $\circ \overrightarrow{B}_{t}({\rm w})$ denote the backward  Stratonovich integral. 
Similarly, using (\ref{T-SDE-2}), we obtain 
\begin{align*}
d{\rm Y}^2_{\tau}&=((\mho_{T-\tau})^{-1}{\rm u}_\tau)d{\rm X}^2_{\tau}((\mho_{T-\tau})^{-1}{\rm u}_\tau)^{-1}\\
&= (\mho_{T-\tau})^{-1}R\left({\rm u}_{\tau}\circ dB_{\tau}(w), {\rm u}_{\tau}{\rm X}^1_{\tau}\right)\mho_{T-\tau}\\
&= -(\mho_{T-\tau})^{-1}R\left({\mho}_{T-\tau}\circ d\overrightarrow{B}_{T-\tau}({\rm w}), {\mho}_{T-\tau}{\rm Y}^1_{\tau}\right)\mho_{T-\tau}. 
\end{align*}
Altogether, we have 
\begin{align*}
d{\rm Y}^1_{\tau}&=-{\rm Y}^2_{\tau}\circ d\overrightarrow{B}_{T-\tau}({\rm w}),\\
d{\rm Y}^2_{\tau}&=-(\mho_{T-\tau})^{-1}R\left({\mho}_{T-\tau}\circ d\overrightarrow{B}_{T-\tau}({\rm w}), {\mho}_{T-\tau}{\rm Y}^1_{\tau}\right)\mho_{T-\tau}
\end{align*}
and the solution $({\rm Y}^1_{T}, {\rm Y}^2_{T})$ is exactly $(\theta,  \varpi)_{\mho_0}\left(D(F_{0, T-t}({\mho}_0, {\rm w}))^{-1}({\rm Y}({\rm X}))\right)$. 
\end{proof}

 As a corollary of (\ref{diff-in-lam-1}) and Lemma \ref{lem-fore-back}, we have
\begin{cor}\label{Cor-fore-back}Conditioned on ${\rm x}_T=y$, the distribution of $(\lfloor {\rm u}_T\rceil^{\l})^{(1)}_0$  given by (\ref{diff-in-lam-1}) is the same as,  conditioned on ${\rm y}_T=x$, the distribution of 
 \begin{align*}
\notag&\overline{(\lfloor {\rm u}_T\rceil^{\l})^{(1)}_0}:=[DF_{0, T}({\mho}_0, {\rm w})]^{-1}(\lfloor {\rm u}_0\rceil^{\l})^{(1)}_{0} - \int_{0}^{T}[DF_{0, t}({\mho}_0, {\rm w})]^{-1}(H^{\l})^{(1)}_{0}(\mho_{t}, e_i)\circ d\overrightarrow{B}_{t}^i({\rm w}),
\end{align*}
where $\circ d\overrightarrow{B}_{t}({\rm w})$ is the backward Stratonovich infinitesimal. 
\end{cor}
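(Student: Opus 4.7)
\textbf{Proof plan for Corollary \ref{Cor-fore-back}.} The strategy is to apply the time-reversal symmetry of Brownian bridges (Lemma \ref{Brown-bridge-symmetry} i)-ii)) together with the frame identification ${\rm X}\mapsto {\rm Y}({\rm X})$ and Lemma \ref{lem-fore-back} directly to the representation (\ref{diff-in-lam-1}). Fix $x,y\in\widetilde M$. Conditioned on ${\rm x}_T=y$ with uniform initial frame ${\rm u}_0\in\mathcal{O}_x^{\wt g}$, the bridge process $({\rm x}_t,{\rm u}_t)_{t\in[0,T]}$ has the same law as the time-reversal $({\rm y}_{T-t},\mho_{T-t})_{t\in[0,T]}$ of the $y$-bridge conditioned on ${\rm y}_T=x$ with uniform initial frame $\mho_0\in\mathcal{O}_y^{\wt g}$; the map ${\rm Y}$ of Section \ref{QIPF-y-s} is the canonical identification between tangent spaces at corresponding frames. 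Since ${\rm Y}$ is an isometric identification preserving the tangential structure, applying ${\rm Y}$ to both sides of (\ref{diff-in-lam-1}) yields a random variable with the same law as the original, up to the distributional equivalence between the two bridges.

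First I would treat the non-stochastic term. Since $(\lfloor{\rm u}_0\rceil^{\l})_0^{(1)}$ sits in $T_{{\rm u}_0}\mathcal F(\widetilde M)=T_{\mho_T}\mathcal F(\widetilde M)$ after identification, Lemma \ref{lem-fore-back} with $t=0$ gives
\[
{\rm Y}\bigl([D\overrightarrow{F}_{0,T}({\rm u}_0,w)](\lfloor{\rm u}_0\rceil^{\l})_0^{(1)}\bigr)=[DF_{0,T}(\mho_0,{\rm w})]^{-1}\bigl((\lfloor{\rm u}_0\rceil^{\l})_0^{(1)}\bigr),
\]
which matches the first summand on the right of the claimed identity. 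Next, I would pass ${\rm Y}$ through the stochastic integral. For each $t$, Lemma \ref{lem-fore-back} gives
\[
{\rm Y}\bigl([D\overrightarrow{F}_{t,T}({\rm u}_t,w)](H^{\l})^{(1)}_0({\rm u}_t,e_i)\bigr)=[DF_{0,T-t}(\mho_0,{\rm w})]^{-1}\bigl((H^{\l})^{(1)}_0(\mho_{T-t},e_i)\bigr),
\]
using that ${\rm Y}$ acts on $(H^{\l})^{(1)}_0({\rm u}_t,e_i)\in T_{{\rm u}_t}\mathcal F(\widetilde M)=T_{\mho_{T-t}}\mathcal F(\widetilde M)$ simply by reidentifying the footpoint. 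Finally, the forward Stratonovich increment $\circ dB_t(w)$ along the forward process corresponds to $-\circ d\overrightarrow{B}_{T-t}({\rm w})$ along the time-reversed process---a standard fact for Stratonovich integrals under time reversal which is already invoked in the proof of Lemma \ref{lem-fore-back}. The substitution $\tau=T-t$ then yields precisely the backward integral $-\int_0^T[DF_{0,\tau}(\mho_0,{\rm w})]^{-1}(H^{\l})^{(1)}_0(\mho_\tau,e_i)\circ d\overrightarrow{B}_\tau^i({\rm w})$ appearing in $\overline{(\lfloor{\rm u}_T\rceil^{\l})_0^{(1)}}$.

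The only nontrivial step is the joint application of ${\rm Y}$ inside the Stratonovich integral together with the forward-to-backward conversion; since the tangent maps $DF$ and $D\overrightarrow F$ satisfy conjugate SDEs driven by $dB$ and $d\overrightarrow B$ respectively (as shown inside the proof of Lemma \ref{lem-fore-back}), this conversion is justified by the already established termwise identity and linearity of the Stratonovich integral in the integrand. I do not expect substantive obstacles beyond bookkeeping the change of variables $t\mapsto T-t$ and the sign produced by time reversal; no new stochastic estimate is required, since the distributional statement follows from the pathwise identities established above combined with Lemma \ref{Brown-bridge-symmetry}.
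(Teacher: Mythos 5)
Your proposal is correct and follows essentially the same route as the paper: the paper's proof also rewrites $(H^{\l})^{(1)}_{0}({\rm u}_t,\circ dB_t)$ in the reversed frame as $-(H^{\l})^{(1)}_{0}(\mho_{T-t},\circ d\overrightarrow{B}_{T-t})$, applies Lemma \ref{lem-fore-back} termwise to convert $D\overrightarrow{F}_{t,T}$ into $[DF_{0,T-t}]^{-1}$, and integrates over $[0,T]$, with the initial-vector term handled directly by Lemma \ref{lem-fore-back} and the bridge time-reversal symmetry supplying the distributional identification. No gaps beyond the bookkeeping you already describe.
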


\begin{proof}  Consider the mapping 
\[
(H^{\l})^{(1)}_{0}({\rm u}_t, \cdot)=(H^{\l})^{(1)}_{0}(\mho_{T-t}(\mho_{T})^{-1}{\rm u}_0, \cdot)
\]
from $T_o\Bbb R^m$ to $T_{{\rm u}_t}\mathcal{F}(\M)$.  We have 
\[
(H^{\l})^{(1)}_{0}({\rm u}_t, e_i)\circ dB_t^i({w})=(H^{\l})^{(1)}_{0}({\rm u}_t, \circ dB_t({w}))
\]
and its correspondence at $T_{{\mho}_{T-t}}\mathcal{F}(\M)$ is  $-
(H^{\l})^{(1)}_{0}(\mho_{T-t}, \circ d\overrightarrow{B}_{T-t}({\rm w})).$
So, by Lemma \ref{lem-fore-back},  
\[\left[D \overrightarrow{F}_{t, T}({\rm u}_t, {w})\right](H^{\l})^{(1)}_{0}({\rm u}_t, \circ dB_t({w}))=-[DF_{0, T-t}({\mho}_0, {\rm w})]^{-1}(H^{\l})^{(1)}_{0}(\mho_{T-t}, \circ d\overrightarrow{B}_{T-t}({\rm w}))\]
and the conclusion follows by taking the integral with respect to $t$ on $[0, T]$. \end{proof}

 Let $\alpha^s, {\rm y}^s$ and  ${\mho}^s$ be the processes obtained in Theorem \ref{Main-alpha-x-v-Q}. Let $({F}^s_{t_1, t_2})_{0\leq t_1<t_2\leq T}$ be the parallel transportation stochastic flow of ${\rm y}^s$ and let $[DF_{t_1, t_2}^s({\mho}_{t_1}^s, {\rm w})]$ be the associated tangent maps.  By Proposition  \ref{El-Ku}, $[D{F}^s_{0, t}(\mho^s_t, {\rm w})]$ is invertible for almost all ${\rm w}$. Hence the inverse maps  $[DF_{0, t}^s({\mho}_0^s, {\rm w})]^{-1}$ are well-defined.   Corollary \ref{Cor-fore-back} shows the distribution of $(\lfloor {\rm u}_T\rceil^{\l})^{(1)}_0(w)$ is the same as  $\overline{(\lfloor {\rm u}_T\rceil^{\l})^{(1)}_0}({\rm w})$. We define 
 \[
 (\lfloor {\rm u}_T\rceil^{\l})^{(1)}_0(w)\circ {\bf F}^s:=\overline{(\lfloor {\rm u}_T\rceil^{\l})^{(1)}_0}({\rm w})\circ {\bf F}^s=\overline{(\lfloor {\rm u}_T^s\rceil^{\l})^{(1)}_0}({\rm w}),
 \]
where
\begin{align*}
\notag\overline{(\lfloor {\rm u}_T^s\rceil^{\l})^{(1)}_0}:=[DF_{0, T}^s({\mho}_0^s, {\rm w})]^{-1}(\lfloor {\rm u}_0\rceil^{\l})^{(1)}_{0}- \int_{0}^{T}[DF_{0, t}^s({\mho}_0^s, {\rm w})]^{-1}(H^{\l})^{(1)}_{0}(\mho_{t}^s, e_i)\circ d\overrightarrow\alpha^{s, i}_{t}({\rm w}). 
\end{align*}
So the differentiability of   $(\lfloor {\rm u}_T\rceil^{\l})^{(1)}_0\circ {\bf F}^s$ in $s$
will follow from  the differentiability of $\overline{(\lfloor {\rm u}_T^s\rceil^{\l})^{(1)}_0}$ in $s$,  which is intuitively true by  the differentiability of  (in $s$) of  $\alpha^{s}_{t}$,  $\mho^{s}_t$ and  $[DF_{0, t}^s({\mho}_0^s, \cdot)]^{-1}$.  We will justify this and  formulate $((\lfloor {\rm u}_T\rceil^{\l})^{(1)}_0\circ {\bf F}^s)'_s$ in the remaining part of this subsection. 

\begin{lem}\label{O-mathttg-diff-esti}Let $\alpha^s_t, O^s_t, \mathtt g^s_t$, $\underline{\Upsilon}_{{\rm V}, {\alpha}^\jmath}$ and $\mho_t^s$ be as in Theorem \ref{Main-alpha-x-v-Q}. Fix $T_0>0$. For any $s_0>0$, $q\geq 1$ and $T>T_0$,   there are constants  $\underline{c}_{{\rm A}}$ (which depends on $s_0, m, q,  \mathtt{s}$ and  $\|g^{0}\|_{C^3}$)  and  $c_{{\rm A}}$ (which depends  on $m, q, T, T_0$ and $\|g^{0}\|_{C^3}$) such that 
\begin{equation}\label{A-q-P-y-x-T}
\sup_{s\in [-s_0, s_0]}\E_{\P^*_{y, x, T}}\sup_{t\in [0, T]}\left|{\rm A}\right|^{q}<\underline{c}_{{\rm A}} e^{c_{{\rm A}}(1+d_{\wt{g}^{\l}}(x, y))}, 
\end{equation}
where ${\rm A}=\alpha^s_t,\  (O^s_t)'_s,\  (\mathtt g^s_t)'_s,\  \underline{\Upsilon}_{{\rm V}, {\alpha}^s}, \ (\mho_t^s)'_s $ or $\  (\theta, \varpi)\big( (\mho_t^s)'_s\big).$
\end{lem}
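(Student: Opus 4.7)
The plan is to follow the scheme of Proposition \ref{est-norm-D-j-F-t-cond}, transferring the non-conditional estimations to the Brownian bridge measure via the Cameron-Martin-Girsanov change of variable. By Lemma \ref{Hsu-thm-5.4.4}, under $\P_{y,x,T}$ the process $b_t := B_t - 2\int_0^t \mho_\tau^{-1}\nabla\ln p(T-\tau,{\rm y}_\tau,x)\,d\tau$ is a Brownian motion. For each of the processes ${\rm A}$ listed in (\ref{A-q-P-y-x-T}), I will write it as a stochastic integral against $dB$ plus absolutely continuous pieces, substitute $dB = db + 2\mho^{-1}\nabla\ln p\,d\tau$, and split the resulting $L^q$-bound into a martingale part treated by Doob's and Burkholder's inequalities (Lemma \ref{Ku-lem}) exactly as in the non-conditional proofs, plus a drift correction of the form $\int_0^T\|\nabla\ln p(T-\tau,{\rm y}_\tau,x)\|\,d\tau$ whose exponential moments are controlled by Proposition \ref{cond-nabla-ln-p} with bound $e^{c(1+d(x,y))}$.

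I would treat the cases in the following order. First, for $\alpha^s_t$ itself, recall that $O^s_\tau \in \mathcal{O}(\Bbb R^m)$ is uniformly bounded and $|{\mathtt g}^s_\tau|\le C(s_0,\mathtt s)\sup|{\rm V}|$ by the iteration formulas (\ref{iteration-O})--(\ref{iteration-g}), so the bound follows from a single application of Burkholder plus Hölder after the $db$/drift split. Second, for $\underline{\Upsilon}_{{\rm V},\alpha^s}(t)$ I use its It\^o expression (\ref{Ito-Upsilon})--(\ref{Ito-K-V}), observing that $|\Upsilon_{{\rm V},\beta^s}(\tau)|\le \mathtt s(\tau)\sup|{\rm V}|$ (parallel transport is an isometry), so the integrand $K_{{\rm V},\alpha^s}(\tau)$ has the same kind of deterministic bound and we argue as in step one. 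Third, for the $s$-derivatives, differentiate (\ref{iteration-O}) to get $(O^s_t)'_s = -K_{{\rm V},\alpha^s}(t)O^s_t$, so the bound on $(O^s_t)'_s$ follows directly from the step-two bound on $K_{{\rm V},\alpha^s}$; similarly $(\mathtt g^s_t)'_s$ is controlled by differentiating (\ref{iteration-g}) and combining these with Hölder.

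Fourth, for $(\mho_t^s)'_s$ (and its $(\theta,\varpi)$-coordinates), I use the It\^o SDE (\ref{Y-t-s-diff-ITO}): $\theta(Y^s_t)$ is given by an absolutely continuous integral bounded uniformly in $(t,s)$, while $\varpi(Y^s_t)$ satisfies a stochastic equation driven by $d\alpha^s$. After the $db$/drift substitution the $dB$-piece is handled by Burkholder, and the drift piece by Hölder's inequality combined with Proposition \ref{cond-nabla-ln-p}. To close up I bound
\[
\phi(\tilde t) := \sup_{s\in[-s_0,s_0]}\E_{\P^*_{y,x,T}}\sup_{t\in[0,\tilde t]}|(\mho_t^s)'_s|^q
\]
by $C(q,s_0,T)\cdot e^{c(1+d(x,y))} + C\int_0^{\tilde t}\phi(\tau)\,d\tau$ and apply Gronwall (Lemma \ref{Gronwall}).

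The main obstacle is handling the feedback loop in step four: the drift correction from the bridge change of measure brings in $\|\nabla\ln p(T-\tau,{\rm y}_\tau,x)\|$, which is not bounded uniformly in $\tau$ near $\tau=T$ and must be carried through the Gronwall step. The way to overcome this is the same Hölder-plus-exponential-moment trick used in the proof of Proposition \ref{est-norm-D-j-F-t-cond}: split the $L^q$ expectation using Hölder into one factor of higher-order $L^{q'}$ moments of the stochastic objects (bounded by Proposition \ref{est-norm-u-t-j} type arguments applied to $\alpha^s$, $\mho^s$, and the flow $F^s_{\tau,t}$) and another factor of the exponential of $C\int_0^T\|\nabla\ln p\|\,d\tau$, the latter being finite with bound $e^{c(1+d(x,y))}$ by Proposition \ref{cond-nabla-ln-p}. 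This yields precisely the exponential dependence $e^{c_{\rm A}(1+d_{\wt g^{\l}}(x,y))}$ claimed in (\ref{A-q-P-y-x-T}).
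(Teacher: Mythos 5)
Your proposal follows essentially the same route as the paper: use Lemma~\ref{Hsu-thm-5.4.4} to split $dB$ into a bridge Brownian motion $db$ plus a $\nabla\ln p$ drift, treat the martingale pieces by Doob/Burkholder and the drift pieces via H\"older plus Proposition~\ref{cond-nabla-ln-p}, and exploit that $O^s\in\mathcal{O}(\Bbb R^m)$, $|\mathtt{g}^s|$ and $|\Upsilon_{{\rm V},\beta^s}|$ are deterministically bounded so that everything reduces to controlling $K_{{\rm V},\alpha^s}(t)$. Steps one through three match the paper's argument.

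The one place your proposal departs from the paper and where you overcomplicate things is in step four, for $(\mho^s_t)'_s$ and $(\theta,\varpi)((\mho^s_t)'_s)$. You describe a ``feedback loop'' and propose to close it with Gronwall. But look at the integrated form of (\ref{Y-t-s-diff-ITO}) (equivalently, Corollary~\ref{cor-mho-t-s-d}): the right-hand sides depend only on $\mho^s_t$, $\mathtt{s}(t)$, $V(F^s y)$ and $d\alpha^s_t$, and not on $Y^s_t$ itself. There is no self-feedback. Integrating directly gives $\theta(Y^s_t)=\mathtt{s}(t)(\mho_0^s)^{-1}V(F^s y)$, which is deterministically bounded, and $\varpi(Y^s_t)=K_{{\rm V},\alpha^s}(t)$, which you already estimated in your step two. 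So the paper simply reads off the bound from the $K$-estimate; the Gronwall step you introduce is vacuous (the coefficient in front of $\int_0^{\tilde t}\phi\,d\tau$ would be zero), and the worry about carrying $\|\nabla\ln p\|$ through a Gronwall loop does not actually arise for this particular process. That concern \emph{is} genuine when estimating the tangent flows $[DF^s_{\tau,t}]^{-1}$, where the SDE does have a linear self-term; that is handled separately in Lemma~\ref{cond-DF-norm}. Here it is not needed, and recognizing the direct identification $\varpi(Y^s_t)=K_{{\rm V},\alpha^s}(t)$ is the cleaner and intended route.
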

\begin{proof} By our construction,  $\alpha^{s}_{t}=\int_{0}^{t}O_{\tau}^sd B_{\tau}+{\mathtt g}^s_{\tau}\ d\tau$, where $O^s\in \mathcal{O}(\Bbb R^m)$ and 
$|{\mathtt g}^s|\leq {\mathtt{c}}s_0 \sup|{\rm V}|$ for some $\mathtt{c}$ that bounds $\sup_{t\in [0, T]}\{|\mathtt s|, |\mathtt s'|\}$, $\sup\{\|\rm Ric\|\}$.  So,  
\begin{align*}
2^{1-q}\E_{\P^*_{y, x, T}}\sup_{t\in [0, T]}\left|\alpha^{s}_{t}\right|^{q}\leq &\E_{\P^*_{y, x, T}}\sup_{t\in [0, T]}\left\|\int_{0}^{t}O_{\tau}^sd B_{\tau}\right\|^{q}+\underline{c}_0 T_0^{-m}(\mathtt{c}s_0T\sup|{\rm V}|)^q e^{c_0(1+T)}\\
=: & {\rm{(I)}}+\underline{c}_0 T_0^{-m}(\mathtt{c}s_0T\sup|{\rm V}|)^q e^{c_0(1+T)},
\end{align*}
where $\underline{c}_0, c_0$ are from (\ref{p-t-rough}).  Let  $b$ be the Brownian motion in Lemma \ref{Hsu-thm-5.4.4} for  $\P_{y, x, T}$, i.e., 
\begin{equation}\label{B-b-relation-0}
d B_{\tau}=d{b}_{\tau}+2({\mho}_\tau^0)^{-1}\nabla\ln p(T-\tau, {\rm y}_{\tau}^0, x)\ d\tau.
\end{equation} 
Then, \begin{align*}
{\rm{(I)}} =\ & \E_{\P^*_{y, x, T}}\sup_{t\in [0, T]}\left\|\int_{0}^{t}O_{\tau}^s\ d{b}_{\tau}+2O_{\tau}^s({\mho}_\tau^0)^{-1}\nabla\ln p(T-\tau, {\rm y}_{\tau}^0, x)\ d\tau\right\|^{q}\\
\leq\ & 2^{q-1}\E_{\P^*_{y, x, T}}\sup_{t\in [0, T]}\left\|\int_{0}^{t}O_{\tau}^s\ d{b}_{\tau}\right\|^q+2^{2q-1}\E_{\P^*_{y, x, T}}\left|\int_{0}^{T}\left\|\nabla\ln p(T-t, {\rm y}_{t}^0, x)\right\|\ dt\right|^{q}\\
=: &\; {\rm{(I)}_1}+{\rm{(I)}_2}. 
\end{align*}
For ${\rm{(I)}_1}$, by successively using Doob's inequality of submartingale, H\"{o}lder's inequality and Burkholder's inequality,  we obtain 
\begin{align*}
2^{1-q} {\rm{(I)}_1} \leq &\  {\mathtt C}(q)p(T, x, y)\E_{\P_{y, x, T}}\big\|\int_{0}^{T}O_{\tau}^s\ d{b}_{\tau}\big\|^{q}\\
\leq &\ {\mathtt C}(q)p(T, x, y)\left(\E_{\P_{y, x, T}}\big\|\int_{0}^{T}O_{\tau}^s\ d{b}_{\tau}\big\|^{2q}\right)^{\frac{1}{2}}\\
\leq &\ \  \underline{c}_0 T_0^{-m}{\mathtt C}(q){\mathtt C}_1(q)\sqrt{T}^{q}e^{c_0(1+T)}, 
\end{align*}
where ${\mathtt C}(q)=(q/q-1)^{q}$ and ${\mathtt{C}}_1(\cdot)$ is as in Lemma \ref{Ku-lem}. 
For ${\rm{(I)}_2}$,  by  Proposition \ref{cond-nabla-ln-p}, 
\begin{align*}
2^{1-2q}{\rm{(I)}_2} \leq &\ \E_{\P^*_{y, x, T}} \left(e^{q\int_{0}^{T}\|\nabla\ln p(T-t, {\rm y}_{t}^0, x)\|\ dt}\right)< e^{c(1+d(x, y))}, 
\end{align*}
where $c$ is as in (\ref{exp-grad-lnp}). Putting the estimations together, we obtain (\ref{A-q-P-y-x-T}) for ${\rm A}=\alpha^s_t$. 

Next, we consider  (\ref{A-q-P-y-x-T})  for  $(O^s_t)'_s$, $(\mathtt g^s_t)'_s$.  By (\ref{iteration-O}) and (\ref{iteration-g}), we have
\begin{align*}
(O_{t}^s)'_s&=-K_{{\rm V}, \alpha^s}(\tau)O_t^s, \\
(\mathtt g^s_{t})'_s&=-K_{{\rm V}, \alpha^s}(\tau){\mathtt g}_t^s + (\mho_0^s)^{-1}[\mathtt{s}'(t){\rm V}(F^sy)]- {\rm Ric}\left(\mho^s_{t}(\mho^s_{0})^{-1}[\mathtt{s}(t){\rm V}(F^sy)]\right), 
\end{align*}
where 
\begin{align*}K_{{\rm V}, \alpha^{s}}(t)=&\int_{0}^{t}({\mho_{\tau}^{s}})^{-1}R\left({\mho_{\tau}^{s}}d\alpha_{\tau}^{s}, {\mho_{\tau}^{s}}(\mho_{0}^s)^{-1}[\mathtt{s}(\tau){\rm V}(F^s y)]\right){\mho_{\tau}^{s}}\\
&\!+\!\int_{0}^{t}\!({\mho_{\tau}^{s}})^{-1}\!(\nabla ({\mho_{\tau}^{s}} e_i)R)\big({\mho_{\tau}^{s}}e_i, {\mho_{\tau}^{s}}\!(\mho_0^s)^{-1}\![\mathtt{s}(\tau){\rm V}(F^sy)]\big){\mho_{\tau}^{s}}d\tau\notag.
\end{align*}
Since  $O^s\in \mathcal{O}(\Bbb R^m)$,  
$|{\mathtt g}^s|\leq {\mathtt{c}}s_0 \sup|{\rm V}|$,   and all the $|\mathtt s|, |\mathtt s'|$ and  $|{\rm V}|$ are uniformly bounded, it  is clear that  (\ref{A-q-P-y-x-T}) holds for $(O^s_t)'_s, (\mathtt g^s_t)'_s$ if  it holds for $K_{{\rm V}, \alpha^{s}}(t)$.  Using (\ref{B-b-relation-0}) and (\ref{abcq}),  we obtain 
\begin{align*}
&\E_{\P^*_{y, x, T}}\sup_{t\in[0, T]}\left|K_{{\rm V}, \alpha^{s}}(t)\right|^{q}\\
& \ \leq\ 3^{q-1}\E_{\P^*_{y, x, T}}\sup_{t\in[0, T]}\left|\int_{0}^{t}\!({\mho_{\tau}^{s}})^{-1}\!R\left({\mho_{\tau}^{s}}O_{\tau}^{s}db_{\tau}^{s}, {\mho_{\tau}^{s}}(\mho_{0}^s)^{-1}[\mathtt{s}(\tau){\rm V}(F^s y)]\right){\mho_{\tau}^{s}}\right|^q\\
&\ \ +3^{q-1}(2\sup\|R\|\sup|{\rm{V}})^q\E_{{\P^*}_{y, x, T}}\left|\int_{0}^{T}\|\nabla\ln p(T-t, {\rm y}_{t}^0, x)\|\ dt\right|^{q}\\
&\ \ +3^{q-1}(\sup\|\nabla R\|\sup|{\rm{V}}|s_0T)^q,
\end{align*}
which has the same bound type as in (\ref{A-q-P-y-x-T}) by a computation  similar to the one for ${\rm{(I)}}$.  

To  verify (\ref{A-q-P-y-x-T})  for $\underline{\Upsilon}_{{\rm V}, {\alpha}^s}$, it suffices to check it for ${\mathsf{K}}_t:=\int_{0}^{t}\langle K_{{\rm V}, \alpha^s}(\tau), d\alpha_{\tau}^s\rangle$ since 
 \[
\underline{\Upsilon}_{{\rm V}, \alpha^s}(t)=\int_{0}^{t} (\mho_0^s)^{-1}[\mathtt{s}'(\tau){\rm V}(F^sy)]- {\rm Ric}\left(\mho^s_{\tau}(\mho^s_{0})^{-1}[\mathtt{s}(\tau){\rm V}(F^sy)]\right)-{\mathsf{K}}_t. 
\]
By (\ref{B-b-relation-0}) and (\ref{abcq}), 
\begin{align*}
3^{1-q}\E_{\P^*_{y, x, T}}\!\!\sup_{t\in [0, T]}\left|{\mathsf{K}}_t\right|^{q}
 \leq\ &\  \E_{\P^*_{y, x, T}}\sup_{t\in [0, T]}\left|\int_{0}^{t}\langle K_{{\rm V}, \alpha^s}(\tau), O^s_{\tau}db_{\tau}\rangle\right|^{q}\\
&+\E_{\P^*_{y, x, T}}\sup_{t\in [0, T]}\left|\int_{0}^{t}\langle K_{{\rm V}, \alpha^s}(\tau), 2{\mho}_\tau^{-1}\nabla\ln p(T-\tau, {\rm y}_{\tau}^0, x)\ d\tau\rangle\right|^{q}\\
&+\E_{\P^*_{y, x, T}}\sup_{t\in [0, T]}\left|\int_{0}^{t}\langle K_{{\rm V}, \alpha^s}(\tau), {\mathtt g}_{\tau}^s\ d\tau\rangle\right|^{q}\\
=:\ & ({\rm{II}})_1+ ({\rm{II}})_2+({\rm{II}})_3. 
\end{align*}
For  $({\rm{II}})_1$, it is routine to apply successively  H\"{o}lder's inequality,  Doob's inequality of submartingale and Burkholder's  inequality, which gives
\begin{align*}
(({\rm{II}})_1)^2 \leq & p(T, x, y) \E_{\P^*_{y, x, T}}\sup_{t\in [0, T]}\left|\int_{0}^{t}\langle K_{{\rm V}, \alpha^s}(\tau), O^s_{\tau}db_{\tau}\rangle\right|^{2q}\\
\leq &{\mathtt C}(2q) p(T, x, y) \E_{\P_{y, x, T}}\left|\int_{0}^{T}\langle K_{{\rm V}, \alpha^s}(\tau), O^s_{\tau}db_{\tau}\rangle\right|^{2q}\\
\leq & {\mathtt C}(2q){\mathtt{C}}_1(2q) p(T, x, y)\E_{\P_{y, x, T}}\left|\int_{0}^{T}\left\|\langle K_{{\rm V}, \alpha^s}(\tau), O^s_{\tau}\rangle\right\|^2\ d\tau\right|^{q}\\
\leq &{\mathtt C}(2q) {\mathtt{C}}_1(2q)T^{q} \E_{\P^*_{y, x, T}}\sup_{\tau\in[0, T]}\left|K_{{\rm V}, \alpha^{s}}(\tau)\right|^{2q}. 
\end{align*}
For $({\rm{II}})_2$, it is true that 
\begin{align*}
(({\rm{II}})_2)^2 \leq &2^{2q}\E_{\P^*_{y, x, T}}\sup_{\tau\in[0, T]}\left|K_{{\rm V}, \alpha^{s}}(\tau)\right|^{2q}\cdot \E_{{\P}^*_{y, x, T}}\left|\int_{0}^{T}\|\nabla\ln p(T-\tau, {\rm y}_{t}^0, x)\|\ d\tau\right|^{2q}. 
\end{align*}
 For $({\rm{II}})_3$, a routine calculation shows 
\begin{align*}
({\rm{II}})_3\leq  (cs_0 \sup|{\rm V}|)^q T^{q}\E_{\P^*_{y, x, T}}\sup_{\tau\in[0, T]}\left|K_{{\rm V}, \alpha^{s}}(\tau)\right|^{q}. 
\end{align*}
Putting the estimations on $({\rm{II}})_1, ({\rm{II}})_2$ and  $({\rm{II}})_3$ together,   we conclude from  Proposition \ref{cond-nabla-ln-p} and the  estimation for $K_{{\rm V}, \alpha^{s}}$ that   (\ref{A-q-P-y-x-T}) also  holds true for ${\mathsf{K}}_t$. This shows (\ref{A-q-P-y-x-T}) for  $\underline{\Upsilon}_{{\rm V}, {\alpha}^s}$. 

Finally,  to check (\ref{A-q-P-y-x-T})  for $(\mho_t^s)'_s$, $(\theta, \varpi)\big( (\mho_t^s)'_s\big)$, it  suffices to consider the latter, which holds true by  the above conclusion  for $K_{{\rm V}, \alpha^{s}}$ since,   by  (\ref{Y-t-s-diff-ITO}), 
\[
\theta(Y^s_t)=\mathtt{s}(t)({\mho}_0^s)^{-1}V(F^s y), \  \varpi(Y^s_t)=K_{{\rm V}, \alpha^{s}}(t). 
\]
\end{proof}

\begin{lem}\label{cond-DF-norm}Let $\alpha^s$ be as in Theorem \ref{Main-alpha-x-v-Q}.  For $\underline{t}, t$, $0\leq \underline{t}<t\leq T$, we abbreviate 
\begin{align*}
[DF_{\underline{t}, t}^s({\mho}_{\underline{t}}^s, {\rm w})]&:=[DF_{\underline{t}, t}^{\alpha^s}({\mho}_{\underline{t}}^s, {\rm w})],\\
[\wt{(DF_{\underline{t}, t}^s)}(\mho_{0}^s, {\rm w})]&:=(\theta, \omega)_{\mho^s_t}[DF_{\underline{t}, t}^s({\mho}_{\underline{t}}^s, {\rm w})](\theta, \omega)_{\mho^s_{\underline{t}}}^{-1}. 
\end{align*}
Let $T_0>0$. 
For any $s_0>0$, $q\geq 1$ and $T>T_0$,  there are constants  $\underline{c}_{{\rm F}}$ (which depends on $s_0, m, q,  \mathtt{s}$ and $\|g^{0}\|_{C^2}$)  and  $c_{{\rm F}}$ (which depends  on $s_0, m, q,  \mathtt{s}$, $T, T_0$ and  $\|g^{0}\|_{C^3}$) such that
 \begin{align}
\notag&\sup_{s\in [-s_0, s_0]}\E_{\P^*_{y, x, T}}\sup_{0\leq \underline{t}<{t}\leq T}\left\|[DF_{\underline{t}, t}^s({\mho}_{\underline{t}}^s, {\rm w})]^{-1}\right\|^q,\ \sup_{0\leq \underline{t}<{t}\leq T}\left\|[\wt{(DF_{\underline{t}, {t}}^s)}(\mho_{\un{t}}^s, {\rm w})]^{-1}\right\|^q\\
&\ \ \ \ \ \ \ \ \ \ \ \ \  \ \ \ \  \ \ \ \ \ \ \ \ \ \  <\underline{c}_{{\rm F}} e^{c_{{\rm F}}(1+d_{\wt{g}^{\l}}(x, y))}. \label{eq-cond-DF-n}
\end{align}
\end{lem}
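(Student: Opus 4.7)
The plan is to mirror the proof of Proposition \ref{est-norm-D-j-F-t-cond}, with the extra ingredient that the driving process is $\alpha^s$ rather than Brownian motion $B$. By the cocycle property $F^s_{0,t} = F^s_{\un t, t} \circ F^s_{0, \un t}$ and the Markov property of the bridge (after time $\un t$), it suffices to handle $\un t = 0$; conjugating by $(\theta, \varpi)$ reduces matters to $\|[\wt{(DF^s_{0,t})}]^{-1}\|^q$ on $T\mathcal{F}(\Bbb R^m)$. From Lemma \ref{Tangent map-SDE} applied to $\alpha^s$, the matrix $\bar{\bf z}_t := [\wt{(DF^s_{0,t})}]$ solves an It\^o SDE of the multiplicative form
\begin{equation*}
 d\bar{\bf z}_t = \sum_{j=1}^m {\bf M}_j(\mho^s_t)\bar{\bf z}_t\, d\alpha^{s,j}_t + {\bf N}^s(\mho^s_t)\bar{\bf z}_t\, dt,
\end{equation*}
with matrices ${\bf M}_j, {\bf N}^s$ bounded in terms of $R, \nabla R$ (uniformly in $s \in [-s_0,s_0]$), and the inverse $\bar{\bf z}_t^{-1}$ satisfies the analogous backward equation.

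The key identity is $d\alpha^s_t = O^s_t\, dB_t + \mathtt g^s_t\, dt$. Under the bridge measure $\P^*_{y,x,T}$, Lemma \ref{Hsu-thm-5.4.4} replaces $dB_t$ by $db_t + 2\mho_t^{-1}\nabla \ln p(T-t, {\rm y}_t, x)\, dt$, where ${\rm y}$ is the original (not perturbed) Brownian path on which we condition. I would then apply It\^o's formula to $|\bar{\bf z}_t^{-1}|^{2q}$ and take logarithms, exactly as in the $l=1$ step of Proposition \ref{est-norm-D-j-F-t}, to get a pointwise bound
\begin{equation*}
 \bigl|[\wt{(DF^s_{0,t})}]^{-1}\bigr|^{2q} \leq C(q)\, e^{C(q)T}\, \exp\!\Bigl\{\wt q\!\int_0^t\! {\rm M}_j^s(\mho^s_\tau, O^s_\tau)\, db^j_\tau + \wt q\!\int_0^t\! {\mathtt D}^s_\tau\, d\tau\Bigr\},
\end{equation*}
where ${\rm M}_j^s$ are bounded (since $O^s$ is orthogonal and the $\wt g$-curvature is bounded), and where the drift ${\mathtt D}^s_\tau$ is linear in $\mathtt g^s_\tau$ and in $\mho_\tau^{-1}\nabla \ln p(T-\tau, {\rm y}_\tau, x)$, with coefficients bounded uniformly in $s \in [-s_0, s_0]$.

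Taking $\P^*_{y,x,T}$-expectation, I would apply H\"older's inequality to separate three contributions. The $b$-martingale factor $\exp\{\wt q\int_0^t {\rm M}_j^s(\ldots)\, db_\tau^j\}$ is handled, as in the proof of Proposition \ref{est-norm-D-j-F-t}, by chopping $[0,T]$ into subintervals of length $t_1$ depending only on the uniform bound of ${\rm M}_j^s$, applying Dambis--Dubins--Schwarz (Lemma \ref{D-D-S}) on each piece, and using the Skorokhod estimate (Lemma \ref{lem-Sk}) on the H\"older norm; this produces a bound of the form $\underline c(q) e^{c(q)T}$. The factor involving $\mathtt g^s$ is controlled by the bound $|\mathtt g^s_\tau| \leq C(s_0)$, which also follows from Lemma \ref{O-mathttg-diff-esti}. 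The factor involving $\nabla \ln p$ is controlled by Proposition \ref{cond-nabla-ln-p}, which gives $\E_{\P^*_{y,x,T}} e^{q\int_0^T \|\nabla \ln p(T-\tau, {\rm y}_\tau, x)\|\, d\tau} \leq e^{c(1+d(x,y))}$. Multiplying these three bounds and combining with the density $p(T,x,y)$ estimates from Lemma \ref{Sa-Thm-6.1} (as already used in the proof of Lemma \ref{O-mathttg-diff-esti}) yields \eqref{eq-cond-DF-n}.

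The main obstacle is that the ambient coefficients $\mho^s_t$ and $O^s_t, \mathtt g^s_t$ depend on $s$ through the nonlinear construction of $\alpha^s$, so a priori the SDE is a whole one-parameter family. The orthogonality of $O^s$ and the uniform bounds on $\mathtt g^s$ and $\mho^s$ (via Lemma \ref{O-mathttg-diff-esti}) keep every coefficient of the bridge-modified SDE uniformly bounded on $[-s_0, s_0]$ by a constant depending only on $s_0, \mathtt s, \|g\|_{C^3}$; this is what lets the Skorokhod--chopping argument go through with constants independent of $s$. The same argument applied to the backward SDE (with $d\overrightarrow{\alpha^s}$) handles $[\wt{(DF^s_{\un t, t})}]^{-1}$ directly for general $0 \leq \un t < t \leq T$, completing the proof.
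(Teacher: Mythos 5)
Your proposal is correct and follows essentially the same route as the paper: write the (backward) SDE for the inverse tangent map driven by $d\alpha^s=O^s\,dB+\mathtt g^s\,dt$ with the curvature matrices ${\bf M}_j,{\bf N}$, note that orthogonality of $O^s$ and the uniform bound on $\mathtt g^s$ keep all coefficients bounded uniformly in $s\in[-s_0,s_0]$, substitute the bridge relation of Lemma \ref{Hsu-thm-5.4.4} (with the original path ${\rm y}$), and then run the Dambis--Dubins--Schwarz/Skorokhod estimate for the martingale part together with Proposition \ref{cond-nabla-ln-p} for the $\nabla\ln p$ drift, exactly as in Propositions \ref{est-norm-D-j-F-t} and \ref{est-norm-D-j-F-t-cond}, to which the paper itself defers. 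The only cosmetic difference is that you redo the pointwise It\^o/logarithm bound directly under the bridge measure and split into three H\"older factors, whereas the paper factors the conditional law's deviation from the unconditional one as a single exponential multiple and cites the unconditional bound; both are the same circle of estimates.
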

\begin{proof}For (\ref{eq-cond-DF-n}), it suffices to consider the second estimation.  Let  $s\in [-s_0, s_0]$ and  $\un{t}, t\in [0, T]$ with $\un{t}<t$. For $(\mathtt{v}_0, {\mathtt Q}_0)\in T_{o}\mathcal{F}(\Bbb R^m)$,  let 
\[
(\mathtt{v}_{{t}-\tau}, {\mathtt Q}_{{t}-\tau}):= [\wt{(DF_{\tau, t}^s)}(\mho_{0}^s, {\rm w})]^{-1}(\mathtt{v}_0, {\mathtt Q}_0), \ \forall \tau\in [\un{t}, {t}]. 
\]
Then  Lemma \ref{Tangent map-SDE} shows that   ${\rm z}_{\tau}:=(\mathtt{v}_\tau, {\mathtt Q}_\tau)$  satisfies  the  It\^{o} form SDE 
\begin{equation*}
d{\rm z}_{t-\tau}({\rm w})=\sum_{j=1}^m\left(-{\bf M}_j(\mho_{\tau}^s){\rm z}_{t-\tau}({\rm w})\ d\overleftarrow\alpha^{s, j}_{\tau}({\rm w})+[{\bf M}_j(\mho_{\tau}^s)]^2{\rm z}_{t-\tau}({\rm w})\ d\tau\right) +{\bf N}(\mho_{\tau}^s){\rm z}_{t-\tau}({\rm w})\ d\tau,  
\end{equation*}
where  ${\bf M}_j,  {\bf N}$ are given in (\ref{matrix-M-j-def}), (\ref{matrix-N-def}).The remaining estimation for (\ref{eq-cond-DF-n}) can be done  by following the proof of Proposition \ref{est-norm-D-j-F-t-cond}. \end{proof}

\begin{lem}\label{cond-DF-diff-norm} Let $\alpha^s$ be as in Theorem \ref{Main-alpha-x-v-Q}. Then $((D{F}^s_{0, t})^{-1})_{t\in [0, T]}, (\wt{(DF_{0, t}^s)^{-1}})_{t\in [0, T]}$ are $C^1$ in the $s$ parameter.  Let $T_0>0$. 
For any $s_0>0$, $q\geq 1$ and $T>T_0$, there exist   $\underline{c}_{{\rm F}}'$ (which depends on $s_0, m, q,  \mathtt{s}$ and  $\|g^{0}\|_{C^3}$)  and  $c_{{\rm F}}'$ (which depends  on $s_0, m, q,  \mathtt{s}, T, T_0$ and  $\|g^{0}\|_{C^3}$) such that
\begin{align}\notag
&\sup_{s\in [-s_0, s_0]}\overline{\E}_{\ov{\P}^*_{y, x, T}}\sup_{t\in [0, T]}\big\|([{(DF_{0, t}^s)}(\mho_{0}^s, {\rm w})]^{-1})'_s\big\|^q, \sup_{t\in [0, T]}\left\|([\wt{(DF_{0, t}^s)}(\mho_{0}^s, {\rm w})]^{-1})'_s\right\|^q\\
&\ \ \ \ \ \ \ \ \ \ \ \ \  \ \ \ \  \ \ \ \ \ \ \ \ \ \  <\underline{c}_{{\rm F}}' e^{c_{{\rm F}}'(1+d_{\wt{g}^{\l}}(x, y))}. \label{DF-s-diff-b}
\end{align}
\end{lem}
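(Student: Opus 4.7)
The plan is to follow the pattern established in Theorem \ref{Main-alpha-x-v-Q} and Lemma \ref{cond-DF-norm}: first derive the Stratonovich SDE satisfied by the $s$-derivative of $DF_{0,t}^s$ (after which $[DF_{0,t}^s]^{-1}$ is handled via the algebraic identity $([DF_{0,t}^s]^{-1})'_s = -[DF_{0,t}^s]^{-1}(DF_{0,t}^s)'_s [DF_{0,t}^s]^{-1}$), establish existence of a continuous $(t,s)$-modification by Picard iteration and Kolmogorov's criterion, and then estimate the conditional moments using the Cameron-Martin-Girsanov transfer of Proposition \ref{absolute-Q-0-s} together with the bridge bounds already accumulated.

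First, I would differentiate formally in $s$ the Stratonovich SDE from Lemma \ref{Tangent map-SDE} satisfied by ${\bf\mathsf v}_t^s = [DF_{0,t}^s]{\bf\mathsf v}_0$, with $\alpha^s$ in place of $\alpha$ and $\mho^s$ in place of $\overline{\beta}^{\alpha}$. Since both inputs are $C^1$ in $s$ by Theorem \ref{Main-alpha-x-v-Q}, with derivatives $(\alpha^s)'_s = \underline{\Upsilon}_{{\rm V},\alpha^s}$ and $(\mho^s)'_s = Y^s$ whose SDEs are given in Lemma \ref{Cor-O-g-formula} and Corollary \ref{cor-mho-t-s-d}, formal differentiation yields a linear inhomogeneous SDE for $({\bf\mathsf v}_t^s)'_s$ whose inhomogeneity is expressible linearly in ${\bf\mathsf v}_t^s$, $(\mho_t^s)'_s$ and $(\alpha^s)'_s$, in exactly the spirit of Lemma \ref{DF-t-la-diff-la}. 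Existence and uniqueness of a $(t,s)$-continuous solution then follow by a Picard iteration in the norm $\|\cdot\|$ used in Theorem \ref{Main-alpha-x-v-Q}, whose convergence and $(t,s)$-Kolmogorov-continuity arguments are strictly parallel to those already given there for $Y^s$ and $\upsilon^s$; this proves the $C^1$ regularity of $DF_{0,t}^s$ and, through the inversion identity together with Lemma \ref{cond-DF-norm}, of $[DF_{0,t}^s]^{-1}$.

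The main task is then the conditional $L^q$-estimate (\ref{DF-s-diff-b}). Via stochastic Duhamel I would write $(DF_{0,t}^s)'_s$ as $\int_0^t [DF_{\tau,t}^s]\, \mathsf{S}_\tau^s\, [DF_{0,\tau}^s]$ of source terms $\mathsf{S}_\tau^s$ built linearly from $(\mho_\tau^s)'_s$, $(\alpha^s)'_s$ and the first one or two covariant derivatives of $H$ and $R$. By H\"older, together with the cocycle-norm estimate of Lemma \ref{cond-DF-norm} applied at each endpoint of Duhamel's formula, it suffices to bound the conditional $L^q$-norm of $\mathsf{S}_\tau^s$ itself and of the stochastic-integral piece $\int_0^t [\cdots]\circ d\alpha^s_\tau$ that arises from the martingale part of Duhamel's integrand. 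The former is covered directly by Lemma \ref{O-mathttg-diff-esti}. For the latter, I would expand $d\alpha^s_\tau = O^s_\tau\, dB_\tau + {\mathtt g}^s_\tau\, d\tau$ using (\ref{iteration-O})--(\ref{iteration-g}) and then substitute the bridge decomposition $dB_\tau = db_\tau + 2\mho_\tau^{-1}\nabla\ln p(T-\tau,{\rm y}_\tau,x)\, d\tau$ from Lemma \ref{Hsu-thm-5.4.4}, applying Burkholder's inequality (Lemma \ref{Ku-lem}) to the $db$-martingale and H\"older to each drift part, with Proposition \ref{cond-nabla-ln-p} absorbing the heat-kernel singularity.

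The hard part will be the careful bookkeeping at the interface between the $\alpha^s$-bridge and the Brownian bridge: the conditional law $\overline{\P}^*_{y,x,T}$ is defined relative to the unperturbed process ${\rm y}$, whereas the derived SDEs naturally involve integration against $d\alpha^s$. As in the proof of Lemma \ref{cond-DF-norm}, this is reconciled by expressing $d\alpha^s$ in the underlying Wiener filtration through (\ref{iteration-O})--(\ref{iteration-g}) before invoking Lemma \ref{Hsu-thm-5.4.4}. A second delicate point is the singular behavior of $\nabla\ln p(T-\tau,\cdot,x)$ as $\tau\uparrow T$; here the requirement $\mathtt{s}(T)=0$ built into (\ref{mathtts-cond}) ensures that $\Upsilon_{{\rm V},{\rm y}^s}(t)$ degenerates at the endpoint, so that the product with the bridge drift remains integrable and the $L^q$-bound of Proposition \ref{cond-nabla-ln-p} is precisely what is needed to close the estimate uniformly in $s\in[-s_0,s_0]$ while producing the claimed exponential dependence on $d_{\wt{g}^\l}(x,y)$.
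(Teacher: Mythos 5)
Your proposal is correct and is essentially the paper's argument: a stochastic Duhamel representation for the $s$-derivative with the inverse tangent maps at the endpoints (bounded by Lemma \ref{cond-DF-norm}), source terms controlled by Lemma \ref{O-mathttg-diff-esti}, and the bridge decomposition of Lemma \ref{Hsu-thm-5.4.4} combined with Burkholder's inequality and Proposition \ref{cond-nabla-ln-p} for the martingale and drift pieces. Two small points: the $C^1$-dependence of $DF^s_{0,t}$ on $s$ is already part of Theorem \ref{Main-alpha-x-v-Q} iv), so the Picard iteration need not be repeated (the paper simply cites it and obtains the inverse's regularity from $[\wt{(DF^s_{0,t})}]^{-1}\circ\wt{(DF^s_{0,t})}={\rm Id}$); and since Lemma \ref{cond-DF-norm} bounds only the \emph{inverse} maps, you must first combine your forward Duhamel formula with the inversion identity and the cocycle property to arrive at $([DF^s_{0,t}]^{-1})'_s=-\int_0^t[DF^s_{0,\tau}]^{-1}\,\mathsf{S}^s_\tau\,[DF^s_{\tau,t}]^{-1}$ — exactly the all-inverse formula the paper obtains by differentiating the backward SDE of $[\wt{(DF^s_{0,t})}]^{-1}$ directly — before the endpoint estimates can be applied.
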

\begin{proof}The $C^1$ regularity of  $s\mapsto (D{F}^s_{0, t})^{-1}$ follows from that of  $s\mapsto \wt{(DF_{0, t}^s)^{-1}}$ since 
\[
[DF_{0, t}^s({\mho}_0^s, {\rm w})]^{-1}=(\theta, \omega)_{\mho^s_0}^{-1}[\wt{(DF_{0, t}^s)}(\mho_{0}^s, {\rm w})]^{-1}(\theta, \omega)_{\mho^s_t}
\]
and $s\mapsto (\theta, \omega)_{\mho^s_t}^{-1}$ is $C^1$.  By Theorem \ref{Main-alpha-x-v-Q}, $\wt{(DF_{0, t}^s)}(\mho_{0}^s, {\rm w})$ is $C^1$ in $s$ for almost all ${\rm w}$. Hence  $[\wt{(DF_{0, t}^s)}(\mho_{0}^s, {\rm w})]^{-1}$ is also $C^1$ in $s$ by the identity \begin{equation*}
[\wt{(DF_{0, t}^s)}(\mho_{0}^s, {\rm w})]^{-1}\circ \wt{(DF_{0, t}^s)}(\mho_{0}^s, {\rm w})={\rm Id}.
\end{equation*}

For (\ref{DF-s-diff-b}), it suffices to consider the second estimation.  For  ${\rm z}_0\in T_o\mathcal{F}(\Bbb R^m)$, let   \[
{\rm z}_{t-\tau}^s:= [\wt{(DF_{0, t-\tau}^s)}(\mho_{0}^s, {\rm w})]^{-1}{\rm z}_0, \ \forall \tau\in [0, t], \ \forall  s\in [-s_0, s_0]. 
\] 
It satisfies the SDE \begin{equation*}
d{\rm z}_{t-\tau}^s({\rm w})=\sum_{j=1}^m\left(-{\bf M}_j(\mho_{\tau}^s){\rm z}_{t-\tau}^s({\rm w})\ d\overleftarrow\alpha^{s, j}_{\tau}({\rm w})+[{\bf M}_j(\mho_{\tau}^s)]^2{\rm z}_{t-\tau}^s({\rm w})\ d\tau\right) +{\bf N}(\mho_{\tau}^s){\rm z}_{t-\tau}^s({\rm w})\ d\tau,
\end{equation*}
where  $({\bf M}_j)_{1\leq j\leq m},  {\bf N}$ are given in (\ref{matrix-M-j-def}), (\ref{matrix-N-def}). 
For $({\rm z}_t^s)'_s:= \left(d{\rm z}_t^s/ds)\right|_s$, its  SDE is 
\begin{align*}
&d({\rm z}_{t-\tau}^s)'_s({\rm w})\\
&= \sum_{j=1}^m\left(-{\bf M}_j(\mho_{\tau}^s)({\rm z}_{t-\tau}^s)'_s({\rm w})\ d\overleftarrow\alpha^{s, j}_{\tau}({\rm w})+[{\bf M}_j(\mho_{\tau}^s)]^2({\rm z}_{t-\tau}^s)'_s({\rm w})\ d\tau\right) +{\bf N}(\mho_{\tau}^s)({\rm z}_{t-\tau}^s)'_s({\rm w})\ d\tau\\
&\ \ \ +\!\sum_{j=1}^{m}-\left({\bf M}_j(\mho_{\tau}^s)\ d\overleftarrow\alpha^{s, j}_{\tau}({\rm w})\right)'_s {\rm z}_{t-\tau}^s+\big(\sum_{j=1}^{m}[{\bf M}_j(\mho_{\tau}^s)]^2+{\bf N}(\mho_{\tau}^s)\big)'_s \ {\rm z}_{t-\tau}^s\ d\tau. 
\end{align*}
Let $O^s=((O^s)_{l}^{j})_{j, l\leq m}$, $\mathtt{g}=(\mathtt g^{s, j})_{j\leq m}$. They are differentiable in $s$ by Theorem \ref{Main-alpha-x-v-Q}.  Let 
\begin{align*}
\big({\bf A}^{(1)}_{l}\big)^s_{\tau}&:=\sum_{j=1}^{m}\left(({\bf M}_j(\mho_{\tau}^s))'_s(O^s_{\tau})^{j}_{l}+{\bf M}_j(\mho_{\tau}^s)((O^s_{\tau})^{j}_{l})'_s\right), \forall  l\leq m, \\
\big({\bf A}^{(2)}\big)^s_{\tau}&:=\sum_{j=1}^m\left({\bf M}_j(\mho_{\tau}^s)(\mathtt{g}^{s, j}_{\tau})'_s+ \left([{\bf M}_j(\mho_{\tau}^s)]^2\right)'_s\right) + \big({\bf N}(\mho_{\tau}^s)\big)'_s-2\sum_{l, j=1}^{m}{\bf M}_j(\mho_{\tau}^s)(O_{\tau}^s)^j_{l}\big({\bf A}^{(1)}_{l}\big)^s_{\tau}. 
\end{align*}
By Duhamel's principle,  we have  
\begin{align*}
({\rm z}_t^s)'_s&=\left[\big[\wt{(DF_{0, t}^s)}(\mho_{0}^s, {\rm w})\big]^{-1}\!\!\int_{0}^{t}\big[\wt{(DF_{\tau, t}^s)}(\mho_{\tau}^s, {\rm w})\big] \big({\bf A}^{(1)}_{l}\big)^s_{\tau}({\rm w})\big[\wt{(DF_{\tau, t}^s)}(\mho_{\tau}^s, {\rm w})\big]^{-1}\!\! d\overleftarrow B_{\tau}^l\right] {\rm z}_0 \\
&\  +\left[\big[\wt{(DF_{0, t}^s)}(\mho_{0}^s, {\rm w})\big]^{-1}\int_{0}^{t}\big[\wt{(DF_{\tau, t}^s)}(\mho_{\tau}^s, {\rm w})\big] \big({\bf A}^{(2)}\big)^s_{\tau}({\rm w})\big[\wt{(DF_{\tau, t}^s)}(\mho_{\tau}^s, {\rm w})\big]^{-1}d\tau\right] {\rm z}_0.
\end{align*}
This means
\begin{align*}
\big(\big[\wt{(DF_{0, t}^s)}(\mho_{0}^s, {\rm w})\big]^{-1}\big)'_s
&=\int_{0}^{t}\big[\wt{(DF_{0, \tau}^s)}(\mho_{0}^s, {\rm w})\big]^{-1} \big({\bf A}^{(1)}_{l}\big)^s_{\tau}({\rm w})\big[\wt{(DF_{\tau, t}^s)}(\mho_{\tau}^s, {\rm w})\big]^{-1} d\overleftarrow B_{\tau}^l \\
&\ \ \  +\int_{0}^{t}\big[\wt{(DF_{0, \tau}^s)}(\mho_{0}^s, {\rm w})\big]^{-1} \big({\bf A}^{(2)}\big)^s_{\tau}({\rm w})\big[\wt{(DF_{\tau, t}^s)}(\mho_{\tau}^s, {\rm w})\big]^{-1} d\tau\\
&=: ({\bf I})_{t}^{s} + ({\bf II})_{t}^{s}. 
\end{align*}
For (\ref{DF-s-diff-b}),  it suffices to show the same bound type is valid for 
\[
({\bf I}):=\sup_{s\in [-s_0, s_0]}\overline{\E}_{\ov{\P}^*_{y, x, T}}\sup_{t\in [0, T]}\left\| ({\bf I})_{t}^{s}\right\|^q,\  ({\bf II}):=\sup_{s\in [-s_0, s_0]}\overline{\E}_{\ov{\P}^*_{y, x, T}}\sup_{t\in [0, T]}\left\| ({\bf II})_{t}^{s}\right\|^q. 
\]
This will follow from    Lemma \ref{cond-DF-norm}  and Proposition \ref{cond-nabla-ln-p}.  Clearly, \[
\overline{\E}_{\ov{\P}^*_{y, x, T}}\!\!\sup_{\tau\in [0, T]}\left\|\big({\bf A}^{(i)}_{l}\big)^s_{\tau}({\rm w})\right\|^{q}\!\leq c_{{\bf M}} \overline{\E}_{\ov{\P}^*_{y, x, T}}\!\max\big\{\!\!\sup_{t\in [0, T]}\left\|(\mho_{t}^s)'_s\right\|^{2q}\!, \!\sup_{t\in [0, T]}\left\|(O_{t}^s)'_s\right\|^{2q}\!,  \!\sup_{t\in [0, T]}\left\|(\mathtt{g}_{t}^s)'_s\right\|^{q}\big\},\]
where  $c_{{\bf M}}$ depends on the norm bounds of  $\{{\bf M}_j\}$ and their differentials.  Hence by Lemma \ref{O-mathttg-diff-esti},  there are constants  $\underline{c}_{{\bf A}}$ (which depends on $s_0, m, q,  \mathtt{s}$ and $\|g^{0}\|_{C^3}$)  and  $c_{{\bf A}}$ (which depends  on $m, q$, $T, T_0$ and  $\|g^{0}\|_{C^3}$) such that 
\begin{equation}\label{P-xy-T-A-i}
\overline{\E}_{\ov{\P}^*_{y, x, T}}\!\!\sup_{\tau\in [0, T]}\left\|\big({\bf A}^{(i)}_{l}\big)^s_{\tau}({\rm w})\right\|^{q}\leq \underline{c}_{{\bf A}} e^{c_{{\bf A}}(1+d_{\wt{g}^{\l}}(x, y))}.  
\end{equation}
Let 
\begin{align*}
({\bf III})_t^s& := \int_{0}^{t}\left[\wt{(DF_{0, \tau}^s)}(\mho_{0}^s, {\rm w})\right]^{-1}\!\!\big({\bf A}^{(1)}_{l}\big)^s_{\tau}({\rm w})\left[\wt{(DF_{\tau, t}^s)}(\mho_{\tau}^s, {\rm w})\right]^{-1}\!\! d\overleftarrow{b}_{\tau}^l,\\
({\bf IV})_t^s& :=\int_{0}^{t}\left[\wt{(DF_{0, \tau}^s)}(\mho_{0}^s, {\rm w})\right]^{-1}\!\!\big({\bf A}^{(1)}_{l}\big)^s_{\tau}({\rm w})\left[\wt{(DF_{\tau, t}^s)}(\mho_{\tau}^s, {\rm w})\right]^{-1}\!\! ({\mho}_\tau^{-1}\nabla\ln p(T-\tau, {\rm y}_{\tau}^0, x))^{l}\ d\tau, 
\end{align*}
where $b_{\tau}$ is the Brownian motion in Lemma \ref{Hsu-thm-5.4.4} for  $\P_{y, x, T}$. Then 
\begin{align*}
\overline{\E}_{\ov{\P}^*_{y, x, T}}\sup_{t\in [0, T]}\left\| ({\bf I})_{t}^{s}\right\|^q\leq  2^{q-1}\overline{\E}_{\ov{\P}^*_{y, x, T}}\sup_{t\in [0, T]}\left\| ({\bf III})_{t}^{s}\right\|^q+2^{2q-1}\overline{\E}_{\ov{\P}^*_{y, x, T}}\sup_{t\in [0, T]}\left\| ({\bf IV})_{t}^{s}\right\|^q.
\end{align*}
As usual, we can use H\"{o}lder's inequality and Doob's  maximal  inequality of sub-martingales to deduce that 
\begin{align*}
\big(\overline{\E}_{\ov{\P}^*_{y, x, T}}\sup_{t\in [0, T]}\left\| ({\bf III})_{t}^{s}\right\|^q\big)^2
\leq &p(T, x, y)(\frac{2q}{2q-1})^{2q}\overline{\E}_{\ov{\P}^*_{y, x, T}}\left\|({\bf III})_{T}^{s}\right\|^{2q}. 
\end{align*}
Let ${\mathtt{C}}_1(\cdot)$ be the constant function in Lemma \ref{Ku-lem}. We  continue to compute that 
\begin{align*}
&\overline{\E}_{\ov{\P}^*_{y, x, T}}\left\|({\bf III})_{T}^{s}\right\|^{2q}\\
& \leq {\mathtt{C}}_1(2q)\overline{\E}_{\ov{\P}^*_{y, x, T}}\left|\int_{0}^{T}\left\|\big[\wt{(DF_{0, \tau}^s)}(\mho_{0}^s, {\rm w})\big]^{-1} \big({\bf A}^{(1)}_{l}\big)^s_{\tau}({\rm w})\big[\wt{(DF_{\tau, T}^s)}(\mho_{\tau}^s, {\rm w})\big]^{-1}\right\|^2\ d\tau\right|^{q}\\
& \leq  {\mathtt{C}}_1(2q)T^{q}\left(\overline{\E}_{\ov{\P}^*_{y, x, T}}\sup_{0\leq \underline{t}<{t}\leq T}\left\|[DF_{\underline{t}, t}^s({\mho}_{\underline{t}}^s, {\rm w})]^{-1}\right\|^{8q}\cdot \overline{\E}_{\ov{\P}^*_{y, x, T}}\sup_{\tau\in [0, T]}\left\|\big({\bf A}^{(1)}_{l}\big)^s_{\tau}({\rm w})\right\|^{4q}\right)^{\frac{1}{2}},
\end{align*}
which has the same type of bound as in (\ref{DF-s-diff-b}) by Lemma \ref{cond-DF-norm} and (\ref{P-xy-T-A-i}).  Similarly, \begin{align*}
\left(\overline{\E}_{{\P}^*_{y, x, T}}\!\sup_{t\in [0, T]}\left\|({\bf IV})_{t}^{s}\right\|^{q}\right)^3\!\!\!\leq &\overline{\E}_{{\P}^*_{y, x, T}}\sup_{0\leq \underline{t}<{t}\leq T}\left\|[DF_{\underline{t}, t}^s({\mho}_{\underline{t}}^s, {\rm w})]^{-1}\right\|^{6q}\!\!\cdot \overline{\E}_{{\P}^*_{y, x, T}}\sup_{\tau\in [0, T]}\left\|\big({\bf A}^{(1)}_{l}\big)^s_{\tau}({\rm w})\right\|^{3q}\\
&\!\cdot \overline{\E}_{{\P}^*_{y, x, T}}\left|\int_{0}^{T}\|\nabla\ln p(T-\tau, {\rm y}_{\tau}^0, x)\|\ d\tau\right|^{3q}, 
\end{align*}
which also has the same type of bound as in (\ref{DF-s-diff-b}) by Proposition \ref{cond-nabla-ln-p}, Lemma \ref{cond-DF-norm} and (\ref{P-xy-T-A-i}). Altogether,  the same type of bound as in (\ref{DF-s-diff-b}) is valid for $({\bf I})$.  This is also true for $({\bf II})$ by Lemma \ref{cond-DF-norm} and (\ref{P-xy-T-A-i}) since 
\begin{align*}
\left(\overline{\E}_{{\P}^*_{y, x, T}}\|({\bf II})_{T}^{s}\|^q\right)^2
\!&\leq T^{2q}\overline{\E}_{{\P}^*_{y, x, T}}\sup_{0\leq \underline{t}<{t}\leq T}\left\|[\wt{(DF_{\underline{t}, {t}}^s)}(\mho_{\un{t}}^s, {\rm w})]^{-1}\right\|^{2q} \overline{\E}_{{\P}^*_{y, x, T}}\sup_{\tau\in [0, T]}\left\|\big({\bf A}^{(2)}_{l}\big)^s_{\tau}({\rm w})\right\|^{2q}.
\end{align*}
\end{proof}

With Lemmas \ref{O-mathttg-diff-esti}-\ref{cond-DF-diff-norm}, we can deduce  the differentiability of   $(D\pi(\lfloor {\rm u}_T\rceil^{\l})^{(1)}_0)\circ {\bf F}^s$ in $s$.

\begin{prop}\label{Diff-int-U-s} Fix  $T_0>0$.  For any $q\geq 1$ and $T>T_0$, there are $\underline{c}_{{\bf F}}$ (depending on $s_0, m, q,  \mathtt{s}$, $\|g^{0}\|_{C^2}$ and $\|\XX^{0}\|_{C^1}$)  and  $c_{{\bf F}}$ (depending on  $s_0, m, q,  \mathtt{s}, T, T_0$ and $\|g^{0}\|_{C^3}$) such that 
\begin{align}\label{dpi-UFS}
\sup_{s\in [-s_0, s_0]}\overline{\E}_{\ov{\P}^*_{y, x, T}} \left\|(D\pi(\lfloor {\rm u}_T\rceil^{\l})^{(1)}_0)\circ {\bf F}^s\right\|^q\leq \underline{c}_{{\bf F}} e^{c_{{\bf F}}(1+d_{\wt{g}}(x, y))}.
\end{align}
The one parameter family  of processes $\{(D\pi(\lfloor {\rm u}_T\rceil^{\l})^{(1)}_0)\circ {\bf F}^s\}$ is differentiable in $s$. Let \[
\nabla_{T, {\rm V}, \mathtt{s}}^sD\pi(\lfloor {\rm u}_T\rceil^{\l})^{(1)}_0:=\left((D\pi(\lfloor {\rm u}_T\rceil^{\l})^{(1)}_0)\circ {\bf F}^s\right)'_{s}.
\] 
For any $q\geq 1$ and $T>T_0$, there are $\underline{c}'_{{\bf F}}$ (depending on $s_0, m, q,  \mathtt{s}$, $\|g^{0}\|_{C^3}$ and  $\|\XX^{0}\|_{C^2}$)  and  $c'_{{\bf F}}$ (depending on  $s_0, m, q,  \mathtt{s}, T, T_0$ and $\|g^{0}\|_{C^3}$) such that 
\begin{align}\label{dpi-UFS-diff}
\sup_{s\in [-s_0, s_0]}\overline{\E}_{\ov{\P}^*_{y, x, T}} \left\|\nabla_{T, {\rm V}, \mathtt{s}}^sD\pi(\lfloor {\rm u}_T\rceil^{\l})^{(1)}_0\right\|^q\leq \underline{c}'_{{\bf F}} e^{c'_{{\bf F}}(1+d_{\wt{g}}(x, y))}.
\end{align}
\end{prop}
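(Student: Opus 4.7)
The plan is to work from the explicit backward stochastic representation established in Corollary \ref{Cor-fore-back}, projected down to $T\M$ via $D\pi$, and then to assemble the required bounds from the preceding lemmas. By the definition given in Section \ref{the flow F-S},
\[
(\lfloor {\rm u}_T\rceil^{\lambda})^{(1)}_0\circ {\bf F}^s=[DF_{0,T}^s(\mho_0^s,{\rm w})]^{-1}(\lfloor{\rm u}_0\rceil^{\lambda})^{(1)}_0-\int_0^T[DF_{0,t}^s(\mho_0^s,{\rm w})]^{-1}(H^{\lambda})^{(1)}_0(\mho_t^s,e_i)\circ d\overrightarrow{\alpha}_t^{s,i}({\rm w}),
\]
and applying $D\pi$ simply reads off the horizontal component. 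The norm $\|(H^{\lambda})^{(1)}_0(\mho_t^s,\cdot)\|$ is bounded in terms of $\|\XX^0\|_{C^1}$ and $\|g^0\|_{C^2}$, uniformly in $t$ and ${\rm w}$, so the integrand is controlled pointwise by $\|[DF_{0,t}^s]^{-1}\|$ times a constant.

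To establish \eqref{dpi-UFS}, I would first convert the backward Stratonovich integral to its It\^o form (picking up an additional $dt$-term of the same type), then switch to the Brownian bridge representation of Lemma \ref{Hsu-thm-5.4.4} applied to the time-reversed motion, so that $d\overrightarrow{\alpha}_t^s$ decomposes into a $\P_{y,x,T}$-Brownian increment $d\overrightarrow{b}_t^s$ plus a drift proportional to $\mho_{T-t}^{-1}\nabla\ln p(t,{\rm y}_{T-t}^0,x)$. For the martingale part, Doob's maximal inequality and Burkholder (Lemma \ref{Ku-lem}) reduce the estimate to $\overline{\E}_{\bar\P^*_{y,x,T}}\sup_{0\le\underline t<t\le T}\|[DF^s_{\underline t,t}]^{-1}\|^{2q}$, which is controlled by Lemma \ref{cond-DF-norm}. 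For the drift part, H\"older's inequality combined with Proposition \ref{cond-nabla-ln-p} absorbs the $\nabla\ln p$ singularity into an exponential factor in $d(x,y)$, again paired with the flow-norm bound of Lemma \ref{cond-DF-norm}. This is essentially the computation already carried out in the proof of Lemma \ref{cond-DF-diff-norm} for the terms $({\bf III})$ and $({\bf IV})$.

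For the differentiability in $s$ and the bound \eqref{dpi-UFS-diff}, I would differentiate the explicit formula above term-by-term. By Theorem \ref{Main-alpha-x-v-Q}(i)-(ii) together with Lemma \ref{cond-DF-diff-norm}, each of $[DF_{0,t}^s]^{-1}$, $\mho_t^s$, and $\alpha_t^s$ is $C^1$ in $s$ with derivatives admitting $L^q$-bounds under $\bar\P^*_{y,x,T}$ of the asserted form. Applying the product rule yields three classes of terms: (a) $([DF_{0,t}^s]^{-1})'_s$ acting on the rest; (b) $[DF_{0,t}^s]^{-1}$ times $\nabla((\mho_t^s)'_s)(H^{\lambda})^{(1)}_0(\mho_t^s,e_i)$; and (c) the variation of the stochastic integral against $d\overrightarrow{\alpha}^s_t$, which by It\^o/Stratonovich conversion contributes an integrand involving $(O^s_t)'_s$ and $(\mathtt g^s_t)'_s$. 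Each ingredient is handled by the corresponding lemma (Lemmas \ref{O-mathttg-diff-esti}, \ref{cond-DF-norm}, \ref{cond-DF-diff-norm}), and then the same Doob/Burkholder + bridge-shift + Proposition \ref{cond-nabla-ln-p} scheme from step one is reapplied.

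The main obstacle I anticipate is controlling the class (c) terms: differentiating a backward Stratonovich integral in the $s$-parameter while the driving process $\alpha^s$ itself depends on $s$ produces cross-terms between $(O^s_t)'_s\, d\overrightarrow{B}$ and $\nabla\ln p$-type drifts after bridge-shifting, and the interchange of $d/ds$ with the stochastic integral must be justified using the joint $(t,s)$-continuity from Theorem \ref{Main-alpha-x-v-Q} together with uniform $L^q$-tightness. Once that interchange is legal, the bound follows from the same machinery as \eqref{dpi-UFS}, with the constant now additionally depending on $\|g^0\|_{C^3}$ and $\|\XX^0\|_{C^2}$ because one order of derivative has been consumed in differentiating $H^{\lambda}$, $O^s$, $\mho^s$, and the inverse flow.
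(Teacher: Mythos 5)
Your proposal follows essentially the same route as the paper's proof: starting from the backward stochastic representation $\overline{(\lfloor{\rm u}_T^s\rceil^{\lambda})^{(1)}_0}$, converting to It\^o form, bridge-shifting via Lemma \ref{Hsu-thm-5.4.4}, and deploying Doob/Burkholder plus Lemmas \ref{O-mathttg-diff-esti}, \ref{cond-DF-norm}, \ref{cond-DF-diff-norm} and Proposition \ref{cond-nabla-ln-p} — and your three-way split of the $s$-derivative (inverse-flow derivative, $\mho^s$-dependence of $(H^\lambda)^{(1)}_0$, and variation of $d\overleftarrow{\alpha}^s$) corresponds precisely to the paper's decomposition into ${\rm I}(s)+{\rm II}(s)+{\rm III}(s)$. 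Your anticipated obstacle (justifying $d/ds$ through the stochastic integral) is handled in the paper exactly as you suggest, via the joint $(t,s)$-continuity of Theorem \ref{Main-alpha-x-v-Q} and the uniform conditional $L^q$ bounds established beforehand.
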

\begin{proof}
Recall that \[
(D\pi(\lfloor {\rm u}_T\rceil^{\l})^{(1)}_0)\circ {\bf F}^s=D\pi \big((\lfloor {\rm u}_T\rceil^{\l})^{(1)}_0\circ {\bf F}^s\big)=D\pi\big(\overline{(\lfloor {\rm u}_T^s\rceil^{\l})^{(1)}_0}(\rm w)\big),
\]
where 
\begin{align*}
\overline{(\lfloor {\rm u}_T^s\rceil^{\l})^{(1)}_0}({\rm w})=[DF_{0, T}^s({\mho}_0^s, {\rm w})]^{-1}(\lfloor {\rm u}_0\rceil^{\l})^{(1)}_{0}\!-\!\int_{0}^{T}[DF_{0, t}^s({\mho}_0^s, {\rm w})]^{-1}(H^{\l})^{(1)}_{0}(\mho_{t}^s, e_i)\circ d\overleftarrow\alpha^{s, i}_{t}({\rm w}). 
\end{align*}
Let 
\[
\wt{(\lfloor {\rm u}_T^s\rceil^{\l})^{(1)}_0}({\rm w}):=(\theta, \omega)_{\mho_{0}^s}\left(\overline{(\lfloor {\rm u}_T^s\rceil^{\l})^{(1)}_0}({\rm w})\right), \ \ \wt{(\lfloor {\rm u}_0^s\rceil^{\l})^{(1)}_{0}}:=(\theta, \omega)_{\mho_{T}^s}\left((\lfloor {\rm u}_0\rceil^{\l})^{(1)}_{0}\right).
\]
It is easy to obtain the following  It\^{o} form  expression:
\begin{align*}
\wt{(\lfloor {\rm u}_T^s\rceil^{\l})^{(1)}_0}({\rm w})=&[\wt{(DF_{0, T}^s)}(\mho_{0}^s, {\rm w})]^{-1}\wt{(\lfloor {\rm u}_0^s\rceil^{\l})^{(1)}_{0}}\\
&\!-\!\int_{0}^{T}[\wt{(DF_{0, t}^s)}(\mho_{0}^s, {\rm w})]^{-1}\!\!\left(\varpi\big((H^{\l})^{(1)}_0(\mho_{t}^s, e_i)\big)e_i\ dt, \varpi\big((H^{\l})^{(1)}_0(\mho_{t}^s,  d\overleftarrow\alpha^{s}_{t}({\rm w}))\big)\right).
\end{align*}
For Proposition \ref{Diff-int-U-s},  it is equivalent to show the differentiability of $s\mapsto \wt{(\lfloor {\rm u}_T^s\rceil^{\l})^{(1)}_0}({\rm w})$ and estimate  the conditional $L^q$ integrals of its differential process and itself. 

The estimation in (\ref{dpi-UFS}) is valid since \[
\sup_{s\in [-s_0, s_0]}\overline{\E}_{\ov{\P}^*_{y, x, T}} \left\|(D\pi(\lfloor {\rm u}_T\rceil^{\l})^{(1)}_0)\circ {\bf F}^s\right\|^q\leq \sup_{s\in [-s_0, s_0]}\overline{\E}_{\ov{\P}^*_{y, x, T}} \left\|\wt{(\lfloor {\rm u}_T^s\rceil^{\l})^{(1)}_0}({\rm w})\right\|^q,
\]
where the second term has a bound in (\ref{dpi-UFS}) by  following the argument of (\ref{DF-j-lambda-cond}) in Proposition \ref{est-norm-u-t-j} and  using Lemma \ref{O-mathttg-diff-esti} and Lemma \ref{cond-DF-norm}. 

 The processes $\alpha^s, \mho^s$ and  $[\wt{DF_{0, t}^s}(\mho_{0}^s, {\rm w})]^{-1}$ are all differentiable in $s$ by Theorem \ref{Main-alpha-x-v-Q}. Lemmas \ref{O-mathttg-diff-esti}-\ref{cond-DF-diff-norm} show  that $\alpha^s_t,$ $\underline{\Upsilon}_{{\rm V}, {\alpha}^s},$ $(\mho_t^s)'_s$, $[\wt{(DF_{0, t}^s)}(\mho_{0}^s, {\rm w})]^{-1}$ and $([\wt{(DF_{0, t}^s)}(\mho_{0}^s, {\rm w})]^{-1})'_s$ all have bounded sup $L^{q}$ ($q\geq 1$) norm  with respect to $\overline{\P}_{y, x, T}$.  Hence $s\mapsto \wt{(\lfloor {\rm u}_T^s\rceil^{\l})^{(1)}_0}({\rm w})$ is also differentiable in $s$ and the differential is 
\begin{align*}
&\big(\wt{(\lfloor {\rm u}_T^s\rceil^{\l})^{(1)}_0}({\rm w})\big)'_s\\ &\ \ =\big([\wt{(DF_{0, T}^s)}(\mho_{0}^s, {\rm w})]^{-1}\big)'_s\wt{(\lfloor {\rm u}_0^s\rceil^{\l})^{(1)}_{0}}\\
&\ \ \ \ \ \ -\int_{0}^{T}\big([\wt{(DF_{0, t}^s)}(\mho_{0}^s, {\rm w})]^{-1}\big)'_s\left(\varpi\big((H^{\l})^{(1)}_0(\mho_{t}^s, e_i)\big)e_i\ dt,\ \varpi\big((H^{\l})^{(1)}_0(\mho_{t}^s,  d\overleftarrow\alpha^{s}_{t}({\rm w}))\big)\right)\\
&\ \ \ \ \ \ -\int_{0}^{T}[\wt{(DF_{0, t}^s)}(\mho_{0}^s, {\rm w})]^{-1}\left(\varpi\big((H^{\l})^{(1)}_0(\mho_{t}^s, e_i)\big)'_s e_i\ dt,\right.\\
&\ \ \ \ \ \ \ \ \ \ \ \ \ \  \ \ \ \ \ \ \ \  \ \ \ \ \ \ \ \ \  \ \ \ \ \ \ \ \ \ \left. \varpi\big((H^{\l})^{(1)}_0(\mho_{t}^s,  \cdot)\big)'_s d\overleftarrow\alpha^{s}_{t}({\rm w})+\varpi\big((H^{\l})^{(1)}_0(\mho_{t}^s,  d\underline{\Upsilon}_{{\rm V}, {\alpha}^s})\big)\right)\\
&\ \  =:\  {\rm I}(s)+{\rm II}(s)+{\rm III}(s).\end{align*}
This process has a continuous version in $s$ by  Kolmogorov's criterion (or by continuity of $\alpha^s$, $\mho^s$, $\underline{\Upsilon}_{{\rm V}, {\alpha}^s}, (\mho_t^s)'_s$ and  $[\wt{(DF_{0, t}^s)}(\mho_{0}^s, {\rm w})]^{-1}$ in $s$ using Theorem \ref{Main-alpha-x-v-Q}).

For (\ref{dpi-UFS-diff}),  we do the corresponding conditional  estimations for ${\rm I}(s), {\rm II}(s)$ and ${\rm III}(s)$. 
Clearly,  \begin{align*}
\overline{\E}_{\ov{\P}^*_{y, x, T}}\left|{\rm I}(s)\right|^q\leq &\  \overline{\E}_{\ov{\P}^*_{y, x, T}} \sup_{t\in [0, T]}\left\|([\wt{(DF_{0, t}^s)}(\mho_{0}^s, {\rm w})]^{-1})'_s\right\|^q \cdot \big\|(\lfloor {\rm u}_0\rceil^{\l})^{(1)}_{0}\big\|^q,\end{align*}
which,  by (\ref{DF-s-diff-b}), has a bound as in (\ref{dpi-UFS-diff}). Put \begin{align*}
{\rm II}_1(s):= &  -\int_{0}^{T}\big([\wt{(DF_{0, t}^s)}(\mho_{0}^s, {\rm w})]^{-1}\big)'_s\left(\varpi\big((H^{\l})^{(1)}_0(\mho_{t}^s, e_i)\big)e_i\ dt, \varpi\big((H^{\l})^{(1)}_0(\mho_{t}^s,  \mathtt{g}^{s}_{t}({\rm w})dt)\big)\right), \\
{\rm II}_2(s):= &  -\int_{0}^{T}\big([\wt{(DF_{0, t}^s)}(\mho_{0}^s, {\rm w})]^{-1}\big)'_s\left(0, \varpi\big((H^{\l})^{(1)}_0(\mho_{t}^s, {O}^{s}_{t}d\overleftarrow{B}_t(\rm w))\big)\right). 
\end{align*}
For ${\rm II}(s)$, we have 
\[
\overline{\E}_{\ov{\P}^*_{y, x, T}}\left|{\rm II}(s)\right|^q\leq 2^{q-1}\left(\overline{\E}_{\ov{\P}^*_{y, x, T}}\left|{\rm II}_1(s)\right|^q+\overline{\E}_{\ov{\P}^*_{y, x, T}}\left|{\rm II}_2(s)\right|^q\right). 
\]
As before, we can use   H\"{o}lder's inequality,  Doob's inequality of submartingales and Burkholder's  inequality to obtain   some $C(q, T)$ depending on $s_0, m, q,  \mathtt{s},  T$, $\|g^{0}\|_{C^3}$ and  $\|\XX^{0}\|_{C^2}$ such that 
\begin{align*}
\overline{\E}_{\ov{\P}^*_{y, x, T}} \left|{\rm II}(s)\right|^q\leq & C(q, T)T_0^{-m}\left(\overline{\E}_{\ov{\P}^*_{y, x, T}} \sup_{t\in [0, T]}\left\|([\wt{(DF_{0, t}^s)}(\mho_{0}^s, {\rm w})]^{-1})'_s\right\|^q+\right.\\
&\left.\big(\overline{\E}_{\ov{\P}^*_{y, x, T}} \sup_{t\in [0, T]}\left\|([\wt{(DF_{0, t}^s)}(\mho_{0}^s, {\rm w})]^{-1})'_s\right\|^{2q} \overline{\E}_{\ov{\P}^*_{y, x, T}} e^{\{2q\int_{0}^{T}\|\nabla\ln p(T-\tau, {\rm y}_{\tau}^0, x)\|\ d\tau\}}\big)^{\frac{1}{2}}\!\right),
\end{align*}
which has a bound as in (\ref{dpi-UFS-diff}) by  Lemma \ref{cond-DF-diff-norm} and Proposition \ref{cond-nabla-ln-p}.  The same argument applies to  ${\rm III}(s)$ and we obtain  some $C'(q, T)$ depending on $s_0, m, q,  \mathtt{s},  T$, $\|g^{0}\|_{C^3}$ and  $\|\XX^{0}\|_{C^2}$ such that  \begin{align*}
\overline{\E}_{\ov{\P}^*_{y, x, T}} \left|{\rm III}(s)\right|^q\leq & C'(q, T)T_0^{-m}\big(\overline{\E}_{\ov{\P}^*_{y, x, T}} \sup_{t\in [0, T]}\left\|[\wt{(DF_{0, t}^s)}(\mho_{0}^s, {\rm w})]^{-1}\right\|^{2q}\big)^{\frac{1}{2}}\cdot\\
&\left\{1+\big(\overline{\E}_{\ov{\P}^*_{y, x, T}} e^{\{2q\int_{0}^{T}\|\nabla\ln p(T-\tau, {\rm y}_{\tau}^0, x)\|\ d\tau\}}\big)^{\frac{1}{2}}\cdot \right.\\
& \left(\big(\overline{\E}_{\ov{\P}^*_{y, x, T}} \int_{0}^{T}\left|\big(\varpi\big((H^{\l})^{(1)}_0(\mho_{t}^s, e_i)\big)e_i\big)'_s\right|^{2q} dt \big)^{\frac{1}{2}}+\big(\overline{\E}_{\ov{\P}^*_{y, x, T}}  \!\int_{0}^{T}\left|K_{{\rm V}, {\alpha}^s}\right|^{2q}  dt\big)^{\frac{1}{2}}\right.\\
&\left.\left.+\big(\overline{\E}_{\ov{\P}^*_{y, x, T}}\int_{0}^{T}\!\left|\big(\varpi(H^{\l})^{(1)}_0(\mho_{t}^s, \cdot)\big)'_s{\mathtt g}_t^s\right|^{2q} dt\big)^{\frac{1}{2}}\!\right)\right\}, 
\end{align*}
which also has a bound as in (\ref{dpi-UFS-diff}) by  Lemma \ref{O-mathttg-diff-esti}, Lemma \ref{cond-DF-norm} and  Proposition \ref{cond-nabla-ln-p}. 
\end{proof}

 We can define $\big(D\pi(\lfloor {\rm u}_T\rceil^{\l})^{(1)}_\l\big)\circ \lf {\bf F}^s\rc^{\l}$ for all $\l$.  Let ${\rm V}, $ $F^s$ and   ${\mathtt s}$ be  as in Section \ref{flow-F-S-y}. For $y\in \M$, let $(\lf {\rm y}_t\rc^{\l}({\rm w}), \lf\mho_t\rc^{\l}({\rm w}))_{t\in [0, T]}$  be the stochastic  pair in $(\M, \mathcal{O}^{\wt{g}^{\l}}(\M))$ which defines the $\wt{g}^{\l}$-Brownian motion on $\M$ starting from $y$. Following Theorem \ref{Main-alpha-x-v-Q}, we  can extend the  map $F^s$  on $y$ to be a  map $\lf {\bf F}^s_y\rc^{\l}$ on  paths $(\lf {\rm y}_t\rc^{\l}({\rm w}))_{t\in [0, T]}$ so that 
 \[\lf {\rm y}^s_t\rc^{\l}({\rm w}):=\left(\lf {\bf F}^s_y\rc^{\l}(\lf{\rm y}\rc^{\l}_{[0, T]}({\rm w}))\right)(t), \ \forall  t\in [0, T], \] 
 and its horizontal lift $\big(\lf\mho_{t}^s\rc^{\l}({\rm w})\big)_{t\in [0, T]}$ with $(\lf\mho_{0}^s\rc^{\l})'_s=0$ 
are such that 
\[
\frac{d}{ds}\big( \lf{\rm y}^s_t\rc^{\l}({\rm w})\big)=\Upsilon_{{\rm V}, \lf {\rm y}^s\rc^{\l}}(t)=\mathtt{s}(t)\lf\mho_{t}^s\rc^{\l}(\lf\mho_0^s\rc^{\l})^{-1}{\rm V}(F^s(y)). 
\]
Accordingly, we denote by $\lf\alpha_t^s\rc^{\l}$ the anti-development of $\lf {\rm y}_t^s\rc^{\l}$ and let  $\big(\lf F_{\underline{t}, \overline{t}}^s\rc^{\l}\big)_{0<\underline{t}<\overline{t}<T}$ be the stochastic flow map corresponding to  the SDE
\[
d\overline{\beta}_t=H^{\l}(\overline{\beta}_t, \circ d\lf \alpha_{t}^s\rc^{\l}({\rm w}))
\]
with tangent maps  $(D\lf F_{\underline{t}, \overline{t}}^s\rc^{\l})_{0<\underline{t}<\overline{t}<T}$. We will omit the upper-script at $s=0$.  For $x\in \M$, we define $\lf {\bf F}^s\rc^{\l}$ on  $C_x([0, T], \M)$ conditioned on the value of $\beta_T$, i.e., 
\[
\lf {\bf F}^s\rc^{\l}(\beta):=\lf{\bf F}^{s}_{\beta_T}\rc^{\l}(\beta), \ \forall \beta\in C_x([0, T], \M). 
\]
Let $(\lfloor {\rm x}_t \rceil^{\l}(w),  \lfloor {\rm u}_t \rceil^{\l}(w))_{t\in [0, T]}$  be the stochastic  pair in $(\M, \mathcal{O}^{\wt{g}^{\l}}(\M))$ which defines the $\wt{g}^{\l}$-Brownian motion on $\M$ starting from $x$.  The  correspondence rule in Corollary \ref{Cor-fore-back} shows that conditioned on $\lf{\rm x}_T\rc^{\l}=y$, the distribution of $(\lfloor {\rm u}_T\rceil^{\l})^{(1)}_{\l}(w)$  is the same as,  conditioned on $\lf {\rm y}_T\rc^{\l}=x$, the distribution of 
 \begin{align*}
\overline{(\lfloor {\rm u}_T\rceil^{\l})^{(1)}_\l}({\rm w}):=&[D\lf F_{0, T}\rc^{\l}(\lf{\mho}_0\rc^{\l}, {\rm w})]^{-1}(\lfloor {\rm u}_0\rceil^{\l})^{(1)}_{\l} \\
&- \int_{0}^{T}[D\lf F_{0, t}\rc^{\l}(\lf {\mho}_0\rc^{\l}, {\rm w})]^{-1}(H^{\l})^{(1)}_{\l}\big(\lf\mho_{t}\rc^{\l}, \circ d{B}_{t}({\rm w})\big),
\end{align*}
where  $\circ d\overrightarrow{B}_{t}({\rm w})$ is the backward Stratonovich infinitesimal. Then  we define \[
(D\pi(\lfloor {\rm u}_T\rceil^{\l})^{(1)}_\l)\circ \lf{\bf F}^s\rc^{\l}:=D\pi \big((\lfloor {\rm u}_T\rceil^{\l})^{(1)}_{\l}\circ \lf {\bf F}^s\rc^{\l}\big)=D\pi\big(\overline{(\lfloor {\rm u}_T^s\rceil^{\l})^{(1)}_\l}(\rm w)\big),
\]
where 
\begin{align}
\overline{(\lfloor {\rm u}_T^s\rceil^{\l})^{(1)}_{\l}}({\rm w})=&\; [D\lf F_{0, T}^s\rc^{\l}(\lf {\mho}_0^s\rc^{\l}, {\rm w})]^{-1}(\lfloor {\rm u}_0\rceil^{\l})^{(1)}_{\l}\notag\\
&-\int_{0}^{T}[D\lf F_{0, t}^s\rc^{\l}(\lf {\mho}_0^s\rc^{\l}, {\rm w})]^{-1}(H^{\l})^{(1)}_{\l}\big(\lf \mho_{t}^s\rc^{\l}, \circ d\lf \alpha^{s}_{t}\rc^{\l}({\rm w})\big). \label{u-T-s-la-1}
\end{align}
The proof of  Proposition \ref{Diff-int-U-s} works for $\lf{\bf F}^s\rc^{\l}$, which gives the following. 

\begin{prop}\label{Diff-int-U-s-gen} For each $\l$,  the one parameter family  of processes $\{(D\pi(\lfloor {\rm u}_T\rceil^{\l})^{(1)}_\l)\circ  \lf{\bf F}^s\rc^{\l}\}$ is differentiable in $s$. Moreover, $(D\pi(\lfloor {\rm u}_T\rceil^{\l})^{(1)}_\l)\circ  \lf{\bf F}^s\rc^{\l}$ and the differential stochastic process
\[
\nabla_{T, {\rm V}, \mathtt{s}}^{s, \l}D\pi(\lfloor {\rm u}_T\rceil^{\l})^{(1)}_{\l}:=\left((D\pi(\lfloor {\rm u}_T\rceil^{\l})^{(1)}_{\l})\circ  \lf{\bf F}^s\rc^{\l}\right)'_{s}
\]
conditioned on $\lf {\rm x}_T\rc^{\l}=y$ are  $L^q$ ($q\geq 1$) integrable, locally uniformly in the $s$ parameter. 
\end{prop}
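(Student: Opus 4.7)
The plan is to retrace the proof of Proposition \ref{Diff-int-U-s} with every object carrying the superscript $\lambda$, taking formula (\ref{u-T-s-la-1}) as the starting representation. That formula splits $\overline{(\lf{\rm u}_T^s\rc^\l)^{(1)}_\l}$ into an initial-condition piece $[D\lf F^s_{0,T}\rc^\l]^{-1}(\lf{\rm u}_0\rc^\l)^{(1)}_\l$ plus a backward Stratonovich integral of $(H^\l)^{(1)}_\l(\lf\mho^s_t\rc^\l,\cdot)$ against $\lf\alpha^s\rc^\l$; differentiating each piece in $s$ will give a candidate formula for $\nabla^{s,\l}_{T,{\rm V},\mathtt s}D\pi(\lf{\rm u}_T\rc^\l)^{(1)}_\l$, and the proof reduces to (i) justifying the differentiation and (ii) producing the bridge $L^q$-bounds.

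First I would install the $\lambda$-versions of Lemmas \ref{O-mathttg-diff-esti}, \ref{cond-DF-norm}, \ref{cond-DF-diff-norm} and of the exponential heat-kernel bound (\ref{exp-grad-lnp}), all with respect to the bridge probability $\ov{\P}^{\l,*}_{y,x,T}$. The arguments are identical once one substitutes $H^\l,R^\l,\nabla R^\l,p^\l$ everywhere, because: Lemma \ref{Hsu-thm-5.4.4} (bridge SDE via Cameron--Martin--Girsanov) and Lemma \ref{Hs2-ST} ($\|\nabla\ln p^\l\|$-estimate) hold for each $\wt g^\l$ with constants controlled by $\|g^\l\|_{C^2}$; the fixed vector field $\rm V$ and function $\mathtt s$ contribute uniform bounds; the curvature and its derivative are bounded by $\|g^\l\|_{C^3}$; and Theorem \ref{Main-alpha-x-v-Q} applied to $\wt g^\l$ produces $\lf\alpha^s\rc^\l,\lf\mho^s\rc^\l$ and their $s$-derivatives with the same type of pathwise estimates and continuous $(t,s)$-modifications. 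Here I take the initial frame section $s\mapsto\lf\mho^s_0\rc^\l$ to be parallel along $(F^s y)_{s\in\Bbb R}$ in the $\wt g^\l$-connection, consistently with Section \ref{sec-4-5}.

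Differentiating (\ref{u-T-s-la-1}) in $s$ then yields a decomposition $\mathrm I^\l(s)+\mathrm{II}^\l(s)+\mathrm{III}^\l(s)$ mirroring the one in the proof of Proposition \ref{Diff-int-U-s}: $\mathrm I^\l(s)$ involves $\big([D\lf F^s_{0,T}\rc^\l]^{-1}\big)'_s$ acting on $(\lf{\rm u}_0\rc^\l)^{(1)}_\l$; $\mathrm{II}^\l(s)$ is the integral of $\big([D\lf F^s_{0,t}\rc^\l]^{-1}\big)'_s$ against the backward integrand; and $\mathrm{III}^\l(s)$ is the integral of $[D\lf F^s_{0,t}\rc^\l]^{-1}$ applied to the $s$-derivative of the integrand, which produces both a drift piece built from $\underline\Upsilon_{{\rm V},\lf\alpha^s\rc^\l}$ and a $(\nabla (H^\l)^{(1)}_\l)(\lf\mho_t^s\rc^\l)'_s$-type contribution. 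Continuity in $(t,s)$ follows from Kolmogorov's criterion, exactly as in Theorem \ref{Main-alpha-x-v-Q}. Applying H\"older's inequality, Doob's maximal inequality and Burkholder's inequality to each of the three pieces, and invoking the preparatory bounds, assembles (\ref{dpi-UFS}) and (\ref{dpi-UFS-diff}) with the advertised local uniformity in $s\in[-s_0,s_0]$.

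The main obstacle, as in the $\l=0$ case, is controlling the backward stochastic integrals under bridge conditioning: Lemma \ref{Hsu-thm-5.4.4} shows that $\lf\alpha^s\rc^\l$ carries a drift $2(\lf\mho_\tau\rc^\l)^{-1}\nabla\ln p^\l(T-\tau,\cdot,x)$ whose $1/(T-\tau)$-type singularity at $\tau\to T$ must be absorbed both by the prescribed decay $\mathtt s(T)=0$ built into $\Upsilon_{{\rm V},\lf{\rm y}^s\rc^\l}$ and by the exponential integrability given by the $\lambda$-analogue of Proposition \ref{cond-nabla-ln-p}. Since that proposition only requires a $C^2$-bound on $g^\l$, after shrinking $\mathcal V_g$ if necessary, all constants in the resulting estimates are uniform for $\l\in(-1,1)$, which delivers the proposition.
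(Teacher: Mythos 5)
Your proposal is correct and takes essentially the same approach as the paper, which dispatches this proposition with a single sentence stating that the proof of Proposition \ref{Diff-int-U-s} carries over to $\lf{\bf F}^s\rc^{\l}$; your careful unpacking — transporting Lemmas \ref{O-mathttg-diff-esti}, \ref{cond-DF-norm}, \ref{cond-DF-diff-norm} and Proposition \ref{cond-nabla-ln-p} to the metric $\wt g^{\l}$ and then repeating the ${\rm I}+{\rm II}+{\rm III}$ decomposition of the $s$-derivative of (\ref{u-T-s-la-1}) — is exactly the verification the paper leaves implicit.
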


For later use, we list and reformulate some differentials related to $\nabla_{T, {\rm V}, \mathtt{s}}D\pi(\lfloor {\rm u}_T\rceil^{\l})^{(1)}_{\l}$.  The upper-scripts $\l$ in  $\nabla^{\l}$, $R^{\l}$, ${\rm Ric}^{\l}$, $\theta^{\l}, \varpi^{\l}$ and  $(\theta, \varpi)^{\l}$ are to  indicate the metric $\wt{g}^{\l}$ used.  

\begin{lem}\label{all-diff-at-s-0}
We have the following  for almost all ${\rm w}$ and  for all $t\in [0, T]$. 
\begin{itemize}
\item[i)] 
\begin{align*}
&(\lf \alpha_t^s\rc^{\l})'_0=\underline{\Upsilon}_{{\rm V}, B}^{\l}(t)\\
& :=\int_{0}^{t}\left((\lf \mho_0\rc^{\l})^{-1}({\mathtt s}'(\tau){\rm V}(y))-{\rm Ric}^{\l}\big(\Upsilon_{{\rm V}, \lf {\rm y}\rc^{\l}}(\tau)\big)\right)\ d\tau-\int_{0}^{t}\langle K_{{\rm V}, B}^{\l}(\tau),  dB_\tau\rangle,
\end{align*}
where  $\Upsilon_{{\rm V}, \lf {\rm y}\rc^{\l}}(\tau):={\mathtt s}(\tau)\lf \mho_{\tau}\rc^{\l}(\lf \mho_0\rc^{\l})^{-1}{\rm V}(y)$ and  
\begin{align*}K_{{\rm V}, B}^{\l}(\tau)=&\int_{0}^{\tau}(\lf \mho_{\wt{\tau}}\rc^{\l})^{-1}R^{\l}\left(\lf {\mho_{\wt{\tau}}}\rc^{\l}dB_{\wt{\tau}}, \Upsilon_{{\rm V}, \lf {\rm y}\rc^{\l}}(\wt{\tau})\right)\lf{\mho_{\wt{\tau}}}\rc^{\l}\\
&+\int_{0}^{\tau} (\lf \mho_{\wt{\tau}}\rc)^{-1}(\nabla^{\l} (\lf \mho_{\wt{\tau}} \rc^{\l}e_i)R^{\l})\left(\lf \mho_{\wt{\tau}}\rc^{\l}e_i, \Upsilon_{{\rm V}, \lf {\rm y}\rc^{\l}}(\wt{\tau})\right)\lf \mho_{\wt{\tau}}\rc^{\l}\ d\wt{\tau}\notag.
\end{align*}
\item[ii)]
\[
(\theta, \varpi)^{\l}_{\lf \mho_t\rc^{\l}}(\lf \mho_t^s\rc^{\l})'_0=\big({\mathtt s}(t)(\lf \mho_0\rc^{\l})^{-1}{\rm V}(y), K_{{\rm V}, B}^{\l}(t)\big).
\]
\item[iii)]For $s\mapsto {\bf\mathsf v}_t^s\in HT_{\lf \mho_t^s\rc^{\l}}\mathcal{F}(\M)$,  let  $
{\bf\mathsf v}_\tau^s:=[D\lf F_{\tau, t}^s\rc^{\l} (\lf \mho_{\tau}^s\rc^{\l}, {\rm w})]^{-1}{\bf\mathsf v}_t^s, \tau\in [0, t]$.  
Then \begin{align*}
\big({\bf\mathsf v}_0^s\big)'_0=&\int_{0}^{t}\big[D\lf F_{0, \tau}\rc^{\l}\big(\lf \mho_{0}\rc^{\l}, {\rm w}\big)\big]^{-1}\left(\circledast^{\l}\big({\bf\mathsf v}_\tau, (\lf \mho_\tau^s\rc^{\l})'_0\big)\right),
\end{align*}
where 
\begin{align}
\circledast^{\l}\big({\bf\mathsf v}_\tau, (\lf \mho_\tau^s\rc^{\l})'_0\big)=&
\nabla^{(2), \l}\big({\bf\mathsf v}_\tau, (\lf \mho_\tau^s\rc^{\l})'_0\big)H^{\l}(\lf \mho_\tau\rc^{\l}, \circ d B_\tau)+\nabla^{\l}({\bf\mathsf v}_\tau)H^{\l}\big(\lf {\mho_\tau}\rc^{\l}, \circ d\Upsilon_{{\rm V}, B}^{\l}(\tau)\big)\notag\\
&+R^{\l}\big(H^{\l}(\lf \mho_{\tau}\rc^{\l}, \circ dB_{\tau}), (\lf \mho_\tau^s\rc^{\l})'_0\big){\bf\mathsf v}_\tau. \label{circle-v-mho-s}
\end{align}
The It\^{o} form of $\big({\bf\mathsf v}_0^s\big)'_0$ in $(\theta, \varpi)$-chart is \begin{align*}
(\theta, \varpi)^{\l}_{\lf \mho_0\rc^{\l}}\big({\bf\mathsf v}_0^s\big)'_0=&\int_{0}^{t}\left[\wt{D\lf F_{0, \tau}\rc^{\l}}\big(\lf \mho_{0}\rc^{\l}, {\rm w}\big)\right]^{-1}\left\{(\theta, \varpi)^{\l}\circledast_{{\rm I}}^{\l}({\bf\mathsf v}_\tau, (\lf \mho_\tau^s\rc^{\l})'_0)\right.\\
&\ \ \ \ \ \ \ \ \ \ \ \ \ \ \ \ \ \ \ \ \ \ \ \ \ \ \ \ \ \ \ \ \ \left.+\big(\wt{\circledast}_{{\rm A}}^{\theta, \l}({\bf\mathsf v}_\tau, (\lf \mho_\tau^s\rc^{\l})'_0), \wt{\circledast}_{{\rm A}}^{\varpi, \l}({\bf\mathsf v}_\tau, (\lf \mho_\tau^s\rc^{\l})'_0)\big)\right\},
\end{align*}
where $\circledast_{{\rm I}}^{\l}({\bf\mathsf v}_\tau, (\lf \mho_\tau^s\rc^{\l})'_0)$ is $\circledast^{\l}({\bf\mathsf v}_\tau, (\lf \mho_\tau^s\rc^{\l})'_0)$ with the Stratonovich infinitesimals  $\circ dB_{\tau}$, $\circ d\Upsilon_{{\rm V}, B}^{\l}(\tau)$ replaced by the It\^{o} infinitesimals  $dB_{\tau}, d\Upsilon_{{\rm V}, B}^{\l}(\tau)$, 
\begin{align*}
&\wt{\circledast}_{{\rm A}}^{\theta, \l}\big({\bf\mathsf v}_\tau, (\mho_\tau^s)'_0\big)\!=\varpi^{\l}\!\left(\circledast\big({\bf\mathsf v}_\tau, (\lf \mho_\tau^s\rc^{\l})'_0, e_i\big)\right)e_i \ d\tau\!+\theta^{\l}\left(\left[H(\mho_\tau, e_i), \circledast^{\l}\big({\bf\mathsf v}_\tau, (\lf\mho_\tau^s\rc^{\l})'_0, e_i\big)\right]\right) d\tau,\\
&\wt{\circledast}_{{\rm A}}^{\varpi, \l}\big({\bf\mathsf v}_\tau, (\lf \mho_\tau^s\rc^{\l})'_0\big)=(\lf \mho_{\tau}\rc^{\l})^{-1} R^{\l} \left(\lf \mho_{\tau}\rc^{\l}e_i, \lf \mho_{\tau}\rc^{\l}\theta^{\l}(\circledast^{\l}({\bf\mathsf v}_\tau, (\lf \mho_\tau^s\rc^{\l})'_0, e_i))\right)\lf \mho_{\tau}\rc^{\l} d\tau\\
&\ \ \ \ \ \ \ \ \ \ \ \ \ \ \ \ \ \ \ \ \ \ \ \ \ \ +\varpi^{\l}\left(\left[H^{\l}(\lf \mho_\tau\rc^{\l}, e_i), \circledast^{\l}\big({\bf\mathsf v}_\tau, (\lf \mho_{\tau}^s\rc^{\l})'_0, e_i\big)\right]\right)\ d\tau, \ \mbox{and}\\
&\circledast^{\l}\big({\bf\mathsf v}_\tau, (\lf \mho_\tau^s\rc^{\l})'_0, e_i\big)=\nabla^{(2), \l}\big({\bf\mathsf v}_\tau, (\lf \mho_{\tau}^s\rc^{\l})'_0\big)H^{\l}(\lf \mho_{\tau}\rc^{\l}, e_i)+R^{\l}\big(H^{\l}(\lf \mho_{\tau}\rc^{\l}, e_i), (\lf \mho_{\tau}^s\rc^{\l})'_0\big){\bf\mathsf v}_{\tau}\\
&\ \ \ \ \ \ \ \ \ \ \ \ \ \ \ \ \  \ \ \ \ \ \ \ \ \ \ \ +\nabla^{\l}({\bf\mathsf v}_{\tau})H^{\l}\big(\lf {\mho_{\tau}}\rc^{\l}, K_{{\rm V}, B}^{\l}e_i\big).
\end{align*}
\end{itemize}
\end{lem}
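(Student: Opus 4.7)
The three identities can all be extracted by specializing the general SDE systems already recorded. The plan is to reduce everything to $s=0$ in the formulas of Theorem~\ref{Main-alpha-x-v-Q}, Corollary~\ref{cor-mho-t-s-d}, and Lemma~\ref{diff-D-F-s-crit}, and then transport the argument from the base metric $\wt g$ to $\wt g^\l$ (the constructions of Sections~\ref{flow-F-S-y}--\ref{the flow F-S} depend only on the parallelism datum $(\theta,\varpi)$ on $\mathcal F(\M)$, so the metric label is carried passively through every identity).

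For (i): Theorem~\ref{Main-alpha-x-v-Q}(i), applied to $\wt g^\l$, yields the defining relation $(\lf\alpha^s_t\rc^\l)'_s = \underline\Upsilon^\l_{V,\lf\alpha^s\rc^\l}(t)$. Setting $s=0$ and using $\lf\alpha^0_t\rc^\l = B_t$ gives $(\lf\alpha^s_t\rc^\l)'_0 = \underline\Upsilon^\l_{V,B}(t)$. The explicit It\^o form then follows from Lemma~\ref{gamma-V-apha}, equations~(\ref{Ito-Upsilon})--(\ref{Ito-K-V}), with $\overline\beta$ replaced by the horizontal Brownian lift $\lf\mho\rc^\l$, after noting that $\Upsilon_{V,\lf {\rm y}\rc^\l}(\tau) = \mathtt s(\tau)\lf\mho_\tau\rc^\l(\lf\mho_0\rc^\l)^{-1}V(y)$ at $s=0$.

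For (ii): By Corollary~\ref{cor-mho-t-s-d}, the process $Y^s_t = (\mho^s_t)'_s$ satisfies the Stratonovich SDE~(\ref{Y-t-s-diff-Str}) with initial condition dictated by $d\mho^s_0/ds = H(\mho^s_0,(\mho^s_0)^{-1}V(F^s y))$, so that $\theta(Y^0_0) = (\lf\mho_0\rc^\l)^{-1}V(y)$ and $\varpi(Y^0_0)=0$. Evaluating~(\ref{Y-t-s-diff-Str}) at $s=0$ the $\theta$-component is a pure drift $\mathtt s'(t)(\lf\mho_0\rc^\l)^{-1}V(y)\,dt$, which integrates to $\mathtt s(t)(\lf\mho_0\rc^\l)^{-1}V(y)$ because $\mathtt s(0)=1$; and the $\varpi$-component integrates directly to $K^\l_{V,B}(t)$ as defined in (i). The metric label is again inert.

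For (iii): this is the only place where Duhamel is needed. Lemma~\ref{diff-D-F-s-crit} shows that, for a $C^1$ family ${\bf\mathsf v}_{t_1}^s$, the derivative $\upsilon_t^s := \nabla_{D/\partial s}{\bf\mathsf v}_t^s$ satisfies~(\ref{mathsfv-t-s-d-stoch}). Applying the stochastic variation-of-constants formula, exactly as in Corollary~\ref{Duha-mathcal-V-t}, gives
\begin{equation*}
\upsilon_t^s \;=\; [DF_{0,t}^s]\,\upsilon_0^s \;+\; \int_0^t [DF_{\tau,t}^s]\,\circledast^\l\!\big({\bf\mathsf v}_\tau^s,(\mho_\tau^s)'_s\big).
\end{equation*}
Solving this for $\upsilon_0^s$ by applying $[DF_{0,t}^s]^{-1}$ and invoking the cocycle identity $[DF_{0,t}^s]^{-1}[DF_{\tau,t}^s] = [DF_{0,\tau}^s]^{-1}$ yields the stated Stratonovich expression for $({\bf\mathsf v}_0^s)'_0$ at $s=0$. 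The It\^o form in the $(\theta,\varpi)$-chart is obtained by converting the Stratonovich differentials $\circ d\alpha_\tau^s$ and $\circ d\underline\Upsilon^\l_{V,\alpha^s}$ inside $\circledast^\l$ to their It\^o counterparts, which introduces precisely the correction terms $\wt\circledast_{\rm A}^{\theta,\l}$, $\wt\circledast_{\rm A}^{\varpi,\l}$ through the structure equations~(\ref{structure-1})--(\ref{structure-2}); this is the same computation that converted the Stratonovich system of Lemma~\ref{diff-D-F-s-crit} into its It\^o form~(\ref{mathsfv-t-s-d-Ito}).

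The main technical obstacle is bookkeeping in (iii): one must ensure that the Stratonovich-to-It\^o conversion of the two stochastic differentials appearing inside $\circledast^\l$ produces exactly the drift terms written in $\wt\circledast^{\theta,\l}_{\rm A}$ and $\wt\circledast^{\varpi,\l}_{\rm A}$, using both the structure equations and the commutator identities $[H(\mho,e_i),\cdot]$ generated by passing $\nabla(H)$ through a second stochastic integral. Everything else is a direct specialization of results already proved.
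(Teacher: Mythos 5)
Parts (i) and (ii) of your argument are correct and are exactly the paper's (very terse) route: specialize to $s=0$, report $\alpha^0=B$ in Theorem~\ref{Main-alpha-x-v-Q}, Lemma~\ref{gamma-V-apha} and Corollary~\ref{cor-mho-t-s-d}, and use $\mathtt{s}(0)=1$ together with $\varpi\big((\mho^s_0)'_s\big)=0$; carrying the metric label $\l$ through the parallelism is indeed harmless.

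The gap is in your concluding step for (iii). Since $\mathsf{v}^s_\tau=[D\lf F^s_{\tau,t}\rc^{\l}]^{-1}\mathsf{v}^s_t=[D\lf F^s_{0,\tau}\rc^{\l}]\mathsf{v}^s_0$, your Duhamel identity evaluated at $\tau=t$ reads
\[
\nabla_{\frac{D}{\partial s}}\mathsf{v}^s_t=[D\lf F^s_{0,t}\rc^{\l}]\,\nabla_{\frac{D}{\partial s}}\mathsf{v}^s_0+\int_0^t[D\lf F^s_{\tau,t}\rc^{\l}]\,\circledast^{\l}\big(\mathsf{v}^s_\tau,(\lf\mho^s_\tau\rc^{\l})'_s\big),
\]
so "solving for $(\mathsf{v}^s_0)'_0$" by applying $[D\lf F_{0,t}\rc^{\l}]^{-1}$ and the cocycle identity produces
\[
(\mathsf{v}^s_0)'_0=[D\lf F_{0,t}\rc^{\l}]^{-1}\big(\mathsf{v}^s_t\big)'_0-\int_0^t[D\lf F_{0,\tau}\rc^{\l}]^{-1}\circledast^{\l}\big(\mathsf{v}_\tau,(\lf\mho^s_\tau\rc^{\l})'_0\big),
\]
which is not the displayed statement: (a) there is a boundary term $[D\lf F_{0,t}\rc^{\l}]^{-1}\nabla_{D/\partial s}\mathsf{v}^s_t\big|_{s=0}$ that you never address — it must either be shown to vanish (as it does in the paper's application, Proposition~\ref{u-s-t-l-1}, where the terminal vector $(\lfloor {\rm u}_0\rceil^{\l})^{(1)}_{\l}$ is fixed in $s$) or be built into the hypothesis on the family $s\mapsto\mathsf{v}^s_t$; and (b) the inversion $\big(A_s^{-1}\big)'=-A_s^{-1}A_s'A_s^{-1}$ puts a minus sign in front of the integral, so the claimed coincidence with the displayed (plus-sign) formula requires identifying a compensating convention — this is precisely why the paper instead compares the SDE of $(\mathsf{v}^s_\tau)'_s$ from Lemma~\ref{diff-D-F-s-crit} with the SDE satisfied by the inverse tangent maps $[DF_{0,\tau}]^{-1}$, $[\wt{DF_{0,\tau}}]^{-1}$ and applies Duhamel directly against the inverse (backward) flow. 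Asserting that the manipulation "yields the stated expression" without handling these two points is the missing step; the remaining bookkeeping (Stratonovich-to-It\^o conversion giving $\wt{\circledast}_{\rm A}^{\theta,\l}$, $\wt{\circledast}_{\rm A}^{\varpi,\l}$ as in (\ref{mathsfv-t-s-d-Ito})) is fine but inherits whatever sign and boundary conventions you settle on there.
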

\begin{proof}Without loss of generality, we can consider the case $\l=0$. The {\rm i)}, {\rm ii)} are straight forward consequences of Theorem \ref{Main-alpha-x-v-Q} reporting $\alpha^0=B$ in the formulas in Lemma  \ref{cor-mho-t-s-d} and Corollary \ref{gamma-V-apha}.  For {\rm iii)},  a comparison of the  SDEs (\ref{mathsfv-t-s-d-stoch}), (\ref{mathsfv-t-s-d-Ito}) in Lemma \ref{diff-D-F-s-crit} with that of the tangent maps $[DF_{0, \tau}]^{-1}$, $[\wt{DF_{0, \tau}}]^{-1}$ shows that we can use Duhamel's principle to formulate $({\bf\mathsf v}_0^s)'_0$, $(\theta, \varpi)_{\mho_0}({\bf\mathsf v}_0^s)'_0$ as above. \end{proof}

\begin{prop}\label{u-s-t-l-1} With all the notations as above,  then 
\begin{align}
\notag\left(\overline{(\lfloor {\rm u}_T^s\rceil^{\l})^{(1)}_{\l}}({\rm w})\right)'_0\! =&\int_{0}^{T}[D\lf F_{0, \tau}\rc^{\l}(\lf \mho_0\rc^{\l}, {\rm w})]^{-1}\left\{\circledast^{\l}\big((\lf \mho_{\tau}\rc^{\l})^{(1)}_{\l}, (\lf \mho_\tau^s\rc^{\l})'_0\big)\right.\\
&\left.-\big(\nabla^{\l}((\lf \mho_\tau^s\rc^{\l})'_0)(H^{\l})^{(1)}_{\l}(\lf \mho_{\tau}\rc^{\l}, \cdot)\big)\circ dB_{\tau}-\! (H^{\l})^{(1)}_{\l}\big(\lf \mho_{\tau}\rc^{\l}, \circ d\underline{\Upsilon}_{{\rm V}, B}^{\l}(\tau)\big)\right\}, \label{the U-T-s-1}
\end{align}
where $\circledast^{\l}\big((\lf \mho_{\tau}\rc^{\l})^{(1)}_{\l}, (\mho_\tau^s)'_0\big)$ is as in (\ref{circle-v-mho-s}) replacing ${\bf\mathsf v}_\tau$ by $(\lf \mho_{\tau}\rc^{\l})^{(1)}_{\l}$ and 
\begin{align*}
(\lf \mho_{\tau}\rc^{\l})^{(1)}_{\l}({\rm w}):=&\big[D\lf F_{\tau, T}\rc^{\l}(\lf {\mho}_0\rc^{\l}, {\rm w})\big]^{-1}(\lfloor {\rm u}_0\rceil^{\l})^{(1)}_{\l}\\
&-\int_{\tau}^{T}\big[D\lf F_{\tau, t}\rc^{\l}(\lf {\mho}_\tau\rc^{\l}, {\rm w})\big]^{-1}(H^{\l})^{(1)}_{\l}\big(\lf \mho_{t}\rc^{\l},\circ d B_{t}({\rm w})\big).
\end{align*}
In  $(\theta, \varpi)^{\l}$-chart, we have the It\^{o} integral expression  
\begin{align*}
&\left((\theta, \omega)^{\l}_{\mho_{0}^s}\big(\overline{(\lfloor {\rm u}_T^s\rceil^{\l})^{(1)}_0}({\rm w})\big)\right)'_0\\
&\  =\int_{0}^{T}\left[\wt{D\lf F_{0, \tau}}\rc^{\l}\big(\lf \mho_{0}\rc^{\l}, {\rm w}\big)\right]^{-1}\!\left\{(\theta, \varpi)^{\l}\circledast_{{\rm I}}^{\l}\big((\lf \mho_{\tau}\rc^{\l})^{(1)}_{\l}, (\lf \mho_\tau^s\rc^{\l})'_0\big)\right.\\
&\ \ \ \ +\left(\wt{\circledast}_{{\rm A}}^{\theta, \l}((\lf \mho_{\tau}\rc^{\l})^{(1)}_{\l}, (\lf \mho_\tau^s\rc^{\l})'_0), \wt{\circledast}_{{\rm A}}^{\varpi, \l}((\lf \mho_{\tau}\rc^{\l})^{(1)}_{\l}, (\lf \mho_\tau^s\rc^{\l})'_0)\right)\\
&\ \ \ \ +\left(\varpi^{\l}\big(\nabla^{\l}((\lf \mho_\tau^s\rc^{\l})'_0)(H^{\l})^{(1)}_{\l}(\lf \mho_{\tau}\rc^{\l}, e_i)\big)e_i\ d\tau, \right.\\
&\ \ \ \ \ \ \ \ \left.\left. \varpi^{\l}\big(\nabla^{\l}((\lf \mho_\tau^s\rc^{\l})'_0)(H^{\l})^{(1)}_{\l}(\lf \mho_{\tau}\rc^{\l}, dB_\tau)\big)+ \varpi^{\l}\big((H^{\l})^{(1)}_{\l}(\lf \mho_{\tau}\rc^{\l},  d\underline{\Upsilon}_{{\rm V}, B}^{\l}(\tau))\big)\right)\right\}.
\end{align*}
\end{prop}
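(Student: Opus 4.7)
The plan is to differentiate the right-hand side of (\ref{u-T-s-la-1}) term by term at $s=0$ and then reorganize the resulting expression so it matches the claimed formula. Throughout, the key point is that each component in (\ref{u-T-s-la-1}) has already been shown to be $C^{1}$ in $s$ (Theorem \ref{Main-alpha-x-v-Q}, Lemma \ref{cond-DF-diff-norm}), and their derivatives at $s=0$ have been identified in Lemma \ref{all-diff-at-s-0}. Since the integrability bounds of Lemma \ref{cond-DF-norm}, Lemma \ref{cond-DF-diff-norm}, Lemma \ref{O-mathttg-diff-esti} and Proposition \ref{cond-nabla-ln-p} apply uniformly in $s$ on $[-s_{0},s_{0}]$, interchanging $d/ds$ with the stochastic integral in $t$ is legitimate (this was already used in the proof of Proposition \ref{Diff-int-U-s-gen}).

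First I would differentiate the initial-value term $[D\lf F_{0,T}^{s}\rc^{\l}(\lf\mho_{0}^{s}\rc^{\l},{\rm w})]^{-1}(\lfloor{\rm u}_{0}\rceil^{\l})^{(1)}_{\l}$ at $s=0$: applying Lemma \ref{all-diff-at-s-0} iii) with ${\bf\mathsf v}_{T}^{s}\equiv(\lfloor{\rm u}_{0}\rceil^{\l})^{(1)}_{\l}$ (so that ${\bf\mathsf v}_{\tau}=[D\lf F_{\tau,T}\rc^{\l}(\lf{\mho}_{\tau}\rc^{\l},{\rm w})]^{-1}(\lfloor{\rm u}_{0}\rceil^{\l})^{(1)}_{\l}$), this produces the integral
\[
\int_{0}^{T}[D\lf F_{0,\tau}\rc^{\l}(\lf\mho_{0}\rc^{\l},{\rm w})]^{-1}\,\circledast^{\l}\!\Big([D\lf F_{\tau,T}\rc^{\l}]^{-1}(\lfloor{\rm u}_{0}\rceil^{\l})^{(1)}_{\l},\,(\lf\mho_{\tau}^{s}\rc^{\l})'_{0}\Big).
\]
Next, I would differentiate the stochastic integral term by splitting it using $\circ d\lf\alpha^{s}\rc^{\l}_{t}=O^{s}_{t}\circ dB_{t}+{\mathtt g}^{s}_{t}\,dt$ (as in Lemma \ref{Cor-O-g-formula}) so that at $s=0$ three contributions appear: one from $([D\lf F_{0,t}^{s}\rc^{\l}]^{-1})'_{0}$, one from the variation of the foot‐point $(\lf\mho_{t}^{s}\rc^{\l})'_{0}$ hitting $(H^{\l})^{(1)}_{\l}$, and one from $(\circ d\lf\alpha^{s}\rc^{\l}_{t})'_{0}=\circ d\underline{\Upsilon}_{{\rm V},B}^{\l}(t)$ given by Lemma \ref{all-diff-at-s-0} i). Using Lemma \ref{all-diff-at-s-0} iii) once more to express $([D\lf F_{0,t}^{s}\rc^{\l}]^{-1})'_{0}(H^{\l})^{(1)}_{\l}(\lf\mho_{t}\rc^{\l},\cdot)$ as an iterated integral in $\tau\in[0,t]$, and then applying a stochastic Fubini (justified by the $L^{q}$‐bounds of Lemmas \ref{O-mathttg-diff-esti}--\ref{cond-DF-diff-norm}), the double integral in $(\tau,t)$ can be rewritten with $\tau$ outer and $t\in[\tau,T]$ inner.

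The principal rearrangement step is to recognize that, at each $\tau$, the inner $t$-integral produces exactly
\[
[D\lf F_{\tau,T}\rc^{\l}]^{-1}(\lfloor{\rm u}_{0}\rceil^{\l})^{(1)}_{\l}-\int_{\tau}^{T}[D\lf F_{\tau,t}\rc^{\l}]^{-1}(H^{\l})^{(1)}_{\l}(\lf\mho_{t}\rc^{\l},\circ dB_{t})=(\lf\mho_{\tau}\rc^{\l})^{(1)}_{\l},
\]
so that the Fubini‐rearranged sum of the initial-value contribution and the $([D\lf F_{0,t}^{s}\rc^{\l}]^{-1})'_{0}$ contributions collapses into a single integral of $[D\lf F_{0,\tau}\rc^{\l}]^{-1}\circledast^{\l}\big((\lf\mho_{\tau}\rc^{\l})^{(1)}_{\l},(\lf\mho_{\tau}^{s}\rc^{\l})'_{0}\big)$. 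Combined with the remaining two contributions (the one involving $\nabla^{\l}((\lf\mho_{\tau}^{s}\rc^{\l})'_{0})(H^{\l})^{(1)}_{\l}$ integrated against $\circ dB_{\tau}$, and the one involving $(H^{\l})^{(1)}_{\l}(\lf\mho_{\tau}\rc^{\l},\circ d\underline{\Upsilon}^{\l}_{{\rm V},B}(\tau))$), this yields (\ref{the U-T-s-1}).

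The It\^{o} form in $(\theta,\varpi)^{\l}$‐coordinates then follows by applying $(\theta,\varpi)^{\l}_{\lf\mho_{0}\rc^{\l}}$ and converting every Stratonovich infinitesimal $\circ dB_{\tau}$ into its It\^{o} counterpart $dB_{\tau}$; the bracket corrections produced by this conversion are precisely $\wt{\circledast}^{\theta,\l}_{\rm A}$ and $\wt{\circledast}^{\varpi,\l}_{\rm A}$, computed exactly as in Lemma \ref{mathcal-V-t} (and displayed in the It\^{o} form of Lemma \ref{all-diff-at-s-0} iii)), while $\circledast^{\l}$ is replaced by $\circledast^{\l}_{\rm I}$. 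I expect the main technical obstacle to be the bookkeeping in the Fubini collapse step: one must carefully track which instance of $[D\lf F_{*,*}\rc^{\l}]^{-1}$ contains the $s$-derivative so that the second integral hidden inside $(\lf\mho_{\tau}\rc^{\l})^{(1)}_{\l}$ is produced with the correct sign and the correct base‐point $\lf\mho_{\tau}\rc^{\l}$ (rather than $\lf\mho_{0}\rc^{\l}$), for which the identity $[D\lf F_{0,t}\rc^{\l}]^{-1}=[D\lf F_{0,\tau}\rc^{\l}]^{-1}[D\lf F_{\tau,t}\rc^{\l}]^{-1}$ from the cocycle property is essential.
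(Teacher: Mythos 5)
Your proposal is correct and follows essentially the same route as the paper's proof: differentiate (\ref{u-T-s-la-1}) term by term at $s=0$, invoke Lemma \ref{all-diff-at-s-0} iii) for the $s$-derivative of the tangent maps, Fubini-swap the inner and outer time integrals, and observe that the initial-value contribution together with the Fubini-rearranged tangent-map contribution collapse into a single integral against $(\lf\mho_{\tau}\rc^{\l})^{(1)}_{\l}$, with the It\^{o} form obtained by the usual Stratonovich-to-It\^{o} conversion. Your remark on tracking which factor carries the $s$-derivative and using the cocycle identity is precisely the bookkeeping the paper performs implicitly when defining ${\bf\mathsf v}_{\tau,t}$.
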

\begin{proof}Differentiating (\ref{u-T-s-la-1}), we obtain 
\begin{align*}
\left(\overline{(\lfloor {\rm u}_T^s\rceil^{\l})^{(1)}_{\l}}({\rm w})\right)'_0=&\ \big([D\lf F_{0, T}^s\rc^{\l}(\lf {\mho}_0^s\rc^{\l}, {\rm w})]^{-1}\big)'_0(\lfloor {\rm u}_0\rceil^{\l})^{(1)}_{\l}\notag\\
&\ -\int_{0}^{T}\big([D\lf F_{0, t}^s\rc^{\l}(\lf {\mho}_0^s\rc^{\l}, {\rm w})]^{-1}\big)'_0(H^{\l})^{(1)}_{\l}\big(\lf \mho_{t}\rc^{\l}, \circ d B_{t}({\rm w})\big)\\
&\ -\int_{0}^{T}[D\lf F_{0, t}\rc^{\l}(\lf \mho_0\rc^{\l}, {\rm w})]^{-1}\big(\nabla^{\l}((\lf \mho_t^s\rc^{\l})'_0)(H^{\l})^{(1)}_{\l}(\lf \mho_{t}\rc^{\l}, \cdot)\big)\circ dB_{t}({\rm w})\\
&\ -\int_{0}^{T}[D\lf F_{0, t}\rc^{\l}(\lf \mho_0\rc^{\l}, {\rm w})]^{-1}(H^{\l})^{(1)}_{\l}\big(\lf \mho_{t}\rc^{\l}, \circ d\Upsilon_{{\rm V}, B}^{\l}(t, {\rm w})\big)\\
=:&\  {(\rm I)}+{(\rm II)}+{(\rm III)}+{(\rm IV)}. 
\end{align*}
By {\rm iii)} of Lemma \ref{all-diff-at-s-0}, we have
\begin{align*}
{(\rm I)}=&\int_{0}^{T}\big[D\lf F_{0, \tau}\rc^{\l}(\lf \mho_0\rc^{\l}, {\rm w})\big]^{-1}\circledast^{\l}\left(\big[D\lf F_{\tau, T}\rc^{\l}(\lf {\mho}_0\rc^{\l}, {\rm w})\big]^{-1}(\lfloor {\rm u}_0\rceil^{\l})^{(1)}_{\l}, (\lf \mho_\tau^s\rc^{\l})'_0\right).
\end{align*}
For $0\leq\tau<t\leq T$, put
\begin{align*}
{\bf\mathsf v}_{\tau, t}:=\big[D\lf F_{\tau, t}\rc^{\l}(\lf {\mho}_\tau\rc^{\l}, {\rm w})\big]^{-1}(H^{\l})^{(1)}_{\l}\big(\lf \mho_{t}\rc^{\l},\circ d B_{t}({\rm w})\big). 
\end{align*}
We continue to compute that 
\begin{align*}
{(\rm II)}=&-\int_{0}^{T}\int_{0}^{t}\big[D\lf F_{0, \tau}\rc^{\l}(\lf \mho_0\rc^{\l}, {\rm w})\big]^{-1}\circledast^{\l}\big({\bf\mathsf v}_{\tau, t},  (\lf \mho_\tau^s\rc^{\l})'_0\big)\\
=&-\int_{0}^{T}\big[D\lf F_{0, \tau}\rc^{\l}(\lf \mho_0\rc^{\l}, {\rm w})\big]^{-1}\circledast^{\l}\big(\int_{\tau}^{T} {\bf\mathsf v}_{\tau, t},  (\lf \mho_\tau^s\rc^{\l})'_0\big).
\end{align*}
Altogether, we obtain 
\[
{(\rm I)}+{(\rm II)}=\int_{0}^{T}[D\lf F_{0, \tau}\rc^{\l}(\lf \mho_0\rc^{\l}, {\rm w})]^{-1}\circledast^{\l}\big((\lf \mho_{\tau}\rc^{\l})^{(1)}_{\l}, (\lf \mho_\tau^s\rc^{\l})'_0\big).
\]
Hence (\ref{the U-T-s-1}) holds true. 
The It\^{o} form integral expression   of $\big((\theta, \omega)_{\mho_{0}^s}\big(\overline{(\lfloor {\rm u}_T^s\rceil^{\l})^{(1)}_0}({\rm w})\big)\big)'_0$ can be obtained using the It\^{o} form in {\rm iii)} of Lemma \ref{all-diff-at-s-0}. 
\end{proof}

As a corollary of Proposition  \ref{u-s-t-l-1}, we  can further express the differential 
\[
((\lfloor {\rm u}_T\rceil^{\l})^{(1)}_{\l}\circ \lf {\bf F}^s\rc^{\l})'_0=:\left((\lfloor {\rm u}_T^s\rceil^{\l})^{(1)}_{\l}(w)\right)'_0
\]
using   $(\lf {\rm u}_t\rc^{\l}(w))_{t\in [0, T]}$ and the tangent maps $\{[D\lf \overrightarrow{F}_{\underline{t}, \overline{t}}\rc^{\l}(\lf{\rm u}_{\underline{t}}\rc^{\l}, w)]\}_{0\leq \underline{t}<\overline{t}\leq T}$ of the flow maps $\{\lf \overrightarrow{F}_{\underline{t}, \overline{t}}\rc^{\l}(\lf{\rm u}_{\underline{t}}\rc^{\l}, w)\}_{0\leq \underline{t}<\overline{t}\leq T}$ associated to  (\ref{FM-BM-SDE-11}).  We only give the  Stratonovich form.  Let 
\begin{align*}&K_{{\rm V}, B}^{\l, {\rm u}}(t, w):=\\
&\  \int_{0}^{t}(\lf {\rm u}_{T-\wt{\tau}}\rc^{\l})^{-1}R^{\l}\left((\lf {\rm u}_{T-\wt{\tau}}\rc^{\l})^{-1}dB_{T-\wt{\tau}}(w), \lf {\rm u}_{T-\wt{\tau}}\rc^{\l}(\lf {\rm u}_{T}\rc^{\l})^{-1}[\mathtt{s}(\wt{\tau}){\rm V}(y)]\right)\lf {\rm u}_{T-\wt{\tau}}\rc^{\l}\\
&\  +\int_{0}^{t} (\lf {\rm u}_{T-\wt{\tau}}\rc^{\l})^{-1}(\nabla^{\l} (\lf {\rm u}_{T-\wt{\tau}} \rc^{\l}e_i)R^{\l})\left(\lf {\rm u}_{T-\wt{\tau}}\rc^{\l}e_i, \lf {\rm u}_{T-\wt{\tau}}\rc^{\l}(\lf {\rm u}_T\rc^{\l})^{-1}[\mathtt{s}(\wt{\tau}){\rm V}(y)]\right)\lf {\rm u}_{T-\wt{\tau}}\rc^{\l}\ d\wt{\tau}\notag, \\
&\underline{\Upsilon}_{{\rm V}, B}^{\l, {\rm u}}(\tau):=\\
&\int_{0}^{T-\tau}\!\!\!\!\big((\lf {\rm u}_T\rc^{\l})^{-1}({\mathtt s}'(t){\rm V}(y))-{\rm Ric}^{\l}\big(\lf {\rm u}_{T-t}\rc^{\l}(\lf {\rm u}_0\rc^{\l})^{-1}[{\mathtt s}(t){\rm V}(y)]\big)\big) dt-\int_{0}^{T-\tau}\!\!\langle K_{{\rm V}, B}^{\l, {\rm u}}(t, w),  dB_t\rangle. 
\end{align*}
Then $\underline{\Upsilon}_{{\rm V}, B}^{\l, {\rm u}}(\tau)$ corresponds to $\underline{\Upsilon}_{{\rm V}, B}^{\l}(T-\tau)$ and they have the same distribution. Put 
\begin{align*}
(\lf {\rm u}_{\tau, T}^s\rc^{\l})'_0:=(\theta, \varpi)_{\lf {\rm u}_\tau\rc^{\l}}^{-1}\left({\mathtt s}(T-\tau)(\lf {\rm u}_T\rc^{\l})^{-1}{\rm V}(y), K_{{\rm V}, B}^{\l, {\rm u}}(T-\tau)\right). 
\end{align*}
\begin{cor}\label{cor-u-s-t-l-1}With all the notations as above, \begin{align*}
&\left((\lfloor {\rm u}_T^s\rceil^{\l})^{(1)}_{\l}(w)\right)'_0\\
&\ \ \ =\int_{0}^{T}\big[D\lf \overrightarrow{F}_{\tau, T}\rc^{\l}(\lf {\rm u}_{\tau}\rc^{\l}, {w})\big]\left\{\circledast^{\l, {\rm u}}\big((\lf {\rm u}_{\tau}\rc^{\l})^{(1)}_{\l}, (\lf {\rm u}_{\tau, T}^s\rc^{\l})'_0\big)\right.\\
&\ \ \ \ \ \ \ \left.+\big(\nabla^{\l}((\lf {\rm u}_{\tau, T}^s\rc^{\l})'_0)(H^{\l})^{(1)}_{\l}(\lf {\rm u}_{\tau}\rc^{\l}, \cdot)\big)\circ dB_{\tau}(w)+(H^{\l})^{(1)}_{\l}\big(\lf {\rm u}_{\tau}\rc^{\l}, \circ d\Upsilon_{{\rm V}, B}^{\l, {\rm u}}(\tau)\big)\right\},
\end{align*}
where 
\begin{align*}
\circledast^{\l, {\rm u}}\big((\lf {\rm u}_{\tau}\rc^{\l})^{(1)}_{\l}, (\lf {\rm u}_{\tau, T}^s\rc^{\l})'_0\big)=&-\nabla^{\l, (2)}\big((\lf {\rm u}_{\tau}\rc^{\l})^{(1)}_{\l}, (\lf {\rm u}_{\tau, T}^s\rc^{\l})'_0\big)H^{\l}\big(\lf {\rm u}_\tau\rc^{\l}, \circ d B_\tau(w)\big)\\
&-\nabla^{\l}((\lf {\rm u}_{\tau}\rc^{\l})^{(1)}_{\l})H^{\l}\big(\lf {{\rm u}_\tau}\rc^{\l}, \circ d\underline{\Upsilon}_{{\rm V}, B}^{\l, {\rm u}}(\tau)\big)\\
&-R^{\l}\big(H^{\l}(\lf {\rm u}_{\tau}\rc^{\l}, \circ dB_{\tau}({w})), (\lf {\rm u}_{\tau, T}^s\rc^{\l})'_0\big)(\lf {\rm u}_{\tau}\rc^{\l})^{(1)}_{\l}.
\end{align*}
In  $(\theta, \varpi)^{\l}$-chart, we have the It\^{o} integral expression  
\begin{align*}
&\left((\theta, \varpi)(\lfloor {\rm u}_T^s\rceil^{\l})^{(1)}_{\l}(w)\right)'_0\\
&\  =\int_{0}^{T}\big[D\wt{\lf \overrightarrow{F}_{\tau, T}\rc^{\l}}(\lf {\rm u}_{\tau}\rc^{\l}, {w})\big]\!\left\{(\theta, \varpi)^{\l}\circledast^{\l, {\rm u}}_{{\rm I}}\big((\lf {\rm u}_{\tau}\rc^{\l})^{(1)}_{\l}, (\lf {\rm u}_{\tau, T}^s\rc^{\l})'_0\big)\right.\\
&\ \ \ \ +\left(\wt{\circledast}_{{\rm A}}^{\theta, \l}\big((\lf {\rm u}_{\tau}\rc^{\l})^{(1)}_{\l}, (\lf {\rm u}_{\tau, T}^s\rc^{\l})'_0\big), \wt{\circledast}_{{\rm A}}^{\varpi, \l}\big((\lf {\rm u}_{\tau}\rc^{\l})^{(1)}_{\l}, (\lf {\rm u}_{\tau, T}^s\rc^{\l})'_0\big)\right)\\
&\ \ \ \ +\left(\varpi^{\l}\big(\nabla^{\l}((\lf {\rm u}_{\tau, T}^s\rc^{\l})'_0)(H^{\l})^{(1)}_{\l}(\lf {\rm u}_{\tau}\rc^{\l}, e_i)\big)e_i\ d\tau, \right.\\
&\ \ \ \ \ \ \ \ \left.\left. \varpi^{\l}\big(\nabla^{\l}((\lf {\rm u}_{\tau, T}^s\rc^{\l})'_0)(H^{\l})^{(1)}_{\l}(\lf {\rm u}_{\tau}\rc^{\l}, dB_\tau)\big)+ \varpi^{\l}\big((H^{\l})^{(1)}_{\l}\big(\lf {\rm u}_{\tau}\rc^{\l},  d\underline{\Upsilon}_{{\rm V}, B}^{\l, {\rm u}}(\tau)\big)\big)\right)\right\}.
\end{align*}
\end{cor}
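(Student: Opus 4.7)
The plan is to deduce Corollary \ref{cor-u-s-t-l-1} from Proposition \ref{u-s-t-l-1} by translating the backward expression for $\big(\overline{(\lfloor {\rm u}_T^s\rceil^{\l})^{(1)}_{\l}}({\rm w})\big)'_0$ (written along the bridge ${\rm y}$ starting at $y$, with horizontal lift $\mho$) into the forward expression for $\big((\lfloor {\rm u}_T^s\rceil^{\l})^{(1)}_{\l}(w)\big)'_0$ (written along ${\rm u}$ starting at $x$), via the correspondence established in Lemma \ref{lem-fore-back} and Corollary \ref{Cor-fore-back}.

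First I would fix the dictionary. Under the identification of the bridge path $\beta$ from $y$ to $x$ with its time-reverse $\overrightarrow{\beta}$ from $x$ to $y$, Lemma \ref{lem-fore-back} gives
$$[DF_{0,\tau}(\mho_0,{\rm w})]^{-1}\,{\rm Y}({\rm X})\;=\;{\rm Y}\big(D\overrightarrow{F}_{T-\tau,T}({\rm u}_{T-\tau},w)\,{\rm X}\big),$$
i.e.\ the backward inverse tangent map at time $\tau$ along $\mho$ is intertwined with the forward tangent map $[D\lf\overrightarrow{F}_{T-\tau,T}\rc^{\l}(\lf{\rm u}_{T-\tau}\rc^{\l},w)]$ along ${\rm u}$ via ${\rm Y}$. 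Under the same identification, $(\lf \mho_{\tau}\rc^{\l})^{(1)}_{\l}$ corresponds to $(\lf {\rm u}_{T-\tau}\rc^{\l})^{(1)}_{\l}$, and, by ii) of Lemma \ref{all-diff-at-s-0}, the vertical vector $(\lf\mho_\tau^s\rc^{\l})'_0$, which has $(\theta,\varpi)$-components $\big({\mathtt s}(\tau)(\lf\mho_0\rc^{\l})^{-1}{\rm V}(y),\ K^{\l}_{{\rm V},B}(\tau)\big)$, corresponds exactly to $(\lf {\rm u}_{T-\tau,T}^s\rc^{\l})'_0$ with components $\big({\mathtt s}(T-(T-\tau))(\lf{\rm u}_T\rc^{\l})^{-1}{\rm V}(y),\ K^{\l,{\rm u}}_{{\rm V},B}(\tau)\big)$, after the change of variable $\tau\mapsto T-\tau$ in the definitions of $K^{\l,{\rm u}}_{{\rm V},B}$ and $\underline{\Upsilon}^{\l,{\rm u}}_{{\rm V},B}$.

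Next I would perform the change of variable $\tau\mapsto T-\tau$ in the integral of Proposition \ref{u-s-t-l-1}, using the well-known rule that Stratonovich integrals are preserved under time-reversal up to a sign: $\circ dB_\tau \leftrightarrow -\circ dB_{T-\tau}(w)$, and similarly $\circ d\underline{\Upsilon}^{\l}_{{\rm V},B}(\tau) \leftrightarrow -\circ d\underline{\Upsilon}^{\l,{\rm u}}_{{\rm V},B}(T-\tau)$. Applying this to each of the three summands of the integrand in (\ref{the U-T-s-1}) produces, respectively: (a) the term $\circledast^{\l,{\rm u}}\big((\lf{\rm u}_{\tau}\rc^{\l})^{(1)}_{\l},(\lf{\rm u}_{\tau,T}^s\rc^{\l})'_0\big)$, where the three signs in front of the $\nabla^{(2)}$, $\nabla$ and $R$ pieces are now $-,-,-$ (flipped relative to $\circledast^{\l}$ precisely because each contains one $dB$ or one $d\underline{\Upsilon}^{\l}_{{\rm V},B}$ infinitesimal); (b) $+\big(\nabla^{\l}((\lf{\rm u}_{\tau,T}^s\rc^{\l})'_0)(H^{\l})^{(1)}_{\l}(\lf{\rm u}_\tau\rc^{\l},\cdot)\big)\circ dB_\tau(w)$ (the two sign flips cancel); and (c) $+(H^{\l})^{(1)}_{\l}\big(\lf{\rm u}_\tau\rc^{\l},\circ d\Upsilon^{\l,{\rm u}}_{{\rm V},B}(\tau)\big)$, matching exactly the three terms in the statement.

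The It\^o form is then obtained by the usual Stratonovich-to-It\^o conversion, using the structural equations (\ref{structure-1}), (\ref{structure-2}) for $\theta,\varpi$ together with the decomposition of $\circledast^{\l,{\rm u}}$ along the coordinate frame $\{e_i\}$, exactly as in the derivation of  Lemma \ref{mathsfv-t-s-d-Ito} and iii) of Lemma \ref{all-diff-at-s-0}; the additional bracket, Ricci, and $\nabla R$ correction terms that appear there are precisely the $\wt{\circledast}^{\theta,\l}_{{\rm A}}$, $\wt{\circledast}^{\varpi,\l}_{{\rm A}}$ corrections in the claimed identity.

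The main obstacle I anticipate is the careful bookkeeping of signs and of which infinitesimals ($dB$ vs.\ $d\overleftarrow B$, and similarly for $\underline{\Upsilon}$) appear in each of the three summands, together with the correct interpretation of $\circledast^{\l,{\rm u}}$ as living at the point $\lf{\rm u}_\tau\rc^{\l}$ (versus $\circledast^{\l}$ at $\lf\mho_\tau\rc^{\l}$). Everything else is a consequence of the dictionary above and the time-reversal change of variable; once the signs in $\circledast^{\l,{\rm u}}$ are verified, the It\^o correction terms are formally identical to those already computed in Lemma \ref{all-diff-at-s-0} and Proposition \ref{u-s-t-l-1}.
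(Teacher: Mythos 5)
Your proposal is correct and follows essentially the same approach as the paper: both invoke the time-reversal correspondence from Lemma \ref{lem-fore-back} and Corollary \ref{Cor-fore-back} to transport the backward expression of Proposition \ref{u-s-t-l-1} into the forward one. The paper's own proof is a one-liner citing the correspondence between $[D\lf\overrightarrow{F}_{\underline{t},\overline{t}}\rc^{\l}]$ and $[D\lf F_{T-\overline{t},T-\underline{t}}\rc^{\l}]^{-1}$; you fill in the details that the paper leaves implicit --- the term-by-term dictionary $(\lf\mho_\tau^s\rc^{\l})'_0 \leftrightarrow (\lf{\rm u}_{T-\tau,T}^s\rc^{\l})'_0$, $(\lf\mho_\tau\rc^{\l})^{(1)}_{\l} \leftrightarrow (\lf{\rm u}_{T-\tau}\rc^{\l})^{(1)}_{\l}$, and the sign bookkeeping: each term of $\circledast^{\l}$ contains exactly one forward infinitesimal $\circ dB$ or $\circ d\underline{\Upsilon}^{\l}_{{\rm V},B}$ and so flips sign (giving $\circledast^{\l,{\rm u}}$ with all minus signs), whereas the $\nabla H^{(1)}$-type terms contain two and therefore do not. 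This reasoning is sound and consistent with the sign produced in the proof of Corollary \ref{Cor-fore-back}, where $(H^{\l})^{(1)}_{0}({\rm u}_t,\circ dB_t)$ corresponds to $-(H^{\l})^{(1)}_{0}(\mho_{T-t},\circ d\overrightarrow{B}_{T-t})$.
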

\begin{proof}Note that $\big((\lfloor {\rm u}_T^s\rceil^{\l})^{(1)}_{\l}(w)\big)'_0$ conditioned on $\lf {\rm x}_T\rc^{\l}=y$  is the same as  $\big(\overline{(\lfloor {\rm u}_T^s\rceil^{\l})^{(1)}_{\l}}({\rm w})\big)'_0$ conditioned on $\lf {\rm y}_T\rc^{\l}=x$.  The formulas follow  by Proposition \ref{u-s-t-l-1} using the  correspondence between
\[
\big[D\lf \overrightarrow{F}_{\underline{t}, \overline{t}}\rc^{\l}(\lf{\rm u}_{\underline{t}}\rc^{\l}, w)\big]\ \mbox{and}\  \big[D\lf F_{T-\overline{t}, T-\underline{t}}\rc^{\l}(\lf{\mho}_{T-\overline{t}}\rc^{\l}, {\rm w})\big]^{-1}.
\]
\end{proof}

\subsection{The differential of $\l\mapsto p^{\l}(T, x, \cdot)$}\label{TDOp-lam} We will show  Theorem  \ref{regu-p-1st} in  two steps, namely, the $k=3$ and  $k>3$ cases. We begin with the $k=3$ case.  As we sketched in Section \ref{Obs-Stra},   the strategy is to show ${\rm z}_T^{\l, 1}$ defined in (\ref{z-T-lam-1}) is a  $C^1$ vector field,  then derive  a conditional path-wise formula of ${\rm Div}^{\l}{\rm z}_T^{\l, 1}(y)$ and use it to give   the estimation in  (\ref{grad-lnp-lam-1}).  

\begin{lem}\label{C-1-fun-phi-Y} Let $\lambda\in
(-1, 1)\mapsto g^{\lambda}\in \mathcal{M}^3(M)$ be a $C^3$ curve. Let $x\in \M$, $T\in \Bbb R_+$. The map $\overline{\Phi}_{\l}^1:\ Y\mapsto \overline{\Phi}_{\l}^1(Y)$ defined in (\ref{Phi-lam-1-Y}) is a locally bounded $C^1$ functional  on $C^k$ bounded vector fields $Y$ on $\M$.  Consequently, $\{{\rm z}_T^{\l, 1}(y)\}$  is a $C^1$ vector field on $\M$. 
\end{lem}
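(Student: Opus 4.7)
The plan is to show the $C^1$ regularity of $\overline{\Phi}_\l^1(Y)$ on $\M$ by testing against arbitrary smooth bounded vector fields ${\rm V}$ generating flows $F^s$, and verifying that $s\mapsto\overline{\Phi}_\l^1(Y)(F^sy)$ is $C^1$ at $s=0$. The mechanism is the quasi-invariance identity (\ref{Ch-Var-compare}), which (for the metric $g^\l$) reads
\begin{equation*}
\overline{\Phi}_\l^1(Y)(F^sy)=\overline{\E}\!\left(\Phi_\l^1\circ\lf{\bf F}^s\rc^\l\cdot\frac{d\overline{\P}_x^\l\circ\lf{\bf F}^s\rc^\l}{d\overline{\P}_x^\l}\,\Big|\,\lfloor{\rm x}_T\rceil^\l(w)=y\right)\cdot\frac{p^\l(T,x,y)}{p^\l(T,x,F^sy)}\cdot\frac{d{\rm Vol}^\l(y)}{d{\rm Vol}^\l(F^sy)}.
\end{equation*}
The two right-hand ratios are $C^1$ in $s$ by the $C^k$ regularity of $g^\l$ and of $p^\l(T,x,\cdot)$. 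The main task is to differentiate the conditional expectation under the integral.

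I would combine Propositions \ref{Quasi-IP-2} and \ref{Diff-int-U-s-gen}: the density $d\overline{\P}_x^\l\circ\lf{\bf F}^s\rc^\l/d\overline{\P}_x^\l$ is $C^1$ in $s$ with derivative $\oa{\mathcal{E}^s_T}\cdot(d\overline{\P}_x^\l\circ\lf{\bf F}^s\rc^\l/d\overline{\P}_x^\l)$, and the random factor $\Phi_\l^1\circ\lf{\bf F}^s\rc^\l=\langle Y(\lf{\rm x}_T^s\rceil^\l),D\pi(\lfloor{\rm u}_T\rceil^\l)^{(1)}_\l\circ\lf{\bf F}^s\rc^\l\rangle_\l$ is $C^1$ in $s$ with derivative $\langle Y(\lf{\rm x}_T^s\rceil^\l),\nabla_{T,{\rm V},\mathtt{s}}^{s,\l}D\pi(\lfloor{\rm u}_T\rceil^\l)^{(1)}_\l\rangle_\l$ (here the choice $\mathtt{s}(T)=0$ is essential: it keeps the footpoint $\lf{\rm x}_T^s\rceil^\l=\lf{\rm x}_T\rceil^\l$ fixed, so $Y(\lf{\rm x}_T^s\rceil^\l)$ contributes no $s$-derivative and remains bounded). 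Both factors and their $s$-derivatives have $L^q$ conditional moments that are locally uniform in $s\in[-s_0,s_0]$ for every $q\geq 1$. H\"older's inequality then legitimates differentiation under the conditional expectation sign, producing an explicit formula for $(d/ds)|_{s=0}\overline{\Phi}_\l^1(Y)(F^sy)$ which is linear and continuous in $(Y(y),{\rm V}(y))$.

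Because ${\rm V}$ is arbitrary and $\overline{\Phi}_\l^1(\cdot)(y)$ is linear in $Y$, this produces a bilinear pairing $(Y,{\rm V})\mapsto\partial_{\rm V}\overline{\Phi}_\l^1(Y)(y)$ whose spatial continuity implies that $\overline{\Phi}_\l^1(Y)(\cdot)\in C^1(\M)$ for each $Y$ and that the functional $Y\mapsto\overline{\Phi}_\l^1(Y)$ is locally bounded on $C^k$ bounded vector fields. A Riesz-type argument using the $\wt g^\l$ inner product then gives $\overline{\Phi}_\l^1(Y)(y)=\langle Y(y),{\rm z}_T^{\l,1}(y)\rangle_\l$ with ${\rm z}_T^{\l,1}\in C^1$.

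The main obstacle will be to verify the \emph{continuity in $y$} of the differentiated conditional expectation, since the conditioning $\lfloor{\rm x}_T\rceil^\l(w)=y$ enters through the Brownian bridge $\overline{\P}_{x,y,T}^\l$ and all the $L^q$ bounds (from Lemmas \ref{O-mathttg-diff-esti}--\ref{cond-DF-diff-norm}, Proposition \ref{cond-nabla-ln-p}, Propositions \ref{Quasi-IP-2}, \ref{Diff-int-U-s-gen}) carry exponential factors in $d_{\wt g^\l}(x,y)$. One must check that these bounds are locally uniform in $y$ and that the continuity-in-$y$ of the bridge distribution (together with the Kolmogorov-type continuity of $\alpha^s,\mho^s,[\wt{DF_{0,t}^s}]^{-1}$ proved in Theorem \ref{Main-alpha-x-v-Q}) allows passage to the limit inside the conditional integrals; this is the delicate point where the non-compactness of $\M$ forces careful use of the Brownian-bridge estimates of \S\ref{BBCE}.
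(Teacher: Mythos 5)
Your overall strategy tracks the paper's proof closely (the quasi-invariance identity (\ref{Ch-Var-compare}), differentiation under the conditional expectation via Propositions \ref{Quasi-IP-2}, \ref{Diff-int-U-s-gen}, \ref{Quasi-IP-1}, H\"older's inequality to legitimate the interchange, then a Riesz-type identification of ${\rm z}_T^{\l,1}$), and your observation that the delicate step is the continuity-in-$y$ of the differentiated conditional expectation is accurate. However, your parenthetical explanation of what $\mathtt{s}(T)=0$ buys you is backwards, and this is not a cosmetic slip. You claim the footpoint $\lf{\rm x}_T^s\rc^\l=\lf{\rm x}_T\rc^\l$ is fixed, so $Y(\lf{\rm x}_T^s\rc^\l)$ contributes no $s$-derivative. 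But the map ${\bf F}^s_y$ of Theorem \ref{Main-alpha-x-v-Q} acts on paths starting at $y$, and the condition $\mathtt{s}(0)=1,\ \mathtt{s}(T)=0$ moves the \emph{starting} point ${\rm y}_0^s$ to $F^s(y)$ while keeping the \emph{ending} point ${\rm y}_T^s$ fixed. Under the time-reversal correspondence $\lf{\rm x}_t\rc^\l\leftrightarrow{\rm y}_{T-t}$ of Lemma \ref{Brown-bridge-symmetry}, the point held fixed is $\lf{\rm x}_0\rc^\l=x$ (which is what makes the conditioning on $\overline{\P}^\l_x$ coherent), and the point that moves is $\lf{\rm x}_T^s\rc^\l=F^s(\lf{\rm x}_T\rc^\l)$. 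This is exactly what (\ref{Ch-Var-compare}) records: one conditions on $\lf{\rm x}_T\rc^\l=y$ and ${\bf F}^s$ carries the endpoint to $F^s(y)$.

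Consequently $(\Phi_\l^1\circ\lf{\bf F}^s\rc^\l)'_s$ is not the single inner product you wrote but the two-term formula (\ref{PHI-Y-F-s}), with the additional piece $\big\langle\nabla_{{\rm V}}Y,\ D\pi(\lf{\rm u}_T\rc^\l)^{(1)}_\l\big\rangle_\l$. That term is structurally essential: after conditioning it produces $\langle\nabla_{\rm V}Y(y),{\rm z}_T^{\l,1}(y)\rangle_\l$, precisely the Leibniz companion needed in Lemma \ref{reg-Z-lam-1} to subtract off $\langle\nabla_{\rm V}Y,{\rm z}\rangle$ and isolate $\nabla_{\rm V}{\rm z}_T^{\l,1}$. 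Your version gives a derivative of the form $\langle Y(y),\cdot\rangle_\l$, which is already inconsistent with $\overline{\Phi}_\l^1(Y)=\langle Y,{\rm z}_T^{\l,1}\rangle_\l$ as soon as $Y(y)=0$ but $\nabla_{\rm V}Y(y)\neq 0$. Once this is repaired, the remainder of your outline — including the use of the locally uniform $L^q$ bridge estimates from Lemmas \ref{O-mathttg-diff-esti}--\ref{cond-DF-diff-norm} and Proposition \ref{cond-nabla-ln-p} for the continuity-in-$y$ limit — agrees with the paper's argument.
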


\begin{proof}Recall that \begin{align*}\overline{\Phi}_{\l}^1(Y)(y)&=\overline{\E}\left(\left.\big\langle Y(\lfloor {\rm x}_T \rceil^{\l}(w)), D\pi({\rm u}_T\rceil^{\l})^{(1)}_{\l}(w)\big\rangle_{\l}\right| \lfloor {\rm x}_T \rceil^{\l}(w)=y\right)=\big\langle Y(y), {\rm z}_T^{\l, 1}(y)\big\rangle. 
\end{align*}
Hence, 
\[
\big\|\overline{\Phi}_{\l}^1(\cdot)(y)\big\|\leq \big\|{\rm z}_T^{\l, 1}(y)\big\|\leq \overline{\E}_{{\P}^{\l}_{x, y, T}}\big\|(\lfloor {\rm u}_T\rceil^{\l})^{(1)}_{\l}(w)\big\|=\frac{1}{p(T, x, y)}\overline{\E}_{{\P}^{\l, *}_{x, y, T}}\big\|(\lfloor {\rm u}_T\rceil^{\l})^{(1)}_{\l}(w)\big\|.  
\]
By Proposition \ref{est-norm-u-t-j},  there are  $\underline{c}_{\Phi}^{\l}$ (depending on $\|g^{\l}\|_{C^2}$ and $\|\XX^{\l}\|_{C^1}$) and ${c}_{\Phi}^{\l}$  (depending on $T, T_0$ and $\|g^{\l}\|_{C^3}$) such that 
\begin{equation*}
\big\|\overline{\Phi}_{\l}^1(\cdot)(y)\big\|\leq \frac{1}{p(T, x, y)}\overline{\E}_{\ov{\P}^{\l, *}_{y, x, T}}\big\|\overline{(\lfloor {\rm u}_T\rceil^{\l})^{(1)}_{\l}}({\rm w})\big\| \leq \frac{1}{p(T, x, y)}\underline{c}_{\Phi}^{\l} e^{{c}_{\Phi}^{\l}(1+d_{\wt{g}^{\l}}(x, y))},
\end{equation*}
where the last term is locally uniformly bounded  in the $y$-coordinate. This shows  the map $Y\mapsto\overline{\Phi}_{\l}^1(Y)$ is locally bounded.  

To show $\overline{\Phi}_{\l}^1$ is $C^1$,  it suffices to show for any flow $F^s$ generated by a smooth bounded vector field ${\rm V}$ on $\M$,  $s\mapsto \overline{\Phi}_{\l}^1(Y)(F^sy), y\in \M$,  is differentiable at $s=0$ and the differential 
\[
(\overline{\Phi}_{\l}^1(Y)(F^sy))'_0:=\left.\frac{d}{ds}\overline{\Phi}_{\l}^1(Y)(F^sy)\right|_{s=0}
\]
varies continuously in $y$. Let  $\lf {\bf F}^s\rc^{\l}$ be  introduced  in Section \ref{the flow F-S}, which extends  $F^s$ to Brownian paths starting from $x$ up to time $T$ using the auxiliary  function $\mathtt s$.  By Proposition \ref{Quasi-IP-2},   $\overline{\P}_x^{\l}\circ \lf {\bf F}^s\rc^{\l}$ is absolutely continuous with respect to $\overline{\P}_x^{\l}$. So the change of variable comparison in Section \ref{Obs-Stra}  works, which gives  (\ref{Ch-Var-compare}), i.e., 
\begin{align*}
\overline{\Phi}_{\l}^1(Y)(F^sy)&=\overline{\E}_{\ov{\P}^{\l}_{x, y, T}}\left(\Phi_{\l}^1(Y, w)\circ \lf {\bf F}^s\rc^{\l} \cdot \frac{d\overline{\P}_x^{\l}\circ \lf {\bf F}^s\rc^{\l}}{d\overline{\P}_x^{\l}}\right)\!\frac{p^{\l}(T, x, y)}{p^{\l}(t, x, F^s y)}\frac{d{\rm Vol}^{\l}}{d{\rm Vol}^{\l}\circ F^s}(y),
\end{align*}
where 
\[\Phi_{\l}^1(Y, {w})=\big\langle Y(\lfloor {\rm x}_T \rceil^{\l}(w)), D\pi(\lfloor {\rm{u}}_T\rceil^{\l})^{(1)}_{\l}(w)\big\rangle_{\l}.\]  By Proposition \ref{Diff-int-U-s-gen}, the process $\Phi_{\l}^1(Y, w)\circ \lf {\bf F}^s\rc^{\l}$ is differentiable in $s$ with 
\begin{equation}\label{PHI-Y-F-s}
(\Phi_{\l}^1\circ \lf {\bf F}^s\rc^{\l})'_s=\big\langle \nabla_{{\rm V}(\lfloor {\rm x}_{T}^s \rceil^{\l})}Y(\lfloor {\rm x}_{T}^s \rceil^{\l}),  D\pi(\lfloor {\rm u}_T^s\rceil^{\l})^{(1)}_{\l}\big\rangle_{\l}\! + \! \big\langle Y(\lfloor {\rm x}_{T}^s \rceil^{\l}),  \nabla_{T, {\rm V}, \mathtt{s}}^{s, \l} D\pi(\lfloor {\rm u}_T\rceil^{\l})^{(1)}_{\l}\big\rangle_{\l}
\end{equation}
and this differential is  $L^q$ integrable  conditioned on ${\rm x}_T=y$ for every $q\geq 1$,  locally  uniformly in the  $s$ parameter. By Lemma \ref{diff-P-F-S}, for $\beta\in C_{x, y}([0, T], \M)$, 
\[
\frac{d\overline{\P}_x^{\l}\circ \lf {\bf F}^s\rc^{\l} }{d\overline{\P}_x^{\l}}(\beta)=\frac{d\ov{\P}_{F^sy, x, T}^{\l}\circ \lf {\bf F}^s\rc^{\l} }{d\ov{\P}_{y, x, T}^{\l}}(\beta)\cdot \frac{p^{\l}(T, x, F^sy)}{p^{\l}(T, x, y)}\cdot\frac{d{\rm Vol}^{\l}\circ F^s}{d{\rm Vol}^{\l}}(y). 
\]
By  Proposition \ref{Quasi-IP-1},  ${d\ov{\P}_{F^sy, x, T}^{\l}\circ \lf {\bf F}^s\rc^{\l}}/{d\ov{\P}_{y, x, T}^{\l}}$ is  differentiable in $s$ with 
\[
\left({d\ov{\P}_{F^sy, x, T}^{\l}\circ \lf {\bf F}^s\rc^{\l}}/{d\ov{\P}_{y, x, T}^{\l}}\right)'_s=\left({d\ov{\P}_{F^sy, x, T}^{\l}\circ \lf {\bf F}^s\rc^{\l}}/{d\ov{\P}_{y, x, T}^{\l}}\right)\cdot \ov{\mathcal{E}}^s_{T}, 
\]
where \[\overline{\mathcal{E}}^{s}_T=-\frac{1}{2}\int_{0}^{T}\big\langle (\ov{{\mathtt g}}^s_t)'_s({\rm w}),  dB_t({\rm w})\big\rangle+\frac{1}{2}\int_0^T \big\langle (\ov{{\mathtt g}}^s_t)'_s({\rm w}), \ov{{\mathtt g}}^s_t({\rm w})\big\rangle\ dt,\]
and both $\big({d\ov{\P}_{F^sy, x, T}^{\l}\circ \lf {\bf F}^s\rc^{\l}}/{d\ov{\P}_{y, x, T}^{\l}}\big)$ and $\ov{\mathcal{E}}^s_{T}$ are  $L^q$ integrable  for all $q\geq 1$, locally uniformly in the $s$ parameter. Using H\"{o}lder's inequality, we conclude that 
\[
\left(\Phi_{\l}^1\circ \lf {\bf F}^s\rc^{\l} \cdot \big({d\overline{\P}_x^{\l}\circ \lf {\bf F}^s\rc^{\l}}/{d\overline{\P}_x^{\l}}\big)\right)'_s
\]
is also  $L^q$ integrable  for every $q\geq 1$, locally uniformly   in the $s$ parameter.  This allows us to take the differential in $s$ under the expectation sign of the expression of $\overline{\Phi}_{\l}^1(Y)(F^s y)$. In particular, this shows $s\mapsto \overline{\Phi}_{\l}^1(Y)(F^sy)$ is differentiable at $s=0$. 

 Let us  derive  a formula for $(\overline{\Phi}_{\l}^1(Y)(F^sy))'_0$.  Note that $\lf\ov{{\mathtt g}}^0\rc^{\l}\equiv 0$ and 
\[
(\lf \ov{{\mathtt g}}^s_t\rc^{\l})'_0({\rm w})=(\lfloor {\mho}_{0}\rceil^{\l})^{-1}\left[\mathtt{s}'(t){\rm V}(y)\right]-{\rm Ric}^{\l}\left(\lfloor{\mho}_{t}\rceil^{\l}(\lfloor{\mho}_0\rceil^{\l})^{-1}[\mathtt{s}(t){\rm V}(y)]\right).
\]
 Using the correspondence between $\lfloor \mho_{t}\rceil^{\l}({\rm w})$ conditioned on ${\rm y}^{\l}_T=x$ and $\lfloor {\rm u}_{T-t}\rceil^{\l}(w)$ conditioned on ${\rm x}^{\l}_T=y$, we have the distribution of $\overline{\mathcal{E}}^{0}_T$ under $\ov{\P}^{\l}_{y, x, T}$ is the same as \[
\oa{\mathcal{E}}_{T, {\rm V}, {\mathtt s}}=-\frac{1}{2}\int_{0}^{T}\big\langle \mathtt{s}'(T\!-\!t)(\lfloor{\rm u}_T\rceil^{\l})^{-1}{\rm V}(\lfloor{\rm x}_T\rceil^{\l})-{\rm Ric}(\lfloor{\rm u}_{t}\rceil^{\l}(\lfloor{\rm u}_T\rceil^{\l})^{-1}\mathtt{s}(T\!-\!t){\rm V}(\lfloor{\rm x}_T\rceil^{\l})), d\oa{B}_{t}\big\rangle.
\]
under $\ov{\P}^{\l}_{x, y, T}$, where $\mathtt s$ is given in Section \ref{flow-F-S-y}.  So,  by (\ref{Ch-Var-compare}) and  (\ref{PHI-Y-F-s}),  we have
\begin{align}
(\overline{\Phi}_{\l}^1(Y)(F^sy))'_0&=\overline{\E}_{\ov{\P}^{\l}_{x, y, T}}\left(\big\langle \nabla_{{\rm V}(\lfloor {\rm x}_{T} \rceil^{\l})}Y, D\pi(\lfloor {\rm u}_T\rceil^{\l})^{(1)}_{\l}\big\rangle_{\l} +\big\langle Y(\lfloor {\rm x}_{T} \rceil^{\l}), \nabla_{T, {\rm V}, \mathtt{s}}^{\l}D\pi(\lfloor {\rm u}_T\rceil^{\l})^{(1)}_{\l}\big\rangle_{\l}  \right.\notag\\
 \notag &\ \ \ \ \  \ \ \ \ \ \ \ \  \  +\left.\big\langle Y(\lfloor {\rm x}_{T} \rceil^{\l}), D\pi(\lfloor {\rm u}_T\rceil^{\l})^{(1)}_{\l}\big\rangle_{\l}\oa{\mathcal{E}}_{T, {\rm V}, {\mathtt s}} \right)\\
&=:\overline{\E}_{\ov{\P}^{\l}_{x, y, T}}\left({\Psi}_{\l}^1(Y, {\rm V})(w)\right),  \label{expression-di-phi}
\end{align}
where we omit the upper-script $0$ of ${\rm x}, {\rm u}$ and $\nabla_{T, {\rm V}, \mathtt{s}}$ at $s=0$ for simplicity.  

To show  $(\overline{\Phi}_{\l}^1(Y)(F^sy))'_0$ is continuous in $y$, we compare it with  its value at  nearby points.  Choose another smooth bounded vector field ${\rm W}$ on $\M$ and let $\leftidx^{r}F$ be the flow it generates, where we use the left upper script  to indicate the parameter associated  with  ${\rm W}$.  As before,  we can extend $\leftidx^{r}F$ to be a one parameter family  of maps 
$\lf {\leftidx^{r}{\bf{F}}}\rc^{\l}=\{ \lf {\leftidx^{r} {{\bf{F}}_y}}\rc^{\l}\}$ on $\wt{g}^{\l}$-Brownian paths starting from $x$ up to time $T$.  Let $ \lfloor\leftidx^{r}{\alpha}\rceil^{\l}, \lfloor\leftidx^{r}{O}\rceil^{\l}, \lfloor\leftidx^{r}{\mathtt g}\rceil^{\l}, \lfloor\leftidx^{r}{\ov{\mathtt g}}\rceil^{\l}, \lfloor\leftidx^{r}{\rm{y}}\rceil^{\l}$, $\lfloor\leftidx^{r}{\mho}\rceil^{\l}$ and   $(\lfloor\leftidx^{r}{\mho}\rceil^{\l})'_r$ denote the corresponding stochastic  processes of $\lf \leftidx^{r}{\bf{F}}\rc^{\l}$ in Theorem \ref{Main-alpha-x-v-Q}. Then a change of variable argument  for (\ref{expression-di-phi})  with $\lf \leftidx^{r}{\bf{F}}\rc^{\l}$ shows that  for $z=\leftidx^{r}F (y)$, 
\[
(\overline{\Phi}_{\l}^1(Y)(F^sz))'_0=\overline{\E}_{\ov{\P}^{\l}_{x, y, T}}\left(\Psi_{\l}^1(Y, {\rm V})\circ \lf {\leftidx^{r}{\bf{F}}}\rc^{\l} \cdot {\frac{d\overline{\P}_x^{\l}\circ \lf {\leftidx^{r}{{\bf F}}}\rc^{\l}}{d\overline{\P}_x^{\l}}}\right)\frac{p^{\l}(T, x, y)}{p^{\l}(T, x, z)}\frac{d{\rm Vol}^{\l}}{d{\rm Vol}^{\l}\circ{\leftidx^{r}{F}}}(y). 
\]
Since  $p^{\l}$ and ${\rm Vol}^{\l}$ are continuous in $y$, for continuity of $(\overline{\Phi}_{\l}^1(Y)(F^sy))'_0$ in $y$, it remains  to show the conditional expectation of the following difference tends to 0 as $r$ goes to 0:
\begin{align*}
&\Psi_{\l}^1(Y, {\rm V})\circ \lf {\leftidx^{r}{{\bf F}}}\rc^{\l} \cdot {\frac{d\overline{\P}_x^{\l}\circ \lf {\leftidx^{r}{{\bf F}}}\rc^{\l}}{d\overline{\P}_x^{\l}}}-\Psi_{\l}^1(Y, {\rm V})\\
&=\left(\Psi_{\l}^1(Y, {\rm V})\circ \lf {\leftidx^{r}{{\bf F}}}\rc^{\l} -\Psi_{\l}^1(Y, {\rm V})\right)\frac{d\overline{\P}_x^{\l}\circ \lf {\leftidx^{r}{{\bf F}}}\rc^{\l}}{d\overline{\P}_x^{\l}} +\left(\frac{d\overline{\P}_x^{\l}\circ \lf {\leftidx^{r}{{\bf F}}}\rc^{\l}}{d\overline{\P}_x^{\l}}-1\right) \Psi_{\l}^1(Y, {\rm V})\\
&=: {\leftidx^{r}{(\rm I)}_1}\cdot  {\leftidx^{r}{(\rm I)}_2}+{\leftidx^{r}{(\rm II)}_1}\cdot {{(\rm II)}_2}.
\end{align*}
For this, it suffices to show 
\begin{align}\label{zero-zero-pf}
\lim\limits_{r\to 0}\overline{\E}_{\ov{\P}^{\l}_{x, y, T}}\big|{\leftidx^{r}{(\rm I)}_1}\big|^2=0 \ \mbox{and}\ \lim\limits_{r\to 0}\overline{\E}_{\ov{\P}^{\l}_{x, y, T}}\big|{\leftidx^{r}{(\rm II)}_1}\big|^2=0
\end{align}
since $\overline{\E}_{\ov{\P}^{\l}_{x, y, T}}|{\leftidx^{r}{(\rm I)}_2}|^2$ is locally uniformly bounded  in $r$ by  Proposition \ref{Quasi-IP-2} and $\overline{\E}_{\ov{\P}^{\l}_{x, y, T}}|{{(\rm II)}_2}|^2$  is bounded  by using Proposition \ref{Quasi-IP-1} and Proposition \ref{Diff-int-U-s}.

Note that $|Y|, |\nabla_{{\rm V}}Y|$ are locally  bounded at $y$ and the difference between $Y(z)$ and $Y(y)$, $\nabla_{{\rm V}}Y(z)$ and $\nabla_{{\rm V}}Y(y)$ under parallel transportation along  $(\imath\mapsto \leftidx^{\imath}F(y))_{\imath\in [0, r]}$ is bounded by a multiple of $r$. Using this,  (\ref{expression-di-phi}) and  a standard  split argument  by H\"{o}lder's inequality, we see that   to conclude the first property in  (\ref{zero-zero-pf}), it suffices to show  \begin{align*}
\begin{array}{ll} {\leftidx^{r}{(\rm III)}}:=& \overline{\E}_{\ov{\P}^{\l}_{x, y, T}}\left(\left|\oa{\mathcal{E}}_{T, {\rm V}, {\mathtt s}}\circ\lf {\leftidx^{r}{{\bf F}}}\rc^{\l}-\oa{\mathcal{E}}_{T, {\rm V}, {\mathtt s}}\right|^4\right)\to 0, \ r\to 0, \\ 
{\leftidx^{r}{(\rm IV)}}:=&\overline{\E}_{\ov{\P}^{\l}_{x, y, T}}\left(\left|D\pi(\lfloor {\rm u}_T\rceil^{\l})^{(1)}_{\l}\circ \lf {\leftidx^{r}{{\bf F}}}\rc^{\l}-D\pi(\lfloor {\rm u}_T\rceil^{\l})^{(1)}_{\l}\right|^4\right)\to 0,  \ r\to 0,\\ 
{\leftidx^{r}{(\rm V)}}:=&  \overline{\E}_{\ov{\P}^{\l}_{x, y, T}}\left(\left|\nabla_{T, {\rm V}, \mathtt{s}}D\pi(\lfloor {\rm u}_T\rceil^{\l})^{(1)}_{\l}\circ \lf {\leftidx^{r}{{\bf F}}}\rc^{\l}-\nabla_{T, {\rm V}, \mathtt{s}}^{\l}D\pi(\lfloor {\rm u}_T\rceil^{\l})^{(1)}_{\l}\right|^2\right)\to 0,  \ r\to 0. 
\end{array}
\end{align*}
Let \begin{align*}
\overline{\mathcal{E}}^{0}_T\circ \lf {\leftidx^{r}{{\bf F}}}\rc^{\l}&=-\frac{1}{2}\int_{0}^{T}\!\!\big\langle (\lfloor \leftidx^{r}{\mho}_{0}\rceil^{\l})^{-1}\!\left[\mathtt{s}'(t){\rm V}(y)\right]-{\rm Ric}^{\l}\big(\lfloor\leftidx^{r}{\mho}_{t}\rceil^{\l}(\lfloor\leftidx^{r}{\mho}_0\rceil^{\l})^{-1}[\mathtt{s}(t){\rm V}(y)]\big), \ d\overleftarrow{\lfloor\leftidx^{r}\alpha_t\rceil^{\l}} \big\rangle\\
&=:-\frac{1}{2}\int_{0}^{T}\!\!\big\langle\leftidx^{r}(\lfloor\ov{{\mathtt g}}^s_t\rceil^{\l})'_0({\rm w}), \ d\overleftarrow{\lfloor\leftidx^{r}\alpha_t\rceil^{\l}}) \big\rangle.
\end{align*}
For ${\leftidx^{r}{(\rm III)}}$,  we have 
\begin{align*}
\begin{array}{ll}
2\cdot {\leftidx^{r}{(\rm III)}}&= \ 2\cdot \ov{\E}_{\overline{\P}^{\l}_{y, x, T}}\left(\big|\overline{\mathcal{E}}^{0}_T\circ \lf {\leftidx^{r}{{\bf F}}}\rc^{\l}-\overline{\mathcal{E}}^{0}_T\big|^4\right)\\
 &\leq\  \ov{\E}_{\overline{\P}^{\l}_{y, x, T}}\left|\int_{0}^{T}\!\!\big\langle\leftidx^{r}(\lfloor\ov{{\mathtt g}}^s_t\rceil^{\l})'_0({\rm w}), \ \big(\overleftarrow{\lfloor\leftidx^{r}O_t\rceil^{\l}}-{\rm{Id}}\big)dB_{t}({\rm{w}})+ \overleftarrow{\lfloor\leftidx^{r}\mathtt g_t\rceil^{\l}}\ dt\big\rangle\right|^4\\
&\ \ \ \ + \ov{\E}_{\overline{\P}^{\l}_{y, x, T}}\left|\int_{0}^{T}\!\!\big\langle\leftidx^{r}(\lfloor\ov{{\mathtt g}}^s_t\rceil^{\l})'_0({\rm w})-\leftidx^{0}(\lfloor\ov{{\mathtt g}}^s_t\rceil^{\l})'_0({\rm w}), \ dB_t({\rm w}) \big\rangle\right|^4\\
&=:\    {\leftidx^{r}{(\rm III)}}_1+{\leftidx^{r}{(\rm III)}}_2.
\end{array}
\end{align*}
For ${\leftidx^{r}{(\rm III)}}_1$, the usual argument  using   Lemma \ref{Hsu-thm-5.4.4} and Burkholder's inequality shows \begin{align*}
3^{-3}\cdot {\leftidx^{r}{(\rm III)}}_1
 \leq &\; \ov{\E}_{\overline{\P}^{\l}_{y, x, T}}\left|\int_{0}^{T}\big\|\overleftarrow{\lfloor\leftidx^{r}O_t\rceil^{\l}}-{\rm{Id}}\big\|^2\big\|\leftidx^{r}(\lfloor\ov{{\mathtt g}}^s_t\rceil^{\l})'_0({\rm w})\big\|^2\ dt\right|^2\\
&\; +\ov{\E}_{\overline{\P}^{\l}_{y, x, T}}\left|\int_{0}^{T}2\big\|\leftidx^{r}(\lfloor\ov{{\mathtt g}}^s_t\rceil^{\l})'_0({\rm w})\big\|\big\|\overleftarrow{\lfloor\leftidx^{r}O_t\rceil^{\l}}-{\rm{Id}}\big\|\big\|\nabla^{\l}\ln p^{\l}(T-t, \lfloor{\rm y}_{t}\rceil^{\l}({\rm w}), x) \big\|\ dt\right|^4\\
&\; +\ov{\E}_{\overline{\P}^{\l}_{y, x, T}}\left|\int_{0}^{T}\big\|\leftidx^{r}(\lfloor\ov{{\mathtt g}}^s_t\rceil^{\l})'_0({\rm w})\big\|\big\|\overleftarrow{\lfloor\leftidx^{r}\mathtt g_t\rceil^{\l}} \big\|\ dt\right|^4. 
\end{align*}
Note that  there is some constant $C$ which depends on $\|g^{\l}\|_{C}, \mathtt{s}$ and $\sup\{\|\rm{V}\|\}$ such that  
\[\big|\overleftarrow{\lfloor\leftidx^{r}\mathtt g_t\rceil^{\l}}({\rm w})\big|,\ \big|\leftidx^{r}(\lfloor\ov{{\mathtt g}}^s_t\rceil^{\l})'_0({\rm w})\big|\leq Cr.\]
Hence 
\begin{align*}
3^{-3}\cdot {\leftidx^{r}{(\rm III)}}_1\leq &\  (Cr)^8T^4+(Cr)^4T^2 \ov{\E}_{\overline{\P}^{\l}_{y, x, T}}\sup_{t\in[0, T]}\big\|\overleftarrow{\lfloor\leftidx^{r}O_t\rceil^{\l}}-{\rm{Id}}\big\|^4\\
&+ (2Cr)^{4}\!\left(\ov{\E}_{\overline{\P}^{\l}_{y, x, T}}\!\sup_{t\in[0, T]}\big\|\overleftarrow{\lfloor\leftidx^{r}O_t\rceil^{\l}}-{\rm{Id}}\big\|^8\cdot \ov{\E}_{\overline{\P}^{\l}_{y, x, T}}e^{8\int_{0}^{T}\big\|\nabla^{\l}\ln p^{\l}(T-t, \lfloor{\rm y}_{t}\rceil^{\l}, x) \big\|\ dt}\right)^{\frac{1}{2}}. 
\end{align*}
By Lemma \ref{O-mathttg-diff-esti} and  Lemma \ref{cond-DF-diff-norm},  for any $q\geq 1$, there is some $C_1(q, T)$  such that 
\[
\ov{\E}_{\ov\P^{\l}_{y, x, T}}\sup_{t\in [0, T]}\  \left\|\lfloor\leftidx^{r}\alpha_t\rceil^{\l}-\lfloor\alpha_t\rceil^{\l} \right\|^{q}\leq C_1(q, T) r^q.\]
Using this and  (\ref{exp-grad-lnp}), we  conclude that ${\leftidx^{r}{(\rm III)}}_1\to 0$ as $r\to 0$. Similarly, using (\ref{B-bridge}),  
 Burkholder's inequality and (\ref{exp-grad-lnp}), we obtain  some  $C_2$ depending on $T, d(x, y)$  such that 
\begin{align*}
 {\leftidx^{r}{(\rm III)}}_2&\leq \ov{\E}_{\overline{\P}^{\l}_{y, x, T}}\left(\int_{0}^{T}\big\|\leftidx^{r}(\lfloor\ov{{\mathtt g}}^s_t\rceil^{\l})'_0-(\lfloor\ov{{\mathtt g}}^s_t\rceil^{\l})'_0\big\|^2\ dt\right)^2\\
&\ \ \ +\ov{\E}_{\overline{\P}^{\l}_{y, x, T}}\left|\int_{0}^{T}\!\left\langle\!\left(\leftidx^{r}(\lfloor\ov{{\mathtt g}}^s_t\rceil^{\l})'_0-(\lfloor\ov{{\mathtt g}}^s_t\rceil^{\l})'_0\!\right), 2(\lfloor{\mho}_t\rceil^{
\l})^{-1}\nabla^{\l}\ln p^{\l}(T-t, \lfloor{\rm y}_{t}\rceil^{\l}, x)\right\rangle_{\l}dt\right|^4\\
&\leq C_2 \ov{\E}_{\overline{\P}^{\l}_{y, x, T}}\left(\sup\limits_{t\in [0, T]}\big\|\leftidx^{r}(\lfloor\ov{{\mathtt g}}^s_t\rceil^{\l})'_0-(\lfloor\ov{{\mathtt g}}^s_t\rceil^{\l})'_0\big\|^4+\sup\limits_{t\in [0, T]}\big\|\leftidx^{r}(\lfloor\ov{{\mathtt g}}^s_t\rceil^{\l})'_0-(\lfloor\ov{{\mathtt g}}^s_t\rceil^{\l})'_0\big\|^8\right). 
\end{align*}
The argument in Lemma  \ref{O-mathttg-diff-esti} shows  there is some $C_3$ depending  on $\|g^{\l}\|_{C^3}$ such that   \begin{align*}\ov{\E}_{\overline{\P}^{\l}_{y, x, T}}\sup\limits_{t\in [0, T]}\big\|\leftidx^{r}(\lfloor\ov{{\mathtt g}}^s_t\rceil^{\l})'_0-(\lfloor\ov{{\mathtt g}}^s_t\rceil^{\l})'_0\big\|^q&\leq C_3\cdot \ov{\E}_{\overline{\P}^{\l}_{y, x, T}}\sup\limits_{t\in [0, T]}\big\|\lfloor\leftidx^{r}{\mho}_{t}\rceil^{\l}-\lfloor{\mho}_{t}\rceil^{\l}\big\|^q\\
 &\leq C_3 r^q \sup_{\imath\in [-r_0, r_0]}\ov{\E}_{\overline{\P}^{\l}_{y, x, T}}\sup_{t\in [0, T]}\big\|(\lfloor\leftidx^{r}{\mho}_{t}\rceil^{\l})'_\imath\big\|^{q}.
 \end{align*}
This immediately implies that $\lim_{r\to 0} {\leftidx^{r}{(\rm III)}}_2=0$.  For ${\leftidx^{r}{(\rm IV)}}$,   we have
\[
{\leftidx^{r}{(\rm IV)}}\leq {\rm Const.}\cdot \ov{\E}_{\overline{\P}^{\l}_{y, x, T}}\left(\left|\overline{(\lfloor {\leftidx^{r}{\rm{u}}}_T\rceil^{\l})^{(1)}_0}-\overline{(\lfloor {\rm u}_T\rceil^{\l})^{(1)}_0}\right|^4\right).
\]Note that 
\begin{align*}
\!\!(\theta, \omega)_{\lfloor{\leftidx^{r}\mho_{0}\rceil^{\l}}}^{\l}\overline{(\lfloor {\leftidx^{r}{\rm{u}}}_T\rceil^{\l})^{(1)}_{\l}}= &\ [D\wt{\lf \leftidx^{r}F_{0, T}\rc^{\l}}(\lfloor \leftidx^{r}\mho_0\rceil^{\l}, {\rm w})]^{-1}\wt{(\lfloor {\rm u}_0\rceil^{\l})^{(1)}_{\l}}+\int_{0}^{T}\![D\wt{\lf \leftidx^{r}F_{0, t}\rc^{\l}}(\lfloor \leftidx^{r}\mho_0\rceil^{\l}, {\rm w})]^{-1}\\
&\ \ \ \ \ \ \ \left(\varpi\big((H^{\l})^{(1)}_{\l}(\lfloor \leftidx^{r}\mho_t\rceil^{\l}, e_i)\big)e_i\ d\tau, \varpi\big((H^{\l})^{(1)}_{\l}(\lfloor \leftidx^{r}\mho_t\rceil^{\l},  d\overleftarrow{\lfloor\leftidx^{r}\alpha\rceil^{\l}_t})\big)\right). 
\end{align*}
By Lemma \ref{O-mathttg-diff-esti} and Lemma \ref{cond-DF-norm},  for any $q\geq 1$, 
\[
\sup_{r\in [-r_0, r_0]}\ov{\E}_{\ov\P^{\l}_{y, x, T}}\sup_{t\in [0, T]}\ \  \left\|\lfloor\leftidx^{r}\alpha\rceil^{\l}_t \right\|^{q}, \ \left\|[\wt{D \lf \leftidx^{r}F_{0, t}\rc^{\l}}(\lfloor \leftidx^{r}\mho_0\rceil^{\l}, {\rm w})]^{-1}\right\|^q<\leftidx^{r}\underline{c} e^{\leftidx^{r}c(1+d_{\wt{g}^{\l}}(x, y))},
\]
where  $\leftidx^{r}\underline{c}$  depends on $r_0, m, q,  \mathtt{s}$ and $\|g^{\l}\|_{C^2}$,  and  $\leftidx^{r}c$  depends  on $r_0, m, q,  \mathtt{s}$, $T, T_0$  and $\|g^{\l}\|_{C^3}$. Moreover, by Lemma \ref{O-mathttg-diff-esti} and  Lemma \ref{cond-DF-diff-norm}, 
\begin{align*}
&\ov{\E}_{\ov\P^{\l}_{y, x, T}}\sup_{t\in [0, T]}\  \left\|\lfloor\leftidx^{r}\alpha_t\rceil^{\l}-\lfloor\alpha_t\rceil^{\l} \right\|^{q}\leq C_4 r^q, \\
&\ov{\E}_{\ov\P^{\l}_{y, x, T}}\sup_{t\in [0, T]}\  \left\|[\wt{D\lf \leftidx^{r}F_{0, t}\rc^{\l}}(\lfloor \leftidx^{r}\mho_0\rceil^{\l}, {\rm w})]^{-1}-[\wt{D\lf F_{0, t}\rc^{\l}}(\lfloor \mho_0\rceil^{\l}, {\rm w})]^{-1}\right\|^{q}\leq C_5 r^q,
\end{align*}
where the constants  $C_4, C_5$ depend on $\|g^{\l}\|_{C^2}$. 
Again,  a standard split argument using  these estimations and H\"{o}lder's inequality gives   $\lim_{r\to 0} {\leftidx^{r}{(\rm IV)}}=0$.  To conclude that $\lim_{r\to 0} {\leftidx^{r}{(\rm V)}}=0$, we see from (\ref{the U-T-s-1}) that it suffices to show  for any $q>1$, 
\begin{equation*}\label{A-F-r-arg}
\ov{\E}_{\ov\P^{\l}_{y, x, T}}\sup_{t\in [0, T]}\left\|{\bf A}_{t}\circ \lf {\leftidx^{r}{\bf{F}}\rc^{\l} }-{\bf A}_{t}\right\|^q\leq C_{\bf A} r^q 
\end{equation*}
for some $C_{\bf A}$ depending on $\|g^{\l}\|_{C^3}, \|\XX\|_{C^2}$, $T$ and $d(x, y)$, 
where ${\bf A}_{t}=(\lfloor {\mathtt g}_{t}^s\rceil^{\l})'_0,$   $(\lfloor O_{t}^s\rceil^{\l})'_0,$  $(\lfloor \mho_{t}^s\rceil^{\l})'_0,$  $(\lfloor \mho_{t}\rceil^{\l})^{(1)}_{\l}$ or   $[\wt{(DF_{0, t})}(\lfloor\mho_{0}\rceil^{\l}, {\rm w})]^{-1}$. Using  Lemma \ref{Cor-O-g-formula}, this can be 
be reduced to  the cases  that ${\bf A}_t=\lfloor\alpha\rceil^{\l}_{t}$,  $\lfloor \mho_{t}\rceil^{\l}$ or $[\wt{D\lf F_{0, t}\rc^{\l}}(\lfloor\mho_{0}\rceil^{\l}, {\rm w})]^{-1}$,  which were shown as above.

Let $C'$ be a bound of $|{d{\rm Vol}^{\l}\circ {\leftidx^{r}F}}/{d{\rm Vol}^{\l}}(y)|$ for $r\in [-r_0, r_0]$. By using (\ref{equ-P-F-S}), we obtain 
\begin{align*}
\ov{\E}_{\ov\P_{y, x, T}^{\l}}\!\left|\frac{d\overline{\P}_x^{\l}\circ\lf {\leftidx^{r}{{\bf F}}}\rc^{\l} }{d\overline{\P}_x^{\l}} -1\right|^2&\!\!\leq 2C'\!\!\left(\ov{\E}_{\ov\P^{\l}_{y, x, T}}\left|\frac{d\P_{z}^{\l}\circ \lf {\leftidx^{r}{{\bf F}}}\rc^{\l} }{d\P_{y}^{\l}}-1\right|^2+\ov{\E}_{\ov\P^{\l}_{y, x, T}}\left|\frac{d{\rm Vol}^{\l}\circ {\leftidx^{r}F}}{d{\rm Vol}^{\l}}(y)-1\right|^{2}\right)\\
&=:C'\left({\leftidx^{r}{(\rm VI)}}+{\leftidx^{r}{(\rm VII)}}\right).
\end{align*}
Clearly, ${\leftidx^{r}{(\rm VII)}}\to 0$ as $r\to 0$. 
For the second property in (\ref{zero-zero-pf}), it remains to show ${\leftidx^{r}{(\rm VI)}}\to 0$ as $r\to 0$.  Following the proof of Proposition \ref{Quasi-IP-1}, we obtain 
\[
\frac{d\P_{z}^{\l}\circ \lf {\leftidx^{r}{{\bf F}}}\rc^{\l} }{d\P_{y}^{\l}}=e^{\left\{-\frac{1}{2}\int_{0}^{T}\langle\lf {\leftidx^{r}{\ov{{\mathtt g}}}}_{\tau}\rc^{\l}({\rm w}),  \  dB_\tau({\rm w})\rangle+\frac{1}{4}\int_{0}^{T}|\lf {\leftidx^{r}{\ov{{\mathtt g}}}}_{\tau}\rc^{\l}({\rm w})|^2\ d\tau\right\}}=:{\leftidx^{r}\mathcal{E}_T}({\rm w})
\]
and 
\begin{align*}
({\leftidx^{r}\mathcal{E}_T}({\rm w}))'_r &=\ {\leftidx^{r}\mathcal{E}_T}({\rm w})\cdot \left(-\frac{1}{2}\int_{0}^{T}\langle  (\lf {\leftidx^{r}{\ov{{\mathtt g}}}}_{t}\rc^{\l})'_r({\rm w}),  dB_t({\rm w})\rangle+\frac{1}{2}\int_0^T \langle (\lf {\leftidx^{r}{\ov{{\mathtt g}}}}_{t}\rc^{\l})'_r({\rm w}), \lf {\leftidx^{r}{\ov{{\mathtt g}}}}_{t}\rc^{\l}({\rm w})\rangle\ dt \right)\\
&=:\  {\leftidx^{r}\mathcal{E}_T}({\rm w})\cdot \overline{\leftidx^{r}\mathcal{E}_T}({\rm w}).
\end{align*}
The   usual argument  using   Lemma \ref{Hsu-thm-5.4.4} and Burkholder's inequality shows that for every  $q\geq 1$,  $
\ov{\E}_{\ov\P_{y, x, T}^{\l}}\left|\overline{\leftidx^{r}\mathcal{E}_T}({\rm w})\right|^q$ 
is locally uniformly  bounded in $r$. Hence ${\leftidx^{r}{(\rm VI)}}\to 0$ as $r\to 0$.

Altogether, we have shown the map $Y\mapsto \overline{\Phi}_{\l}^1(Y)$ is a $C^1$ locally bounded functional on $C^{k}$ vector fields $Y$ on $\M$. Hence there  exists  some  $C^1$ vector field $\wt{{\rm z}_T^{\l, 1}}$ on $\M$ such that 
\[
\overline{\Phi}_{\l}^1(Y)(y)=\big\langle Y(y),  \wt{{\rm z}_T^{\l, 1}}(y)\big\rangle_{\l}. 
\]
This  shows   $ \wt{{\rm z}_t^{\l, 1}}(y)\equiv {\rm z}_T^{\l, 1}(y)$. Thus  $\{{\rm z}_T^{\l, 1}(y)\}$ forms a $C^1$ vector field on $\M$ as claimed. 
\end{proof}

\begin{lem}\label{reg-Z-lam-1}
Let $\lambda\in
(-1, 1)\mapsto g^{\lambda}\in \mathcal{M}^3(M)$ be a $C^3$ curve. Let $x\in \M$, $T\in \Bbb R_+$.  For any smooth bounded vector field ${\rm V}$ on $\M$,  let ${\mathtt s}$, $\nabla^{\l}_{T, {\rm V}, \mathtt{s}}$, $\oa{\mathcal{E}}_{T, {\rm V}, {\mathtt s}}$ be as above, then 
\begin{equation}\label{Cov-der-Z-lam}
\nabla^{\l}_{{\rm V}(y)}{\rm z}_T^{\l, 1}(y)=\ov{\E}\left(\left.\nabla^{\l}_{T, {\rm V}, \mathtt{s}}D\pi(\lfloor {\rm u}_T\rceil^{\l})^{(1)}_{\l}(w)+D\pi(\lfloor {\rm u}_T\rceil^{\l})^{(1)}_{\l}(w)\oa{\mathcal{E}}_{T, {\rm V}, {\mathtt s}}(w)\right|  \lfloor {\rm x}_T \rceil^{\l}({w})=y\right). 
\end{equation}
As a consequence, 
\begin{align*}
&{\rm Div}^{\l}{\rm z}_T^{\l, 1}(y)\\
&\ =\ov{\E}\left({\rm tr}\left({\rm V}\mapsto \nabla^{\l}_{T, {\rm V}, \mathtt{s}}D\pi(\lfloor {\rm u}_T\rceil^{\l})^{(1)}_{\l}\right)\!-\!\big\langle D\pi(\lfloor {\rm u}_T\rceil^{\l})^{(1)}_{\l}, \ \frac{1}{2}\lfloor{\rm u}_T\rceil^{\l}\!\int_{0}^{T}\!\mathtt{s}'(T\!-\!\tau)d\oa{B}_{\tau}\big\rangle_{\l}
\right.\\
&\ \ \ \ \  \ \ \ \left.\left.+\big\langle D\pi(\lfloor {\rm u}_T\rceil^{\l})^{(1)}_{\l}, \frac{1}{2}\!\int_{0}^{T}\!\mathtt{s}(T\!-\!\tau) \lfloor{\rm u}_{T}\rceil^{\l}(\lfloor{\rm u}_\tau\rceil^{\l})^{-1}({\rm Ric}_{\lfloor{\rm u}_{\tau}\rceil^{\l}}^{\l})^{-1}d\oa{B}_{\tau}\big\rangle_{\l} \right|  \lfloor {\rm x}_T \rceil^{\l}({w})=y\right).
\end{align*}
\end{lem}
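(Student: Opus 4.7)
My plan is to derive both identities by combining formula (\ref{expression-di-phi}), obtained inside the proof of Lemma \ref{C-1-fun-phi-Y}, with the Leibniz rule applied to $\overline{\Phi}_\l^1(Y)(y)=\langle Y(y),{\rm z}_T^{\l,1}(y)\rangle_\l$. For any $C^k$ bounded vector field $Y$ on $\widetilde{M}$ and any smooth bounded vector field ${\rm V}$ on $\widetilde{M}$ with flow $\{F^s\}$, the quantity $\overline{\Phi}_\l^1(Y)(F^sy)$ is differentiable at $s=0$ with derivative equal, on the one hand, to $\overline{\E}_{\ov{\P}_{x,y,T}^{\l}}(\Psi_\l^1(Y,{\rm V}))$, and on the other hand, since ${\rm z}_T^{\l,1}$ is $C^1$ by Lemma \ref{C-1-fun-phi-Y}, to $\langle\nabla_{{\rm V}(y)}^\l Y,{\rm z}_T^{\l,1}(y)\rangle_\l+\langle Y(y),\nabla_{{\rm V}(y)}^\l{\rm z}_T^{\l,1}(y)\rangle_\l$.

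The first term of $\Psi_\l^1$ is $\langle\nabla_{{\rm V}(\lfloor{\rm x}_T\rceil^\l)}Y,D\pi(\lfloor{\rm u}_T\rceil^\l)^{(1)}_\l\rangle_\l$; conditioning on $\lfloor{\rm x}_T\rceil^\l=y$ turns $\nabla_{{\rm V}(\lfloor{\rm x}_T\rceil^\l)}Y$ into the constant vector $\nabla_{{\rm V}(y)}^\l Y(y)$, so its conditional expectation matches exactly $\langle\nabla_{{\rm V}(y)}^\l Y,{\rm z}_T^{\l,1}(y)\rangle_\l$. Subtracting this common term and noting that $Y(y)$ can be any prescribed vector in $T_y\widetilde{M}$ (for instance by choosing $Y$ to be the $\nabla^\l$-parallel extension of a fixed $Y_0$ along $\{F^sy\}$, which furthermore gives $\nabla_{{\rm V}}^\l Y(y)=0$) yields (\ref{Cov-der-Z-lam}).

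For the divergence formula I take an orthonormal frame $\{e_i\}$ at $y$ and write ${\rm Div}^\l{\rm z}_T^{\l,1}(y)=\sum_i\langle\nabla_{e_i}^\l{\rm z}_T^{\l,1}(y),e_i\rangle_\l$, applying (\ref{Cov-der-Z-lam}) with ${\rm V}$ extending each $e_i$. The contribution involving $\nabla_{T,e_i,\mathtt{s}}^\l D\pi(\lfloor{\rm u}_T\rceil^\l)^{(1)}_\l$ immediately assembles into the trace appearing in the statement. For the contribution involving $\oa{\mathcal{E}}_{T,e_i,\mathtt{s}}$, I use the explicit expression of $\oa{\mathcal{E}}_{T,{\rm V},\mathtt{s}}$ displayed just before the lemma; once conditioned on $\lfloor{\rm x}_T\rceil^\l=y$, this expression depends linearly on ${\rm V}(y)$, and substituting ${\rm V}(y)=e_i$ and pairing against $\langle D\pi(\lfloor{\rm u}_T\rceil^\l)^{(1)}_\l,e_i\rangle_\l$ reduces the sum over $i$ to two inner products over $T_y\widetilde{M}$: the $\mathtt{s}'$-piece uses only that $\lfloor{\rm u}_T\rceil^\l$ is an isometry $\mathbb{R}^m\to T_y\widetilde{M}$, whereas the $\mathtt{s}$-piece requires in addition the self-adjointness of the Ricci operator to transfer $e_i$ from the integrand to the second slot of the inner product. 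This last symmetry manipulation is where I expect the bookkeeping to be the most delicate, especially to confirm the precise form in which the Ricci operator appears in the statement; all of the integrability needed to swap the conditional expectation with the stochastic integrals and to justify taking the derivative under the integral sign is already supplied by Propositions \ref{Quasi-IP-1}, \ref{Quasi-IP-2}, and \ref{Diff-int-U-s-gen}.
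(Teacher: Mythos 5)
Your proposal follows essentially the same route as the paper's own proof: differentiate the identity $\overline{\Phi}_{\l}^1(Y)(y)=\langle Y(y),{\rm z}_T^{\l,1}(y)\rangle_{\l}$ using the formula from Lemma \ref{C-1-fun-phi-Y}, cancel the $\nabla_{\rm V}^{\l}Y$ term via (\ref{PHI-Z-T-L}), use arbitrariness of $Y(y)$ to get (\ref{Cov-der-Z-lam}), and then trace over an orthonormal frame while exploiting the isometry of the horizontal lift and the self-adjointness of the Ricci operator to assemble the divergence formula. The Ricci bookkeeping you flag as delicate is exactly the step where the paper invokes the isometry of backward parallel transport together with (\ref{Ric-def}), so your outline is correct.
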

\begin{proof}Let $Y$ be a $C^k$ bounded  vector field on $\M$. By Lemma \ref{C-1-fun-phi-Y}, 
\begin{equation}\label{PHI-Z-T-L}
\overline{\Phi}_{\l}^1(Y)(y)=\big\langle Y(y),  {\rm z}_T^{\l, 1}(y)\big\rangle_{\l},
\end{equation}
where all the variables $\overline{\Phi}_{\l}^1(Y), Y$ and ${\rm z}_T^{\l, 1}$ are $C^1$ in $y$. Hence
\begin{align}\label{Phi-1-Fs-y-com1}
\nabla^{\l}_{{\rm V}}(\overline{\Phi}_{\l}^1(Y))(y)=\big\langle \nabla^{\l}_{{\rm V}}Y(y),  {\rm z}_T^{\l, 1}(y)\big\rangle_{\l}+ \big\langle Y(y),  \nabla^{\l}_{{\rm V}}{\rm z}_T^{\l, 1}(y)\big\rangle_{\l}. 
\end{align}
Let  $\{F^s\}_{s\in \Bbb R}$ be the flow generated by a smooth vector field ${\rm V}$. Then 
\[
\nabla^{\l}_{{\rm V}}(\overline{\Phi}_{\l}^1(Y))(y)=\left(\overline{\Phi}_{\l}^1(Y)(F^sy)\right)'_0. 
\]
It was  shown in Lemma \ref{C-1-fun-phi-Y} that 
\begin{align}
\notag\left(\overline{\Phi}_{\l}^1(Y)(F^sy)\right)'_0&=\overline{\E}\left(\big\langle \nabla^{\l}_{{\rm V}(\lfloor {\rm x}_{T} \rceil^{\l})}Y, D\pi(\lfloor {\rm u}_T\rceil^{\l})^{(1)}_{\l}\big\rangle_{\l} +\big\langle Y(\lfloor {\rm x}_{T} \rceil^{\l}), \nabla^{\l}_{T, {\rm V}, \mathtt{s}}D\pi(\lfloor {\rm u}_T\rceil^{\l})^{(1)}_{\l}\big\rangle_{\l}  \right.\\
\label{Phi-1-Fs-y-com2}&\ \ \ \ \ \ \ \ \ \ \ \ \ \ \ \ \ \ \ \ \ \ \ \ \ \  \left.\left.+\big\langle Y(\lfloor {\rm x}_{T} \rceil^{\l}), D\pi(\lfloor {\rm u}_T\rceil^{\l})^{(1)}_{\l}\big\rangle_{\l}\oa{\mathcal{E}}_{T, {\rm V}, {\mathtt s}} \right|  \lfloor {\rm x}_T \rceil^{\l}(w)=y\right). 
\end{align}
Applying (\ref{PHI-Z-T-L}) for the $C^{k-1}$ vector field $\nabla^{\l} Y$ (instead of $Y$) gives 
\begin{align*}
\overline{\E}\left(\left.\big\langle \nabla^{\l}_{{\rm V}(\lfloor {\rm x}_{T} \rceil^{\l})}Y, D\pi(\lfloor {\rm u}_T\rceil^{\l})^{(1)}_{\l}\big\rangle_{\l} \right|  \lfloor {\rm x}_T \rceil^{\l}(w)=y\right)=\big\langle \nabla^{\l}_{{\rm V}}Y(y),  {\rm z}_T^{\l, 1}(y)\big\rangle_{\l}. 
\end{align*}
Report this in (\ref{Phi-1-Fs-y-com2}) and then compare it with (\ref{Phi-1-Fs-y-com1}). We obtain  
\begin{align*}
&\big\langle Y(y),  \nabla^{\l}_{{\rm V}}{\rm z}_t^{\l, 1}(y)\big\rangle_{\l}\\
&=\overline{\E}\left(\big\langle Y(\lfloor {\rm x}_{T} \rceil^{\l}), \nabla^{\l}_{T, {\rm V}, \mathtt{s}}D\pi(\lfloor {\rm u}_T\rceil^{\l})^{(1)}_{\l}\big\rangle_{\l}\!+\!\left.\big\langle Y(\lfloor {\rm x}_{T} \rceil^{\l}), D\pi(\lfloor {\rm u}_T\rceil^{\l})^{(1)}_{\l}\big\rangle_{\l}\oa{\mathcal{E}}_{T, {\rm V}, {\mathtt s}} \right|  \lfloor {\rm x}_T \rceil^{\l}(w)=y\right)\\
&=\left\langle Y(y), \ \ov{\E}\left(\left.\nabla^{\l}_{T, {\rm V}, \mathtt{s}}D\pi(\lfloor {\rm u}_T\rceil^{\l})^{(1)}_{\l}(w)+D\pi(\lfloor {\rm u}_T\rceil^{\l})^{(1)}_{\l}(w)\oa{\mathcal{E}}_{T, {\rm V}, {\mathtt s}}(w)\right|  \lfloor {\rm x}_T \rceil^{\l}({w})=y\right)\right\rangle_{\l}. 
\end{align*}
This implies  (\ref{Cov-der-Z-lam}) since $Y$ is arbitrary.       

 The divergence $({\rm Div}^{\l}{\rm z}_T^{\l, 1}(y))$ is just the trace of the mapping ${\rm V}(y)\mapsto \nabla^{\l}_{{\rm V}(y)}{\rm z}_T^{\l, 1}(y)$. Put
\begin{align*}
\oa{\mathcal{E}}_{T, {\rm V}, {\mathtt s}}^1
&=-\frac{1}{2}\int_{0}^{T}\big\langle \mathtt{s}'(T\!-\!\tau)(\lfloor{\rm u}_T\rceil^{\l})^{-1}{\rm V}(\lfloor{\rm x}_T\rceil^{\l}), d\oa{B}_{\tau}\big\rangle,\\
\oa{\mathcal{E}}_{T, {\rm V}, {\mathtt s}}^2&=\frac{1}{2}\int_{0}^{T}\big\langle {\rm Ric}^{\l}_{\lfloor{\rm u}_{\tau}\rceil^{\l}}(\lfloor{\rm u}_{\tau}\rceil^{\l}(\lfloor{\rm u}_T\rceil^{\l})^{-1}\mathtt{s}(T\!-\!\tau){\rm V}(\lfloor{\rm x}_T\rceil^{\l})), d\oa{B}_{\tau}\big\rangle. 
\end{align*} Then 
\begin{align*}
({\rm Div}^{\l}{\rm z}_T^{\l, 1}(y))&=\ov{\E}\left({\rm tr}\left({\rm V}\mapsto \nabla^{\l}_{T, {\rm V}, \mathtt{s}}D\pi(\lfloor {\rm u}_T\rceil^{\l})^{(1)}_{\l}(w)\right)+
\right.\\
&\ \ \ \ \ \ \ \left.\left.\sum_{i=1}^{2}{\rm tr}\left({\rm V}\mapsto D\pi(\lfloor {\rm u}_T\rceil^{\l})^{(1)}_{\l}(w)\oa{\mathcal{E}}_{T, {\rm V}, {\mathtt s}}^i(w)\right)\right|  \lfloor {\rm x}_T \rceil^{\l}({w})=y\right).
\end{align*}
 Take ${\rm V}_1, \cdots, {\rm V}_m$ to be orthogonal at $y$ in the metric $\wt{g}^{\l}$. We obtain 
\begin{align*}{\rm tr}\left({\rm V}\mapsto D\pi(\lfloor {\rm u}_T\rceil^{\l})^{(1)}_{\l}(w)\oa{\mathcal{E}}_{T, {\rm V}, {\mathtt s}}^1(w)\right)&=\sum_{j=1}^{m}\big\langle D\pi(\lfloor {\rm u}_T\rceil^{\l})^{(1)}_{\l}(w), {\rm V}_i\big\rangle_{\l} \cdot \oa{\mathcal{E}}_{T, {\rm V}_i, {\mathtt s}}^1(w) \\
&=\big\langle  D\pi(\lfloor {\rm u}_T\rceil^{\l})^{(1)}_{\l}(w), -\frac{1}{2}{\rm u}_T\int_{0}^{T}\!\mathtt{s}'(T\!-\!\tau)d\oa{B}_{\tau} \big\rangle_{\l}, 
\end{align*}
Note that   $\lfloor{\rm u}_{\tau}\rceil^{\l}(\lfloor{\rm u}_T\rceil^{\l})^{-1}$ is the backward parallel transportation along $\lf {\rm x}\rc^{\l}_{[\tau, T]}(w)$ which preserves the inner-product.  Using (\ref{Ric-def}), we obtain 
\[
\oa{\mathcal{E}}_{T, {\rm V}, {\mathtt s}}^2=\frac{1}{2}\int_{0}^{T}\big\langle {\rm V}(\lfloor{\rm x}_T\rceil^{\l}), \mathtt{s}(T\!-\!\tau) \lfloor{\rm u}_{T}\rceil^{\l}(\lfloor{\rm u}_\tau\rceil^{\l})^{-1}{\rm Ric}_{\lfloor{\rm u}_{\tau}\rceil^{\l}}^{-1}d\oa{B}_{\tau}\big\rangle_{\l} 
\]
and 
\begin{align*}
\begin{array}{ll}
\ \ \ &{\rm tr}\left({\rm V}\mapsto D\pi(\lfloor {\rm u}_T\rceil^{\l})^{(1)}_{\l}(w)\oa{\mathcal{E}}_{T, {\rm V}, {\mathtt s}}^2(w)\right)\\ \\
&\ \ \ \ \ \  =\ \ \sum_{j=1}^{m}\langle D\pi(\lfloor {\rm u}_T\rceil^{\l})^{(1)}_{\l}(w), {\rm V}_i\rangle \cdot \oa{\mathcal{E}}_{T, {\rm V}_i, {\mathtt s}}^2(w) \\ \\
&\ \ \ \ \ \  =\big\langle  D\pi(\lfloor {\rm u}_T\rceil^{\l})^{(1)}_{\l}(w), \frac{1}{2}\int_{0}^{T}\mathtt{s}(T\!-\!\tau) \lfloor{\rm u}_{T}\rceil^{\l}(\lfloor{\rm u}_\tau\rceil^{\l})^{-1}{\rm Ric}_{\lfloor{\rm u}_{\tau}\rceil^{\l}}^{-1}d\oa{B}_{\tau}\big\rangle_{\l}.
\end{array}
\end{align*}
\end{proof}

\begin{proof}[Proof of Theorem  \ref{regu-p-1st} ($k=3$)] Let $x\in \M$ and $T\in \Bbb R_+$.  Let $(\lfloor {\rm x}_t\rceil^{\l}, \lfloor {\rm u}_t\rceil^{\l})_{t\in \Bbb R_+}$ be the stochastic process  pair which defines the Brownian motion on $(\M, \wt{g}^{\l})$ starting from $x$. By   Lemma \ref{u-lambda-differential-1} and Proposition \ref{est-norm-u-t-j}, it is true   that  for  any  $f\in C_c^{\infty}(\M)$, 
\begin{equation*}
\left(\int_{\M} f(y)p^{\l}(T, x, y)\ d{\rm Vol}^{\l}(y)\right)_{\l}^{(1)} =\ \int_{\M} \big\langle \nabla^{\l}_y f(y), {\rm z}_T^{\l, 1}(y)\cdot p^{\l}(T, x, y)\big\rangle_{\l}\ d{\rm Vol}^{\l}(y).  
\end{equation*}
Since $\{{\rm z}_T^{\l, 1}(y)\}$  is a $C^1$ vector field on $\M$ by Lemma \ref{C-1-fun-phi-Y}, the classical integration by parts argument in Section \ref{Obs-Stra}  shows that 
\begin{align*}
&\left(\int_{\M} f(y)p^{\l}(T, x, y)\ d{\rm Vol}^{\l}(y)\right)_{\l}^{(1)}\\
&\  =\ -\int_{\M} f(y)\left(({\rm Div}^{\l}{\rm z}_T^{\l, 1}(y))\cdot p^{\l}(T, x, y)+\big\langle {\rm z}_T^{\l, 1}(y), \nabla^{\l}p^{\l}(T, x, y) \big\rangle_{\l}\right)\ d{\rm Vol}^{\l}(y)\\
&\  =\ \int_{\M}f(y)\phi_{\l}^{1}(T, x, y)p^{\l}(T, x, y)\ d{\rm Vol}^{\l}(y). 
\end{align*}
The function $\phi_{\l}^{1}(T, x, y)$ is continuous in $y$, uniformly in $\l$ (see Lemma \ref{C-1-fun-phi-Y}). Hence its continuity in $\l$  follows from the continuity in $\l$ of $\left(\int_{\M} f(y)p^{\l}(T, x, y)\ d{\rm Vol}^{\l}(y)\right)_{\l}^{(1)}$ for any $f\in C_c^{\infty}(\M)$, which is true  by (\ref{indentity-diff-expect}) and the convergence in $\l$ of  $\lfloor {\rm x}_T\rceil^{\l}(w)$ and  $\lfloor {\rm u}_T\rceil^{\l}(w)$ in the  $L^q$-norm for every $q\geq 1$.  So  the first part argument in the proof of Lemma \ref{weak-reg-p-1} works, which  shows that  $\l\mapsto p^{\l}(T, x, \cdot)$ is $C^{1}$, the differential $(p^{\l})^{(1)}_{\l}(T, x, y)$ is continuous in $y$ and 
\begin{equation*}
(p^{\l})^{(1)}_{\l}(T, x, y)\cdot \rho^{\l}(y)+p^{\l}(T, x, y)\cdot (\rho^{\l})^{(1)}_{\l}(y)=\phi_{\l}^{1}(T, x, y) p^{\l}(T, x, y)\rho^{\l}(y). 
\end{equation*}
This  gives   (\ref{Thm1.3-step 1}) since  $\rho^{\l}$  is  non-zero for $\mathcal{V}_g$ small. 

Next, we show (\ref{grad-lnp-lam-1}) with $l=0$. For this,  it suffices to show the same type of bound holds for  the $L^q$-norm of $\phi_{\l}^{1}(T, x, y)$.  Note that,  by  Lemma  \ref{C-1-fun-phi-Y},  ${\rm z}_T^{\l, 1}(y)$ is such that 
\begin{align*}
\big\langle {\rm z}_T^{\l, 1}(y), \nabla^{\l}\!\ln p^{\l}(T, x, y) \big\rangle_{\l}=\overline{\E}\left(\!\left.\big\langle D\pi({\rm u}_T\rceil^{\l})^{(1)}_{\l}\!(w), \nabla^{\l}\ln p^{\l}(T, x, \lfloor {\rm x}_T \rceil^{\l}(w))\big\rangle_{\l}\right| \lfloor {\rm x}_T \rceil^{\l}=y\right).
\end{align*}
Using this and  the  formula of ${\rm Div}^{\l}{\rm z}_T^{\l, 1}$ in Lemma \ref{reg-Z-lam-1},  we obtain 
\begin{equation}\label{wt-phi-00}\phi_{\l}^{1}(T, x, y)=\ov{\E}\left(\left.\wt{\phi}_{\l}^{1}(T, x, w)\right|\lfloor {\rm x}_T \rceil^{\l}(w)=y \right),\end{equation}
where 
\begin{align}
{\wt{\phi}}_{\l}^{1}(T, x, w)&=-{\rm tr}\big({\rm V}\mapsto \nabla^{\l}_{T, {\rm V}, \mathtt{s}}D\pi(\lfloor {\rm u}_T\rceil^{\l})^{(1)}_{\l}\big)+\langle D\pi(\lfloor {\rm u}_T\rceil^{\l})^{(1)}_{\l},  \frac{1}{2}\lfloor{\rm u}_T\rceil^{\l}\!\int_{0}^{T}\!\mathtt{s}'(T\!-\!\tau)d\oa{B}_{\tau}\big\rangle_{\l}\notag\\
&\ \ \ \ \ \ \ \ \ \ \ \ \ \ \ \ \ \ \ \ \ \  -\big\langle D\pi(\lfloor {\rm u}_T\rceil^{\l})^{(1)}_{\l}, \frac{1}{2}\!\int_{0}^{T}\!\mathtt{s}(T\!-\!\tau) \lfloor{\rm u}_{T}\rceil^{\l}(\lfloor{\rm u}_\tau\rceil^{\l})^{-1}{\rm Ric}_{\lfloor{\rm u}_{\tau}\rceil^{\l}}^{-1}d\oa{B}_{\tau}\big\rangle_{\l}\label{wt-phi-1}\\
&\ \ \ \ \ \ \ \ \ \ \ \ \ \ \ \ \ \ \ \ \ \  -\big\langle D\pi(\lfloor {\rm u}_T\rceil^{\l})^{(1)}_{\l}, \nabla^{\l}\ln p^{\l}(T, x, \lfloor {\rm x}_T \rceil^{\l})\big\rangle_{\l}\ \ \ \ \ \ \ \ \ \ \ \ \ \ \ \ \ \ \ \ \ \ \ \ \ \ \notag\\
&=: ({\rm I})(T, x, w)+({\rm II})(T, x, w)+({\rm III})(T, x, w)+({\rm IV})(T, x, w). \notag
\end{align}
So, 
\begin{align*}
\left\|\phi_{\l}^{1}(T, x, \cdot)\right\|_{L^q}^q = &\ \int_{\M}\left|\ov{\E}\big(\wt{\phi}_{\l}^{1}(T, x, w)\big|\lfloor {\rm x}_T \rceil^{\l}(w)=y \big)\right|^q p^{\l}(T, x, y)\ d{{\rm Vol}}^{\l}(y)\\
\leq & \ \int_{\M}\ov{\E}\big(\|\wt{\phi}_{\l}^{1}(T, x, w)\|^q\big|\lfloor {\rm x}_T \rceil^{\l}(w)=y \big) p^{\l}(T, x, y)\ d{{\rm Vol}}^{\l}(y)\\
\leq &\ 4^{q-1}\left(\ov{\E} \|({\rm I})\|^q+ \ov{\E} \|({\rm II})\|^q+\ov{\E} \|({\rm III})\|^q+\ov{\E} \|({\rm IV})\|^q\right). 
\end{align*}
Hence we will obtain (\ref{grad-lnp-lam-1}) with $l=0$ if  $({\rm I})$, $({\rm II})$, $({\rm III})$ and $({\rm IV})$ all have the same type of $L^q$ bounds. This actually follows from  Proposition \ref{est-norm-D-j-F-t} and Proposition \ref{est-norm-u-t-j}.  For $({\rm IV})$,  it is true by (\ref{DF-j-lambda}) and (\ref{equ-Hs2-ST}) since 
\begin{align*}
\left(\overline{\E}\left\|({\rm IV})\right\|^q\right)^2\leq \ \overline{\E} \left\|(\lfloor {\rm u}_T\rceil^{\l})^{(1)}_{\l}\right\|^{2q}\cdot \overline{\E}\left\|\nabla^{\l}\ln p^{\l}(T, x, \lfloor {\rm x}_T \rceil^{\l})\right\|^{2q}.
\end{align*}
Using  H\"{o}lder's inequality, we obtain  \[\left(\overline{\E}\left\|({\rm III})\right\|^q\right)^2\leq\ \overline{\E} \left\|(\lfloor {\rm u}_T\rceil^{\l})^{(1)}_{\l}\right\|^{2q}\cdot \overline{\E} \left\|\frac{1}{2}\!\int_{0}^{T}\!\mathtt{s}(T\!-\!\tau) \lfloor{\rm u}_{T}\rceil^{\l}(\lfloor{\rm u}_\tau\rceil^{\l})^{-1}{\rm Ric}_{\lfloor{\rm u}_{\tau}\rceil^{\l}}^{-1}d\oa{B}_{\tau}\right\|^{2q}.\]
Using  Proposition \ref{est-norm-u-t-j} and Lemma \ref{Ku-lem}, it is easy to show that $\overline{\E}\left\|({\rm III})\right\|^q$ has 
the same bound type in (\ref{grad-lnp-lam-1}) with $l=0$.  The term $({\rm II})$ can be handled in the same way.  For  $({\rm I})$, it suffices to  estimate the $L^q$-norm of $\big((\lfloor {\rm u}_T^s\rceil^{\l})^{(1)}_{\l}(w)\big)'_0$ for $\rm{V}$ with norm 1. Split the It\^{o} integral of $\big((\theta, \varpi)(\lfloor {\rm u}_T^s\rceil^{\l})^{(1)}_{\l}(w)\big)'_0$ in Corollary \ref{cor-u-s-t-l-1} with infinitesimal increments $dB_{\tau}$ and $d\tau$, respectively,  as
 \[
\overline{({\rm I})}:=\left((\theta, \varpi)(\lfloor {\rm u}_T^s\rceil^{\l})^{(1)}_{\l}(w)\right)'_0=\int_{0}^{T}\big[D\wt{\lf \overrightarrow{F}_{\tau, T}\rc^{\l}}(\lf {\rm u}_{\tau}\rc^{\l}, {w})\big]\left(({\rm I})_1(\tau, w)dB_{\tau}+({\rm I})_2(\tau, w)d\tau\right). 
 \]
Then it is standard to use Burkholder's inequality and H\"{o}lder's inequality to deduce that 
\begin{align*}
2^{1-p}\E\left\|\overline{({\rm I})}\right\|^p\leq &\ \left( \overline{\E}\left|\int_{0}^{T}\left\|\big[D\wt{\lf \overrightarrow{F}_{\tau, T}\rc^{\l}}(\lf {\rm u}_{\tau}\rc^{\l}, {w})\big]({\rm I})_1(\tau, w)\right\|^2d\tau\right|^q\right)^{\frac{1}{2}}\\
&\ +\E\left\|\int_{0}^{T}\big[D\wt{\lf \overrightarrow{F}_{\tau, T}\rc^{\l}}(\lf {\rm u}_{\tau}\rc^{\l}, {w})\big]({\rm I})_2(\tau, w)\ d\tau\right\|^q.\end{align*}
Using Corollary \ref{cor-u-s-t-l-1} and (\ref{DF-j-lambda}), we can continue to estimate $\overline{\E} \left\|({\rm I})_1\right\|^{4q}$, $\overline{\E}\left\|({\rm I})_2\right\|^{2q}$ as in Proposition \ref{Diff-int-U-s} and show  they have same bound type in (\ref{grad-lnp-lam-1}) with $l=0$. 

To  complete  the proof of i), we apply Lemma \ref{weak-reg-p-1}.   It remains to show $(p^{\l})^{(1)}_{\l}(T, x, y)$ is continuous in the $(T, y)$-coordinate,  locally uniformly in $\l$,  which is true if we have\\
\indent 1) the continuity of $y\mapsto (p^{\l})^{(1)}_{\l}(T, x, y)$, locally  uniformly  in $T$ and $\l$, and  \\
\indent 2) the continuity of $T\mapsto (p^{\l})^{(1)}_{\l}(T, x, y)$ for every  $x, y$ fixed, locally uniformly in $\l$. 
\par 
For 1), it holds if the continuity of $y\mapsto (\ln p^{\l})^{(1)}_{\l}(T, x, y)$ is locally uniform in $T$ and $\l$, where the latter  is true if $y\mapsto (\overline{\Phi}_{\l}^1(Y)(F^sy))'_0$ is continuous, locally uniformly in $T$ and $\l$. Since all the bounds in Lemmas \ref{O-mathttg-diff-esti}-\ref {cond-DF-diff-norm}  are locally uniform  in $(y, T)$ and $\l$,  the limits for continuity of $y\mapsto (\overline{\Phi}_{\l}^1(Y)(F^sy))'_0$  in  proof  of Lemma  \ref{C-1-fun-phi-Y}   are all locally  uniform in $T$ and $\l$. 

We proceed to show 2). Simply denote by $({\rm x}^{\l}, {\rm u}^{\l})$ the stochastic pair which defines the Brownian motion starting from $x$. Then for any smooth function $f$ on $\M$ with support contained in a small neighborhood of $y$,
\[
f({\rm x}^{\l}_T)=f(x)+\int_{0}^{T}\Delta^{\l} f({\rm x}^{\l}_t)\ dt+ \int_{0}^{T}H^{\l}({\rm u}_t^{\l}, e_i)\big(\wt{f}({\rm u}_t^{\l})\big)\ dB^i_t. 
\]
Taking expectations on both sides shows 
\[
\E \big(f({\rm x}^{\l}_T)\big)=\int_{0}^{T}\E\big(\Delta^{\l} f({\rm x}^{\l}_t)\big)\ dt. 
\]
Hence for $T'>T$, 
\[
\E \big(f({\rm x}^{\l}_{T'})\big)-\E \big(f({\rm x}^{\l}_T)\big)=\int_{T}^{T'}\E\big(\Delta^{\l} f({\rm x}^{\l}_t)\big)\ dt. 
\]
Differentiating  both sides in $\l$ gives
\begin{align*}
&\int_{\M} f(z)\left((p^{\l})^{(1)}_{\l}(T', x, z)-(p^{\l})^{(1)}_{\l}(T, x, z)\right)\ d{\rm Vol}^{\l}(z)\\
&=- \int_{\M} f(z)\left(p^{\l}(T', x, z)-p^{\l}(T, x, z)\right)(\ln \rho^{\l})^{(1)}_{\l}(z)\ d{\rm Vol}^{\l}(z)+\int_{T}^{T'}\E\left(\big(\Delta^{\l} f\big)^{(1)}_{\l}\!({\rm x}^{\l}_t)\right)\ dt\\
&\ \ \ \ +\int_{T}^{T'} \int_{\M}\big(\Delta^{\l}f\big)(z)\phi^1_{\l}(t, x, z)p^{\l}(t, x, z)\ d{\rm Vol}^{\l}(z),
\end{align*}
where, as $T'\to T$,  the first term tends to zero since $p^{\l}(T, x, z)$ is continuous at $T>0$, locally uniformly in $z$, the second term tends to zero since $\E\big(\big(\Delta^{\l} f\big)^{(1)}_{\l}\!({\rm x}^{\l}_t)\big)$ is uniformly bounded 
for $t$ in a small neighborhood of $T$ and the last term goes to zero as well by using   that  the bound in  (\ref{grad-lnp-lam-1}) with $l=0$ is locally uniform in $t$. In summary, we have 
\[
\lim_{T'\to T}\int_{\M} f(z)\left((p^{\l})^{(1)}_{\l}(T', x, z)-(p^{\l})^{(1)}_{\l}(T, x, z)\right)\ d{\rm Vol}^{\l}(z)=0. 
\]
Since $z\mapsto (p^{\l})^{(1)}_{\l}(T, x, z)$ is continuous, locally  uniformly  in $T$ and $\l$,  and $f$ is arbitrary, we must have   $\lim_{T'\to T}(p^{\l})^{(1)}_{\l}(T', x, y)=(p^{\l})^{(1)}_{\l}(T, x, y)$, locally uniformly in $\l$. This shows 2).

Finally, we show  iii). By symmetry, the mapping $x\mapsto (p^{\l})^{(1)}_{\l}(T, x, y)$ is continuous for all $T,y$, locally uniformly in $y$. Therefore iii) holds for any bounded function with compact support. Fix $q \geq 1$.   Any uniformly continuous and bounded  $\wt{f}\in C(\M)$ can be approximated by  a sequence  $\{\wt{f}_n\}_{n\in \Bbb N}$  of continuous functions on $\M$ with compact support in such a way that
\begin{equation}\label{uniform-f-f-n}\lim_{n\to \infty}\left\|\wt{f}(y)-\wt{f}_n(y)\right\|_q=0,
\end{equation}
locally uniformly in $x$. 
Property iii) follows by using (\ref{uniform-f-f-n}) and (\ref{grad-lnp-lam-1}) with $l=0$.
 \end{proof}

\begin{proof}[Proof of Theorem  \ref{regu-p-1st} ($k>3$)]  
By Theorem \ref{regu-p-1st}  i) of the  $k=3$ case  and Lemma \ref{weak-reg-p-1}, we deduce Theorem  \ref{regu-p-1st} i).  Hence  $\nabla^{(l)}(\ln p^{\l})^{(1)}_{\l}(T, x, \cdot)$, $l\leq k-3$, are well-defined.  By taking the gradients of  the  identity (\ref{Thm1.3-step 1}),  we obtain that  $\nabla^{(l)}\phi_{\l}^1(T, x, \cdot)$, $l\leq k-3$, exist as well.   For (\ref{grad-lnp-lam-1}),  it suffices to show the same type of $L^q$-norm  bounds hold for 
$\nabla^{(l)}\phi_{\l}^1(T, x, \cdot)$,  $l\leq k-3$. 

 The $l=0$ case was treated in the previous  proof  of Theorem  \ref{regu-p-1st} with $k=3$. We proceed to consider the $l=1$ case.  Let ${\rm W}$ be  a smooth bounded vector field  on $\M$ and let $\{\leftidx^{r}F\}_{r\in \Bbb R}$ be the flow it generates. Then 
\[
\nabla^{\l}_{W(y)} \phi_{\l}^1(T, x, \cdot)=\left.\frac{d}{dr}\right|_{r=0}\left(\phi_{\l}^1(T, x, \leftidx^{r}F (y))\right).
\] 
We  will look for a conditional expectation expression of $\nabla^{\l}_{W(y)} \phi_{\l}^1(T, x, \cdot)$ and use it to estimate  $|\nabla\phi_{\l}^1(T, x, \cdot)|$.  For this, we  adopt the idea  we used in analyzing  the regularity  of $\overline{\Phi}_{\l}^1(Y)$  (see Section \ref{Obs-Stra}).  Let $f$  be an arbitrary bounded measurable function  on $\M$. By the definition of the conditional expectation and the change of variable formula under  $\leftidx^{r}F$,  
\begin{align*}
\overline{\E}\left({\wt{\phi}}_{\l}^{1}(T, x, w)f( \lfloor {\rm x}_T \rceil^{\l}(w))\right)=&\ \overline{\E}\left(\overline{\E}\big(\left.{\wt{\phi}}_{\l}^{1}(T, x, w)\right| \lfloor {\rm x}_T \rceil^{\l}(w)=y\big)f(y)\right)\\
=&\int \phi_{\l}^1(T, x, y) f(y) p^{\l}(T, x, y)d{\rm Vol}^{\l}(y)\\
=&\int \phi_{\l}^1(T, x, \leftidx^{r}F (y)) f(\leftidx^{r}F (y)) p^{\l}(T, x, \leftidx^{r}F (y))d{\rm Vol}^{\l}(\leftidx^{r}F (y)).
\end{align*}
Let $\lf\leftidx^{r}{\bf F}\rc^{\l}$ be the extension of $\leftidx^{r} F$ to $C_x([0, T], \M)$ constructed in the  previous subsections. By Proposition \ref{Quasi-IP-2}, all probabilities  $\overline{\P}_x^{\l}\circ \lf\leftidx^{r}{\bf F}\rc^{\l}$  are  absolutely continuous with respect to $\overline{\P}_x^{\l}$. Hence, using the change of variable formula under  $\lf\leftidx^{r}{\bf F}\rc^{\l}$, we obtain 
\begin{align*}
\overline{\E}\left({\wt{\phi}}_{\l}^{1}\cdot f( \lfloor {\rm x}_T \rceil^{\l})\!\right)=&\; \overline{\E}\left({\wt{\phi}}_{\l}^{1} \circ {\lf\leftidx^{r}{\bf{F}}\rc^{\l}} \cdot f \circ {\lf\leftidx^{r}{\bf{F}}\rc^{\l}} \cdot \frac{d\overline{\P}_x^{\l}\circ  {\lf\leftidx^{r}{\bf{F}}\rc^{\l}} }{d\overline{\P}_x^{\l}}\right)\\
=&\;\int \overline{\E}\!\left(\!\left.{\wt{\phi}}_{\l}^{1} \circ  {\lf\leftidx^{r}{\bf{F}}\rc^{\l}} \cdot \frac{d\overline{\P}_x^{\l}\circ  {\lf\leftidx^{r}{\bf{F}}\rc^{\l}} }{d\overline{\P}_x^{\l}}\right|\lfloor {\rm x}_T \rceil^{\l}=y\!\right)\! f(\leftidx^{r} F (y)) p^{\l}(T, x, y)d{\rm Vol}^{\l}(y).
\end{align*}
Since $f$ is arbitrary, a comparison of the  two expressions of $\overline{\E}\big({\wt{\phi}}_{\l}^{1}\cdot f( \lfloor {\rm x}_T \rceil^{\l})\big)$  shows 
\begin{align*}
\phi_{\l}^1(T, x, \leftidx^{r}F (y))=\overline{\E}\!\left(\!\left.{\wt{\phi}}_{\l}^{1} \circ  {\lf\leftidx^{r}{\bf{F}}\rc^{\l}} \cdot \frac{d\overline{\P}_x^{\l}\circ  {\lf\leftidx^{r}{\bf{F}}\rc^{\l}} }{d\overline{\P}_x^{\l}}\right|\lfloor {\rm x}_T \rceil^{\l}=y\!\right)\cdot\frac{p^{\l}(T, x, y)}{p^{\l}(T, x, \leftidx^{r}F (y))}\cdot\frac{d{\rm Vol}^{\l}}{d{\rm Vol}^{\l}\circ \leftidx^{r}F}(y). 
\end{align*}
So differentiating both sides in $r$ at $r=0$ gives 
\begin{align*}
\nabla^{\l}_{W(y)} \phi_{\l}^1(T, x, \cdot)=&\ \phi_{\l}^1(T, x, y)\left(\big(\ln p^{\l}(T, x,  \leftidx^{r}F (y))\big)'_{0}+\big(\ln \rho^{\l}(\leftidx^{r}F (y))\big)'_{0}\right)\\
&+\left(\overline{\E}\!\left(\!\left.{\wt{\phi}}_{\l}^{1} \circ  {\lf\leftidx^{r}{\bf{F}}\rc^{\l}} \cdot \frac{d\overline{\P}_x^{\l}\circ  {\lf\leftidx^{r}{\bf{F}}\rc^{\l}} }{d\overline{\P}_x^{\l}}\right|\lfloor {\rm x}_T \rceil^{\l}=y\!\right)\right)'_0. 
\end{align*}
It was shown in  Proposition \ref{Quasi-IP-2}    that $d\overline{\P}_x^{\l}\circ  {\lf\leftidx^{r}{\bf{F}}\rc^{\l}}/d\overline{\P}_x^{\l}$ is differentiable in $r$ with 
\[
\big(d\overline{\P}_x^{\l}\circ  {\lf\leftidx^{r}{\bf{F}}\rc^{\l}}/d\overline{\P}_x^{\l}\big)'_r=\leftidx^{r}\overline{\mathcal{E}_t}\cdot \big(d\overline{\P}_x^{\l}\circ  {\lf\leftidx^{r}{\bf{F}}\rc^{\l}}/d\overline{\P}_x^{\l}\big) 
\]
and both $\leftidx^{r}\overline{\mathcal{E}_t}$ and  $\big(d\overline{\P}_x^{\l}\circ  {\lf\leftidx^{r}{\bf{F}}\rc^{\l}}/d\overline{\P}_x^{\l}\big)$  conditioned on ${\rm x}_T=y$  are  $L^q$ $(q\geq 1)$ integrable, locally uniformly  in the $r$ parameter.  Using H\"{o}lder's inequality,  if we can further show 
\begin{itemize}
\item[$\bigstar$)] ${\wt{\phi}}_{\l}^{1} \circ  {\lf\leftidx^{r}{\bf{F}}\rc^{\l}}$  is also differentiable in $r$ with both ${\wt{\phi}}_{\l}^{1} \circ  {\lf\leftidx^{r}{\bf{F}}\rc^{\l}}$ and $({\wt{\phi}}_{\l}^{1}\circ  {\lf\leftidx^{r}{\bf{F}}\rc^{\l}})'_r$ conditioned on ${\rm x}_T=y$ are $L^2$  integrable, locally uniformly in the $r$ parameter,  
\end{itemize}
we are allowed to take the differentiation under the expectation sign: 
\begin{align*}
\left(\!\overline{\E}\!\left(\!\left. {\wt{\phi}}_{\l}^{1} \circ  {\lf\leftidx^{r}{\bf{F}}\rc^{\l}} \cdot \frac{d\overline{\P}_x^{\l}\circ  {\lf\leftidx^{r}{\bf{F}}\rc^{\l}} }{d\overline{\P}_x^{\l}}\right|\lfloor {\rm x}_T \rceil^{\l}=y\!\right)\!\right)'_0=&\ \overline{\E}\!\left(\!\left.\left({\wt{\phi}}_{\l}^{1} \circ  {\lf\leftidx^{r}{\bf{F}}\rc^{\l}} \cdot \frac{d\overline{\P}_x^{\l}\circ  {\lf\leftidx^{r}{\bf{F}}\rc^{\l}} }{d\overline{\P}_x^{\l}}\right)'_0\right|\lfloor {\rm x}_T \rceil^{\l}=y\!\right)\\
=&\ \overline{\E}\left(\left. \big({\wt{\phi}}_{\l}^{1} \circ  {\lf\leftidx^{r}{\bf{F}}\rc^{\l}}\big)'_0+ {\wt{\phi}}_{\l}^{1} \cdot \leftidx^{0}\overline{\mathcal{E}_T} \right|\lfloor {\rm x}_T \rceil^{\l}=y \right).
\end{align*}
Altogether, we will have 
\begin{align}
\nabla^{\l}_{W(y)} \phi_{\l}^1(T, x, \cdot)=&\ \phi_{\l}^1(T, x, y)\left(\nabla_{W(y)}^{\l} (\ln p^{\l}(T, x, \cdot))+\nabla_{W(y)}^{\l} (\ln \rho^{\l})\right)\notag\\
&\ + \overline{\E}\left(\left. \big({\wt{\phi}}_{\l}^{1} \circ  {\lf\leftidx^{r}{\bf{F}}\rc^{\l}}\big)'_0+ {\wt{\phi}}_{\l}^{1} \cdot \leftidx^{0}\overline{\mathcal{E}_T} \right|\lfloor {\rm x}_T \rceil^{\l}=y \right) \label{nab-ln-p-1-diff}
\end{align}
and we can use it  to show that  a $L^q$-norm bound  as in (\ref{grad-lnp-lam-1}) is valid for  $\nabla^{\l}\phi_{\l}^1(T, x, \cdot)$.

We show $\bigstar)$ first. 
Consider the  processes  \[
 {\lf \leftidx^{r}{\mho_{\tau}}\rc^{\l}}:=  {\lf {\mho_{\tau}}\rc^{\l}}\circ {\lf \leftidx^{r}{\bf F}\rc^{\l}},\   \big[D{\lf \leftidx^{r} F_{\tau, t}\rc^{\l}} ( {\lf \leftidx^{r}{\mho_{\tau}}\rc^{\l}}, {\rm w})\big]^{-1}:=\big[D\lf F_{\tau, t}\rc^{\l}(\lf {\mho}_\tau\rc^{\l}, {\rm w})\big]^{-1}\circ {\lf\leftidx^r {\bf{F}}\rc^{\l}}. 
\]
They are  well-defined by Theorem \ref{Main-alpha-x-v-Q} and the corresponding estimations in  Lemmas \ref{O-mathttg-diff-esti}-\ref{cond-DF-diff-norm} (for  ${\lf\leftidx^{r}{\bf{F}}\rc^{\l}}$) are valid. Note that $D\pi(\lfloor {\rm u}_T\rceil^{\l})^{(1)}_{\l}$,  $\nabla^{\l}_{T, {\rm V}, \mathtt{s}}D\pi(\lfloor {\rm u}_T\rceil^{\l})^{(1)}_{\l}$ can be expressed by stochastic integrals using $ {\lf {\mho_{\tau}}\rc^{\l}}$ and ${\lf {\bf{F}}\rc^{\l}}$ (see Proposition \ref{Diff-int-U-s} and Proposition \ref{u-s-t-l-1}). Their images under  ${\lf \leftidx^{r}{\bf F}\rc^{\l}}$  can be defined by applying ${\lf \leftidx^{r}{\bf F}\rc^{\l}}$ to each components in the integrals.  So ${\wt{\phi}}_{\l}^{1} \circ  {\lf\leftidx^{r}{\bf{F}}\rc^{\l}}$ is well-defined. By using Lemma \ref{Hs2-ST}, Proposition \ref{est-norm-u-t-j} ii) and  (\ref{wt-phi-1}),  it is  easy to  obtain 
\begin{align*}
\overline{\E}_{\overline{\P}^{\l, *}_{x, y, T}}\big|{\wt{\phi}}_{\l}^{1} \circ  {\lf\leftidx^{r}{\bf{F}}\rc^{\l}}\big|^2\leq \underline{\mathtt c}\left(\big(\frac{1}{T}d_{\wt{g}^{\l}}(x, y)+\frac{1}{\sqrt{T}}\big)^2+1\right) e^{\mathtt c(1+d_{\wt{g}^{\l}}(x, y))} 
\end{align*}
for some constants $\underline{\mathtt c}$ (depending on $\mathtt{s}, r_0$, $\|g^{\l}\|_{C^3}$ and  $\|\XX^{\l}\|_{C^2}$) and  $\mathtt c$  (depending on $T, T_0$ and  $\|g^{\l}\|_{C^3}$).  By Propositions \ref{cond-nabla-ln-p}, \ref{Quasi-IP-1} and \ref{Quasi-IP-2},   we may also  assume $\underline{\mathtt c}, \mathtt c$ are such that 
 \[
\big(\overline{\E}_{\overline{\P}^{\l, *}_{x, y, T}}\big|\leftidx^{0}\overline{\mathcal{E}_T}\big|^2\big)^{\frac{1}{2}}\leq \|W(y)\| \underline{\mathtt c} e^{\mathtt c(1+d_{\wt{g}^{\l}}(x, y))}. 
 \]
To justify (\ref{nab-ln-p-1-diff}), it remains to check  the differentiability  of $
r\mapsto {\rm{A}}\circ {\lf\leftidx^r {\bf{F}}\rc^{\l}}$, for ${\rm{A}}=({\rm I}), ({\rm II}), ({\rm III}), ({\rm IV})$ in (\ref{wt-phi-1}) and show the differentials $\big({\rm{A}}\circ {\lf \leftidx^{r} {\bf{F}}\rc^{\l}} \big)'_r$ are $L^2$ integrable, uniformly in the $r$ parameter.  We begin with ${\rm{A}}=({\rm IV})$.  By Proposition \ref{Diff-int-U-s-gen},  $(D\pi(\lfloor {\rm u}_T\rceil^{\l})^{(1)}_{\l})\circ \lf \leftidx^{r}{\bf F}\rc^{\l}$ is differentiable in $r$.  Let $r\in [-r_0, r_0]$. As usual,  we write 
\[
{\lf \leftidx^{r}{\rm{x}}\rc^{\l}}:=  {\lf {\rm{x}}\rc^{\l}}\circ {\lf \leftidx^{r}{\bf F}\rc^{\l}},\ \  {\lf \leftidx^{r}{\rm{u}}\rc^{\l}}:=  {\lf {\rm{u}}\rc^{\l}}\circ {\lf \leftidx^{r}{\bf F}\rc^{\l}}, \ \nabla_{T, {\rm W}, \mathtt{s}}^{r, \l}D\pi(\lfloor {\rm u}_T\rceil^{\l})^{(1)}_{\l}:=\left((D\pi(\lfloor {\leftidx^{r}{\rm u}}_T\rceil^{\l})^{(1)}_\l)\right)'_{r}. 
\]
Then $({\rm IV})\circ \lf \leftidx^{r}{\bf F}\rc^{\l}$ is differentiable in $r$ with differential 
\begin{align*}
\big(({\rm{IV}})\circ {\lf \leftidx^{r} {\bf{F}}\rc^{\l}} \big)'_r=&-\big\langle \nabla_{T, {\rm W}, \mathtt{s}}^{r, \l}D\pi({\lfloor {\rm u}_T\rceil^{\l})^{(1)}_{\l}},\  \nabla^{\l}\ln p^{\l}(T, x, {\lfloor \leftidx^r {{\rm{x}}}_T \rceil^{\l}})\big\rangle_{\l}\\
&-\big\langle D\pi({\lfloor {\leftidx^r {\rm{u}}}_T\rceil^{\l})^{(1)}_{\l}}, \  \nabla^{\l}_{{{\rm{W}}({\lfloor {\leftidx^r {\rm{x}}}_T \rceil^{\l}} )}}\nabla^{\l}\ln p^{\l}(T, x, {\lfloor {\leftidx^r {\rm{x}}}_T \rceil^{\l}} )\big\rangle_{\l}.
\end{align*}
By Proposition \ref{est-norm-u-t-j}, we can obtain some  $\underline{\mathtt c}'$ (depending on $\mathtt{s}, r_0$, $\|g^{\l}\|_{C^3}$ and  $\|\XX^{\l}\|_{C^1}$) and  $\mathtt c'$  (depending on $T, T_0$ and  $\|g^{\l}\|_{C^3}$) such that \[
\left(\overline{\E}_{\overline{\P}^{\l, *}_{x, y, T}}\big\|D\pi({\lfloor  {\leftidx^r {\rm u}}_T\rceil^{\l})^{(1)}_{\l}} \big\|^2\right)^{\frac{1}{2}}\leq \|W(y)\| \underline{\mathtt c}'e^{\mathtt c'(1+d_{\wt{g}^{\l}}(x, y))}. 
\]
By Proposition \ref{Diff-int-U-s}, we can obtain some $\underline{\mathtt c}''$ (depending on $\mathtt{s}, r_0$, $\|g^{\l}\|_{C^3}$ and $\|\XX^{\l}\|_{C^2}$) and  $\mathtt c''$  (depending on $T, T_0$ and  $\|g^{\l}\|_{C^3}$) such that 
\[
\left(\overline{\E}_{\overline{\P}^{\l, *}_{x, y, T}}\big\|\nabla_{T, {\rm W}, \mathtt{s}}^{r, \l}D\pi(\lfloor {\rm u}_T\rceil^{\l})^{(1)}_{\l}\big\|^2\right)^{\frac{1}{2}}\leq \|W(y)\|\underline{\mathtt c}'' e^{\mathtt c''(1+d_{\wt{g}^{\l}}(x, y))}.\]
Using  (\ref{equ-Hs2-ST}), we  further  obtain   
\begin{align*}
&\!\left(\overline{\E}_{\overline{\P}^{\l, *}_{x, y, T}}\big|\big(({\rm{IV}})\circ {\lf \leftidx^{r} {\bf{F}}\rc^{\l}} \big)'_r\big|^2\right)^{\frac{1}{2}}\!\!\\
&\leq \left(\overline{\E}_{\overline{\P}^{\l, *}_{x, y, T}}\big\|\nabla_{T, {\rm W}, \mathtt{s}}^{r, \l}D\pi(\lfloor {\rm u}_T\rceil^{\l})^{(1)}_{\l}\big\|^2\right)^{\frac{1}{2}} \big\| \nabla^{\l}\ln p^{\l}(T, x,  \lf {\leftidx^r F}\rc^{\l}(y))\big\|\\
&\ \ \ \  +\left(\overline{\E}_{\overline{\P}^{\l, *}_{x, y, T}}\big\|D\pi({\lfloor  \leftidx^r {\rm u}_T\rceil^{\l})^{(1)}_{\l}} \big\|^2\right)^{\frac{1}{2}} \big\| \nabla^{\l}_{W( \lf {\leftidx^r F}\rc^{\l}(y) )}\!\nabla^{\l}\ln p^{\l}(T, x,  \lf {\leftidx^r F}\rc^{\l}(y))\big\|\\
 &\leq \ \|W(y)\| \underline{\mathtt c}'''e^{\mathtt c'''(1+d_{\wt{g}^{\l}}(x, y))}\sum_{i=1}^2\big(\frac{1}{T}d_{\wt{g}^{\l}}(x, \lf {\leftidx^r F}\rc^{\l}(y))+\frac{1}{\sqrt{T}}\big)^i,
\end{align*}
where $\underline{\mathtt c}'''$ (depending on $\mathtt{s}, r_0$, $\|g^{\l}\|_{C^3}$ and  $\|\XX^{\l}\|_{C^2}$) and  $\mathtt c'''$  (depending on $T, T_0$ and  $\|g^{\l}\|_{C^3}$) 
and this  bound is finite and is uniform in $r$.  For ${\rm A}=({\rm II}), ({\rm III})$, the same argument shows  the $C^1$ regularity of  $r\mapsto {\rm{A}}\circ {\lf\leftidx^r {\bf{F}}\rc^{\l}}$ and 
\[
\left(\overline{\E}_{\overline{\P}^{\l, *}_{x, y, T}}\big|\big({\rm{A}}\circ {\lf \leftidx^{r} {\bf{F}}\rc^{\l}} \big)'_r\big|^2\right)^{\frac{1}{2}}\leq \|W(y)\| \underline{c}_{{\rm A}}e^{c_{{\rm A}}(1+d_{\wt{g}^{\l}}(x, y))}
\]
for some $\underline{c}_{{\rm A}}$ (depending on $\mathtt{s}, r_0$, $\|g^{\l}\|_{C^3}$ and  $\|\XX^{\l}\|_{C^1}$) and  $c_{{\rm A}}$  (depending on $T, T_0$ and  $\|g^{\l}\|_{C^3}$). 
It remains to analyze $({\rm I})\circ {\lf\leftidx^r {\bf{F}}\rc^{\l}}$.  Recall that for any smooth bounded vector field ${\rm V}$ on $\M$, 
\[
\nabla^{\l}_{T, {\rm V}, \mathtt{s}}D\pi(\lfloor {\rm u}_T\rceil^{\l})^{(1)}_{\l}=D\pi \big(\overline{(\lfloor {\rm u}_T^s\rceil^{\l})^{(1)}_{\l}}({\rm w})\big)'_0,
\]
where $\big(\overline{(\lfloor {\rm u}_T^s\rceil^{\l})^{(1)}_{\l}}({\rm w})\big)'_0$ was formulated in (\ref{the U-T-s-1}). Hence the regularity of $r\mapsto {(\rm{I})}\circ {\lf\leftidx^r {\bf{F}}\rc^{\l}}$ can be reduced to the regularity of each component of (\ref{the U-T-s-1}) under  ${\lf\leftidx^r {\bf{F}}\rc^{\l}}$. 
Applying Theorem \ref{Main-alpha-x-v-Q} to $ {\lf\leftidx^r {\bf{F}}\rc^{\l}}$ shows  $r\mapsto  {\lf \leftidx^{r}{\mho_{\tau}}\rc^{\l}}, \big[D{\lf \leftidx^{r} F_{\tau, t}\rc^{\l}} ( {\lf \leftidx^{r}{\mho_{\tau}}\rc^{\l}}, {\rm w})\big]^{-1}$ are $C^1$.  Lemmas \ref{O-mathttg-diff-esti}-\ref{cond-DF-diff-norm} also hold true for ${\lf\leftidx^r {\bf{F}}\rc^{\l}}$.  Using these properties and  the fact that $\l\mapsto g^{\lambda}$ is  $C^k$ in  $\mathcal{M}^k(M)$ with $k\geq 4$, we can deduce the regularity of the components of (\ref{the U-T-s-1}) under  ${\lf\leftidx^r {\bf{F}}\rc^{\l}}$. Moreover, by a routine computation using Lemmas \ref{O-mathttg-diff-esti}-\ref{cond-DF-diff-norm}, we can obtain some $\underline{c}_{{\rm I}}$ (depending on $\mathtt{s}, r_0$, $\|g^{\l}\|_{C^4}$ and  $\|\XX^{\l}\|_{C^3}$) and $c_{{\rm I}}$ (depending on $T, T_0$ and  $\|g^{\l}\|_{C^3}$) such that 
\[
\left(\overline{\E}_{\overline{\P}^{\l, *}_{x, y, T}}\left|\big({(\rm{I})}\circ {\lf \leftidx^{r} {\bf{F}}\rc^{\l}} \big)'_r\right|^2\right)^{\frac{1}{2}}\leq \|W(y)\| \underline{c}_{{\rm I}} e^{c_{{\rm I}}(1+d_{\wt{g}^{\l}}(x, y))}. 
\]
Altogether, we have the differentiability of $\l\mapsto {\wt{\phi}}_{\l}^{1} \circ  {\lf\leftidx^{r}{\bf{F}}\rc^{\l}}$ and also  obtain some   $\underline{c}$ (depending on $\mathtt{s}, r_0$, $\|g^{\l}\|_{C^4}$ and $\|\XX^{\l}\|_{C^3}$) and $c$ (depending on $T, T_0$ and  $\|g^{\l}\|_{C^3}$) such that 
 \begin{equation*}
\! \left(\overline{\E}_{\overline{\P}_{x, y, T}^{\l, *}}\!\left|\big({\wt{\phi}}_{\l}^{1} \circ  {\lf\leftidx^{r}{\bf{F}}\rc^{\l}}\big)'_0\right|^2\right)^{\frac{1}{2}} \leq \|W(y)\| \underline{c}e^{c(1+d_{\wt{g}^{\l}}(x, y))}\left( (\frac{1}{T}d_{\wt{g}^{\l}}(x, y)+\frac{1}{\sqrt{T}})^2+1 \right).
 \end{equation*}

Now (\ref{nab-ln-p-1-diff}) holds true. Using H\"{o}lder's inequality, it is easy to deduce
\begin{align*}
4^{1-q}\left\|\nabla^{\l}_{W(y)} \phi_{\l}^1(T, x, \cdot)\right\|_{L^q}^q\leq & \left\|\phi_{\l}^1(T, x, \cdot)\right\|_{L^{2q}}^{q}\!\left(\left\|\nabla_{W(y)}^{\l} (\ln p^{\l}(T, x, \cdot))\right\|_{L^{2q}}^{q}\!\!+\left\|\nabla_{W(y)}^{\l} (\ln \rho^{\l})\right\|_{L^{2q}}^{q}\right)\\
&+\big(\E\big\|{\wt{\phi}}_{\l}^{1} \big\|^{2q}\big)^{\frac{1}{2}}\big(\E\left\| \leftidx^{0}\overline{\mathcal{E}_T}\right\|^{2q}\big)^{\frac{1}{2}}+ \overline{\E}\big\|\big({\wt{\phi}}_{\l}^{1} \circ  {\lf\leftidx^{r}{\bf{F}}\rc^{\l}}\big)'_0\big\|^q\\
=: &\  {\rm{D}}_1(q) +  {\rm{D}}_2(q) +  {\rm{D}}_3(q).
\end{align*}
By (\ref{equ-Hs2-ST}), (\ref{S-p-t-upper-b}), we see that,  for the $i$-th covariant derivative $\nabla^{\l, (i)}\ln p^{\l}(t, x, \cdot)$, $i\leq k-2$,  there is $\underline{c}(i)$ (depending on $m, q, T_0$ and $\|g^{\l}\|_{C^{i+2}}$) and $c(i)$ (depending on $\|g^{\l}\|_{C^3}$) such that 
\begin{equation}\label{L-q-grad-lnp}
\left\|\nabla^{\l, (i)}\ln p^{\l}(T, x, \cdot)\right\|_{L^q}^q\leq \underline{c}(i)e^{c(i)(1+T)}.
\end{equation}
Using this and the $L^q$ estimation of  $\phi_{\l}^1(T, x, \cdot)$ in the proof of Theorem \ref{regu-p-1st} for the $k=3$ case,  we obtain  $
{\rm{D}}_1(q)\leq \underline{c}_{\l}(q)$, 
where $\underline{c}_{\l}(q)$ depends  on $m, q$, $T_0, T$,  $\|g^{\l}\|_{C^3}$ and  $\|\XX^{\l}\|_{C^2}$. With the $L^{2q}$ estimations of $\wt{\phi}_{\l}^1(T, x, \cdot)$ and $\leftidx^{0}\overline{\mathcal{E}_T}$ for Theorem \ref{regu-p-1st} with  $k=3$, we can also conclude that ${\rm{D}}_2(q)$ has the same type of bound as $ {\rm{D}}_1(q)$.  For $ {\rm{D}}_3(q)$, we check  the $L^q$-norms of $\big({\rm{A}}\circ {\lf \leftidx^{r} {\bf{F}}\rc^{\l}} \big)'_0$  for ${\rm{A}}=({\rm I}), ({\rm II}), ({\rm III})$ or $ ({\rm IV})$ in (\ref{wt-phi-1}), respectively.  Using  H\"{o}lder's inequality, (\ref{L-q-grad-lnp}) and Proposition \ref{est-norm-u-t-j}, it suffices to estimate the $L^q$-norms of 
\[
\left(({\lfloor  \leftidx^r {\rm u}_T\rceil^{\l})^{(1)}_{\l}}\right)'_{r=0}, \ \left(\big(\overline{(\lfloor {\rm u}_T^s\rceil^{\l})^{(1)}_{\l}}({\rm w})\big)'_0\circ  {\lf\leftidx^{r}{\bf{F}}\rc^{\l}}\right)'_{r=0}. 
\]
This, by using Lemma \ref{all-diff-at-s-0} and  Proposition \ref{u-s-t-l-1},  can be eventually reduced to a multiple of a constant depending on $m, q$, $T_0$, $T$,   $\|g^{\l}\|_{C^4}$ and $\|\XX^{\l}\|_{C^3}$ with a combination of some   $L^{q'}$  norm estimations (with $q'\geq 1$ depending on $q$) of  $\sup_{0\leq t\leq T}\|(\lf {\rm{u}}_{t}\rc^{\l})^{(1)}_{\l}\|$ and  $\sup_{0\leq \underline{t}<\overline{t}\leq T}\big\|\big[ D\wt{\lf \overrightarrow{F}_{\underline{t}, \overline{t}}\rc^{\l}}(\lf {\rm{u}}_{\underline{t}}\rc^{\l}, {w})\big]\big\|$. Hence, by  Proposition \ref{est-norm-u-t-j}, we conclude that  $ {\rm{D}}_3(q)$ has the same type of bound as ${\rm{D}}_1(q)$ with $\underline{c}_{\l}(q)$  depending on $m, q$, $T_0$, $T$,  $\|g^{\l}\|_{C^4}$ and $\|\XX^{\l}\|_{C^3}$. 
 
 For the $L^q$-norm estimation of $\nabla^{(2)} \phi_{\l}^1(T, x, \cdot)$, we continue to differentiate (\ref{nab-ln-p-1-diff}). Let $W_2$ be another smooth bounded vector field on $\M$. Then 
 \begin{align*}
\nabla^{\l}_{W_2(y)} \nabla^{\l}_{W(y)}\phi_{\l}^1(T, x, \cdot)=&\phi_{\l}^1(T, x, \cdot)\left(\nabla^{\l}_{W_2(y)}\nabla_{W(y)}^{\l}(\ln p^{\l}(T, x, \cdot))+\nabla^{\l}_{W_2(y)}\nabla_{W(y)}^{\l} (\ln \rho^{\l})\right)\\
&+\nabla^{\l}_{W_2(y)}\phi_{\l}^1(T, x, \cdot) \left(\nabla_{W(y)}^{\l}(\ln p^{\l}(T, x, \cdot))+\nabla_{W(y)}^{\l} (\ln \rho^{\l})\right)\\
&+\nabla^{\l}_{W_2(y)}\left(\overline{\E}\left(\left. \big({\wt{\phi}}_{\l}^{1} \circ  {\lf\leftidx^{r}{\bf{F}}\rc^{\l}}\big)'_0+ {\wt{\phi}}_{\l}^{1} \cdot \leftidx^{0}\overline{\mathcal{E}_T} \right|\lfloor {\rm x}_T \rceil^{\l}=y \right)\right)\\
=:&({\bf a})_y+({\bf b})_y+({\bf c})_y. 
 \end{align*}
Using the previous estimations of $\phi^1_{\l}$,  $\nabla^{\l}_{W}\phi^1_{\l}$, (\ref{L-q-grad-lnp}) and H\"{o}lder's inequality,  we obtain
 \[
 \big\|({\bf a})_y\big\|_{L^q}, \big\|({\bf b})_y\big\|_{L^q}\leq \|W_2(y)\|\|W(y)\|\underline{c}_{{\bf a, b}}e^{c_{{\bf a, b}}T},
 \]
 where $\underline{c}_{{\bf a, b}}$ depends on $m, q, \|g^{\l}\|_{C^4}, \|\XX^{\l}\|_{C^3}$ and $c_{{\bf a, b}}$ depends on $m, q, T, T_0$ and $\|g\|_{C^3}$.  For $({\bf c})_y$, we can follow the above argument for $\nabla^{\l}_{W(y)}\phi_{\l}^1(T, x, \cdot)$ to `exchange' the differentiation $\nabla^{\l}_{W_2(y)}$ with the conditional expectation sign and obtain 
 \begin{align*}
 ({\bf c})_y=&\overline{\E}\left(\left. \left.\frac{d}{da}\right|_{a=0}\left(\big( \big({\wt{\phi}}_{\l}^{1} \circ  {\lf\leftidx^{r}{\bf{F}}\rc^{\l}}\big)'_0+ {\wt{\phi}}_{\l}^{1} \cdot \leftidx^{0}\overline{\mathcal{E}_T}\big)\circ {\lf\leftidx^{a}{{\bf{F}}}^{W_2}\rc^{\l}}\right) \right|\lfloor {\rm x}_T \rceil^{\l}=y \right)\\
 &+\overline{\E}\left(\left. \big( \big({\wt{\phi}}_{\l}^{1} \circ  {\lf\leftidx^{r}{\bf{F}}\rc^{\l}}\big)'_0+ {\wt{\phi}}_{\l}^{1} \cdot \leftidx^{0}\overline{\mathcal{E}_T}\big)\leftidx^{0}\overline{\mathcal{E}_{T}^{W_2}}  \right|\lfloor {\rm x}_T \rceil^{\l}=y \right),
 \end{align*}
 where  ${\lf\leftidx^{a}{{\bf{F}}}^{W_2}\rc^{\l}}, \leftidx^{0}\overline{\mathcal{E}_{T}^{W_2}}$  are the corresponding objects  ${\lf\leftidx^{a}{{\bf{F}}}\rc^{\l}}, \leftidx^{0}\overline{\mathcal{E}_{T}}$ for $W_2$.  In addition to the terms involving a single differentiation of ${\lf\leftidx^{a}{{\bf{F}}}^{W_2}\rc^{\l}}$ or ${\lf\leftidx^{r}{{\bf{F}}}^{W_1}\rc^{\l}}$,  we have  the  differentiation of $\big({\wt{\phi}}_{\l}^{1} \circ  {\lf\leftidx^{r}{\bf{F}}\rc^{\l}}\big)'_0$ under ${\lf\leftidx^{a}{{\bf{F}}}^{W_2}\rc^{\l}}$, which  involves $\nabla^{\l}_{W_2(y)}\nabla^{\l}_{W(y)} \nabla^{\l}\ln p^{\l}(T, x, \cdot)$ and multi-stochastic integrals using  the tangent maps $\big[D{\lf F_{\tau, t}\rc^{\l}} ( {\lf {\mho_{\tau}}\rc^{\l}}, {\rm w})\big]$ and geometric terms with bounds determined by  $\|g^{\l}\|_{C^5}$ and $\|\XX^{\l}\|_{C^4}$. So, a routine calculation as above using  Proposition \ref{est-norm-u-t-j} gives
 \[
 \big\|({\bf c})_y\big\|_{L^q}\leq \|W_2(y)\|\|W(y)\|\underline{c}_{{\bf c}}e^{c_{{\bf c}}T},\]
 where $\underline{c}_{{\bf c}}$ depends on $m, q, \|g^{\l}\|_{C^5}$ and  $\|\XX^{\l}\|_{C^4}$,  and $c_{{\bf c}}$ depends on $m, q, T, T_0$  and  $\|g\|_{C^3}$.
  
Continuing  this argument, we can  obtain the estimations in (\ref{grad-lnp-lam-1}) for all $l\leq k-3$. We stop at $l=k-3$  step since  $\nabla^{(l)}\phi_{\l}^1(T, x, \cdot)$ involves $\nabla^{(l+1)}(\ln p^{\l})(T, x, \cdot)$ and the bound estimation in  (\ref{equ-Hs2-ST}) is only valid  for $\nabla^{(l)}(\ln p^{\l})(T, x, \cdot)$, $l\leq k-2$,  in general.  
\end{proof}

In proving (\ref{esti-p-lam-der-k}), we also  obtain the following coarse estimation,  which will be used in the inductive argument in the next section.  
\begin{cor}\label{di-p-lam-1} For all $l$, $0\leq l\leq k-3$, there is $\underline{c}_{\l, (l, 1)}$,  depending  on $m$, $\|g^{\l}\|_{C^{l+3}}$ and $\|\XX^{\l}\|_{C^{l+2}}$,   and $c^{\l, (l, 1)}$, depending  on $l$, $m, q$, $T, T_0$ and  $\|g^{\l}\|_{C^3}$,  such that 
\begin{align*}
\ \ \ \ \ \ \  &\left|\nabla^{(l)}(\ln p^{\l})^{(1)}_{\l}(T, x, y)\right|, \left|\nabla^{(l)}\phi_{\l}^{1}(T, x, y)\right|\notag\\
&\ \ \ \ \ \leq (p^{\l}(T, x, y))^{-1}\underline{c}_{\l, (l, 1)}\left(\big(\frac{1}{T}d_{\wt{g}^{\l}}(x, y)+\frac{1}{\sqrt{T}}\big)^{l+1}+1\right)\cdot  e^{c^{\l, (l, 1)}(1+d_{\wt{g}^{\l}}(x, y))}.
\end{align*}
\end{cor}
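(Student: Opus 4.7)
The plan is to read off this pointwise bound from the same calculations that produced the $L^q$-bound (\ref{grad-lnp-lam-1}) in the proof of Theorem \ref{regu-p-1st}, but tracking the conditional expectations \emph{before} integration against $p^\l(T,x,y)\,d{\rm Vol}^\l(y)$, which is exactly how the factor $(p^\l(T,x,y))^{-1}$ appears on the right side. Since (\ref{Thm1.3-step 1}) gives $(\ln p^\l)^{(1)}_\l(T,x,y) = \phi_\l^1(T,x,y) - (\ln \rho^\l)^{(1)}_\l(y)$ and the $y$-derivatives of $(\ln\rho^\l)^{(1)}_\l$ up to any order are uniformly bounded on $\mathcal V_g$, it suffices to prove the stated bound for $|\nabla^{(l)}\phi_\l^1(T,x,y)|$.

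For the base case $l=0$ I use (\ref{wt-phi-00}) rewritten as
\[
\phi_\l^1(T,x,y)\,p^\l(T,x,y) \;=\; \ov{\E}_{\ov{\P}^{\l,\ast}_{x,y,T}}\!\big(\wt\phi_\l^1(T,x,w)\big),
\]
split $\wt\phi_\l^1 = ({\rm I})+({\rm II})+({\rm III})+({\rm IV})$ as in (\ref{wt-phi-1}), and bound each term in $L^1(\ov{\P}^{\l,\ast}_{x,y,T})$. For $({\rm I}),({\rm II}),({\rm III})$, Proposition \ref{est-norm-u-t-j}(ii), Lemma \ref{Ku-lem} and H\"older's inequality give a bound of the form $\underline c\,e^{c(1+d_{\wt g^\l}(x,y))}$, precisely as in the proof of Theorem \ref{regu-p-1st} for $k=3$. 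In $({\rm IV})$, conditioning on $\lf{\rm x}_T\rc^\l = y$ turns $\nabla^\l\ln p^\l(T,x,\lf{\rm x}_T\rc^\l)$ into the deterministic vector $\nabla^\l\ln p^\l(T,x,y)$, which by Lemma \ref{Hs2-ST} has norm at most $c(1,T)\big(\tfrac{1}{T}d_{\wt g^\l}(x,y)+\tfrac{1}{\sqrt T}\big)$; the remaining conditional expectation of $\|D\pi(\lf{\rm u}_T\rc^\l)^{(1)}_\l\|$ is controlled again by Proposition \ref{est-norm-u-t-j}(ii). Combining these yields the $l=0$ case.

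For $l\geq 1$ I induct using (\ref{nab-ln-p-1-diff}). Each differentiation $\nabla^\l_W$ either multiplies a term already present by one factor of $\nabla^\l_{W}\ln p^\l(T,x,y)$, which by Lemma \ref{Hs2-ST} contributes one more power of $\big(\tfrac{1}{T}d_{\wt g^\l}(x,y)+\tfrac{1}{\sqrt T}\big)$, or produces, via the quasi-invariance mechanism of Propositions \ref{Quasi-IP-2} and \ref{Diff-int-U-s-gen}, a new conditional expectation involving one extra derivative of $\ln p^\l(T,x,\cdot)$ together with additional bounded stochastic integrands. Conditioning localizes every extra $\nabla^{(j+1)}\ln p^\l$ at $y$, where Lemma \ref{Hs2-ST} again provides the pointwise bound $c(j+1,T)\big(\tfrac{1}{T}d_{\wt g^\l}(x,y)+\tfrac{1}{\sqrt T}\big)^{j+1}$. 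After $l$ differentiations the highest-order derivative of $\ln p^\l$ that appears is $\nabla^{(l+1)}\ln p^\l$, producing the polynomial factor $\big(\tfrac{1}{T}d_{\wt g^\l}(x,y)+\tfrac{1}{\sqrt T}\big)^{l+1}+1$, while Proposition \ref{cond-nabla-ln-p} and the conditional $L^q$-estimates of Proposition \ref{est-norm-u-t-j} supply the exponential factor $e^{c^{\l,(l,1)}(1+d_{\wt g^\l}(x,y))}$.

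The main obstacle is bookkeeping: at step $l$ the differentiated expression is a sum of many conditional expectations of multiple stochastic integrals built from $\{(\lf{\rm u}_t\rc^\l)^{(1)}_\l\}$, the tangent maps $\{[D\wt{\lf F_{\tau,t}\rc^\l}]^{\pm 1}\}$, the processes $K^\l_{{\rm V},B}$, $\leftidx^{0}\ov{\mathcal E_T}$, and their derivatives under $\lf\leftidx^{r_1}{\bf F}\rc^\l,\dots,\lf\leftidx^{r_l}{\bf F}\rc^\l$ up to order $l$. One must verify inductively that every such summand satisfies the stated pointwise bound with constants controlled only by $m$, $T$, $T_0$, $\|g^\l\|_{C^{l+3}}$ and $\|\XX^\l\|_{C^{l+2}}$. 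The regularity cutoff $l\leq k-3$ is imposed exactly by Lemma \ref{Hs2-ST}: its pointwise gradient estimate for $\ln p^\l$ is available through order $k-2$, matching the order $l+1\leq k-2$ that appears at the $l$-th inductive step.
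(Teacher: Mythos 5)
Your proposal is correct and reproduces the paper's argument: the paper states this corollary as a by-product of the proof of Theorem~\ref{regu-p-1st} without a separate proof, and you reconstruct exactly that implicit mechanism — the factor $(p^{\l}(T,x,y))^{-1}$ arises because $\phi_\l^1(T,x,y)=(p^\l(T,x,y))^{-1}\overline{\E}_{\overline{\P}^{\l,*}_{x,y,T}}(\wt\phi_\l^1)$, conditioning on $\lf{\rm x}_T\rc^\l=y$ turns $\nabla^{(j)}\ln p^{\l}(T,x,\lf{\rm x}_T\rc^\l)$ into the deterministic $\nabla^{(j)}\ln p^{\l}(T,x,y)$ whose Li-type bound (Lemma~\ref{Hs2-ST}) produces the polynomial $\big(T^{-1}d_{\wt g^\l}(x,y)+T^{-1/2}\big)^{l+1}$, and the conditional $L^q$-bounds from Propositions~\ref{cond-nabla-ln-p}, \ref{est-norm-u-t-j} and \ref{Quasi-IP-1}--\ref{Diff-int-U-s-gen} give the exponential factor; the cutoff $l\le k-3$ is correctly identified as $l+1\le k-2$ for Lemma~\ref{Hs2-ST}.
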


\section{Higher order regularity of the heat kernels in metrics}
To conclude Theorem \ref{diff-HK-estimations-gen} for all $i$, $2\leq i\leq k-2$, we use an  inductive argument  based on  the proof of  Theorem  \ref{regu-p-1st} to  identify  the differentials $(p^{\l})^{(i)}_{\l}(T, x, \cdot)$, $2\leq i\leq k-2$, using the SDE theory in Section 4.  The estimations in (\ref{esti-p-lam-der-k}) and (\ref{p-lam-i-p-equ}) will be obtained using the conditional  stochastic expressions of $\{(\ln p^{\l})^{(i)}_{\l}(T, x, \cdot)\}$.   In the following, we  first pick out the  properties  of  $(p^{\l})^{(i)}_{\l}(T, x, \cdot)$ necessary  for an  inductive argument, then  verify these properties for the  $i=2$ case  and the  $i>2$ case, respectively.

\subsection{A sketch of the  proof for Theorem 1.3 with  $i\geq 2$}\label{skectch-6.1} 
\begin{lem}\label{weak-reg-p-geq1}The {\rm i)} of Theorem \ref{diff-HK-estimations-gen} holds true if  there are locally  absolutely  integrable functions $\{\phi_{\l}^{i}(T, x, y)\}_{x\in \M, T\in \Bbb R_+, i\leq k-2}$  on $\M$, which  are continuous in the $\l$-parameter  and are continuous in  the  $(T, y)$-parameter, locally uniformly in $\l$,  such that for any  $f\in C_c^{\infty}(\M)$,
\begin{equation}\label{p-diff-induction-i}
\left(\int_{\M}f(y)p^{\l}(T, x, y)\ d{\rm{Vol}}^{\l}(y)\right)^{(i)}_{\l}=\int_{\M}f(y) \phi_{\l}^{i}(T, x, y)p^{\l}(t, x, y)\ d{\rm{Vol}}^{\l}(y).
\end{equation}
\end{lem}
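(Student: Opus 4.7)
The plan is to prove this lemma by induction on $i\in\{1,\ldots,k-2\}$, with the base case $i=1$ being exactly Lemma \ref{weak-reg-p-1} (which was proved in the $k=3$ argument for Theorem \ref{regu-p-1st}). The inductive step will repeat, at each level, the two-phase structure of that proof: first pass from the integrated identity (\ref{p-diff-induction-i}) to a pointwise identity for $(p^{\l})^{(i)}_{\l}(T,x,y)$, and then upgrade regularity in $y$ from mere continuity to $C^{k,\iota}$ by the parabolic Schauder estimates of Lemma \ref{Friedman-lem} and Lemma \ref{Friedman-lem-2}.

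For the inductive step, assume $\l\mapsto p^{\l}(T,x,\cdot)$ is $C^{i-1}$ into $C^{k,\iota}(\M)$, that each derivative $(p^{\l})^{(j)}_{\l}(T,x,y)$ with $j\leq i-1$ is jointly continuous in $(\l,T,y)$ (locally uniformly in $\l$), and that the identity (\ref{p-diff-induction-i}) holds at order $i$. Writing $\rho^{\l}=d{\rm Vol}^{\l}/d{\rm Vol}^0$ and expanding the left-hand side of (\ref{p-diff-induction-i}) by the Leibniz rule gives
\[
\sum_{j=0}^{i}\binom{i}{j}\int_{\M}f(y)(p^{\l})^{(j)}_{\l}(T,x,y)(\rho^{\l})^{(i-j)}_{\l}(y)\,d{\rm Vol}^0(y)=\int_{\M}f(y)\phi_{\l}^{i}(T,x,y)p^{\l}(T,x,y)\rho^{\l}(y)\,d{\rm Vol}^0(y).
\]
Integrating this identity in $\wt\l$ from $0$ to $\l$, applying Fubini (justified by the assumed continuity of $\phi_{\l}^{i}$ and the inductive joint continuity of the lower-order derivatives), and using that $f\in C_c^{\infty}(\M)$ is arbitrary, one recovers—by the same continuity-plus-density argument as in the proof of Lemma \ref{weak-reg-p-1}—a pointwise identity for $(p^{\l}\rho^{\l})^{(i)}_{\l}(y)$ in terms of $\phi_{\l}^{i}$ and already-known quantities. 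Since $\l\mapsto\rho^{\l}$ is $C^k$ and $\rho^{\l}$ is bounded away from zero on $\mathcal{V}_g$, this yields the pointwise existence of $(p^{\l})^{(i)}_{\l}(T,x,y)$ and its joint continuity in $(\l,T,y)$.

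To upgrade the spatial regularity, differentiate the heat equation ${\rm L}^{\l}p^{\l}=0$ in $\l$ exactly $i$ times by the Leibniz rule; the pointwise derivatives just constructed then satisfy, in the distributional sense in $(T,y)$,
\[
{\rm L}^{\l,{\rm w}}(p^{\l})^{(i)}_{\l}(T,x,\cdot)=-\sum_{j=0}^{i-1}\binom{i}{j}(\Delta^{\l})^{(i-j)}_{\l}(p^{\l})^{(j)}_{\l}(T,x,\cdot).
\]
By the inductive hypothesis, each $(p^{\l})^{(j)}_{\l}(T,x,\cdot)$ with $j\leq i-1$ belongs to $C^{k,\iota}(\M)$ with seminorms locally uniform in $\l$, while $(\Delta^{\l})^{(i-j)}_{\l}$ is a second-order operator whose coefficients lie in $C^{k-2,\iota}$ (after shrinking $\mathcal{V}_g$ if necessary, since $\l\mapsto g^{\l}$ is $C^k$ into $C^k$ metrics). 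Hence the right-hand side is H\"older continuous with finite $|\cdot|_{2,\iota}$-seminorm on each bounded subdomain, uniformly in $\l$. Lemma \ref{weak-strong-same} promotes the distributional identity to a classical one, Lemma \ref{Friedman-lem-2} supplies the bound $|(p^{\l})^{(i)}_{\l}(T,x,\cdot)|_{0,2+\iota}<\infty$ on bounded domains, and Lemma \ref{Friedman-lem} then delivers $(p^{\l})^{(i)}_{\l}(T,x,\cdot)\in C^{k,\iota}(\M)$ with uniformly controlled norms. Continuity of $\l\mapsto(p^{\l})^{(i)}_{\l}(T,x,\cdot)$ in $C^{k,\iota}(\M)$ follows from the continuity of $\phi_{\l}^{i}$ in $\l$ together with these uniform Schauder estimates.

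The main obstacle I foresee is purely combinatorial bookkeeping: one must verify that the source term of the parabolic equation for $(p^{\l})^{(i)}_{\l}$ genuinely closes on quantities already controlled by the inductive hypothesis, and that the H\"older seminorms of this source are bounded locally uniformly in $\l$ at every level of the induction. Once the induction is organized so that at step $i$ both the Schauder bound and the $\l$-continuity in $C^{k,\iota}$ are propagated, the remainder of the argument is a direct transcription of the base case proved for Theorem \ref{regu-p-1st}.
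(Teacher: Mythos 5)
Your proposal follows the paper's proof in structure and in all essentials: an induction reducing to Lemma~\ref{weak-reg-p-1} at the base, a first phase converting the integrated identity into a pointwise one for the new $\l$-derivative, and a second phase upgrading spatial regularity via Lemmas~\ref{weak-strong-same}, \ref{Friedman-lem-2} and~\ref{Friedman-lem} applied to the inhomogeneous parabolic equation for $(p^{\l})^{(i)}_{\l}$. The source-term formula you wrote is the same as the paper's after reindexing.

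The one step that is circular as written is the Leibniz expansion at order $i$: you expand the left-hand side of (\ref{p-diff-induction-i}) at order $i$ into a sum containing $(p^{\l})^{(i)}_{\l}(\rho^{\l})^{(0)}_{\l}$, but existence of $(p^{\l})^{(i)}_{\l}$ is precisely what the inductive step must establish; the inductive hypothesis only supplies $(p^{\l})^{(j)}_{\l}$ for $j\leq i-1$. The repair is what the paper does: integrate (\ref{p-diff-induction-i}) at order $i$ from $0$ to $\l$ and rewrite the resulting left side using (\ref{p-diff-induction-i}) at order $i-1$, obtaining
\[
\phi_{\l}^{i-1}p^{\l}\rho^{\l}-\phi_{0}^{i-1}p^{0}\rho^{0}=\int_{0}^{\l}\phi_{\wt\l}^{i}p^{\wt\l}\rho^{\wt\l}\,d\wt\l
\]
pointwise in $y$ (by continuity and density of $f$). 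Then one invokes the already-established pointwise identity at order $i-1$, namely $\sum_{j=0}^{i-1}\binom{i-1}{j}(p^{\l})^{(j)}_{\l}(\rho^{\l})^{(i-1-j)}_{\l}=\phi_{\l}^{i-1}p^{\l}\rho^{\l}$, whose left-hand side involves only quantities known to exist. Since the right-hand side is $C^1$ in $\l$ and all but the $j=i-1$ term on the left are $C^1$ in $\l$ by the inductive hypothesis (and $\rho^{\l}$ is $C^k$ and bounded below), one deduces that $(p^{\l})^{(i-1)}_{\l}$ is differentiable in $\l$, i.e.\ $(p^{\l})^{(i)}_{\l}$ exists, together with the order-$i$ pointwise identity. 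The remainder of your argument then goes through unchanged.
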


\begin{proof}Assume  (\ref{p-diff-induction-i}) holds true.  We show the differentials $(p^{\l})^{(i)}_{\l}(T, x, \cdot)$, $i=1, \cdots, k-2$, exist as continuous functions on $\M$ and satisfy
\begin{equation}\label{heat-induction-1}
\sum_{i=0}^{j}\left(\begin{matrix}j\\ i\end{matrix}\right)(p^{\l})^{(i)}_{\l}(T, x, y)(\rho^{\l})^{(j-i)}(y)=\phi_{\l}^{j}(T, x, y)p^{\l}(T, x, y)\rho^{\l}(y),\  j=1, \cdots, k-2.
\end{equation}
The $j=1$ case was handled in Lemma \ref{weak-reg-p-1} and we know that   $(p^{\l})^{(1)}_{\l}(T, x, \cdot)$ is a continuous function on $\Bbb R_+\times \M$. Assume $(p^{\l})^{(i)}_{\l}(T, x, \cdot)$, $i\leq j_0<k-2$, exist, are continuous, and satisfy (\ref{heat-induction-1}) for $j\leq j_0$. Using this,  a comparison of (\ref{p-diff-induction-i}) for $i=j_0$ and $j_0+1$ gives
\begin{align*}
&\int_{\M}\left(\phi_{\l}^{j_0}(T, x, y)p^{{\l}}(T, x, y)\rho^{\l}(y)-\phi_{0}^{j_0}(T, x, y)p^{0}(T, x, y)\rho^0(y)\right)f(y) \ d{\rm Vol}^0(y)\\
&\ =\int_{\M}\left(\int_{0}^{\l}\phi_{\wt{\l}}^{j_0+1}(T, x, y)p^{\wt{\l}}(T, x, y)\rho^{\wt{\l}}(y)d\wt{\l} \right) f(y)\ d{\rm Vol}^0(y), \ \forall  f\in C_c^{\infty}(\M). 
\end{align*}
Since both sides are continuous functions in $y$-variable, we must have
\begin{align*}&\phi_{\l}^{j_0}(T, x, y)p^{{\l}}(T, x, y)\rho^{\l}(y)-\phi_{0}^{j_0}(T, x, y)p^{0}(T, x, y)\rho^0(y)\\
& \ \ \ \ \ \ \ \ \ \ \ \ \ \ \ \ \ \ \ \ \ \ \ \ \ \ \ \ \ \ \ \ \ \ \ \ \ \ \ \ \ \ \ \ \ \ \ \ \ \ \ \ \ \ \ \ \ =\int_{0}^{\l}\phi_{\wt{\l}}^{j_0+1}(T, x, y)p^{\wt{\l}}(T, x, y)\rho^{\wt{\l}}(y)d\wt{\l}. 
\end{align*}
Consequently, 
\[
\left(\sum_{i=0}^{j_0}\left(\begin{matrix}j_0\\ i\end{matrix}\right)(p^{\l})^{(i)}_{\l}(T, x, y)(\rho^{\l})^{(j_0-i)}_{\l}(y)\right)^{(1)}_{\l}=\phi_{\l}^{j_0+1}(T, x, y)p^{{\l}}(T, x, y)\rho^{\l}(y),
\]
which implies that $(p^{\l})^{(j_0+1)}_{\l}(T, x, y)$ exists for every $y$ and satisfies  (\ref{heat-induction-1}). Then we can conclude from this  and the inductive assumption on the continuity of $(p^{\l})^{(i)}_{\l}(T, x, \cdot)$, $i=1, \cdots, j_0$, that $(p^{\l})^{(j_0+1)}_{\l}(T, x, \cdot)$ is  also a continuous function on $\Bbb R_+\times \M$.

Now,  the differentials $(p^{\l})^{(i)}_{\l}(T, x, \cdot)$, $i=1, \cdots, k-2$, exist as continuous functions on $\Bbb R_+\times \M$ and hence their weak derivatives  in $(T, y)$ of any order are well-defined.  Taking the differential of   the heat equations ${\rm L}^{\l}p^{\l}=0$ in $\l$  gives  the following identities in distribution:
\begin{equation*}
{\rm L}^{\l,{\rm w}}(p^{\l})^{(i)}_{\l}(T, x, \cdot)+\sum_{j=1}^{i}\left(\begin{matrix}i\\ j\end{matrix}\right)({\rm L}^{\l})^{(j), {\rm w}}(p^{\l})^{(i-j)}_{\l}(T, x, \cdot)=0, \  \  i=1, \cdots, k-2,
 \end{equation*}
where $({\rm L}^{\l})^{(j), {\rm w}}$ is the weak derivative of the $j$-th differential operator $(L^{\l})^{(j)}_{\l}$.  
We can use Lemma \ref{weak-strong-same} and  Lemma \ref{Friedman-lem} inductively to improve the regularity of $(p^{\l})^{(i)}_{\l}(T, x, \cdot)$.  Shrinking the neighborhood $\mathcal{V}_g$ of $g$ if necessary, we may assume  there is $\iota>0$ such that $p^{\l}(T, x, \cdot)\in C^{k, \iota}(\M)$ for all $\l$.  Since it  is a local problem, for $(T, y)\in \Bbb R_+\times \M$, we can also restrict ourselves to a bounded domain $\mathcal{D}$ containing $(T, y)$. By Lemma \ref{weak-reg-p-1}, there is some domain $\mathcal{D}_1\subset \mathcal{D}$ such that  $p^{\l}(T, x, \cdot), (p^{\l})^{(1)}_{\l}(T, x, \cdot)\in C^{k, \iota}(\mathcal{D}_1)$. Assume  for all $i\leq j_0<k-2$ there are domains $\mathcal{D}_i$ containing $(T, y)$ such that $|(p^{\l})^{(i)}_{\l}(T, x, \cdot)|_{0, 2+\iota}<\infty$ on $\mathcal{D}_i$ and  $(p^{\l})^{(i)}_{\l}(T, x, \cdot)\in C^{k, \iota}(\mathcal{D}_i)$. Then 
\begin{align}
{\rm L}^{\l,{\rm w}}(p^{\l})^{(j_0+1)}_{\l}(T, x, \cdot)= & -\sum_{j=1}^{j_0+1}\left(\begin{matrix}j_0+1\\ j\end{matrix}\right)({\rm L}^{\l})^{(j), {\rm w}}(p^{\l})^{(j_0+1-j)}_{\l}(T, x, \cdot)\notag\\
= & -\sum_{j=1}^{j_0+1}\left(\begin{matrix}j_0+1\\ j\end{matrix}\right)({\rm L}^{\l})^{(j)}_{\l}(p^{\l})^{(j_0+1-j)}_{\l}(T, x, \cdot). \label{induction-p-i-0} 
\end{align}
Shrinking $\mathcal{D}_{j_0}$ to $\mathcal{D}_{j_0+1}$ if necessary,  we can deduce from $|(p^{\l})^{(i)}_{\l}(T, x, \cdot)|_{0, 2+\iota}<\infty$ on $\mathcal{D}_i$ that  $|({\rm L}^{\l})^{(j)}(p^{\l})^{(j_0+1-j)}_{\l}(T, x, \cdot)|_{2, \iota}$ is finite for all $j\leq j_0+1$ on $\mathcal{D}_{j_0+1}$. Since $(p^{\l})^{(j_0+1)}_{\l}(T, x, \cdot)$ is continuous,  Lemma  \ref{weak-strong-same} shows that (\ref{induction-p-i-0}) holds  in the usual sense. Then we can apply Lemma \ref{Friedman-lem-2} to conclude that  $|(p^{\l})^{(j_{0}+1)}_{\l}(T, x, \cdot)|_{0, 2+\iota}<\infty$ on $\mathcal{D}_{j_0+1}$ and apply Lemma \ref{Friedman-lem} to conclude $(p^{\l})^{(j_0+1)}_{\l}(T, x, \cdot)\in C^{k, \iota}(\mathcal{D}_{j_0+1})$. Accordingly, the continuity of $\l\mapsto (p^{\l})^{(1)}_{\l}(T, x, \cdot)$ in $C(\M)$ can be improved to be the continuity in $C^{k, \iota}(\M)$ by using the parabolic differential equation (\ref{induction-p-i-0}), Lemma \ref{Friedman-lem} and Lemma \ref{Friedman-lem-2}.  
\end{proof}

The  $\phi_{\l}^{1}$ satisfying (\ref{p-diff-induction-i}) was identified  in Theorem \ref{regu-p-1st}.  We continue to pick up a candidate  $\phi_{\l}^{2}$  for (\ref{p-diff-induction-i}).  Let $\wt{\phi}_{\l}^{1}(T, x, \cdot)$ be as in (\ref{wt-phi-1}) such that  (\ref{wt-phi-00}) holds.  Then, for any  $f\in C_c^{\infty}(\M)$,
\begin{align}\label{ind-wt-phi-1}
\left(\int_{\M} f(y)p^{\l}(T, x, y)\ d{\rm Vol}^{\l}(y)\right)^{(1)}_{\l}= \overline{\E}\left(f( \lfloor {\rm x}_T \rceil^{\l}(w))\wt{\phi}_{\l}^{1}(T, x, w)\right). 
\end{align}
If we can show  $\l\mapsto \wt{\phi}_{\l}^{1}$ is differentiable,  and both $\wt{\phi}_{\l}^{1}$ and the  differential $(\wt{\phi}_{\l}^{1})^{(1)}_{\l}$  are  $L^q$ integrable for some $q\geq 1$, we are allowed  to differentiate under the expectation sign of the right hand side term of (\ref{ind-wt-phi-1}). 
This will give 
\begin{align}
\notag&\left(\int_{\M} f(y)p^{\l}(T, x, y)\ d{\rm Vol}^{\l}(y)\right)^{(2)}_{\l}\\
\notag& \ \ \  \ \  \ \ \ \ \ \ \ =\int_{\M}f(y)\overline{\E}\left((\wt{\phi}_{\l}^{1})^{(1)}_{\l}(T, x, w)\big| {\lfloor {\rm x}_T \rceil^{\l}(w)}=y\right)p^{\l}(T, x, y)\ d{\rm Vol}^{\l}(y)\\
\label{ex-term-pl2}& \ \ \  \ \  \  \ \ \  \ \ \ \ \ \ \  +\overline{\E}\left(\big\langle \nabla^{\l}_{\lfloor {\rm x}_T \rceil^{\l}(w)} f({\lfloor {\rm x}_T \rceil^{\l}(w)}), \ \wt{\phi}_{\l}^{1}(T, x, w)\cdot D\pi(\lfloor {\rm{u}}_T\rceil^{\l})^{(1)}_{\l}(w) \big\rangle_{\l}\right). 
\end{align}
We can deal with the last  expectation term in (\ref{ex-term-pl2})  as  we did  for   $\phi_{\l}^{1}$ in  Section 5.   Define 
\begin{equation*}
\Phi_{\l}^2(Y, w):=\big\langle Y(\lfloor {\rm x}_T\rceil^{\l}(w)),\   \wt{\phi}_{\l}^{1}(T, x, w)\cdot D\pi(\lfloor {\rm{u}}_T\rceil^{\l})^{(1)}_{\l}(w) \big\rangle_{\l}, 
\end{equation*}
where $Y$ is any  $C^{k}$ bounded vector field  on $\M$,  and consider the linear functional 
\begin{equation*}
\overline{\Phi}_{\l}^2:\ Y\mapsto \overline{\E}\left(\left.\Phi_{\l}^2(Y, w)\right| \lfloor {\rm x}_T \rceil^{\l}(w)=y\right). 
\end{equation*}
If we can show $\overline{\Phi}_{\l}^2$ is such that $\overline{\Phi}_{\l}^2(Y)$ is $C^1$ in $y$ variable, we can conclude that 
\begin{equation}\label{z-T-lam-2}
\overline{\E}\left(\left.\wt{\phi}_{\l}^{1}(T, x, w)\cdot D\pi(\lfloor {\rm{u}}_T\rceil^{\l})^{(1)}_{\l}(w)\right| \lfloor {\rm x}_T \rceil^{\l}(w)=y\right)=:{\rm z}_T^{\l, 2}(y)
\end{equation}
 is a $C^1$ vector field on $\M$ and  satisfies 
\begin{equation*}
\overline{\Phi}_{\l}^2(\nabla f)(y)=\big\langle \nabla_y^{\l}f(y),  {\rm z}_T^{\l, 2}(y)\big\rangle_{\l}. 
\end{equation*}
Using the classical  integration by parts formula, we obtain 
\begin{align*}
&\overline{\E}\left(\big\langle \nabla^{\l}_{\lfloor {\rm x}_T \rceil^{\l}(w)} f({\lfloor {\rm x}_T \rceil^{\l}(w)}), \wt{\phi}_{\l}^{1}(T, x, w)\cdot D\pi(\lfloor {\rm{u}}_T\rceil^{\l})^{(1)}_{\l}(w) \big\rangle_{\l}\right)\\
&=\int_{\M}\big\langle\nabla_y^{\l} f(y),  p^{\l}(T, x, y){\rm z}_T^{\l, 2}(y)\big\rangle_{\l}\ d{\rm Vol}^{\l}(y)\\
&=-\int_{\M} f(y)\left({\rm Div}^{\l}{\rm z}_T^{\l, 2}(y)+\big\langle {\rm z}_T^{\l, 2}(y), \nabla^{\l}\ln p^{\l}(T, x, y) \big\rangle_{\l}\right)p^{\l}(T, x, y)\ d{\rm Vol}^{\l}(y). 
\end{align*}
Therefore, a candidate of $\phi_{\l}^{2}(T, x, \cdot)$  for  (\ref{p-diff-induction-i})  is 
\begin{align}
\notag\phi_{\l}^{2}(T, x, y):=&\  \overline{\E}\left((\wt{\phi}_{\l}^{1})^{(1)}_{\l}(T, x, w)\big| {\lfloor {\rm x}_T \rceil^{\l}(w)}=y\right)\\
&\  -\left({\rm Div}^{\l}{\rm z}_T^{\l, 2}(y)+\big\langle {\rm z}_T^{\l, 2}(y), \nabla^{\l}\ln p^{\l}(T, x, y) \big\rangle_{\l}\right). \label{phi-2-1-rel}
\end{align}
Once we show   $\phi_{\l}^{2}(T, x, y)$ fulfills the continuity requirement of  Lemma \ref{weak-reg-p-geq1}, we can conclude the second order differentiability of $\l\mapsto p^{\l}(T, x, )$ in $C^{k, \iota}$ for some $\iota>0$.   It follows that 
\begin{align*}
(\ln p^{\l})^{(2)}_{\l}(T, x, y)=\phi_{\l}^{2}(T, x, y)-(\phi_{\l}^1)^2(T, x, y)-(\ln \rho^{\l})^{(2)}_{\l}(y). 
\end{align*}
Note that  the  gradients estimations of $\phi_{\l}^1$ were already  handled in Theorem \ref{regu-p-1st}. Hence the gradients estimations of $(\ln p^{\l})^{(2)}_{\l}$ can be reduced to that of $\phi_{\l}^{2}$, which can be analyzed  following the proof of Theorem \ref{regu-p-1st}, if we can find some controllable $\wt{\phi}_{\l}^{2}(T, x, \cdot)$ such that 
 \[\phi_{\l}^{2}(T, x, y)=\ov{\E}\left(\left.\wt{\phi}_{\l}^{2}(T, x, w)\right|\lfloor {\rm x}_T \rceil^{\l}(w)=y \right). \]

We will follow this  line  of discussion to find all the candidates $\phi_{\l}^{i}$ for (\ref{p-diff-induction-i}).   Put $\wt{\phi}^0_{\l}(T, x, w)\equiv 1$ and let $\wt{\phi}^1_{\l}(T, x, w)$ be as in  (\ref{wt-phi-1}). For $i,$ $2\leq i\leq k-2$,  define 
\begin{align}\label{wt-phi-induction-i}
{\wt{\phi}}_{\l}^{i}(T, x, w)\ := \ &\big({\wt{\phi}}_{\l}^{i-1}\!(T, x, w)\big)^{(1)}_{\l}\!-\!\big\langle \nabla^{\l}_{T, \mathtt{s}}\wt{\phi}_{\l}^{i-1}\!(T, x, w), D\pi(\lfloor {\rm{u}}_T\rceil^{\l})^{(1)}_{\l}(w)\big\rangle\\
&+{\wt{\phi}}_{\l}^{i-1}(T, x, w){\wt{\phi}}_{\l}^{1}(T, x, w),\notag
\end{align}
where the `path-wise gradient' $\nabla^{\l}_{T, \mathtt{s}}\wt{\phi}_{\l}^{i-1}\!(T, x, w)$ will be specified later. We will show each \begin{align}\label{-phi-induction-i}
\phi_{\l}^{i}(T, x, y):=\ov{\E}\left(\left.\wt{\phi}_{\l}^{i}(T, x, w)\right|\lfloor {\rm x}_T \rceil^{\l}(w)=y \right)
\end{align}
fulfills all the requirements in  Lemma \ref{weak-reg-p-geq1}. The stochastic  expression (\ref{-phi-induction-i}) will be used for two purposes:  one is for the gradient estimations  of $\phi_{\l}^{i}(T, x, \cdot)$ and $(\ln p^{\l})^{(i)}_{\l}(T, x, \cdot)$; the other is  for obtaining  $\phi_{\l}^{i+1}(T, x, y)$  as we exposed above for $\phi_{\l}^{2}(T, x, y)$ (see (\ref{phi-2-1-rel})).

Let us highlight  the necessary steps to undergo an inductive argument for Theorem \ref{diff-HK-estimations-gen}.  Assume for all $i<j\leq k-2$, the 
$\phi_{\l}^{i}$ defined in (\ref{-phi-induction-i}) are such that Lemma \ref{weak-reg-p-geq1} holds true,  $(p^{\l})^{(i)}_{\l}(T, x, \cdot)\in C^{k, \iota}(\M)$, is continuous in $\l$  and (\ref{esti-p-lam-der-k})  holds for $(\ln p^{\l})^{(i)}_{\l}(T, x, \cdot)$. We also assume the following coarse pointwise estimation holds true for all $i<j$. 
\begin{itemize}
\item[{\bf 0)}] For all $l$, $0\leq l\leq k-2-i$, there is $\underline{c}_{\l, (l, i)}$ depending  on $\|g^{\l}\|_{C^{l+i+2}}$, $\|\XX^{\l}\|_{C^{l+i+1}}$  and $c^{\l, (l, i)}$ depending  on $(l, i)$, $m, q$, $T, T_0$ and  $\|g^{\l}\|_{C^3}$ such that 
\begin{align}
\ \ \ \ \ \ \  &\left|\nabla^{(l)}(\ln p^{\l})^{(i)}_{\l}(T, x, y)\right|, \left|\nabla^{(l)}\phi_{\l}^{i}(T, x, y)\right|\notag\\
&\ \ \ \ \ \leq (p^{\l}(T, x, y))^{-i}\underline{c}_{\l, (l, i)}\left(\big(\frac{1}{T}d_{\wt{g}^{\l}}(x, y)+\frac{1}{\sqrt{T}}\big)^{l+i}+1\right)\cdot  e^{c^{\l, (l, i)}(1+d_{\wt{g}^{\l}}(x, y))}.\label{esti-p-lam-der-2}
\end{align}
\end{itemize}

For the existence of  $(p^{\l})^{(j)}_{\l}(T, x, \cdot)$, the very first step is find  some measurable candidate satisfying (\ref{p-diff-induction-i}), which can be done once we show the following. 
\begin{itemize}
\item[{\bf i)}] The function $\wt{\phi}_{\l}^{j-1}(T, x, w)$ is differentiable in $\l$ for almost all $w\in \Theta_+$ and both $\wt{\phi}_{\l}^{j-1}(T, x, w)$  and $\big({\wt{\phi}}_{\l}^{j-1}\big)^{(1)}_{\l}(T, x, w)$ are  $L^q$ integrable in $w$ for all $q\geq 1$.
\item[{\bf ii)}] For any $C^{k}$ bounded  vector field $Y$ on $\M$, let 
\begin{equation*}
\Phi_{\l}^j(Y, w):=\big\langle Y(\lfloor {\rm x}_T\rceil^{\l}(w)),  \wt{\phi}_{\l}^{j-1}(T, x, w) D\pi(\lfloor {\rm{u}}_T\rceil^{\l})^{(1)}_{\l}(w) \big\rangle_{\l}. 
\end{equation*}
Then the linear  functional \begin{equation*}
\overline{\Phi}_{\l}^j:\ Y\mapsto \overline{\E}\left(\left.\Phi_{\l}^j(Y, w)\right| \lfloor {\rm x}_T \rceil^{\l}(w)=y\right)
\end{equation*}
is bounded with $\overline{\Phi}_{\l}^j(Y)(y)$ varying $C^1$ in  the $y$-coordinate.
\end{itemize}
\begin{cla}\label{cla-1-11-ov-phi}Assume {\bf i), ii)} are true. Then (\ref{p-diff-induction-i}) hold with some $\ov{\phi}_{\l}(T, x, \cdot)$ for $i=j$. 
\end{cla}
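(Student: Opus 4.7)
The plan is to follow the template of the $j=2$ derivation preceding (\ref{phi-2-1-rel}), now carried through at the general inductive step. By the induction hypothesis, (\ref{p-diff-induction-i}) holds at $i=j-1$, so the tower property combined with the definition (\ref{-phi-induction-i}) of $\phi_{\l}^{j-1}$ yields the stochastic representation
\[
\left(\int_{\M}f(y)p^{\l}(T,x,y)\,d{\rm{Vol}}^{\l}(y)\right)^{(j-1)}_{\l}=\overline{\E}\!\left(f(\lfloor{\rm x}_T\rceil^{\l}(w))\,\wt{\phi}_{\l}^{j-1}(T,x,w)\right)
\]
for every $f\in C_c^{\infty}(\M)$, which generalises (\ref{ind-wt-phi-1}). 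My task is to differentiate the right-hand side once more in $\l$ and then rewrite the result as a single integral of $f$ against $p^{\l}\,d{\rm Vol}^{\l}$.

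First I would use assumption \textbf{i)} to interchange the $\l$-derivative with the expectation: the pointwise-in-$w$ differentiability of $\wt{\phi}_{\l}^{j-1}$ together with the $L^q$ control of both $\wt{\phi}_{\l}^{j-1}$ and $(\wt{\phi}_{\l}^{j-1})^{(1)}_{\l}$, combined with the $L^q$ bounds on $(\lfloor{\rm u}_T\rceil^{\l})^{(1)}_{\l}$ from Proposition \ref{est-norm-u-t-j} and the compact support of $f$, supply the domination required for a Vitali-type convergence on difference quotients. Using (\ref{indentity-diff-expect}) to compute the $\l$-derivative of $f(\lfloor{\rm x}_T\rceil^{\l}(w))$, Leibniz's rule gives
\[
\overline{\E}\!\left(\langle\nabla^{\l}f(\lfloor{\rm x}_T\rceil^{\l}),D\pi(\lfloor{\rm u}_T\rceil^{\l})^{(1)}_{\l}\rangle_{\l}\,\wt{\phi}_{\l}^{j-1}\right)+\overline{\E}\!\left(f(\lfloor{\rm x}_T\rceil^{\l})\,(\wt{\phi}_{\l}^{j-1})^{(1)}_{\l}\right).
\]

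Next I would treat the two terms separately. The second expectation, by the tower property, equals
\[
\int_{\M}f(y)\,\overline{\E}\!\left((\wt{\phi}_{\l}^{j-1})^{(1)}_{\l}\,\big|\,\lfloor{\rm x}_T\rceil^{\l}(w)=y\right)p^{\l}(T,x,y)\,d{\rm Vol}^{\l}(y).
\]
For the first expectation, assumption \textbf{ii)} delivers a $C^{1}$ vector field ${\rm z}_T^{\l,j}$ on $\M$, defined in analogy with (\ref{z-T-lam-1}) and (\ref{z-T-lam-2}) as the conditional expectation of $\wt{\phi}_{\l}^{j-1}\cdot D\pi(\lfloor{\rm u}_T\rceil^{\l})^{(1)}_{\l}$ given $\lfloor{\rm x}_T\rceil^{\l}=y$, with $\overline{\Phi}_{\l}^{j}(Y)(y)=\langle Y(y),{\rm z}_T^{\l,j}(y)\rangle_{\l}$. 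Taking $Y=\nabla^{\l}f$ and applying the classical integration-by-parts formula on $\M$, which is legitimate since $f\in C_c^{\infty}(\M)$ and the integrand is $C^{1}$ on its support, converts the first expectation into
\[
-\int_{\M}f(y)\!\left({\rm Div}^{\l}{\rm z}_T^{\l,j}(y)+\langle{\rm z}_T^{\l,j}(y),\nabla^{\l}\ln p^{\l}(T,x,y)\rangle_{\l}\right)\!p^{\l}(T,x,y)\,d{\rm Vol}^{\l}(y).
\]
Summing the two contributions produces exactly (\ref{p-diff-induction-i}) at $i=j$ with $\phi_{\l}^{j}$ given by the conditional expectation of $(\wt{\phi}_{\l}^{j-1})^{(1)}_{\l}$ minus the two divergence terms, exactly as anticipated by (\ref{phi-2-1-rel}) at the general level.

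The main obstacle is the very first step, the differentiation-under-the-expectation, which requires uniform-in-$\l$ domination of the difference quotients. Assumption \textbf{i)} together with Proposition \ref{est-norm-u-t-j} provide exactly this. The non-compactness of $\M$ creates no trouble for the integration-by-parts because $f$ has compact support, so no boundary term appears.
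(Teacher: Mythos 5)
Your proposal is correct and follows essentially the same route as the paper's own proof: derive the stochastic representation $\left(\int f\, p^{\l}\, d{\rm Vol}^{\l}\right)^{(j-1)}_{\l}=\overline{\E}\big(f(\lfloor{\rm x}_T\rceil^{\l})\,\wt{\phi}_{\l}^{j-1}\big)$ from the inductive hypothesis, differentiate under the expectation sign (justified by \textbf{i)} and Proposition \ref{est-norm-u-t-j}), split by the Leibniz rule into the $(\wt{\phi}_{\l}^{j-1})^{(1)}_{\l}$ term and the $\Phi_{\l}^{j}(\nabla f,w)$ term, handle the first by the tower property and the second by \textbf{ii)} plus classical integration by parts against the $C^1$ field ${\rm z}_T^{\l,j}$, arriving at exactly the expression (\ref{ov-phi}). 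The only cosmetic difference is that you spell out the Vitali-type domination argument behind the interchange of $\partial_{\l}$ and $\overline{\E}$, which the paper simply attributes to \textbf{i)}.
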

\begin{proof}
By the inductive assumption, \begin{align}\label{ch6-DUES}
\left(\int_{\M} f(y)p^{\l}(T, x, y)\ d{\rm Vol}^{\l}(y)\right)^{(j-1)}_{\l}\!=\ \ov{\E}\left (f(\lfloor {\rm x}_{T}\rceil^{\l})\wt{\phi}_{\l}^{j-1}(T, x, w)\right). 
\end{align}
If  {\bf i)} is true, we can differentiate under the expectation sign of (\ref{ch6-DUES}). This gives 
\begin{align*}
\left(\int_{\M} f(y)p^{\l}(T, x, y)\ d{\rm Vol}^{\l}(y)\right)^{(j)}_{\l}=\; \overline{\E}\left(f( \lfloor {\rm x}_T \rceil^{\l}(w))(\wt{\phi}_{\l}^{j-1})^{(1)}_{\l}(T, x, w)\right)+\overline{\E}\left(\Phi_{\l}^j(\nabla f, w)\right).
\end{align*}
The property {\bf ii)}  implies 
\begin{equation}\label{z-T-lam-j}
{\rm z}_T^{\l, j}(y) :=\; \overline{\E}\left(\left.\wt{\phi}_{\l}^{j-1}(T, x, w)\cdot D\pi(\lfloor {\rm{u}}_T\rceil^{\l})^{(1)}_{\l}(w)\right| \lfloor {\rm x}_T \rceil^{\l}(w)=y\right)
\end{equation}
is a $C^1$ vector field on $\M$ such that 
\[
\overline{\Phi}_{\l}^j(Y)(y)=\big\langle Y(y), {\rm z}_T^{\l, j}(y)\big\rangle_{\l}. 
\]
In particular, we have
\begin{align*}
\overline{\E}\left(\Phi_{\l}^j(\nabla f, w)\right)
&=\int_{\M}\big\langle\nabla_y^{\l} f(y),  p^{\l}(T, x, y){\rm z}_T^{\l, j}(y)\big\rangle_{\l}\ d{\rm Vol}^{\l}(y)\\
&=\!-\int_{\M} f(y)\!\left({\rm Div}^{\l}{\rm z}_T^{\l, j}(y)\!+\!\big\langle {\rm z}_T^{\l, j}(y), \nabla^{\l}\!\ln p^{\l}(T, x, y)\big\rangle_{\l}\right)\! p^{\l}(T, x, y)\ d{\rm Vol}^{\l}(y). 
\end{align*}
This means a measurable candidate $\phi^{j}_{\l}$ for  (\ref{p-diff-induction-i}) at $i=j$ is 
\begin{align}
\notag\ov{\phi}_{\l}(T, x, y):=&\; \overline{\E}\left(\left. (\wt{\phi}_{\l}^{j-1})^{(1)}_{\l}(T, x, w)\right| {\lfloor {\rm x}_T \rceil^{\l}(w)}=y\right)\\
\label{ov-phi}&-\left({\rm Div}^{\l}{\rm z}_T^{\l, j}(y)+\big\langle {\rm z}_T^{\l, j}(y), \nabla^{\l}\ln p^{\l}(T, x, y) \big\rangle_{\l}\right).
\end{align}
\end{proof}

We will show  $\ov{\phi}_{\l}(T, x, \cdot)$ given in Claim \ref{cla-1-11-ov-phi}  coincides with $\phi_{\l}^{j}(T, x, \cdot)$ defined by (\ref{-phi-induction-i}).  For any smooth bounded vector field ${\rm V}$ on $\M$, let  $\{F^s\}_{s\in \Bbb R}$ be the flow it generates.  As we did for $\overline{\Phi}_{\l}^1$ in Section 5 (see Lemma \ref{C-1-fun-phi-Y}), we will  prove the following  in verifying {\bf ii)}.  
\begin{itemize}
\item[{\bf iii)}] For any $y\in \M$,   $s\mapsto \overline{\Phi}_{\l}^j(Y)(F^sy)$  is differentiable at $s=0$ and the differential $(\overline{\Phi}_{\l}^j(Y)(F^sy))'_0$
varies continuously in $y$.  Moreover, \begin{align*}
\ \ \ \ \ \ \ \ \ \ &(\overline{\Phi}_{\l}^j(Y)(F^sy))'_0\\ 
\ \ \ \ \ \ \ \ &\  =\ \overline{\Phi}_{\l}^j(\nabla_{\rm V}^{\l} Y)(y)+\overline{\E}\left(\big\langle Y(\lfloor {\rm x}_{T} \rceil^{\l}(w)), \nabla^{\l}_{T, {\rm V}, \mathtt{s}}\big(\wt{\phi}_{\l}^{j-1}(T, x, w) D\pi(\lfloor {\rm u}_T\rceil^{\l})^{(1)}_{\l}(w)\big)\big\rangle_{\l} \right.\\
\ \ \ \ \ \ \ \ & \ \ \ \ \left.\left.+\ \big\langle Y(\lfloor {\rm x}_{T} \rceil^{\l}(w)), \big(\wt{\phi}_{\l}^{j-1}(T, x, w) D\pi(\lfloor {\rm u}_T\rceil^{\l})^{(1)}_{\l}(w)\big)\big\rangle_{\l}\oa{\mathcal{E}}_{T, {\rm V}, {\mathtt s}}(w) \right|  \lfloor {\rm x}_T \rceil^{\l}(w)=y\right),  
\end{align*}
where the path-wise differential $\nabla^{\l}_{T, {\rm V}, \mathtt{s}}\big(\wt{\phi}_{\l}^{i-1}\!(T, x, w)D\pi(\lfloor {\rm u}_T\rceil^{\l})^{(1)}_{\l}(w)\big)$ will be clarified later and it satisfies 
\begin{align*}
\ \ \ \  \ \ \ \ \ \nabla^{\l}_{T, {\rm V}, \mathtt{s}}\big(\wt{\phi}_{\l}^{i-1}(T, x, w)D\pi(\lfloor {\rm u}_T\rceil^{\l})^{(1)}_{\l}(w)\big)=&\ \big(\nabla^{\l}_{T, {\rm V}, \mathtt{s}}\wt{\phi}_{\l}^{i-1}(T, x, w)\big)\cdot D\pi(\lfloor {\rm u}_T\rceil^{\l})^{(1)}_{\l}(w)\\
&+\wt{\phi}_{\l}^{i-1} (T, x, w)\cdot \big(\nabla^{\l}_{T, {\rm V}, \mathtt{s}}D\pi(\lfloor {\rm u}_T\rceil^{\l})^{(1)}_{\l}(w)\big). 
\end{align*}
\end{itemize}

\begin{cla}\label{Cla-i-ii-} Assume {\bf i)-iii)} are true. Then $\ov{\phi}_{\l}(T, x, \cdot)=\phi_{\l}^j(T, x, \cdot)$. 
\end{cla}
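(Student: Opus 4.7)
The plan is to match the two candidate formulas term by term, showing that $\ov{\phi}_{\l}(T,x,y)$ from (\ref{ov-phi}) equals $\phi_{\l}^{j}(T,x,y) = \ov{\E}(\wt\phi_{\l}^{j}(T,x,w)\mid\lf{\rm x}_T\rc^{\l}(w)=y)$, where $\wt\phi_{\l}^{j}$ is given by the recursive formula (\ref{wt-phi-induction-i}). Comparing both expressions, the first term $\ov{\E}((\wt\phi_{\l}^{j-1})_{\l}^{(1)}\mid\lf{\rm x}_T\rc^{\l}=y)$ is common to both, so the content of the claim reduces to the identity
\begin{equation*}
-{\rm Div}^{\l}{\rm z}_T^{\l,j}(y)-\big\langle{\rm z}_T^{\l,j}(y),\nabla^{\l}\ln p^{\l}(T,x,y)\big\rangle_{\l}=\ov{\E}\Big(-\big\langle\nabla^{\l}_{T,\mathtt{s}}\wt\phi_{\l}^{j-1},D\pi(\lf{\rm u}_T\rc^{\l})^{(1)}_{\l}\big\rangle+\wt\phi_{\l}^{j-1}\wt\phi_{\l}^{1}\,\Big|\,\lf{\rm x}_T\rc^{\l}=y\Big).
\end{equation*}

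First I would handle the easier term. Since $\nabla^{\l}\ln p^{\l}(T,x,y)$ is deterministic in $y$, factoring it out of the conditional expectation defining ${\rm z}_T^{\l,j}(y)$ in (\ref{z-T-lam-j}) gives $\langle{\rm z}_T^{\l,j}(y),\nabla^{\l}\ln p^{\l}(T,x,y)\rangle_{\l}=\ov{\E}(\wt\phi_{\l}^{j-1}\langle D\pi(\lf{\rm u}_T\rc^{\l})^{(1)}_{\l},\nabla^{\l}\ln p^{\l}(T,x,\lf{\rm x}_T\rc^{\l})\rangle_{\l}\mid\lf{\rm x}_T\rc^{\l}=y)$, which is exactly $-\ov{\E}(\wt\phi_{\l}^{j-1}\cdot({\rm IV})\mid\lf{\rm x}_T\rc^{\l}=y)$ in the notation of (\ref{wt-phi-1}).

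Next I would compute ${\rm Div}^{\l}{\rm z}_T^{\l,j}(y)$ by choosing an orthonormal frame $\{e_i\}$ at $y$ and applying {\bf iii)} with $Y=e_i$ to evaluate $\langle e_i,\nabla^{\l}_{e_i}{\rm z}_T^{\l,j}(y)\rangle_{\l}=(\ov{\Phi}_{\l}^{j}(e_i)(F^{s}y))'_{0}-\ov{\Phi}_{\l}^{j}(\nabla_{e_i}^{\l}e_i)(y)$. Summing over $i$ and using the Leibniz rule stated at the end of {\bf iii)} splits the conditional expectation into two pieces: the piece carrying the derivative on $\wt\phi_{\l}^{j-1}$ produces the trace $\sum_i\langle e_i,(\nabla^{\l}_{T,e_i,\mathtt{s}}\wt\phi_{\l}^{j-1})D\pi(\lf{\rm u}_T\rc^{\l})^{(1)}_{\l}\rangle$, which by the very definition of the path-wise gradient $\nabla^{\l}_{T,\mathtt{s}}\wt\phi_{\l}^{j-1}$ (to be fixed inductively so as to make this splitting consistent with (\ref{wt-phi-induction-i})) equals $\langle\nabla^{\l}_{T,\mathtt{s}}\wt\phi_{\l}^{j-1},D\pi(\lf{\rm u}_T\rc^{\l})^{(1)}_{\l}\rangle$; the other piece equals $\wt\phi_{\l}^{j-1}$ times the integrand of the divergence formula of Lemma \ref{reg-Z-lam-1}, which is exactly $\wt\phi_{\l}^{j-1}\cdot\big(({\rm I})+({\rm II})+({\rm III})\big)$ in the notation of (\ref{wt-phi-1}).

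Combining the three computations yields
\begin{equation*}
-{\rm Div}^{\l}{\rm z}_T^{\l,j}(y)-\big\langle{\rm z}_T^{\l,j}(y),\nabla^{\l}\ln p^{\l}\big\rangle_{\l}=\ov{\E}\Big(-\langle\nabla^{\l}_{T,\mathtt{s}}\wt\phi_{\l}^{j-1},D\pi(\lf{\rm u}_T\rc^{\l})^{(1)}_{\l}\rangle+\wt\phi_{\l}^{j-1}\big[({\rm I})+({\rm II})+({\rm III})+({\rm IV})\big]\,\Big|\,\lf{\rm x}_T\rc^{\l}=y\Big),
\end{equation*}
and since $({\rm I})+({\rm II})+({\rm III})+({\rm IV})=\wt\phi_{\l}^{1}$, this is precisely the required identity, so $\ov{\phi}_{\l}(T,x,y)=\phi_{\l}^{j}(T,x,y)$. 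The main subtle point is that the formula (\ref{wt-phi-induction-i}) is not a stand-alone definition: the symbol $\nabla^{\l}_{T,\mathtt{s}}\wt\phi_{\l}^{j-1}$ has to be \emph{defined} so that the Leibniz rule stated in {\bf iii)} holds, and one must verify inductively that the object arising from the derivative of $\wt\phi_{\l}^{j-1}\circ\lf{\bf F}^{s}\rc^{\l}$ (together with the diagonal terms from $D\pi(\lf{\rm u}_T\rc^{\l})^{(1)}_{\l}\circ\lf{\bf F}^{s}\rc^{\l}$) splits cleanly as a path-wise gradient paired with $D\pi(\lf{\rm u}_T\rc^{\l})^{(1)}_{\l}$; this is where the careful SDE-level constructions of Section \ref{Sec-BM-SF} and the path-wise gradient formulas from Lemma \ref{all-diff-at-s-0}, Proposition \ref{u-s-t-l-1} and Corollary \ref{cor-u-s-t-l-1} are used. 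Once this Leibniz structure is in place, the claim reduces, as above, to a clean bookkeeping identity.
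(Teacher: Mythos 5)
Your proposal follows the paper's proof in all essentials: you use the $C^1$ structure from \textbf{ii)} together with \textbf{iii)} to identify $\nabla_{{\rm V}}{\rm z}_T^{\l,j}$, compute ${\rm Div}^{\l}{\rm z}_T^{\l,j}$ as a trace, split via the Leibniz structure of the path-wise gradient, and combine with the $\langle{\rm z}_T^{\l,j},\nabla^{\l}\ln p^{\l}\rangle_{\l}$ term using (\ref{wt-phi-1}). One local slip: the integrand in Lemma \ref{reg-Z-lam-1}'s divergence formula is $-({\rm I})-({\rm II})-({\rm III})$ (since $({\rm I})=-{\rm tr}$, etc.), so the ``other piece'' you isolate equals $-\wt\phi_{\l}^{j-1}\cdot(({\rm I})+({\rm II})+({\rm III}))$ rather than $+\wt\phi_{\l}^{j-1}\cdot(({\rm I})+({\rm II})+({\rm III}))$ as written; your final displayed identity is nevertheless the correct one, so this is only a sign typo in that intermediate sentence, not a gap in the argument.
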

\begin{proof}By {\bf ii)}, both $\overline{\Phi}_{\l}^j(Y)(y)$ and ${\rm z}_T^{\l, j}(y)$ vary  $C^1$ in $y$. Hence 
\[
(\overline{\Phi}_{\l}^j(Y)(F^sy))'_0=\nabla^{\l}_{{\rm V}}(\overline{\Phi}_{\l}^j(Y))(y)=\big\langle \nabla^{\l}_{{\rm V}}Y(y),  {\rm z}_T^{\l, j}(y)\big\rangle_{\l}+ \big\langle Y(y),  \nabla^{\l}_{{\rm V}}{\rm z}_T^{\l, j}(y)\big\rangle_{\l}. 
\] 
Comparing this with the expression of $\big(\overline{\Phi}_{\l}^j(Y)(F^sy)\big)'_0$ in {\bf iii)}  gives  \begin{align}
&\nabla^{\l}_{{\rm V}(y)}{\rm z}_T^{\l, j}(y)=\ov{\E}\left(\nabla^{\l}_{T, {\rm V}, \mathtt{s}}\big(\wt{\phi}_{\l}^{j-1}(T, x, w)\cdot D\pi(\lfloor {\rm u}_T\rceil^{\l})^{(1)}_{\l}(w)\big)\right.\notag\\
&\left.\left.\ \ \ \ \ \ \ \ \ \ \ \ \ \ \ \ \ \ \ \  \ \ \ +\big(\wt{\phi}_{\l}^{j-1}(T, x, w)\cdot D\pi(\lfloor {\rm u}_T\rceil^{\l})^{(1)}_{\l}(w)\big)\oa{\mathcal{E}}_{T, {\rm V}, {\mathtt s}}(w)\right|  \lfloor {\rm x}_T \rceil^{\l}({w})=y\right).\label{DIV-Z-j}
\end{align}
Following the  argument in the proof of Lemma  \ref{reg-Z-lam-1} and then using (\ref{wt-phi-1}), we obtain
\begin{align*}
&{\rm Div}^{\l}{\rm z}_T^{\l, j}(y)\\
&=\ov{\E}\left({\rm tr}\big({\rm V}\mapsto \nabla^{\l}_{T, {\rm V}, \mathtt{s}}\big(\wt{\phi}_{\l}^{j-1}(T, x, w)\cdot D\pi(\lfloor {\rm u}_T\rceil^{\l})^{(1)}_{\l}(w)\big)\big)\right.\\
&\ \ \ \ \ \ \ \ +\wt{\phi}_{\l}^{j-1}(T, x, w)\cdot\big\langle D\pi(\lfloor {\rm u}_T\rceil^{\l})^{(1)}_{\l}(w), \frac{1}{2}\!\int_{0}^{T}\!\mathtt{s}(T\!-\!\tau) \lfloor{\rm u}_{T}\rceil^{\l}(\lfloor{\rm u}_\tau\rceil^{\l})^{-1}{\rm Ric}_{\lfloor{\rm u}_{\tau}\rceil^{\l}}^{-1}d\oa{B}_{\tau}\big\rangle_{\l}
\\
&\ \ \ \ \ \ \ \ +\left.\left. \wt{\phi}_{\l}^{j-1}(T, x, w)\cdot\big\langle D\pi(\lfloor {\rm u}_T\rceil^{\l})^{(1)}_{\l}(w),  -\frac{1}{2}\lfloor{\rm u}_T\rceil^{\l}\!\int_{0}^{T}\!\mathtt{s}'(T\!-\!\tau)d\oa{B}_{\tau}\big\rangle_{\l}\ \ \right|  \lfloor {\rm x}_T \rceil^{\l}({w})=y\right)\\
&=\ov{\E}\left(\left.\big\langle \nabla^{\l}_{T, \mathtt{s}}\wt{\phi}_{\l}^{j-1}(T, x, w), D\pi(\lfloor {\rm{u}}_T\rceil^{\l})^{(1)}_{\l}(w)\big\rangle \right| \lfloor {\rm x}_T \rceil^{\l}(w)=y\right)\\
&-\ov{\E}\left(\left.{\wt{\phi}}_{\l}^{j-1}\!(T, x, w)\big({\wt{\phi}}_{\l}^{1}(T, x, w)\!+\!\big\langle D\pi(\lfloor {\rm u}_T\rceil^{\l})^{(1)}_{\l}, \nabla^{\l}\ln p^{\l}(T, x, \lfloor {\rm x}_T \rceil^{\l})\big\rangle_{\l}\big)\right|  \lfloor {\rm x}_T \rceil^{\l}(w)=y\right).\end{align*}
Note that 
\begin{align*}
&\langle {\rm z}_T^{\l, j}(y), \nabla^{\l}\ln p^{\l}(T, x, y) \rangle_{\l}\\
&\ \ \ \ \ \ \ =\ov{\E}\left(\left.{\wt{\phi}}_{\l}^{j-1}(T, x, w)\big\langle D\pi(\lfloor {\rm u}_T\rceil^{\l})^{(1)}_{\l}(w), \nabla^{\l}\ln p^{\l}(T, x, \lfloor {\rm x}_T \rceil^{\l}(w))\big\rangle_{\l}\right| \lfloor {\rm x}_T \rceil^{\l}(w)=y\right).
\end{align*}
Reporting these two expressions  in (\ref{ov-phi}), we obtain  
\begin{align*}
\ov{\phi}_{\l}(T, x, y)=\ &\ov{\E}\left(\big({\wt{\phi}}_{\l}^{j-1}\!(T, x, w)\big)^{(1)}_{\l}\!-\!\big\langle \nabla^{\l}_{T, \mathtt{s}}\wt{\phi}_{\l}^{j-1}\!(T, x, w), D\pi(\lfloor {\rm{u}}_T\rceil^{\l})^{(1)}_{\l}(w)\big\rangle\right.\\
&\ \ \ \  \ \ \ \ \ \ \ \ \  \ \ \ \ \ \ \ \ \ \ \ \ \ \left.\left.+{\wt{\phi}}_{\l}^{j-1}(T, x, w){\wt{\phi}}_{\l}^{1}(T, x, w)\right| \lfloor {\rm x}_T \rceil^{\l}(w)=y\right)\\
=\ &\phi_{\l}^j(T, x, y).
\end{align*}
\end{proof}

To study the  continuity of $\phi^j_{\l}(T, x, y)$ in $(T, y)$,  we first show the following. 
\begin{itemize}
\item[{\bf iv)}]For all $x\in \M$, $T\in \Bbb R_+$, 
\[
y\mapsto \overline{\E}\left(\left. (\wt{\phi}_{\l}^{j-1})^{(1)}_{\l}(T, x, w)\right| {\lfloor {\rm x}_T \rceil^{\l}(w)}=y\right)
\]
is continuous, locally  uniformly  in $T$ and $\l$. Moreover, there exist  $\underline{c}_{\l, j}$ (depending on $\|g^{\l}\|_{C^{j+2}}, \|\XX^{\l}\|_{C^{j+1}}$) and $\wt{c}^{\l, j}$  (depending on $j$,  $T, T_0$ and  $\|g^{\l}\|_{C^3}$)  such that
\begin{align*}
\ \ \ &\left|\overline{\E}_{\ov{\P}^{\l}_{x, y, T}}\left((\wt{\phi}_{\l}^{j-1})^{(1)}_{\l}(T, x, w)\right)\right|\\
&\ \ \ \ \ \leq (p^{\l}(T, x, y))^{-j}\underline{c}_{\l, j}\left(\big(\frac{1}{T}d_{\wt{g}^{\l}}(x, y)+\frac{1}{\sqrt{T}}\big)^j+1\right)\cdot  e^{\wt{c}^{\l, j}(1+d_{\wt{g}^{\l}}(x, y))}.
\end{align*}
\item[{\bf v)}] For all  $x\in \M$, $T\in \Bbb R_+$, the mappings   $y\mapsto {\rm z}_T^{\l, j}(y), y\mapsto {\rm Div}^{\l}{\rm z}_T^{\l, j}(y)$ are continuous, locally  uniformly in $T$ and $\l$.  Moreover,  there are $\underline{c}'_{\l, j}$ (depending on $\|g^{\l}\|_{C^{j+2}}, \|\XX^{\l}\|_{C^{j+1}}$) and $\ov{c}^{\l, j}$ depending on $j$, $T, T_0$ and  $\|g^{\l}\|_{C^3}$  such that   \begin{align*}
\ \ \ \ \ \ \ \ \ &\big|{\rm z}_T^{\l, j}(y)\big|, \ \big|{\rm Div}^{\l}{\rm z}_T^{\l, j}(y)\big|\\ 
\ \ \ \ \ \ \ \ \ &\ \ \ \ \ \ \ \leq (p(T, x, y))^{-j}\underline{c}'_{\l, j} \left(\big(\frac{1}{T}d_{\wt{g}^{\l}}(x, y)+\frac{1}{\sqrt{T}}\big)^j+1\right)\cdot  e^{\ov{c}^{\l, j}(1+d_{\wt{g}^{\l}}(x, y))}.
 \end{align*}
 \item[{\bf vi)}] There is $\underline{c}_{\l, (0, j)}(q)$ depending  on $j$, $m, q$, $T, T_0$, $\|g^{\l}\|_{C^{j+2}}$ and  $\|\XX^{\l}\|_{C^{j+1}}$ such that  \begin{align}\label{norm-phi-j-lam-0}
\left\|\phi_{\l}^j(T, x, \cdot)\right\|_{L^q}\leq \underline{c}_{\l, (0, j)}(q), \ \forall q\geq 1. \end{align}
\end{itemize} 

\begin{cla}\label{cont-phi-j-T-y} Assume {\bf i)-vi)}. For every  $x\in \M$, $(T, y)\mapsto \phi^j_{\l}(T, x, y)$ is continuous, locally uniformly in $\l$. \end{cla}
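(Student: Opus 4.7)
The plan is to split joint continuity of $(T,y)\mapsto \phi_\l^j(T,x,y)$, locally uniformly in $\l$, into two parts: (a) continuity in $y$ for fixed $T$, locally uniformly in $(T,\l)$, and (b) continuity in $T$ for each fixed $y$, locally uniformly in $\l$. These together yield joint continuity by a triangle inequality once the equicontinuity in $y$ from (a) is combined with the pointwise $T$-convergence from (b).

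Part (a) is immediate from the identification $\phi_\l^j=\ov{\phi}_\l$ furnished by Claim~\ref{Cla-i-ii-}: the three summands in (\ref{ov-phi}) are continuous in $y$ locally uniformly in $(T,\l)$ by {\bf iv)} for the conditional expectation term, by {\bf v)} for ${\rm z}_T^{\l,j}$ and ${\rm Div}^\l {\rm z}_T^{\l,j}$, and by the $C^{k,\iota}$-regularity of $p^\l$ together with Lemma~\ref{Hs2-ST} for $\nabla^\l\ln p^\l(T,x,\cdot)$. Uniformity in $\l$ is ensured by shrinking $\mathcal{V}_g$ so that $\|g^\l\|_{C^{j+2}}$ and $\|\XX^\l\|_{C^{j+1}}$ remain uniformly controlled.

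Part (b) is proved by imitating the $T$-continuity argument at the end of the proof of Theorem~\ref{regu-p-1st}. Starting from the semigroup identity
\[
\E\bigl(f({\rm x}^\l_{T'})\bigr)-\E\bigl(f({\rm x}^\l_T)\bigr)=\int_T^{T'}\E\bigl(\Delta^\l f({\rm x}^\l_t)\bigr)\,dt,\qquad f\in C_c^\infty(\M) \text{ supported near } y,
\]
differentiate $j$ times in $\l$; via (\ref{p-diff-induction-i}) and (\ref{heat-induction-1}) (available at order $j$ by Claim~\ref{cla-1-11-ov-phi} and at lower orders by the inductive hypothesis) one obtains an identity of the shape
\[
\int_\M f(y)\,\bigl((p^\l)^{(j)}_\l(T',x,y)-(p^\l)^{(j)}_\l(T,x,y)\bigr)\,d{\rm Vol}^\l(y)=(\mathrm{A})+(\mathrm{B}),
\]
where $(\mathrm{A})$ collects lower-order differences $(p^\l)^{(i)}_\l(T',x,\cdot)-(p^\l)^{(i)}_\l(T,x,\cdot)$ with $i<j$, integrated against bounded $\l$-derivatives of $\rho^\l$, and $(\mathrm{B})$ collects time integrals of the form $\int_T^{T'}\!\int_\M (\Delta^\l f)^{(i)}_\l\,\phi^{j-i}_\l(t,x,\cdot)\,p^\l(t,x,\cdot)\,d{\rm Vol}^\l\,dt$ for $0\le i\le j$ (plus analogous $\rho^\l$-derivative terms). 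The terms in $(\mathrm{A})$ tend to $0$ as $T'\to T$ by the inductive instance of the present claim (applied through Lemma~\ref{weak-reg-p-geq1} to convert the continuity of $\phi_\l^i$ into continuity of $(p^\l)^{(i)}_\l$ for $i<j$), and the terms in $(\mathrm{B})$ vanish as $T'\to T$ by H\"older's inequality together with the $L^q$ bound {\bf vi)} and its inductively available lower-order analogs, which provide an integrable majorant uniform in $t$ on a neighborhood of $T$. Combining this with the continuity in (a) and the arbitrariness of $f$ forces $\phi^j_\l(T',x,y)\to\phi^j_\l(T,x,y)$ as $T'\to T$, locally uniformly in $\l$, which is (b).

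The main obstacle is the bookkeeping in part (b): after $j$-fold Leibniz differentiation of the semigroup identity one must verify that every cross-term involving the various $(p^\l)^{(i)}_\l$, $(\ln\rho^\l)^{(i')}_\l$ and $(\Delta^\l f)^{(i'')}_\l$ is controlled by the available estimates. This relies in an essential way on the pointwise bound {\bf 0)} (namely (\ref{esti-p-lam-der-2})), the $L^q$-bound {\bf vi)}, and the fully inductive hypothesis at all orders $i<j$, to guarantee $L^1$-integrability of each integrand uniformly in $t$ on a small interval around $T$.
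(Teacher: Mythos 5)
Your strategy matches the paper's: split joint continuity into (a) continuity in $y$, locally uniform in $(T,\l)$, and (b) continuity in $T$ for fixed $y$, locally uniform in $\l$; part (a) follows from {\bf iv)} and {\bf v)} via (\ref{phi-i-iden}), and part (b) imitates the $T$-continuity argument at the end of the proof of Theorem~\ref{regu-p-1st}. Part (a) is fine.

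There is, however, a circularity in your formulation of part (b). You write the differentiated semigroup identity in terms of $(p^\l)^{(j)}_\l(T',x,\cdot)-(p^\l)^{(j)}_\l(T,x,\cdot)$ and justify this by saying that (\ref{heat-induction-1}) ``at order $j$'' is ``available by Claim~\ref{cla-1-11-ov-phi}.'' That is not correct: Claim~\ref{cla-1-11-ov-phi} only delivers the \emph{integral} identity (\ref{p-diff-induction-i}) at order $i=j$; the passage from (\ref{p-diff-induction-i}) to the \emph{pointwise} relation (\ref{heat-induction-1}) at order $j$ (and with it the existence of the continuous function $(p^\l)^{(j)}_\l$) is carried out inside the proof of Lemma~\ref{weak-reg-p-geq1}, and it relies precisely on the continuity of $\phi^j_\l$ established in Claim~\ref{cont-phi-j-T-y}. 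So at the point where you are proving this claim, $(p^\l)^{(j)}_\l$ does not yet exist as a pointwise object; appealing to it would assume part of what you are trying to prove. The paper sidesteps this by never isolating $(p^\l)^{(j)}_\l$: it writes the left side of the differentiated semigroup identity directly as
\[
\int_{\M} f(z)\big(\phi^{j}_{\l}(T',x,z)-\phi^{j}_{\l}(T,x,z)\big)p^{\l}(T',x,z)\,d{\rm Vol}^{\l}(z)
+\int_{\M} f(z)\,\phi^{j}_{\l}(T,x,z)\big(p^{\l}(T',x,z)-p^{\l}(T,x,z)\big)\,d{\rm Vol}^{\l}(z),
\]
which only invokes (\ref{p-diff-induction-i}) with a fixed test function, and then sends $T'\to T$ using {\bf vi)} to kill the right side and the continuity of $p^\l$ in $T$ (together with {\bf vi)} again) to kill the second term. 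With the arbitrariness of $f$ and the $y$-continuity from part (a) this forces the first term to vanish and gives (b). Your argument should be rewritten in this form; once you replace $(p^\l)^{(j)}_\l$ by the products $\phi^j_\l\,p^\l$ and drop the appeal to (\ref{heat-induction-1}) at order $j$, the remaining estimates (using the inductive hypothesis for $i<j$, the $L^q$ bound {\bf vi)}, and {\bf 0)}) go through as you describe.
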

\begin{proof}To conclude the continuity of $\phi^j_{\l}(T, x, y)$ in $(T, y)$, we verify the following.  \\
\indent 1) For all $x\in \M$, $T\in \Bbb R_+$, $y\mapsto \phi^j_{\l}(T, x, y)$ is continuous, locally  uniformly  in $T$ and $\l$.\\
\indent 2) For each $x, y$ fixed,   $T\mapsto \phi^j_{\l}(T, x, y)$ is continuous, locally uniformly in $\l$.

By the inductive assumption,  all $(p^{\l})^{(i)}(T, x, y)$, $i<j$, exist, are continuous in $(T, y)$  and satisfy the bound estimation (\ref{esti-p-lam-der-k}).  By  {\bf i)-iii), }
\begin{align}
\phi^j_{\l}(T, x, y)=&\; \overline{\E}\left(\left.(\wt{\phi}_{\l}^{j-1})^{(1)}_{\l}(T, x, w)\right| {\lfloor {\rm x}_T \rceil^{\l}(w)}=y\right)\notag\\
\label{phi-i-iden}&-\left({\rm Div}^{\l}{\rm z}_T^{\l, j}(y)+\big\langle {\rm z}_T^{\l, j}(y), \nabla^{\l}\ln p^{\l}(T, x, y) \big\rangle_{\l}\right)
\end{align}
satisfies (\ref{p-diff-induction-i}). By {\bf iv)} and  {\bf v)}, for each $x\in \M$,  we have that the mapping $y\mapsto \phi_{\l}^j(T, x, y)$ is continuous, locally  uniformly   in $T$. 

For 2), we follow  the proof of Theorem \ref{regu-p-1st} for the $k=3$ case.  Simply denote by $({\rm x}^{\l}, {\rm u}^{\l})$ the stochastic pair which defines the Brownian motion on $(\M, \wt{g}^{\l})$ starting from $x$. Then, for any  $f\in C^{\infty}_c(\M)$ with support contained in a small neighborhood of $y$ and  $T'>T$, 
\[
\E \big(f({\rm x}^{\l}_{T'})\big)-\E \big(f({\rm x}^{\l}_T)\big)=\int_{T}^{T'}\E\big(\Delta^{\l} f({\rm x}^{\l}_t)\big)\ dt. 
\]
Take the $j$-th differential in $\l$ of both sides and use (\ref{p-diff-induction-i}). We obtain 
\begin{align*}
&\int_{\M} f(z)\left(\phi^{j}_{\l}(T', x, z)-\phi^{j}_{\l}(T, x, z)\right)p^{\l}(T', x, z)\ d{\rm Vol}^{\l}(z)\\
& \ +\int_{\M} f(z)\phi^{j}_{\l}(T, x, z)\left(p^{\l}(T', x, z)-p^{\l}(T, x, z)\right)\ d{\rm Vol}^{\l}(z)\\
&=\int_{T}^{T'} \int_{\M}\sum_{i=0}^{j}\left(\begin{matrix}j\\ i\end{matrix}\right)(\Delta^{\l}f(z))^{(i)}_{\l}\left(p^{\l}(t, x, z)\rho^{\l}(z)\right)^{(j-i)}_{\l}\ d{\rm Vol}^{0}(z)\ dt. 
\end{align*}
Using (\ref{norm-phi-j-lam-0}),  we deduce  that 
\[
\lim_{T'\to T}\int_{\M} f(z)\left(\phi^{j}_{\l}(T', x, z)-\phi^{j}_{\l}(T, x, z)\right)p^{\l}(T', x, z)\ d{\rm Vol}^{\l}(z)=0. 
\]
Since  $\phi^{j}_{\l}(T, x, z)$ is continuous in $z$ and $\l$, locally  uniformly  in $T$,   and $f$ is arbitrary, we must have
 $\lim_{T'\to T}\phi^{j}_{\l}(T', x, y)=\phi^{j}_{\l}(T, x, y)$, locally uniformly in $\l$. This shows 2) and finishes the proof of Claim \ref{cont-phi-j-T-y}. 
\end{proof}

\begin{cla}
Assume {\bf i)-vi)}. Then for any  $x\in \M$, $T\in \Bbb R_+$,   $\l\mapsto p^{\l}(T, x, \cdot)$ is $C^j$ in $C^{k, \iota}(\M)$ for some $\iota>0$.  The differential $(p^{\l})^{(j)}_{\l}(T, x, y)$ satisfies the equation
\begin{equation}\label{p-j-phi-j}
(p^{\l})^{(j)}_{\l}(T, x, y)=\phi_{\l}^{j}(T, x, y)p^{{\l}}(T, x, y)-(\rho^{\l}(y))^{-1}\sum_{i=0}^{j-1}\left(\begin{matrix}j\\ i\end{matrix}\right)(p^{\l})^{(i)}_{\l}(T, x, y)(\rho^{\l})^{(j-i)}_{\l}(y). 
\end{equation} 
Consequently,  $\phi^{j}_{\l}(T, x, \cdot)\in C^{k, \iota}(\M)$ as well.
\end{cla}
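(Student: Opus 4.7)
The plan is to reduce the claim directly to Lemma \ref{weak-reg-p-geq1} together with the parabolic regularity arguments already developed for the $k=3$ case of Theorem \ref{regu-p-1st}, then to read off the regularity of $\phi^j_\l$ from the identity (\ref{p-j-phi-j}).

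First, I would collect what the preceding claims give. By Claim \ref{cla-1-11-ov-phi}, assumption {\bf i)} lets us differentiate under the expectation in (\ref{ch6-DUES}), and assumption {\bf ii)} lets us apply integration by parts, producing a function $\ov{\phi}_\l(T,x,\cdot)$ satisfying (\ref{p-diff-induction-i}) at $i=j$. Claim \ref{Cla-i-ii-} then identifies $\ov{\phi}_\l$ with $\phi^j_\l$ defined by (\ref{-phi-induction-i}), provided {\bf iii)} holds, and Claim \ref{cont-phi-j-T-y} (using {\bf iv)}, {\bf v)}, {\bf vi)}) gives continuity of $(T,y)\mapsto \phi^j_\l(T,x,y)$ locally uniformly in $\l$. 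In particular, the hypotheses of Lemma \ref{weak-reg-p-geq1} are met at order $j$, so its conclusion applies: the differentials $(p^\l)^{(i)}_\l(T,x,\cdot)$ exist up to order $j$, are continuous on $\mathbb{R}_+\times \M$, and satisfy the recursion (\ref{heat-induction-1}). Solving (\ref{heat-induction-1}) for the top-order term yields precisely (\ref{p-j-phi-j}).

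Next, to upgrade continuity to $C^{k,\iota}(\M)$ and establish $C^j$ regularity in $\l$ in that topology, I would invoke exactly the bootstrap already carried out in the final paragraph of the proof of Lemma \ref{weak-reg-p-geq1}. Shrinking $\mathcal{V}_g$ so that $p^\l(T,x,\cdot)\in C^{k,\iota}(\M)$ uniformly in $\l$, I would argue inductively on $i\leq j$: assuming $(p^\l)^{(i')}_\l(T,x,\cdot)\in C^{k,\iota}$ on some bounded domain $\mathcal{D}_{i'}$ with $|(p^\l)^{(i')}_\l|_{0,2+\iota}<\infty$ for all $i'<i$, differentiating the heat equation $i$ times in $\l$ gives the distributional parabolic equation
\[
{\rm L}^{\l,\mathrm{w}}(p^\l)^{(i)}_\l(T,x,\cdot)=-\sum_{\wt\jmath=1}^{i}\binom{i}{\wt\jmath}({\rm L}^\l)^{(\wt\jmath)}_\l(p^\l)^{(i-\wt\jmath)}_\l(T,x,\cdot).
\]
The right-hand side lies in $C^{0,\iota}$ with bounded $|\cdot|_{2,\iota}$-norm on a suitably shrunken subdomain, so by Lemma \ref{weak-strong-same} the identity holds classically, and Lemma \ref{Friedman-lem-2} together with Lemma \ref{Friedman-lem} yields $(p^\l)^{(i)}_\l(T,x,\cdot)\in C^{k,\iota}$ with locally $\l$-uniform bounds. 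The same argument upgrades the continuity of $\l\mapsto (p^\l)^{(i)}_\l(T,x,\cdot)$ in $C(\M)$ (which comes from the continuity of $\phi^i_\l$ in $\l$) to continuity in $C^{k,\iota}(\M)$, giving the asserted $C^j$ regularity in $\l$.

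Finally, to see that $\phi^j_\l(T,x,\cdot)\in C^{k,\iota}(\M)$ it suffices to rewrite (\ref{p-j-phi-j}) as
\[
\phi^j_\l(T,x,y)=\frac{1}{p^\l(T,x,y)}\Bigl((p^\l)^{(j)}_\l(T,x,y)+(\rho^\l(y))^{-1}\!\!\sum_{i=0}^{j-1}\binom{j}{i}(p^\l)^{(i)}_\l(T,x,y)(\rho^\l)^{(j-i)}_\l(y)\Bigr),
\]
and note that $p^\l(T,x,\cdot)$ is strictly positive and $C^{k,\iota}$, that $\rho^\l$ and its $\l$-derivatives are $C^k$, and that each $(p^\l)^{(i)}_\l(T,x,\cdot)$ is $C^{k,\iota}$ by the previous paragraph. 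The main potential obstacle is the matching of weak and strong derivatives at the top order $i=j$, but this is exactly the content of Lemma \ref{weak-strong-same}, whose hypothesis $|r|_{2,\iota}<\infty$ is guaranteed by the inductive bounds $|(p^\l)^{(i')}_\l|_{0,2+\iota}<\infty$ for $i'<j$; everything else is a bookkeeping verification that the inductive regularity propagates one step further.
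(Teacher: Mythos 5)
Your proposal skips the verification of one of the explicit hypotheses of Lemma \ref{weak-reg-p-geq1}: that $\phi_{\l}^{j}(T,x,\cdot)$ is \emph{continuous in the $\l$-parameter}. You cite Claim \ref{cont-phi-j-T-y}, but that claim only establishes continuity of $(T,y)\mapsto \phi_{\l}^{j}(T,x,y)$ \emph{locally uniformly in $\l$}, which is the other hypothesis; it says nothing about continuity as $\l$ itself varies. Lemma \ref{weak-reg-p-geq1} needs both, and you cannot apply it before closing this gap. Your parenthetical remark later in the proof that continuity of $\l\mapsto(p^\l)^{(i)}_\l$ in $C(\M)$ ``comes from the continuity of $\phi^i_\l$ in $\l$'' invokes exactly the thing that is never proved.

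The paper fills this gap by an extra step that is entirely absent from your argument: it studies, for each test function $f$, the scalar function
\[
A_j(\l,T,x):=\left(\int_{\M} f(y)\,p^{\l}(T,x,y)\ d{\rm Vol}^{\l}(y)\right)^{(j)}_{\l},
\]
which is obtained by differentiating the stochastic representation of $A_1(\l,T,x)=\E\big(\langle \nabla^{\l}(f\circ\pi)(\lfloor{\rm u}_T\rceil^{\l}),(\lfloor{\rm u}_T\rceil^{\l})^{(1)}_{\l}\rangle_{\l}\big)$ another $j-1$ times in $\l$. The resulting expression involves $\{\nabla^{(i)}f\}_{i\le j}$, $\{(\lfloor{\rm x}_T\rceil^{\l})^{(i)}\}_{i\le j}$ and $\{(\lfloor{\rm u}_T\rceil^{\l})^{(i)}_{\l}\}_{i\le j}$, and Proposition \ref{est-norm-u-t-j} i) gives $L^q$-convergence of these processes as $\l$ varies, hence continuity of $\l\mapsto A_j(\l,T,x)$. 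Combining this with the already-established $y$-continuity of $\phi^j_\l$ uniformly in $\l$ yields the $\l$-continuity of $\phi^j_\l(T,x,y)$. This is the missing ingredient; once you supply it, the rest of your plan (applying Lemma \ref{weak-reg-p-geq1}, solving (\ref{heat-induction-1}) for the top term to get (\ref{p-j-phi-j}), and reading off $\phi^j_\l\in C^{k,\iota}$ from the strict positivity and $C^{k,\iota}$ regularity of $p^\l$) matches the paper.
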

\begin{proof} The function $\phi_{\l}^{j}(T, x, y)$ is continuous in $y$, uniformly in $\l$ by using {\bf iv), v)} and (\ref{phi-i-iden}). So, it is continuous  in $\l$ if for any $f\in C_c^{\infty}(\M)$, we have the continuity of 
\begin{equation*}
\l\mapsto \left(\int_{\M} f(y)p^{\l}(T, x, y)\ d{\rm Vol}^{\l}(y)\right)_{\l}^{(j)}=: A_j(\l, T, x).\end{equation*}
Note that 
\[
A_1(\l, T, x)=\E\left(\left\langle \nabla^{\l}_{\lfloor {\rm x}_T \rceil^{\l}(w)} (f\circ \pi)(\lfloor {\rm{u}}_T\rceil^{\l}(w)), (\lfloor {\rm{u}}_T\rceil^{\l})^{(1)}_{\l}(w) \right\rangle_{\l}\right). 
\]
Differentiating $A_1(\l, T, x)$ in $\l$ for $j-1$ times, we get a similar expression  $A_j(\l, T, x)$ of  a  combination of inner products  involving  $\{\nabla^{(i)}f\}_{i\leq j}$, $\{(\lfloor {\rm x}_T \rceil^{\l})^{(i)}\}_{i\leq j}$ and $\{(\lfloor {\rm{u}}_T\rceil^{\l})^{(i)}_{\l}\}_{i\leq j}$. Following Proposition \ref{est-norm-u-t-j} i), we can derive the $L^q$ $(q\geq 1)$ convergence of $(\lfloor {\rm x}_T \rceil^{\l})^{(i)}$ and $(\lfloor {\rm{u}}_T\rceil^{\l})^{(i)}_{\l}$ in $\l$. As a consequence, we obtain the continuity of $\l\mapsto  A_j(\l, T, x)$. 

Now, by {\bf vi)}, the continuity of $\phi_{\l}^{j}(T, x, y)$ in $\l$ and $(T, y)$ and the induction assumption, we can apply Lemma \ref{weak-reg-p-geq1} to conclude  that  $\l\mapsto p^{\l}(T, x, \cdot)$ is $C^j$ in $C^{k, \iota}(\M)$ for some $\iota>0$. The equation (\ref{p-j-phi-j}) holds by comparison and hence $\phi^{j}_{\l}(T, x, \cdot)\in C^{k, \iota}(\M)$. 
\end{proof}

With {\bf i)-vi)},  the gradients $\{\nabla^{(l)}(\ln p^{\l})^{(j)}_{\l}(T, x, y)\}_{1\leq l\leq k-2-j}$  are well-defined.  To conclude  Theorem \ref{diff-HK-estimations-gen} ii)  by induction, it remains to show (\ref{esti-p-lam-der-k}) for $i=j$. With the identity (\ref{p-j-phi-j}) and {\bf vi)}, it remains to show the following. 
\begin{itemize}
\item[{\bf vii)}] For all $l$, $1\leq l\leq k-2-j$, $q\geq 1$,  there is $\underline{c}_{\l, (l, j)}(q)$ which depends on $(l, j)$, $m, q$, $T, T_0$, $\|g^{\l}\|_{C^{l+j+2}}$ and  $\|\XX^{\l}\|_{C^{l+j+1}}$ such that  \begin{align}\label{norm-phi-j-lam}
\left\|\nabla^{(l)}\phi_{\l}^j(T, x, \cdot)\right\|_{L^q}\leq \underline{c}_{\l, (l, j)}(q). \end{align}
\end{itemize} 
For (\ref{norm-phi-j-lam}), we will  use (\ref{-phi-induction-i}) to formulate  $\nabla^{(l)}\nabla_{W_1, W_2, \cdots, W_j}\phi_{\l}^1$ (for any smooth bounded vector fields $W_1, W_2, \cdots, W_j$) as some conditional expectation and  use it for  evaluations as in the proof of Theorem \ref{regu-p-1st}. For this, we need the bounds control on $\phi_{\l}^j(T, x, y)$ from {\bf iv), v)}. Note that  in showing {\bf iv), v)},  we need a bound control of $\{|\nabla^{(l)}(\ln p^{\l})^{(i)}_{\l}(T, x, \cdot)|\}_{1\leq l\leq j-i}$. So, to continue the inductive argument,  we also  need to verify {\bf 0)} at $i=j$, which can be obtained in showing {\bf vi)} and {\bf vii)}. 

Theorem \ref{diff-HK-estimations-gen} iii) will follow from ii). Indeed, for $i=1$,  (\ref{p-lam-i-p-equ}) is true by Theorem \ref{regu-p-1st} for the $k=3$ case.  
For $i\geq 2$, by (\ref{p-j-phi-j}), \begin{align*}
\frac{(p^{\l})^{(i)}(T, x, y)}{p^{\l}(T, x, y)}=\phi_{\l}^{i}(T, x, y)-(\rho^{\l}(y))^{-1}\sum_{j=0}^{i-1}\left(\begin{matrix}i\\ j\end{matrix}\right)\frac{(p^{\l})^{(j)}(T, x, y)}{p^{\l}(T, x, y)}(\rho^{\l})^{(i-j)}(y). 
\end{align*}
So an inductive argument using (\ref{norm-phi-j-lam})  and (\ref{esti-p-lam-der-k}) will  conclude (\ref{p-lam-i-p-equ}) for all $i\leq k-2$.

Finally,  consider  Theorem \ref{diff-HK-estimations-gen} iv). By symmetry, the mapping $x \mapsto (p^{\l})^{(i)}(T, x, y)$ is continuous for all $T,y$, locally uniformly in $y$.  We conclude using (\ref{uniform-f-f-n}) and (\ref{p-lam-i-p-equ})  as in the proof of Theorem  \ref{regu-p-1st} iii).

In summary, to carry out the above inductive argument for Theorem \ref{diff-HK-estimations-gen}, all we need to do is to verify the  properties {\bf i)-vii)}  at each step.   We first consider {\bf i)}, followed by {\bf iv)} and then check {\bf ii), iii), v), vi)} and  {\bf vii)}. 
 The ideas to show these properties at each step are similar. So we only check them  for the $j=2$ case in details and indicate the necessary modifications to make them work  for the general case.

\subsection{Proofs of the properties concerning $\phi^j_{\l}$}\label{sec6.2} Let  $\lambda\in
(-1, 1)\mapsto g^{\lambda}\in \mathcal{M}^k(M)$ be a $C^k$ curve $(k\geq 4)$.   Assume  all the properties  {\bf i)}-{\bf vii)} in Section \ref{diff-HK-estimations-gen} hold true  for $\wt{\phi}_{\l}^{i}, \phi_{\l}^{i}$  and $(p^{\l})^{(i)}_{\l}$,  $i< j\leq k-2$. We continue to verify the conclusions for $\wt{\phi}_{\l}^{j}, \phi_{\l}^{j}$ and $(p^{\l})^{(j)}_{\l}$.

\begin{proof}[Proof of properties {\bf i)} and {\bf iv)} in Section \ref{skectch-6.1}] We first show {\bf i)} and the estimation in {\bf iv)}.  We begin with the case $j=2$.  For {\bf i)}, it suffices to consider the differentiability and $L^q$ integrability of each term  in (\ref{wt-phi-1}). We add an upper-script $\l$ to  $({\rm{I}}), ({\rm{II}}), ({\rm{III}})$ and $({\rm{IV}})$ in   (\ref{wt-phi-1}) to indicate their dependence on $\l$. 

 For $({\rm{IV}})^{\l}$,  it is differentiable in $\l$  by Lemma \ref{u-lambda-differential-1} and Theorem \ref{regu-p-1st} for the $k=3$ case. Denote the differential by  $\big(({\rm{IV}})^{\l}\big)^{(1)}_{\l}$. Then 
\begin{align*}
\big(({\rm{IV}})^{\l}\big)^{(1)}_{\l}=&\  -\big\langle D\pi(\lfloor {\rm u}_T\rceil^{\l})^{(2)}_{\l}, \nabla^{\l}\ln p^{\l}(T, x, \lfloor {\rm x}_T \rceil^{\l})\big\rangle_{\l}\\
&\ -\big\langle D\pi(\lfloor {\rm u}_T\rceil^{\l})^{(1)}, \nabla^{\l}(\ln p^{\l})^{(1)}_{\l}(T, x, \lfloor {\rm x}_T \rceil^{\l})\big\rangle_{\l} \\
&\ -\big\langle D\pi(\lfloor {\rm u}_T\rceil^{\l})^{(1)}, \nabla^{\l}_{(\lfloor {\rm x}_T \rceil^{\l})^{(1)}_{\l}}\nabla^{\l}\ln p^{\l}(T, x, \lfloor {\rm x}_T \rceil^{\l})\big\rangle_{\l} \\
=:&\ ({\rm{IV}})^{\l}_1+({\rm{IV}})^{\l}_2+({\rm{IV}})^{\l}_3.
\end{align*}
 By an abuse of notation, we  use $\underline{c}(q)$ to denote a constant depending on $T, T_0, m, q, \|g^{\l}\|_{C^4}$ and $\|\XX\|_{C^3}$, which  may vary from line to line. 
Using i) of Proposition \ref{est-norm-u-t-j} and Lemma \ref{Hs2-ST}, we obtain $\underline{c}(q)$  such that 
\begin{align*}
\big(\overline{\E}\big| ({\rm{IV}})^{\l}_1\big|^q\big)^2\leq \ov{\E}\big\|(\lfloor {\rm u}_T\rceil^{\l})^{(2)}_{\l}\big\|^{2q}\cdot \ov{\E}\big\|\nabla^{\l}\ln p^{\l}(T, x, \lfloor {\rm x}_T \rceil^{\l})\big\|^{2q}\leq \underline{c}(q). 
\end{align*}
Similarly, by i) of  Proposition \ref{est-norm-u-t-j} and Lemma \ref{Hs2-ST}, we can derive  that 
\begin{align*}
\ \   \big(\overline{\E}\big| ({\rm{IV}})^{\l}_3\big|^q\big)^3\leq \ov{\E}\big\|(\lfloor {\rm u}_T\rceil^{\l})^{(1)}_{\l}\big\|^{4q}\cdot \ov{\E}\big\|\nabla^{\l, (2)}\ln p^{\l}(T, x, \lfloor {\rm x}_T \rceil^{\l})\big\|^{q}\leq \underline{c}(q).
\end{align*}
Using i) of Proposition \ref{est-norm-u-t-j}  and (\ref{grad-lnp-lam-1}), we obtain 
\begin{align*}
\ \ \ \ \big(\overline{\E}\big| ({\rm{IV}})^{\l}_2\big|^q\big)^2\leq \ov{\E}\big\|(\lfloor {\rm u}_T\rceil^{\l})^{(1)}_{\l}\big\|^{2q}\cdot \ov{\E}\big\|\nabla^{\l}(\ln p^{\l})^{(1)}_{\l}(T, x, \lfloor {\rm x}_T \rceil^{\l})\big\|^{2q}\leq\underline{c}(q).
\end{align*}
As for  the conditional  expectations,  by  ii) of  Proposition \ref{est-norm-u-t-j} and Lemma \ref{Hs2-ST}, we obtain 
\begin{align*}
\overline{\E}_{\P^{\l, *}_{x, y, T}}\left|({\rm{IV}})^{\l}_1\right|+\overline{\E}_{\P^{\l, *}_{x, y, T}}\left|({\rm{IV}})^{\l}_3\right|\leq &\ \overline{\E}_{\P^{\l, *}_{x, y, T}}\big\|(\lfloor {\rm u}_T\rceil^{\l})^{(2)}_{\l}\big\|\cdot\big\|\nabla^{\l}\ln p^{\l}(T, x, y)\big\|\\
& +\overline{\E}_{\P^{\l, *}_{x, y, T}}\big\|(\lfloor {\rm u}_T\rceil^{\l})^{(1)}_{\l}\big\|\cdot \big\|\nabla^{\l, (2)}\ln p^{\l}(T, x, y)\big\|\\
\leq &\; \underline{c}\left(\big(\frac{1}{T}d_{\wt{g}^{\l}}(x, y)+\frac{1}{\sqrt{T}}\big)^2+1\right)e^{{c}(1+d_{\wt{g}^{\l}}(x, y))}.
\end{align*}
By Corollary \ref{di-p-lam-1},  for some different $\underline{c}, c$, 
\[
p^{\l}(T, x, y)\big\|\nabla^{\l}(\ln p^{\l})^{(1)}_{\l}(T, x, y)\big\|\leq \underline{c}\left(\big(\frac{1}{T}d_{\wt{g}^{\l}}(x, y)+\frac{1}{\sqrt{T}}\big)^2+1\right)e^{{c}(1+d_{\wt{g}^{\l}}(x, y))}.
\]
Using this  and ii) of Proposition \ref{est-norm-u-t-j}, we conclude that  the same type of bound is valid for  the term $p^{\l}(T, x, y)\overline{\E}_{\P^{\l, *}_{x, y, T}}\left|({\rm{IV}})^{\l}_2\right|$.

By Lemma \ref{u-lambda-differential-1},  $\l\mapsto ({\rm{III}})^{\l}$ is also differentiable in $\l$. Its   differential is  given by 
\begin{align*}
\big(({\rm{III}})^{\l}\big)^{(1)}_{\l}=&\ \big\langle D\pi(\lfloor {\rm u}_T\rceil^{\l})^{(2)}_{\l},\  \frac{1}{2}\!\int_{0}^{T}\!\mathtt{s}(T\!-\!\tau) \lfloor{\rm u}_{T}\rceil^{\l}(\lfloor{\rm u}_\tau\rceil^{\l})^{-1}{\rm Ric}_{\lfloor{\rm u}_{\tau}\rceil^{\l}}^{-1}d\oa{B}_{\tau}\big\rangle_{\l}\\
&\ + \big\langle D\pi(\lfloor {\rm u}_T\rceil^{\l})^{(1)}_{\l},\  \frac{1}{2}\!\int_{0}^{T}\!\mathtt{s}(T\!-\!\tau)\big( \lfloor{\rm u}_{T}\rceil^{\l}(\lfloor{\rm u}_\tau\rceil^{\l})^{-1}{\rm Ric}_{\lfloor{\rm u}_{\tau}\rceil^{\l}}^{-1}\big)^{(1)}_{\l}d\oa{B}_{\tau}\big\rangle_{\l}\\
&\ +\big\langle D\pi(\lfloor {\rm u}_T\rceil^{\l})^{(1)}_{\l},\  \frac{1}{2}\!\int_{0}^{T}\!\mathtt{s}(T\!-\!\tau) \lfloor{\rm u}_{T}\rceil^{\l}(\lfloor{\rm u}_\tau\rceil^{\l})^{-1}{\rm Ric}_{\lfloor{\rm u}_{\tau}\rceil^{\l}}^{-1}d\oa{B}_{\tau}\big\rangle_{\l}^{(1)}\\
=:&\ ({\rm{III}})^{\l}_1+({\rm{III}})^{\l}_2+({\rm{III}})^{\l}_3,
\end{align*}
where the last term denotes the differential of the inner product. Then it is standard to estimate the expectation of $\big(({\rm{III}})^{\l}\big)^{(1)}_{\l}$ using  H\"{o}lder's inequality, Burkholder's inequality and  i) of Proposition \ref{est-norm-u-t-j}, which gives
\begin{equation*}
\left(\ov{\E}\big|\big(({\rm{III}})^{\l}\big)^{(1)}_{\l}\big|^q\right)^2\!\leq  \underline{c}(T^q+T^{2q})\!\left(\ov{\E}\big\|(\lfloor {\rm u}_T\rceil^{\l})^{(2)}_{\l}\big\|^{2q}+\ov{\E}\big\|(\lfloor {\rm u}_T\rceil^{\l})^{(1)}_{\l}\big\|^{2q}+ \ov{\E}\big\|(\lfloor {\rm u}_T\rceil^{\l})^{(1)}_{\l}\big\|^{4q}\right)
\!\leq  \underline{c}(q). 
\end{equation*}
For the corresponding  conditional expectation estimation, we use (\ref{B-b-relation-x}), H\"{o}lder's inequality and Burkholder's inequality as before.  It is easy to deduce that 
\begin{align*}
\left(\overline{\E}_{\P^{\l}_{x, y, T}}\big|\big(({\rm{III}})^{\l}\big)^{(1)}_{\l}\big|\right)^2\leq & \left(\overline{\E}_{\P^{\l}_{x, y, T}}\big\|(\lfloor {\rm u}_T\rceil^{\l})^{(2)}_{\l}\big\|^2+\overline{\E}_{\P^{\l}_{x, y, T}}\big\|(\lfloor {\rm u}_T\rceil^{\l})^{(1)}_{\l}\big\|^2+\overline{\E}_{\P^{\l}_{x, y, T}}\big\|(\lfloor {\rm u}_T\rceil^{\l})^{(1)}_{\l}\big\|^4\right)\\
&\cdot \underline{c}\left(T+\overline{\E}_{\P^{\l}_{x, y, T}}\big|\int_{0}^{T}\big\|2\nabla^{\l}\ln p(T-\tau, \lf {\rm{x}}_{\tau}\rc^{\l}, y)\big\|\ d\tau\big|^{2}\right).
\end{align*}
So by ii) of Proposition \ref{est-norm-u-t-j} and Proposition \ref{cond-nabla-ln-p}, we have 
\[
\overline{\E}_{\P^{\l, *}_{x, y, T}}\big|\big(({\rm{III}})^{\l}\big)^{(1)}_{\l}\big|\leq \underline{c}e^{{c}(1+d_{\wt{g}^{\l}}(x, y))}.
\]
For $({\rm{II}})^{\l}$,  the same argument gives 
\[
\ov{\E}\big|\big(({\rm{II}})^{\l}\big)^{(1)}_{\l}\big|^q\leq \underline{c}(q), \ \overline{\E}_{\P^{\l, *}_{x, y, T}}\big|\big(({\rm{II}})^{\l}\big)^{(1)}_{\l}\big|\leq \underline{c}e^{{c}(1+d_{\wt{g}^{\l}}(x, y))}.
\]
For $({\rm I})^{\l}$, we can check the differentiability of  $\nabla^{\l}_{T, {\rm V}, \mathtt{s}}D\pi(\lfloor {\rm u}_T\rceil^{\l})^{(1)}_{\l}$  term-by-term  using its expression in Corollary \ref{cor-u-s-t-l-1}. The estimation can be done  as above  using Proposition \ref{est-norm-u-t-j}.

By the inductive construction,  $\wt{\phi}_{\l}^{j-1}(T, x, w)$ involves the mixed differentials of order $j$ in $\l$ and in $\nabla^{\l}_{T, \mathtt{s}}$ of $\lfloor u_T\rceil^{\l}$  and can be expressed by  a multi-stochastic integral involving a mixture of differential processes $\{\big(\lf {\rm u}_t\rc^{\l}\big)^{(j')}_{\l}\}_{j'\leq j-1}$, $\{\big[ D^{(1)}\lf F_{\underline{t}, \overline{t}}\rc^{\l}({\lf {\rm{u}}_{\underline{t}}\rc^{\l}}, w)\big] ^{(i)}_{\l}\}_{i\leq j-2}$ and  $\{\nabla^{\l, (l)}(\ln p^{\l})^{(i)}_{\l}(T, x, \lf {\rm x}_T\rc^{\l})\}_{l+i\leq j-1, i\leq j-2}$. So, by Lemma \ref{u-lambda-differential-1} and Proposition \ref{est-norm-u-t-j}, we have the differentiability of $\l\mapsto \wt{\phi}_{\l}^{j-1}(T, x, w)$ and the derivative $(\wt{\phi}_{\l}^{j-1})^{(1)}_{\l}(T, x, w)$ involves     $\{\big(\lf {\rm u}_t\rc^{\l}\big)^{(j')}_{\l}\}_{j'\leq j}$, $\{\big[ D^{(1)}\lf F_{\underline{t}, \overline{t}}\rc^{\l}({\lf {\rm{u}}_{\underline{t}}\rc^{\l}}, w)\big] ^{(i)}_{\l}\}_{i\leq j-1}$ and  $\{\nabla^{\l, (l)}(\ln p^{\l})^{(i)}_{\l}(T, x, \lf {\rm x}_T\rc^{\l})\}_{l+i\leq j, i\leq j-1}$. The estimations in {\bf i)} and {\bf iv)} will follow from a repeated application of  Proposition \ref{est-norm-u-t-j} to the multiple stochastic integral as in the $j=2$ case. The  bound estimation in  {\bf iv)} contains $\big(T^{-1}d_{\wt{g}^{\l}}(x, y)+{(\sqrt{T})}^{-1}\big)^j$ since the formula of  $(\wt{\phi}_{\l}^{j-1})^{(1)}_{\l}(T, x, w)$ contains  the terms  $\nabla^{\l, (j)}\ln p^{\l}(T, x, \lf {\rm x}_T\rc^{\l})$, $\nabla^{\l, (j-1)}(\ln p^{\l})^{(1)}_{\l}(T, x, \lf {\rm x}_T\rc^{\l})$. 

As to the continuity and its uniformity in $T$ and $\l$ of the map \[
y\mapsto  \varPsi^{j}(y):=\overline{\E}\left(\left. (\wt{\phi}_{\l}^{j-1})^{(1)}_{\l}(T, x, w)\right| {\lfloor {\rm x}_T \rceil^{\l}(w)}=y\right),
\]
we compare $\varPsi^{j}(y)$ with $\varPsi^{j}(\leftidx^r F(y))$, where $\{\leftidx^r F\}_{r\in \Bbb R}$ is the flow map generated by a bounded smooth vector field ${\rm W}$ on $\M$.  Let ${\lf\leftidx^{r}{\bf{F}}\rc^{\l}}$ be as in Section \ref{sec5} which extends $\leftidx^r F$ to the $\wt{g}^{\l}$-Brownian paths. Then,  as in the proof of Theorem \ref{regu-p-1st}, we obtain \[
\varPsi^{j}(\leftidx^r F(y))=\overline{\E}\!\left(\!\left.(\wt{\phi}_{\l}^{j-1})^{(1)}_{\l} \circ  {\lf\leftidx^{r}{\bf{F}}\rc^{\l}} \cdot \frac{d\overline{\P}_x^{\l}\circ  {\lf\leftidx^{r}{\bf{F}}\rc^{\l}} }{d\overline{\P}_x^{\l}}\right|\lfloor {\rm x}_T \rceil^{\l}=y\!\right)\cdot\frac{p^{\l}(T, x, y)}{p^{\l}(T, x, \leftidx^{r}F (y))}\cdot\frac{d{\rm Vol}^{\l}}{d{\rm Vol}^{\l}\circ \leftidx^{r}F}(y).\]
In the proof of Lemma \ref{C-1-fun-phi-Y}, we  obtained the local uniform  boundedness in $(T, y)$  and $\l$ of 
\[
\overline{\E}_{\ov{\P}^{\l}_{x, y, T}}\big\|{d\overline{\P}_x^{\l}\circ  {\lf\leftidx^{r}{\bf{F}}\rc^{\l}} }/{d\overline{\P}_x^{\l}}\big\|^q
\]
and the local uniformity  in $(T, y)$ and $\l$ of  the convergence of 
\[
\overline{\E}_{\ov{\P}^{\l}_{x, y, T}}\left\|{d\overline{\P}_x^{\l}\circ  {\lf\leftidx^{r}{\bf{F}}\rc^{\l}} }/{d\overline{\P}_x^{\l}}-1\right\|^2\to 0, \ {as}\ r\to 0.
\]
Following the estimation for {\bf{iv})}, we  obtain  the local uniform  boundedness in $(T, y)$ and $\l$ of 
\[
\overline{\E}_{\ov{\P}^{\l}_{x, y, T}}\big\|(\wt{\phi}_{\l}^{j-1})^{(1)}_{\l}(T, x, w)\circ {\lf\leftidx^{r} {\bf{F}}\rc^{\l}}\big\|^2. 
\]
So for {\bf iv)}, it remains to show the local uniform  convergence  in $(T, y)$ and $\l$ of 
\[
\overline{\E}_{\ov{\P}^{\l}_{x, y, T}}\left\|(\wt{\phi}_{\l}^{j-1})^{(1)}_{\l}(T, x, w)\circ {\lf\leftidx^{r} {\bf{F}}\rc^{\l}}-(\wt{\phi}_{\l}^{j-1})^{(1)}_{\l}(T, x, w)\right\|^2\to 0, \ {\rm{as}}\ r\to 0. 
\]
This, by using ii) of Proposition \ref{est-norm-u-t-j},  can be reduced to showing the local uniformity  in $(T, y)$ and $\l$ of  the convergence of  
\[
\overline{\E}_{\ov{\P}^{\l}_{x, y, T}}\left\|{\bf A}\circ {\lf\leftidx^{r} {\bf{F}}\rc^{\l}}-{\bf A}\right\|^2\to 0, \ {\rm{as}}\ r\to 0,\]  for elements   $\lf {\rm u}_t\rc^{\l}$, $\{\big(\lf {\rm u}_t\rc^{\l}\big)^{(j')}_{\l}\}_{j'\leq j}$ and  $\{\big[ D^{(1)}\lf F_{\underline{t}, \overline{t}}\rc^{\l}({\lf {\rm{u}}_{\underline{t}}\rc^{\l}}, w)\big] ^{(i)}_{\l}\}_{i\leq j-1}$ that appear in the expression of $(\wt{\phi}_{\l}^{j-1})^{(1)}_{\l}$, which is true since they can be further reduced to the ${\bf A}$  appearing in Lemma \ref{C-1-fun-phi-Y}  by the construction of ${\lf\leftidx^{r}{\bf{F}}\rc^{\l}}$.  \end{proof}

\begin{proof}[Proof of properties {\bf ii)}, {\bf iii)} and {\bf v)} in Section \ref{skectch-6.1}] Using (\ref{DF-j-lambda-cond}) and the inductive assumption on the boundedness of  $
\ov{\E}_{\ov{\P}^{\l}_{x, y, T}}\big\|\wt{\phi}_{\l}^{j-1}(T, x, w)\big\|^q$, 
we deduce that  $\overline{\Phi}_{\l}^j:$ $Y\mapsto \overline{\Phi}_{\l}^j(Y)$, where 
\begin{align*}\overline{\Phi}_{\l}^j(Y)(y)&:=\overline{\E}\left(\left.\big\langle Y(\lfloor {\rm x}_T\rceil^{\l}(w)),  \wt{\phi}_{\l}^{j-1}(T, x, w)\cdot D\pi(\lfloor {\rm u}_T\rceil^{\l})^{(1)}_{\l}(w) \big\rangle_{\l}\right| \lfloor {\rm x}_T \rceil^{\l}(w)=y\right)
\end{align*}
is a locally bounded  functional  on $C^k$ bounded vector fields $Y$ on $\M$. 

 To show $\overline{\Phi}_{\l}^j(Y)$ is $C^1$, we follow the argument in the proof of Lemma \ref{C-1-fun-phi-Y}. Let $\{F^s\}_{s\in \Bbb R}$ be the flow generated by a smooth bounded vector field ${\rm V}$ on $\M$.  Let  $\lf {\bf F}^s\rc^{\l}$ be  constructed as in Section \ref{the flow F-S}, which extends  $F^s$ to Brownian paths starting from $x$ up to time $T$ using the auxiliary  function $\mathtt s$. Then the change of variable comparison in Section \ref{Obs-Stra}  gives 
\begin{align*}
\overline{\Phi}_{\l}^j(Y)(F^sy)&=\overline{\E}_{\ov{\P}^{\l}_{x, y, T}}\left(\Phi_{\l}^j(Y, w)\circ \lf {\bf F}^s\rc^{\l} \cdot \frac{d\overline{\P}_x^{\l}\circ \lf {\bf F}^s\rc^{\l}}{d\overline{\P}_x^{\l}}\right)\!\frac{p^{\l}(T, x, y)}{p^{\l}(t, x, F^s y)}\frac{d{\rm Vol}^{\l}}{d{\rm Vol}^{\l}\circ F^s}(y),
\end{align*}
where 
\[\Phi_{\l}^j(Y, {w})=\big\langle Y(\lfloor {\rm x}_T\rceil^{\l}(w)),  \wt{\phi}_{\l}^{j-1}(T, x, w)\cdot D\pi(\lfloor {\rm{u}}_T\rceil^{\l})^{(1)}_{\l}(w) \big\rangle_{\l}.\] 
The process $\Phi_{\l}^j(Y, w)\circ \lf {\bf F}^s\rc^{\l}$ is differentiable in $s$ with 
\begin{align*}
(\Phi_{\l}^j\circ \lf {\bf F}^s\rc^{\l})'_s=&\ \big\langle \nabla_{{\rm V}(\lfloor {\rm x}_{T}^s \rceil^{\l})}Y(\lfloor {\rm x}_{T}^s \rceil^{\l}),  \wt{\phi}_{\l}^{j-1}\circ  \lf {\bf F}^s\rc^{\l}  D\pi(\lfloor {\rm u}_T^s\rceil^{\l})^{(1)}_{\l}\big\rangle_{\l}\\
& + \big\langle Y(\lfloor {\rm x}_{T}^s \rceil^{\l}),  \big(\nabla_{T, {\rm V}, \mathtt{s}}^{s, \l} \wt{\phi}_{\l}^{j-1}\big) \circ\lf {\bf F}^s\rc^{\l} D\pi(\lfloor {\rm u}_T\rceil^{\l})^{(1)}_{\l}\big\rangle_{\l}\\
&+ \big\langle Y(\lfloor {\rm x}_{T}^s \rceil^{\l}),  \wt{\phi}_{\l}^{j-1}\circ  \lf {\bf F}^s\rc^{\l} \nabla_{T, {\rm V}, \mathtt{s}}^{s, \l} D\pi(\lfloor {\rm u}_T\rceil^{\l})^{(1)}_{\l}\big\rangle_{\l}
\end{align*}
and  this differential is  $L^q$ integrable  conditioned on ${\rm x}_T=y$, uniformly in $s$,  for all $q\geq 1$.  Using this and  Proposition \ref{Quasi-IP-1}, we can
conclude that $\overline{\Phi}_{\l}^j(Y)(F^sy)$ is differentiable in $s$.  Following  the proof of Lemma \ref{C-1-fun-phi-Y} (see (\ref{expression-di-phi})), we obtain \begin{align}
(\overline{\Phi}_{\l}^j(Y)(F^sy))'_0&=\overline{\E}_{\ov{\P}^{\l}_{x, y, T}}\left(\big\langle \nabla_{{\rm V}(\lfloor {\rm x}_{T} \rceil^{\l})}Y,   \wt{\phi}_{\l}^{j-1}\cdot D\pi(\lfloor {\rm u}_T\rceil^{\l})^{(1)}_{\l}(w) \big\rangle_{\l} \right.\notag \\
&\ \ \ \ \  \ \ \ \ \ \ \ \  \  +\big\langle Y(\lfloor {\rm x}_{T} \rceil^{\l}), \nabla_{T, {\rm V}, \mathtt{s}}^{\l}\big(  \wt{\phi}_{\l}^{j-1}\cdot D\pi(\lfloor {\rm u}_T\rceil^{\l})^{(1)}_{\l}(w) \big)\big\rangle_{\l}\notag \\
&\ \ \ \ \  \ \ \ \ \ \ \ \  \  +\left.\big\langle Y(\lfloor {\rm x}_{T} \rceil^{\l}),  \wt{\phi}_{\l}^{j-1}\cdot D\pi(\lfloor {\rm u}_T\rceil^{\l})^{(1)}_{\l}(w) \big\rangle_{\l}\oa{\mathcal{E}}_{T, {\rm V}, {\mathtt s}} \right)\label{expression-di-phi-j}\\
&=:\overline{\E}_{\ov{\P}^{\l}_{x, y, T}}\left( {\Psi}_{\l}^j(Y, {\rm V})(w)\right).\notag
\end{align}
To show  $y\mapsto (\overline{\Phi}_{\l}^j(Y)(F^sy))'_0$ is continuous, we compare  (\ref{expression-di-phi-j})  with  its value at  nearby points.  Choose another smooth bounded vector field ${\rm W}$ on $\M$ and let $\{\leftidx^{r}F\}_{r\in \Bbb R}$ be the flow it generates and let  $\lf \leftidx^{r}{\bf F}\rc^{\l}$  be its extension to  $\wt{g}^{\l}$-Brownian paths starting from $x$ up to time $T$.  A change of variable argument in  Section \ref{Obs-Stra} for $\lf \leftidx^{r}{\bf{F}}\rc^{\l}$ shows that  for $z=\leftidx^{r}F (y)$, 
\[
(\overline{\Phi}_{\l}^j(Y)(F^sz))'_0=\overline{\E}_{\ov{\P}^{\l}_{x, y, T}}\left(\Psi_{\l}^j(Y, {\rm V})\circ \lf {\leftidx^{r}{\bf{F}}}\rc^{\l} \cdot {\frac{d\overline{\P}_x^{\l}\circ \lf {\leftidx^{r}{{\bf F}}}\rc^{\l}}{d\overline{\P}_x^{\l}}}\right)\frac{p^{\l}(T, x, y)}{p^{\l}(T, x, z)}\frac{d{\rm Vol}^{\l}}{d{\rm Vol}^{\l}\circ{\leftidx^{r}{F}}}(y). 
\]
We can show the local uniform convergence (in $(y, T)$ and $\l$)  of $(\overline{\Phi}_{\l}^j(Y)(F^sz))'_0$ to $(\overline{\Phi}_{\l}^j(Y)(F^sy))'_0$ as $r\to 0$ exactly as we did in the previous proofs of properties {\bf i)} and {\bf iv)}.

As to the estimations in {\bf v)}, $\big|{\rm z}_T^{\l, j}(y)\big|$ can be estimated using  the conditional $L^q$ expectations of $\wt{\phi}_{\l}^{j-1}(T, x, w),(\lfloor {\rm{u}}_T\rceil^{\l})^{(1)}_{\l}(w)$, respectively.  By (\ref{expression-di-phi-j}), we have the  formula in {\bf iii)}. By Claim \ref{Cla-i-ii-}, we obtain the formula of $\nabla^{\l}_{{\rm V}}{\rm z}_t^{\l, j}(y)$ in (\ref{DIV-Z-j}). We can use them  and  Proposition \ref{est-norm-u-t-j} to give  the desired estimation of ${\rm{Div}}^{\l}{\rm z}_t^{\l, j}(y)$. 
\end{proof}

\begin{proof}[Proof of properties {\bf vi)} and {\bf vii)} in Section \ref{skectch-6.1}] The $j=1$ case was considered in Theorem  \ref{regu-p-1st}.
When $j=2$, since we have  (\ref{p-diff-induction-i}) for $i=1, 2$, so,  for all $f\in C_{c}^{\infty}(\M)$, 
\begin{equation*}
\int_{\M}f(y) \phi_{\l}^{2}(T, x, y)p^{\l}(T, x, y)\ d{\rm{Vol}}^{\l}(y)=\left(\int_{\M}f(y) \phi_{\l}^{1}(T, x, y)p^{\l}(T, x, y)\ d{\rm{Vol}}^{\l}(y)\right)^{(1)}_{\l}. 
\end{equation*}
This  implies
\begin{align*}
\phi_{\l}^{2}(T, x, y)&=(\phi_{\l}^{1})^{(1)}_{\l}(T, x, y)+\phi_{\l}^{1}(T, x, y)\cdot \left(\ln(p^{\l}(T, x, y)\rho^{\l}(y))\right)^{(1)}_{\l}\\
&= (\phi_{\l}^{1})^{(1)}_{\l}(T, x, y)+\left(\phi_{\l}^{1}(T, x, y)\right)^2.
\end{align*}
Hence, 
\begin{align*}
(\ln p^{\l})^{(2)}_{\l}(T, x, y)=\phi_{\l}^{2}(T, x, y)-(\phi_{\l}^1)^2(T, x, y)-(\ln \rho^{\l})^{(2)}_{\l}(y)
\end{align*}
and 
\begin{align*}
\nabla (\ln p^{\l})^{(2)}_{\l}(T, x, y)=\nabla \phi_{\l}^{2}(T, x, y)-2\phi_{\l}^1(T, x, y)\nabla \phi_{\l}^{1}(T, x, y)-\nabla (\ln \rho^{\l})^{(2)}_{\l}(y). 
\end{align*}
This, together with {\bf vi)}, {\bf vii)} in the $j=1$ case,  shows that the estimation for the term   $|\nabla^{(l)}(\ln p^{\l})^{(2)}_{\l}(T, x, y)|$ in (\ref{esti-p-lam-der-2}) holds true if  the same type of  estimation is valid for  $|\nabla^{(l)}\phi_{\l}^{2}(T, x, y)|$.  By {\bf i)}-{\bf v)},  Claims \ref{cla-1-11-ov-phi}-\ref{Cla-i-ii-} apply.  We have 
\begin{equation}\label{phi-l-2-ex}
\phi_{\l}^{2}(T, x, y)=\ov{\E}\left(\left.\wt{\phi}_{\l}^{2}(T, x, w)\right|\lfloor {\rm x}_T \rceil^{\l}(w)=y \right),
\end{equation}
where 
\begin{align*}{\wt{\phi}}_{\l}^{2}(T, x, w)=&\big({\wt{\phi}}_{\l}^{1}\big)^{(1)}_{\l}\!(T, x, w)\!-\!\big\langle \nabla^{\l}_{T, \mathtt{s}}\wt{\phi}_{\l}^{1}\!(T, x, w), D\pi(\lfloor {\rm{u}}_T\rceil^{\l})^{(1)}_{\l}(w)\big\rangle+{\wt{\phi}}_{\l}^{1}(T, x, w){\wt{\phi}}_{\l}^{1}(T, x, w).
\end{align*}
We can use (\ref{phi-l-2-ex}) to derive the conditional expectation expressions of $\nabla^{(l)}\phi_{\l}^{2}(T, x, y)$ as in the proof of Theorem  \ref{regu-p-1st}. Using this and Proposition \ref{est-norm-u-t-j}, we can derive the desired estimations of   $\nabla^{(l)}\phi_{\l}^{2}(T, x, y)$ and its $L^q$-norm. 

For $j\geq 3$, with {\bf i)}-{\bf v)}, we  have the identity
\begin{align}
\notag&\nabla^{(l)} (\ln p^{\l})^{(j)}_{\l}(T, x, y)\\
\label{lnp-j-phi-j}&\ \ \ =\nabla^{(l)}\phi_{\l}^{j}(T, x, y)-\!\sum_{i=1}^{j-1}\nabla^{(l)}\big(\phi_{\l}^{i}\cdot \phi_{\l}^{1}\big)^{(j-i-1)}_{\l}\!(T, x, y)-\!\nabla^{(l)} \big(\ln \rho^{\l}\big)^{(j)}_{\l}(y).  
\end{align}
By Theorem \ref{regu-p-1st} i), 
\[
\phi_{\l}^{1}(T, x, y)=\left(\ln(p^{\l}(T, x, y)\rho^{\l}(T, x, y))\right)^{(1)}_{\l}=(\ln p^{\l})^{(1)}_{\l}(T, x, y)+ (\ln \rho^{\l})^{(1)}_{\l}(y). 
\]
By (\ref{p-diff-induction-i}), for all $i$,  $i\leq j$,   and  all $f\in C_c^{\infty}(\M)$, 
\begin{align*}
\int_{\M}f(y) \phi_{\l}^{i}(T, x, y)p^{\l}(T, x, y)\ d{\rm{Vol}}^{\l}(y)=\left(\int_{\M}f(y) \phi_{\l}^{i-1}(T, x, y)p^{\l}(T, x, y)\ d{\rm{Vol}}^{\l}(y)\right)^{(1)}_{\l}.
\end{align*}
Since $f$ is arbitrary, we must have  
\begin{align*}
\phi_{\l}^{i}(T, x, y)&=(\phi_{\l}^{i-1})^{(1)}_{\l}(T, x, y)+\phi_{\l}^{i-1}(T, x, y)\cdot \left(\ln(p^{\l}(T, x, y)\rho^{\l}(T, x, y))\right)^{(1)}_{\l}\\
&= (\phi_{\l}^{i-1})^{(1)}_{\l}(T, x, y)+\phi_{\l}^{i-1}(T, x, y)\cdot \phi_{\l}^{1}(T, x, y).
\end{align*}
Using this relationship inductively, we obtain 
\[
\phi_{\l}^{j}(T, x, y)=(\phi_{\l}^1(T, x, y))^{(j-1)}_{\l}+\sum_{i=1}^{j-1}\big(\phi_{\l}^{i}\cdot \phi_{\l}^{1}\big)^{(j-i-1)}_{\l}(T, x, y),
\]
which implies \[
(\ln p^{\l})^{(j)}_{\l}(T, x, y)=\phi_{\l}^{j}(T, x, y)-\sum_{i=1}^{j-1}\big(\phi_{\l}^{i}\cdot \phi_{\l}^{1}\big)^{(j-i-1)}_{\l}(T, x, y)-\big(\ln \rho^{\l}\big)^{(j)}_{\l}(y). 
\]
A differentiation of this equation gives (\ref{lnp-j-phi-j}).  By induction, we see that  the differentials $(\phi_{\l}^{i})^{(r)}_{\l}(T, x, y)$ for $r\leq j-i-1$ only  consist  of $(\ln p^{\l})^{(s)}_{\l}(T, x, y), \big(\ln \rho^{\l}\big)^{(s)}_{\l}(y)$ up to order $s=i+r\leq j-1$. By the inductive assumption on the  gradient estimations of $(\ln p^{\l})^{(s)}_{\l}(T, x, y)$, $s\leq j-1$, to obtain {\bf vii)} for $\big|\nabla^{(l)}(\ln p^{\l})^{(j)}_{\l}(T, x, y)\big|$, it suffices to give the estimation for $|\nabla^{(l)}\phi_{\l}^{j}(T, x, y)|$.  By {\bf i)}-{\bf v)},  Claims \ref{cla-1-11-ov-phi}-\ref{Cla-i-ii-} apply and we have 
\[
\phi_{\l}^{j}(T, x, y):=\ov{\E}\left(\left.\wt{\phi}_{\l}^{j}(T, x, w)\right|\lfloor {\rm x}_T \rceil^{\l}(w)=y \right),
\]
where $\wt{\phi}_{\l}^{j}(T, x, w)$ is defined inductively using (\ref{wt-phi-induction-i}).  As in the proof of  Theorem \ref{regu-p-1st}, we can further  obtain $\wt{\phi}_{\l}^{j, (l)}(T, x, w)$ such that \[
\nabla^{(l)}\phi_{\l}^{j}(T, x, y)=\ov{\E}\left(\left.\wt{\phi}_{\l}^{j, (l)}(T, x, w)\right|\lfloor {\rm x}_T \rceil^{\l}(w)=y \right).
\]
The term $\wt{\phi}_{\l}^{j, (l)}(T, x, w)$ involves the derivatives of $\wt{\phi}_{\l}^{j}(T, x, w)$ under  $\leftidx^{r}{\bf F}$ up to the $j$-th order and can be formulated as a stochastic integral using $\mathtt s, \mathtt s'$, $\lf {\rm{u}}\rc^{\l}$, $\{\big(\lf {\rm u}_t\rc^{\l}\big)^{(j')}_{\l}\}_{j'\leq j}$, $\{\big[ D^{(1)}\lf F_{\underline{t}, \overline{t}}\rc^{\l}({\lf {\rm{u}}_{\underline{t}}\rc^{\l}}, w)\big] ^{(i)}_{\l}\}_{i\leq j-1}$ and  $\{\nabla^{\l, (l)}(\ln p^{\l})^{(i)}_{\l}(T, x, \lf {\rm x}_T\rc^{\l})\}_{l+i\leq j, i\leq j-1}$. So we can  use this and ii) of Proposition \ref{est-norm-u-t-j} to  derive the desired bound of  $|\nabla^{(l)}\phi_{\l}^{j}(T, x, y)|$ as in  Theorem  \ref{regu-p-1st}.  As to {\bf vi)}, it is equivalent to estimate  $\big(\ov{\E}(\|\wt{\phi}_{\l}^{j, (l)}(T, x, w)\|^{q})\big)^{\frac{1}{q}}$, which can be handled using i) of Proposition \ref{est-norm-u-t-j} and the inductive assumption on (\ref{esti-p-lam-der-k}) for $i<j$. 
\end{proof}

\section{Regularity of the entropy}

The analog of formula (\ref{ell-lambda}) for the entropy involves the Martin kernel of the Brownian motion on $(\M, \wt{g}^{\l})$ for ${g}^{\l}\in \Re^k(M)$. Recall that the Green function on $(\M, \wt{g}^{\l})$ is given by 
\[
{\bf G}^{\l}(x, y):=\int_{0}^{\infty}p^{\l}(t, x, y)\ dt, \ \mbox{for}\ x, y\in \M, 
\]
and it can be  associated with a ``Green metric''  on $\M$ (\cite{LS2}) by letting
\begin{align*}
d_{{\bf G}^{\l}}(x, y):=\left\{ \begin{array}{ll}
-\ln ({\mathtt{c}_0}{\bf G}^{\l}(x, y)), &\mbox{if}\ d_{\wt{g}^{\l}}(x, y)>1,\\
-\ln {\mathtt{c}_0}, &\mbox{otherwise},
\end{array}
 \right.
\end{align*}
where $\mathtt{c}_0$ can be chosen to be independent of $\l$ for $g^{\l}$ in a small neighborhood of $g^0$.
By Anderson and  Schoen \cite{AS} (see also \cite{Anc}), the Martin kernel for $(\M, \wt{g}^{\l})$  is defined by 
\begin{equation}\label{Martin-kernel}
k^{\l}(x, y, \xi):=\lim_{z\to \xi}k^{\l}(x, y, z), \ \mbox{where}\  
k^{\l}(x, y, z):=\frac{{\bf G}^{\l}(y, z)}{{\bf G}^{\l}(x, z)}. 
\end{equation}
Hence, the logarithm of the Martin kernel is an  analog of   the  Busemann function using the Green metric since 
\[
\ln k^{\l}(x, y, \xi):=\lim\limits_{z\to \xi} \big(d_{{\bf G}^{\l}}(x, z)-d_{{\bf G}^{\l}}(y, z)\big), \ \mbox{for}\ x, y\in \M, \ \xi\in \partial \M. 
\]
Moreover,  it is known that the entropy satisfies the following formula (\cite{K1})
\begin{equation*}\label{h-lambda}
h^{\l}=-\int \left.\Delta^\l_y \ln k^\l(x, y, \xi)\right|_{y=x} \ d{\bf{m}}^\l(x, \xi). 
\end{equation*}

For  $x, y\in \M$ fixed, the function $k^{\l}(x, y, \xi)$ is a continuous version of the Radon-Nikodyn derivative $(d\wt{{\bf m}}_y/d\wt{{\bf m}}_x)(\xi)$; the gradient 
\[\overline{Z}(x, \xi):=\nabla^{\l}_y  k^{\l}(x, y, \xi)|_{y=x}\]
is a $G$-equivariant stable vector field  that depends H\"{o}lder continuously on $\xi$ with the H\"{o}lder  exponent uniformly in $\l$ for $g^{\l}$ in a small  neighborhood of $g^0$ (\cite{AS}, see also \cite{Ha}). Furthermore,  we have the following.  
\begin{lem}\label{Holder-2nd-k}
 Let  $M$ be a closed connected smooth manifold.  For each $g\in \Re^{k}(M)$ ($k\geq 3$), there  exist a neighborhood $\mathcal{V}''_g$ of $g$ in $\Re^{k}(M)$ and ${\mathtt{b}}''$, ${\mathtt{b}}''>0$,   such that for any $C^k$ curve $\lambda\in
(-1, 1)\mapsto g^{\lambda}\in \mathcal{V}''_g$  with $g^0=g$,  the second order differentials of $k^{\l}(x, y, \xi)$ in $y$ at $y=x$ are  H\"{o}lder continuous in $\xi$ with exponent ${\mathtt{b}}''$;  for  ${\mathtt{b}}<{\mathtt{b}}''\varkappa$, where $\varkappa$ is as in (\ref{distance-bd}), we have 
\begin{equation}\label{Delta-lnk-2}
\Delta^{\l}_y \ln k^0(x, y, \xi)\big|_{y=x},\  (\Delta^{\l}_y)'_{\l} \ln k^0(x, y, \xi)\big|_{y=x}\in \mathcal{H}_{{\mathtt{b}}}^0. 
\end{equation}
\end{lem}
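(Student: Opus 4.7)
The plan is first to establish the assertion about H\"older regularity in $\xi$ of the second $y$-differentials of $k^{\l}(x,y,\xi)$ at $y=x$, and then to derive (\ref{Delta-lnk-2}) from this regularity together with the harmonicity of $k^{\l}(x,\cdot,\xi)$. For the first assertion, I would fix a small neighborhood $U$ of $x$ and observe that, for each $z\in\M$ sufficiently far from $U$, the function $y\mapsto k^{\l}(x,y,z) = {\bf G}^{\l}(y,z)/{\bf G}^{\l}(x,z)$ is positive and $\Delta^{\l}$-harmonic on $U$. Interior Schauder estimates applied to the uniformly elliptic operator $\Delta^{\l}$ bound the $C^{2,\alpha}(U')$-norm on $U'\Subset U$ by the $C^{0}(U)$-norm, with constants uniform in $\l$ provided $\mathcal{V}''_g$ is chosen small enough (using that $\l\mapsto g^{\l}$ is $C^{k}$ with $k\geq 3$). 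Passing to the limit $z\to\xi$ yields analogous bounds on $k^{\l}(x,\cdot,\xi)$. The Anderson--Schoen boundary Harnack principle and its refinements in \cite{AS, Anc, Ha} provide H\"older continuity of $z\mapsto k^{\l}(x,y,z)$, and hence in $\xi$ in the limit, with some exponent $\mathtt{b}''>0$ uniformly in $\l$ and $y\in U'$. Combining these two ingredients, the second $y$-differentials of $k^{\l}(x,\cdot,\xi)$ at $y=x$ are H\"older continuous in $\xi$ with exponent $\mathtt{b}''$.

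Next, I would exploit the fact that $k^{\l}(x,\cdot,\xi)$ is $\Delta^{\l}$-harmonic: since $k^{\l}(x,x,\xi)=1$, the chain-rule identity
\begin{equation*}
\Delta^{\l}_y \ln k^{\l}(x,y,\xi)\big|_{y=x}
= \frac{\Delta^{\l}_y k^{\l}(x,y,\xi)}{k^{\l}(x,y,\xi)}\bigg|_{y=x}
- \bigl|\nabla^{\l}_y \ln k^{\l}(x,y,\xi)\big|_{y=x}\bigr|_{\l}^{2}
= -|\overline{Z}^{\l}(x,\xi)|_{\l}^{2}
\end{equation*}
holds, so that $\Delta^{\l}_y \ln k^0(x,y,\xi)|_{y=x}$ at $\l=0$ inherits the H\"older regularity of $\overline{Z}^{0}$ in $\xi$ with exponent $\mathtt{b}''$. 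For the $\l$-derivative, I would write $(\Delta^{\l}_y)'_{\l}$ in local coordinates as a second-order linear operator in $y$ whose coefficients are first-order $\l$-derivatives of the components of $g^{\l}$ and their first $y$-derivatives; applied to $\ln k^0(x,\cdot,\xi)$ and evaluated at $y=x$, it produces a linear combination of the first and second $y$-derivatives of $\ln k^0(x,\cdot,\xi)$ at $y=x$ with smooth $x$-dependent coefficients. Since the second $y$-derivatives of $\ln k^0$ at $y=x$ reduce via the chain rule to combinations of first and second $y$-derivatives of $k^{0}$ at $y=x$, which are H\"older in $\xi$ with exponent $\mathtt{b}''$ by the first step, so is $(\Delta^{\l}_y)'_{\l}\ln k^0(x,y,\xi)|_{y=x}$.

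Finally, since the Busemann distance on $\partial\M$ is $d_x^{\varkappa}(\xi_1,\xi_2)=e^{-\varkappa(\xi_1|\xi_2)_x}$, any function that is H\"older of exponent $\mathtt{b}''$ in $\xi$ with respect to this distance satisfies $|f(x,\xi_1)-f(x,\xi_2)|\leq C\,e^{-\mathtt{b}''\varkappa(\xi_1|\xi_2)_x}$, and therefore $|f(x,\xi_1)-f(x,\xi_2)|\,e^{\mathtt{b}(\xi_1|\xi_2)_x}\leq C$ whenever $\mathtt{b}<\mathtt{b}''\varkappa$; combined with joint continuity in $(x,\xi)$ coming from the joint regularity of the Martin kernel and its derivatives, this places both functions in $\mathcal{H}_{\mathtt{b}}^{0}$. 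The main technical obstacle will be to secure uniformity in $\l$ of both the Schauder constants and the Anderson--Schoen boundary Harnack exponent; this forces shrinking $\mathcal{V}''_g$ so that the sectional curvatures and the ellipticity constants of $\Delta^{\l}$ together with the $\l$-derivatives of the coefficients of $(\Delta^{\l})'_{\l}$ remain comparable across the family.
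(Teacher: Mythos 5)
Your proposal is correct, and it reaches the H\"older regularity of the second $y$-differentials of $k^{\l}$ by a genuinely different route from the paper. The paper applies the Anderson--Schoen boundary Harnack theorem directly to the shifted second $y$-difference quotient $\varphi_{s,t}(z)+C$ (viewed as a ratio of positive harmonic functions vanishing at $\partial\M$), which requires first establishing the a priori uniform bound $|\varphi_{s,t}(z)|\leq C$; that bound is reduced to an estimate on second difference quotients of the Green function and is proved there by combining the Harnack inequality, the Cheng--Yau gradient estimate, and an interior elliptic estimate for $L_W\big|_y{\bf G}^\l(y,z)$ through the identity $\Delta^\l L_W {\bf G}^\l = [L_W,\Delta^\l]{\bf G}^\l$. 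Your argument applies Anderson--Schoen instead to $k^\l(x,y,\cdot)={\bf G}^\l(y,\cdot)/{\bf G}^\l(x,\cdot)$ itself, obtaining $C^0$-H\"older dependence on $\xi$ uniformly in $y$, and then transfers this to $C^2$ via elliptic regularity. The step you call ``combining these two ingredients'' deserves to be spelled out: what makes it work is that the difference $u:=k^\l(x,\cdot,\xi_1)-k^\l(x,\cdot,\xi_2)$ is itself $\Delta^\l$-harmonic, so the interior Schauder estimate applied to $u$ (not merely to each $k^\l(x,\cdot,\xi)$ separately) gives $\|D^2_y u\|_{C^0(U')}\lesssim\|u\|_{C^0(U)}\lesssim e^{-{\mathtt{b}}''\varkappa(\xi_1|\xi_2)_x}$, preserving the exponent ${\mathtt{b}}''$ exactly (a plain interpolation between a uniform $C^{2,\alpha}$ bound and $C^0$-H\"older in $\xi$ would lose the exponent). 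Once this is made explicit the argument is clean and, arguably, shorter than the paper's, since it bypasses the positivity/boundedness lemma (7.2)--(7.3) entirely; what each approach buys is similar --- the same ${\mathtt{b}}''$, uniformly over a small neighborhood $\mathcal{V}''_g$ --- with your route substituting one application of interior elliptic regularity for the paper's two (Cheng--Yau plus the Lie-derivative equation). The reduction of $(\Delta^\l_y)'_\l\ln k^0$ to first and second $y$-derivatives of $k^0$, and the passage to $\mathcal{H}^0_{{\mathtt b}}$ for ${\mathtt b}<{\mathtt b}''\varkappa$, are handled the same way in both.
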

\begin{proof} The second part follows from \cite[Theorem 6.2]{AS} and the first part. We show  the first part  by following the proof of   \cite[Lemma 3.2]{Ha}.

Let $x\in \M$ and let $B(x, \delta)$ be a small neighborhood around $x$ with a positive  radius $\delta$. For ${\bf v}=(x', \zeta)\in S\M_{\wt{g}}$ with $x'\in B(x, 2\delta)\backslash B(x, \delta)$  and $\rho, 0 < \rho \leq \pi/2$, let 
\[
{\rm C^\l} ({\bf v}, \rho):=\left\{z\in \M:\  \angle_{x'}^{\wt{g}^{\l}} ({\bf v}, \dot{\gamma}_{x', z}^{\wt{g}^{\l}}(0))<\rho \right\}, \quad {\rm C} ({\bf v}, \rho) := {\rm C^0} ({\bf v}, \rho)
\]
be the open  cone of vertex $x'$, axis ${\bf v}$ and angle $\rho$, 
where  $\angle_{x'}^{\wt{g}^{\l}}(\cdot, \cdot)$ is the $\wt{g}^{\l}$ angle function in $S_{x'}\M$ and $\dot{\gamma}_{x', z}^{\wt{g}^{\l}}(0)$ is the initial tangent vector  of the ${\wt{g}^{\l}}$ unit speed geodesic $\gamma_{x', z}^{\wt{g}^{\l}}$ from $x'$ to $z$.  

 There  exists a neighborhood $\mathcal{V}_g$ of $g$ in $\Re^{k}(M)$ such that  if $g^\l  \in  \mathcal{V}_g,$ then for  all ${\bf v},  {\rm C} ({\bf v}, \pi /6)\subset {\rm C^\l} ({\bf v}, \pi /4) \subset {\rm C} ({\bf v}, \pi/3)$ and for all $x \in \M, B(x, \d/4)\subset B_{g^{\l}} (x,\d/2) \subset B(x, \d).$\footnote{There is a neighborhood $\mathcal {V}_g$ of $g$ in $\mathfrak {R}^3(M)$ and a number $r$ such that for $\wt{g}, \wt{g}' \in \mathcal {V}_g, \tau \geq r,$ \[  \angle_{x'}^{\wt {g}'} (\dot{\gamma}^{\wt {g}'}_{x', \gamma ^{\wt {g}'}_{x,\xi (\tau)}}, \dot{\gamma}^{\wt {g}}_{x', \gamma _{x,\xi (\tau)}^{\wt {g}}})<\angle_{x'}^{\wt {g}'} (\dot{\gamma}^{\wt {g}'}_{x', \gamma ^{\wt {g}'}_{x,\xi (r)}}, \dot{\gamma}^{\wt {g}}_{x', \gamma^{\wt {g}}_{x,\xi (r)}} )+ \pi / 100.\] It suffices then to control the angles on $B(x, r+ 2\d)$. } Let $\{x_{s,t}\}_{|s|, |t| <1}$, with $x_{0,0} = x$,  be any $C^2$ two parameter family of points in $B(x, \delta/4)$.   For ${\rm C} ({\bf v}, \pi/2)$ apart from $B(x, \delta)$ and $z\in {\rm C}  ({\bf v}, \pi/2)$, let 
\begin{equation*}
\varphi_{s, t}(z):=\frac{1}{st}\left(k^{\l} (x,x_{s,t},z) -   k^{\l} (x,x_{0,t},z) -  k^{\l} (x,x_{s,0},z) +  k^{\l} (x,x_{0,0},z)\right). 
\end{equation*}
To conclude the first part of Lemma \ref{Holder-2nd-k}, it suffices to show for $\mathcal{V}_g$ small, there is some  $C$, $C>0$,  independent of $s, t, x, z$ and $g^{\l}$ such that
\begin{equation}\label{bound of quotient}
|\varphi_{s, t}(z)|\leq C.
\end{equation}
This is because (\ref{bound of quotient}) implies that, for $z \in {\rm C^\l} ({\bf v}, \pi /4)$,
\[
\varphi_{s, t}(z)+C=\frac{\frac{1}{st}\left({\bf G}^{\l} (x_{s,t},z) -  {\bf G}^{\l}(x_{0,t},z) -  {\bf G}^{\l}(x_{s,0},z) +  {\bf G}^{\l}(x_{0,0},z)\right)+C{\bf G}^{\l}(x, z)}{{\bf G}^{\l}(x, z)}
\]
is the quotient of two positive harmonic functions in  ${\rm C^\l} ({\bf v}, \pi/4)$ which vanish at the infinity boundary $\partial \M$.  Hence, by using \cite[Theorem 6.2]{AS}, for $\mathcal{V}_g$ small,  we obtain  two  positive numbers $C', {\mathtt{b}}''$,  independent of $s, t, x$ and $g^{\l}$,  such that   
\begin{equation}\label{holder-z-z'}\left|(\varphi_{s, t}(z)+C)-(\varphi_{s, t}(z')+C)\right|\leq C'e^{-{\mathtt{b}}''\varkappa(z|z')_x^{\l}}, \ \forall z, z'\in {\rm C^{\l}} ({\bf v},\pi /4).
\end{equation}
Let  $\xi, \eta\in \partial \M$ be points lying in the closure of  ${\rm C} ({\bf v}, \pi/6)$.  Letting $z\to \xi, z'\to \eta$ and then  letting $s, t\to 0$ in  (\ref{holder-z-z'}), the first part statement of Lemma \ref{Holder-2nd-k} follows by using (\ref{Martin-kernel}).

It remains to show (\ref{bound of quotient}), or, equivalently,  
\begin{equation}\label{gradient-Green}
\left|\frac{1}{st}\left({\bf G}^{\l} (x_{s,t},z) -  {\bf G}^{\l}(x_{0,t},z) -  {\bf G}^{\l}(x_{s,0},z) +  {\bf G}^{\l}(x_{0,0},z)\right)\right|\leq C{\bf G}^{\l}(x, z).
\end{equation}
Since ${\bf G}^{\l}(\cdot, z)$ is harmonic in $B_{g^{\l}}(x, \delta /2)$, by the Harnack inequality (\cite{AS}) and 
the infinitesimal Harnack inequality of Cheng-Yau (\cite{CY}), for $\mathcal{V}_g$ small, there is some  $C''$, $C''>0$,  independent of $s, t, x, z$ and $g^{\l}$ such that
\begin{align*}
{\bf G}^{\l}(y, z),\  \big\|\nabla^{\l}_{y}{\bf G}^{\l}(y, z)\big\|_{\wt{g}^{\l}}\leq C''{\bf G}^{\l}(x, z), \ \forall y\in B_{g^{\l}}(x, \delta /2), \ z\in {\rm C^{\l}} ({\bf v}, \pi /4).
\end{align*}
To continue, we consider  $L_{W}\big|_y{\bf G}^{\l}(y, z)$, where $W$ is any smooth  bounded vector field on $\M$, $L_{W}\big|_y$ is the  Lie derivative in $W$ evaluated at $y$. Then, in the distribution sense, 
\begin{align*}
\Delta^{\l}L_{W}\big|_{y}{\bf G}^{\l}(y, z)=  L_{W}\big|_y\Delta^{\l}{\bf G}^{\l}(y, z)+\big[L_W, \Delta^{\l}\big]{\bf G}^{\l}(y, z)=\big[L_W, \Delta^{\l}\big]{\bf G}^{\l}(y, z),
\end{align*}
where the last commutator term  is a linear combination of the contractions  $R^{\l}*\nabla^{\l}{\bf G}^{\l}(\cdot, z)$, $\nabla^{\l} R^{\l}* {\bf G}^{\l}(\cdot, z)$ evaluated at $W\in T_y\M$. Since $L_{W}\big|_{y}{\bf G}^{\l}(y, z)$ is $C^1$ in $y$, it must be  a  real solution function $f$ to the equation  
\[
\Delta^{\l}f(y)=\big[L_W, \Delta^{\l}\big]{\bf G}^{\l}(y, z). 
\]
Hence the classical estimation property of elliptic equation (cf. \cite{Fr}) shows that there is some positive $C'''$ depending on the geometry, which can be chosen to be independent of $x, z, g^{\l}$ for $\mathcal{V}_g$ small, such that 
\begin{align*}
\big\|\nabla^{\l}_y L_{W}\big|_{y}{\bf G}^{\l}(y, z)\big\|_{\wt{g}^{\l}} 
\leq &\; C'''\big(\sup_{y\in B(x, \delta)}\big\|\nabla^{\l}_{y}{\bf G}^{\l}(y, z)\big\|_{\wt{g}^{\l}}+\sup_{y\in B(x, \delta)}{\bf G}^{\l}(y, z)\big)\\
\leq &\; 2C''' C''{\bf G}^{\l}(x, z). 
\end{align*}
This shows  (\ref{gradient-Green}) since $W$ can be arbitrary. 
\end{proof}

\begin{proof}[Proof of Theorem \ref{main-h}] Let $\mathcal{V}''_g$, ${\mathtt{b}}''$ be as in Lemma \ref{Holder-2nd-k}. Let  ${\mathtt{b}}<{\mathtt{b}}''\varkappa$, $\mathcal{V}_g$ and $\mathcal{H}^0_{\mathtt{b}}$  be such that  Theorem \ref{regularity-harmonic measure} holds true. Let $\lambda\in
(-1, 1)\mapsto g^{\lambda}\in \mathcal{V}''_g\cap \mathcal{V}_g$  with $g^0=g$ be a $C^3$ curve. We omit the index $0$ for $h^0, k^0$, $p^{0}$,  $\overline{Z}^0$,  $\Delta^0$, ${\rm{Div}}^0$, $\nabla^0$,  $\langle \cdot, \cdot\rangle_0$, ${\rm{Vol}}^0$ and ${\bf m}^0$ at $g^0$. 

  We study the differentiability of $h^{\l}$ by writing, as in \cite{LS2},
\begin{align*}
\frac{1}{\l}(h^{\l}-h)=\frac{1}{\l}(h^{\l}-h^{\l, 0})+\frac{1}{\l}(h^{\l, 0}-h)=: {\rm{(I)}}_{\l}+ {\rm{(II)}}_{\l},
\end{align*}
where
\begin{align*}
h^{\l, 0}
=& -\int \left.\Delta^{\l}_y \ln k(x, y, \xi)\right|_{y=x} \ d{\bf{m}}^{\l}(x, \xi).
\end{align*}
Then, by (\ref{Delta-lnk-2}) and Theorem \ref{regularity-harmonic measure}, 
\begin{align*}
\lim\limits_{\l\to 0}{\rm{(II)}}_{\l}=-\int (\Delta^{\l}_y)'_0 \ln k(x, y, \xi)\big|_{y=x} \ d{\bf{m}}(x, \xi)-\int \Delta_y \ln k(x, y, \xi)\big|_{y=x} \ d({\bf{m}}^\l)'_{0}(x, \xi). 
\end{align*}
Using $u_1$ for the function such that 
\[
\Delta u_1=-\Delta_y \ln k(x, y, \xi)\big|_{y=x}-h, \ (\mbox{see \cite[(5.7)]{LS2}}),
\]
we obtain, as in Section \ref{sec-3.1}, 
\[
\mathcal{K}:=\lim\limits_{\l\to 0}{\rm{(II)}}_{\l}=\int \left(-\frac{1}{2}\langle \nabla {\rm{trace}}\XX, \overline{Z}+\nabla u_1 \rangle +{\rm{Div}}\big(\XX (\overline{Z}+\nabla u_1)\big)\right)\ d{\bf m}. 
\]
Clearly, $\mathcal{K}$ is linear  on  $\XX\in C^k(S^2T^*)$. When $g$ is locally symmetric, $u_1\equiv 0$ and $\overline{Z}=\ell {\overline{X}}$. Hence, 
\[
\mathcal{K}=\ell \int \left(-\frac{1}{2}\langle \nabla {\rm{trace}}\XX, \overline{X}\rangle -\ell \XX (\overline{X}, \overline{X})\right)\ d{\bf m},
\]
which vanishes (see Remark \ref{cri-remark}).

We will now show $\lim_{\l\to 0}{\rm{(I)}}_{\l}=0$, which will complete the proof of Theorem \ref{main-h}. Following \cite[Proposition 2.4]{LS2}, we obtain  $
h^{\l, 0}=\inf_{s>0}\{\overline{h}^{\l, 0}(s)\}$, 
where 
\begin{align*}
\overline{h}^{\l, 0}(s) :=& \lim\limits_{t\to \infty}-\frac{1}{t}\int (\ln p(st, x, y)) p^{\l}(t, x, y)\ d{\rm{Vol}}^{\l}(y). 
\end{align*}
Then, for all $\l>0$, 
\begin{align*}
({\rm{I}})_{\l}=&\; \frac{1}{\l}\sup\limits_{s>0}\lim\limits_{t\to \infty} -\frac{1}{t}\int \ln \frac{p^{\l}(t, x, y)}{p(st, x, y)}p^{\l}(t, x, y)\ d{\rm{Vol}}^{\l}(y)\\
=&\; \frac{1}{\l}\sup\limits_{s>0}\lim\limits_{t\to \infty} -\frac{1}{t}\int \ln \frac{p^{\l}(t, x, y)\rho^{\l}(y)}{p(st, x, y)}p^{\l}(t, x, y)\ d{\rm{Vol}}^{\l}(y)\\
\leq  &\; \frac{1}{\l}\sup\limits_{s>0}\lim\limits_{t\to \infty} \frac{1}{t}\left(1-\int p(st, x, y)\ d{\rm{Vol}}(y)\right)\leq 0,
\end{align*}
where the third inequality holds since $-\ln a\leq  a^{-1}-1$ for all $a>0$. On the other hand, 
\begin{align*}
({\rm{I}})_{\l}
\geq &\; \frac{1}{\l}\left(h^{\l}-\overline{h}^{\l, 0}(1)\right)\\
=&\; \frac{1}{\l}\lim\limits_{t\to \infty} -\frac{1}{t}\int \ln \frac{p^{\l}(t, x, y)}{p(t, x, y)} p^{\l}(t, x, y)\ d{\rm{Vol}}^{\l}(y)\\
=&\; \frac{1}{\l}\lim\limits_{n\in \Bbb N, n\to \infty} \frac{1}{n}\int \ln \frac{p(n, x, y)}{p^{\l}(n, x, y)\rho^{\l}(y)} p^{\l}(n, x, y)\ d{\rm{Vol}}^{\l}(y). 
\end{align*}
To estimate $({\rm I})_{\l}$, consider the stationary Markov chain on the space $\underline{\Omega}=\M^{\Bbb N\cup \{0\}}$ with transition probability $p^{\l}(1, x, y)\ d{\rm{Vol}}^{\l}(y)$ and the process ${Y}_0(\underline{w})=1$ and, for $n\geq 1$, 
\[
{Y}_n(\underline{w}):=\frac{p(1,\underline{w}_0, \underline{w}_1)}{p^{\l}(1, \underline{w}_0, \underline{w}_1)\rho^{\l}(\underline{w}_1)}\cdot \frac{p(1,\underline{w}_1, \underline{w}_2)}{p^{\l}(1, \underline{w}_1, \underline{w}_2)\rho^{\l}(\underline{w}_2)}\cdots  \frac{p(1,\underline{w}_{n-1}, \underline{w}_n)}{p^{\l}(1, \underline{w}_{n-1}, \underline{w}_n)\rho^{\l}(\underline{w}_{n})}.
\]
Observe that 
\[
\frac{p(n, x, y)}{p^{\l}(n, x, y)\rho^{\l}(y)}=\E_{\P_{x}^{\l}}\left(Y_n(\underline{w})\; \big| \; \underline{w}_n=y\right).
\]
So we may write 
\begin{align*}
{\rm{(I)}}_{\l}\;\geq &\; \frac{1}{\l}\lim\limits_{n\in \Bbb N, n\to \infty} \frac{1}{n}\E_{\P_x^{\l}}\left(\ln  \E_{\P_{x}^{\l}}\left(Y_n(\underline{w})\; \big| \; \underline{w}_n=y\right)\right)\\
\geq &\; \frac{1}{\l}\lim\limits_{n\in \Bbb N, n\to \infty} \frac{1}{n}\E_{\P_x^{\l}}\left(\ln  Y_n(\underline{w})\right)\\
=&\; \frac{1}{\l}\lim\limits_{n\in \Bbb N, n\to \infty} \frac{1}{n}\sum\limits_{i=0}^{n-1}\E_{\P_x^{\l}}\left(\ln  \frac{Y_{i+1}(\underline{w})}{Y_{i}(\underline{w})}\right)\\
=&\; \frac{1}{\l}\lim\limits_{n\in \Bbb N, n\to \infty} \frac{1}{n}\sum\limits_{i=0}^{n-1}\E_{\P_x^{\l}}\left(\E_{\P_{\underline{w}_i}^{\l}}\left(\ln  \frac{Y_{i+1}(\underline{w})}{Y_{i}(\underline{w})}\right)\right).
\end{align*}
Set $\underline{w}_{i}=y$ and let $(\lf{\rm y}_t\rc^{\l}, \lf \mho_t\rc^{\l})_{t\geq 0}$ be the stochastic pair in $\M\times \mathcal{O}^{\wt{g}^{\l}}(\M)$ that defines the $\wt{g}^{\l}$ Brownian motion on $\M$ starting from $y$. 
Then, 
\[
\E_{\P_{\underline{w}_i}^{\l}}\left(\ln  \frac{Y_{i+1}(\underline{w})}{Y_{i}(\underline{w})}\right)=\E_{{\rm Q}} \left(\ln \frac{p(1, y, \lf{\rm y}_1\rc^{\l}({\rm w}))}{p^{\l}(1, y, \lf{\rm y}_1\rc^{\l}({\rm w}))\rho^{\l}(\lf{\rm y}_1\rc^{\l}({\rm w}))}\right)=: \E_{{\rm Q}}\left(({\rm{III}})_{\l, y}\right),
\]
which is  $L^1$ integrable in  $y$. Hence the ergodic theorem applied to the $g^{\l}$ Brownian motion on $M$ (see Proposition \ref{mixing}) gives that
\[
\lim\limits_{n\in \Bbb N, n\to \infty} \frac{1}{n}\sum\limits_{i=0}^{n-1}\E_{\P_x^{\l}}\left(\E_{\P_{\underline{w}_i}^{\l}}\left(\ln  \frac{Y_{i+1}(\underline{w})}{Y_{i}(\underline{w})}\right)\right) = \E_{\P^{\l}}\E_{{\rm Q}} \left(({\rm{III}})_{\l, \underline{w}_0}\right).
\]
Since  
\begin{align*}\E_{{\rm Q}}\left(({\rm{III}})_{\l, y}\right)'_{\l}\; =\; &\E_{{\rm Q}}\left(-(\ln p^{\l})'_{\l}(1, y, \lf {\rm{y}}_1\rc^{\l}({\rm w}))-(\ln \rho^{\l})'_{\l}(\lf {\rm{y}}_1\rc^{\l}({\rm w}))\right)\\
& +\E_{{\rm Q}}\left(\big\langle \left.\nabla_z \ln \frac{p(1, y, z)}{p^{\l}(1, y, z)\rho^{\l}(z)}\right|_{z=\lf{\rm y}_1\rc^{\l}({\rm w})}, D\pi \big(\lf \mho_1\rc^{\l}\big)^{(1)}_{\l}({\rm w}) \big\rangle \right),
\end{align*}
we conclude that $\left(({\rm{III}})_{\l, y}\right)'_{\l}$ is $L^1$ integrable, uniformly in $\l$ and $y$,  by using  Theorem \ref{diff-HK-estimations-gen} ii), Lemma \ref{Hs2-ST} and Proposition \ref{est-norm-u-t-j} i) for $(\lf{\rm y}_1\rc^{\l}, \lf \mho_1\rc^{\l})$. Moreover,
\[
\left(\E_{{\rm Q}} \left(({\rm{III}})_{\l, y}\right)\right)'_0= \E_{{\rm Q}}\left(-(\ln p^{\l})'_{0}(1, y, \lf {\rm{y}}_1\rc^{0})-(\ln \rho^{\l})'_{0}(\lf {\rm{y}}_1\rc^{0})\right)=0
\]
by taking the differential in $\l$ of   $\int p^{\l}(1, y, z)\ d{\rm{Vol}}^{\l}(z)\equiv 1$. 

Hence, 
\[
\lim\limits_{\l\to 0+0}\frac{1}{\l} \E_{\P^{\l}}\E_{{\rm Q}} \left(({\rm{III}})_{\l, \underline{w}_0}\right)=\E_{\P}\left(\E_{{\rm Q}} \left(({\rm{III}})_{\l, \underline{w}_0}\right)\right)'_0 = 0,\]
where the first equality holds since we are integrating a function that depends  only on $\underline{w}_0$.  Consequently,  we obtain  $\lim_{\l\to 0+0}{\rm{(I)}}_{\l}=0$.  In the same way, we show $\lim_{\l\to 0-0}{\rm{(I)}}_{\l}=0$. Thus, $\lim_{\l\to 0}{\rm{(I)}}_{\l}=0$. 
\end{proof}

\begin{remark}Note that for all $\l$, $h^{\l}\leq (\upsilon^{\l})^2$ by \cite{Gu} and \cite{K1}. As in Corollary  \ref{prop-thm1.4},  we can also  use \cite{BCG}, \cite{KKPW} and the $C^1$ differentiability of $\l\mapsto h^{\l}$ for any $C^{3}$ curve $\lambda\mapsto
g^{\lambda}\in \Re^{3}(M)$ to conclude that $(h^{\l})'_0=0$  at locally symmetric  $g^0$.
\end{remark}
  In proving Theorem \ref{main-h}, we obtain the following formula. 

\begin{theo}\label{deriv-entr-for}Let  $M$ be a closed connected smooth manifold and let $g\in \Re^3(M)$.   For any $C^3$ curve $\lambda\in (-1, 1)\mapsto g^{\lambda}\in \Re^3(M)$ with $g^0=g$ and constant volume,
\[
(h^{\l})'_0=\int \left(-\frac{1}{2}\langle \nabla {\rm{trace}}\XX, \overline{Z}+\nabla u_1 \rangle +{\rm{Div}}\big(\XX (\overline{Z}+\nabla u_1)\big)\right)\ d{\bf m}.
\]
\end{theo}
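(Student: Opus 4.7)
The plan is to follow exactly the template used for the linear drift in Theorem \ref{Equiv-Thm1.5}, substituting the geometric objects that govern the entropy. From the proof of Theorem \ref{main-h} we already know that $\lim_{\l\to 0}({\rm I})_\l = 0$ and
\[
(h^{\l})'_0 \; = \; \mathcal{K} \; = \; -\int (\Delta^{\l}_y)'_0 \ln k(x,y,\xi)\big|_{y=x}\, d{\bf m}(x,\xi) \; - \; \int \Delta_y \ln k(x,y,\xi)\big|_{y=x}\, d({\bf m}^{\l})'_0(x,\xi),
\]
so the task is simply to rewrite each of the two integrals as a linear expression in $\XX$.

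For the first integral, the plan is to apply the same pointwise expansion $(\Delta^{\l})'_0 f = \tfrac12 \langle \nabla f, \nabla({\rm trace}\,\XX)\rangle - {\rm Div}(\XX\,\nabla f)$ that already appears in the proof of Theorem \ref{Equiv-Thm1.5}, now taken on the stable leaf $W^{\rm s}(x,\xi)$ with $f(y) = \ln k(x,y,\xi)$. Because $k(x,x,\xi)=1$, the definition $\overline{Z}(x,\xi) := \nabla^{\l}_y k^{\l}(x,y,\xi)|_{y=x}$ gives $\nabla_y \ln k(x,y,\xi)|_{y=x} = \overline{Z}(x,\xi)$, so this contribution becomes
\[
-\int (\Delta^{\l}_y)'_0 \ln k\big|_{y=x}\, d{\bf m} \;=\; \int \Bigl(-\tfrac12\langle\nabla\,{\rm trace}\,\XX,\, \overline{Z}\rangle + {\rm Div}(\XX\,\overline{Z})\Bigr)\, d{\bf m}.
\]

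For the second integral I would substitute $\Delta u_1 = -\Delta_y \ln k(x,y,\xi)|_{y=x} - h$, where $u_1$ is the function introduced in \cite{LS2} (formula (5.7)) playing the role of $u_0$ in Theorem \ref{Equiv-Thm1.5}. Since each ${\bf m}^{\l}$ is a probability measure and constants lie in $\mathcal{H}^{0}_{{\mathtt{b}}}$, we have $\int d({\bf m}^{\l})'_0 = 0$, hence $\int \Delta_y \ln k|_{y=x}\, d({\bf m}^{\l})'_0 = -\int \Delta u_1\, d({\bf m}^{\l})'_0$. Next I would invoke the differentiated harmonicity relation (\ref{har-diff-eq}), extended to $u_1$ exactly as it was extended to $u_0$ in Theorem \ref{Equiv-Thm1.5}, to obtain $\int \Delta u_1\, d({\bf m}^{\l})'_0 = -\int (\Delta^{\l})'_0 u_1\, d{\bf m}$, and then apply the pointwise formula for $(\Delta^{\l})'_0$ a second time. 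Summing the two contributions produces precisely the claimed expression for $(h^{\l})'_0$.

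The main technical point, and the step where the work of Section~7 enters essentially, is to justify that (\ref{har-diff-eq}) is legitimately applicable to $u_1$: one needs $u_1$ to be of class $C^2$ along stable leaves with globally continuous second derivatives, and its gradient has to pair against the vector fields arising in $({\bf m}^{\l})'_0$. This is the exact analogue of the corresponding verification for $u_0$ in Theorem \ref{Equiv-Thm1.5}, but here it requires the source term $\Delta_y \ln k(x,y,\xi)|_{y=x}$ of the leafwise Poisson equation defining $u_1$ to have sufficient H\"older regularity in the $\xi$-direction; this is exactly what Lemma \ref{Holder-2nd-k} and the choice ${\mathtt{b}}<{\mathtt{b}}''\varkappa$ provide, placing the source in $\mathcal{H}^{0}_{{\mathtt{b}}}$ and allowing one to solve for $u_1$ in a class compatible with the duality identity. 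Linearity of the resulting expression in $\XX$ is then manifest.
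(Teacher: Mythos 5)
Your proposal is correct and follows essentially the same route as the paper. The paper's own ``proof'' of Theorem~\ref{deriv-entr-for} is contained in the body of the proof of Theorem~\ref{main-h}, where the identity for $\mathcal{K}=\lim_{\l\to 0}({\rm II})_\l$ is stated with the phrase ``we obtain, as in Section~\ref{sec-3.1}''; you supply exactly the computation that phrase refers to: applying the pointwise formula $(\Delta^{\l})'_0 f = \tfrac12\langle\nabla f,\nabla\,{\rm trace}\,\XX\rangle-{\rm Div}(\XX\,\nabla f)$ first to $f(y)=\ln k(x,y,\xi)$ (using the cocycle property of the Martin kernel so that $\nabla_y\ln k(x,y,\xi)=\overline{Z}(y,\xi)$ along the whole stable leaf, which is what makes ${\rm Div}(\XX\,\overline{Z})$ the right leafwise quantity), then to $u_1$ after eliminating the $({\bf m}^{\l})'_0$ term via $\int d({\bf m}^{\l})'_0=0$ and (\ref{har-diff-eq}). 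Your closing remark correctly identifies why Lemma~\ref{Holder-2nd-k} and the choice ${\mathtt b}<{\mathtt b}''\varkappa$ are the non-formal content here: they are what legitimize applying the extended duality (\ref{har-diff-eq}) to $u_1$, in parallel with the role played by the $C^2$-along-leaves regularity of $u_0$ in Theorem~\ref{Equiv-Thm1.5}.
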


\vspace*{1cm} 
{{\bf{Acknowledgments}} --- We thank MSRI, ICERM and IML for their partial support. The second author would  like to also thank  LPMA and the  Department of Mathematics of the University of Notre Dame for hospitality during her stays.}
\vspace*{1cm}

\end{document}